\setlist[enumerate]{label = {\rm{\rm(\alph*)}},ref = {\rm(\alph*})}
\tikzset{
  symbol/.style={
    draw=none,
    every to/.append style={
      edge node={node [sloped, allow upside down, auto=false]{$#1$}}}
  }
}
\newcolumntype{P}[1]{>{\centering\arraybackslash}p{#1}}
\DeclareFontFamily{U}{mathx}{}
\DeclareFontShape{U}{mathx}{m}{n}{<-> mathx10}{}
\DeclareSymbolFont{mathx}{U}{mathx}{m}{n}
\DeclareMathAccent{\widecheck}{0}{mathx}{"71}
\DeclareMathOperator\ch{ch}
\def\qs{/\kern -.3em/\kern .03em}
\DeclareMathOperator\sk{sk}
\DeclareMathOperator\Quad{Quad}
\DeclareMathOperator\Alt{Alt}
\DeclareMathOperator\Skew{Skew}
\DeclareMathOperator\vol{vol}
\def\CP{{\mathbin{\mathbb{CP}}}}
\def\RP{{\mathbin{\mathbb{RP}}}}
\def\HP{{\mathbin{\mathbb{HP}}}}
\DeclareMathOperator\Sq{Sq}
\DeclareMathOperator\id{id}
\DeclareMathOperator\Id{Id}
\DeclareMathOperator\ev{ev}
\DeclareMathOperator\colim{colim}
\DeclareMathOperator\Hom{Hom}
\DeclareMathOperator\Aut{Aut}
\DeclareMathOperator\Map{Map}
\let\Im\relax\DeclareMathOperator\Im{Im}
\DeclareMathOperator\Ker{Ker}
\DeclareMathOperator\Coker{Coker}
\begin{document}
%%%%%%%%%%%%%%%%%%%%%%%%%%%%%%%%%%%%%%%%%%%%%%%%%%%%%%%%%%%%%%%%%%%%%%%%
%%%%%%%%%%%%%%%%%%%%%%%%%%     Macros      %%%%%%%%%%%%%%%%%%%%%%%%%%%%%
%%%%%%%%%%%%%%%%%%%%%%%%%%%%%%%%%%%%%%%%%%%%%%%%%%%%%%%%%%%%%%%%%%%%%%%%
\def\e#1\e{\begin{equation}#1\end{equation}}
\def\ea#1\ea{\begin{align}#1\end{align}}
\def\eq#1{{\rm(\ref{#1})}}
\theoremstyle{plain}% default
\newtheorem{thm}{Theorem}[section]
\newtheorem{lem}[thm]{Lemma}
\newtheorem{prop}[thm]{Proposition}
\newtheorem{cor}[thm]{Corollary}
\newtheorem{conj}[thm]{Conjecture}
\newtheorem{quest}[thm]{Question}
\newtheorem{prob}[thm]{Problem}
\theoremstyle{definition}
\newtheorem{dfn}[thm]{Definition}
\newtheorem{ex}[thm]{Example}
\newtheorem{rem}[thm]{Remark}
\newtheorem{ax}[thm]{Axiom}
\newtheorem{ass}[thm]{Assumption}
\newtheorem{property}[thm]{Property}
\newtheorem{cond}[thm]{Condition}
\newtheorem{nota}[thm]{Notation}
\newtheorem*{nota*}{Notation}
\numberwithin{figure}{section}
\numberwithin{table}{section}
\numberwithin{equation}{section}
% Number systems:
\def\bG{{\mathbin{\mathbb G}}}
\def\bL{{\mathbin{\mathbb L}}}
\def\N{\mathbb{N}}
\def\Z{\mathbb{Z}}
\def\Q{\mathbb{Q}}
\def\R{\mathbb{R}}
\def\C{\mathbb{C}}
\def\F{\mathbb{F}}
\def\H{\mathbb{H}}
\def\K{\mathbb{K}}
\def\pt{\mathrm{pt}}
% Lie groups and Lie algebras:
\def\GL{\mathrm{GL}}
\def\SL{\mathrm{SL}}
\def\O{\mathrm{O}}
\def\SO{\mathrm{SO}}
\def\Spin{\mathrm{Spin}}
\def\Spinc{\mathrm{Spin^c}}
\def\Hol{\mathop{\rm Hol}}
\def\U{\mathrm{U}}
\def\SU{\mathrm{SU}}
\def\Sp{\mathrm{Sp}}
\def\g{{\mathfrak g}}
\def\h{{\mathfrak h}}
\def\gl{{\mathfrak{gl}}}
\def\sl{{\mathfrak{sl}}}
\def\sp{{\mathfrak{sp}}}
\def\so{{\mathfrak{so}}}
\def\spin{{\mathfrak{spin}}}
\def\u{{\mathfrak u}}
\def\su{{\mathfrak{su}}}
\def\Ad{\mathrm{Ad}}
\def\ad{\mathrm{ad}}
\def\pl{{\rm pl}}
\def\ran{\mathrm{an}}
\def\rank{\mathop{\rm rank}}
\def\top{\mathrm{top}}
\def\cla{\mathrm{cla}}
\def\vdim{\mathop{\rm vdim}\nolimits}
\def\virt{{\rm virt}}
\def\fund{{\rm fund}}
\def\coh{{\rm coh}}
% Calligraphy shorthands:
\def\cA{\mathcal{A}}
\def\A{{\mathbin{\cal A}}}
\def\cB{\mathcal{B}}
\def\B{{\mathbin{\cal B}}}
\def\ovB{{\mathbin{\smash{\,\overline{\!\mathcal B}}}}}
\def\cC{\mathcal{C}}
\def\cD{\mathcal{D}}
\def\cE{\mathcal{E}}
\def\cF{\mathcal{F}}
\def\cG{\mathcal{G}}
\def\G{{{\cal G}}}
\def\cH{\mathcal{H}}
\def\cI{\mathcal{I}}
\def\cJ{\mathcal{J}}
\def\cK{\mathcal{K}}
\def\cL{\mathcal{L}}
\def\M{{\mathbin{\cal M}}}
\def\cM{\mathcal{M}}
\def\bcM{{\mathbin{\bs{\cal M}}}}
\def\cN{\mathcal{N}}
\def\cO{\mathcal{O}}
\def\cP{\mathcal{P}}
\def\cQ{\mathcal{Q}}
\def\cR{\mathcal{R}}
\def\cS{\mathcal{S}}
\def\cT{\mathcal{T}}
\def\cU{\mathcal{U}}
\def\cV{\mathcal{V}}
\def\cW{\mathcal{W}}
\def\cX{\mathcal{X}}
\def\cY{\mathcal{Y}}
\def\cZ{\mathcal{Z}}
% Greek alphabet:
\def\al{\alpha}
\def\be{\beta}
\def\ga{\gamma}
\def\de{\delta}
\def\io{\iota}
\def\ep{\epsilon}
\def\varep{\varepsilon}
\def\la{\lambda}
\def\ka{\kappa}
\def\th{\theta}
\def\ze{\zeta}
\def\up{\upsilon}
\def\vp{\varphi}
\def\si{\sigma}
\def\om{\omega}
\def\De{\Delta}
\def\Ka{{\rm K}}
\def\La{\Lambda}
\def\Om{\Omega}
\def\Ga{\Gamma}
\def\Si{\Sigma}
\def\Th{\Theta}
\def\Up{\Upsilon}
\def\Chi{{\rm X}}
\def\Tau{{\rm T}}
\def\Nu{{\rm N}}
% Binary operators
\def\op{\oplus}
\def\ot{\otimes}
\def\t{\times}
% Other
\def\d{{\rm d}}
\def\db{\bar\partial}
\def\na{\nabla}
\def\bu{\bullet}
\def\iy{\infty}
\def\longra{\longrightarrow}
\def\an#1{\langle #1 \rangle}
\def\ban#1{\bigl\langle #1 \bigr\rangle}
\def\lb{\llbracket}
\def\rb{\rrbracket}
\def\ul{\underline}
\def\es{\emptyset}
\def\ab{\allowbreak}
\def\bs{\boldsymbol}
\def\ge{\geqslant}
\def\le{\leqslant}
% Project-specific macros
\def\Eig{\operatorname{Eig}}
\def\Hilb{\mathop{\rm Hilb}\nolimits}
\def\adj{\mathrm{adj}}
\def\ss{{\rm ss}}
\def\Vect{\mathop{\rm Vect}}
\def\vect{{\rm vect}}
\def\APS{\mathrm{APS}}
\def\em{\mathrm{em}}
\def\im{\mathrm{im}}
\def\rsm{\mathrm{sm}}
\def\Cay{{\mathrm{Cay}}}
\def\ass{{\mathrm{ass}}}
\def\asd{{\mathrm{asd}}}
\def\inc{\mathrm{inc}}
\def\rel{{\rm rel}}
\def\trans{\mathrm{trans}}
\def\PF{\mathop{\rm PF}\nolimits}
\def\DET{\mathop{\rm DET}\nolimits}
\def\SP{\mathop{\rm SP}\nolimits}
\def\ind{\mathop{\rm ind}\nolimits}
\def\skewind{\operatorname{skew-ind}}
\def\skew{\mathrm{skew}}
\def\sym{\mathrm{sym}}
\def\Stab{\mathop{\rm Stab}\nolimits}
\def\irr{{\rm irr}}
\def\red{{\rm red}}
\def\Pf{{\rm Pf}}
\def\boo{{\mathbin{\mathbb 1}}}
\def\m{{\mathfrak m}}
\def\KOtor{\Ga_\ell\qs\Ga_{\ell+1}}
\def\Bord{{\mathfrak{Bord}}{}}
\def\tBord{\widetilde{\mathfrak{B}}{\mathfrak{ord}}{}}
\def\hBord{\widehat{\mathfrak{B}}{\mathfrak{ord}}{}}
\def\cBord{\smash{\widecheck{\mathfrak{B}}}{\mathfrak{ord}}{}}
\def\rO{\mathrm{O}}
\def\rF{\mathrm{F}}
\def\sD{\slashed{D}}
\def\rF{\mathrm{F}}
\def\Cl{C\ell}
\def\Gr{\operatorname{Gr}_\mathrm{res}}
\def\Cyl{\operatorname{Cyl}}
\def\Ell{\operatorname{Ell}}
\def\Lag{\operatorname{Lag}}
\def\Pd{\operatorname{Pd}}
\def\det{\operatorname{det}}
\def\supp{\operatorname{supp}}
% Additional macros
\def\rM{\mathrm{M}}
\def\rB{\mathrm{B}}
\def\na{\nabla}
\def\Spc{\SP}
\def\PF{\mathop{\rm PF}\nolimits}
\def\DET{\mathop{\rm DET}\nolimits}
\def\SP{\mathop{\rm SP}\nolimits}
\def\Ell{\operatorname{Ell}}
\def\Or{\mathop{\rm Or}}
\def\rst{\mathrm{st}}
\def\nOr{\check\Or}
\def\wh{\widehat}
\def\mathscr{\scr}
%%%
\def\oBord{{\ov{\mathfrak{Bord}}}{}}
\def\osF{\bar{\mathsf{F}}}
\def\sF{\mathsf{F}}
\def\sG{\mathsf{G}}
\def\sH{\mathsf{H}}
\def\sI{\mathsf{I}}
\def\sJ{\mathsf{J}}
\def\sK{\mathsf{K}}
\def\sL{\mathsf{L}}
\def\sM{\mathsf{M}}
\def\sN{\mathsf{N}}
\def\sO{\mathsf{O}}
\def\sP{\mathsf{P}}
\def\sQ{\mathsf{Q}}
\def\sT{\mathsf{T}}
\def\sZ{\mathsf{Z}}
\def\sz{\mathsf{z}}
\def\bw{\bigwedge}
\newcommand\tor[1]{\mathop{#1\text{\rm-tor}}}
\def\pd{\partial}
\def\ts{\textstyle}
\def\st{\scriptstyle}
\def\sst{\scriptscriptstyle}
\def\w{\wedge}
\def\sm{\setminus}
\def\lt{\ltimes}
\def\bu{\bullet}
\def\sh{\sharp}
\def\di{\diamond}
\def\he{\heartsuit}
\def\od{\odot}
\def\op{\oplus}
\def\ot{\otimes}
\def\bt{\boxtimes}
\def\bp{\boxplus}
\def\ov{\overline}
\def\bigop{\bigoplus}
\def\bigot{\bigotimes}
\def\iy{\infty}
\def\es{\emptyset}
\def\ra{\rightarrow}
\def\rra{\rightrightarrows}
\def\Ra{\Rightarrow}
\def\Longra{\Longrightarrow}
\def\ci{\circ}
\def\ti{\tilde}
\def\ra{\rightarrow}
\def\ha{{\ts\frac{1}{2}}}
\def\hookra{\hookrightarrow}
\def\dashra{\dashrightarrow}
\def\md#1{\vert #1 \vert}
\def\ms#1{\vert #1 \vert^2}
\def\nm#1{\Vert #1 \Vert}
\def\bmd#1{\big\vert #1 \big\vert}
\def\bms#1{\big\vert #1 \big\vert^2}
\def\an#1{\langle #1 \rangle}
\def\ban#1{\bigl\langle #1 \bigr\rangle}
\def\Ztor{{\mathop{\Z_2\text{\rm-tor}}}}
\def\ZZtor{{\mathop{\Z\text{\rm-tor}}}}
\def\Zktor{{\mathop{\Z_k\text{\rm-tor}}}}
\def\sZtor{{\mathop{\text{\rm s-}\Z_2\text{\rm-tor}}}}
\def\sign{\mathop{\rm sign}\nolimits}
\def\Ord{\mathop{\rm Ord}}
\def\Mon{\mathop{\rm Mon}}
\def\tLeq{{\,\,\,\widetilde{\!\!\!\mathfrak{Leq}\!\!\!}\,\,\,}{}}
%%%%%%%%%%%%%%%%%%%%%%%%%%%%%%%%%%%%%%%%%%%%%%%%%%%%%%%%%%%%%%%%%%%%%%%%
%%%%%%%%%%%%%%%%%%%%%%%    Text of paper    %%%%%%%%%%%%%%%%%%%%%%%%%%%%
%%%%%%%%%%%%%%%%%%%%%%%%%%%%%%%%%%%%%%%%%%%%%%%%%%%%%%%%%%%%%%%%%%%%%%%%
\title{Bordism categories and \\ orientations of moduli spaces}
\author{Dominic Joyce and Markus Upmeier}

\date{}

\maketitle

\begin{abstract}
There are many situations in geometry where one forms moduli spaces $\M$ of some geometric objects, and $\M$ may be a (possibly singular, or derived) real manifold, and one wishes to define an {\it orientation\/} on $\M$. Such orientations are needed to define enumerative invariants which `count' points in $\M$, e.g.\ Donaldson invariants of 4-manifolds.

This monograph develops a new bordism-theoretic point of view on orientations of moduli spaces. Let $X$ be a compact $n$-manifold with geometric structure $\Om$, and $\M$ a moduli space of geometric objects on $X$, e.g.\ a moduli space of connections on $X$ satisfying a curvature condition, or of calibrated submanifolds in $X$. Our theory aims to answer the questions:
\begin{itemize}
\setlength{\itemsep}{0pt}
\setlength{\parsep}{0pt}
\item[(i)] Can we prove $\M$ is orientable for all such $(X,\Om),\M$?
\item[(ii)] If not, can we give computable sufficient conditions on $(X,\Om)$ that guarantee $\M$ is orientable?
\item[(iii)] If the sufficient conditions hold, can we specify extra data on $X$ which allow us to construct a canonical orientation on~$\M$?  
\end{itemize}

Our theory is written in terms of {\it bordism groups} of certain classifying spaces $T$ in Algebraic Topology, such as $T=\cL BG$ for moduli spaces of connections on principal $G$-bundles $P\ra X$. The typical answer to (ii) is that $\M$ is orientable provided certain `bad' bordism classes $\be$ in $\Om_n^{\bs\Spin}(T)$ cannot be written in the form $\be=[X,\phi]$ for our $n$-manifold $X$, and the answer to (i) is yes if there are no `bad' classes.

We define {\it bordism categories}, such as $\Bord_n^{\bs\Spin}(BG)$ with objects $(X,P)$ for $X$ a compact spin $n$-manifold and $P\ra X$ a principal $G$-bundle. Bordism categories are {\it Picard groupoids}. Orientation problems are encoded in {\it orientation functors\/} $\sO: \Bord_n^{\bs\Spin}(BG)\ra\sZtor$. Orientations on moduli spaces $\M$ on $X$ are induced by trivializations of $\sO$ on a subcategory of $\Bord_n^{\bs\Spin}(BG)$ depending on $X$. We compute spin bordism groups $\Om_n^{\bs\Spin}(T)$ for many classifying spaces $BG,MH,K(\Z,R),\ldots$ using Algebraic Topology, and use these to answer orientability questions.

We apply our theory to study orientability and canonical orientations for moduli spaces of $G_2$-instantons and of associative 3-folds in $G_2$-manifolds, and for moduli spaces of $\Spin(7)$-instantons and of Cayley 4-folds in $\Spin(7)$-manifolds, and for moduli spaces of coherent sheaves and perfect complexes on Calabi--Yau 4-folds. The latter are needed to define Donaldson--Thomas type invariants of Calabi--Yau 4-folds.
\end{abstract}

\setcounter{tocdepth}{2}
\tableofcontents

\section{Introduction}
\label{fm1}

Coherent orientations of moduli spaces play an important role in gauge theory and for enumerative geometry \cite{BoJo,DoKr,DoSe,DoTh,Joyc5,Joyc7,OhTh,Walp1}. Despite their central role, the absence of a general framework means that orientations often remain poorly understood. Indeed, the motivating question for this work, which we answer in \S\ref{fm13}, was whether one can construct canonical orientations for Donaldson--Thomas type invariants of Calabi--Yau 4-folds~\cite{BoJo,CaLe,OhTh}.

In Differential Geometry, we study moduli spaces $\M$ of geometric objects $E$ on a compact manifold $X$ which satisfy a nonlinear elliptic p.d.e. For example:
\begin{itemize}
\setlength{\itemsep}{0pt}
\setlength{\parsep}{0pt}
\item[(a)] Let $(X,\vp,g)$ be a compact 7-manifold with coclosed $G_2$-structure, $G$ a Lie group, and $P\ra X$ a principal $G$-bundle. Write $\M_P^{G_2}$ for the moduli space of $G_2$-{\it instantons\/} on $P$, that is, irreducible connections $\nabla$ on $P$ satisfying the curvature condition $\pi^2_7(F_\nabla)=0$, modulo gauge transformations.
\item[(b)] Similarly, if $(X,\Om,g)$ is a compact 8-manifold with $\Spin(7)$-structure, we consider moduli spaces $\M_P^{\Spin(7)}$ of $\Spin(7)$-{\it instantons\/} on $P\ra X$.
\item[(c)] Let $(X,\vp,g)$ be a compact 7-manifold with $G_2$-structure. Write $\M_\al^{\rm ass}$ for the moduli space of {\it associative\/ $3$-folds\/} on $X$, that is, compact $\vp$-calibrated 3-submanifolds $N\subset X$, in a fixed L-equivalence class $\al\in\La_3^{\SO}(X)$.
\item[(d)] Similarly, if $(X,\Om,g)$ is a compact 8-manifold with $\Spin(7)$-structure, we consider moduli spaces $\M_\al^{\rm Cay}$ of {\it Cayley\/ $4$-folds\/} $N\subset X$.
\end{itemize}
In each case, the moduli space $\M$ is a smooth manifold if $\vp,\Om$ are generic, and a derived manifold \cite{Joyc2,Joyc3,Joyc4,Joyc6} in general, so orientations on $\M$ make sense.

There are also orientation problems in Algebraic Geometry. In particular:
\begin{itemize}
\setlength{\itemsep}{0pt}
\setlength{\parsep}{0pt}
\item[(e)] Let $X$ be a Calabi--Yau 4-fold, and $\M_\al^\rst(\tau)$ a proper moduli scheme of Gieseker stable coherent sheaves on $X$ with Chern character $\al$. Borisov--Joyce \cite{BoJo} showed how to make $\M_\al^\rst(\tau)$ into a derived manifold. Thus, if we can choose an {\it orientation\/} on $\M_\al^\rst(\tau)$ (which has a purely algebro-geometric definition), we get a virtual class $[\M_\al^\rst(\tau)]_\virt$ in $H_*(\M_\al^\rst(\tau),\Z)$, which can be used to define Donaldson--Thomas type `DT4 invariants' of $X$. Later, Oh--Thomas \cite{OhTh} gave an algebro-geometric definition of $[\M_\al^\rst(\tau)]_\virt$, still needing a choice of orientation. DT4 invariants are now a very active area, see for instance~\cite{Bojk1,Bojk2,BoJo,Cao1,Cao2,CaKo1,CaKo2,CKM,CaLe,CMT1, CMT2,COT1,COT2,CaQu,CaTo1,CaTo2,CaTo3,GJT,Joyc7,KiPa,OhTh,Park}.
\end{itemize}

Cao--Gross--Joyce \cite[Th.~1.15]{CGJ} showed that algebro-geometric orientations on $\M_\al^\rst(\tau)$ can be induced from orientations on moduli spaces of connections $\B_P$ on $\U(m)$-bundles $P\ra X$ for $m\gg 0$, which is essentially the same orientation problem as for $\Spin(7)$-instantons in (b) above. This brings DT4 orientations within the scope of our approach to orientations on moduli spaces.

In this monograph we develop a general framework for studying orientations of moduli spaces using {\it bordism categories}. We illustrate the idea of bordism categories using the gauge-theoretic categories $\Bord_n^{\bs\Spin}(BG)$ from \S\ref{fm4}, which are used in \S\ref{fm12}--\S\ref{fm13} to solve (a),(b),(e) above. We also define {\it submanifold bordism categories\/} $\Bord_{n,k}^{\bs B}(MH)$ in \S\ref{fm5}, used in \S\ref{fm14} to solve (c),(d), {\it cohomology bordism categories\/} $\Bord^{\bs B}_n(K(R,k))$ in \S\ref{fm6}, which are used to reduce orientability and canonical orientation problems to computations using cohomology operations, such as Steenrod squares, and {\it topological bordism categories\/} $\Bord^{\bs B}_n(T)_\top$ in~\S\ref{fm7}.

Let $G$ be a Lie group, and $n\ge 0$. The bordism category $\Bord_n^{\bs\Spin}(BG)$ is a symmetric monoidal category with objects pairs $(X,P)$ for $X$ a compact spin $n$-manifold and $P\ra X$ a principal $G$-bundle, and morphisms $[Y,Q]:(X_0,P_0)\ra(X_1,P_1)$ equivalence classes $[Y,Q]$ of pairs $(Y,Q)$, where $Y$ is a compact spin $(n+1)$-manifold with boundary $\pd Y=-X_0\amalg X_1$ and $Q\ra Y$ is a principal $G$-bundle with $Q\vert_{\pd Y}=P_0\amalg P_1$. The composition of $[Y,Q]:(X_0,P_0)\ra(X_1,P_1)$ and $[Y',Q']:(X_1,P_1)\ra(X_2,P_2)$ is $[Y\amalg_{X_1}Y',Q\amalg_{P_1}Q']$. The monoidal structure is $(X_0,P_0)\ot(X_1,P_1)=(X_0\amalg X_1,P_0\amalg P_1)$ on objects.

Then $\Bord_n^{\bs\Spin}(BG)$ is a {\it Picard groupoid}, or {\it abelian\/ $2$-group}. As explained in Appendix \ref{fmA}, Picard groupoids $\cC$ are classified up to equivalence by abelian groups $\pi_0,\pi_1$ and a linear quadratic morphism $q:\pi_0\ra\pi_1$. For $\Bord_n^{\bs\Spin}(BG)$ these are the spin bordism groups $\pi_i=\Om_{n+i}^{\bs\Spin}(BG)$ for $i=0,1$, where $BG$ is the classifying space of $G$, and $q$ is multiplication by $\al_1$ in $\Om_1^{\bs\Spin}(*)=\Z_2\an{\al_1}$. Thus, if we can compute $\Om_*^{\bs\Spin}(BG)$, we understand $\Bord_n^{\bs\Spin}(BG)$ very well.

After some background from Algebraic Topology in \S\ref{fm2}, our first main results, in \S\ref{fm3}, compute the spin bordism groups $\Om_n^{\bs\Spin}(T)$, and in some cases $\Om_{n-1}^{\bs\Spin}(\cL T)$, for $n\le 9$ and $T$ in the list of spaces
\e
\begin{gathered}
M\SU(2),M\U(2),M\Spin(4),M\SO(4),B\SU(m),B\Sp(m),\\ 
BE_8,\SU(2),\SU,\Sp,K(\Z,3),K(\Z_2,3),K(\Z,4),K(\Z_2,4).
\end{gathered}
\label{fm1eq1}
\e
Here $MH$ is the Thom space of $H\ra\O(4)$, as in \S\ref{fm25}, and $BG$ is the classifying space of the Lie group $G$, and $K(R,k)$ is an Eilenberg--MacLane space, which classifies cohomology classes in $H^k(-,R)$. We also compute some maps $\hat\xi_{n-1}^{\bs\Spin}(T):\ti\Om_{n-1}^{\bs\Spin}(\cL T;T)\ra\ti\Om_n^{\bs\Spin}(T)$ arising in our orientability theory. The proofs of theorems in \S\ref{fm3} are given in~\S\ref{fm15}--\S\ref{fm18}.

As an example, we give a partial statement here of our results for $M\U(2)$ and $B\SU(m)$, see Theorems \ref{fm3thm1}--\ref{fm3thm3} and Corollary \ref{fm3cor1}. Section \ref{fm3} also gives similar results for the other spaces in~\eq{fm1eq1}.

\begin{thm}
\label{fm1thm1}
{\bf(a)} In dimensions\/ $n\le 9$ the reduced spin bordism groups\/ $\ti{\Omega}_n^{\bs\Spin}(M\U(2))$ are as follows: 
\e
\begin{gathered}
\begin{tabular}{c|ccccccccccc}
$n$ & 
{\rm 0,1,2,3,5} &  $4$  & $6$  & $8$ & $9$ \\
\hline
\parbox[top][4ex][c]{2cm}{$\!\!\!\ti\Om_n^{\bs\Spin}(M\U(2))\!\!\!$} & 
$0$ &  $\Z\an{\de}$ & $\Z\an{\varepsilon}$ & $\Z\an{\frac{\ze_1}{2},\ze_2,\ze_3}$ & $\Z_2\an{\al_1\ze_2}$
\end{tabular}
\end{gathered}
\label{fm1eq2}
\e
Here, writing elements of\/ $\ti{\Omega}_n^{\bs\Spin}(M\U(2))$ as $[X,M]$ for $X$ a compact spin\/ $n$-manifold and\/ $M\subset X$ a compact\/ $(n-4)$-submanifold with a normal\/ $\U(2)$-structure, we have
\begin{align*}
\al_1&=[\cS^1_{\rm nb}]\in\Om_1^{\bs\Spin}(*), & \de&=[\cS^3\t\cS^1_{\rm b},\{*\}], & \varepsilon&=[\cS^5\t\cS^1_{\rm b},\cS^2], \\
\frac{\ze_1}{2}&=[(K3\t\cS^3)/\Z_2\an{\al}\t\cS^1_{\rm b},K3\t\{N\}\t\{1\}],
\!\!\!\!\!\!\!\!\!\!\!\!\!\!\!\!\!\!\!\!\!\!\!\!\!\!\!\!\!\!\!\!\!\!\!\!\!\!\!\!\!\!\!\!\!\!\!\!\!\!\!\!\!\!\!\!\!\!\!\!\!\!\!\!\! \\
\ze_2&=[\HP^2,\overline{\HP}^1]-[\HP^2,\es], & \ze_3&=[\CP^3\t(\cS^1_{\rm b})^2, \CP^2\t\{1\}^2].\!\!\!\!\!\!\!\!\!\!\!\!\!\!\!\!\!\!\!\!\!\!\!\!\!\!\!\!\!\!\!
\end{align*}
For $n=4,6,8$ the isomorphisms in \eq{fm1eq2} may be written explicitly as
\begin{align*}
&\ti\Om_4^{\bs\Spin}(M\U(2))\overset\cong\longra\Z, &[X,M]&\longmapsto\# M, \\
&\ti\Om_6^{\bs\Spin}(M\U(2))\overset\cong\longra\Z,&[X,M]&\longmapsto\ts -\frac{1}{2}\int_M c_1(\nu_M),\\
&\ti\Om_8^{\bs\Spin}(M\U(2))\overset\cong\longra\Z^3, \\
&[X,M]\longmapsto\mathrlap{\left(-\ts\frac{\operatorname{sign}(M)}{8} + \ts\int_M \frac{c_1(\nu_M)^2}{8}, \ts\int_M c_2(\nu_M), \ts\int_M c_1(\nu_M)^2\right),}
\end{align*}
which map\/ $\de\mapsto 1,$ $\varep\mapsto 1,$ and\/ $\frac{\ze_1}{2}\mapsto(1,0,0),$ $\ze_2\mapsto (0,1,0),$ $\ze_3\mapsto(0,0,1)$.
\smallskip

\noindent{\bf(b)} There is a map $\hat\xi^{\bs\Spin}_{n-1}(M\U(2)):\Om^{\bs\Spin}_{n-1}(\cL M\U(2);M\U(2))\ab\ra\ti\Om^{\bs\Spin}_n(M\U(2))$ defined in \eq{fm2eq7} below. In the notation of\/ {\bf(a)\rm,} this has image
\e
\begin{gathered}
\begin{tabular}{c|ccccccccccc}
$n$ & 
{\rm 0,1,2,3,5} &  $4$  & $6$  & $8$ & $9$ \\
\hline
\parbox[top][4ex][c]{2.3cm}{$\!\!\!\Im\hat\xi^{\bs\Spin}_{n-1}(M\U(2))\!\!\!$} & 
$0$ &  $\Z\an{\de}$ & $\Z\an{\varepsilon}$ & $\Z\an{\frac{\ze_1}{2},2\ze_2,\ze_3}$ & $\Z_2\an{\al_1\ze_2}$
\end{tabular}
\end{gathered}
\label{fm1eq3}
\e

\noindent{\bf(c)} There is a\/ $10$-connected map $M\U(2)\ra B\SU$. Also the canonical map\/ $B\SU(m)\ra B\SU$ is\/ $(2m+1)$-connected. These imply that\/ $\ti\Om_n^{\bs\Spin}(M\U(2))\cong \ti\Om_n^{\bs\Spin}(B\SU)$ for $n\le 9$ and\/ $\ti\Om_n^{\bs\Spin}(B\SU(m))\cong\ti\Om_n^{\bs\Spin}(B\SU)$ for $2m\ge n$. Hence $\ti\Om_n^{\bs\Spin}(B\SU(m))$ and\/ $\Im\hat\xi^{\bs\Spin}_{n-1}(M\SU(m))$ for $n\le 9$ and\/ $2m\ge n$ are as in\/ {\rm\eq{fm1eq2}--\eq{fm1eq3}}. For $n=4,6,8$ and $m\ge 2,3$ and\/ $4$ respectively, the description of\/ $\ti\Om_n^{\bs\Spin}(B\SU(m))$ may be written explicitly as
\ea
&\ti\Om_4^{\bs\Spin}(B\SU(m))\overset\cong\longra\Z,\qquad 
[X,P]\longmapsto\ts\int_X c_2(P),
\label{fm1eq4}\\
&\ti\Om_6^{\bs\Spin}(B\SU(m))\overset\cong\longra\Z,\qquad [X,P]\longmapsto\ts\frac{1}{2}\int_X c_3(P),
\label{fm1eq5}\\
&\ti\Om_8^{\bs\Spin}(B\SU(m))\overset\cong\longra\Z^3,
\label{fm1eq6}\\
&[X,P]\longmapsto \left(\ts\int_X[\frac{c_4(P)}{6} - \frac{c_2(P)^2}{12} - \frac{p_1(TX)c_2(P)}{24}],\ts\int_X c_2(P)^2, \int_X c_4(P)\right),\nonumber
\ea
which map\/ $\de\mapsto 1,$ $\varep\mapsto 1$ and\/ $\frac{\ze_1}{2}\mapsto(1,0,0),$ $\ze_2\mapsto(0,1,0),$ $\ze_3\mapsto(0,0,1)$.
\end{thm}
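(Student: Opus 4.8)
The plan is to prove parts (a), (b), (c) in order, deriving (a) by a direct computation of the reduced spin bordism of $M\U(2)$, then reading off (b) from the structure of the map $\hat\xi$, and finally obtaining (c) from connectivity estimates for the maps $M\U(2)\ra B\SU$ and $B\SU(m)\ra B\SU$. The bulk of the work is part (a). First I would set up the Atiyah--Hirzebruch spectral sequence $E^2_{p,q}=H_p(M\U(2);\Om_q^{\bs\Spin}(*))\Rightarrow\Om_{p+q}^{\bs\Spin}(M\U(2))$, using the background results of \S\ref{fm2}. The cohomology (hence homology) of $M\U(2)$ in low degrees is accessible via the Thom isomorphism: since $M\U(2)$ is the Thom space of the tautological rank-$4$ real (equivalently rank-$2$ complex) bundle over $B\U(2)$, we have $\ti H^{*}(M\U(2);\Z)\cong H^{*-4}(B\U(2);\Z)$, and $H^*(B\U(2);\Z)=\Z[c_1,c_2]$ with $\deg c_i=2i$. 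So through degree $9$ the reduced homology of $M\U(2)$ is concentrated in degrees $4,6,8$ (and $H_8$ has rank $2$, generated by the Thom classes over $1$ and over $c_1^2$, $c_2$ — rank $3$ in degree $8$ since $\{c_1^2,c_2\}$ span $H^4(B\U(2))$ — let me recount: $H^0,H^2,H^4$ of $B\U(2)$ have ranks $1,1,2$, so $\ti H^4,\ti H^6,\ti H^8$ of $M\U(2)$ have ranks $1,1,2$; wait, $H^4(B\U(2))$ has basis $c_1^2,c_2$, rank $2$, but also $H^6(B\U(2))$ contributes to $\ti H^{10}$, out of range; however $\ti\Om_8$ has rank $3$ in \eq{fm1eq2}, so the extra generator must come from $q>0$ rows). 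I would combine the row $q=0$ (ordinary homology of $M\U(2)$), the row $q=1$ ($\Om_1^{\bs\Spin}=\Z_2\an{\al_1}$ tensored with $H_{*}$, giving the $\al_1\ze_2$ class in total degree $9$ and a potential $\Z_2$ in degree $7$ from $H_6$), and the row $q=4$ ($\Om_4^{\bs\Spin}=\Z$, contributing in total degree $8$ via $H_4$). The differentials in the spin AHSS are governed by Steenrod operations; the relevant one is $d_2$ dual to $\Sq^2$ together with the first $k$-invariant, and I would use the $\Sq^2$-action on $H^*(M\U(2);\Z_2)$ — computable from the Wu formula on Chern classes and the Thom class — to kill the stray $\Z_2$'s in odd total degrees $5,7$ and to check no differential hits the integral classes in degrees $4,6,8$. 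The extension problems in degree $8$ (is it $\Z^3$ or does the $\Om_4$-row contribution sit as an index-$2$ sublattice) are resolved by exhibiting explicit manifolds: this is why the generators $\de,\varep,\ze_1/2,\ze_2,\ze_3$ and the characteristic-number homomorphisms are written down. Concretely, I would verify that the stated maps $[X,M]\mapsto \#M$, $\mapsto-\tfrac12\int_M c_1(\nu_M)$, and the degree-$8$ triple are well-defined bordism invariants (each is an integral characteristic number of the pair, using that $\tfrac18(\mathrm{sign}-\int c_1^2)$ and $\tfrac16 c_4-\tfrac1{12}c_2^2-\tfrac1{24}p_1 c_2$ are integers on closed spin manifolds by the index theorem / integrality of $\hat A$ and of the twisted $\hat A$-genus), then evaluate them on the explicit generators to get the stated matrices, which simultaneously proves surjectivity onto $\Z$, $\Z$, $\Z^3$ and, by unimodularity of the resulting integer matrices, injectivity — thereby pinning down the groups and settling all extensions at once.

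For part (b), I would recall from \eq{fm2eq7} the definition of $\hat\xi^{\bs\Spin}_{n-1}(M\U(2))$ and note that its image is detected by the same characteristic-number homomorphisms as in (a). The point is that a class in the image comes from $\cL M\U(2)$, i.e.\ carries an extra loop/$S^1$-direction, which forces the "$\ze_2$"-coordinate to be even: intuitively the $\int_M c_2(\nu_M)$ invariant, evaluated on something built by a mapping-torus-type construction over the loop space, lands in $2\Z$, whereas the $\tfrac18$-type and $\int c_1^2$-type coordinates remain unconstrained, and in degree $9$ the image is all of $\Z_2\an{\al_1\ze_2}$ since $\al_1$ itself is the loop class $[\cS^1_{\mathrm{nb}}]$. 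So I would (i) show $\Im\hat\xi$ is contained in the sublattice $\Z\an{\tfrac{\ze_1}2,2\ze_2,\ze_3}$ by evaluating the $\ze_2$-coordinate on loop-space classes, and (ii) show the reverse inclusion by exhibiting explicit lifts to $\cL M\U(2)$ of $\tfrac{\ze_1}2$, $2\ze_2$, $\ze_3$, $\al_1\ze_2$ — the last three admitting obvious product-with-$\cS^1$ descriptions.

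For part (c), the two connectivity claims are the heart. The map $M\U(2)\ra B\SU$ classifying the (stabilized, determinant-corrected) bundle is $10$-connected: one sees this by comparing Postnikov/skeletal data — $B\SU$ is $3$-connected with $\pi_4=\Z$, and $M\U(2)\simeq \mathrm{Th}(\gamma_2)$ is $3$-connected with $\pi_4=\Z$ (Thom iso plus Hurewicz), and matching homotopy/homology groups through degree $9$ via the AHSS computation of (a) against the known $\ti\Om_*^{\bs\Spin}(B\SU)$ shows the induced map is an isomorphism on $\Om_*^{\bs\Spin}$ for $*\le 9$; more robustly, the fibre of $M\U(2)\to B\SU$ is $9$-connected by a relative-Hurewicz / cell-structure argument. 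Likewise $B\SU(m)\to B\SU$ is $(2m+1)$-connected because $\SU/\SU(m)$ is $2m$-connected (its bottom cell is in dimension $2m+1$, being roughly $\SU(m+1)/\SU(m)=S^{2m+1}$ through that range), hence the fibration $\SU/\SU(m)\to B\SU(m)\to B\SU$ gives the stated range. Both facts then yield isomorphisms on reduced spin bordism in the stated degree ranges by the standard comparison for the AHSS (a $k$-connected map induces an iso on $\Om^{\bs\Spin}_*$ for $*<k$ and epi for $*=k$, and here we have $k$ strictly larger than $9$, resp.\ $\ge n$). Transporting the generators and characteristic-number formulae across these isomorphisms, and rewriting $c_i(\nu_M)$ as $c_i(P)$ and $-\mathrm{sign}(M)/8+\int c_1^2/8$ as the twisted-$\hat A$ expression $\int_X[\tfrac{c_4}{6}-\tfrac{c_2^2}{12}-\tfrac{p_1 c_2}{24}]$ (via the index theorem identifying both with the same spin characteristic number of $X$), gives \eq{fm1eq4}--\eq{fm1eq6} and the stated images of $\de,\varep,\tfrac{\ze_1}2,\ze_2,\ze_3$.

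The main obstacle I anticipate is the degree-$8$ part of (a): correctly identifying which integral lattice $\ti\Om_8^{\bs\Spin}(M\U(2))$ is — in particular whether the $q=4$ row of the AHSS ($\hat A$-genus contribution) enters with index $2$, which is exactly the content of the "$\tfrac{\ze_1}{2}$" generator and the $-\tfrac{\mathrm{sign}(M)}{8}+\int c_1(\nu_M)^2/8$ coordinate. Resolving this requires simultaneously (i) knowing the $d_2$, $d_3$ differentials do not shorten this $\Z$, (ii) solving the extension between the $q=0$ rank-$2$ piece and the $q=4$ rank-$1$ piece, and (iii) verifying the half-integrality is genuine by producing the manifold $(K3\times\cS^3)/\Z_2$ realizing it. The cleanest route — and the one I would follow — is to short-circuit (i) and (ii) entirely by the unimodular-characteristic-number-matrix argument above, which is why the explicit generators and explicit isomorphisms are the load-bearing part of the proof rather than the spectral sequence bookkeeping.
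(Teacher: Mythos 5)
Your overall strategy matches the paper's (AHSS with the homology of $M\U(2)$ obtained from the Thom isomorphism, differentials from $\Sq^2$, explicit generators and characteristic numbers to fix the lattice, and connectivity of $M\U(2)\ra B\SU$ and $B\SU(m)\ra B\SU$ for part (c)), but two steps as you describe them do not go through.

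First, in degree $8$ your claim that surjectivity of the characteristic-number map onto $\Z^3$ plus ``unimodularity'' gives injectivity is not valid: any homomorphism to $\Z^3$ kills torsion, so it cannot distinguish $\Z^3$ from $\Z^3\op\Z_2$. The candidate $\Z_2$ sits at $E^\iy_{6,2}$ --- the $q=2$ row, $\Om_2^{\bs\Spin}(*)=\Z_2$, which your row-by-row inventory omits --- and the question is exactly whether the extension $0\ra E^\iy_{4,4}\cong\Z\ra F_{6,8}\ra E^\iy_{6,2}\cong\Z_2\ra 0$ splits. The gap is repairable with your own ingredients: $\frac{\ze_1}{2}$ has vanishing image in $H_8(M\U(2),\Z)$, hence lies in $F_{6,8}$, and its first characteristic number is $1$ while $F_{4,8}=\Z\an{\ze_1}$ maps into $2\Z$; compatibility of the invariants with the filtration then forces the non-split case $F_{6,8}\cong\Z\ban{\frac{\ze_1}{2}}$. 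This is precisely the argument the paper runs for $M\SO(4)$ in \S\ref{fm1523}; for $M\U(2)$ the paper instead compares with $K(\Z,4)$ via the Thom class and applies the five lemma, using the prior determination of the filtration of $\ti\Om_8^{\bs\Spin}(K(\Z,4))\cong\Z^2$. Either route works, but your write-up as it stands leaves the torsion undecided.

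Second, in part (b) the assertion that $2\ze_2$ admits an ``obvious product-with-$\cS^1$ description'' is false, and this is the hardest point of the theorem. The class $\ze_2=[\HP^2,\overline{\HP}^1]-[\HP^2,\es]$ is not of mapping-torus type, and doubling it does not make it so; the paper's lift of $2\ze_2$ (Lemma~\ref{fm18lem2}) is a genuinely nontrivial construction, an $\SU(6)$-bundle over $(\Sp(2)\t\Sp(1))/(\Sp(1)\t\Sp(1))\t\cS^1$ built by summing a mapping-torus $\Sp(2)$-bundle with a pulled-back $\SU(2)$-bundle. Relatedly, identifying the domain $\Om^{\bs\Spin}_{n-1}(\cL M\U(2);M\U(2))$ requires first computing $\ti\Om_*^{\bs\Spin}(\Om M\U(2))\cong\ti\Om_*^{\bs\Spin}(\SU)$ (Theorem~\ref{fm3thm5}), a substantial input your sketch does not budget for. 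Your containment $\Im\hat\xi^{\bs\Spin}_7\subseteq\Z\an{\frac{\ze_1}{2},2\ze_2,\ze_3}$ via evenness of $\int_{X\t\cS^1}\al\cup\al$ is correct and is the paper's argument.
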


Bordism theory is applicable to orientations of moduli spaces as the second author \cite{Upme2} establishes a fundamental new technique that formalizes the idea that for orientation problems based on twisted Dirac operators, orientations are functorial along bordisms; we recall this result as Theorem \ref{fm9thm2} below.

Let $X$ be a compact spin $n$-manifold and $P\ra X$ a principal $G$-bundle. Write $\B_P=\A_P/\Aut(P)$ for the moduli stack of all connections $\nabla$ on $P$, modulo gauge transformations. Suppose $n\equiv 1,7,8\mod 8$. Then we can define a principal $\Z_2$-bundle $O_P\ra\B_P$ which parametrizes orientations at $[\nabla]\in\B_P$ for the (positive) Dirac operator $\sD^{(+)}_X\ot\ad(\nabla)$ twisted by the connection $\ad(\nabla)$ on the adjoint bundle $\g\hookra \ad(P)\ra X$. (The restriction to $n\equiv 1,7,8\mod 8$ is because otherwise $\sD^{(+)}_X$ is $\C$- or $\H$-linear, and is trivially oriented.) Such orientations are relevant to instanton moduli spaces, since in (a),(b) we have $\M_P\subset\B_P$, and $O_P\vert_{\M_P}$ is the orientation bundle on $\M_P$, so that an orientation on $\B_P$ induces orientations of $G_2$- or $\Spin(7)$-instanton moduli spaces~$\M_P$.

Using Theorem \ref{fm9thm2} we define a monoidal functor $\sO:\Bord_n^{\bs\Spin}(BG)\ra\Ztor$ or $\sZtor$, where $\Ztor,\sZtor$ are the Picard groupoids of (super) $\Z_2$-torsors. This functor controls orientations of moduli spaces $\B_P$ in the following sense: let $X$ be a compact spin $n$-manifold, and write $\Bord_X^{\bs\Spin}(BG)$ for the subcategory of $\Bord_n^{\bs\Spin}(BG)$ with objects $(X,P)$ and morphisms $[X\t[0,1],Q]$. Then a natural isomorphism $\eta_X$ of functors in the diagram
\e
\begin{gathered}
\xymatrix@C=120pt@R=15pt{
*+[r]{\Bord_X(BG)_{\vphantom{(}}} \drtwocell_{}\omit^{}\omit{^{\eta_X\,\,\,}} \ar[r]_(0.35)\boo \ar@{^{(}->}[d]^{\inc} & *+[l]{\Ztor} \\
*+[r]{\Bord_n^{\bs\Spin}(BG)} \ar[r]^(0.6)\sO & *+[l]{\sZtor} \ar[u]^{\text{forget $\Z_2$-grading}} }
\end{gathered}
\label{fm1eq7}
\e
is equivalent to choices of orientations for $\B_P$ for all principal $G$-bundles $P\ra X$, invariant under isomorphisms $P\cong P'$. Using the description of $\Bord_n^{\bs\Spin}(BG)$ in terms of $\Om_{n+i}^{\bs\Spin}(BG)$ for $i=0,1$, in \S\ref{fm9} we will prove:

\begin{thm}
\label{fm1thm2}
Let\/ $X$ be a compact spin $n$-manifold for $n\equiv 1,7,8\mod 8,$ and $G$ be a Lie group. Then $\B_P$ is orientable \textup(i.e.\ the principal\/ $\Z_2$-bundle $O_P\ra\B_P$ is trivializable\/\textup) for all principal\/ $G$-bundles $P\ra X$ if and only if
\begin{equation*}
\pi_1(\sO)\bigl([X\t\cS^1,\phi]\bigr)=\ul{0}\qquad\text{in $\Z_2$ for all maps $\phi:X\t\cS^1\ra BG,$}
\end{equation*}
where $\pi_1(\sO):\Om_{n+1}^{\bs\Spin}(BG)\ra\Z_2$ is a group morphism. 

Writing $\xi_n^{\bs\Spin}(BG):\Om_n^{\bs\Spin}(\cL BG)\ab\ra\Om_{n+1}^{\bs\Spin}(BG)$ for the natural map, where $\cL BG$ is the free loop space of\/ $BG,$ this condition may also be written
\begin{equation*}
\pi_1(\sO)\ci\xi_n^{\bs\Spin}(BG)\bigl([X,\phi']\bigr)=\ul{0}\qquad\text{in $\Z_2$ for all maps $\phi':X\ra\cL BG$.}
\end{equation*}
Furthermore, $\B_P$ is orientable for all\/ $X,P$ if and only if\/ $\pi_1(\sO)\ci\xi_n^{\bs\Spin}(BG)\equiv\ul{0}$.
\end{thm}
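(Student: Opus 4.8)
The strategy is to recast orientability in terms of the orientation functor $\sO:\Bord_n^{\bs\Spin}(BG)\ra\sZtor$ and the Picard-groupoid structure, and then to read off the stated conditions. I would begin with the reduction already prepared above: by the characterisation of $\eta_X$ accompanying \eq{fm1eq7} (which is where Theorem~\ref{fm9thm2} enters) together with the construction of $\sO$ in \S\ref{fm9}, the statement ``$\B_P$ is orientable for all principal $G$-bundles $P\ra X$'' is equivalent to ``the natural isomorphism $\eta_X$ in \eq{fm1eq7} exists''. The one point here that is not purely formal --- that whenever each $\B_P$ is orientable one can choose the orientations compatibly with isomorphisms $P\cong P'$ (using that $\B_P=\A_P/\Aut(P)$ is a stack, on which gauge transformations act trivially up to canonical $2$-isomorphism) --- is established in \S\ref{fm9}. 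Thus it remains to decide exactly when $\eta_X$ exists.

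Since $\Bord_n^{\bs\Spin}(BG)$ is a Picard groupoid (Appendix~\ref{fmA}), $\sO$ is a morphism of Picard groupoids; in particular $\pi_1(\sO):\pi_1(\Bord_n^{\bs\Spin}(BG))=\Om_{n+1}^{\bs\Spin}(BG)\ra\Z_2$ is a group homomorphism, and for every object the automorphism group $\Aut_{\Bord_n^{\bs\Spin}(BG)}((X,P))$ is canonically identified with $\pi_1=\Om_{n+1}^{\bs\Spin}(BG)$, with $\id_{(X,P)}$ going to $0$. The subcategory $\Bord_X^{\bs\Spin}(BG)$ is the disjoint union, over the isomorphism classes $[X,BG]$ of $G$-bundles on $X$, of connected groupoids, the component of $(X,P)$ being equivalent to $B\Aut_{\Bord_X^{\bs\Spin}(BG)}((X,P))$. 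Since $\boo$ sends every object to the trivial $\Z_2$-torsor and every morphism to the identity, and since every $\Z_2$-torsor is (non-canonically) trivialisable, a natural isomorphism $\eta_X$ exists if and only if, on each component, that trivialisation can be chosen compatibly with the automorphisms --- equivalently, the homomorphism $\Aut_{\Bord_X^{\bs\Spin}(BG)}((X,P))\ra\Z_2$ induced by $\sO$ (composed with the forgetful functor $\sZtor\ra\Ztor$) vanishes for every $P$. By the identification above, this homomorphism is the restriction of $\pi_1(\sO)$ to the subgroup $\Aut_{\Bord_X^{\bs\Spin}(BG)}((X,P))\subseteq\Om_{n+1}^{\bs\Spin}(BG)$.

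The last step is to identify these subgroups. A self-morphism of $(X,P)$ in $\Bord_X^{\bs\Spin}(BG)$ is $[X\t[0,1],Q]$ with $Q\vert_{X\t\{0,1\}}=P$; gluing the ends of $[0,1]$ yields a closed spin $(n+1)$-manifold $X\t\cS^1$ together with a $G$-bundle on it restricting to $P$ over $X\t\{\mathrm{pt}\}$, and under the identification of the previous step this morphism corresponds to the resulting class $[X\t\cS^1,\phi]\in\Om_{n+1}^{\bs\Spin}(BG)$, where $\cS^1$ carries the spin structure for which $[X\t\cS^1,\pi_X^*P]=0$ --- equivalently, the one used to define $\xi_n^{\bs\Spin}(BG)$ in \eq{fm2eq7}. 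Conversely, cutting at $X\t\{\mathrm{pt}\}$ recovers such a morphism from any $G$-bundle over $X\t\cS^1$ restricting to $P$. Hence $\bigcup_{P\ra X}\Aut_{\Bord_X^{\bs\Spin}(BG)}((X,P))$, as a subset of $\Om_{n+1}^{\bs\Spin}(BG)$, is exactly $\{[X\t\cS^1,\phi]:\phi:X\t\cS^1\ra BG\}$, so the condition of the previous step becomes $\pi_1(\sO)([X\t\cS^1,\phi])=\ul{0}$ for all $\phi$. Since maps $X\t\cS^1\ra BG$ are adjoint to maps $\phi':X\ra\cL BG$ and $\xi_n^{\bs\Spin}(BG)([X,\phi'])=[X\t\cS^1,\phi]$ by definition, this reads $\pi_1(\sO)\ci\xi_n^{\bs\Spin}(BG)([X,\phi'])=\ul{0}$ for all $\phi'$. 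Finally, requiring this for all closed spin $n$-manifolds $X$, and using that every class of $\Om_n^{\bs\Spin}(\cL BG)$ has the form $[X,\phi']$, gives $\pi_1(\sO)\ci\xi_n^{\bs\Spin}(BG)\equiv\ul{0}$.

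I expect the substantive obstacle to be the reduction in the first paragraph: establishing that the orientation $\Z_2$-bundles $O_P\ra\B_P$ are faithfully recorded by $\sO$, and that orientations transport invariantly along isomorphisms $P\cong P'$ once orientability holds. This is precisely the geometric input supplied by \S\ref{fm9} and, underneath it, by Theorem~\ref{fm9thm2}. The remaining, more combinatorial, difficulty is the bookkeeping of the third step: one must check that gluing the interval produces $\cS^1$ with exactly the spin structure appearing in \eq{fm2eq7} --- this is in fact forced, since $\id_{(X,P)}$ must map to $0\in\Om_{n+1}^{\bs\Spin}(BG)$ --- so that the automorphism subgroups coincide with $\{[X\t\cS^1,\phi]\}$ with no spurious correction by classes of the form $\al_1\cdot[X,\,\cdot\,]$.
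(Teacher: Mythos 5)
Your proof is correct and follows essentially the same route as the paper's: the reduction of "every $\B_P$ is orientable" to the existence of $\eta_X$ is Proposition \ref{fm12prop1}, the identification of the automorphism groups $\Aut_{\Bord_X(BG)}(P)$ with the classes $[X\t\cS^1,\phi]=\xi_n^{\bs\Spin}(BG)([X,\phi'])$ is Proposition \ref{fm4prop3}, and the Picard-groupoid bookkeeping in your second and third paragraphs is the proof of Theorem \ref{fm9thm1} given in \S\ref{fm191}. Your remark that the bounding spin structure on the glued $\cS^1$ is forced by $\id_{(X,P)}\mapsto 0$ is a tidy substitute for the paper's explicit corner cobordism (Figure \ref{fm4fig1}).
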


We also prove similar results relevant to orientations of moduli spaces of special submanifolds, as in (c),(d) above.

Results like Theorem \ref{fm1thm2} mean that we can answer orientability problems by explicit computation of bordism groups $\Om_m^{\bs\Spin}(BG),\Om_m^{\bs\Spin}(\cL BG)$ and morphisms $\pi_i(\sO)$. As in Theorem \ref{fm1thm1}, we calculate spin bordism groups of many classifying spaces $BG,\cL BG,MH,\cL MH,K(R,k),\cL K(R,k)$ that we will be interested in, and the results are given in~\S\ref{fm3}.

For a given spin $n$-manifold $X$, testing whether a class $\al\in\Om_{n+1}^{\bs\Spin}(BG)$ may be written in the form $[X\t\cS^1,\phi]$ may not be easy. We develop a technique to help with this. It turns out that in many examples we care about, the orientation functor $\sO:\Bord_n^{\bs\Spin}(BG)\ra\sZtor$ factors via a monoidal functor $\sT:\Bord_n^{\bs\Spin}(BG)\ra \Bord_n^{\bs\Spin}(K(\Z,4))$, where $\Bord_n^{\bs\Spin}(K(\Z,4))$ is a {\it cohomology bordism category\/} from \S\ref{fm6}. Testing whether a class $\be\in\Om_{n+1}^{\bs\Spin}(K(\Z,4))$ may be written in the form $[X\t\cS^1,\psi]$ reduces to conditions on the cohomology of $X$, e.g.\ involving Steenrod squares, which are usually more computable.

Next we consider the question: suppose we have proved orientability of some class of moduli spaces $\M$ on $X$, such as those in (a)--(e) above, by our bordism-theoretic methods. What additional data on $X$ do we need to construct a {\it canonical\/} orientation on each moduli space $\M$? For example, we would like such canonical orientations to develop a theory of DT4 invariants in (e) above.

We explained above that a natural isomorphism $\eta_X$ in \eq{fm1eq7} is equivalent to choices of orientations for $\B_P$ for all principal $G$-bundles $P\ra X$. However, saying $\eta_X$ is the additional data we want is {\it not\/} a good answer, as this allows an arbitrary choice of orientation on $\B_P$ for each isomorphism class of $G$-bundles $P\ra X$. Instead, we hope that a much smaller amount of data on $X$ --- ideally, only a finite choice --- can be used to construct $\eta_X$ in~\eq{fm1eq7}.

Our solution, when $\sO$ factors as $\sO\cong\sH\ci\sT$ for a monoidal functor $\mathsf{T}:\Bord_n^{\bs\Spin}(BG)\ra \Bord_n^{\bs\Spin}(K(\Z,4))$, is to define a {\it flag structure\/} $F$ on $X$ to be a natural isomorphism of functors in the following diagram:
\e
\begin{gathered}
\xymatrix@C=120pt@R=15pt{
*+[r]{\Bord_X(K(\Z,4))_{\vphantom{(}}} \drtwocell_{}\omit^{}\omit{^{F\,\,\,}} \ar[r]_(0.45)\boo \ar@{^{(}->}[d]^{\inc} & *+[l]{\Ztor} \\
*+[r]{\Bord_n^{\bs\Spin}(K(\Z,4))} \ar[r]^(0.6)\sH & *+[l]{\sZtor.} \ar[u]^{\text{forget $\Z_2$-grading}} }
\end{gathered}
\label{fm1eq8}
\e
Having chosen a flag structure on $X$, we define $\eta_X$ in \eq{fm1eq7} by pullback along $\sT$.

This is related to previous work of the authors \cite{Joyc5,JoUp}. The first author \cite[\S 3]{Joyc5} defined a notion of flag structure on a compact 7-manifold $X$, and showed that if $(X,\vp,g)$ is a compact $G_2$-manifold then a flag structure on $X$ induces orientations on moduli spaces $\M_\al^{\rm ass}$ of associative 3-folds in $X$, as in (c) above. Then the authors \cite{JoUp} showed that a flag structure on $X$ induces orientations on moduli spaces $\M_P^{G_2}$ of $G_2$-instantons in $X$ when $G=\SU(m)$ or $\U(m)$, as in (a) above. We show in Theorem \ref{fm10thm1} that the notions of flag structure in \cite[\S 3]{Joyc5} and \eq{fm1eq8} are equivalent when~$n=7$. 

Equation \eq{fm1eq8} provides a way to define flag structures in 8 dimensions, when the ideas of \cite{Joyc5} do not apply. We show in Theorem \ref{fm10thm2} that a compact spin 8-manifold $X$ admits a flag structure if and only if the following condition holds:
\begin{itemize}
\setlength{\itemsep}{0pt}
\setlength{\parsep}{0pt}
\item[$(*)$] There does not exist a class $\al\in H^3(X,\Z)$ such that $\int_X\bar\al\cup\Sq^2(\bar\al)=\ul{1}$ in $\Z_2,$ where $\bar\al\in H^3(X,\Z_2)$ is the mod 2 reduction of $\al,$ and $\Sq^2(\bar\al)$ in $H^5(X,\Z_2)$ is its Steenrod square.
\end{itemize}
Then the set of flag structures $F$ on $X$ is a torsor for $\Map(H^4(X,\Z),\Z_2)$.

As defined in \eq{fm1eq8}, a flag structure $F$ includes an independent $\Z_2$ choice for each $\al\in H^4(X,\Z)$, where the choice is canonical for $\al=0$. However, by imposing extra conditions on $F$, e.g.\ by requiring $F$ to factor via $\Bord_n^{\bs\Spin}(K(\Z,4))\ra\Bord_n^{\bs\Spin}(K(\Z_2,4))$, we can reduce this to a finite choice.

To see how flag structures make choices of orientations more canonical, consider the case $G=\U(m)$. Then the $\Z_2$-choice in $F$ in \eq{fm1eq8} for $\al\in H^4(X,\Z)$ determines the $\Z_2$-choices in $\eta_X$ in \eq{fm1eq7} for all principal $\U(m)$-bundles $P\ra X$ with $c_2(P)-c_1(P)^2=\al$, independently of the other Chern classes~$c_i(P)$.

We develop our theory of bordism categories and orientations of moduli spaces in \S\ref{fm4}--\S\ref{fm11}, with some proofs deferred to \S\ref{fm19}. Sections \ref{fm12}--\ref{fm14} give applications of our theory to problems (a)--(e) above. Theorems \ref{fm1thm3}, \ref{fm1thm4}, \ref{fm1thm5} and \ref{fm1thm6} summarize some main results from these sections. See \S\ref{fm10} and \S\ref{fm12}--\S\ref{fm14} for definitions and notation.

\begin{thm}
\label{fm1thm3}
Let\/ $G$ be a compact Lie group on the list
\e
E_8,\; E_7,\; E_6,\; G_2,\; \Spin(3),\; \SU(m),\; \U(m),\; \Spin(2m), \quad \text{for $m\ge 1$.}
\label{fm1eq9}
\e

\noindent{\bf(a)} Let\/ $(X,g)$ be a compact Riemannian spin $7$-manifold. Then $\B_P$ is orientable \textup(i.e.\ the principal\/ $\Z_2$-bundle $O_P\ra\B_P$ is trivializable\/\textup) for all principal\/ $G$-bundles $P\ra X$. A choice of orientation for $\det\sD_X\cong\R$ and a flag structure on $X$ in the sense of\/ {\rm\S\ref{fm101}} determine orientations on $\B_P$ for all\/ $P\ra X$.
\smallskip

\noindent{\bf(b)} Let\/ $(X,g)$ be a compact Riemannian spin $8$-manifold which satisfies $(*)$ above. Then $\B_P$ is orientable for all principal\/ $G$-bundles $P\ra X$. A choice of orientation for $\det\sD^+_X\cong\R$ and a flag structure on $X$ in the sense of\/ {\rm\S\ref{fm102}} determine orientations on $\B_P$ for all\/~$P\ra X$.
\end{thm}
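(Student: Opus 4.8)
The plan is to derive both parts from the general orientability criterion of Theorem~\ref{fm1thm2}, the explicit spin-bordism computations of \S\ref{fm3}, and the theory of flag structures of \S\ref{fm10}. The common first step is to show that for every $G$ in the list \eq{fm1eq9} the orientation functor $\sO\colon\Bord_n^{\bs\Spin}(BG)\ra\sZtor$ factors, up to natural isomorphism, as $\sO\cong\sH\ci\sT$ through the cohomology bordism category $\Bord_n^{\bs\Spin}(K(\Z,4))$, where $\sT$ is induced by the universal degree-$4$ characteristic class governing the orientation problem: for $G=\SU(m)$ and $\U(m)$ this is (a multiple of) $c_2$ corrected by $c_1^2$, for $G=\Spin(2m)$ a Pontryagin-type class, and for $G=E_8,E_7,E_6,G_2,\Spin(3)$ the appropriate generator of $H^4(BG;\Z)\cong\Z$. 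This reduces the analysis of the group morphism $\pi_1(\sO)\colon\Om_{n+1}^{\bs\Spin}(BG)\ra\Z_2$ to that of $\pi_1(\sH)$ on $\Om_{n+1}^{\bs\Spin}(K(\Z,4))$, which was computed in \S\ref{fm3}.

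For part~(a) we have $n=7\equiv 7\bmod 8$, so Theorem~\ref{fm1thm2} applies and $\B_P$ is orientable for all $P\ra X$ and all compact spin $7$-manifolds $X$ as soon as $\pi_1(\sO)\ci\xi_7^{\bs\Spin}(BG)\equiv\ul 0$. Via the factorization this holds once $\pi_1(\sH)$ vanishes on $\Im\hat\xi_7^{\bs\Spin}(K(\Z,4))\subseteq\Om_8^{\bs\Spin}(K(\Z,4))$, which we read off from \S\ref{fm3}. More conceptually, the mod-$2$ obstruction over a given $X$ is detected on $X\t\cS^1$ by a top cup-product of classes pulled back from $X$, which necessarily vanishes; this is the bordism counterpart of the fact, from \cite[\S 3]{Joyc5} and Theorem~\ref{fm10thm1}, that every compact $7$-manifold admits a flag structure. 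A flag structure $F$ on $X$ together with an orientation of $\det\sD_X\cong\R$ then produces, by pullback along $\sT$, the natural isomorphism $\eta_X$ of \eq{fm1eq7}, hence coherent orientations of $\B_P$ for all~$P\ra X$.

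For part~(b) we have $n=8$, with $8\equiv 0\bmod 8$, so Theorem~\ref{fm1thm2} again applies, but now $\pi_1(\sH)$ need not vanish on all of $\Om_9^{\bs\Spin}(K(\Z,4))$. The \S\ref{fm3} computation identifies $\pi_1(\sH)\bigl([X\t\cS^1,\psi]\bigr)$ with $\int_X\bar\al\cup\Sq^2(\bar\al)\bmod 2$, where $\al\in H^4(X\t\cS^1;\Z)$ corresponds to $\psi$ and $\bar\al$ is the induced mod-$2$ class on $X$; hence the criterion of Theorem~\ref{fm1thm2} becomes precisely condition~$(*)$, giving orientability of $\B_P$ for all $P\ra X$ whenever $(*)$ holds. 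Concretely, by Theorem~\ref{fm10thm2}, $(*)$ is exactly the condition that $X$ admit a flag structure $F$ as in \eq{fm1eq8}; given $F$ and an orientation of $\det\sD^+_X\cong\R$, pullback along $\sT$ yields $\eta_X$ in \eq{fm1eq7} and thus the desired orientations on all~$\B_P$.

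The step I expect to be the main obstacle is the first one: verifying uniformly, across the exceptional groups $E_8,E_7,E_6,G_2$, the spin groups $\Spin(2m)$, and the (special) unitary families, that $\sO$ genuinely factors through $K(\Z,4)$ with the stated degree-$4$ class, and pinning down the resulting functor $\sH$ — including all factors of $2$, compare the appearance of $2\ze_2$ rather than $\ze_2$ in Theorem~\ref{fm1thm1}(b) — precisely enough that the \S\ref{fm3} computations of $\Om_8^{\bs\Spin}(K(\Z,4))$, $\Om_9^{\bs\Spin}(K(\Z,4))$ and of $\hat\xi$ can be applied verbatim. Once that is in place, the remaining work is bookkeeping with the already-computed bordism groups and with the dictionary of Appendix~\ref{fmA} between Picard-groupoid invariants $(\pi_0,\pi_1,q)$ and orientations.
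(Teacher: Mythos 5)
The overall architecture you propose — factor the orientation functor through $\Bord_n^{\bs\Spin}(K(\Z,4))$, apply the orientability criterion of Theorem \ref{fm1thm2} together with the computations of $\Om_{n+1}^{\bs\Spin}(K(\Z,4))$ and $\Im\hat\xi$, and then pull back a flag structure along the transfer functor — is exactly the paper's architecture (Theorem \ref{fm11thm5}, Proposition \ref{fm12prop1}, Theorem \ref{fm12thm1}). The reductions in your second and third paragraphs are correct, as is the identification of condition $(*)$ via $\int_X\bar\al\cup\Sq^2(\bar\al)$.

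The genuine gap is the step you yourself flag as the main obstacle: establishing the factorization $\sO\cong\sH\ci\sT$ for every $G$ on the list. Your plan is to do this \emph{directly}, with $\sT$ the topological transfer functor induced by a degree-$4$ characteristic class $BG\ra K(\Z,4)$. By Theorem \ref{fmAthm2} this requires knowing that $\pi_1(\sO)\colon\Om_{n+1}^{\bs\Spin}(BG)\ra\Z_2$ kills the kernel of $\Om_{n+1}^{\bs\Spin}(BG)\ra\Om_{n+1}^{\bs\Spin}(K(\Z,4))$ (and, for $n=8$, controlling an $H^2_\sym(\Om_8^{\bs\Spin}(BG),\Z_2)$-ambiguity). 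For $G=E_7,E_6,G_2,\Spin(2m),\U(m)$ the paper never computes $\Om_{n+1}^{\bs\Spin}(BG)$ nor the index-theoretic action of $\pi_1(\sO)$ on it, and doing so is not routine (e.g.\ $B\Spin(2m)$ has substantial $2$-torsion). Moreover the factorization is emphatically not automatic from the existence of a generator of $H^4(BG,\Z)\cong\Z$: for $\Sp(m)$, $m\ge 2$, which also has $H^4(B\Sp(m),\Z)\cong\Z\an{q_1}$, one checks from Tables \ref{fm3tab1}, \ref{fm3tab3} and \ref{fm11tab1} that $\pi_1(\sN_{7,\Z_2}^{\bs\Spin,\Sp(m)})$ does \emph{not} factor through $\Om_8^{\bs\Spin}(K(\Z,4))$ (indeed $\M_P$ can be non-orientable, Example \ref{fm12ex2}), so the list \eq{fm1eq9} cannot be treated "uniformly" by a characteristic-class argument. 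The idea your proposal is missing is the paper's \emph{complex type morphism} machinery (Definition \ref{fm11def1}, Proposition \ref{fm11prop1}, Theorem \ref{fm11thm3}): one first establishes the factorization only for $G=E_8$, where $BE_8\ra K(\Z,4)$ is $16$-connected so the transfer functor is an equivalence and nothing need be computed; one then propagates it to every other group on the list via chains of embeddings such as $E_7\t\U(1)\ra E_8$, $\SU(8)\t\U(1)\ra E_8$, $\Spin(14)\t\U(1)\ra E_8$, $G_2\ra\Spin(8)$, $\U(m)\ra\SU(m+1)$, whose quotient representations $\h/\g$ are complex, so that the twisted Dirac operators for $G$ and $H$ differ by a $\C$-linear operator and their $\Z_2$-orientation problems coincide. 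Without this (or an equivalent device) the first step of your proof does not go through for most of the groups in \eq{fm1eq9}.
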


\begin{rem}
\label{fm1rem1}
{\bf(i)} Let $(X,g)$ be a compact Riemannian spin 8-manifold. When $G=E_8$, we show that condition $(*)$ holds for $X$ {\it if and only if\/} (not just only if) $\B_P$ is orientable for all principal\/ $E_8$-bundles $P\ra X$.
\smallskip

\noindent{\bf(ii)} The compact 8-manifold $X=\SU(3)$ does not satisfy condition $(*)$. We show in Example \ref{fm12ex1} that $\B_P$ is {\it not\/} orientable when $P\ra X$ is the trivial $G$-bundle for $G$ any of $\SU(m),\U(m),$ or $\Spin(2m)$ for $m\ge 3$, $G_2,E_6,E_7,$ or~$E_8$.

\smallskip

\noindent{\bf(iii)} Let $X$ be a compact spin 8-manifold, not necessarily satisfying condition $(*)$, and $P\ra X$ be a principal $G$-bundle for $G=\SU(m)$ or $\U(m)$. Cao--Gross--Joyce \cite[Th.~1.11]{CGJ} claimed to prove $\B_P$ is orientable. As in {\bf(ii)}, this is {\it false\/} for $X=\SU(3)$, and there is a mistake in the proof of \cite[Th.~1.11]{CGJ}, which is explained in Remark \ref{fm12rem3} below. The first author would like to apologize for this mistake. One of the goals of this monograph was to fix the problems with \cite{CGJ} under additional conditions on~$X$.
\smallskip

\noindent{\bf(iv)} We can also use our theory to answer orientability questions for Lie groups $G$ not in \eq{fm1eq9}. For example, we show in Example \ref{fm12ex2} that if $X$ is the compact spin 7-manifold $\Sp(2)\t_{\Sp(1)\t\Sp(1)}\Sp(1)$, so that $X\t\cS^1$ is a compact spin 8-manifold, then $\B_{X\t G}$ and $\B_{X\t\cS^1\t G}$ are not orientable for the trivial $G$-bundles $X\t G\ra X$, $X\t\cS^1\t G\ra X\t\cS^1$ for any Lie group $G$ on the list
\begin{equation*}
F_4, \quad \Sp(m+1), \quad  \Spin(2m+3), \quad \SO(2m+3), \quad \text{where $m\ge 1.$}
\end{equation*}
\end{rem}

\begin{thm}
\label{fm1thm4}
{\bf(a)} Let\/ $(X,\vp,g)$ be a compact\/ $G_2$-manifold with\/ $\d(*\vp)=0,$ and let\/ $G$ be any of the Lie groups in the list\/ \eq{fm1eq9}. Then the $G_2$-instanton moduli space $\M_P^{G_2}$ is orientable for all principal\/ $G$-bundles $P\ra X,$ and a choice of orientation for $\det\sD_X\cong\R$ and a flag structure on $X$ in the sense of\/ {\rm\S\ref{fm101}} determine orientations on $\M_P^{G_2}$ for all\/~$P\ra X$.

If\/ $(\vp,g)$ is torsion-free then $\det\sD_X$ has a canonical orientation.
\smallskip

\noindent{\bf(b)} Let\/ $(X,\vp,g)$ be a compact\/ $G_2$-manifold. Then the moduli space of associative $3$-folds $\M_\al^\ass$ is orientable for all\/ $\al\in \La_3^{\SO(4)}(X),$ and a choice of flag structure $F$ on $X$ determines orientations on $\M_\al^\ass$ for all\/~$\al$.
\end{thm}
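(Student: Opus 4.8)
The plan is to deduce both parts from the structural results already stated --- principally Theorem~\ref{fm1thm3} --- together with the identification of the deformation--obstruction complex of each moduli problem with a twisted Dirac-type operator. For part~(a), I would first note that every $G_2$-manifold is spin, since a $G_2$-structure reduces the structure group to $G_2\subset\Spin(7)$, and that the hypothesis $\d(*\vp)=0$ is exactly what makes the $G_2$-instanton equation into an elliptic complex, so that $\M_P^{G_2}$ carries a well-defined derived manifold structure of virtual dimension $0$ with the expected deformation theory. Next I would invoke the standard comparison --- going back to Donaldson--Thomas and set up rigorously via the methods of \cite{Upme2} in the earlier sections --- that $\M_P^{G_2}\subset\B_P$ and that the orientation $\Z_2$-bundle of the derived manifold $\M_P^{G_2}$ is canonically isomorphic to $O_P\vert_{\M_P^{G_2}}$, the restriction of the orientation bundle on $\B_P$ for the twisted Dirac operator $\sD_X\ot\ad(\nabla)$; this holds because the linearised instanton operator, together with the gauge-fixing operator, is homotopic through elliptic operators to $\sD_X\ot\ad(\nabla)$ plus a $\nabla$-independent operator whose contribution is absorbed. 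Granting this, an orientation of $\B_P$ restricts to one of $\M_P^{G_2}$, and Theorem~\ref{fm1thm3}(a), applied to the compact Riemannian spin $7$-manifold $(X,g)$, already supplies orientability of all $\B_P$ and a rule producing orientations from a choice of orientation of $\det\sD_X\cong\R$ together with a flag structure on $X$.

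For the torsion-free statement I would argue that if $(\vp,g)$ is torsion-free then $\Hol(g)\subseteq G_2$, so $g$ is Ricci-flat and admits a unit parallel spinor $s$; since $7\equiv 7\bmod 8$ the operator $\sD_X$ is real and formally self-adjoint, giving a canonical identification $\Coker\sD_X\cong\Ker\sD_X$ and hence $\det\sD_X\cong(\Lambda^{\mathrm{top}}\Ker\sD_X)^*\ot\Lambda^{\mathrm{top}}\Ker\sD_X$. A Lichnerowicz--Bochner argument identifies $\Ker\sD_X$ with the parallel spinors, on which $s$ gives a basis well-defined up to sign, and the resulting vector $s\ot s^*\in\det\sD_X$ is insensitive to $s\mapsto -s$; this is the required canonical orientation. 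The delicate point is to check that this parallel-spinor orientation is the one used to normalise the orientation functor $\sO$ in \S\ref{fm9}, so that it is compatible with the rest of the construction.

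For part~(b), by McLean's theorem the moduli space $\M_\al^\ass$ of associative $3$-folds is a derived $0$-manifold whose deformation--obstruction complex at an associative $N\subset X$ is a Dirac-type operator on the $3$-manifold $N$ twisted by the normal bundle $\nu_N$, which carries a natural $H$-structure for a suitable $H\ra\O(4)$. Orientability and canonical orientations are then governed by an orientation functor on the submanifold bordism category $\Bord_{7,3}^{\bs B}(MH)$ of \S\ref{fm5}, which, exactly as in the gauge-theoretic case, factors through a cohomology bordism category $\Bord_7^{\bs B}(K(\Z,4))$; the spin bordism computations of \S\ref{fm3} for $M\SU(2),M\Spin(4),\dots$ and their free loop spaces show that the relevant obstruction class vanishes, yielding orientability for all $\al\in\La_3^{\SO(4)}(X)$. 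The feature distinguishing (b) from (a) is that the reference Dirac operator now lives on the base $3$-manifold $N$, and since $3\equiv 3\bmod 8$ it is $\H$-linear and hence canonically oriented: no analogue of the choice of orientation of $\det\sD_X$ is needed, and a flag structure on $X$ alone determines the orientations, its equivalence with the flag structures of \cite{Joyc5} being Theorem~\ref{fm10thm1}.

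The genuine bordism-theoretic input --- the computations of \S\ref{fm3} and the vanishing of the obstruction classes --- is already available, so the main obstacle is the bookkeeping that identifies the intrinsic orientation torsor of each derived moduli space with the pullback of the relevant orientation functor along the appropriate classifying map, together with the compatibility check for the parallel-spinor orientation described above.
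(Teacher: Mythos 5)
Your part~(a) is correct and follows the paper's own route: the orientation bundle of $\M_P^{G_2}$ is the restriction to $\M_P^{G_2}\subset\B_P$ of $O_P^{E_\bu}$ for $E_\bu$ the Dirac operator of the induced spin structure, and Theorem~\ref{fm1thm3}(a) (proved as Theorem~\ref{fm12thm1}(a)) then supplies orientability and the flag-structure recipe. One small correction to your torsion-free argument: torsion-freeness only gives $\Hol(g)\subseteq G_2$, so the space of parallel spinors can have dimension greater than one and a single parallel spinor $s$ need not be a basis of $\Ker\sD_X$; but your preceding observation that self-adjointness gives $\det\sD_X\cong\det\Ker\sD_X\ot(\det\Ker\sD_X)^*$ already produces the canonical orientation without invoking $s$ at all.

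Part~(b) contains a genuine gap. The deformation operator of an associative $3$-fold is the Fueter operator $F_M^-=(\sD_M)^{\Si_\nu^-}$, and the orientation functor it defines on $\Bord_{7,4}^{\bs\Spin}(M\SO(4))$ (note the subscript is the codimension $4$, not $3$) is $\sO_{7,4,\Z_2}^{\bs\Spin,\SO(4),-}$. Contrary to your assertion, for $H=\SO(4)$ this functor does \emph{not} factor through $\Bord_7^{\bs\Spin}(K(\Z,4))$ and its obstruction does \emph{not} vanish: Corollary~\ref{fm11cor2} excludes it from the list of functors orientable for all compact spin $7$-manifolds, because $\pi_1(\sO_{7,4}^{\bs\Spin,\SO(4),-})$ is nonzero on $\tfrac{\ze_1}{4}$, which lies in $\Im\hat\xi^{\bs\Spin}_7(M\SO(4))$ by Table~\ref{fm3tab3}; only the difference functor $\sO_{7,4,\Z_2}^{\bs\Spin,\SO(4),0}=(\sO^{+})^{-1}\ot\sO^{-}$ factors via $\sH_7^\Z$ when $H=\SO(4)$ (equation \eq{fm11eq18}). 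The idea your argument is missing is the key step of Theorem~\ref{fm14thm2}: on the associative locus one has $E_0\cong E_1\cong\Si_\nu^+$, so $F_M^+$ deforms to the Hodge--de Rham operator $*(\d+\d^*)$ on $\La^0T^*M\op\La^2T^*M$, whose kernel $H^0(M,\R)\op H^2(M,\R)$ has locally constant dimension and whose orientation torsor is therefore canonically and \emph{continuously} trivialized over $\M_\al^{\ass}$; hence $O_\al^-\vert_{\M_\al^{\ass}}\cong O_\al^0\vert_{\M_\al^{\ass}}$, and it is $O_\al^0$ that a flag structure trivializes (Theorem~\ref{fm14thm1}(a)). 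Without this comparison the bordism computation alone gives the wrong answer for $H=\SO(4)$. Relatedly, your claim that the Dirac operator on the $3$-manifold is $\H$-linear and hence canonically oriented is misleading: the twist by the quaternionic line bundle $\Si_\nu^\pm$ reduces $F_M^\pm$ to an $\R$-linear ($7$-adapted) operator, which is precisely why its orientation is a nontrivial problem.
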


\begin{rem}
\label{fm1rem2}
Theorem \ref{fm1thm4}(a) was already known for $G=\SU(m)$ and $\U(m)$ by the authors \cite[Cor.~1.4]{JoUp}. Walpuski \cite[\S 6.1]{Walp1} earlier proved orientability for $G=\SU(m)$. Theorem \ref{fm1thm4}(b) was already known when $\M_\al^\ass$ is unobstructed by the first author \cite[\S 3.2]{Joyc5}. In both cases we provide new proofs.
\end{rem}

\begin{rem}
\label{fm1rem3}
{\bf(On Floer gradings.)}
We can also use our theory to study generalizations of orientations. For example, if $n\equiv 7\mod 8$ then the material of \S\ref{fm92}--\S\ref{fm93} implies that principal $\Z_2$-bundle $O_P\ra\B_P$ above is the $\Z_2$-reduction of a natural principal $\Z$-bundle $O_P^\Z\ra\B_P$. This $O_P^\Z$ encodes information about counting how many eigenvalues of twisted Dirac operators $\sD_X\ot\ad(\nabla)$ cross zero as one deforms $\nabla$ in a 1-parameter family. Write $O_P^{\Z_k}\ra\B_P$ for the $\Z_k$-reduction of $O_P^\Z\ra\B_P$ for $k\ge 2$, a principal $\Z_k$-bundle, with~$O_P^{\Z_2}=O_P$.

Our theory also gives criteria for when the principal $\Z$- or $\Z_k$-bundles $O_P^\Z$ or $O_P^{\Z_k}$ are trivializable, essentially the same as Theorem \ref{fm1thm2}, but using morphisms $\pi_1(\sO)$ to $\Z$ or $\Z_k$ instead of~$\Z_2$.

Results of this kind may have applications to {\it gradings of Floer theories}. As in Donaldson \cite{Dona}, if $X$ is a compact oriented homology 3-sphere, one can define the {\it instanton Floer homology\/} $HF_*(X)$. Roughly speaking, this is the homology of a chain complex $(CF_*(X),\pd)$ in which the generators are flat connections on principal $G$-bundles $P\ra X$, usually for $G=\U(2)$ or $\SO(3)$, and the differential $\pd$ is defined by counting self-dual instantons on a principal $G$-bundle $Q\ra X\t\R$ which are asymptotic at $\pm\iy$ in $\R$ to given flat connections $\nabla_\pm$ on $X$. The issue of {\it grading\/} Floer theories is whether for $HF_k(X)$ the index $k$ should lie in $\Z$, or $\Z_k$, or $\Z_2$ (or more generally in a torsor over $\Z,\Z_k$ or~$\Z_2$).

Following Donaldson--Thomas \cite{DoTh} and Donaldson--Segal \cite{DoSe}, if $(X,\vp,g)$ is a compact $G_2$-manifold, one can imagine trying to define a Floer theory in which the Floer complex is generated by $G_2$-instantons on a compact $G_2$-manifold $(X,\vp,g)$, and the differentials obtained by counting $\Spin(7)$-instantons on $X\t\R$. There are of course serious analytic difficulties. Our theory is relevant to gradings of such a Floer theory, if it exists, since trivializing $O_P^\Z\ra\B_P$ or $O_P^{\Z_k}\ra\B_P$ would induce a grading of the Floer theory over $\Z$ or~$\Z_k$.

In fact, we have proved a {\it negative result\/}: Corollary \ref{fm11cor1} implies that for the groups $G=\SU(m),$ $\Sp(m)$ for $m\ge 2$ and $E_8$, Theorem \ref{fm1thm3}(a) does not generalize to $\Z$- or $\Z_k$-gradings for any $k>2$. That is, for any $G$ on this list, there exists a compact spin 7-manifold $X$ and a principal $G$-bundle $P\ra X$ such that $O_P^\Z\ra\B_P$ or $O_P^{\Z_k}\ra\B_P$ is not trivializable.
\end{rem}

The analogue of Theorem \ref{fm1thm4} for $\Spin(7)$-instantons is the following:

\begin{thm}
\label{fm1thm5}
Let\/ $(X,\Om,g)$ be a compact\/ $\Spin(7)$-manifold satisfying condition $(*)$ above. Then:
\smallskip

\noindent{\bf(a)} Let\/ $G$ be any of the Lie groups in the list\/ \eq{fm1eq9}. Then the $\Spin(7)$-instanton moduli space $\M_P^{\Spin(7)}$ is orientable for all principal\/ $G$-bundles $P\ra X,$ and a choice of orientation for $\det\sD^+_X\cong\R$ and a flag structure on $X$ in the sense of\/ {\rm\S\ref{fm102}} determine orientations on $\M_P^{\Spin(7)}$ for all\/~$P\ra X$.

If\/ $(\Om,g)$ is torsion-free then $\det\sD^+_X$ has a canonical orientation.
\smallskip

\noindent{\bf(b)} The moduli spaces $\M_\al^{\Cay,\Spin(4)}$ of \begin{bfseries}spin\end{bfseries} Cayley $4$-folds $N$ in $X$ are orientable for all\/ $\al\in\La_4^{\Spin(4)}(X)$. A choice of flag structure $F$ on $X$ determines orientations on $\M_\al^{\Cay,\Spin(4)}$ for all\/~$\al\in\La_4^{\Spin(4)}(X)$.
\end{thm}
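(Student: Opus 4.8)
\medskip
\noindent\emph{Proof strategy.}
The plan is to deduce part (a) directly from Theorem~\ref{fm1thm3}(b), and part (b) from the submanifold counterparts of Theorems~\ref{fm1thm2} and~\ref{fm1thm3}(b) set up in \S\ref{fm5}, \S\ref{fm6} and \S\ref{fm14}. In both cases the mechanism is the one used throughout the monograph: the deformation theory of the relevant objects ($\Spin(7)$-instantons, resp.\ spin Cayley $4$-folds) is controlled by a twisted Dirac operator, so the orientation bundle of the moduli space is the restriction of a universal principal $\Z_2$-bundle classified --- via the functoriality of Theorem~\ref{fm9thm2} --- by an orientation functor $\sO$; and $\sO$ factors through the cohomology bordism category $\Bord_8^{\bs\Spin}(K(\Z,4))$, so that condition $(*)$ ensures a flag structure on $X$ exists (Theorem~\ref{fm10thm2}) and the flag structure then supplies the orientations.

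For part (a): as in \S\ref{fm12}, the $\Spin(7)$-instanton moduli space is the subset $\M_P^{\Spin(7)}=\{[\nabla]\in\B_P:\pi^2_7(F_\nabla)=0\}$, and it carries a derived manifold structure whose orientation bundle is $O_P|_{\M_P^{\Spin(7)}}$, where $O_P\ra\B_P$ is the principal $\Z_2$-bundle of orientations for the family of twisted Dirac operators $\sD^+_X\ot\ad(\nabla)$; the point is the bundle isomorphisms $\Lambda^0T^*X\op\Lambda^2_7T^*X\cong S^+X$ and $\Lambda^1T^*X\cong S^-X$ on a $\Spin(7)$-manifold, which intertwine the linearised $\Spin(7)$-instanton operator with $\sD^+_X\ot\ad(\nabla)$ up to lower-order terms. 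Now a $\Spin(7)$-manifold $(X,\Om,g)$ is in particular a compact Riemannian spin $8$-manifold satisfying $(*)$, so Theorem~\ref{fm1thm3}(b) gives: $\B_P$ is orientable, and a choice of orientation for $\det\sD^+_X\cong\R$ together with a flag structure on $X$ in the sense of \S\ref{fm102} trivialise $O_P$ for every principal $G$-bundle $P\ra X$ with $G$ in \eq{fm1eq9}. Restricting these trivialisations to $\M_P^{\Spin(7)}$ proves the first two assertions. For the last sentence, if $(\Om,g)$ is torsion-free then $\Hol(g)\subseteq\Spin(7)$, so $X$ is Ricci-flat and carries a nonzero parallel positive spinor; arguing as for the torsion-free $G_2$ case in \cite{JoUp}, this parallel spinor together with Hodge theory provides preferred identifications of $\Ker\sD^+_X$ and $\Coker\sD^+_X$ with spaces of harmonic forms carrying canonical orientations, hence a canonical orientation of $\det\sD^+_X=\det(\Ker\sD^+_X)\ot\det(\Coker\sD^+_X)^*$.

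For part (b) I would work in the submanifold bordism category $\Bord_{8,4}^{\bs\Spin}(M\Spin(4))$ of \S\ref{fm5}: a spin Cayley $4$-fold $N\subset X$ has rank-$4$ normal bundle $\nu_N$, and the stabiliser in $\Spin(7)$ of a Cayley $4$-plane equips $\nu_N$ with a $\Spin(4)$-structure compatibly with $TN$, so L-equivalence classes of such $N$ lie in $\La_4^{\Spin(4)}(X)$ and are recorded by objects of $\Bord_{8,4}^{\bs\Spin}(M\Spin(4))$. The Cayley deformation operator is a Dirac-type operator on the $4$-manifold $N$ twisted by a bundle built from $\nu_N$, so $\M_\al^{\Cay,\Spin(4)}$ is a derived manifold whose orientation bundle is the restriction of a universal principal $\Z_2$-bundle, classified by an orientation functor $\sO:\Bord_{8,4}^{\bs\Spin}(M\Spin(4))\ra\sZtor$; the submanifold version of Theorem~\ref{fm1thm2} (cf.\ \S\ref{fm5}, \S\ref{fm14}) then reduces orientability of all the $\M_\al^{\Cay,\Spin(4)}$ to vanishing of $\pi_1(\sO)$ on classes $[X\t\cS^1,\phi]$. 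As in the gauge-theoretic case, the computation of $\Om_*^{\bs\Spin}(M\Spin(4))$ in \S\ref{fm3} shows $\sO$ depends only on a single degree-$4$ characteristic class of the $\Spin(4)$-structure, so it factors as $\sH\ci\sT$ with $\sT$ into $\Bord_8^{\bs\Spin}(K(\Z,4))$ and $\sH$ the functor of \S\ref{fm6}. Since $X$ satisfies $(*)$, Theorem~\ref{fm10thm2} supplies a flag structure $F$ on $X$, i.e.\ a natural isomorphism as in \eq{fm1eq8}; pulling $F$ back along $\sT$ yields the submanifold analogue of $\eta_X$ in \eq{fm1eq7}, hence orientations on $\M_\al^{\Cay,\Spin(4)}$ for all $\al\in\La_4^{\Spin(4)}(X)$; in particular every such moduli space is orientable.

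The step I expect to be the main obstacle is part (b): pinning down the reduced structure group $\Spin(4)$ and the Cayley deformation operator accurately enough to see that the resulting orientation functor on $\Bord_{8,4}^{\bs\Spin}(M\Spin(4))$ really does factor through $\Bord_8^{\bs\Spin}(K(\Z,4))$ with $(*)$ as the precise orientability criterion --- i.e.\ threading the submanifold machinery of \S\ref{fm5} and \S\ref{fm14} correctly through the $M\Spin(4)$ bordism computations of \S\ref{fm3}. In part (a) the only delicate points are the compatibility of the bundle isomorphisms above with orientations (routine, but needing care with signs) and the torsion-free canonical orientation of $\det\sD^+_X$, which should follow the $G_2$ template of \cite{JoUp} with only minor changes.
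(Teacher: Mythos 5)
Your proposal follows essentially the same route as the paper: part (a) is Theorem \ref{fm12thm3}, obtained by restricting the orientations on $\B_P$ supplied by Theorem \ref{fm12thm1}(b) to $\M_P^{\Spin(7)}\subset\B_P$ via the identification of the linearised instanton operator with $\sD^+_X\ot\ad(\nabla)$, and part (b) is Theorem \ref{fm14thm3} with $H=\Spin(4)$, obtained by identifying the Cayley deformation operator with the Fueter operator $F_N^-$ (McLean) and invoking the factorization of $\sO_{8,4,\Z_2}^{\bs\Spin,\Spin(4),-}$ through $\sH_8^\Z$ (Theorem \ref{fm11thm5}(b)(ii), proved by the generator computation of \eq{fm11eq17}), which is exactly the step you defer. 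One correction: the normal $\Spin(4)$-structure on a Cayley $4$-fold does \emph{not} come from the stabiliser of a Cayley $4$-plane in $\Spin(7)$ (that only gives an $\SO(4)$-structure); it comes from the hypothesis that $N$ is spin, via the 2-out-of-3 argument of Remark \ref{fm14rem3}(a) — and this hypothesis is essential, since for $H=\SO(4)$ the functor $\sO_{8,4,\Z_2}^{\bs\Spin,\SO(4),-}$ does not factor via $\sH_8^\Z$ (Theorem \ref{fm11thm4}(b)), so the argument genuinely fails for non-spin Cayley $4$-folds.
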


\begin{rem}
\label{fm1rem4}
Cao--Gross--Joyce \cite[Cor.~1.12]{CGJ} claimed to prove orientability in (a) for $G=\SU(m)$ or $\U(m)$, without requiring $X$ to satisfy condition $(*)$. However, the proof relied on \cite[Th.~1.11]{CGJ}, which is false as in Remark~\ref{fm1rem1}(iii).
\end{rem}

\begin{thm}
\label{fm1thm6}
Let\/ $X$ be a Calabi--Yau\/ $4$-fold, and suppose that\/ $X$ satisfies condition $(*)$. Then the moduli stacks\/ $\M$ of all objects in $D^b\coh(X),$ and\/ $\M_\al\subset\M$ of objects $F^\bu$ in $D^b\coh(X)$ with $\lb F^\bu\rb=\al\in K^0_\top(X),$ are orientable in the sense of Definition\/ {\rm\ref{fm13def2}}. A choice of flag structure $F$ on $X$ in the sense of\/ {\rm\S\ref{fm102}} determines an  orientation on the moduli stacks $\M,\M_\al$. Such orientations are necessary for defining \begin{bfseries}DT4 invariants\end{bfseries} of\/ $X,$ as in Borisov--Joyce\/ {\rm\cite{BoJo}} and Oh--Thomas\/ {\rm\cite{OhTh}}.

If\/ $c_2(\al)-c_1(\al)^2=0$ in $H^4(X,\Z)$ then we can construct a canonical orientation on $\M_\al$ without choosing a flag structure.
\end{thm}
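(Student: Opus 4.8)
The plan is to deduce everything from the general machinery already in place, so that the statement becomes a combination of (1) Cao--Gross--Joyce's reduction of algebro-geometric orientations on $\M,\M_\al$ to orientations of moduli spaces of $\U(m)$-connections $\B_P$ on $X$ for $m\gg 0$, and (2) our bordism-theoretic criteria for orientability of $\B_P$ together with the flag-structure construction of canonical orientations. First I would invoke \cite[Th.~1.15]{CGJ} (recalled in the introduction): an orientation on $\M$, respectively on $\M_\al$, is equivalent to a compatible system of orientations on the $\B_P$ for $\U(m)$-bundles $P\to X$ with $m\gg 0$, and likewise for the restricted Chern data when one fixes $\lb F^\bu\rb = \al$. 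This translates the algebro-geometric problem into the $n=8$, $G=\U(m)$ case of the gauge-theoretic problem studied in \S\ref{fm12}--\S\ref{fm13}.

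Next I would apply Theorem \ref{fm1thm3}(b): since $X$ is a Calabi--Yau $4$-fold it is a compact spin $8$-manifold, and by hypothesis it satisfies $(*)$, so $\B_P$ is orientable for all principal $\U(m)$-bundles $P\to X$; moreover a choice of orientation of $\det\sD_X^+\cong\R$ together with a flag structure on $X$ in the sense of \S\ref{fm102} determines orientations on all $\B_P$. On a Calabi--Yau $4$-fold the bundle $\det\sD_X^+$ has a canonical orientation (the manifold is Ricci-flat Kähler, so $\sD_X^+$ is complex-linear on the relevant component and its index is canonically oriented; this is exactly the situation handled in \S\ref{fm9} for the torsion-free case of Theorems \ref{fm1thm4}--\ref{fm1thm5}), so only the flag structure is genuinely needed. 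Feeding this back through \cite[Th.~1.15]{CGJ} and checking that the flag-structure-induced orientations on the $\B_P$ are compatible under the inclusions $\U(m)\hookrightarrow\U(m+1)$ — which holds because the flag structure lives on $X$ and is pulled back along the fixed functor $\sT:\Bord_8^{\bs\Spin}(B\U(m))\to\Bord_8^{\bs\Spin}(K(\Z,4))$, independently of $m$ — yields an orientation on $\M$, and hence on each $\M_\al$ by restriction. This establishes orientability and the flag-structure-dependent canonical orientation, and the statement that such orientations are what DT4 invariants require is then just the definitions of Borisov--Joyce \cite{BoJo} and Oh--Thomas \cite{OhTh}.

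For the last sentence, suppose $c_2(\al)-c_1(\al)^2=0$ in $H^4(X,\Z)$. The point is that the $\Z_2$-ambiguity in a flag structure $F$ in \eq{fm1eq8} is exactly a choice in $\Map(H^4(X,\Z),\Z_2)$, canonical at $0\in H^4(X,\Z)$, and — as recalled in the introduction for $G=\U(m)$ — the value of $F$ at a class $\beta\in H^4(X,\Z)$ controls the orientations of precisely those $\B_P$ with $c_2(P)-c_1(P)^2=\beta$, independently of the other Chern classes. Under the Cao--Gross--Joyce reduction, $\M_\al$ corresponds to $\U(m)$-bundles whose Chern data is constrained so that $c_2(P)-c_1(P)^2$ is a fixed multiple/image of $c_2(\al)-c_1(\al)^2=0$; hence only the \emph{canonical} value $F(0)$ enters, and the resulting orientation on $\M_\al$ is independent of all the free choices in $F$, i.e.\ canonical. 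I would phrase this by noting that the functor $\sH\ci\sT$ restricted to the relevant subcategory already lands in the canonically-trivialized part of $\sZtor$, so no flag structure is needed.

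The main obstacle I anticipate is the compatibility/naturality bookkeeping in the second paragraph: one must verify that the orientations produced on the $\B_P$ for varying $m$ and varying $P$ genuinely assemble into a single orientation on the moduli stack $\M$ in the sense of Definition \ref{fm13def2}, i.e.\ that the flag-structure construction is compatible with the stabilization maps and with the isomorphisms $P\cong P'$ used in \cite{CGJ}. This is where the functoriality of $\sO\cong\sH\ci\sT$ over the bordism category — rather than just a bundle-by-bundle choice — does the real work, and it is the step that genuinely requires the bordism-categorical formalism of \S\ref{fm4}--\S\ref{fm11} rather than the earlier ad hoc arguments; pinning down the precise compatibility statement, and checking the pinning-down of $c_2(\al)-c_1(\al)^2$ versus $c_2(P)-c_1(P)^2$ under \cite[Th.~1.15]{CGJ} in the last part, is where I would spend most of the effort.
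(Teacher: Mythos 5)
Your proposal is correct and follows essentially the same route as the paper's proof: reduce to orientations of $\B_P$ for $\U(m)$-bundles via the Cao--Gross--Joyce comparison (Theorem \ref{fm13thm1}), apply Theorem \ref{fm12thm1}(b) under condition $(*)$ together with the canonical trivialization of $\det\sD^+_X$ on a Calabi--Yau $4$-fold, and obtain the canonical orientation for $c_2(\al)-c_1(\al)^2=0$ from flag structures natural at zero (Proposition \ref{fm12prop2}(b)). The stabilization-compatibility issue you flag as the main obstacle is already packaged into the cited result (Theorem \ref{fm13thm1}(b),(g)), so no further work is needed there.
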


\begin{rem}
{\bf(i)} Cao--Gross--Joyce \cite[Cor.~1.17]{CGJ} claimed to prove orientability in Theorem \ref{fm1thm6} without requiring $X$ to satisfy condition $(*)$. However, the proof relied on \cite[Th.~1.11]{CGJ}, which is false as in Remark~\ref{fm1rem1}(iii).
\smallskip

\noindent{\bf(ii)} As explained in \S\ref{fm13}, the last part of Theorem \ref{fm1thm6} resolves an apparent paradox in the literature on DT4 invariants. In the papers \cite{Bojk2,Cao1,Cao2,CaKo1,CaKo2,CKM,CaLe,CMT1,CMT2,COT1,COT2,CaQu,CaTo1,CaTo2,CaTo3} one can find many (often conjectural) relations of the form
\e
\text{Conventional invariants of $X$}\simeq \text{DT4 invariants of $X$,}
\label{fm1eq10}
\e
where by `conventional invariants' of $X$ we mean things like the Euler characteristic and Gromov--Witten invariants, and the relation `$\simeq$' may involve change of variables in a generating function, etc. 

Now the left hand side of \eq{fm1eq10} does not involve orientations, but the right hand side needs a choice of orientations on the moduli spaces $\M_\al$ to determine the signs of the DT4 invariants. All relations \eq{fm1eq10} in \cite{Bojk2,Cao1,Cao2,CaKo1,CaKo2,CKM,CaLe,CMT1,CMT2,COT1,COT2,CaQu,CaTo1,CaTo2,CaTo3} involve only moduli spaces $\M_\al$ on $X$ with $c_1(\al)=c_2(\al)=0$ in $H^*(X,\Z)$. These have canonical orientations without arbitrary choices, resolving the paradox.
\smallskip

\noindent{\bf(iii)} The authors do not know of any examples of either compact 8-manifolds with holonomy $\Spin(7)$, or Calabi--Yau 4-folds, for which condition $(*)$ does not hold. It would be interesting to know if such examples exist.
\end{rem}

\noindent{\it Acknowledgements.} This research was partly funded by a Simons Collaboration Grant on `Special Holonomy in Geometry, Analysis and Physics'. The second author thanks Mark Grant for several useful discussions.

For the purpose of open access, the authors have applied a CC BY public copyright licence to any Author Accepted Manuscript (AAM) version arising from this submission.

\section{Background from Algebraic Topology}
\label{fm2}

\subsection{Bordism theory}
\label{fm21}

\subsubsection{Tangential structures}
\label{fm211}

To define bordism groups with different flavours, we first define {\it tangential structures}. Our treatment is based on Lashof \cite{Lash} and Stong~\cite{Ston1}.

\begin{dfn}
\label{fm2def1}
Let $B\O=\colim_{n\ra\iy}B\O(n)$ be the classifying space of the stable orthogonal group, the direct limit of the classifying spaces $B\O(n)$ of the orthogonal groups $\O(n)$ under the inclusions $\O(n)\hookra\O(n+1)$. There are natural continuous maps $\io_{B\O(n)}:B\O(n)\ra B\O$ coming from the direct limit.

The inclusions $\O(m)\t\O(n)\ra\O(m+n)$ induce a binary operation $\mu_{B\O}:B\O\t B\O\ra B\O$ which is up to homotopy associative, unital, and commutative. Hence $B\O$ is a commutative H-space.

If $X$ is a smooth $n$-manifold (possibly with boundary or corners) then choosing a Riemannian metric $g$ on $X$ gives $TX$ an $\O(n)$-structure, so we have a classifying map $\phi_{TX}:X\ra B\O(n)$, which we compose with $\io_{B\O(n)}:B\O(n)\ra B\O$ to get a map $\phi^\rst_{TX}:X\ra B\O$ classifying the {\it stable tangent bundle} of $X$. Up to contractible choice this is unique and independent of the Riemannian metric $g$ on~$X,$ which permits us to fix the choice of $\phi^\rst_{TX}$ below. We have a homotopy commutative diagram
\begin{equation*}
\xymatrix@C=60pt@R=15pt{ X \ar[d]^{\phi_{TX}} \ar[dr]^(0.7){\phi_{TX\op\ul{\R}}} \ar@/^.8pc/[drr]^(0.7){\phi^\rst_{TX}} \\
B\O(n) \ar[r] & B\O(n+1) \ar[r]^{\io_{B\O(n+1)}} & B\O, }	
\end{equation*}
where $\ul{\R}\ra X$ is the trivial line bundle. The vector bundle isomorphism $\id_{TX}\op -\id_{\ul\R}:TX\op\ul{\R}\ra TX\op\ul{\R}$ induces a homotopy $-1_{\phi_{TX\op\ul{\R}}}:\phi_{TX\op\ul{\R}}\Ra\phi_{TX\op\ul{\R}}$ whose square is homotopic to the constant homotopy $\Id_{\phi_{TX\op\ul{\R}}}$. We define $-1_{\phi^\rst_{TX}}:\phi^\rst_{TX}\Ra\phi^\rst_{TX}$ to be the horizontal composition of this with~$\Id_{\io_{B\O(n)}}$.	

If $X$ has boundary or corners then $TX\vert_{\pd X}\cong T(\pd X)\op\ul{\R}$, where $\ul{\R}\ra X$ is the trivial line bundle. Thus we have a homotopy commutative diagram
\begin{equation*}
\xymatrix@C=120pt@R=15pt{
*+[r]{\pd X} \drtwocell_{}\omit^{}\omit{^{}} \ar[r]_(0.35){i_{\pd X}} \ar[d]^{\phi_{T\pd X}}  & *+[l]{X} \ar[d]_{\phi_{TX}} \\
*+[r]{B\O(n-1)} \ar[r] & *+[l]{B\O(n).\!} }
\end{equation*}
Composing with $B\O(n)\ra B\O$ shows that
\e
\phi^\rst_{TX}\vert_{\pd X}\simeq \phi^\rst_{T(\pd X)}.
\label{fm2eq1}
\e

A {\it tangential structure\/} $\bs B=(B,\be)$ is a topological space $B$ and a continuous map $\be:B\ra B\O$. We say that $\bs B$ {\it has products} if we are given a continuous map $\mu_{\bs B}:B\t B\ra B$, which is homotopy commutative and associative, in a homotopy commutative diagram
\begin{equation*}
\xymatrix@C=120pt@R=15pt{
*+[r]{B\t B} \ar[r]_(0.35){\mu_{\bs B}} \ar[d]^{\be\t\be} \drtwocell_{}\omit^{}\omit{^{\eta_{\bs B}\,\,\,\,}} & *+[l]{B} \ar[d]_{\be} \\
*+[r]{B\O\t B\O} \ar[r]^(0.65){\mu_{B\O}} & *+[l]{B\O.\!} }
\end{equation*}

A $\bs B$-{\it structure\/} $\bs\ga_X=(\ga_X,\eta_X)$ on a smooth manifold $X$ (possibly with boundary or corners) is a homotopy commutative diagram of continuous maps
\e
\begin{gathered}
\xymatrix@C=70pt@R=13pt{
\drrtwocell_{}\omit^{}\omit{_{\,\,\,\,\eta_X}} & B \ar[dr]^\be  \\
X \ar[ur]^{\ga_X} \ar[rr]^(0.35){\phi_X^\rst} && B\O.}
\end{gathered}
\label{fm2eq2}
\e

An {\it isomorphism} of tangential structures $\bs\ga_X=(\ga_X,\eta_X)$ and $\bs\ga_X'=(\ga_X',\eta_X')$ is represented by a homotopy $\eta:\ga_X\Ra\ga_X'$ such that the diagram
\begin{equation*}
\begin{tikzcd}
 \be\circ\ga_X\arrow[rd,Rightarrow,"\eta_X"]\arrow[rr,Rightarrow,"\Id_\be\circ\eta"] && \be\circ\ga_X'\arrow[ld,Rightarrow,"\eta_X'"]\\
 &\phi_X^\rst	
\end{tikzcd}
\end{equation*}
commutes up to homotopy (of homotopies). Here we only care about $\eta$ up to homotopy and often we will only care about isomorphism classes of $\bs B$-structures.

The {\it opposite $\bs B$-structure\/} $-\bs\ga_X$ is obtained by composing homotopies across the diagram
\begin{equation*}
\xymatrix@C=70pt@R=13pt{
\drrtwocell_{}\omit^{}\omit{_{\,\,\,\,\eta_X}} & B \ar[dr]^\be  \\
X \drrtwocell_{}\omit^{}\omit{_{\qquad -1_{\phi^\rst_{TX}}}} \ar@/_2pc/[rr]_(0.15){\phi_X^\rst} \ar[ur]^{\ga_X} \ar[rr]_(0.3){\phi_X^\rst} && B\O. \\
&& }
\end{equation*}

Often we just write a manifold with $\bs B$-structure $(X,\bs\ga_X)$ as $X$, omitting $\bs\ga_X$ from the notation. In this case we write $-X$ as a shorthand for $(X,-\bs\ga_X)$, that is, $X$ with the opposite $\bs B$-structure.

From \eq{fm2eq1} we see that if $X$ has boundary or corners then composing \eq{fm2eq2} with $i_X:\pd X\ra X$ gives a restriction $\bs\ga_X\vert_{\pd X}$ which is a $\bs B$-structure on~$\pd X$.

If $\bs B=(B,\be)$, $\bs B'=(B',\be')$ are tangential structures, we say that $\bs B$ {\it factors through\/} $\bs B'$ if there is a homotopy commutative diagram
\e
\begin{gathered}
\xymatrix@C=70pt@R=13pt{
\drrtwocell_{}\omit^{}\omit{_{}} & B' \ar[dr]^{\be'}  \\
B \ar[ur] \ar[rr]^(0.35){\be} && B\O.}
\end{gathered}
\label{fm2eq3}
\e
Composing with this diagram, a $\bs B$-structure on $X$ induces a $\bs B'$-structure.
\end{dfn}

Here are some examples, including well known geometric structures such as orientations and spin structures. 

\begin{ex}
\label{fm2ex1}
{\bf(a)} The {\it orthogonal tangential structure\/} is $\bs{\O}=(B\O,\id_{B\O})$. Every manifold $X$ has an $\bs{\O}$-structure unique up to homotopy.
\smallskip

\noindent{\bf(b)} The {\it special orthogonal tangential structure\/} is $\bs{\SO}=(B\SO,\be_\SO)$, where $B\SO=\colim_{n\ra\iy}B\SO(n)$ and $\be_\SO:B\SO\ra B\O$ is induced by the inclusions $\SO(n)\hookra\O(n)$. A $\bs{\SO}$-structure on $X$ is equivalent to an orientation on $X$. The opposite $\bs{\SO}$-structure is equivalent to the opposite orientation.
\smallskip

\noindent{\bf(c)} The {\it spin tangential structure\/} is $\bs{\Spin}\!=\!(B\Spin,\be_\Spin)$, where $B\Spin\!=\!\colim_{n\ra\iy}\ab B\Spin(n)$ and $\be_\Spin:B\Spin\ra B\O$ is induced by $\Spin(n)\ra\O(n).$ A $\bs{\Spin}$-structure on $X$ is equivalent to an orientation and a spin structure.
\smallskip

\noindent{\bf(d)} The {\it spin$^c$ tangential structure\/} is $\bs{\Spinc}=(B\Spinc,\be_{\Spinc})$, for $B\Spinc=\colim_{n\ra\iy}\ab B\Spinc(n)$ and $\be_{\Spinc}:B\Spinc\ra B\O$ induced by $\Spinc(n)\ra\O(n)$. A $\bs{\Spinc}$-structure on $X$ amounts to an orientation and a spin$^{\rm c}$ structure.
\smallskip

\noindent{\bf(e)} The {\it unitary tangential structure\/} is $\bs{\U}=(B\U,\be_\U)$, where $B\U=\colim_{m\ra\iy}\ab B\U(m)$ and $\be_\U:B\U\ra B\O$ is induced by the commutative diagram
\begin{equation*}
\xymatrix@C=20pt@R=15pt{
\cdots \ar[r] & \U(m) \ar[r] \ar[d] & \U(m) \ar[r] \ar[d] & \U(m+1) \ar[r] \ar[d] & \U(m+1) \ar[r] \ar[d] & \cdots \\ 
\cdots \ar[r] & \O(2m) \ar[r]  & \O(2m+1) \ar[r] & \O(2m+2) \ar[r] & \O(2m+3) \ar[r] & \cdots.\! }	
\end{equation*}
A $\bs{\U}$-structure on $X$ is equivalent to a {\it stable almost complex structure\/} on $X.$
\smallskip

\noindent{\bf(f)} The {\it special unitary tangential structure\/} is $\bs{\SU}=(B\SU,\be_\SU)$, where $B\SU=\colim_{m\ra\iy}\ab B\SU(m)$ and $\be_\SU:B\SU\ra B\O$ is defined as in {\bf(e)}.
\smallskip

\noindent{\bf(g)} The {\it quaternionic tangential structure\/} is $\bs{\Sp}=(B\Sp,\be_\Sp)$, where $B\Sp=\colim_{m\ra\iy}\ab B\Sp(m)$ and $\be_\Sp:B\Sp\ra B\O$ is defined in a similar way to~{\bf(e)}.
\smallskip

All of the tangential structures in {\bf(a)}--{\bf(g)} have products. Also {\bf(b)}--{\bf(g)} factor through $\bs{\SO}$, but {\bf(a)} does not.
\end{ex}

\subsubsection{Bordism, as a generalized homology theory}
\label{fm212}

Let $\bs B$ be a tangential structure. Then $\bs B$-{\it bordism\/} $\Om_*^{\bs B}(-)$ is a generalized homology theory of topological spaces $T$, in which the `$n$-chains' are continuous maps $f:X\ra T$ for $X$ a compact $n$-manifold with a $\bs B$-structure. The subject began with the work of Thom \cite{Thom}. Bordism was introduced by Atiyah \cite{Atiy}, and good references are Conner \cite[\S I]{Conn} and Stong \cite{Ston1}.

\begin{dfn}
\label{fm2def2}
Let $\bs B$ be a tangential structure, $T$ be a topological space, and $n\in\N$. Consider triples $(X,\bs\ga_X,f)$, where $X$ is a compact manifold with $\dim X=n$, $\bs\ga_X$ is a $\bs B$-structure on $X$, and $f:X\ra T$ is a continuous map. Given two such triples, a {\it bordism\/} from $(X_0,\bs\ga_{X_0},f_0)$ to $(X_1,\bs\ga_{X_1},f_1)$ is a triple $(W,\bs\ga_W,e)$, where:
\begin{itemize}
\setlength{\itemsep}{0pt}
\setlength{\parsep}{0pt}
\item[(i)] $W$ is a compact $(n+1)$-manifold with boundary, with a given identification $\pd W\cong X_0\amalg X_1$.
\item[(ii)] $\bs\ga_W$ is a $\bs B$-structure on $W$, with a given isomorphism of $\bs B$-structures $\bs\ga_W\vert_{\pd W}\ab\cong -\bs\ga_{X_0}\amalg\bs\ga_{X_1}$ on $\pd W\cong X_0\amalg X_1.$
\item[(iii)] $e:W\ra T$ is a continuous map such that $e\vert_{\pd W}\cong f_0\amalg f_1$ under the identification $\pd W\cong X_0\amalg X_1.$
\end{itemize}

Write $(X_0,\bs\ga_{X_0},f_0)\sim(X_1,\bs\ga_{X_1},f_1)$ if there exists such a bordism $(W,\bs\ga_W,e)$. Then `$\sim$' is an equivalence relation, called $\bs B$-{\it bordism}, and the equivalence class $[X,\bs\ga_X,f]$ is called a $\bs B$-{\it bordism class}. The $n^{\it th}$ $\bs B$-{\it bordism group\/} $\Om^{\bs B}_n(T)$ is the set of $\bs B$-bordism classes $[X,\bs\ga_X,f]$ with $\dim X=n,$ where the group structure has zero element $0_T=[\es,\es,\es],$ addition $[X,\bs\ga_X,f]+[X',\bs\ga_{X'},f']=[X\amalg X',\bs\ga_X\amalg\bs\ga_{X'},f\amalg f'],$ and inverse $-[X,\bs\ga_X,f]=[X,-\bs\ga_X,f].$

When $T$ is a point we may omit the necessarily constant map $f$ from the notation, and write elements of $\Om^{\bs B}_n(*)$ as~$[X,\bs\ga_X]$.

If $T$ is a smooth manifold then, as smooth maps are dense in continuous maps, we can take $f:X\ra T$ and $e:W\ra F$ above to be smooth.

Now suppose that $\bs B'$ is another tangential structure and that $\bs B$ factors through $\bs B'$ as in \eq{fm2eq3} of Definition \ref{fm2def1}. Then a $\bs B$-structure $\bs\ga_X$ on a manifold $X$ induces a $\bs B'$-structure $\Pi_{\bs B}^{\bs B'}(\bs\ga_X)$ on $X$. This defines a group morphism
\begin{equation*}
\Om^{\bs B}_n(T)\overset{\Pi_{\bs B}^{\bs B'}}{\longra}\Om_n^{\bs B'}(T),\qquad[X,\bs\ga_X,f]\longmapsto\bigl[X,\Pi_{\bs B}^{\bs B'}(\bs\ga_X),f\bigr].
\end{equation*}

If $\io: U\hookrightarrow T$ is a subspace we can define {\it relative bordism groups\/} $\Om^{\bs B}_n(T;U)$, whose elements $[X,\bs\ga_X,f]$ are bordism classes of triples $(X,\bs\ga_X,f)$ with $X$ a compact $n$-manifold with boundary and $\bs B$-structure $\bs\ga_X$, and $f:X\ra T$ a continuous map with $f(\pd X)\subseteq U\subseteq T$. These fit into a long exact sequence
\begin{equation*}
\xymatrix@C=21pt{ \cdots \ar[r] & \Om^{\bs B}_n(U) \ar[r]^{\io_*} & \Om^{\bs B}_n(T) \ar[r]^(0.45){\pi_*} & \Om^{\bs B}_n(T;U) \ar[r]^(0.53)\pd & \Om_{n-1}^{\bs B}(U) \ar[r] & \cdots. }
\end{equation*}

If $T$ is path-connected we define the {\it reduced bordism groups\/} to be $\ti\Om^{\bs B}_n(T)=\Om^{\bs B}_n(T;\{t_0\})$, for $t_0\in T$ any base point. As the inclusion of $U=\{t_0\}$ has a left inverse, the long exact sequence reduces to short exact sequences
\begin{equation*}
\xymatrix@C=30pt{ 0 \ar[r] & \Om^{\bs B}_n(*) \ar[r]^{\io_*} & \Om^{\bs B}_n(T) \ar[r]^{\pi_*} & \ti\Om^{\bs B}_n(T) \ar[r] & 0. }
\end{equation*}
\end{dfn}

\begin{table}[htb]
\centerline{\begin{tabular}{|l|l|l|l|l|l|l|l|l|l|l|}
\hline
 & $n\!=\!0$ & $n\!=\!1$ & $n\!=\!2$ & $n\!=\!3$ & $n\!=\!4$ & $n\!=\!5$ & $n\!=\!6$ & $n\!=\!7$ & $n\!=\!8$ & $n\!=\!9$ \\
\hline
\vphantom{$\bigl(^(_($}$\Om_n^{\bs\SO}(*)$ & $\Z$ & 0 & 0 & 0 & $\Z$ & $\Z_2$ & 0 & 0 & $\Z^2$ & \\
\hline
\vphantom{$\bigl(^(_($}$\Om_n^{\bs\O}(*)$ & $\Z_2$ & 0 & $\Z_2$ & 0 & $\Z_2^2$ & $\Z_2$ & $\Z_2^3$ & $\Z_2$ & $\Z_2^5$ & \\
\hline
\vphantom{$\bigl(^(_($}$\Om_n^{\bs\Spin}(*)$ & $\Z$ & $\Z_2$ & $\Z_2$ & 0 & $\Z$ & 0 & 0 & 0 & $\Z^2$ & $\Z_2^2$ \\
\hline
\vphantom{$\bigl(^(_($}$\Om_n^{\bs\Spinc}(*)$ & $\Z$ & 0 & $\Z$ & 0 & $\Z^2$ & 0 & $\Z^2$ & 0 & $\Z^4$ & 0 \\
\hline
\vphantom{$\bigl(^(_($}$\Om_n^{\bs\U}(*)$ & $\Z$ & 0 & $\Z$ & 0 & $\Z^2$ & 0 & $\Z^3$ & 0 & $\Z^5$ & \\
\hline
\vphantom{$\bigl(^(_($}$\Om_n^{\bs\SU}(*)$ & $\Z$ & $\Z_2$ & $\Z_2$ & 0 & $\Z$ & 0 & $\Z$ & 0 & $\Z^2$ & \\
\hline
\end{tabular}}
\caption{$\bs B$-bordism groups of the point}
\label{fm2tab1}
\end{table}

As $\bs B$-bordism $\Om_*^{\bs B}(-)$ is a generalized homology theory, as in \S\ref{fm22} there is an {\it Atiyah--Hirzebruch spectral sequence\/} $H_p(T,\Om^{\bs B}_q(*))\Ra\Om^{\bs B}_{p+q}(T)$, where $*$ is the point. If $T$ is path-connected then as the splitting $\Om^{\bs B}_*(T)=\Om^{\bs B}_*(*)\op\ti\Om^{\bs B}_*(T)$ is functorial, this induces a spectral sequence $\ti H_p(T,\Om^{\bs B}_q(*))\Ra\ti\Om^{\bs B}_{p+q}(T)$. Thus, a lot of important behaviour of bordism depends on the bordism groups $\Om^{\bs B}_n(*)$ of the point, so much effort has gone into calculating these. Table \ref{fm2tab1} gives values of $\Om^{\bs B}_n(*)$ for $\bs B=\bs{\SO},\bs{\O},\bs{\Spin},\bs{\Spinc},\bs{\U},\bs{\SU}$ and $n\le 9$, which are taken from Stong \cite{Ston1}, Anderson, Brown and Peterson \cite{ABP}, and Gilkey \cite[pp.~330, 333]{Gilk}. Note that we omit the letter $\bs{\rm B}$ from the notation for the bordism group for the classical tangential structures defined in Example \ref{fm2ex1}.

\begin{table}[htb]	
\centerline{\begin{tabular}{|l|l|l|l|l|l|l|}
\hline
$n$ & $0$ & $1$ & $2$ & $4$  & $8$ & $9$  \\
\hline
\parbox[top][4ex][c]{1.3cm}{$\Om^{\bs\Spin}_n(*)$\!\!\!} & $\Z\an{1}$ & $\Z_2\an{\al_1}$ & $\Z_2\an{\al_1^2}$ & $\Z\an{\al_4}$ & $\Z\an{\al_8,\al_8'}$ & $\Z_2\an{\al_1\al_8,\al_1\al_8'}$ \\
\hline
\end{tabular}}
\smallskip

\centerline{$\al_1=[\cS^1_{\rm nb}],$ $\al_4=[K3]$, $\al_8=[\HP^2]$, $\al_8'=[(K3\t K3)/\Z_2^2]$}
\caption{Explicit presentation of $\Om^{\bs\Spin}_n(*)$, $n\le 9$}
\label{fm2tab2}
\end{table}

A presentation of $\Om^{\bs\Spin}_n(*)$, $n\le 9$ is given in Table \ref{fm2tab2}, where 
$\cS^1_{\rm nb}$ is $\cS^1$ with the {\it non-bounding\/} spin structure, i.e.\ the unique spin structure that does not extend from $\cS^1=\pd D^2$ to~$D^2$.

\subsubsection{Free loop spaces, based loop spaces, and their bordism}
\label{fm213}

\begin{dfn}
\label{fm2def3}
Let $T$ be a topological space, which we suppose is path-connected, with a basepoint $t_0\in T$. Write $\cS^1=\R/\Z$, with basepoint $\ul 0=0+\Z$. The {\it free loop space\/} of $T$ is $\cL T=\Map_{C_0}(\cS^1,T)$, with the compact-open topology. Points of $\cL T$ are continuous maps $\ga:\cS^1\ra T$, that is, loops in $T$. The {\it based loop space\/} of $T$ is $\Om T=\Map_{C_0}((\cS^1,\ul{0}),(T,t_0))$. Points of $\Om T$ are continuous maps $\ga:\cS^1\ra T$ with $\ga(\ul{0})=t_0$, that is, based loops in $T$. 

Mapping $\ga\mapsto\ga(\ul{0})$ defines an evaluation map $\ev_{\ul{0}}:\cL T\ra T$, with $\ev_{\ul{0}}^{-1}(0)=\Om T$. It is a homotopy fibration, with fibre $\Om T$. Let $\bs B$ be a tangential structure. As $\bs B$-bordism $\Om_*^{\bs B}(-)$ is a generalized homology theory, the homotopy fibration gives an Atiyah--Hirzebruch spectral sequence
\e
H_p\bigl(T,\Om_q^{\bs B}(\Om T)\bigr)\Longra \Om_{p+q}^{\bs B}(\cL T).
\label{fm2eq4}
\e

The fibration has a section $s_T:T\ra\cL T$ mapping $t\in T$ to the constant loop $\cS^1\ra\{t\}\subset T$, with $\ev_{\ul{0}}\ci s_T=\id_T$. The morphisms $(\ev_{\ul{0}})_*:\Om^{\bs B}_n(\cL T)\ra \Om^{\bs B}_n(T)$ and $(s_T)_*:\Om^{\bs B}_n(T)\ra \Om^{\bs B}_n(\cL T)$ induce a splitting
\e
\begin{split}
&\Om^{\bs B}_n(\cL T)=\Om^{\bs B}_n(T)\op \Om^{\bs B}_n(\cL T;T),\qquad\text{where}\\
&\Om^{\bs B}_n(\cL T;T)=\Ker\bigl((\ev_{\ul{0}})_*:\Om^{\bs B}_n(\cL T)\longra \Om^{\bs B}_n(T)\bigr).
\end{split}
\label{fm2eq5}
\e
Write $\Pi^{\bs B}_n(T):\Om^{\bs B}_n(\cL T)\ra\Om^{\bs B}_n(\cL T;T)$ for the projection in this direct sum. We regard $\Om^{\bs B}_n(\cL T;T)$ as a {\it relative $\bs B$-bordism group}.

The decomposition \eq{fm2eq5} of $\Om^{\bs B}_n(\cL T)$ corresponds in the spectral sequence \eq{fm2eq4} to the decomposition $\Om_q^{\bs B}(\Om T)=\Om_q^{\bs B}(*)\op\ti\Om_q^{\bs B}(\Om T)$. Therefore \eq{fm2eq4} splits as the direct sum of two spectral sequences, with the first $H_p\bigl(T,\Om_q^{\bs B}(*)\bigr)\Ra \Om_{p+q}^{\bs B}(T)$ the usual spectral sequence for computing $\Om_*^{\bs B}(T)$, and the second being~$H_p\bigl(T,\ti\Om_q^{\bs B}(\Om T)\bigr)\Ra \Om_{p+q}^{\bs B}(\cL T;T)$.

Let $f:S\ra T$ be a continuous map of connected topological spaces, and write $\cL f:\cL S\ra\cL T$ for the induced map of free loop spaces. Then $\cL f_*:\Om^{\bs B}_n(\cL S)\ra\Om^{\bs B}_n(\cL T)$ is compatible with the splittings \eq{fm2eq5}. Write
\begin{equation*}
f_\rel^{\bs B}:\Om^{\bs B}_n(\cL S;S)\longra\Om^{\bs B}_n(\cL T;T)
\end{equation*}
for the restriction of $\cL f_*$ to relative $\bs B$-bordism.

Now a continuous map $\phi:X\ra\cL T$ is equivalent to a continuous map $\phi':X\t\cS^1\ra T$, by the tautological definition $\phi'(x,y)=\phi(x)(y)$ for $x\in X$ and $y\in\cS^1$. Define a morphism
\e
\begin{split}
\xi^{\bs B}_n(T)&:\Om^{\bs B}_n(\cL T)\longra\ti\Om_{n+1}^{\bs B}(T)\qquad\text{by}\\
\xi^{\bs B}_n(T)&:[X,\bs\ga_X,\phi]\longmapsto [X\t\cS^1,\bs\ga_X\t\bs\ga_{\cS^1},\phi'],
\end{split}
\label{fm2eq6}
\e
where $\phi'$ is as above and $\bs\ga_{\cS^1}$ is the $\bs B$-structure on $\cS^1$ induced from the standard $\bs B$-structure on the closed unit disc $D^2\subset\R^2$ by identifying $\cS^1=\pd D^2$. Here $\xi^{\bs B}_n(T)$ maps to $\ti\Om_{n+1}^{\bs B}(T)=\Ker\bigl(\Om_{n+1}^{\bs B}(T)\ra\Om_{n+1}^{\bs B}(*)\bigr)\subset \Om_{n+1}^{\bs B}(T)$ as the projection of \eq{fm2eq6} to $\Om_{n+1}^{\bs B}(*)$ is $[X\t\cS^1]=[X]\cdot[\cS^1]=0$ since~$\cS^1=\pd D^2$.

Equation \eq{fm2eq6} is compatible with equivalences $(X_0,\bs\ga_{X_0},\phi_0)\sim\ab(X_1,\ab\bs\ga_{X_1},\ab\phi_1)$, and so is well defined. Note that $[\cS^1,\bs\ga_{\cS^1}]=0$ in $\Om_1^{\bs B}(*)$. When $\bs B=\Spin$ there is also a second $\Spin$-structure $\bs\ga'_{\cS^1}$ on $\cS^1$ with $[\cS^1,\bs\ga'_{\cS^1}]\ne 0$ in $\Om_1^{\bs\Spin}(*)$; it is important that we use $\bs\ga_{\cS^1}$ rather than $\bs\ga'_{\cS^1}$ in~\eq{fm2eq6}.

Consider the diagram
\e
\begin{gathered}
\xymatrix@C=35pt@R=15pt{ 
0 \ar[r] & \Om^{\bs B}_n(T) \ar@<-1ex>[dr]_0 \ar[r]^{(s_T)_*} & \Om^{\bs B}_n(\cL T) \ar[r]^{\Pi^{\bs B}_n(T)} \ar[d]^{\xi^{\bs B}_n(T)} & \Om^{\bs B}_n(\cL T;T) \ar@<1ex>@{..>}[dl]^{\hat\xi^{\bs B}_n(T)} \ar[r] & 0 \\
&& \ti\Om_{n+1}^{\bs B}(T).\! }	
\end{gathered}
\label{fm2eq7}
\e
The top row is exact. If $[X,\bs\ga_X,\psi]\in \Om^{\bs B}_n(T)$ then
\begin{align*}
&\xi^{\bs B}_n(T)\ci (s_T)_*\bigl([X,\bs\ga_X,\psi]\bigr)=\xi^{\bs B}_n(T)\bigl([X,\bs\ga_X,s_T\ci\psi]\bigr)
\\
&\;\> =[X\t\cS^1,\bs\ga_X\t\bs\ga_{\cS^1},\psi\ci\pi_X]=[X,\bs\ga_X,\psi]*[\cS^1,\bs\ga_{\cS^1}]=[X,\bs\ga_X,\psi]*0=0,
\end{align*}
where $*:\Om^{\bs B}_n(T)\t\Om_q^{\bs B}(*)\ra\Om_{n+1}^{\bs B}(T)$ is the natural product. Thus the left hand triangle of \eq{fm2eq7} commutes, so there exists a unique morphism $\hat\xi^{\bs B}_n(T):\Om^{\bs B}_n(\cL T;T)\ra\ti\Om_{n+1}^{\bs B}(T)$ making the right hand triangle commute.
\end{dfn}

\subsection{The Atiyah--Hirzebruch spectral sequence}
\label{fm22}

As spin bordism is a generalized homology theory, for each topological space with basepoint $t_0\in T$ there is an {\it Atiyah--Hirzebruch spectral sequence}
\e
 \ti H_p\bigl(T,\Om^{\bs\Spin}_q(*)\bigr)\Longra\ti\Om^{\bs\Spin}_{p+q}(T).
\label{fm2eq8}
\e
Evaluation of the spectral sequence requires knowledge of the graded coefficient ring $\bigoplus_{n\in\N}\Om^{\bs\Spin}_n(*)$, which is shown in Table \ref{fm2tab2} for $n\le 9$.

We explain the meaning of \eq{fm2eq8}, and refer to McCleary \cite{McCl} for a comprehensive introduction to spectral sequences. A spectral sequence consists of a sequence of bigraded abelian groups $\{(E_{p,q}^r,d_{p,q}^r)\}_{p,q\in\Z}$ for each $r=2,3,\ldots$, the {\it pages} of the spectral sequence, and differentials $d^r_{p,q}: E^r_{p,q}\ra E^r_{p-r,q+r-1}$. The groups $E_{p,q}^{r+1}$ of the $E^{r+1}$-page are the homology groups of $(E_{p,q}^r,d_{p,q}^r)$ of the $E^r$-page, beginning with $E_{p,q}^2=\ti H_p(T,\Om^{\bs\Spin}_q(*))$. If the spectral sequence is {\it convergent}, the groups $E_{p,q}^r$ stabilize for large values of $r$ to groups $E_{p,q}^\iy$. Moreover, for each $n$ there is a filtration
\begin{equation*}
0=F_{-1,n}\subset F_{0,n}\subset \cdots\subset F_{p,n}\subset F_{p+1,n}\subset\cdots\subset F_{n,n}=\ti \Om^{\bs\Spin}_n(T).
\end{equation*}
If $T$ is a CW complex, then $F_{p,n}$ is defined to be the image of the morphism $\ti\Om_n^{\bs\Spin}(T_p)\ra\ti\Om_n^{\bs\Spin}(T)$ induced by the inclusion of the $p$-skeleton $T_p$. For all $p,n$ in $\N$ there are natural isomorphisms
\e
\label{fm2eq9}
 F_{p,n}/F_{p-1,n}\cong E_{p,n-p}^\iy.
\e

Fix $p$. The direct sum $\bigoplus_q E_{p,q}^r$ over all groups in the $p$-th column of the $E^r$-page of the spectral sequence forms a graded module over the graded ring $\bigoplus_{n\in\N}\Om^{\bs\Spin}_n(*)$, and the differentials $d^r_{p,q}$ are linear for this module structure. Moreover, the filtration is compatible with the graded module structure and for each $\al\in\Om_d^{\bs\Spin}(*)$ there is a commutative diagram
\begin{equation*}
\begin{tikzcd}
 0\rar & F_{p-1,n}\rar{\subset}\dar{\cdot\al} & F_{p,n}\dar{\cdot\al}\rar & E_{p,n-p}^\iy\rar\dar{\cdot\al} & 0\\
 0\rar & F_{p-1,n+d}\rar{\subset} & F_{p,n+d}\rar & E_{p,n+d-p}^\iy\rar & 0.\!
\end{tikzcd}
\end{equation*}

To emphasize the dependence of the spectral sequence and of the filtration on the space $T$, we sometimes write $E^r_{p,q}(T)$, $d^r_{p,q}(T)$, and $F_{p,n}(T)$.

The suspension isomorphism $\si:\ti\Om_n^{\bs\Spin}(T)\ra\ti\Om_{n+1}^{\bs\Spin}(\Si T)$ maps the filtration $F_{p,n}$ to $F_{p+1,n+1}$ and so induces a morphism of spectral sequences
\e
\label{fm2eq10}
 E^r_{p,q}(T)\longra E^r_{p+1,q}(\Si T).
\e
On the $E^2$-page the morphism \eq{fm2eq10} coincides with the suspension isomorphism $\si:\ti H_p(T,\Om_q^{\bs\Spin}(*))\ra\ti H_{p+1}(T,\Om_q^{\bs\Spin}(*))$ in ordinary homology. Moreover, we have commutative diagrams
\begin{equation*}
\begin{tikzcd}[column sep=4ex]
	0\rar & F_{p-1,n}(T)\rar{\subset}\arrow[d,"\si","\cong"'] & F_{p,n}(T)\arrow[d,"\si","\cong"']\rar & E^\iy_{p,n-p}(T)\rar\arrow[d,"\si","\cong"'] & 0\\
	0\rar & F_{p,n+1}(\Si T)\rar{\subset} & F_{p+1,n+1}(\Si T)\rar & E^\iy_{p+1,n-p}(\Si T)\rar & 0.\!
\end{tikzcd}
\end{equation*}

In general, the differentials in a spectral sequence are difficult to compute. For the spectral sequence \eq{fm2eq8} it is possible to compute $d^2_{p,0}$, $d^2_{p,1}$ by the following result of Maunder \cite{Maun}, see also \cite[Lem.~5.6]{ABP}.

\begin{prop}
\label{fm2prop1}
For the Atiyah--Hirzebruch spectral sequence of spin bordism, the differentials\/ $d^2_{p,1}:\ti H_p(T,\Z_2)\cong E_{p,1}^2\ra E_{p-2,2}^2\cong \ti H_{p-2}(T,\Z_2)$ on the\/ $E^2$-page are dual to the Steenrod operation\/ $\Sq^2$. Moreover,\/ $d^2_{p,0}=d^2_{p,1}\circ \rho_2$ where\/ $\rho_2:H_p(T,\Z)\ra H_p(T,\Z_2)$ is reduction modulo two.
\end{prop}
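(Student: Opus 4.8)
The plan is to establish both statements by reducing to known facts about the Atiyah--Hirzebruch spectral sequence for ordinary mod 2 homology, exploiting the structure of the spin bordism coefficient ring in low degrees (Table \ref{fm2tab2}). The crucial input is that $\Om_0^{\bs\Spin}(*)=\Z$, $\Om_1^{\bs\Spin}(*)=\Z_2\an{\al_1}$, $\Om_2^{\bs\Spin}(*)=\Z_2\an{\al_1^2}$, so the rows $q=0,1,2$ of the $E^2$-page are $\ti H_p(T,\Z)$, $\ti H_p(T,\Z_2)$, $\ti H_p(T,\Z_2)$ respectively, and multiplication by $\al_1$ gives canonical maps between the $q=0,1$ and $q=1,2$ rows. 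First I would recall the general principle (Maunder \cite{Maun}, or \cite[Lem.~5.6]{ABP}) that for a connective generalized homology theory $h_*$ with $h_0(*)=\Z$, $h_1(*)=\Z_2$ generated by $\eta$, and with the first $k$-invariant of the spectrum being $\Sq^2$ (equivalently, the Hurewicz-type map realizing the relation $2\eta=0$ and the Postnikov structure $\tau_{\le 1}h \simeq H\Z \vee \Sigma^2(\ldots)$ involves $\Sq^2$), the $d^2$ differential out of the $q=1$ line is dual to $\Sq^2$. For spin bordism this Postnikov/$k$-invariant statement is classical: the $1$-truncation of the spin bordism spectrum $M\Spin$ has first nontrivial $k$-invariant $\Sq^2: H\Z \to \Sigma^2 H\Z_2$, equivalently the relevant cohomology operation detecting $\al_1\cdot(-)$ is the dual of $\Sq^2$.

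The key steps, in order, are as follows. (1) Identify the bottom Postnikov tower of $M\Spin$: $\pi_0 = \Z$, $\pi_1 = \Z_2$, $\pi_2 = \Z_2$, and the $k$-invariants $k_1 \in H^2(H\Z;\Z_2)$ and the structure of $\tau_{\le 2}M\Spin$; here $k_1 = \Sq^2\rho_2$ where $\rho_2$ is reduction mod 2, because $H^2(H\Z;\Z_2)=\Z_2\langle \Sq^2\iota\rangle$ (using $\Sq^1\iota=0$ on the integral class) and this $k$-invariant is nonzero (it is detected on $\CP^2$ or $K3$, where $\al_1$ does not divide the fundamental class appropriately). (2) Translate this into the statement about the AHSS: the $d^2$ differential $E^2_{p,q}\to E^2_{p-2,q+1}$ is, on the $q=0$ and $q=1$ rows, induced by the primary operation given by the $k$-invariant. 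Concretely, for $q=1$ we get $d^2_{p,1}: \ti H_p(T,\Z_2)\to \ti H_{p-2}(T,\Z_2)$ equal to the homology operation dual to $\Sq^2: H^{p-2}(T,\Z_2)\to H^p(T,\Z_2)$; this is the content of the first sentence. (3) For $q=0$: the differential $d^2_{p,0}: \ti H_p(T,\Z)\to \ti H_{p-2}(T,\Z_2)$ factors through the $q=1$ differential because the $k$-invariant $k_1$ itself factors as $\Sq^2\circ\rho_2$ through mod 2 reduction — i.e., the composite $H\Z \xrightarrow{\rho_2} H\Z_2 \xrightarrow{\Sq^2} \Sigma^2 H\Z_2$. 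This gives $d^2_{p,0}=d^2_{p,1}\circ\rho_2$ exactly. (4) Naturality in $T$ is automatic since everything — the AHSS, the $k$-invariants, $\Sq^2$, and $\rho_2$ — is natural; one can therefore verify the identification of $d^2$ on a universal example, e.g. by checking it against $T = K(\Z_2,m)$ or Eilenberg--MacLane spaces where the operation $\Sq^2$ is detected tautologically, or simply cite \cite{Maun, ABP}.

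The main obstacle — or rather, the step requiring the most care — is step (1)/(2): pinning down that the relevant $k$-invariant of $M\Spin$ is precisely $\Sq^2$ (and not, say, zero or some other degree-2 operation), and correctly dualizing the cohomology-operation statement into the homology-operation statement governing the AHSS differential. This is standard but is exactly where a sign- or dualization-level error could creep in; the cleanest route is to quote Maunder's theorem \cite{Maun} directly, which establishes $d^2_{p,1} = (\Sq^2)^\vee$ for any connective theory whose spectrum has this bottom Postnikov structure, and then note $M\Spin$ qualifies because $\Om_0^{\bs\Spin}(*)=\Z$, $\Om_1^{\bs\Spin}(*)=\Z_2$, $\Om_2^{\bs\Spin}(*)=\Z_2$ (Table \ref{fm2tab2}) and its first $k$-invariant is nonzero (otherwise $\Om_1^{\bs\Spin}(\CP^2)$ or a similar group would be computed incorrectly). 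Everything else — exactness of the rows, compatibility with $\rho_2$, naturality — is then formal, and I would present it as a short citation-backed argument rather than reproving Maunder's result from scratch.
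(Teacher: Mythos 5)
Your proposal is correct and takes essentially the same route as the paper: the paper offers no proof of Proposition \ref{fm2prop1}, simply quoting it as a result of Maunder \cite{Maun} (see also \cite[Lem.~5.6]{ABP}), which is exactly the fallback you recommend, and your sketch of the underlying argument — identifying the bottom Postnikov $k$-invariants of $M\Spin$ as $\Sq^2\ci\rho_2$ and $\Sq^2$ and dualizing to get the $d^2$ differentials, with $d^2_{p,0}=d^2_{p,1}\ci\rho_2$ following from the factorization of the integral $k$-invariant (equivalently from $\al_1$-linearity of the differentials) — is the standard proof of Maunder's theorem. The only loose point is the parenthetical claim that nontriviality of the first $k$-invariant is ``detected on $\CP^2$ or $K3$,'' which is vague as stated, but harmless since you defer to the citation for that classical fact.
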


By taking the push-forward of the {\it fundamental class} of a compact spin $n$-manifold, we obtain a morphism
\e
 \Psi_n(T):\ti\Om^{\bs\Spin}_n(T)\longra \ti H_n(T,\Z),\quad [f:X\ra T]\longmapsto f_*([X]).
 \label{fm2eq11}
\e

\begin{prop}
\label{fm2prop2}
\e
\begin{aligned}
\text{$\Psi_n(T)$ injective}&\iff \text{$E^\iy_{n-1,1}=\cdots=E^\iy_{0,n}=0$,}\\
\text{$\Psi_n(T)$ surjective}&\iff \text{$d^r_{n,0}:E^r_{n,0}\ra E^r_{n-r,r-1}$ vanish for all\/ $r\geqslant 2$.}
\end{aligned}
\label{fm2eq12}
\e
\end{prop}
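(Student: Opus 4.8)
The plan is to analyze the Atiyah--Hirzebruch spectral sequence \eq{fm2eq8} and keep track of how the morphism $\Psi_n(T)$ of \eq{fm2eq11} interacts with the filtration $F_{p,n}$ of $\ti\Om^{\bs\Spin}_n(T)$. First I would recall the bottom row of the coefficient ring, $\Om^{\bs\Spin}_0(*)=\Z$, so the $E^2$-page in the row $q=0$ is $E^2_{p,0}=\ti H_p(T,\Z)$. Under the edge homomorphism of the spectral sequence, the quotient $\ti\Om^{\bs\Spin}_n(T)/F_{n-1,n}\cong E^\iy_{n,0}$ is identified with the subgroup of $\ti H_n(T,\Z)=E^2_{n,0}$ that survives to the $E^\iy$-page, i.e.\ the intersection of the kernels of all outgoing differentials $d^r_{n,0}:E^r_{n,0}\ra E^r_{n-r,r-1}$ for $r\ge2$. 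Moreover, this edge homomorphism is exactly the map $[f:X\ra T]\mapsto f_*([X])=\Psi_n(T)([f:X\ra T])$, because the fundamental class of a compact spin $n$-manifold generates the top nonzero group in the relevant filtration: this is the standard identification of the edge map of the AHSS for a generalized homology theory with the Hurewicz/fundamental-class map, which I would cite or verify directly from the skeletal description of $F_{p,n}$ given just before \eq{fm2eq9}.

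Granting this, surjectivity of $\Psi_n(T)$ is equivalent to $E^\iy_{n,0}=E^2_{n,0}=\ti H_n(T,\Z)$, i.e.\ to the vanishing of all differentials $d^r_{n,0}$ for $r\ge2$ (the incoming differentials into position $(n,0)$ are automatically zero since $E^r_{n+r,1-r}=0$ for $r\ge2$), which is precisely the second line of \eq{fm2eq12}. For injectivity: the kernel of the edge map $\ti\Om^{\bs\Spin}_n(T)\twoheadrightarrow E^\iy_{n,0}$ is $F_{n-1,n}$, and $F_{n-1,n}$ has a finite filtration with graded pieces $F_{p,n}/F_{p-1,n}\cong E^\iy_{p,n-p}$ for $p=0,1,\dots,n-1$ by \eq{fm2eq9}. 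Hence $\Psi_n(T)$ is injective if and only if $F_{n-1,n}=0$ if and only if all of $E^\iy_{0,n},E^\iy_{1,n-1},\dots,E^\iy_{n-1,1}$ vanish; rewriting the indices $E^\iy_{p,n-p}$ in the order displayed gives $E^\iy_{n-1,1}=\cdots=E^\iy_{0,n}=0$, which is the first line of \eq{fm2eq12}. Note $E^\iy_{p,n-p}=0$ automatically for those $p$ with $\Om^{\bs\Spin}_{n-p}(*)=0$, but that only streamlines checking, not the statement.

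The main obstacle I anticipate is the careful identification of the edge homomorphism of the spin-bordism AHSS with the fundamental-class pushforward $\Psi_n(T)$: one must check that, under the skeletal filtration $F_{p,n}=\Im(\ti\Om^{\bs\Spin}_n(T_p)\ra\ti\Om^{\bs\Spin}_n(T))$, a class $[f:X\ra T]$ lands in $E^\iy_{n,0}=\ti H_n(T,\Z)$ via $f_*[X]$, compatibly with the construction of the AHSS from the skeletal filtration of $T$ (or, dually, of the spectrum $MSpin\wedge T_+$). This is a standard fact---the $q=0$ row of the AHSS for a connective homology theory $h_*$ with $h_0(*)=\Z$ recovers ordinary homology and the edge map is the Hurewicz-type map induced by $MSpin\ra H\Z$ on degree $0$---but it deserves an explicit sentence or a pointer to McCleary \cite{McCl} or Conner \cite{Conn}. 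Everything else is the elementary homological algebra of a bounded filtration with known subquotients $E^\iy_{p,q}$, plus the observation that incoming differentials to the edge position vanish for degree reasons.
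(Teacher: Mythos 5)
Your proposal is correct and follows essentially the same route as the paper: factor $\Psi_n(T)$ through the edge homomorphism $\ti\Om^{\bs\Spin}_n(T)\twoheadrightarrow E^\iy_{n,0}\hookrightarrow E^2_{n,0}=\ti H_n(T,\Z)$ and read off injectivity from $F_{n-1,n}=0$ and surjectivity from $E^\iy_{n,0}=E^2_{n,0}$. The one point you flag as needing care — identifying the edge map with $f_*[X]$ — is handled in the paper by observing that \eq{fm2eq11} is a transformation of homology theories, so it induces a morphism of Atiyah--Hirzebruch spectral sequences whose target collapses to a single row; this is exactly the verification you anticipated.
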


\begin{proof}
Since \eq{fm2eq11} is a transformation of cohomology theories, it induces a morphism of Atiyah--Hirzebruch spectral sequences. The $E^2$-page of the spectral sequence $\ti H_p(T;H_q(*,\Z))\Ra\ti H_{p+q}(T,\Z)$ consists of a single row, so the spectral sequence collapses and we have a trivial filtration with $E^2_{n,0}=\ti H_n(T,\Z)$. Moreover, we see that $\Ker\Psi_n(T)=F_{n-1,n}$. We can thus factor $\Psi_n(T)$ as
\e
 \ti\Om^{\bs\Spin}_n(T)=F_{n,n} \mathbin{\relbar\joinrel\twoheadrightarrow} F_{n,n}/F_{n-1,n}\cong E^\iy_{n,0} \mathbin{\lhook\joinrel\longra}
 E^2_{n,0}=\ti H_n(T,\Z),
 \label{fm2eq13}
\e
where the first map is surjective and the last map is injective. Hence \eq{fm2eq13} is injective if and only if $F_{n-1,n}=0$. Moreover, \eq{fm2eq13} is surjective if and only if $E^2_{n,0}=E^\iy_{n,0}$. These conditions are equivalent to those stated in \eq{fm2eq12}.
\end{proof}

If $E^\iy_{n,0}\subset E^2_{n,0}=\ti H_n(T,\Z)$ is freely generated by homology classes $\al_1,\ldots,\al_k$ of orders $m_1,\ldots,m_k\ge 0$, we can interpret $\Psi_n(T)$ as a morphism $\ti\Om^{\bs\Spin}_n(T)\ra\Z_{m_1}\op\cdots\op\Z_{m_k}$. Suppose that $a_j\in H^n(T,\Z_{m_j})$ are cohomology classes satisfying $\an{a_i,\al_j\bmod{m_i}}=\de_{i,j}$. Then we can write this morphism as
\begin{equation*}
 \Psi_n(T)([X,f])=\bigl(\ts\int_X f^*(a_1),\ldots,\ts\int_X f^*(a_k)\bigr).
\end{equation*}
If \eq{fm2eq12} holds, these give explicit isomorphisms $\ti\Om^{\bs\Spin}_n(T)\cong\Z_{m_1}\op\cdots\op\Z_{m_k}$.

\subsection{Eilenberg--MacLane spaces and Steenrod squares}
\label{fm23}

The differentials in the spectral sequence \eq{fm2eq8} are determined by Steenrod squares, which are linked to Eilenberg--MacLane spaces. As both of these concepts play an important role in what follows, we briefly review them here.

\begin{dfn}
Let $A$ be an abelian group and $n\ge 1$. The {\it Eilenberg--MacLane space} $K(A,n)$ is a connected topological space, well-defined up to homotopy equivalence, such that $\pi_k(K(A,n))=0$ for all $n\neq k$ and $A\cong\pi_n(K(A,n))$.
\end{dfn}

There is a Hurewicz isomorphism
\e
 A\cong\pi_n(K(A,n))\longra H_n(K(A,n),\Z)
\label{fm2eq14}
\e
and a {\it primary class} $e_n \in H^n(K(A,n),A)$ that corresponds under the isomorphism $H^n(K(A,n),A)\cong \Hom(H_n(K(A,n),\Z),A)$ to the inverse of \eq{fm2eq14}.

A useful, alternative definition of $K(A,n)$ arises in obstruction theory and states that $K(A,n)$ is characterized by the property that cohomology classes $u\in H^n(X,A)$ correspond bijectively to homotopy classes of maps $f:X\ra K(A,n)$ satisfying $f^*(e_n)=u$.

\begin{dfn}
For any topological space $X$ and $i\ge 0$ the {\it Steenrod operation} is a group morphism
\begin{equation*}
 \Sq^i: H^*(X,\Z_2)\longra H^{*+i}(X,\Z_2),
\end{equation*}
characterized uniquely by the following axioms:
\begin{itemize}
\setlength{\itemsep}{0pt}
\setlength{\parsep}{0pt}
\item[(a)] {\it Naturality.} For any continuous map $f:X\ra Y$,
\begin{equation*}
 f^*\circ\Sq^i=\Sq^i\circ f^*.
\end{equation*}
\item[(b)] {\it Normalization.} If $x\in H^n(X,\Z_2)$ has degree $n$, then
\begin{align*}
\Sq^0(x)&=x, &\Sq^n(x)&=x\cup x, &\Sq^m(x)&=0\quad \text{for all $m>n$.}
\end{align*}
\item[(c)] {\it Cartan formula.} For all $x,y\in H^*(X,\Z_2)$ we have
\begin{equation*}
 \Sq^n(x\cup y)=\sum_{i+j=n}\Sq^i(x)\cup\Sq^j(y).
\end{equation*}
\end{itemize}
\end{dfn}

It follows from these axioms that $\Sq^i$ is a {\it stable} cohomology operation, meaning that it commutes with suspension isomorphisms. Another important consequence of the axioms is the {\it Adem relation} for all  $i,j>0$, $i<2j$,
\e
\label{fm2eq15}
 \Sq^i\Sq^j = \sum_{k=0}^{\lfloor i/2 \rfloor} \binom{j-k-1}{i-2k} \Sq^{i+j-k} \Sq^k.
\e
The first Steenrod operation $\Sq^1$ is closely connected to the Bockstein homomorphism $\be_2:H^*(X,\Z_2)\ra H^{*+1}(X,\Z)$ associated to the short exact sequence
\begin{equation*}
\begin{tikzcd}[row sep=0]
 0\rar & \Z\rar{\mu_2} & \Z\rar{\rho_2} & \Z_2\rar & 0,
\end{tikzcd}
\end{equation*}
where $\mu_2$ is multiplication by $2$ and $\rho_2$ is reduction modulo $2$. Specifically,
\e
\label{fm2eq16}
 \Sq^1=\rho_2\circ\be_2
\e
Moreover, for all odd $i=2k+1$ there are {\it integral Steenrod operations}
\e
\label{fm2eq17}
 \Sq^{2k+1}_\Z=\be_2\circ\Sq^{2k}:H^*(X,\Z_2)\longra H^{*+2k+1}(X,\Z)
\e
into integral cohomology. The Adem relation $\Sq^1\circ\Sq^{2k}=\Sq^{2k+1}$ implies
\begin{equation*}
 \rho_2\circ\Sq^{2k+1}_\Z=(\rho_2\circ\be_2)\circ\Sq^{2k}=\Sq^1\circ\Sq^{2k}=\Sq^{2k+1}.
\end{equation*}

\begin{ex}
The $\Z_2$-cohomology of the classifying space $B\O(n)$ is a polynomial $\Z_2$-algebra generated by the Stiefel--Whitney classes $w_1,\ldots, w_n$. The {\it Wu formula} calculates the Steenrod squares of these for $i<j$ and states
\begin{equation*}
\Sq^i(w_j)=\sum_{k=0}^i\binom{j-k-1}{i-k}w_{i+j-k}w_t.
\end{equation*}
\end{ex}

A sequence $I=(i_1,i_2,\cdots)$ of natural numbers is {\it admissible} if $i_1\geqslant 2i_2$, $i_2\geqslant2i_3$, $\cdots$. The {\it excess} of an admissible sequence is $e(I)=(i_1-2i_2)+(i_2-2i_3)+\cdots.$ Write $\bar\al\in H^n(X,\Z_2)$ for the reduction modulo two of a cohomology class $\al\in H^n(X,\Z)$. From Serre {\cite[Th.~3]{Serr}} we recall the following theorem.

\begin{thm}
\label{fm2thm1}
Let\/ $n\ge 2$. The cohomology\/ $H^*(K(\Z,n),\Z_2)$ is the polynomial\/ $\Z_2$-algebra on generators\/ $\Sq^I(\bar e_n)$ for all admissible sequences\/ $I=(i_1,i_2,\cdots)$ with excess\/ $e(I)<n$ and all entries\/ $i_k\neq1.$
\end{thm}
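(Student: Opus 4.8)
The plan is to argue by induction on $n\ge 2$, using for each $n$ the path--loop fibration $F\to P\to K(\Z,n)$ with contractible total space $P$ and fibre $F=K(\Z,n-1)$, together with its Serre spectral sequence with $\Z_2$-coefficients. Two classical structural facts do the real work: \emph{Borel's theorem}, that if the fibre cohomology of such a fibration is a polynomial $\Z_2$-algebra on transgressive generators then the base cohomology is a polynomial $\Z_2$-algebra on their transgressions; and the \emph{Kudo transgression theorem}, that if $x$ is transgressive with $\tau(x)=y$ then each $\Sq^i x$ is transgressive with $\tau(\Sq^i x)=\Sq^i y$, so in particular $x^2=\Sq^{\md{x}}x$ is transgressive. (In this monograph the result is simply quoted from Serre \cite{Serr}; the sketch below is the argument behind that citation and uses only these standard spectral-sequence inputs, for which see \cite{McCl}.)

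For the base case $n=2$ we have $K(\Z,2)=\CP^\iy$, so $H^*(K(\Z,2),\Z_2)=\Z_2[\bar e_2]$, which matches the asserted description since the only admissible $I$ with $e(I)<2$ and no entry equal to $1$ is the empty sequence. (One could equally start from $K(\Z,1)=\cS^1$, whose cohomology $\Z_2[\bar e_1]/(\bar e_1^2)$ is exterior on the transgressive class $\bar e_1$, and recover $n=2$ from Borel's theorem.)

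For the inductive step assume the result for $K(\Z,n-1)$ with $n-1\ge2$, so $H^*(F,\Z_2)$ is the polynomial $\Z_2$-algebra on the classes $z_I=\Sq^I(\bar e_{n-1})$ for admissible $I=(i_1,\dots,i_r)$ with $e(I)<n-1$ and all $i_k\ne1$. The fibre fundamental class $\bar e_{n-1}$ transgresses to $\bar e_n$, and transgression commutes with stable operations, so each $z_I$ is transgressive; by Kudo the powers $z_I^{2^k}$ ($k\ge0$) are transgressive, and $\{z_I^{2^k}\}$ is a simple system of generators of the polynomial algebra $H^*(F,\Z_2)$. Borel's theorem then gives that $H^*(K(\Z,n),\Z_2)$ is the polynomial $\Z_2$-algebra on the transgressions $\tau(z_I^{2^k})$. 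Iterating $v^2=\Sq^{\md{v}}v$ and writing $m=\md{z_I}=(n-1)+\md{I}$, one has $z_I^{2^k}=\Sq^{J}(\bar e_{n-1})$ for $J=(2^{k-1}m,\dots,2m,m,i_1,\dots,i_r)$, and this $J$ is \emph{admissible}: the inequalities inside the prefix are equalities $2^jm=2\cdot2^{j-1}m$, those inside $I$ hold by hypothesis, and at the junction $m\ge 2i_1$ since $e(I)=i_1-(i_2+\cdots+i_r)<n-1$. Hence $\tau(z_I^{2^k})=\Sq^{J}(\bar e_n)$.

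It remains to verify that as $(I,k)$ ranges over this index set, $J$ ranges without repetition over exactly the admissible sequences with $e(J)<n$ and all entries $\ne1$. A short computation gives $e(J)=e(I)\le n-2$ when $k=0$ and $e(J)=n-1$ when $k\ge1$, so $e(J)<n$ always, and the entries of $J$ are $\ge m\ge n-1\ge2$ together with the $i_j\ne1$; conversely, given an admissible $J$ with all entries $\ne1$, peeling off the maximal prefix of exact doublings $j_1=2j_2,\ j_2=2j_3,\dots$ recovers the above normal form uniquely (excess is preserved at each peel, which forces $m=(n-1)+\md{I}$ and hence $e(J)<n\iff e(I)<n-1$). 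This also explains the two defining conditions: $e(J)<n$ is precisely the range in which $\Sq^J(\bar e_n)$ is nonzero (it vanishes by instability once $e(J)>n$) and indecomposable (for $e(J)=n$ it is a square), while the exclusion of the entry $1$ records that $\Sq^1(\bar e_n)=\rho_2\be_2(\bar e_n)=0$ by \eq{fm2eq16}, since $\bar e_n=\rho_2(e_n)$ and $\be_2\rho_2=0$ by exactness of the Bockstein sequence. The one step needing genuine care is checking Borel's hypotheses in the form used --- that \emph{every} polynomial generator of $H^*(F,\Z_2)$ is transgressive, which is exactly why the inductive hypothesis must be carried in the precise generator form above --- together with the elementary but delicate excess bookkeeping just described; the rest is the formal spectral-sequence machinery.
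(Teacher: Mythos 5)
The paper does not prove this statement; it simply quotes it from Serre \cite{Serr}, so there is no internal proof to compare against. Your sketch is a correct rendition of the standard argument behind that citation: induction via the path--loop fibration, Kudo's transgression theorem to make the simple system $\{\Sq^I(\bar e_{n-1})^{2^k}\}$ transgressive, Borel's theorem to conclude polynomiality, and the admissibility/excess bookkeeping identifying $\tau(z_I^{2^k})=\Sq^J(\bar e_n)$ with $J$ ranging exactly over admissible sequences of excess $<n$ with no entry equal to $1$ --- all of which you carry out correctly, including the key junction inequality $m>2i_1$ and the disjointness of the cases $e(J)\le n-2$ (from $k=0$) and $e(J)=n-1$ (from $k\ge1$).
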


We shall also need the following cohomology operation. Following Browder--Thomas \cite{BrTh}, the Pontrjagin square is characterized by the following axioms.

\begin{dfn}
\label{fm2def4}
For any topological space $X$ the {\it Pontrjagin square} is a (non-additive) map
\begin{equation*}
 \cP:H^{2*}(X,\Z_2)\longra H^{4*}(X,\Z_4)
\end{equation*}
characterized uniquely by the following axioms:
\begin{itemize}
\setlength{\itemsep}{0pt}
\setlength{\parsep}{0pt}
\item[(a)] {\it Naturality.}
$\cP$ is natural in $X$;
\item[(b)] {\it Addition formula.} For all $\bar x, \bar y\in H^{2n}(X,\Z_2)$ we have
\begin{equation*}
\cP(\bar x+\bar y)=\cP(\bar x)+\cP(\bar y)+\ti\mu_2(\bar x\cup \bar y).
\end{equation*}
\item[(c)] {\it Normalization.} The mod $2$ reduction of $\cP$ is the cup square:
\begin{equation*}
 \bar\rho_2\circ\cP(\bar x)=\bar x^2.
\end{equation*}
If $\bar x=\bar\rho_2(\ti x)$ can be lifted to $\ti x\in H^{2n}(X,\Z_4)$, then
\begin{equation*}
 \cP(\bar\rho_2(\ti x))=\ti x^2.
\end{equation*}
\end{itemize}

Here, $\ti\mu_2$ is the map of coefficients $\Z_2\ra\Z_4$ induced by multiplication by $2$, and $\bar\rho_2$ is the map of coefficients\/ $\Z_4\ra\Z_2$ induced by projection modulo $2$.
\end{dfn}

\subsection{The spectral sequence for the free loop space}
\label{fm24}

Let $T$ be a path-connected topological space with basepoint $t_0\in T$.

There is a more general version of the Atiyah--Hirzebruch spectral sequence for Serre fibrations, which we shall use for the loop space fibration defined as follows. The {\it free loop space} is the set $\cL T$ of all continuous maps $\ga:\cS^1\ra T$, with the compact-open topology. The {\it based loop space} is the subspace of all $\ga\in\cL T$ satisfying $\ga(1)=t_0$. Evaluation at $1\in\cS^1$ defines a Serre fibration
\begin{equation*}
 \pi_T:\cL T\longra T,\quad \ga\longmapsto\ga(1),
\end{equation*}
with fibre $\Om T$, and there is an Atiyah--Hirzebruch spectral sequence
\e
 H_p\bigl(T,\Om_q^{\bs\Spin}(\Om T)\bigr)\Longra \Om_{p+q}^{\bs\Spin}(\cL T).
\label{fm2eq18}
\e

Inclusion of the constant loops defines a section $s_T:T\ra\cL T$ of the fibration, with $\pi_T\circ s_T=\id_T$ and the morphisms $(\pi_T)_*:\Om_n^{\bs\Spin}(\cL T)\ra \Om_n^{\bs\Spin}(T)$ and $(s_T)_*:\Om_n^{\bs\Spin}(T)\ra \Om_n^{\bs\Spin}(\cL T)$ induce a splitting
\e
 \Om_n^{\bs\Spin}(\cL T)=\Om_n^{\bs\Spin}(T)\op \Om_n^{\bs\Spin}(\cL T;T),
\label{fm2eq19}
\e
where $\Om_n^{\bs\Spin}(\cL T;T)$ is the relative spin bordism group. The decomposition \eq{fm2eq19} of $\Om_n^{\bs\Spin}(\cL T)$ corresponds in the spectral sequence \eq{fm2eq18} to the decomposition $\Om_q^{\bs\Spin}(\Om T)=\Om_q^{\bs\Spin}(*)\op\ti\Om_q^{\bs\Spin}(\Om T)$. Therefore \eq{fm2eq18} splits as the direct sum of two spectral sequences, the first being $H_p\bigl(T,\Om_q^{\bs\Spin}(*)\bigr)\Ra \Om_{p+q}^{\bs\Spin}(T)$ and the second being
\e
H_p\bigl(T,\ti\Om_q^{\bs\Spin}(\Om T)\bigr)\Longra \Om_{p+q}^{\bs\Spin}(\cL T;T).
\label{fm2eq20}
\e
If $T$ is a CW complex with $p$-skeleta $T_p\subset T$ for $p\ge 0$, then the filtration of $\Om_n^{\bs\Spin}(\cL T;T)$ is
\e
\label{fm2eq21}
 F_{p,n}=\Im\left(\Om_n^{\bs\Spin}(\pi_T^{-1}(T_p);T_p)\ra\Om_n^{\bs\Spin}(\cL T;T)\right).
\e
In particular, if $T$ is connected, we can take $T_0=\{*\}$ and then $\pi_T^{-1}(T_0)=\Om T$, so the inclusion $j:\Om T\ra\cL T$ of the based loop space into the free loop space induces a map 
\begin{equation*}
 j_*:\ti\Om_n^{\bs\Spin}(\Om T)=\Om_n^{\bs\Spin}(\Om T;*)\longra\Om_n^{\bs\Spin}(\cL T;T)
\end{equation*}
whose image is the first term $F_{0,n}$ of the filtration \eq{fm2eq21}. The spectral sequence \eq{fm2eq20} converges, meaning that $F_{p,n}/F_{p-1,n}\cong E^\iy_{p,n-p}$ for all $p,n$.

\begin{prop}
	\ea
\text{$j_*$ injective in dim.~$n$}&\iff \text{$d^r_{r,n-r+1} : E^r_{r,n-r+1} \ra E^r_{0,n}$ vanish for all\/ $r\geqslant 2$,}\nonumber\\
\text{$j_*$ surjective in dim.~$n$}&\iff \text{$E_{p,n-p}^\iy=0$ for all\/ $p\geqslant 1$.}
\label{fm2eq22}
\ea
\end{prop}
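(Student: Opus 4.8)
The plan is to deduce both equivalences purely formally from the convergent filtration $0=F_{-1,n}\subseteq F_{0,n}\subseteq\cdots$ of $\Om_n^{\bs\Spin}(\cL T;T)$, using the two facts already established above: that $\Im j_*=F_{0,n}$, and that $F_{p,n}/F_{p-1,n}\cong E^\iy_{p,n-p}$ for the spectral sequence \eq{fm2eq20}. Since $F_{-1,n}=0$ this gives $\Im j_*=F_{0,n}\cong E^\iy_{0,n}$, and moreover $j_*$ factors as the edge morphism of the fibration $\Om T\hookrightarrow\cL T\to T$ followed by the successive page-to-page quotients,
\begin{equation*}
\ti\Om_n^{\bs\Spin}(\Om T)\longrightarrow E^2_{0,n}\twoheadrightarrow E^3_{0,n}\twoheadrightarrow\cdots\twoheadrightarrow E^\iy_{0,n}=F_{0,n}\hookrightarrow\Om_n^{\bs\Spin}(\cL T;T),
\end{equation*}
where the first arrow is an isomorphism because every space $T$ of interest in this monograph is simply connected (in general it is the projection of $\ti\Om_n^{\bs\Spin}(\Om T)$ onto its $\pi_1(T)$-coinvariants $H_0(T,\ti\Om_n^{\bs\Spin}(\Om T))=E^2_{0,n}$).

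For injectivity, I would observe that bidegree $(0,n)$ carries no nonzero outgoing differential, since $d^r_{0,n}$ targets bidegree $(-r,n+r-1)$, whose first coordinate is negative; hence $E^{r+1}_{0,n}=E^r_{0,n}/\Im\bigl(d^r_{r,n-r+1}\bigr)$ for all $r\ge 2$, a process terminating once $n-r+1<0$. So $E^2_{0,n}\twoheadrightarrow E^\iy_{0,n}$ is an isomorphism if and only if $d^r_{r,n-r+1}=0$ for every $r\ge 2$; together with the factorization above (and injectivity of the edge morphism), this is exactly the first line of \eq{fm2eq22}.

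For surjectivity, I would first check the filtration is exhaustive: a class in $\Om_n^{\bs\Spin}(\cL T;T)$ is represented by some $\phi\colon X\to\cL T$ with $X$ compact, so $\pi_T\circ\phi(X)$ lies in a finite skeleton $T_p$ of the CW complex $T$, whence $\phi(X)\subseteq\pi_T^{-1}(T_p)$ and the class lies in $F_{p,n}$; and convergence gives $E^\iy_{p,n-p}=0$ hence $F_{p,n}=F_{n,n}$ for $p\ge n$, so $F_{n,n}=\Om_n^{\bs\Spin}(\cL T;T)$. Then $j_*$ surjective $\iff F_{0,n}=F_{n,n}$ $\iff F_{p,n}/F_{p-1,n}=0$ for $1\le p\le n$ $\iff E^\iy_{p,n-p}=0$ for all $p\ge 1$ (the range $p>n$ being automatic), which is the second line of \eq{fm2eq22}.

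The one point I expect to need genuine care is the identification of $j_*$ with the edge morphism composed with the page-quotient maps --- equivalently, that the skeletal filtration of $\cL T$ by the subspaces $\pi_T^{-1}(T_p)$ realizes the split spectral sequence \eq{fm2eq20} with bottom filtration layer $F_{0,n}=\Im j_*$. This is the standard description of the Atiyah--Hirzebruch/Leray--Serre edge homomorphism at the fibre, but one must track the $\pi_1(T)$-action on the coefficient system $\ti\Om_\bullet^{\bs\Spin}(\Om T)$, which is trivial for every $T$ occurring in the monograph; the remainder of the argument is formal bookkeeping with the exact couple of the filtration.
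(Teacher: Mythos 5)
Your proposal is correct and follows essentially the same route as the paper: factor $j_*$ as the surjection $\ti\Om_n^{\bs\Spin}(\Om T)\cong E^2_{0,n}\twoheadrightarrow E^\iy_{0,n}$ (quotienting by the images of the differentials $d^r_{r,n-r+1}$ into position $(0,n)$) followed by the inclusion $F_{0,n}\subset\Om_n^{\bs\Spin}(\cL T;T)$, and then read off injectivity from the vanishing of those differentials and surjectivity from the vanishing of the quotients $E^\iy_{p,n-p}$ for $p\ge 1$. Your extra remarks on exhaustiveness of the filtration and the triviality of the $\pi_1(T)$-action are sensible bookkeeping that the paper leaves implicit.
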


\begin{proof}
We can factor $j_*$ as
\begin{equation*}
 \ti\Om_n^{\bs\Spin}(\Om T) \mathbin{\relbar\joinrel\twoheadrightarrow} E_{0,n}^\iy \mathbin{\lhook\joinrel\longra} \Om_n^{\bs\Spin}(\cL T;T),
\end{equation*}
where the first map projects from $E_{0,n}^2=H_0(T,\ti\Om_n^{\bs\Spin}(\Om T))\cong \ti\Om_n^{\bs\Spin}(\Om T)$ onto $E_{0,n}^\iy$ modulo the images of all differentials into position $(0,n)$ and the second map is the inclusion $F_{0,n}\subset \Om_n^{\bs\Spin}(\cL T;T).$ The first condition in \eq{fm2eq22} is therefore equivalent to the injectivity of $j_*$. The second condition in \eq{fm2eq22} is equivalent to the surjectivity of $j_*$ since $\Om_n^{\bs\Spin}(\cL T;T)$ is obtained from $F_{0,n}$ by a sequence of extension problems with quotients $E_{1,n-1}^\iy,E_{2,n-2}^\iy,\ldots$, so $F_{0,n}=\Om_n^{\bs\Spin}(\cL T;T)$ if and only if all of these groups vanish.
\end{proof}

\subsection{L-equivalence and Thom spaces}
\label{fm25}

Thom \cite{Thom} introduced the spaces $M\SO(k)$ to study which homology classes $\al\in H_{n-k}(X,\Z)$ in an oriented $n$-manifold $X$ can be represented by a compact oriented submanifold $M\subset X$ of codimension $k$. Extending this idea, requiring the structure group of the normal bundle to factor through a fixed representation $\rho:H\ra\SO(k)$ leads, more generally, to the Thom space~$MH$.

\begin{dfn}
\label{fm2def5}
Let $H$ be a Lie group, and $\rho:H\ra\O(k)$ be a morphism of Lie groups for $k\ge 0$. Usually $H$ will be a subgroup of $\O(k)$, such as $\O(k),\SO(k)$ or $\U(k/2)$, and $\rho$ will be the inclusion. But we will also be interested in the case when $H=\Spin(k)$ and $\rho$ is the composition of the double cover $\Spin(k)\twoheadrightarrow\SO(k)$ with the inclusion $\SO(k)\hookra\O(k)$. We will only consider $\rho:H\ra\O(k)$ which factorize via $\SO(k)\hookra\O(k)$, which is needed for the Thom isomorphisms in \eq{fm2eq23}--\eq{fm2eq24} below to work over general~$R$.

Let $X$ be a manifold or paracompact topological space, and $E\ra X$ be a (smooth or topological) rank $k$ vector bundle with a metric $g_E$ on its fibres. An $H$-{\it structure\/} on $E$ means a pair $(Q,q)$ with $Q\ra X$ a (smooth or topological) principal $H$-bundle, so that $(Q\t\O(k))/H\ra X$ is a principal $\O(k)$-bundle, where $H$ acts on $Q$ by the principal bundle action and on $\O(k)$ by $h:A\mapsto A\rho(h)^{-1}$ for $h\in H$ and $A\in\O(k)$, and $q:(Q\t\O(k))/H\ra F_E$ is an isomorphism of $\O(k)$-bundles on $X$, where $F_E\ra X$ is the frame bundle of~$E$.

If the rank $k$ vector bundle $E\ra X$ does not come with a metric $g_E$ on its fibres, then when we want an $H$-structure on $E$, we implicitly choose some $g_E$. As the space of choices of $g_E$ is contractible, and we only care about $H$-structures up to isotopy, this choice is unimportant, and we will usually not mention it.

We will define the {\it Thom space\/} $MH$, a topological space we think of as natural up to homotopy equivalence. It depends on $\rho$ as well as $H$, although we omit this from the notation. Let $BH$ be the classifying space of $H$. There is a tautological rank $k$ vector bundle $E_k\ra BH$, with an $H$-structure. Write $S_k\subset D_k\subset E_k$ for the subbundles with fibres the unit sphere $\bigl\{(x_1,\ldots,x_k)\in\R^k:x_1^2+\cdots+x_k^2=1\bigr\}$ in $\R^k$, and the unit disc $\bigl\{(x_1,\ldots,x_k)\in\R^k:x_1^2+\cdots+x_k^2\le 1\bigr\}$ in $\R^k$, respectively. Then define $MH$ to be the quotient topological space $D_k/S_k$ obtained by collapsing the closed subspace $S_k$ in $D_k$ to a point, written~$\{\iy\}$. 

Observe that a composition $\rho_1=\rho_2\circ \io:H_1\,{\buildrel\io\over\longra}\,H_2\,{\buildrel\rho_2\over\longra}\,\O(k)$ induces a map
\begin{equation*}
 \io_*:MH_1\longra MH_2.
\end{equation*}
If $\rho$ factorizes via $\SO(k)\hookra\O(k)$, then for (co)homology with coefficients in a commutative ring $R$ there are {\it Thom isomorphisms}, see \cite[\S II.2]{Thom},
\ea
H^{n-k}(BH,R)&\overset{\cong}{\longra} \ti H^n(MH,R), & a&\longmapsto \pi^*(a)\cup t,
\label{fm2eq23}\\
\ti H_n(MH,R)&\overset{\cong}{\longra} H_{n-k}(BH,R), & \be &\longmapsto \pi_*(t\cap\be),
\label{fm2eq24}
\ea
defined in terms of relative cup and cap products with the {\it Thom class} $t$ in $\ti H^k(MH,R)\cong H^k(D_\rho, S_\rho,R)$. In particular, $\ti H^n(MH,R)$ and $\ti H_n(MH,R)$ both vanish for $n<k$. If $\rho$ does not factorize via $\SO(k)\hookra\O(k)$, then \eq{fm2eq23}--\eq{fm2eq24} hold for commutative rings $R$ of characteristic 2. For the inverse of the homology Thom isomorphism \eq{fm2eq24} we introduce the notation
\begin{equation*}
 H_n(BH,R)\,{\buildrel\cong\over\longra}\,\ti H_{n+k}(MH,R),\quad \al\longmapsto\al^T.
\end{equation*}
Write $\tau=1^T\in H_k(MH,R)$ for the dual of the Thom class.

Let $s:BH\ra MH$ be composition of the zero section of $D_\rho$ with the projection onto $MH$. The {\it Euler class} of $E_\rho$ is defined by
\begin{equation*}
 e=s^*(t)\in H^k(BH,\Z),
\end{equation*}
and it determines the ring structure on $H^*(MH,\Z)$ according to the rule
\e
t\cup t=\pi^*(e)\cup t.
\label{fm2eq25}
\e

When $X$ is a compact manifold, the {\it Pontrjagin--Thom construction} is a correspondence between homotopy classes of maps $X\ra MH$ and bordism classes of submanifolds $M\subset X$ with a normal $H$-structure. Under the excision isomorphism, we can view the Thom class of the normal bundle of $M$ as an element of $H^k(D\nu_M;S\nu_M,R)\cong H^k(X;X\sm M,R)$ whose image in $H^k(X)$ is Poincar\'e dual to the fundamental class $[M]\in H_{n-k}(X,R)$.
\end{dfn}

Thom spaces were introduced by Ren\'e Thom \cite{Thom}, who studied the question of when a homology class $\al\in H_{n-k}(X,\Z)$ in an $n$-manifold $X$ can be represented by a compact $(n-k)$-submanifold $M\subset X$. In \cite[\S IV.3]{Thom} (see also Novikov \cite[\S 1.1]{Novi}) he defines {\it L-equivalence\/} of submanifolds.

\begin{dfn}
\label{fm2def6}
Let $X$ be an $n$-manifold, possibly with boundary or corners. Recall from Hirsch \cite[Ch.~1, \S 4]{Hirs} that an $(n-k)$-submanifold $M\subset X$ is {\it neat\/} if $M$ intersects $\pd X$ transversely and $\pd M\cong M\cap\pd X.$ For such submanifolds there exist collars of $\pd X$ that restrict to collars of $\pd M$. Moreover, $\pd M\subset\pd X$ is again a neat $(n-k-1)$-submanifold such that $\nu_{\pd M}\cong\nu_M\vert_{\pd M}$ for the normal bundles.

Let $0\le k\le n$ and $\rho:H\ra\O(k)$ be a Lie group morphism. Let $X$ be a compact $n$-manifold without boundary, and consider pairs $(M,\ga_M)$ of a compact, embedded $(n-k)$-submanifold $M\subset X$ without boundary, with normal bundle $\nu_M\ra M$ of rank $k$, and an $H$-structure $\ga_M$ on~$\nu_M$. 

We say that two such pairs $(M_0,\ga_{M_0}),(M_1,\ga_{M_1})$ are {\it L-equivalent\/} if there exists a compact, embedded, neat $n-k+1$-submanifold $N\subset X\t[0,1]$ whose normal bundle $\nu_N\ra N$ is equipped with an $H$-structure $\ga_N$, such that $\pd N=(M_0\t\{0\})\amalg(M_1\t\{1\})$, and $\ga_N\vert_{M_i\t\{i\}}=\ga_{M_i}$ for $i=0,1$. This is an equivalence relation on such pairs $(M,\ga_M)$. Write $\La_k^H(X)$ for the set of L-equivalence classes $[M,\ga_M]$ of pairs~$(M,\ga_M)$.
\end{dfn}

The next theorem is proved by Thom \cite[Th.~IV.6]{Thom} (see also \cite[Th.~II.1]{Thom}) when $H$ is $\SO(k)$ or $\O(k)$, and extended by Novikov \cite[Lem.~2.1]{Novi} to general $H$. The point of the theorem is that we can hope to compute $[X,MH]$ using homotopy theory, and this then tells us about submanifolds of $X$ up to L-equivalence.

\begin{thm}
\label{fm2thm2}
{\bf(Thom \cite{Thom}.)} In Definition\/ {\rm\ref{fm2def6},} there is a natural bijection
\e
\La_k^H(X)\cong [X,MH],
\label{fm2eq26}
\e
with\/ $[X,MH]$ the set of isotopy classes $[f]$ of continuous~$f:X\ra MH$.
\end{thm}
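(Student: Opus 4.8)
The plan is to establish the bijection \eq{fm2eq26} by the Pontrjagin--Thom construction, building maps in both directions and checking that they are mutually inverse; naturality in $X$ (for open inclusions and, more generally, transverse maps, which is all that is used in the rest of the paper) will be visible from the local character of the construction.

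\textbf{From submanifolds to maps.} Given a pair $(M,\ga_M)$ representing a class in $\La_k^H(X)$, I would first choose a tubular neighbourhood $U\subset X$ of $M$ and a diffeomorphism $U\cong D\nu_M$ restricting to the identity on $M$; this exists since $M$ is compact without boundary in the closed manifold $X$, and is unique up to isotopy. By Definition \ref{fm2def5}, the $H$-structure $\ga_M=(Q,q)$ on $\nu_M$ is the same data as a classifying map $g_M:M\to BH$ together with an $H$-equivariant isomorphism $\nu_M\cong g_M^*E_k$ of rank-$k$ bundles with $H$-structure. This extends fibrewise to an identification $D\nu_M\cong g_M^*D_k$ and hence to a continuous map $D\nu_M\to D_k$ carrying $S\nu_M\to S_k$. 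Composing the collapse $X\to X/\overline{(X\sm U)}\cong D\nu_M/S\nu_M$ with the induced map $D\nu_M/S\nu_M\to D_k/S_k=MH$ yields $f_{(M,\ga_M)}:X\to MH$. Its homotopy class is independent of the choice of $U$ by uniqueness of tubular neighbourhoods, and L-equivalent pairs give homotopic maps: running the same construction over an L-equivalence $N\subset X\t[0,1]$, using a \emph{neat} tubular neighbourhood of $N$ (which exists as in Definition \ref{fm2def6}), produces a homotopy rel endpoints. This defines a map $\La_k^H(X)\to[X,MH]$.

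\textbf{From maps to submanifolds.} Given a continuous $f:X\to MH$, compactness of $X$ forces the image to miss a neighbourhood of $\{\iy\}$, so after a homotopy $f$ factors through $D_k\subset MH$. I would then replace $BH$ by a sufficiently high finite-dimensional smooth approximation $BH^{(N)}$ carrying a genuine rank-$k$ vector bundle $E_k^{(N)}$ with the universal $H$-structure; since the inclusion $D(E_k^{(N)})\hookra D_k$ is highly connected, $f$ may be taken with image in the manifold-with-boundary $D(E_k^{(N)})$. Now smoothing $f$ and making it transverse to the zero section $s(BH^{(N)})$ by Thom's transversality theorem, set $M=f^{-1}(s(BH^{(N)}))$, a compact $(n-k)$-submanifold of $X$; the fibrewise derivative of $f$ gives $\nu_M\cong f^*E_k^{(N)}$, and pulling back the universal $H$-structure equips $\nu_M$ with an $H$-structure $\ga_M$. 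A homotopy between two such maps, made transverse to the zero section relative to the ends, has preimage a neat submanifold of $X\t[0,1]$ realizing an L-equivalence, so $[M,\ga_M]\in\La_k^H(X)$ depends only on $[f]$. The two maps are mutually inverse: starting from $(M,\ga_M)$, the collapse map $f_{(M,\ga_M)}$ is by construction transverse to the zero section with preimage exactly $M$ and induced $H$-structure exactly $\ga_M$; conversely, starting from $f$ with $M=f^{-1}(0)$, the maps $f$ and $f_{(M,\ga_M)}$ agree on a disc-bundle neighbourhood of $M$, while on its complement $f$ takes values in $D_k\sm s(BH)$, which deformation retracts (rel boundary) onto $S_k$ and hence onto $\{\iy\}$, where $f_{(M,\ga_M)}$ is constant; this retraction furnishes the required homotopy.

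\textbf{Main obstacle.} The one genuinely delicate point is the transversality step: $MH$ is not a manifold, so Thom transversality does not apply directly. The remedy --- approximating $BH$ by finite-dimensional smooth models and exploiting the high connectivity of the corresponding disc-bundle inclusions --- is standard, but it must be arranged so that the submanifold $M$ and its normal $H$-structure are independent of the approximation and of all intermediate choices, and so that for a general morphism $\rho:H\to\O(k)$ (not necessarily an inclusion), following Novikov, the universal $H$-structure on $E_k$ still pulls back to a well-defined $H$-structure on $\nu_M$ in the sense of Definition \ref{fm2def5}. Everything else reduces to uniqueness of tubular neighbourhoods and relative-transversality arguments.
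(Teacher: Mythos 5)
Your proposal is correct in outline and is the standard Pontrjagin--Thom argument, but in the direction from maps to submanifolds it takes a genuinely different route from the paper. The paper's sketch never makes $f$ itself transverse to anything: it sets $U=X\sm f^{-1}(\iy)$, pulls back the tautological bundle $E_k\ra MH\sm\{\iy\}$ and its tautological section $s_k$ (whose norm tends to $1$ at $\iy$) to $U$, and then perturbs the pulled-back \emph{section} over the finite-dimensional manifold $U$ to a transverse section $\ti s_k$, taking $M=\ti s_k^{-1}(0)$; compactness of $M$ follows because $\nm{\ti s_k}$ stays close to $1$ near $\bar U\sm U$. This sidesteps entirely the obstacle you correctly identify (that $MH$ is not a manifold) without introducing finite-dimensional approximations $BH^{(N)}$, and it makes the normal $H$-structure come for free from $\d\ti s_k\vert_M:\nu_M\ra f^*(E_k)\vert_M$. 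Your route via smooth models $E_k^{(N)}$ and transversality to the zero section is also standard and works, at the cost of the extra bookkeeping you flag (independence of $N$ and of the smoothing).

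One sentence of your backward direction is false as stated: compactness of $X$ does \emph{not} force the image of $f$ to miss a neighbourhood of $\{\iy\}$ (for the collapse map of a nonempty $M$ the image always contains $\iy$), so $f$ need not factor through $D_k$ after a homotopy. This is repairable --- either work on the open set $U=X\sm f^{-1}(\iy)$ as the paper does, or first homotope $f$ so that a neighbourhood of $f^{-1}(\iy)$ is sent to $\iy$ and note that the preimage of the zero section is automatically a compact subset of $U$ --- but as written the transversality step has nothing to be applied to. The rest (forward direction via tubular neighbourhoods, relative versions over $X\t[0,1]$ giving well-definedness and mutual inverseness) matches the paper's intent.
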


\begin{proof}[Sketch proof] Let $f:X\ra MH$ be continuous. Write $U=X\sm f^{-1}(\iy)$, so that $U$ is open in $X$. On $MH\sm\{\iy\}$ there is a tautological rank $k$ vector bundle $E_k\ra MH\sm\{\iy\}$ with $H$-structure, and a section $s_k:MH\sm\{\iy\}\ra E_k$, such that $\lim_{m\ra\iy}\nm{s_k}(m)=1$, that is, the length $\nm{s_k}$ approaches 1 at $\iy$ in~$MH$.

Thus, on $U\subset X$ we have a {\it topological\/} vector bundle $f^*(E_k)\ra U$ with $\O(k)$-structure and $H$-structure, and a {\it continuous\/} section $f^*(s_k)$ of $f^*(E_k)$, whose length $\nm{f^*(s_k)}$ approaches 1 at $\bar U\sm U$ in $X$.

Choose a smooth vector bundle structure on $f^*(E_k)\ra U$ compatible with its $H$-structure. Choose a smooth, generic section $\ti s_k$ of $f^*(E_k)$ which is $C^0$-close to $f^*(s_k)$. Then $\ti s_k$ is transverse, so $M=\ti s_k^{-1}(0)$ is a smooth, embedded $(n-k)$-submanifold of $U$. As $\nm{\ti s_k}$ is close to 1 near $\bar U\sm U$, $M$ cannot approach $\bar U\sm U$, so $M$ is compact, as $X$ is. There is a natural isomorphism $\d\ti s_k\vert_M:\nu_M\ra f^*(E_k)\vert_M$. Thus, $\nu_M$ has an $H$-structure $\ga_M$, induced from that on $f^*(E_k)$. So $[M,\ga_M]\in [X,MH]$. Thom shows that $[M,\ga_M]$ depends only on the isotopy class $[f]$, and the map $[f]\mapsto[M,\ga_M]$ gives the bijection~\eq{fm2eq26}.
\end{proof}

\section{Spin bordism groups of classifying spaces}
\label{fm3}

\subsection{Spin bordism groups of some Thom spaces}
\label{fm31}

For low values of $n$, the structure of $\Om_n^{\bs\Spin}(*)$ was determined by Milnor \cite{Miln} and is given in Table \ref{fm2tab2}. The group $\Om_1^{\bs\Spin}(*)\cong\Z_2$ is generated by $\al_1=[\cS^1_{\rm nb}]$, $\Om_2^{\bs\Spin}(*)\cong\Z_2$ is generated by $\al_1^2=[\cS^1_{\rm nb}\t\cS^1_{\rm nb}]$, $\Om_4^{\bs\Spin}(*)\cong\Z$ is generated by $\al_4=[K3]$, and $\Om_n^{\bs\Spin}(*)=0$ for all other values of $n\le 7$. For each pair of spaces $(T,S)$ with $S\subset T$, the spin bordism groups $\Om_*^{\bs\Spin}(T;S)=\bigoplus_n\Om_n^{\bs\Spin}(T;S)$ are a graded module over the coefficient ring $\bigoplus_n\Om_n^{\bs\Spin}(*)$. In particular, write $\tilde\Om_n^{\bs\Spin}(T)=\Om_n^{\bs\Spin}(T;*)$ for the reduced spin bordism groups.

\subsubsection{Description of generators}
\label{fm311}

Before stating our first theorem, Theorem \ref{fm3thm1}, which calculates the spin bordism groups of Thom spaces, we describe elements of the spin bordism groups that will serve as explicit generators, and describe a convention that allows us to regard these as elements in different spin bordism groups.

Let $H$ be a Lie group with a fixed representation on $\R^4$, and let $MH$ be its Thom space. Each morphism $H_1\ra H_2$ that is compatible with the representations induces a map $MH_1\ra MH_2$. In particular, we apply this construction to the maps $\{1\}\subset\SU(2)$, $\SU(2)\subset \U(2)$, $\SU(2)\ra\Spin(4)$, $\U(2)\subset\SO(4)$, and $\Spin(4)\ra\SO(4)$. Additionally, let $M\SO(4)\rightarrow K(\Z,4)$ be the classifying map of the Thom class in $H^4(M\SO(4),\Z)$ and let $K(\Z,4)\ra K(\Z_2,4)$ be the reduction mod $2$. We summarize the situation in the following diagram:
\begin{equation}
\begin{tikzcd}[column sep=small]
&& M\U(2)\arrow[rd] &&\\
M\{1\}\rar & M\SU(2)=M\Sp(1)\!\!\!\!\!\!\!\! \arrow[ru]\arrow[rd] && M\SO(4)\arrow[r] & K(\Z,4)\dar\\
&& M\Spin(4)\arrow[ru]	&& K(\Z_2,4).\!
\end{tikzcd}
\label{fm3eq1}
\end{equation}
Each map induces a morphism of spin bordism groups, so we can regard elements in $\ti\Om_n^{\bs\Spin}(MH_1)$ as elements in $\ti\Om_n^{\bs\Spin}(MH_2)$, in $\ti\Om_n^{\bs\Spin}(K(\Z,4))$, or in $\ti\Om_n^{\bs\Spin}(K(\Z_2,4))$. {\it We use the same letter to denote these elements.}

In order to have explicit generators that can be used in both Theorem \ref{fm3thm1} and Theorem \ref{fm3thm2} below, we will present them in a form that makes it clear whether they can be lifted along $\hat\xi^{\bs\Spin}_{n-1}(MH)$. For $T=MH$, an element in the image of $\hat\xi^{\bs\Spin}_{n-1}(MH)$ has the form $[X\t\cS^1_{\rm b}, M\t\{1\}]$, where $X$ is a compact spin $(n-1)$-manifold and the normal bundle of $M\t\{1\}$ has an $H$-structure.

\paragraph{Generator $\de$ in Dimension 4.}

The normal bundle of a point $*$ in $\cS^3\t \cS^1_{\rm b}$ is trivial, so
\begin{equation*}
\de=[\cS^3\t\cS^1_{\rm b},\{*\}] \quad\text{in}\quad\ti\Om_4^{\bs\Spin}(M\{1\}).
\end{equation*}

\paragraph{Generator $\varep$ in Dimension 6.}

Recall that both $\cS^6\cong G_2/\SU(3)$ and $\cS^2\cong\CP^1$ are almost complex manifolds. In particular, the normal bundle of $\cS^2\subset\cS^6$ has structure group $\U(2)$. The normal bundle of $\cS^2\subset\cS^5\t\cS^1_{\rm b}$ is isomorphic to the normal bundle of $\cS^2\subset \cS^6$, so also has structure group $\U(2)$. Define
\begin{equation*}
\varepsilon=[\cS^5\t\cS^1_{\rm b},\cS^2] \quad\text{in}\quad\ti\Om_6^{\bs\Spin}(M\U(2)).
\end{equation*}

\paragraph{Generators $\ze_1,\frac{\ze_1}{2},\frac{\ze_1}{4},\ze_2,\ze_2',\ze_3$ in Dimension 8.}

The $K3$ surface as a submanifold of $K3\t\cS^3\t\cS^1_{\rm b}$ has trivial normal bundle, so
\e
\ze_1=[K3\t\cS^3\t\cS^1_{\rm b},\,K3\t\{N\}\t\{1\}]\quad\text{in}\quad \ti\Om_8^{\bs\Spin}(M\{1\}),
\label{fm3eq2}
\e
where $N=(1,0,0,0)\in\cS^3$. We next construct a natural divisor $\frac{\ze_1}{2}$ of the image of $\ze_1$ in $\ti\Om_8^{\bs\Spin}(M\U(2))$, and a natural divisor $\frac{\ze_1}{4}$ of the image of $\frac{\ze_1}{2}$ in $\ti\Om_8^{\bs\Spin}(M\SO(4))$. Recall that the $K3$ surface has involutions $\al, \be$ such that if $I,J,K$ are hyperk\"ahler complex structures on $K3$, then $\al$ maps $I\mapsto I,$ $J\mapsto -J,$ $K\mapsto -K$ and $\be$ maps $I\mapsto -I,$ $J\mapsto J,$ $K\mapsto -K.$ Then $\Z_2\an{\al,\be}$ acts freely on $K3$. Let $\Z_2\an{\al,\be}$ act on $\cS^3$ by $\al(x_0,x_1,x_2,x_3)=(x_0,x_1,-x_2,-x_3)$ and $\be(x_0,x_1,x_2,x_3)=(x_0,-x_1,x_2,-x_3)$. 

Now $N$ is a fixed point of this action and one of $\{\al,\be\}$ can be made to commute with the obvious complex structure on the (trivial) normal bundle. If, say, $\al$ commutes with the complex structure, then $K3$ has a normal $\U(2)$-structure when viewed as a submanifold of the compact spin $8$-manifold $(K3\t\cS^3)/\Z_2\an{\al}\t\cS^1_{\rm b}$ and a normal $\SO(4)$-structure as a submanifold of the compact spin $8$-manifold $(K3\t\cS^3)/\Z_2\an{\al,\be}\t\cS^1_{\rm b}$. This defines
\e
\begin{aligned}
\frac{\ze_1}{2}&=[(K3\t\cS^3)/\Z_2\an{\al}\t\cS^1_{\rm b},K3\t\{N\}\t\{1\}] &&\text{in}\;\>\ti\Om_8^{\bs\Spin}(M\U(2)),\\
\frac{\ze_1}{4}&=[(K3\t\cS^3)/\Z_2\an{\al,\be}\!\t\!\cS^1_{\rm b},K3\!\t\!\{N\}\!\t\!\{1\}] &&\text{in}\;\>\ti\Om_8^{\bs\Spin}(M\SO(4)).
\end{aligned}
\label{fm3eq3}
\e
We will show that indeed $\ti\Om_8^{\bs\Spin}(M\{1\})\ra\ti\Om_8^{\bs\Spin}(M\U(2))$ maps $\ze_1\mapsto 2\frac{\ze_1}{2}$ and $\ti\Om_8^{\bs\Spin}(M\U(2))\ra\ti\Om_8^{\bs\Spin}(M\SO(4))$ maps $\frac{\ze_1}{2}\mapsto 2\frac{\ze_1}{4}$, thus justifying the notation.

\smallskip

Recall that a {\it spin structure\/} on an oriented Euclidean vector bundle $W\ra M$ of rank $4$ is given by a pair of quaternionic line bundles $\Si^\pm_W\ra M$ and an isomorphism $W\cong \Hom_\H(\Si^-_W,\Si^+_W)$. In particular, a quaternionic line bundle structure on $W$ induces a spin structure on $W$ with $\Si^+_W=W$ and $\Si^-_W$ trivial.

The normal bundle of the embedding $\HP^1\hookrightarrow\HP^2$, $[q_0:q_1]\mapsto[q_0:q_1:0]$ is $\nu\cong\Hom_\H(\cL,\H)$, where $\cL$ is the tautological quaternionic line bundle over $\HP^1$. In particular, $\nu$ has a $\Spin(4)$-structure with spinor bundles $\Si_\nu^-=\cL$ and $\Si_\nu^+$ trivial. Let $\ov{\HP}^1$ be $\HP^1$ with the reversed orientation. There is then similarly an embedding $\ov{\HP}^1\hookrightarrow\HP^2$, $[q_0:q_1]\mapsto[\bar q_0:\bar q_1:0]$ whose normal bundle $\nu\cong\cL$ is a quaternionic line bundle.
Define
\e
\begin{aligned}
\ze_2'&=[\HP^2,\HP^1]-[\HP^2,\es]&&\text{in}\quad\ti\Om_8^{\bs\Spin}(M\Spin(4)),\\
\ze_2&=[\HP^2,\overline{\HP}^1]-[\HP^2,\es] &&\text{in}\quad\ti\Om_8^{\bs\Spin}(M\SU(2)).
\end{aligned}
\label{fm3eq4}
\e
Here we subtract $[\HP^2,\es]$ to get elements of $\ti\Om_8^{\bs\Spin}(T)\subset\Om_8^{\bs\Spin}(T)$.

Finally, $\CP^2\subset\CP^3$ has a complex normal bundle, and we define
\e
\ze_3=[\CP^3\t(\cS^1_{\rm b})^2, \CP^2\t\{1\}^2]\quad\text{in}\quad\ti\Om_8^{\bs\Spin}(M\U(2)).
\label{fm3eq5}
\e

\paragraph{Generator $\eta$ in Dimension 9.}

The {\it Wu manifold\/} $\SU(3)/\SO(3)$ is a compact $5$-manifold that is oriented, but not spin; it is a submanifold of the $3$-sphere bundle $(\SU(3)\t\cS^3)/\SO(3)$, where $\SO(3)$ acts on $\SU(3)$ by translation and on $\cS^3$ by rotation, by embedding it as $(\SU(3)\t\{N\})/\SO(3)$. Define
\e
\eta=[(\SU(3)\t\cS^3)/\SO(3)\t\cS^1_{\rm b},\SU(3)/\SO(3)\t\{1\}]\quad\text{in}
\quad\ti\Om_9^{\bs\Spin}(M\SO(4)).
\label{fm3eq6}
\e

\subsubsection{Statement of theorem}
\label{fm312}

For an abelian group $A$, the notation $A=\Z_m\an{a_1,\ldots,a_k}$ indicates that $A$ is freely generated by the elements $a_1,\ldots,a_k\in A$ of order $m$.

The following theorem is proved in~\S\ref{fm15}.

\begin{thm}
\label{fm3thm1}
{\bf(a)} In dimensions\/ $n\le 9$ the spin bordism groups\/ $\ti{\Omega}_n^{\bs\Spin}(T)$ of\/ $T=M\SU(2),$ $M\U(2),$ $M\Spin(4),$ $M\SO(4),$ $K(\Z,4),$ and\/ $K(\Z_2,4)$ are as shown in Table {\rm\ref{fm3tab1},} with generators $\de,\ep,\ldots$ as in\/ {\rm\S\ref{fm311}} and\/~$\al_1=[\cS^1_{\rm nb}]$.
\smallskip

\begin{table}[!ht]
\renewcommand{\arraystretch}{1.5}
\centering

\begin{tabular}{P{0.3cm}|P{2.75cm}|P{2.75cm}|P{2.75cm}}
$n$ & $\ti\Om_n^{\bs\Spin}(M\SU(2))$ & $\ti\Om_n^{\bs\Spin}(M\U(2))$ & $\ti\Om_n^{\bs\Spin}(M\Spin(4))$\\ \hline
$4$   &  $\Z\an{\de}$   &   $\Z\an{\de}$  &   $\Z\an{\de}$\\
$5$   &  $\Z_2\an{\al_1\de}$  &   &  $\Z_2\an{\al_1\de}$\\
$6$   &  $\Z_2\an{\al_1^2\de}$  &  $\Z\an{\varepsilon}$  & $\Z_2\an{\al_1^2\de}$\\
$8$   &  $\Z\an{\ze_1,\ze_2}$  &  $\Z\an{\frac{\ze_1}{2},\ze_2,\ze_3}$           &  $\Z\an{\ze_1,\ze_2,\ze_2'}$\\
$9$   &  $\Z_2\an{\al_1\ze_2}$   & $\Z_2\an{\al_1\ze_2}$  &  $\Z_2\an{\al_1\ze_2,\al_1\ze_2'}$\\
\end{tabular}
\medskip

\centering
\begin{tabular}{P{0.3cm}|P{2.75cm}|P{2.75cm}|P{2.75cm}}
$n$ & $\ti\Om_n^{\bs\Spin}(M\SO(4))$ & $\ti\Om_n^{\bs\Spin}(K(\Z,4))$ & $\ti\Om_n^{\bs\Spin}(K(\Z_2,4))$ \\ \hline
$4$   &  $\Z\an{\de}$  &  $\Z\an{\de}$ & $\Z_2\an{\de}$\\
$8$   &  $\Z\an{\frac{\ze_1}{4},\ze_2,\ze_3}$  & $\Z\an{\ze_2,\ze_3}$ & $\Z_4\an{\ze_2}$\\
$9$   &  $\Z_2\an{\al_1\frac{\ze_1}{4},\al_1\ze_2,\eta}$   &  $\Z_2\an{\al_1\ze_2}$  &  $\Z_2\an{\al_1\ze_2}$\\
\end{tabular}

\caption{The spin bordism groups in all dimensions $n\le 9$ of various Thom spaces and of\/ $K(\Z,4),K(\Z_2,4)$. A missing entry stands for the trivial group. If\/ $n\le 9$ is omitted, all groups $\ti\Om_n^{\bs\Spin}(T)$ are trivial for these $T$. }
\label{fm3tab1}
\end{table}

\noindent{\bf(b)} The morphisms between the groups in Table {\rm\ref{fm3tab1}} induced by the maps in \eq{fm3eq1} are as indicated by the notation \textup(or obviously trivial\textup) and the rules
\e
\label{fm3eq7}
\begin{aligned}
 \ti\Om_*^{\bs\Spin}(M\SO(4))&\longra\ti\Om_*^{\bs\Spin}(K(\Z,4)), &\ts\frac{\ze_1}{4}&\longmapsto 0,\enskip \eta\longmapsto\al_1\ze_2,\\
\ti\Om_*^{\bs\Spin}(M\Spin(4))&\longra\ti\Om_*^{\bs\Spin}(M\SO(4)), &\ze_2'&\longmapsto \ts\frac{\ze_1}{4}+\ze_2+4\ze_3,\\
 \ti\Om_*^{\bs\Spin}(M\U(2))&\longra\ti\Om_*^{\bs\Spin}(K(\Z_2,4)),& \ze_3&\longmapsto 0.
\end{aligned}
\e

\noindent{\bf(c)} The action of\/ $\al_1\in\Om_1^{\bs\Spin}(*)$ is as indicated by the notation and the rules
\e
\al_1\ze_1=0,\qquad \al_1\frac{\ze_1}{2}=0,\qquad \al_1\ze_3=0.
\label{fm3eq8}
\e

\noindent{\bf(d)} Representing elements of\/ $\ti\Om_n^{\bs\Spin}(MH)$ as pairs\/ $[X,M],$ where\/ $X$ is a compact spin\/ $n$-manifold and\/ $M\subset X$ is a submanifold with an\/ $H$-structure on its normal bundle\/ $\nu_M,$ we have the following explicit isomorphisms.

For\/ $M\SU(2)$ we have explicit isomorphisms
\ea
 &\ti\Om_4^{\bs\Spin}(M\SU(2))\overset\cong\longra\Z, &[X,M]&\longmapsto\# M, &
\label{fm3eq9}\\
 &\ti\Om_8^{\bs\Spin}(M\SU(2))\overset\cong\longra\Z^2, &[X,M]&\longmapsto\left(-\ts\frac{\operatorname{sign}(M)}{16},\ts\int_M c_2(\nu_M)\right), &
\label{fm3eq10}
\ea
which map\/ $\de\mapsto 1$ and\/ $\ze_1\mapsto(1,0),$ $\ze_2\mapsto(0,1)$.

For\/ $M\U(2)$ we have explicit isomorphisms
\ea
 &\ti\Om_4^{\bs\Spin}(M\U(2))\overset\cong\longra\Z, &[X,M]&\longmapsto\# M,&
\label{fm3eq11}\\
 &\ti\Om_6^{\bs\Spin}(M\U(2))\overset\cong\longra\Z,&[X,M]&\longmapsto\ts -\frac{1}{2}\int_M c_1(\nu_M),&
\label{fm3eq12}\\
  &\ti\Om_8^{\bs\Spin}(M\U(2))\overset\cong\longra\Z^3,& &
\label{fm3eq13}\\
 &[X,M]\longmapsto\mathrlap{\left(-\ts\frac{\operatorname{sign}(M)}{8} + \ts\int_M \frac{c_1(\nu_M)^2}{8}, \ts\int_M c_2(\nu_M), \ts\int_M c_1(\nu_M)^2\right),}
\nonumber
\ea
which map\/ $\de\mapsto 1,$ $\varep\mapsto 1,$ and\/ $\frac{\ze_1}{2}\mapsto(1,0,0),$ $\ze_2\mapsto (0,1,0),$ $\ze_3\mapsto(0,0,1)$.

For\/ $M\Spin(4)$ we have explicit isomorphisms
\ea
 &\ti\Om_4^{\bs\Spin}(M\Spin(4))\overset\cong\longra\Z, &[X,M]&\longmapsto\# M, &
\label{fm3eq14}\\
 &\ti\Om_8^{\bs\Spin}(M\Spin(4))\overset\cong\longra\Z^3,& &
\label{fm3eq15}\\
 &[X,M]\longmapsto\mathrlap{\left(\ts-\frac{\operatorname{sign}(M)}{16},\ts\int_M c_2(\Si^+_{\nu_M}),\ts\int_M c_2(\Si^-_{\nu_M})\right),}
\nonumber&
\ea
which map\/ $\de\mapsto 1$ and\/ $\ze_1\mapsto(1,0,0),$ $\ze_2\mapsto(0,1,0),$ $\ze_2'\mapsto(0,0,-1)$.

For\/ $M\SO(4)$ we have explicit isomorphisms
\ea
 &\ti\Om_4^{\bs\Spin}(M\SO(4))\overset\cong\longra\Z, &[X,M]&\longmapsto\# M,
\label{fm3eq16} &\\
 &\ti\Om_8^{\bs\Spin}(M\SO(4))\overset\cong\longra\Z^3, &[X,M]&\longmapsto &
\label{fm3eq17}\\
 &\mathrlap{\left(\ts-\frac{\operatorname{sign}(M)}{4}+\ts\int_M[\frac{e(\nu_M)}{2}+\frac{p_1(\nu_M)}{4}], \ts\int_M e(\nu_M), \ts\int_M (2e(\nu_M)+p_1(\nu_M))\right),}
 \nonumber&
\ea
which map\/ $\de\mapsto 1$ and \/ $\frac{\ze_1}{4}\mapsto(1,0,0),$ $\ze_2\mapsto(0,1,0),$ $\ze_3\mapsto(0,0,1)$.

For\/ $K(\Z,4)$ we have explicit isomorphisms
\ea
 &\ti\Om_4^{\bs\Spin}(K(\Z,4))\overset\cong\longra\Z, &[X,\al]&\longmapsto\ts\int_X\al,&
\label{fm3eq18}\\
 &\ti\Om^{\bs\Spin}_8(K(\Z,4))\overset\cong\longra\Z^2,
 &[X,\al]&\longmapsto\left(\ts\int_X\al^2,\ts\int_X\al\cup\frac{p_1(TX)+2\al}{4}\right),
\label{fm3eq19}
\ea
which map\/ $\de\mapsto 1$ and\/ $\ze_2\mapsto(1,0),$ $\ze_3\mapsto (0,1)$.

For\/ $K(\Z_2,4)$ we have explicit isomorphisms
\ea
 &\ti\Om_4^{\bs\Spin}(K(\Z,4))\overset\cong\longra\Z_2, &[X,\bar\al]&\longmapsto\ts\int_X\bar\al,
\label{fm3eq20}\\
 &\ti\Om^{\bs\Spin}_8(K(\Z_2,4))\overset\cong\longra\Z_4, &[X,\bar\al]&\longmapsto\ts\int_X\cP(\bar\al),&
\label{fm3eq21}
\ea
which map\/ $\de\mapsto 1$ and\/ $\ze_2\mapsto 1$. Here\/ $\cP$ denotes the \begin{bfseries}Pontrjagin square\end{bfseries} from Definition~{\rm\ref{fm2def4}}.
\end{thm}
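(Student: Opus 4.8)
The plan is to run an Atiyah--Hirzebruch spectral sequence (AHSS) for spin bordism, $\ti H_p(T,\Om_q^{\bs\Spin}(*))\Ra\ti\Om_{p+q}^{\bs\Spin}(T)$, separately for each of the six spaces $T$, with the coefficient groups $\Om_q^{\bs\Spin}(*)$ for $q\le9$ supplied by Table \ref{fm2tab2}. The first step is to assemble the $E^2$-pages, i.e.\ to compute ordinary (co)homology in degrees $\le9$. For the four Thom spaces this follows at once from the Thom isomorphism \eq{fm2eq23}--\eq{fm2eq24}, using the classical rings $H^*(B\SU(2),R)=R[c_2]$, $H^*(B\U(2),R)=R[c_1,c_2]$ and $H^*(B\Spin(4),R)=R[c_2(\Si^+),c_2(\Si^-)]$ for $R=\Z,\Z_2$ (so the relevant reduced (co)homology of $M\SU(2),M\U(2),M\Spin(4)$ is concentrated in a handful of cells), and $H^*(B\SO(4),\Z)$ being a polynomial ring on $p_1,e$ modulo $2$-torsion generated by $W_3=\be_2 w_2$, with $H^*(B\SO(4),\Z_2)=\Z_2[w_2,w_3,w_4]$. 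For $K(\Z,4)$ and $K(\Z_2,4)$ one instead invokes Serre's Theorem \ref{fm2thm1} together with the standard integral computations through degree $9$. Each $E^2$-page is then a small finite diagram, and on the $E^2$-level one also records the maps induced by \eq{fm3eq1} (from naturality of the Thom isomorphism) and the action of $\al_1$.

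Next I would run the differentials. The $d^2$ out of the rows $q=0,1$ is dual to $\Sq^2$ by Proposition \ref{fm2prop1}, and the relevant Steenrod squares are computed via the Wu formula and the Adem relations (on $M\SO(4)$ this relates the classes built from $w_2,w_3,w_4$ and annihilates the $2$-torsion coming from $W_3 t$; on $K(\Z,4)$ one needs $\Sq^2\bar e_4$ and $\Sq^3\bar e_4$; on $K(\Z_2,4)$ one uses $\Sq^2\bar e_4$). This forces almost every entry of Table \ref{fm3tab1}. What is left is (i) a few genuinely higher differentials landing in total degree $8$ and $9$, and (ii) the extension problems reassembling $E^\iy$ into the bordism groups; both are settled geometrically. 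Each generator $\de,\varep,\ze_1,\tfrac{\ze_1}{2},\tfrac{\ze_1}{4},\ze_2,\ze_2',\ze_3,\eta$ of \S\ref{fm311} is an explicit pair $[X,M]$; computing characteristic numbers of $X$ and of $\nu_M$ together with $\sign(M)$ shows these classes are non-torsion of the stated order, verifies $\ze_1=2\tfrac{\ze_1}{2}$ and $\tfrac{\ze_1}{2}=2\tfrac{\ze_1}{4}$, and shows they span exactly what $E^\iy$ predicts. In particular the non-split extension responsible for $\ti\Om_8^{\bs\Spin}(M\U(2))$ being $\Z^3$ rather than $\Z^2\op\Z_2$ is pinned down by $\ze_1=2\tfrac{\ze_1}{2}$, the non-split $\Z_4=\ti\Om_8^{\bs\Spin}(K(\Z_2,4))$ by the Pontrjagin square of Definition \ref{fm2def4} together with $\ze_2$ having order $4$, and the Wu-manifold class $\eta$ (whose submanifold is not spin) accounts for the $\Z_2$ summand in $\ti\Om_9^{\bs\Spin}(M\SO(4))$ not of the form $\al_1\cdot(\text{degree }8)$.

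For part (d), one checks that each displayed combination of characteristic numbers is a spin-bordism invariant and then evaluates it on the generators using the group structure already determined. Well-definedness rests on three integrality inputs: Rokhlin's theorem ($16\mid\sign$ for a spin $4$-manifold), which applies to the codimension-$4$ submanifold $M$ exactly when $\nu_M$ carries an $\SU(2)$- or $\Spin(4)$-structure, because then $w_2(TM)=w_2(\nu_M)=0$; the Atiyah--Singer index theorem for the Dirac operator on $M$ twisted by a bundle built from $\nu_M$ (using $TM\op\nu_M\cong TX|_M$ to obtain the requisite twisted spin structure on $M$), which produces the denominators $8,16,4$ and the $p_1(TX),e(\nu_M),p_1(\nu_M)$ corrections in \eq{fm3eq10}, \eq{fm3eq13}, \eq{fm3eq15}, \eq{fm3eq17}; and, for the Eilenberg--MacLane spaces, $\hat A$-integrality on spin $8$-manifolds together with the axioms of the Pontrjagin square for \eq{fm3eq21} (Proposition \ref{fm2prop2} handling the low-degree cases). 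That each such invariant vanishes on a bordism $N\subset Y\t[0,1]$ is the standard characteristic-number argument. Parts (b) and (c) then follow: the maps in \eq{fm3eq1} are induced by maps of spaces, so naturality of the Thom isomorphism plus the compatibility built into the definitions of \S\ref{fm311} reduces each induced morphism to substituting the explicit representatives into the formulas of (d); and the $\al_1$-action is $[X,M]\mapsto[X\t\cS^1_{\rm nb},M\t\cS^1_{\rm nb}]$, with $\al_1\ze_1=\al_1\al_4\de\in\ti\Om_9^{\bs\Spin}(M\{1\})\cong\Om_5^{\bs\Spin}(*)=0$, with $\al_1\tfrac{\ze_1}{2}=0$ for filtration reasons (the corresponding piece of $\ti\Om_9^{\bs\Spin}$ is zero), and with $\al_1\ze_3=0$ by naturality along $M\U(2)\ra K(\Z_2,4)$, under which $\ze_3\mapsto0$ while the induced map on $\ti\Om_9^{\bs\Spin}$ is injective.

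I expect the main obstacle to be the second step, and specifically the extension problems (and the few honestly higher differentials) in total degree $8$. These are invisible to $\Sq^2$: one must produce the explicit manifolds-with-submanifolds of \S\ref{fm311}, verify the divisibility and index-theoretic identities that make their characteristic numbers well defined, and use them both to detect (or kill) the remaining differentials and to distinguish $\Z$ from $\Z\op\Z_2$ --- above all, to establish the non-split $\Z_4$ computing $\ti\Om_8^{\bs\Spin}(K(\Z_2,4))$.
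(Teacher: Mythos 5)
Your overall architecture coincides with the paper's: AHSS for each $T$, $E^2$-pages from the Thom isomorphism and Serre's computation, $d^2$ dual to $\Sq^2$ via Proposition \ref{fm2prop1}, then explicit generators and characteristic numbers to fix the extensions and verify (b)--(d). Your arguments for (c) (suspension for $\al_1\ze_1$, filtration for $\al_1\tfrac{\ze_1}{2}$, naturality into $K(\Z_2,4)$ for $\al_1\ze_3$) and your integrality inputs for (d) are the paper's arguments.

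The gap is in your claim that the remaining higher differentials are ``settled geometrically'' by the explicit generators. Explicit pairs $[X,M]$ and their characteristic numbers only ever give \emph{lower} bounds on the bordism groups, i.e.\ they can force differentials to \emph{vanish}; they cannot show that a class on the $E^3$-page \emph{dies}. But several of the decisive higher differentials here are nonzero, and the theorem is false without them. Concretely: for $K(\Z,4)$ the $E^3$-page in total degree $9$ has two $\Z_2$'s, at $(7,2)$ and $(8,1)$, and one must prove $d^3_{10,0}\colon E^3_{10,0}\to E^3_{7,2}$ is onto to get $\ti\Om_9^{\bs\Spin}(K(\Z,4))\cong\Z_2$ rather than a group of order $4$; for $K(\Z_2,4)$ one must prove $d^3_{7,2}\colon E^3_{7,2}\to E^3_{4,4}$ is nonzero to get $\ti\Om_8^{\bs\Spin}(K(\Z_2,4))\cong\Z_4$ rather than order $8$ (the Pontrjagin square on $\ze_2$ only gives $\Z_4$ as a subgroup). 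Nothing in your toolkit produces these upper bounds. The paper obtains them by importing Stong's independent computation of $\ti\Om_*^{\bs\Spin}(K(\Z,4))$ \cite{Ston2} together with naturality against auxiliary spaces ($K(\Z,3)\to K(\Z,4)$ via suspension, and $K(\Z,2)\w K(\Z,2)\to K(\Z,4)$, Lemma \ref{fm15lem1}), and, for $K(\Z_2,4)$, by a contradiction argument comparing the filtration of $\ti\Om_8^{\bs\Spin}(K(\Z,4))\cong\Z^2$ (in which the generator of $F_{4,8}$ becomes divisible by $4$) with that of $\ti\Om_8^{\bs\Spin}(K(\Z_2,4))$ under $\psi_*$. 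These nonvanishing statements then propagate by naturality to $M\SO(4)$ and the other Thom spaces; a related upper-bound issue recurs for the extension in $\ti\Om_9^{\bs\Spin}(M\SO(4))$, where showing $\eta$ generates a direct $\Z_2$ summand (rather than the group being $\Z_2\op\Z_4$) requires $\phi_*(\eta)\ne0$ in $\ti\Om_9^{\bs\Spin}(K(\Z,4))$, proved via a mapping-cone computation (Lemmas \ref{fm15lem2}--\ref{fm15lem3}). You need to add an argument of this kind, or some other independent source of upper bounds, before the degree $8$ and $9$ entries of Table \ref{fm3tab1} are established.
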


\subsection{Spin bordism groups of some free loop spaces}
\label{fm32}

In \S\ref{fm91} we will show that the orientability of a large class of moduli spaces depends only on the images of the maps $\hat\xi^{\bs\Spin}_{n-1}(T)$ from \eq{fm2eq7}. The next theorem computes the spin bordism groups of free loop spaces of various Thom spaces $MH$ and the image of $\hat\xi^{\bs\Spin}_{n-1}(MH):\Om_{n-1}^{\bs\Spin}(\cL MH;MH)\ra\ab\ti\Om_n^{\bs\Spin}(MH)$, which we describe explicitly using the elements $\de,\ab\varep,\ab\ze_1,\ab\frac{\ze_1}{2},\ab\frac{\ze_1}{4},\ab\ze_2,\ab\ze_2',\ab\ze_3,\eta$ constructed in \S\ref{fm31}. The following theorem is proved in~\S\ref{fm18}.

\begin{thm}
\label{fm3thm2}
{\bf(a)} In dimensions\/ $n\le 8$ the relative spin bordism groups of the free loop space of\/ $MH$ are as given in Table~{\rm\ref{fm3tab2}}.

\begin{table}[!ht]
\renewcommand{\arraystretch}{1.5}
\begin{tabular}{p{4.7cm}|ccccccc}
$n$ & \centering $0,1,2$ & $3$ & $4$ & $5$ & $6$ & $7$ & $8$\\
\hline
$\Om_n^{\bs\Spin}(\cL M\SU(2);M\SU(2))$ & $0$ & $\Z$ & $\Z_2$ & $\Z_2$ & $0$ & $\Z^2$ & $\Z_2$\\
$\Om^{\bs\Spin}_n(\cL M\U(2);M\U(2))$ & $0$ & $\Z$ & $0$ & $\Z$ & $0$ & $\Z^3$ & $\Z$\\
$\ti\Om^{\bs\Spin}_n(\cL M\Spin(4);M\Spin(4))$ & $0$ & $\Z$ & $\Z_2$ & $\Z_2$ & $0$ & $\Z^3$ & $\Z_2^2$\\
$\Om^{\bs\Spin}_n(\cL K(\Z,4);K(\Z,4))$ & $0$ & $\Z$ & $0$ & $0$ & $0$ & $\Z^2$ & $\Z_2$\\
$\Om^{\bs\Spin}_n(\cL K(\Z_2,4);K(\Z_2,4))$ & $0$ & $\Z_2$ & $0$ & $0$ & $0$ & ? & $\Z_2^2$\\
\end{tabular}
\caption{The relative spin bordism groups of free loop spaces in dimensions $n\le 8$. The group\/ $\Om^{\bs\Spin}_7(\cL K(\Z_2,4);K(\Z_2,4))$ is not important for our purposes; it is either\/ $\Z_8,$ $\Z_4\op\Z_2,$ $\Z_4,$ or\/ $\Z_2^2$.}
\label{fm3tab2}
\end{table}

\noindent{\bf(b)} In dimensions\/ $n\le 9$ the images of the morphism\/ $\hat\xi^{\bs\Spin}_{n-1}(T):\Om^{\bs\Spin}_{n-1}(\cL T;T)\ab\ra\ti\Om^{\bs\Spin}_n(T)$ from \eq{fm2eq7} are as shown in Table~{\rm\ref{fm3tab3}}.
\smallskip

\begin{table}[ht!]
\renewcommand{\arraystretch}{1.5}
\centering
\begin{tabular}{c|c|c|c}
$n$ & $\Im\hat\xi^{\bs\Spin}_{n-1}(M\SU(2))$ & $\Im\hat\xi^{\bs\Spin}_{n-1}(M\U(2))$ & $\Im\hat\xi^{\bs\Spin}_{n-1}(M\Spin(4))$\\ \hline
$0,1,2,3,7$ & & \\
$4$   &  $\Z\an{\de}$   &   $\Z\an{\de}$  &   $\Z\an{\de}$\\
$5$   &  $\Z_2\an{\al_1\de}$  &   &  $\Z_2\an{\al_1\de}$\\
$6$   &  $\Z_2\an{\al_1^2\de}$  &  $\Z\an{\varepsilon}$  & $\Z_2\an{\al_1^2\de}$\\
$8$   &  $\Z\an{\ze_1,2\ze_2}$  &  $\Z\an{\frac{\ze_1}{2},2\ze_2,\ze_3}$           &  $\Z\an{\ze_1,\ze_2-\ze_2',\ze_2+\ze_2'}$\\
$9$   &     & $\Z_2\an{\al_1\ze_2}$  &  $\Z_2\an{\al_1(\ze_2+\ze_2')}$\\
\end{tabular}
\bigskip

\centering
\begin{tabular}{c|c|c|c}
$n$ & \!$\Im\hat\xi^{\bs\Spin}_{n-1}(M\SO(4))$\!\! & \!$\Im\hat\xi^{\bs\Spin}_{n-1}(K(\Z,4))$\!\! & \!$\Im\hat\xi^{\bs\Spin}_{n-1}(K(\Z_2,4))$\!\! \\ \hline
$0,1,2,3,5,6,7$ & & \\
$4$   &  $\Z\an{\de}$  &  $\Z\an{\de}$ &  $\Z_2\an{\de}$ \\
$8$   &  $\Z\an{\frac{\ze_1}{4},2\ze_2,\ze_3}$  & $\Z\an{2\ze_2,\ze_3}$ & $\Z_2\an{2\ze_2}$ \\
$9$   &  $\Z_2\an{\al_1\frac{\ze_1}{4},\al_1\ze_2,\eta}$   &  $\Z_2\an{\al_1\ze_2}$ &  $\Z_2\an{\al_1\ze_2}$ \\
\end{tabular}

\caption{The image $\Im\hat\xi^{\bs\Spin}_{n-1}(T)\subset\ti\Om_n^{\bs\Spin}(T)$ of the morphism \eq{fm2eq7}. \\ A missing entry stands for the trivial group.}
\label{fm3tab3}
\end{table}

\noindent{\bf(c)} Classes in $\Om^{\bs\Spin}_n(\cL K(\Z_2,4);K(\Z_2,4))$ are represented by pairs\/ $[X,\bar\al]$ of a compact spin\/ $8$-manifold\/ $X$ and a class\/ $\bar\al\in H^4(X\t\cS^1,\Z_2)$. Under the identification $\ti\Om^{\bs\Spin}_9(K(\Z_2,4))\cong\Z_2$ the map\/ $\hat\xi^{\bs\Spin}_8(K(\Z_2,4)):\Om^{\bs\Spin}_8(\cL K(\Z_2,4),\ab K(\Z_2,4))\ra\ti\Om^{\bs\Spin}_9(K(\Z_2,4))$ is
\e
\label{fm3eq22}
 \hat\xi^{\bs\Spin}_8(K(\Z_2,4))([X,\bar\al])=\ts\int_X\bar\be\cup\Sq^2(\bar\be),
\e
where we decompose\/ $\bar\al=\bar\be\bt[\cS^1]+\bar\ga\bt 1$ under the K\"unneth iso\-morph\-ism. Similarly, $\hat\xi^{\bs\Spin}_8(K(\Z,4)):\Om^{\bs\Spin}_8(\cL K(\Z,4);K(\Z,4))\ra\ti\Om^{\bs\Spin}_9(K(\Z,4))\cong\Z_2$ maps\/ $[X,\al]\mapsto\int_X\bar\be\cup\Sq^2(\bar\be),$ where we instead decompose the reduction of\/ $\al$ modulo two.
\end{thm}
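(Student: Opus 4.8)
The whole theorem is a spectral-sequence computation resting on the Atiyah--Hirzebruch--Serre spectral sequence \eq{fm2eq20} of the loop fibration $\Om T\to\cL T\xrightarrow{\ev_{\ul 0}}T$, namely $E^2_{p,q}=H_p(T,\ti\Om_q^{\bs\Spin}(\Om T))\Ra\Om_{p+q}^{\bs\Spin}(\cL T;T)$, fed by the explicit bordism groups of Theorem \ref{fm3thm1}. The first task is to identify $\Om T$ through degree $8$ and record $H_*(\Om T;\Z)$ and $H_*(\Om T;\Z_2)$ with their Steenrod structure. For the Eilenberg--MacLane spaces this is classical: $\Om K(\Z,4)=K(\Z,3)$ and $\Om K(\Z_2,4)=K(\Z_2,3)$, with low-degree (co)homology from Serre's theorem and its $\Z_2$-analogue. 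For the Thom spaces I would exploit high connectivity: $M\SU(2)=M\Sp(1)\simeq\HP^\iy=B\Sp(1)$, hence $\Om M\SU(2)\simeq\Sp(1)=\cS^3$; the $10$-connected map $M\U(2)\to B\SU$ of Theorem \ref{fm1thm1}(c) gives a $9$-connected map $\Om M\U(2)\to\Om B\SU=\SU$, so $H_{\le 8}(\Om M\U(2))\cong H_{\le 8}(\SU)=(\La_\Z(x_3,x_5,x_7,\dots))_{\le 8}$; and for $M\Spin(4)$, $M\SO(4)$ I would combine the Thom isomorphism $\ti H_*(MH)\cong H_{*-4}(BH)$ (using $H_*(B\Spin(4))=H_*(\HP^\iy)^{\ot 2}$ and $H_*(B\SO(4))$) with the homology spectral sequence of the path fibration over $MH$. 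Next I would compute $\ti\Om_q^{\bs\Spin}(\Om T)$ for $q\le 8$ by the ordinary spin Atiyah--Hirzebruch spectral sequence over the coefficient ring of Table \ref{fm2tab2}; in this range the only relevant differential is $d^2$, dual to $\Sq^2$ by Proposition \ref{fm2prop1}, while a few potential higher differentials are forced by sparsity or identified with $\be\Sq^2$, and the small extension problems are settled by comparison with connective $\KO$-homology. (For $\Om M\SU(2)\simeq\cS^3$ this step is just the suspension isomorphism, $\ti\Om_q^{\bs\Spin}(\cS^3)=\Om_{q-3}^{\bs\Spin}(*)$.)

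These groups become the coefficient modules on the $E^2$-page of \eq{fm2eq20}, which I would then run for each $T$, with base $H_*(T;\Z)$, $H_*(T;\Z_2)$ supplied by the Thom isomorphism (for $MH$) or the classical computations (for $K(R,4)$). The low differentials are again pinned down via the $\Sq^2$/Serre-transgression description, and the $E^\iy$-terms are assembled through the filtration of \S\ref{fm24}; remaining extensions are resolved by exhibiting the explicit manifolds-with-loop-maps underlying the generators $\de,\varepsilon,\ze_1,\frac{\ze_1}{2},\frac{\ze_1}{4},\ze_2,\ze_2',\ze_3,\eta$ of \S\ref{fm311} and checking directly which of them lift to $\cL$-classes. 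This yields Table \ref{fm3tab2}; the entry $\Om_7^{\bs\Spin}(\cL K(\Z_2,4);K(\Z_2,4))$ is deliberately left ambiguous because that extension cannot be resolved by these methods and plays no further role, which is why the statement lists the four possibilities.

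For part (b), since $\xi^{\bs\Spin}_{n-1}(T)$ annihilates $(s_T)_*\Om_{n-1}^{\bs\Spin}(T)$ (shown after \eq{fm2eq7}), its image equals $\Im\hat\xi^{\bs\Spin}_{n-1}(T)$, computed by applying the construction $[X,\phi]\mapsto[X\t\cS^1_{\rm b},\phi']$ to the generators of $\Om_{n-1}^{\bs\Spin}(\cL T;T)$ produced above and reading off the answer in $\ti\Om_n^{\bs\Spin}(T)$ via the characteristic-number isomorphisms of Theorem \ref{fm3thm1}(d); naturality of $\xi$ under multiplication by $\al_1$, together with the mod-$2$-index detection of the dimension-$9$ groups, fills in the last row. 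Matching ranks and torsion gives Table \ref{fm3tab3} (e.g.\ $\ze_2$ is not of product-with-$\cS^1$ form and fails to lie in the image for $M\U(2)$, while $2\ze_2$ does). For part (c) I would restrict to the bottom filtration step $F_{0,8}=j_*\,\ti\Om_8^{\bs\Spin}(\Om K(\Z_2,4))=j_*\,\ti\Om_8^{\bs\Spin}(K(\Z_2,3))$: a class there is $[X^8,\bar\be]$ with $\bar\be\in H^3(X,\Z_2)$, and $j_*[X,\bar\be]$ corresponds to the suspended class $\bar\be\bt[\cS^1]$ on $X\t\cS^1$, so $\hat\xi_8\ci j_*$ is a map $\ti\Om_8^{\bs\Spin}(K(\Z_2,3))\to\ti\Om_9^{\bs\Spin}(K(\Z_2,4))\cong\Z_2$. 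I would prove this map is $[X,\bar\be]\mapsto\int_X\bar\be\cup\Sq^2(\bar\be)$ by (i) verifying the right-hand side is a well-defined bordism invariant --- it is a Stiefel--Whitney-type number in $\bar\be$ and $w(TX)$, and for a bounding $(W^9,\bar\be_W)$ the class $\bar\be_W\cup\Sq^2\bar\be_W$ is pulled back from $W$, hence integrates to $0$ over $\pd W$; (ii) identifying $\hat\xi_8\ci j_*$ with the $\Sq^2$-dual edge homomorphism of the AHSS by naturality and a second application of Proposition \ref{fm2prop1}; and (iii) evaluating both $\hat\xi_8$ and the formula on a lift of the generator of the quotient $\Om_8^{\bs\Spin}(\cL K(\Z_2,4);K(\Z_2,4))/F_{0,8}\cong\Z_2$ (with representative $[X\t\cS^1,\bar\al]$), which pins down the remaining $\Z_2$. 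Finally the $K(\Z,4)$ statement follows by pulling back along $\rho_2:K(\Z,4)\to K(\Z_2,4)$, which is compatible with $\hat\xi_8$ and an isomorphism on the relevant $\ti\Om_9^{\bs\Spin}$.

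The crux is the second spectral sequence: \eq{fm2eq20} has coefficients which are themselves the output of the Atiyah--Hirzebruch computations, so tracking its differentials --- and especially resolving the extension problems --- is delicate, and for $T=K(\Z_2,4)$ it only partially succeeds. The second hard point is step (ii) of part (c): showing that the geometrically defined map $\xi$ induces on the bottom of the filtration \emph{exactly} the $\Sq^2$-dual Steenrod edge homomorphism, not merely a map agreeing with it on one generator, is the cohomology-operations bookkeeping that makes the explicit formula rigorous.
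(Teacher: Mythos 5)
Your overall strategy is the paper's: run the fibration spectral sequence \eq{fm2eq20} with coefficients $\ti\Om_q^{\bs\Spin}(\Om T)$, identify $\Om T$ via $\Om M\SU(2)\simeq\cS^3$, the $9$-connected map $\Om M\U(2)\to\SU$, and $\Om K(R,4)\simeq K(R,3)$, and then determine $\Im\hat\xi^{\bs\Spin}_{n-1}$ by testing explicit generators. Two steps, however, are asserted rather than proved, and both are genuinely hard points where the paper has to work.

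First, ``matching ranks and torsion'' does not show that $2\ze_2$ lies in $\Im\hat\xi^{\bs\Spin}_7$. The Pontrjagin-square/K\"unneth argument only gives the upper bound $\Im\hat\xi^{\bs\Spin}_7(M\U(2))\subseteq\Z\an{\frac{\ze_1}{2},2\ze_2,\ze_3}$; a map $\Z^3\to\Z^3$ whose image contains $\Z\an{\frac{\ze_1}{2},\ze_3}$ could still have image $\Z\an{\frac{\ze_1}{2},4\ze_2,\ze_3}$, say. The paper closes this with Lemma \ref{fm18lem2}: an explicit mapping-torus $\Sp(2)$-bundle over $(\Sp(2)\t\Sp(1))/(\Sp(1)\t\Sp(1))\t\cS^1$, Whitney-summed with a pulled-back $\SU(2)$-bundle, whose characteristic numbers are computed to hit $(0,-2,0)$ under \eq{fm3eq35}; a further filtration-comparison (Lemma \ref{fm18lem3}) is then needed to transport the lift of $2\ze_2$ from $M\U(2)$ down to $M\SU(2)$, since the explicit bundle has structure group $\SU(6)$. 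Without some such construction your Table \ref{fm3tab3} entries at $n=8$ (and hence the $\ze_2\pm\ze_2'$ entries for $M\Spin(4)$, which the paper derives from $\vartheta_2\mapsto\ze_2-\ze_2'$ in \eq{fm3eq58} combined with the lift of $2\ze_2$) are not established.

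Second, your claim that the differentials of \eq{fm2eq20} are ``pinned down via the $\Sq^2$/Serre-transgression description'' misapplies Proposition \ref{fm2prop1}, which concerns the Atiyah--Hirzebruch spectral sequence with \emph{constant} coefficients $\Om_q^{\bs\Spin}(*)$; it says nothing about \eq{fm2eq20}, whose coefficients are $\ti\Om_q^{\bs\Spin}(\Om T)$. The differentials that actually matter here are higher ones: $d^6_{6,3}$ for $K(\Z,4)$ (killed because $\eta$ lifts and has nonzero $\hat\xi$-image), $d^5_{5,3}$ and $d^6_{6,3}$ for $K(\Z_2,4)$ (killed by comparison with $K(\Z,4)$ and by surjectivity of the explicit invariant \eq{fm18eq4}, established on $\SU(3)$ and $\RP^7\t\cS^1_{\rm b}$), and $d^4_{4,5},d^6_{6,3}$ for $M\U(2)$, which the paper kills by a rational argument: the Chern--Dold character plus the H-space splitting $\cL X\simeq X\t\Om X$ (Lemma \ref{fm18lem4}) force $\Om_8^{\bs\Spin}(\cL B\SU;B\SU)\ot\Q\cong\Q\ne 0$. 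Your part (c) argument is essentially sound --- the paper likewise identifies $\hat\xi^{\bs\Spin}_8(K(\Z_2,4))$ as the first component of \eq{fm18eq4} by exhibiting an explicit nullbordism (a torus change of coordinates) of the $\RP^7\t\cS^1_{\rm b}\t\cS^1_{\rm b}$ class --- but it presupposes $\Om_8^{\bs\Spin}(\cL K(\Z_2,4);K(\Z_2,4))\cong\Z_2^2$, which itself depends on resolving the differentials just described.
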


\subsection{Relation between certain Thom and classifying spaces}
\label{fm33}

We are interested in orientation problems both for moduli spaces of calibrated submanifolds and for moduli spaces of instantons. These turn out to be closely connected. Indeed, the next theorem shows that, in low dimensions, maps from a compact $n$-manifold $X$ into a Thom space $MH$ (which describe submanifolds $M\subset X$) are equivalent to maps from $X$ into a classifying space $BG$ (which describe principal $G$-bundles $P\ra X$). Referring to \S\ref{fm25} for details, recall that the Thom class of an embedded $(n-k)$-submanifold $M\subset X$ can be regarded as a cohomology class $t\in H^k(X;X\sm M,R)$, and that it induces the Thom isomorphism $\pi^*(-)\cup t: H^*(M)\ra H^{*+k}(X;X\sm M,R)$, which we will use in equations \eq{fm3eq24}, \eq{fm3eq26}, and \eq{fm3eq28} below. The principal bundles appearing in these formulas are framed outside of $M$, so their characteristic classes are naturally elements of the relative cohomology $H^*(X;X\sm M,R)$.

Recall that a principal $\Sp(m)$-bundle $P$ has {\it symplectic Pontrjagin classes} $q_i(P)$ defined in terms of the Chern classes of the associated principal $\U(2m)$-bundle $Q\ra X$ by $q_i(P)=(-1)^i c_{2i}(Q)$. As $H^4(BE_8,\Z)\cong\Z$, a principal $E_8$-bundle has a characteristic class $a_1(P)\in H^4(X,\Z)$.

Recall that a map $f:X\ra Y$ of topological spaces is {\it $n$-connected} if the induced map $\pi_k(f):\pi_k(X)\ra\pi_k(Y)$ of homotopy groups is an isomorphism for all $k<n$ and surjective for $k=n$. If $n=\iy$, we call $f$ a {\it weak homotopy equivalence}. The following theorem is proved in~\S\ref{fm16}.

\begin{thm}
\label{fm3thm3}
{\bf(a)} There is a weak homotopy equivalence
\e
\label{fm3eq23}
M\SU(2)=M\Sp(1)\longra B\SU(2).
\e
If\/ $P\ra X$ is the principal\/ $\SU(2)$-bundle corresponding under \eq{fm3eq23} to the submanifold\/ $M\subset X$ with normal\/ $\Sp(1)$-structure, then
\e
\label{fm3eq24}
c_2(P)^k=\pi^*\bigl(c_2(\nu_M)^{k-1}\bigr)\cup t\qquad \text{for all\/ $k\ge 1$.}
\e

\noindent{\bf(b)} There is a\/ $10$-connected map
\e
\label{fm3eq25}
 M\U(2)\longra B\SU.
\e
Moreover, the canonical map\/ $B\SU(m)\ra B\SU$ is\/ $(2m+1)$-connected. If\/ $P\ra X$ is the principal\/ $\SU(m)$-bundle corresponding under \eq{fm3eq25} to the submanifold\/ $M\subset X$ with normal\/ $\U(2)$-structure, then
\e
\label{fm3eq26}
\begin{split}
c_1(P)&=0,\\
c_k(P)&=(-1)^k \pi^*\bigl(c_1(\nu_M)^{k-2}\bigr)\cup t\qquad \text{for all\/ $k\ge 2,$}\\
c_2(P)^2&=\pi^*\bigl(c_2(\nu_M)\bigr)t,\\
c_2(P)c_3(P)&=-\pi^*\bigl(c_1(\nu_M)c_2(\nu_M)\bigr)\cup t.
\end{split}
\e

\noindent{\bf(c)} There is a\/ $12$-connected map
\e
\label{fm3eq27}
M\Spin(4)\longra B\Sp.
\e
Moreover, the canonical map\/ $B\Sp(m)\ra B\Sp$ is\/ $(4m+3)$-connected. If\/ $P\ra X$ is the principal\/ $\Sp(m)$-bundle corresponding under \eq{fm3eq27} to the submanifold\/ $M\subset X$ with normal spinor bundles\/ $\Si_{\nu_M}^\pm\ra M,$ then
\e
\label{fm3eq28}
\begin{split}
q_k(P)&=-\pi^*\bigl(c_2(\Si^-_{\nu_M})^{k-1}\bigr)\cup t \qquad\text{for all\/ $k\ge 1,$}\\
q_1(P)^2&=\pi^*\bigl(c_2(\Si^+_{\nu_M})-c_2(\Si^-_{\nu_M})\bigr)\cup t,\\
q_1(P)^3&=-\pi^*\bigl(c_2(\Si^+_{\nu_M})^2-2c_2(\Si^+_{\nu_M})c_2(\Si^-_{\nu_M})+c_2(\Si^-_{\nu_M})^2\bigr)\cup t,\\
q_1(P)q_2(P)&=\pi^*\bigl(c_2(\Si^+_{\nu_M})c_2(\Si^-_{\nu_M})-c_2(\Si^-_{\nu_M})^2\bigr)\cup t.
\end{split}
\e

\noindent{\bf(d)} There is a\/ $16$-connected map
\e
\label{fm3eq29}
 BE_8\longra K(\Z,4).
\e
If\/ $P\ra X$ is the principal\/ $E_8$-bundle corresponding under \eq{fm3eq29} to a cohomology class\/ $\al\in H^4(X,\Z),$ then
\e
\label{fm3eq30}
 a_1(P)=\al.
\e
\end{thm}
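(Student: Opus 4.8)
The plan is to treat parts (a)--(d) in turn --- in each case constructing the map, verifying the stated connectivity, and deriving the characteristic-class formulas --- and to dispose of the auxiliary statements about $B\SU(m)\to B\SU$ and $B\Sp(m)\to B\Sp$ by a standard fibration argument. For (a), I would first observe that $M\SU(2)=M\Sp(1)$ and $B\SU(2)=B\Sp(1)$ are both models for $\HP^\infty$: writing $\mathcal L\to\HP^\infty$ for the tautological quaternionic line bundle, its unit sphere bundle $S(\mathcal L)$ has total space the infinite sphere and so is contractible, whence collapsing it gives a homotopy equivalence $M\Sp(1)=D(\mathcal L)/S(\mathcal L)\simeq D(\mathcal L)\simeq\HP^\infty=B\SU(2)$, which is the map \eqref{fm3eq23}. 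Under this equivalence the Thom class $t\in\tilde H^4(M\Sp(1);\Z)$ corresponds, up to sign, to the generator $c_2\in H^4(B\SU(2);\Z)$, so $c_2(P)=\pm t$; combined with the ring relation $t\cup t=\pi^*(e)\cup t$ of \eqref{fm2eq25}, in which the universal Euler class $e$ corresponds to $c_2(\nu_M)$, this gives $c_2(P)^k=t^k=\pi^*(e^{k-1})\cup t=\pi^*(c_2(\nu_M)^{k-1})\cup t$, which is \eqref{fm3eq24}.

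For (b) and (c) I would construct the maps from Thom classes in $K$- and $K\mathrm{Sp}$-theory. In (b): the $K$-theory Thom class $u\in\tilde K^0(M\U(2))$ of the universal complex rank-$2$ bundle has virtual rank $0$, and its first Chern class vanishes because the cohomology Thom class lies in degree $4>2$; hence $u$ classifies a pointed map $M\U(2)\to B\SU$ as in \eqref{fm3eq25}. In (c): over the disc bundle of the tautological bundle $E=\Hom_\H(\Sigma^-,\Sigma^+)\to B\Spin(4)$, the virtual quaternionic bundle $\pi^*[\Sigma^+]\ominus\pi^*[\Sigma^-]$ is canonically trivialized over the sphere bundle $S(E)$ --- a unit normal vector is precisely an $\H$-linear isomorphism $\Sigma^-\to\Sigma^+$ --- and therefore, via the cofibre sequence $S(E)\to D(E)\to M\Spin(4)$, defines a rank-$0$ class in $\widetilde{K\mathrm{Sp}}^0(M\Spin(4))$ classifying \eqref{fm3eq27}. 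To verify the connectivities, note that source and target are simply connected, so by the relative Hurewicz and Whitehead theorems it suffices to check that the induced map on integral (co)homology is an isomorphism below the claimed connectivity and an epimorphism in top degree; using the Thom isomorphism $\tilde H^*(MH;\Z)\cong H^{*-4}(BH;\Z)$ and the standard polynomial cohomology rings of $B\U(2),B\Spin(4),B\SU,B\Sp$, this reduces to a finite linear-algebra check in each degree, with the induced maps supplied by step (iii) below. The auxiliary facts follow because the homotopy fibres $\SU/\SU(m)$ and $\Sp/\Sp(m)$ of $B\SU(m)\to B\SU$ and $B\Sp(m)\to B\Sp$ are respectively $2m$- and $(4m+2)$-connected, which one sees inductively from $\SU(m+1)/\SU(m)=\cS^{2m+1}$ and $\Sp(m+1)/\Sp(m)=\cS^{4m+3}$ via the associated fibre sequences.

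For step (iii), the formulas \eqref{fm3eq26} and \eqref{fm3eq28} follow from the Riemann--Roch identity for the Chern character of the $K$-theory Thom class, $\ch(u_E)=\pi^*(\mathrm{Td}(E)^{-1})\cup t$, and its symplectic analogue obtained via the forgetful map $K\mathrm{Sp}\to KU$, combined once more with $t\cup t=\pi^*(e)\cup t$ and with the identification of the normal Euler class in terms of the refined structure ($e(\nu_M)=c_2(\nu_M)$ for a $\U(2)$-structure, $e(\nu_M)=c_2(\Sigma^+_{\nu_M})-c_2(\Sigma^-_{\nu_M})$ for a $\Spin(4)$-structure); here the vanishing of $\ch$ in degrees below that of the Thom class forces $c_1(P)=0$ in \eqref{fm3eq26}. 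For (d): $H^4(BE_8;\Z)\cong\Z$, so its generator $a_1$ classifies a map $BE_8\to K(\Z,4)$ as in \eqref{fm3eq29} with $a_1(P)=\alpha$ immediate from the construction; and since $E_8$ is $2$-connected with $\pi_3(E_8)\cong\Z$ and, classically, $\pi_i(E_8)=0$ for $4\le i\le14$, the space $BE_8$ satisfies $\pi_4\cong\Z$ and $\pi_i=0$ for $i\le3$ and for $5\le i\le15$, so the map is an isomorphism on $\pi_i$ for $i<16$ (using Hurewicz in degree $4$, where $a_1$ is a generator) and hence $16$-connected.

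The step I expect to be the main obstacle is part (c): pinning down exactly which virtual symplectic bundle the map $M\Spin(4)\to B\Sp$ should classify, making the canonical trivialization over the sphere bundle precise, and then pushing the $K\mathrm{Sp}$-theoretic Riemann--Roch bookkeeping through so that all four formulas in \eqref{fm3eq28} emerge with exactly the stated signs. The unitary case (b) is analogous but less delicate, and the connectivity computations, though somewhat lengthy, are routine linear algebra once the relevant cohomology rings and induced maps are in hand.
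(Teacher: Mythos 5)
Your proposal is correct and follows the same overall strategy as the paper: the maps \eq{fm3eq23}, \eq{fm3eq25}, \eq{fm3eq27} are constructed exactly as you describe, as classifying maps of K-theoretic Thom classes represented by explicit complexes over the disc bundle that are exact over the sphere bundle (the Koszul complex $\pi^*\La^\bu(E_\rho)$ for $M\U(2)$, and Clifford multiplication $\pi^*(\Si^-_\rho)\to\pi^*(\Si^+_\rho)$ for $M\Spin(4)$); the connectivities are checked via Whitehead's theorem on cohomology, using torsion-freeness to reduce to integral coefficients; and parts (a) and (d) are handled just as you do (contractibility of $S_\rho\cong E\Sp(1)$, respectively Bott--Samelson on $\pi_*(E_8)$). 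The one place you genuinely diverge is the extraction of \eq{fm3eq26} and \eq{fm3eq28}: you propose Riemann--Roch for the Thom class, $\ch(u_E)=\pi^*(\mathrm{Td}(E)^{-1})\cup t$, and its symplectic analogue, then recover Chern classes from the Chern character (legitimate here since $H^*(MH,\Z)$ is torsion-free). The paper instead writes $(f_\xi^H)^*(a)=\pi^*(b)\cup t$ via the Thom isomorphism, restricts along the zero section to get $a(s^*\xi)=b\cup e$, and computes $a(s^*\xi)$ for the total Chern (resp.\ symplectic Pontrjagin) class by the Whitney sum formula applied to $s^*\xi=[E_\rho]-[\ul\H]$, $[\La^1 E_\rho]-[\La^0E_\rho]-[\La^2E_\rho]$, $[\Si^+_\rho]-[\Si^-_\rho]$, then reads off $b$ by dividing by $e$. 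The paper's route is more elementary and fixes all signs automatically from the explicit complexes (note its $\U(2)$ complex is $-\lambda_{-1}(E_\rho)$ rather than the standard Thom class, which is where the $(-1)^k$ in \eq{fm3eq26} comes from), whereas yours trades that for the Newton-relation bookkeeping you already flag as the delicate step; both are sound.
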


Theorem \ref{fm3thm3} implies that Theorem \ref{fm3thm1} determines  the spin bordism groups $\ti\Om_n^{\bs\Spin}(BG)$ for $n\le 9$ of various classifying spaces. The generators in Table \ref{fm3tab1} correspond under \eq{fm3eq23}, \eq{fm3eq25}, \eq{fm3eq27} and the morphisms $B\SU(m)\ra B\SU$, $B\Sp(m)\ra B\Sp$ to principal $G$-bundles, which we denote by the same letter.

\begin{cor}
\label{fm3cor1}
{\bf(a)} In dimensions\/ $n\le 9$ the spin bordism groups\/ $\ti\Om_n^{\bs\Spin}(BG)$ for\/ $G=\SU(2),$ $\SU(m)$ for\/ $2m\ge n,$ $\Sp(m)$ for $4m+2\ge n,$ and\/ $E_8$ are as given in Table~{\rm\ref{fm3tab4}}.
\smallskip

\begin{table}[ht!]
\renewcommand{\arraystretch}{1.5}
\centering

\begin{tabular}{c|c|c|c}
$n$ & $\!\!\ti\Om_n^{\bs\Spin}(B\SU(2))\!\!$ & $\!\!\ti\Om_n^{\bs\Spin}(B\SU(m)),\!$ $2m\!\ge\! n$\! & $\!\!\ti\Om_n^{\bs\Spin}(B\Sp(m)),\!$ $4m\!+\!2\!\ge\! n$\!\! \\ \hline
{\rm 0--3,7} & & \\
$4$   &  $\Z\an{\de}$   &   $\Z\an{\de}$  &   $\Z\an{\de}$\\
$5$   &  $\Z_2\an{\al_1\de}$  &   &  $\Z_2\an{\al_1\de}$\\
$6$   &  $\Z_2\an{\al_1^2\de}$  &  $\Z\an{\varepsilon}$  & $\Z_2\an{\al_1^2\de}$\\
$8$   &  $\Z\an{\ze_1,\ze_2}$  &  $\Z\an{\frac{\ze_1}{2},\ze_2,\ze_3}$           &  $\Z\an{\ze_1,\ze_2,\ze_2'}$\\
$9$   &  $\Z_2\an{\al_1\ze_2}$   & $\Z_2\an{\al_1\ze_2}$  &  $\Z_2\an{\al_1\ze_2,\al_1\ze_2'}$
\end{tabular}
\bigskip

\centering
\begin{tabular}{c|c}
$n$ &  $\ti\Om_n^{\bs\Spin}(BE_8)$ \\ \hline
$0,1,2,3,5,6,7$ & \\
$4$   &   $\Z\an{\de}$\\
$8$   &  $\Z\an{\ze_2,\ze_3}$\\
$9$   &   $\Z_2\an{\al_1\ze_2}$
\end{tabular}

\caption{Spin bordism groups of classifying spaces $BG,$ \\ where a blank entry stands for the trivial group.}
\label{fm3tab4}
\end{table}

\noindent{\bf(b)}  Representing elements of\/ $\ti\Om_n^{\bs\Spin}(BG)$ as pairs\/ $[X,P],$ where\/ $X$ a compact spin $n$-manifold and\/ $P\ra X$ is a principal\/ $G$-bundle, the composition of the isomorphisms\/ $\ti\Om_n^{\bs\Spin}(BG)\cong\ti\Om_n^{\bs\Spin}(MH)$ from {\rm\eq{fm3eq23}, \eq{fm3eq25}, \eq{fm3eq27}, \eq{fm3eq29}} with the isomorphisms \eq{fm3eq11}--\eq{fm3eq19} have the following expressions.

For $B\SU(2)$ we have explicit isomorphisms
\ea
&\ti\Om_4^{\bs\Spin}(B\SU(2))\overset\cong\longra\Z,\qquad 
[X,P]\longmapsto\ts\int_X c_2(P),
\label{fm3eq31}\\
&\ti\Om_8^{\bs\Spin}(B\SU(2))\overset\cong\longra\Z^2,
\label{fm3eq32}\\
&[X,P]\longmapsto\left(\ts\int_X[-\frac{c_2(P)^2}{24}-\frac{p_1(TX)c_2(P)}{48}],\int_X c_2(P)^2\right),
\nonumber
\ea
which map\/ $\de\mapsto 1$ and\/ $\ze_1\mapsto(1,0),$ $\ze_2\mapsto(0,1)$.

For $B\SU(m),$ $m\ge 2,3$ and\/ $4$ respectively, we have explicit isomorphisms
\ea
&\ti\Om_4^{\bs\Spin}(B\SU(m))\overset\cong\longra\Z,\qquad 
[X,P]\longmapsto\ts\int_X c_2(P),
\label{fm3eq33}\\
&\ti\Om_6^{\bs\Spin}(B\SU(m))\overset\cong\longra\Z,\qquad [X,P]\longmapsto\ts\frac{1}{2}\int_X c_3(P),
\label{fm3eq34}\\
&\ti\Om_8^{\bs\Spin}(B\SU(m))\overset\cong\longra\Z^3,
\label{fm3eq35}\\
&[X,P]\longmapsto \left(\ts\int_X[\frac{c_4(P)}{6} - \frac{c_2(P)^2}{12} - \frac{p_1(TX)c_2(P)}{24}],\ts\int_X c_2(P)^2, \int_X c_4(P)\right),\nonumber
\ea
which map\/ $\de\mapsto 1,$ $\varep\mapsto 1$ and\/ $\frac{\ze_1}{2}\mapsto(1,0,0),$ $\ze_2\mapsto(0,1,0),$ $\ze_3\mapsto(0,0,1)$.

For $B\Sp(m),$ $m\ge 2,$ we have explicit isomorphisms
\ea
&\ti\Om_4^{\bs\Spin}(B\Sp(m))\overset\cong\longra\Z,\qquad 
[X,P]\longmapsto\ts-\int_X q_1(P),
\label{fm3eq36}\\
&\ti\Om_8^{\bs\Spin}(B\Sp(m))\overset\cong\longra\Z^3,\quad [X,P]\longmapsto
\label{fm3eq37}\\
 &\left(\ts\int_X[\frac{p_1(TX)q_1(P)}{48}-\frac{q_1(P)^2}{24}+\frac{q_2(P)}{12}],\ts\int_X [q_1(P)^2-q_2(P)],-\int_X q_2(P)\right),\nonumber
\ea
which map\/ $\de\mapsto 1$ and\/ $\ze_1\mapsto(1,0,0),$ $\ze_2\mapsto(0,1,0),$ $\ze_2'\mapsto(0,0,-1)$.

The isomorphism\/ $\ti\Om_8^{\bs\Spin}(BE_8)\cong\Z^2$ is just \eq{fm3eq19} for\/ $\al=a_1(P)$ the characteristic class of the principal\/ $E_8$-bundle\/ $P\ra X$.
\smallskip

\noindent{\bf(c)} Theorem {\rm\ref{fm3thm2}} determines the bordism groups\/ $\Om_n^{\bs\Spin}(\cL BG;BG),$ $n\le 8,$ for each of the groups\/ $G=\SU(2),$ $\SU(m)$ for $2m-1\ge n,$ $\Sp(m)$ for $4m+1\ge n,$ and\/ $E_8$. We do not write these out explicitly.
\smallskip

\noindent{\bf(d)} Theorem {\rm\ref{fm3thm2}} also determines the images $\Im\hat\xi^{\bs\Spin}_{n-1}(BG),$ $n\le 9,$ for each of the groups\/ $G=\SU(2),$ $\SU(m)$ for\/ $2m\ge n,$ $\Sp(m)$ for $4m+2\ge n,$ and\/ $E_8$. We give these in Table\/~{\rm\ref{fm3tab5}}.
 
\begin{table}[ht!]
\renewcommand{\arraystretch}{1.5}
\centering
\begin{tabular}{c|c|c}
$n$ & $\Im\hat\xi^{\bs\Spin}_{n-1}(B\SU(2))$  & $\Im\hat\xi^{\bs\Spin}_{n-1}(B\Sp(m)),$ $4m+2\ge n$ \\ \hline
$0,1,2,3,7$ & & \\
$4$   &  $\Z\an{\de}$   &   $\Z\an{\de}$\\
$5$   &  $\Z_2\an{\al_1\de}$   &  $\Z_2\an{\al_1\de}$\\
$6$   &  $\Z_2\an{\al_1^2\de}$   & $\Z_2\an{\al_1^2\de}$\\
$8$   &  $\Z\an{\ze_1,2\ze_2}$  &  $\Z\an{\ze_1,\ze_2-\ze_2',\ze_2+\ze_2'}$\\
$9$   &      &  $\Z_2\an{\al_1(\ze_2+\ze_2')}$
\end{tabular}
\medskip

\centering
\begin{tabular}{c|c|c}
$n$ & $\Im\hat\xi^{\bs\Spin}_{n-1}(B\SU(m)),$ $2m\ge n$ & $\Im\hat\xi^{\bs\Spin}_{n-1}(BE_8)$ \\ \hline
$0,1,2,3,5,7$ & & \\
$4$ & $\Z\an{\de}$  &   $\Z\an{\de}$\\
$6$ & $\Z\an{\varepsilon}$ &      \\
$8$ & $\Z\an{\frac{\ze_1}{2},2\ze_2,\ze_3}$ &   $\Z\an{2\ze_2,\ze_3}$\\
$9$ &  $\Z_2\an{\al_1\ze_2}$ &    $\Z_2\an{\al_1\ze_2}$
\end{tabular}

\caption{The image $\Im\hat\xi^{\bs\Spin}_{n-1}(BG)\subset\ti\Om_n^{\bs\Spin}(BG)$ of the morphism \eq{fm2eq7}.}
\label{fm3tab5}
\end{table}

\end{cor}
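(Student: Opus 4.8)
The plan is to deduce the Corollary from Theorems \ref{fm3thm1}, \ref{fm3thm2} and \ref{fm3thm3} by transporting the bordism computations for Thom spaces and their free loop spaces along the highly connected maps $M\SU(2)\ra B\SU(2)$, $M\U(2)\ra B\SU$, $B\SU(m)\ra B\SU$, $M\Spin(4)\ra B\Sp$, $B\Sp(m)\ra B\Sp$ and $BE_8\ra K(\Z,4)$ of Theorem \ref{fm3thm3}. The only topological input is the standard observation that, since spin bordism is a connective generalized homology theory, an $n$-connected map $g\colon S\ra T$ of simply connected spaces induces isomorphisms $g_*\colon\ti\Om^{\bs\Spin}_k(S)\ra\ti\Om^{\bs\Spin}_k(T)$ for $k\le n-1$ and a surjection for $k=n$: the homotopy cofibre $C_g$ is $n$-connected, so $\ti H_p(C_g,\Z)=0$ for $p\le n$ by Hurewicz, hence $\ti\Om^{\bs\Spin}_k(C_g)=0$ for $k\le n$ by the Atiyah--Hirzebruch spectral sequence \eq{fm2eq8}, and the claim follows from the long exact sequence of the cofibration. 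Applying this to the six maps above, composed where necessary as $\ti\Om^{\bs\Spin}_n(B\SU(m))\cong\ti\Om^{\bs\Spin}_n(B\SU)\cong\ti\Om^{\bs\Spin}_n(M\U(2))$ and $\ti\Om^{\bs\Spin}_n(B\Sp(m))\cong\ti\Om^{\bs\Spin}_n(B\Sp)\cong\ti\Om^{\bs\Spin}_n(M\Spin(4))$, yields part (a), the entries of Table \ref{fm3tab4} being read off from Table \ref{fm3tab1}; the hypotheses $2m\ge n$ and $4m+2\ge n$ are exactly the ranges in which $B\SU(m)\ra B\SU$ is $(2m+1)$-connected, respectively $B\Sp(m)\ra B\Sp$ is $(4m+3)$-connected, hence induce isomorphisms on $\ti\Om^{\bs\Spin}_n$. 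The generators $\de,\varep,\ze_i,\ldots$ in the $BG$-columns are by construction the images under $g_*$ of the generators for $MH$, as fixed in the paragraph preceding the Corollary.

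For part (b) I would begin from the explicit isomorphisms $\ti\Om^{\bs\Spin}_n(MH)\overset{\cong}{\ra}\Z^k$ of Theorem \ref{fm3thm1}(d), whose components are $\ts\int_M$ of characteristic classes of $\nu_M$ together with $\operatorname{sign}(M)$, and rewrite each as $\ts\int_X$ of a polynomial in characteristic classes of $P$ and of $TX$. Two ingredients suffice. First, the Poincar\'e--Thom identity $\ts\int_M\iota^*\be=\ts\int_X\be\cup u$, where $u\in H^4(X,\Z)$ is the image of the Thom class of $\nu_M$ and, by the $k=1$ cases of \eq{fm3eq24}, \eq{fm3eq28} and \eq{fm3eq30}, equals $c_2(P)$ for $\SU(m)$, $-q_1(P)$ for $\Sp(m)$, and $a_1(P)$ for $E_8$. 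Second, the remaining relations of Theorem \ref{fm3thm3}, which convert the terms $\ts\int_M$ of monomials in the $c_i(\nu_M)$ into $\ts\int_X$ of monomials in the $c_i(P)$, the $q_i(P)$, or $a_1(P)$ (all a priori lying in $H^*(X;X\sm M)$, which maps to $H^*(X)$). To eliminate the signature term, present in dimension $8$ only, I would use Hirzebruch's theorem $\operatorname{sign}(M)=\tfrac13\ts\int_M p_1(TM)$ for the $4$-manifold $M$, together with $TX|_M\cong TM\op\nu_M$, to write $p_1(TM)=\iota^*p_1(TX)-p_1(\nu_M)$, and then push forward to $X$. For $G=\SU(m)$, for instance, $p_1(\nu_M)=c_1(\nu_M)^2-2c_2(\nu_M)$, so $3\operatorname{sign}(M)=\ts\int_X p_1(TX)c_2(P)-\ts\int_X c_4(P)+2\ts\int_X c_2(P)^2$; substituting into \eq{fm3eq13} reproduces \eq{fm3eq35}, and the $\SU(2)$ and $\Sp(m)$ cases are entirely analogous. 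For $E_8$ the isomorphism is \eq{fm3eq19} with $\al=a_1(P)$.

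Parts (c) and (d) follow from Theorem \ref{fm3thm2} by the same transport, now applied to free loop spaces. When $g\colon S\ra T$ is $c$-connected, the induced map $\cL g\colon\cL S\ra\cL T$ is $(c-1)$-connected --- compare the loop fibrations $\Om T\ra\cL T\ra T$, using that $\Om g$ is $(c-1)$-connected --- so, restricting to the natural direct summand on which $g_*$ is an isomorphism, $g_{\rel}\colon\Om^{\bs\Spin}_{n-1}(\cL MH;MH)\ra\Om^{\bs\Spin}_{n-1}(\cL BG;BG)$ is an isomorphism throughout the ranges of $m$ and $n\le 9$ in question. Since $\hat\xi^{\bs\Spin}_{n-1}$ is natural in $T$ and $g_*$ is an isomorphism on $\ti\Om^{\bs\Spin}_n$, the relation $g_*\ci\hat\xi^{\bs\Spin}_{n-1}(MH)=\hat\xi^{\bs\Spin}_{n-1}(BG)\ci g_{\rel}$ shows that $g_*$ carries $\Im\hat\xi^{\bs\Spin}_{n-1}(MH)$ onto $\Im\hat\xi^{\bs\Spin}_{n-1}(BG)$; this gives Table \ref{fm3tab5} from Table \ref{fm3tab3}, while Theorem \ref{fm3thm2}(a) supplies the groups $\Om^{\bs\Spin}_n(\cL BG;BG)$ of part (c).

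I expect the main obstacle to be the bookkeeping in part (b): one must track the relative-cohomology classes carefully (the characteristic classes of $P$ naturally live in $H^*(X;X\sm M)$), keep the signs in the Chern and symplectic Pontrjagin identities of Theorem \ref{fm3thm3} straight, and match the precise denominators produced by the signature theorem, so that the three integral formulas in each dimension-$8$ case agree on the nose with those coming from Theorem \ref{fm3thm1}(d). Everything else --- part (a), and the naturality arguments underlying (c) and (d) --- is formal once the connectivity statements of Theorem \ref{fm3thm3} are in hand.
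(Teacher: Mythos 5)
Your proposal is correct and follows essentially the same route as the paper: part (a) via the fact that a $d$-connected map induces isomorphisms on $\ti\Om_n^{\bs\Spin}$ for $n<d$ (the paper cites the Atiyah--Hirzebruch spectral sequence where you use the cofibre plus Hurewicz, but these are the same argument), part (b) by rewriting the Thom-space formulas of Theorem \ref{fm3thm1}(d) using \eq{fm3eq24}, \eq{fm3eq26}, \eq{fm3eq28} together with the signature theorem and $TX|_M\cong TM\op\nu_M$, and parts (c),(d) by naturality of $\hat\xi^{\bs\Spin}_{n-1}$ under the loop maps. The only cosmetic difference is that you work out the $\SU(m)$ signature substitution in detail where the paper works out the $\Sp(m)$ case; your computation checks out against \eq{fm3eq35}.
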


\begin{proof}
As the coefficient groups $\Om^{\bs\Spin}_n(*)$ of spin bordism are concentrated in non-negative degrees $n\ge 0$, the Atiyah--Hirzebruch spectral sequence (see \S\ref{fm22}) implies that a $d$-connected map $f:X\ra Y$ induces an isomorphism $f_*:\ti\Om_n^{\bs\Spin}(X)\ra \ti\Om_n^{\bs\Spin}(Y)$ in all dimensions $n<d$. Hence Theorem \ref{fm3thm3} implies Theorem \ref{fm3thm1} in dimension $n\le 9$ in each of the cases. This proves (a).

The isomorphisms \eq{fm3eq9}--\eq{fm3eq15} are rewritten using \eq{fm3eq24}, \eq{fm3eq26} and \eq{fm3eq28}, which leads to \eq{fm3eq32}--\eq{fm3eq37}. As an example, we explain in detail how \eq{fm3eq15} implies \eq{fm3eq37}. By the Hirzebruch signature formula, $\frac{-\operatorname{sign}(M)}{16}=-\frac{1}{48}\int_M p_1(TM)$. Moreover, the splitting $TX|_M=TM\op\nu_M$ gives $p_1(TX)|_M=p_1(TM)+p_1(\nu_M)$ and $p_1(\nu_M)=-c_2(\nu_M\ot\C)$. But $\nu_M\ot\C\cong\Hom_\C(\Si_{\nu_M}^-,\Si_{\nu_M}^+)\ab\cong\Si_{\nu_M}^-\ot_\C\Si_{\nu_M}^+$, as the quaternionic structure makes $\Si_{\nu_M}^-$ self-conjugate. Hence $p_1(\nu_M)=-c_2(\Si_{\nu_M}^-\ot_\C\Si_{\nu_M}^+)=-2c_2(\Si_{\nu_M}^-)-2c_2(\Si_{\nu_M}^+)$. Substituting these expression into the calculation of $\frac{-\operatorname{sign}(M)}{16}$ gives
\begin{equation*}
 \frac{-\operatorname{sign}(M)}{16}=-\frac{1}{48}\int_M p_1(TX)|_M-\frac{1}{24}\int_M c_2(\Si_{\nu_M}^-)-\frac{1}{24}\int_M c_2(\Si_{\nu_M}^+).
\end{equation*}
According to \eq{fm3eq28}, $-q_1(P)$ is Poincar\'e dual to $[M]$, $\int_X q_2(P)=-\int_M c_2(\Si_{\nu_M}^-)$ and $\int_X[q_1(P)^2-q_2(P)]=\int_M c_2(\Si_{\nu_M}^+)$ which, in particular, gives the second and third component of \eq{fm3eq37}. We can now rewrite the integrals over $M$ in the previous equation as integrals over $X$,
\begin{equation*}
 \frac{-\operatorname{sign}(M)}{16}=\frac{1}{48}\int_X p_1(TX)q_1(P)+\frac{1}{24}\int_X q_2(P)-\frac{1}{24}\int_X[q_1(P)^2-q_2(P)],
\end{equation*}
giving the first component of~\eq{fm3eq37}.

Equations \eq{fm3eq31}--\eq{fm3eq36} have similar proofs, and are left to the reader. Parts (c),(d) are immediate from Theorems \ref{fm3thm2} and~\ref{fm3thm3}.
\end{proof}

\subsection{Spin bordism groups of some Lie groups}
\label{fm34}

Our next result, Theorem \ref{fm3thm5}, is a prerequisite for the proof of Theorem \ref{fm3thm2} and computes the spin bordism groups of various Lie groups and of the topological groups $K(\Z,3)$ and $K(\Z_2,3)$.

\subsubsection{Description of generators}
\label{fm341}

Again, we first construct elements in $\ti\Om_n^{\bs\Spin}(G)$ that will serve as generators, and describe a convention that allows us to regard these in different spin bordism groups. Classes in $\ti\Om_n^{\bs\Spin}(G)$ are represented by pairs $[X,\phi]$ of a compact spin $n$-manifold $X$ and a continuous map $\phi:X\ra G$. A continuous group morphism $G_1\ra G_2$ induces a morphism of spin bordism groups, so we can view elements in $\ti\Om_n^{\bs\Spin}(G_1)$ as elements in $\ti\Om_n^{\bs\Spin}(G_2)$. {\it We use the same letter to denote these elements.} We summarize the situation in the following diagram.
\begin{equation}
\begin{tikzcd}[column sep=small]
			& \SU\arrow[dr]\\
	\SU(2)=\Sp(1) \arrow[ru]\arrow[rd]	&		& K(\Z,3)\rar & K(\Z_2,3). \\
			& \Sp\arrow[ur]
\end{tikzcd}
\label{fm3eq38}
\end{equation}

Recall from \cite[Chs.~3--4]{BCM} that $H^*(\SU,\Z)$ is an exterior algebra with generators $b_i$ of degree $|b_i|=2i-1$ for all $i\ge 2$. The transgression of $b_i$ is the Chern class $c_i\in H^{2i}(B\SU,\Z)$. For the symplectic group, $H^*(\Sp,\Z)$ is an exterior algebra with generators $a_j$ of degree $|a_j|=4j-1$ for all $j\ge 1$. The transgression of $a_j$ is the symplectic Pontrjagin class $q_j\in H^{4j}(B\Sp,\Z)$. The map in cohomology induced by $\Sp\ra\SU$ maps $b_{2i}\mapsto(-1)^ia_i$, $b_{2i+1}\mapsto0$ and, similarly, $B\Sp\ra B\SU$ maps $c_{2i}\mapsto(-1)^i q_i$, $c_{2i+1}\mapsto0$. Our convention in \eq{fm3eq38} is that the map $\SU\ra K(\Z,3)$ classifies $b_2\in H^3(\SU,\Z)$ and $\Sp\ra K(\Z,3)$ classifies $-a_1\in H^3(\Sp,\Z)$, so that \eq{fm3eq38} commutes up to homotopy.

Observe that the map $\Sp\ra\SU$ is not compatible with the zig-zags $B\SU\overset{\enskip\simeq_{10}}{\longleftarrow} M\U(2)\ra M\SO(4)$ and $B\Sp\overset{\enskip\simeq_{12}}{\longleftarrow} M\Spin(4)\ra M\SO(4)$ constructed using \eq{fm3eq25} and \eq{fm3eq27} (indeed, the maps \eq{fm3eq25} and \eq{fm3eq27} are constructed in a very different way). To avoid confusion, we will therefore {\it not} apply the convention above for group morphisms $G_1\ra G_2$ in the case $\Sp\ra\SU$.

\paragraph{Generator $\rho$ in Dimension 3.}

Let $\phi:\cS^3\ra\SU(2)$ represent the generator of $\pi_3(\SU(2))\cong\Z$ and define
\begin{equation*}
\rho=[\cS^3,\phi]\quad\text{in}\quad\ti\Om_3^{\bs\Spin}(\SU(2)).
\end{equation*}

\paragraph{Generator $\varsigma$ in Dimension 5.}

Let $\phi:\cS^5\ra\SU(3)$ represent the generator of $\pi_5(\SU(3))\cong\Z$ and define
\begin{equation*}
\varsigma=[\cS^5,\phi] \quad\text{in}\quad\ti\Om_5^{\bs\Spin}(\SU).
\end{equation*}

\paragraph{Generators $\vartheta_1,\frac{\vartheta_1}{2},\vartheta_2,\vartheta_3$ in Dimension 7.}

The spin $7$-manifold $K3\t\cS^3$ and the projection $\phi: K3\t\cS^3\ra\cS^3\cong\SU(2)$ define
\begin{equation*}
\vartheta_1=[K3\t\cS^3,\phi] \quad\text{in}\quad\ti\Om_7^{\bs\Spin}(\SU(2)).
\end{equation*}
The image of $\vartheta_1$ in $\ti\Om_7^{\bs\Spin}(\SU)$ is naturally divisible by two, by \eq{fm3eq45} below; an explicit construction is more complicated: recall from above that $\frac{\ze_1}{2}$ is represented by the submanifold $(K3\t\cS^3)/\Z_2\an{\al}\t\{1\}$ of $(K3\t\cS^3)/\Z_2\an{\al}\t\cS^1_{\rm b}$. As the submanifold does not meet $(K3\t\cS^3)/\Z_2\an{\al}\t\{-1\}$, the Pontrjagin--Thom collapse determines a map $(K3\t\cS^3)/\Z_2\an{\al}\ra \Om M\U(2)$ into the {\it based} loop space, which we can map via \eq{fm3eq25} to $\Om B\SU\simeq \SU$. This gives $\phi:(K3\t\cS^3)/\Z_2\an{\al}\ra\SU$ and defines
\begin{equation*}
\frac{\vartheta_1}{2}=[(K3\t\cS^3)/\Z_2\an{\al},\phi] \quad\text{in}\quad\ti\Om_7^{\bs\Spin}(\SU).
\end{equation*}

Next, let $(r,s)\in\Sp(1)\t\Sp(1)$ act on the right of $(A,q)\in\Sp(2)\t\Sp(1)$ by
\begin{equation*}
 \left(A\begin{pmatrix}r & 0\\ 0 & s\end{pmatrix}, s^{-1}q s\right).
\end{equation*}
The quotient manifold $X=(\Sp(2)\t\Sp(1))/(\Sp(1)\t\Sp(1))$ is a compact spin $7$-manifold. We can construct a continuous map
\begin{equation*}
 \phi: X\longra\Sp(2),\quad \left(\begin{pmatrix}a & b\\ c& d\end{pmatrix},q\right)\longmapsto
 \begin{pmatrix}
 |a|^2 + bq\bar{b} & a\bar{c} + bq\bar{d}\\
 c\bar{a}+dq\bar{b} & |c|^2 + dq\bar{d}
 \end{pmatrix}
\end{equation*}
which we use to define
\begin{align*}
 \vartheta_2&=[(\Sp(2)\t\Sp(1))/(\Sp(1)\t\Sp(1)),\phi] \quad\text{in}\quad\ti\Om_7^{\bs\Spin}(\Sp).
\end{align*}

Finally, the spin $7$-manifold $\CP^3\t\cS^1_{\rm b}$ has a map $\phi_0:\CP^3\t\cS^1\ra\U(4)$ that sends a $1$-dimensional subspace $L\subset\C^4$ and $\la\in\cS^1$ to the transformation $\la\id_L\op\id_{L^\perp}$ of $\C^4$. Define $\phi:\CP^3\t\cS^1\ra\SU(5)$ by $\phi(L,\la)=(\phi_0(L),\la^{-1})$ and
\begin{equation*}
\vartheta_3=[\CP^3\t\cS^1_{\rm b},\phi] \quad\text{in}\quad\ti\Om_7^{\bs\Spin}(\SU).
\end{equation*}

\paragraph{Generator $\upsilon$ in Dimension 8.}

The compact spin $8$-manifold $\SU(3)$ and the natural map $\phi:\SU(3)\ra\SU$ define
\begin{equation*}
\upsilon=[\SU(3),\phi] \quad\text{in}\quad\ti\Om_8^{\bs\Spin}(\SU).
\end{equation*}

\subsubsection{Statement of theorem}
\label{fm342}

The following theorem is proved in \S\ref{fm17}.

\begin{thm}
\label{fm3thm5}
{\bf(a)} In dimension\/ $n\le 8$ the spin bordism groups of the spaces\/ $\SU(2),$ $\SU,$ $\Sp,$ $K(\Z,3),$ and\/ $K(\Z_2,3)$ are as shown in Table~{\rm\ref{fm3tab6}}.
\smallskip

\begin{table}[!ht]
\renewcommand{\arraystretch}{1.5}
\centering

\begin{tabular}{c|c|c|c|c|c}
$n$ & \!\!$\ti\Om_n^{\bs\Spin}(\SU(2))$\!\! & \!\!$\ti\Om_n^{\bs\Spin}(\SU)$\!\! & \!\!$\ti\Om_n^{\bs\Spin}(\Sp)$\!\! & \!\!$\ti\Om_n^{\bs\Spin}(K(\Z,3))$\!\! & \!\!$\ti\Om_n^{\bs\Spin}(K(\Z_2,3))$\!\! \\ \hline
$3$ &$\Z\an{\rho}$			&$\Z\an{\rho}$	&$\Z\an{\rho}$	&$\Z\an{\rho}$	&$\Z_2\an{\rho}$\\
$4$ &$\Z_2\an{\al_1\rho}$	&		&$\Z_2\an{\al_1\rho}$	&		&\\
$5$ &$\Z_2\an{\al_1^2\rho}$	&$\Z\an{\varsigma}$	&$\Z_2\an{\al_1^2\rho}$	&		&\\
$7$ &$\Z\an{\vartheta_1}$		&$\Z\an{\frac{\vartheta_1}{2},\vartheta_3}$	&$\Z\an{\vartheta_1,\vartheta_2}$	&$\Z\an{\vartheta_2}$	& $\Z_2^2$ or $\Z_4$\\
$8$ &						&$\Z\an{\upsilon}$	&$\Z_2\an{\al_1\vartheta_2}$	&$\Z_2\an{\upsilon}$	&$\Z_2\an{\upsilon}$\\
\end{tabular}

\caption{The spin bordism groups $\ti\Om_n^{\bs\Spin}(G)$ for all $n\le 8$ and\/ $G=K(\Z,3),\ab K(\Z_2,3),\ab \SU(2),\ab \SU,\ab \Sp$. A missing entry stands for the trivial group. The group $\ti\Om_7^{\bs\Spin}(K(\Z_2,3))$ is either $\Z_2^2$ or $\Z_4$; it will not be needed.}
\label{fm3tab6}
\end{table}

\noindent{\bf(b)}  The morphisms between the groups in Table {\rm\ref{fm3tab6}} induced by the maps in \eq{fm3eq38} are as indicated by the notation and the rules
\e
\begin{aligned}
\ti\Om^{\bs\Spin}_*(\SU)&\longra\ti\Om^{\bs\Spin}_*(K(\Z,3)),
&\frac{\vartheta_1}{2}&\longmapsto -6\vartheta_2, &\vartheta_3&\longmapsto\vartheta_2,\\
\ti\Om^{\bs\Spin}_*(\Sp)&\longra\ti\Om^{\bs\Spin}_*(K(\Z,3)),
&\vartheta_1&\longmapsto -12\vartheta_2, &\al_1\ze_2&\longmapsto 0.
\end{aligned}
\label{fm3eq39}
\e

\noindent{\bf(c)}  The action of\/ $\al_1\in\Om_1^{\bs\Spin}(*)$ is as indicated by the notation and the rules
\ea
\al_1\frac{\vartheta_1}{2}&=0,
&\al_1\vartheta_1&=0,
&\al_1\vartheta_3&=0,
&\al_1\vartheta_2&=\upsilon.
\label{fm3eq40}
\ea

\noindent{\bf(d)}  For\/ $\SU(2)$ we have explicit isomorphisms
\ea
 \ti\Om^{\bs\Spin}_3(\SU(2))&\longra\Z,		& [X,\phi]&\longmapsto\ts\int_X\phi^*(b_2),
\label{fm3eq41}\\
 \ti\Om^{\bs\Spin}_7(\SU(2))&\longra\Z,		& [X,\phi]&\longmapsto -\ts\int_X\frac{p_1(TX)\phi^*(b_2)}{48},
\label{fm3eq42}
\ea
which map\/ $\rho\mapsto 1,$ $\vartheta_1\mapsto 1$.

For\/ $\SU$ we have explicit isomorphisms
\ea
\ti\Om^{\bs\Spin}_3(\SU)&\longra\Z, & [X,\phi]&\longmapsto\ts\int_X\phi^*(b_2),
\label{fm3eq43}\\
\ti\Om^{\bs\Spin}_5(\SU)&\longra\Z, & [X,\phi]&\longmapsto\ts\frac12\int_X\phi^*(b_3),
\label{fm3eq44}\\
\ti\Om^{\bs\Spin}_7(\SU)&\longra\Z^2, & 
\label{fm3eq45}\\
[X,\phi]&\mathrlap{\longmapsto\left(\ts\int_X[\frac{\phi^*(b_4)}{6}-\frac{p_1(TX)\phi^*(b_2)}{24}],\ts\int_X\phi^*(b_4)\right),}\nonumber\\
\ti\Om^{\bs\Spin}_8(\SU)&\longra\Z, & [X,\phi]&\longmapsto\ts\int_X\phi^*(b_2\cup b_3),
\label{fm3eq46}
\ea
which map $\rho\mapsto 1,$ $\varsigma\mapsto 1,$ $\frac{\vartheta_1}{2}\mapsto(1,0),$ $\vartheta_3\mapsto (0,1),$ $\upsilon\mapsto 1$.

For\/ $\Sp$ we have explicit isomorphisms
\ea
 \ti\Om^{\bs\Spin}_3(\Sp)&\longra\Z,		& [X,\phi]&\longmapsto -\ts\int_X\phi^*(a_1),
\label{fm3eq47}\\
 \ti\Om^{\bs\Spin}_7(\Sp)&\longra\Z^2,		
\label{fm3eq48}\\
 [X,\phi]&\longmapsto
 \mathrlap{\left(\ts\int_X \frac{p_1(TX)\phi^*(a_1)}{48}+\frac{\phi^*(a_2)}{12},-\ts\int_X \phi^*(a_2)\right),}\nonumber
\ea
which map\/ $\rho\mapsto 1,$ $\vartheta_1\mapsto(1,0),$ $\vartheta_2\mapsto(0,-1)$.

For\/ $K(\Z,3)$ we have explicit isomorphisms
\ea
&\ti\Om^{\bs\Spin}_3(K(\Z,3))\longra\Z, & &[X,\al]\longmapsto \ts\int_X\al,
\label{fm3eq49}\\
&\ti\Om^{\bs\Spin}_7(K(\Z,3))\longra\Z, & &[X,\al]\longmapsto \ts\frac{1}{4}\int_X p_1(TX)\cup\al,
\label{fm3eq50}\\
&\ti\Om^{\bs\Spin}_8(K(\Z,3))\longra\Z_2, & &[X,\al]\longmapsto \ts\int_X\bar\al\cup\Sq^2(\bar\al),
\label{fm3eq51}
\ea
which map\/ $\rho\mapsto 1,$ $\vartheta_2\mapsto -1,$ and\/ $\upsilon\mapsto 1$.

For\/ $K(\Z_2,3)$ we have explicit isomorphisms
\ea
 \ti\Om^{\bs\Spin}_3(K(\Z_2,3))&\longra\Z_2,		& [X,\bar\al]&\longmapsto \ts\int_X \bar\al,
 \label{fm3eq52}\\
 \ti\Om^{\bs\Spin}_8(K(\Z_2,3))&\longra\Z_2,		& [X,\bar\al]&\longmapsto \ts\int_X \bar\al\cup\Sq^2(\bar\al),
 \label{fm3eq53}
\ea
which map\/ $\rho\mapsto 1$ and\/ $\upsilon\mapsto 1$.
\end{thm}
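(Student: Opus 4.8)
The plan is to run the Atiyah--Hirzebruch spectral sequence (AHSS) for spin bordism, $\ti H_p(G,\Om^{\bs\Spin}_q(*))\Ra\ti\Om^{\bs\Spin}_{p+q}(G)$, for each of the five spaces $G=\SU(2),\SU,\Sp,K(\Z,3),K(\Z_2,3)$ in the range $p+q\le 8$, and then pin down the extension problems using the explicit generators constructed in \S\ref{fm341}. First I would assemble the mod-$2$ and integral cohomology of each space: $H^*(\SU(2),\Z)=\Lambda[b_2]$, $H^*(\SU,\Z)=\Lambda[b_2,b_3,b_4,\ldots]$, $H^*(\Sp,\Z)=\Lambda[a_1,a_2,\ldots]$ (from \cite{BCM}), and for $K(\Z,3),K(\Z_2,3)$ the low-degree cohomology from Serre's Theorem \ref{fm2thm1} together with the Bockstein/$\Sq^i$ structure — here the key classes are $\bar e_3$, $\Sq^2\bar e_3$ in degree $5$, and in degree $7$ the products and $\Sq^I\bar e_3$ with $e(I)<3$, whose count determines whether $\ti\Om^{\bs\Spin}_7(K(\Z_2,3))$ is $\Z_2^2$ or $\Z_4$ (a group the theorem explicitly declines to resolve). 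Reading off $E^2_{p,q}=\ti H_p(G,\Om^{\bs\Spin}_q(*))$ uses only $\Om^{\bs\Spin}_q(*)$ for $q\le 8$ from Table \ref{fm2tab2}: the nonzero rows are $q=0$ ($\Z$), $q=1$ ($\Z_2\an{\al_1}$), $q=2$ ($\Z_2\an{\al_1^2}$), $q=4$ ($\Z\an{\al_4}$), $q=8$ ($\Z^2$).

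Next I would compute the differentials. By Proposition \ref{fm2prop1}, $d^2_{p,1}$ is dual to $\Sq^2$ and $d^2_{p,0}=d^2_{p,1}\ci\rho_2$, which kills most of rows $q=0,1,2$ against each other; for instance on $\SU$ the class $b_5\in H^5$ dies because $\Sq^2$ hits it appropriately, and the torsion in $\ti\Om^{\bs\Spin}_5(\SU)$ is eliminated, leaving $\Z\an{\varsigma}$. Higher differentials $d^3$ into the $q=4$ row and $d^r$ ($r\le 5$) landing in the $q=8$ row in total degree $\le 8$ must be handled by hand: the most efficient route is to \emph{use the generators to force them}. Each generator $\rho,\varsigma,\vartheta_i,\upsilon$ is a concrete $[X,\phi]$ whose image under the relevant characteristic-number homomorphisms (the right-hand columns of the theorem, via Proposition \ref{fm2prop2} and its surrounding discussion on $\Psi_n(T)$) is computed directly — e.g.\ $\int_{\SU(3)}\phi^*(b_2\cup b_3)=1$ shows $\upsilon$ generates a $\Z$ in $\ti\Om^{\bs\Spin}_8(\SU)$, which is only consistent with a specific pattern of surviving $E^\iy$-terms, thereby \emph{determining} the differentials rather than computing them abstractly. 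The relations in (c) — notably $\al_1\vartheta_2=\upsilon$ — come from the graded-module structure of the spectral sequence over $\Om^{\bs\Spin}_*(*)$: multiplication by $\al_1$ is a map of the whole AHSS, so $\al_1\cdot[\ \cdot\ ]$ on $E^\iy_{4,4}$-type classes lands in $E^\iy_{4,5}$, and the claimed nontriviality is checked on the characteristic number $\int_X\bar\al\cup\Sq^2(\bar\al)$ for $K(\Z,3)$, transported to $\SU$ via the map $\SU\ra K(\Z,3)$ classifying $b_2$.

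The maps in (b) are then a bookkeeping exercise: the induced maps on cohomology ($b_{2i}\mapsto(-1)^i a_i$ for $\Sp\ra\SU$, $b_2\mapsto\bar e_3$ for $\SU\ra K(\Z,3)$, $-a_1\mapsto\bar e_3$ for $\Sp\ra K(\Z,3)$) induce maps of spectral sequences, and the coefficients $-6$, $-12$, $1$ in \eqref{fm3eq39} are read off by evaluating the degree-$7$ characteristic numbers \eqref{fm3eq42}, \eqref{fk3eq45}–\eqref{fm3eq48}, \eqref{fm3eq50} on the same bordism class pushed forward along the two routes; the factor discrepancies ($\frac{\vartheta_1}{2}\mapsto-6\vartheta_2$ versus $\vartheta_1\mapsto-12\vartheta_2$) are exactly the divisibility phenomena flagged in \S\ref{fm341}. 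Finally the explicit isomorphisms (d) are obtained by combining the Hirzebruch-type index/signature identities (the $p_1(TX)$ terms, as in the $\hat A$-genus contributions to spin bordism characteristic numbers) with the AHSS filtration: $\Psi_n(G)$ of \eqref{fm2eq11} is injective/surjective in the relevant degrees by Proposition \ref{fm2prop2}, and one reads the isomorphism off the cohomology classes dual to the surviving $E^\iy_{n,0}$ generators.

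\textbf{Main obstacle.} The hard part will be the degree-$7$ and degree-$8$ differentials for $K(\Z,3)$ and $K(\Z_2,3)$, where the cohomology is larger (several $\Sq^I\bar e_3$ classes plus products) and where the $\Om^{\bs\Spin}_8(*)=\Z^2$ row first contributes. Here the abstract spectral sequence alone does not cleanly separate the $\Z$ from the $\Z_2$ summands (and indeed does not resolve $\ti\Om^{\bs\Spin}_7(K(\Z_2,3))$), so one genuinely needs the explicit generators $\vartheta_2,\upsilon$ and the secondary relation $\al_1\vartheta_2=\upsilon$ — proved via the characteristic number $\int_X\bar\al\cup\Sq^2(\bar\al)$ and its compatibility with the $\al_1$-action — to finish. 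Controlling these extension problems, rather than the $E^2$-computations, is where the real work lies.
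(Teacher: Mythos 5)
Your overall skeleton (AHSS with $d^2$ dual to $\Sq^2$, explicit generators, characteristic numbers) matches the paper's \S\ref{fm17}, and your idea of using a generator with nonzero image under $\Psi_n$ to force the differentials out of $E^r_{n,0}$ to vanish via Proposition \ref{fm2prop2} (e.g.\ $\upsilon$ in degree $8$ for $\SU$ and $K(\Z,3)$) is exactly right. But there is a genuine gap: generators only give \emph{lower} bounds, whereas the crux of degrees $7$--$8$ for $K(\Z,3)$ and $K(\Z_2,3)$ is that certain differentials must be shown \emph{nonzero}. Concretely, $d^3_{9,0}:E^3_{9,0}\ra E^3_{6,2}=\Z_2\an{\al_1^2\bar\de_3^2}$ must be an isomorphism for $K(\Z,3)$ (otherwise $E^\iy_{6,2}\ne 0$ in total degree $8$ and $\ti\Om^{\bs\Spin}_8(K(\Z,3))$ would have order $4$, not $2$), and $d^3_{6,2}\ne 0$ for $K(\Z_2,3)$ (otherwise $\ti\Om^{\bs\Spin}_7(K(\Z_2,3))$ would have order $8$). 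No characteristic number of an explicit class can certify that a potential $E^\iy_{p,q}$-class with $q>0$ dies. The paper supplies these upper bounds by a technique your proposal omits entirely: comparison of spectral sequences along the suspension maps $\chi:\Si K(R,3)\ra K(R,4)$ with the cases $K(\Z,4)$, $K(\Z_2,4)$ already treated in \S\ref{fm15}, which in turn rest on Stong's external computation of $\ti\Om_*^{\bs\Spin}(K(\Z,4))$ and the auxiliary space $K(\Z,2)\w K(\Z,2)$ (Lemma \ref{fm15lem1}). The same comparison (now with $K(\Z,3)$) is how the paper kills $d^3_{8,0},d^5_{8,0}$ for $\SU$ and resolves the nontrivial extension $\ti\Om^{\bs\Spin}_7(\SU)\cong\Z^2$; there your generator-plus-characteristic-number route could in principle substitute, but it cannot for the two differentials above.

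A second, smaller problem: your proposed verification of $\al_1\vartheta_2=\upsilon$ via the invariant $\int_X\bar\al\cup\Sq^2(\bar\al)$ cannot work. For any class of the form $\al_1\cdot[X,\al]=[X\t\cS^1_{\rm nb},\al\bt 1]$ with $\dim X=7$, the degree-$8$ class $(\bar\al\cup\Sq^2\bar\al)\bt 1$ is pulled back from $H^8(X,\Z_2)=0$, so this invariant vanishes identically on all $\al_1$-multiples and cannot detect the claimed relation. (The paper does not prove this relation by that route either; note also the tension with \eq{fm3eq39} and \S\ref{fm177}, where the image of $\al_1\vartheta_2$ under $\ti\Om^{\bs\Spin}_8(\Sp)\ra\ti\Om^{\bs\Spin}_8(K(\Z,3))$ is shown to be zero.) You would need a genuinely different argument for this part of (c).
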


When possible, we wrote the generators of $\ti\Om_n^{\bs\Spin}(MH)$ as images under $\hat\xi^{\bs\Spin}_{n-1}(MH)$ of elements in $\Om_{n-1}^{\bs\Spin}(\cL MH;MH)$. If we use Theorem \ref{fm3thm3} to identify $\Om_{n-1}^{\bs\Spin}(\cL MH;MH)\cong\Om_{n-1}^{\bs\Spin}(\cL BG;BG)$, it may be possible to lift classes further:

Let $G$ be a topological group. By clutching a pair of trivial bundles over the two cones in $\Si G$, we obtain a principal $G$-bundle over the suspension, classified by a map
\e
\label{fm3eq54}
 \chi:\Si G\longra BG.
\e
Its adjoint is a homotopy equivalence $G\ra \Om BG$. Consider the morphism
\e
\label{fm3eq55}
 \ti\Om_{n-1}^{\bs\Spin}(G)\longra \Om_{n-1}^{\bs\Spin}(\cL BG; BG)
\e
induced by $(G,*)\simeq(\Om BG,*)\ra (\cL BG,BG)$. There are topological abelian group structures on $G=K(\Z,3)$ and\/ $G=K(\Z_2,3)$ for which $BG$ is $K(\Z,4)$ and\/ $K(\Z_2,4)$, so we can apply the above constructions in these cases also.

The following proposition is proved in \S\ref{fm176}.

\begin{prop}
\label{fm3prop6}
There is a commutative diagram
\begin{equation}
\begin{tikzcd}[column sep=0ex]
& \Om_{n-1}^{\bs\Spin}(\cL BG;BG)\arrow[rd,"\hat\xi^{\bs\Spin}_{n-1}(BG)"]\\
\ti\Om_{n-1}^{\bs\Spin}(G)\arrow[ru,"\eq{fm3eq55}"]\arrow[rd,"\cong"] && \ti\Om_n^{\bs\Spin}(BG).\\
& \ti\Om_n^{\bs\Spin}(\Si G)\arrow[ru,"\chi_*"]
\end{tikzcd}
\label{fm3eq56}
\end{equation}
Here, we use the suspension isomorphism of the spin bordism generalized homology theory. Explicitly, the composition of the diagram takes\/ $\phi:X\ra G$ to the mapping torus principal\/ $G$-bundle $P_\phi\ra X\t \cS^1_{\rm b}$.

On generators, the composition
\e
\ti\Om_{n-1}^{\bs\Spin}(G)\longra\ti\Om_n^{\bs\Spin}(BG)
\label{fm3eq57}
\e
either way around\/ \eq{fm3eq56} has the following effect:
\e
\begin{aligned}
\rho&\mapsto\de, &&\text{any\/ $G$}, \\
\varsigma&\mapsto\varep, && \text{$G=\SU$},\\
\vartheta_1&\mapsto\ze_1,&&\text{$G=\SU(2)$},
&\ts\frac{\vartheta_1}{2}&\mapsto\ts\frac{\ze_1}{2},&&\text{$G=\SU$},\\
\vartheta_2&\mapsto\ze_2-\ze_2', && \text{$G=\Sp$}, &\vartheta_3&\mapsto\ze_3, && \text{$G=\SU$},\\
\upsilon&\mapsto\al_1\ze_2, && \text{$G=\SU$.}
\end{aligned}
\label{fm3eq58}
\e
\end{prop}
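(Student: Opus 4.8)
\textbf{Proof proposal for Proposition \ref{fm3prop6}.}

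The plan is to establish the commutative diagram \eqref{fm3eq56} by a general abstract naturality argument, and then to verify the effect on generators \eqref{fm3eq58} by combining that diagram with the explicit computations already available in Corollary \ref{fm3cor1} and Theorem \ref{fm3thm5}. For the commutativity of \eqref{fm3eq56}, first I would unpack the three maps. The map \eqref{fm3eq55} is induced by the inclusion of based loops into free loops $(\Om BG,*)\hookra(\cL BG,BG)$ composed with the homotopy equivalence $G\simeq\Om BG$ which is the adjoint of the clutching map $\chi:\Si G\ra BG$ of \eqref{fm3eq54}. The suspension isomorphism $\ti\Om_{n-1}^{\bs\Spin}(G)\cong\ti\Om_n^{\bs\Spin}(\Si G)$ is part of the spin-bordism generalized homology theory. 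The key point is that both routes around the square compute the same thing: tracing through, a class $[X,\phi]$ with $\phi:X\ra G$ is sent (lower route) to $\chi_*$ of the suspension, which by definition of $\chi$ is the principal $G$-bundle on $X\t\cS^1_{\rm b}$ obtained by clutching, i.e.\ the mapping torus bundle $P_\phi\ra X\t\cS^1_{\rm b}$; and (upper route) $[X,\phi]$ maps first to the based loop $X\ra\Om BG$, hence to the relative class $[X,\phi']\in\Om_{n-1}^{\bs\Spin}(\cL BG;BG)$ where $\phi':X\ra\cL BG$ lands in the fibre over the basepoint, and then $\hat\xi^{\bs\Spin}_{n-1}(BG)$ sends this to $[X\t\cS^1,(\phi')']$ by \eqref{fm2eq6}--\eqref{fm2eq7}, which is again $P_\phi\ra X\t\cS^1_{\rm b}$ under the identification $\B_P\leftrightarrow$ maps to $BG$. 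So the diagram commutes essentially by the definition of the clutching construction together with the compatibility, spelled out in \S\ref{fm213}, between $\xi^{\bs B}_n(T)$, the splitting \eqref{fm2eq5}, and the section $s_T$. This also gives the stated explicit description of the composition as $\phi\mapsto P_\phi$.

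For the effect on generators \eqref{fm3eq58}, I would argue one line at a time using the commutativity just established, reading off the right-hand leg $\chi_*\ci(\text{suspension})$ via the explicit isomorphisms. For $\rho=[\cS^3,\phi]\in\ti\Om_3^{\bs\Spin}(\SU(2))$: the suspension is a class in $\ti\Om_4^{\bs\Spin}(\Si\SU(2))$ mapping under $\chi_*$ to $\ti\Om_4^{\bs\Spin}(B\SU(2))\cong\Z$; one computes $\int c_2(P_\phi)=1$ using $\phi^*(b_2)$ and the transgression relation ($b_2$ transgresses to $c_2$), matching $\de$ via \eqref{fm3eq31}, \eqref{fm3eq41}. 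For $\varsigma=[\cS^5,\phi]\in\ti\Om_5^{\bs\Spin}(\SU)$, the image in $\ti\Om_6^{\bs\Spin}(B\SU(m))\cong\Z$ is computed by $\frac12\int c_3(P_\phi)$, which is $1$ using $b_3$ transgressing to $c_3$ and \eqref{fm3eq34}, \eqref{fm3eq44}, giving $\varep$. For $\vartheta_1=[K3\t\cS^3,\phi]\in\ti\Om_7^{\bs\Spin}(\SU(2))$, the image in $\ti\Om_8^{\bs\Spin}(B\SU(2))\cong\Z^2$ is $(\int[-\frac{c_2^2}{24}-\frac{p_1(TX)c_2}{48}],\int c_2^2)$ evaluated on $P_\phi$ over $K3\t\cS^3\t\cS^1_{\rm b}$; using \eqref{fm3eq42} and matching with \eqref{fm3eq32} this should come out to $(1,0)=\ze_1$. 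For $\frac{\vartheta_1}{2}$ and $\vartheta_3$ in $\ti\Om_7^{\bs\Spin}(\SU)$, one uses \eqref{fm3eq45} and \eqref{fm3eq35} to get $(1,0,0)=\frac{\ze_1}{2}$ and $(0,0,1)=\ze_3$ respectively. For $\vartheta_2\in\ti\Om_7^{\bs\Spin}(\Sp)$, the image in $\ti\Om_8^{\bs\Spin}(B\Sp(m))\cong\Z^3$ via \eqref{fm3eq48} and \eqref{fm3eq37} is to be identified with $(0,1,-1)$ in the coordinates where $\ze_2\mapsto(0,1,0)$, $\ze_2'\mapsto(0,0,-1)$, i.e.\ the class $\ze_2-\ze_2'$. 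Finally $\upsilon=[\SU(3),\phi]\in\ti\Om_8^{\bs\Spin}(\SU)$ maps into $\ti\Om_9^{\bs\Spin}(B\SU)\cong\Z_2$; here one uses $\al_1\vartheta_2=\upsilon$ from \eqref{fm3eq40} together with $\al_1$-linearity of all the maps in \eqref{fm3eq56} and the already-established $\vartheta_2\mapsto\ze_2-\ze_2'$, noting $\al_1(\ze_2-\ze_2')=\al_1\ze_2$ since $\al_1\ze_2'$ and $\al_1\ze_2$ are related in the relevant group (or directly: $\al_1\ze_2=\al_1\ze_2'$ in $\ti\Om_9^{\bs\Spin}(B\SU)$), yielding $\upsilon\mapsto\al_1\ze_2$.

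The main obstacle I anticipate is \emph{not} the abstract commutativity of \eqref{fm3eq56}, which is formal, but the bookkeeping in the generator computations: carefully tracking the transgression relations between the exterior-algebra generators $b_i,a_j$ of $H^*(G,\Z)$ and the characteristic classes $c_i,q_j$ of the clutched bundles $P_\phi$, and then pushing these through the characteristic-number formulas \eqref{fm3eq31}--\eqref{fm3eq37}, \eqref{fm3eq41}--\eqref{fm3eq48} while keeping signs and normalization factors ($\frac12$, $\frac16$, $\frac{1}{24}$, $\frac{1}{48}$, and the $p_1(TX)$-corrections) consistent. In particular the case of $\vartheta_2$ over the homogeneous space $(\Sp(2)\t\Sp(1))/(\Sp(1)\t\Sp(1))$ requires knowing the pullbacks $\phi^*(a_1),\phi^*(a_2)$, which is the most involved single computation; and the $\Sp\ra\SU$ entries need care because, as the text warns, the same-letter convention is deliberately \emph{not} applied to $\Sp\ra\SU$, so one must use the $\Sp(m)$-coordinates of \eqref{fm3eq37} throughout rather than silently passing to $\SU$. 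Once these explicit evaluations are carried out, \eqref{fm3eq58} follows, and the proof is complete.
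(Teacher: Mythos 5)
Your treatment of the commutativity of \eq{fm3eq56} (unravelling the definitions of the clutching map, the based/free loop inclusion, and $\hat\xi^{\bs\Spin}_{n-1}$) and your strategy for the generators $\rho,\varsigma,\vartheta_1,\frac{\vartheta_1}{2},\vartheta_2,\vartheta_3$ — comparing the explicit characteristic-number isomorphisms \eq{fm3eq41}--\eq{fm3eq48} with \eq{fm3eq31}--\eq{fm3eq37} via the transgression relations — match the paper's proof in \S\ref{fm176}.

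However, there is a genuine gap in your argument for $\upsilon\mapsto\al_1\ze_2$. You propose to deduce it from $\al_1\vartheta_2=\upsilon$, $\al_1$-linearity, and $\vartheta_2\mapsto\ze_2-\ze_2'$, "noting $\al_1(\ze_2-\ze_2')=\al_1\ze_2$". This fails for two reasons. First, in $\ti\Om_9^{\bs\Spin}(B\Sp(m))=\Z_2\an{\al_1\ze_2,\al_1\ze_2'}$ one has $\al_1(\ze_2-\ze_2')=\al_1\ze_2+\al_1\ze_2'$, which is \emph{not} equal to $\al_1\ze_2$; these are distinct nonzero elements of $\Z_2^2$. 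Second, the computation $\vartheta_2\mapsto\ze_2-\ze_2'$ lives in the $G=\Sp$ column, whereas the claim $\upsilon\mapsto\al_1\ze_2$ is for $G=\SU$, and the paper explicitly refuses to apply the same-letter convention to $\Sp\ra\SU$ because the relevant zig-zags are incompatible. The only way to compare is to push both into $K(\Z,3)\to K(\Z,4)$, and there the $\Sp$-route dies: by \eq{fm3eq7}--\eq{fm3eq8} the image of $\al_1(\ze_2+\ze_2')$ in $\ti\Om_9^{\bs\Spin}(K(\Z,4))$ is $\al_1\frac{\ze_1}{4}+2\al_1\ze_2+4\al_1\ze_3=0$ (this is exactly the computation in \S\ref{fm177} showing $\al_1\vartheta_2\mapsto 0$ under $\Sp\to K(\Z,3)$). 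So the $\Sp$-computation carries no information about $\upsilon$ in $\ti\Om_9^{\bs\Spin}(B\SU)$. The paper flags this case as "the most difficult part of the proof, as there is no explicit cohomological isomorphism $\ti\Om_9^{\bs\Spin}(B\SU)\cong\Z_2$", and instead proves directly that $\ti\Om_8^{\bs\Spin}(\SU)\to\ti\Om_9^{\bs\Spin}(B\SU)$ is surjective: one forms the mapping cone $C$ of $\chi:\Si\SU\to B\SU$, computes $H_*(C,\Z)$ from the long exact sequence, and shows $\ti\Om_9^{\bs\Spin}(C)=0$ via the Atiyah--Hirzebruch spectral sequence by establishing that the differential $d^2_{10,0}$ (dual to $\Sq^2:H^8(C,\Z_2)\to H^{10}(C,\Z_2)$) is nonzero, which in turn requires the Steenrod-square-on-mapping-cones computation of Lemma \ref{fm15lem3}. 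Your proposal contains no substitute for this argument, so the $\upsilon$ line of \eq{fm3eq58} is unproved as written.
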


\section{Gauge-theoretic bordism categories}
\label{fm4}

\subsection{\texorpdfstring{Bordism categories $\Bord^{\bs B}_n(BG)$}{Bordism categories Bordₙᴮ(BG)}}
\label{fm41}

\begin{dfn}
\label{fm4def1}
Fix a dimension $n\ge 0$, a tangential structure $\bs B$ in the sense of \S\ref{fm211}, and a Lie group $G$. We will define a symmetric monoidal category $\Bord^{\bs B}_n(BG)$ that we call a {\it bordism category}.
\begin{itemize}
\setlength{\itemsep}{0pt}
\setlength{\parsep}{0pt}
\item[(a)] {\it Objects\/} of $\Bord^{\bs B}_n(BG)$ are pairs $(X,P)$, where $X$ is a compact $n$-manifold without boundary with a $\bs B$-structure $\bs\ga_X$, which we generally omit from the notation, and $P\ra X$ is a principal $G$-bundle.
\item[(b)] {\it Morphisms\/} $[Y,Q]:(X_0,P_0)\ra(X_1,P_1)$ in $\Bord^{\bs B}_n(BG)$ are equivalence classes of pairs $(Y,Q),$ see (c), where $Y$ is a compact $(n+1)$-manifold with $\bs B$-structure $\bs\ga_Y$, there is a chosen isomorphism $\pd Y\cong -X_0\amalg X_1$ of the boundary preserving $\bs B$-structures (where $-X_0$ indicates that $X_0$ has the opposite $\bs B$-structure $-\bs\ga_{X_0}$), and $Q\ra Y$ is a principal $G$-bundle with a chosen isomorphism $Q\vert_{\pd Y}\cong P_0\amalg P_1$. We suppress the isomorphisms from the notation.
\item[(c)] In the situation of (b), let $(Y_0,Q_0)$ and $(Y_1,Q_1)$ be two choices for $(Y,Q)$. We say that $(Y_0,Q_0)\sim(Y_1,Q_1)$ if there exists a pair $(Z,R)$, where $Z$ is a compact $(n+2)$-manifold with corners and $\bs B$-structure $\bs\ga_Z$, with a chosen isomorphism of boundaries identifying $\bs B$-structures
\e
\pd Z\cong (-X_0\t[0,1])\amalg (X_1\t[0,1]) \amalg -Y_0\amalg Y_1
\label{fm4eq1}
\e
such that along $\pd^2Z$ we identify $\pd Y_i$ with $(-X_0\amalg X_1)\t\{i\}$ for $i=0,1$ in the obvious way, and $R\ra Z$ is a principal $G$-bundle such that under \eq{fm4eq1} we have
\begin{equation*}
R\vert_{\pd Z}\cong (P_0\t[0,1])\amalg (P_1\t[0,1])\amalg Q_0\amalg Q_1,
\end{equation*}
with the obvious compatibility with the chosen isomorphisms $Q_i\vert_{\pd Y_i}\cong P_0\amalg P_1$ over $(X_0\amalg X_1)\t\{i\}$. It is easy to see that `$\sim$' is an equivalence relation, so the equivalence classes $[Y,Q]$ are well defined.
\item[(d)] If $[Y,Q]:(X_0,P_0)\ra(X_1,P_1)$ and $[Y',Q']:(X_1,P_1)\ra(X_2,P_2)$ are morphisms, the {\it composition\/} is
\begin{equation*}
[Y',Q']\ci [Y,Q]=[Y'\amalg_{X_1}Y,Q'\amalg_{P_1}Q]:(X_0,P_0)\longra(X_2,P_2).
\end{equation*}
That is, we glue $Y,Y'$ along their common boundary component $X_1$ to make a manifold $Y'\amalg_{X_1}Y$ with $\bs B$-structure and boundary $\pd(Y'\amalg_{X_1}Y)=-X_0\amalg X_2$. To define the smooth structure on $Y'\amalg_{X_1}Y$ we should choose `collars' $X_1\t(-\ep,0]\subset Y$, $X_1\t[0,\ep)\subset Y'$ of $X_1$ in $Y,Y'$, and similarly for $Q,Q'$, but the choices do not change the equivalence class $[Y'\amalg_{X_1}Y,Q'\amalg_{P_1}Q]$. Composition is associative.
\item[(e)] If $(X,P)$ is an object in $\Bord^{\bs B}_n(BG),$ the {\it identity morphism\/} is
\begin{equation*}
\id_{(X,P)}=\bigl[X\t[0,1],P\t[0,1]\bigr]:(X,P)\longra(X,P).
\end{equation*}
\item[(f)] If $[Y,Q]:(X_0,P_0)\ra(X_1,P_1)$ is a morphism, we can prove that it has an inverse morphism
\begin{align*}
[Y,Q]^{-1}&=\bigl[-Y\amalg (Y\amalg_{X_0\amalg X_1}-Y),Q\amalg(Q\amalg_{P_0\amalg P_1}-Q)\bigr]:\\
&\qquad (X_1,P_1)\longra(X_0,P_0),
\end{align*}
noting that $\pd(-Y)=-(-X_0\amalg X_1)=-X_1\amalg X_0$. Thus the category $\Bord^{\bs B}_n(BG)$ is a {\it groupoid}, that is, all morphisms are isomorphisms.
\item[(g)] Define a {\it monoidal structure\/} $\ot$ on $\Bord^{\bs B}_n(BG)$ by, on objects
\begin{equation*}
(X,P)\ot (X',P')=(X\amalg X',P\amalg P'),
\end{equation*}
and if $[Y,Q]:(X_0,P_0)\ra(X_1,P_1)$, $[Y',Q']:(X_0',P_0')\ra(X_1',P_1')$ are morphisms, then
\begin{align*}
&[Y,Q]\ot [Y',Q']=[Y\amalg Y',Q\amalg Q']:\\
&(X_0,P_0)\ot(X_0',P_0')\longra(X_1,P_1)\ot (X_1',P_1').
\end{align*}
This is compatible with `$\sim$', and with compositions and identities.
\item[(h)] The {\it identity\/} in $\Bord^{\bs B}_n(BG)$ is $\boo=(\es,\es)$.
\item[(i)] If $(X,P)\in\Bord^{\bs B}_n(BG)$ we write $-(X,P)=(-X,P)$, that is, we give $X$ the opposite $\bs B$-structure $-\bs\ga_X$. Observe that we have an isomorphism
\begin{equation*}
\bigl[X\t[0,1],P\t[0,1]\bigr]:(-X,P)\ot(X,P)\longra\boo.
\end{equation*}
Thus $-(X,P)$ is an inverse for $(X,P)$ under `$\ot$'.
\item[(j)] The {\it symmetry isomorphism} $\si_{(X,P),(X',P')}=[Y,Q]\colon(X,P)\ot(X',P')\ra(X',P')\ot(X,P)$
has $(Y,Q)=((X\amalg X')\t[0,1],(P\amalg P')\t[0,1])$ with the obvious identification of $\pd Y$ with the disjoint union of $-(X\amalg X')$ and~$X'\amalg X.$
\end{itemize}
Hence $\Bord^{\bs B}_n(BG)$ is a {\it Picard groupoid}, as in~Appendix \ref{fmA}.

In the case $G=\{1\}$ we will write $\Bord^{\bs B}_n(*)$ instead of $\Bord^{\bs B}_n(B\{1\})$. By definition, objects of $\Bord^{\bs B}_n(*)$ are pairs $(X,P)$, where $P\ra X$ is a principal $\{1\}$-bundle. But as principal $\{1\}$-bundles are trivial (we may take $P\ra X$ to be $\id_X:X\ra X$) we may omit $P$, and write objects of $\Bord^{\bs B}_n(*)$ as $X$, morphisms as $[Y]:X_0\ra X_1$, and so on.

If $\ga:G_1\ra G_2$ is a morphism of Lie groups, there is an obvious functor
\e
F_\ga:\Bord^{\bs B}_n(BG_1)\longra\Bord^{\bs B}_n(BG_2)
\label{fm4eq2}
\e
mapping $(X,P)\mapsto (X,(P\t G_2)/G_1)$ on objects and $[Y,Q]\mapsto[Y,(Q\t G_2)/G_1]$ on morphisms, where $G_1$ acts on $P\t G_2$ by the principal bundle action on $P$, and by $g_1:g_2\mapsto g_2\cdot\ga(g_1)^{-1}$ on $G_2$. In particular, the morphisms $\{1\}\hookra G$, $G\twoheadrightarrow\{1\}$ induce functors $\Bord^{\bs B}_n(*)\ra\Bord^{\bs B}_n(BG)$ and~$\Bord^{\bs B}_n(BG)\ra\Bord^{\bs B}_n(*)$.

Similarly, a morphism of tangential structures induces a functor.
\end{dfn}

The next proposition motivates the name bordism category, and the choice of notation `$BG$' in $\Bord^{\bs B}_n(BG)$. It shows the $\Bord^{\bs B}_n(BG)$ can be understood explicitly using homotopy-theoretic methods. As in \S\ref{fm33}, the groups $\Om^{\bs B}_n(BG)$ are often explicitly computable.

\begin{prop}
\label{fm4prop1}
{\bf(a)} $\Bord^{\bs B}_n(BG)$ is a Picard groupoid. Its invariants in Theorem\/ {\rm\ref{fmAthm2}(a)} are the $\bs B$-bordism groups
\ea
\pi_0\bigl(\Bord^{\bs B}_n(BG) \bigr)&\cong\Om^{\bs B}_n(BG),
\label{fm4eq3}\\
\pi_1\bigl(\Bord^{\bs B}_n(BG) \bigr)&\cong\Om_{n+1}^{\bs B}(BG),
\label{fm4eq4}
\ea
and\/ $q:\Om^{\bs B}_n(BG)\ra\Om_{n+1}^{\bs B}(BG)$ mapping\/ $[X,P]\mapsto[X\t\cS^1,P\t\cS^1]$. 

Here in $X\t\cS^1,$ the $\cS^1$ has a $\U(1)$-equivariant\/ $\bs B$-structure with the usual orientation, so when $\bs B=\bs\Spin,$ it has the \begin{bfseries}non-bounding\end{bfseries} spin structure $\cS^1_{\rm nb}$. Note that for $\bs B=\bs\Spin,$ this means that\/ $q$ is multiplication by $\al_1=[\cS^1_{\rm nb}]$ in $\Om_1^{\bs\Spin}(*)$ in Table\/ {\rm\ref{fm2tab2}} under the natural action of\/ $\Om_*^{\bs\Spin}(*)$ on\/~$\Om_*^{\bs\Spin}(BG)$.
\smallskip

\noindent{\bf(b)} The isomorphisms \eq{fm4eq3} and \eq{fm4eq4} are compatible with change of group functors, in particular with\/ $\{1\}\ra G.$
\smallskip

\noindent{\bf(c)} As in every Picard groupoid, a morphism\/ $\la:(X_0,P_0)\ab\ra(X_1,P_1)$ in\/ $\Bord^{\bs B}_n(BG)$ determines a bijection
\e
\Om_{n+1}^{\bs B}(BG)\longra\Hom_{\Bord^{\bs B}_n(BG)}\bigl((X_0,P_0),(X_1,P_1)\bigr)
\label{fm4eq5}
\e
given by composition in the diagram of bijections
\e
\begin{gathered}
\xymatrix@C=144pt@R=15pt{ 
*+[r]{\Om_{n+1}^{\bs B}(BG)} \ar[d]^{\eq{fm4eq4}} \ar[r]_(0.22)\cong &  *+[l]{\Hom_{\Bord^{\bs B}_n(BG)}\bigl((X_0,P_0),(X_1,P_1)\bigr)} \ar@{=}[d] \\
*+[r]{\Hom_{\Bord^{\bs B}_n(BG)}(\boo,\boo)} \ar[r]^(0.33){\raisebox{5pt}{$\scriptstyle\ot\la$}}  & *+[l]{\Hom_{\Bord^{\bs B}_n(BG)}\bigl(\boo\!\ot\!(X_0,P_0),\boo\!\ot\!(X_1,P_1)\bigr).}
}\!\!\!\!
\end{gathered}
\label{fm4eq6}
\e

\noindent{\bf(d)} For the category $\Bord^{\bs B}_n(*),$ the analogues of\/ {\rm\eq{fm4eq3}--\eq{fm4eq4}} are
\e
\pi_0(\Bord^{\bs B}_n(*))\cong \Om^{\bs B}_n(*),\quad \pi_1(\Bord^{\bs B}_n(*))\cong \Om_{n+1}^{\bs B}(*).
\label{fm4eq7}
\e
Under the identifications {\rm\eq{fm4eq7}, \eq{fm4eq3}, \eq{fm4eq4},} the functor $F_\inc$ from $\inc:\{1\}\hookra G$ induces the morphisms $\Om_m^{\bs B}(*)\ra\Om_m^{\bs B}(BG)$ induced by $*\ra BG,$ $*\mapsto\iy$ for~$m=n,n+1$.
\end{prop}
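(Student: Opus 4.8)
The statement to prove is Proposition \ref{fm4prop1}, which identifies the Picard-groupoid invariants of the bordism category $\Bord^{\bs B}_n(BG)$ with bordism groups.

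\medskip

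\textbf{Plan of proof.} The essential point is a dictionary between the combinatorial data of the Picard groupoid $\Bord^{\bs B}_n(BG)$ and the algebraic data of $\bs B$-bordism. Recall from Appendix \ref{fmA} (Theorem \ref{fmAthm2}) that for any Picard groupoid $\cC$ one has $\pi_0(\cC)=$ isomorphism classes of objects and $\pi_1(\cC)=\Aut_\cC(\boo)$, with the quadratic map $q(x)=\si_{x,x}\in\pi_1$ obtained by transporting the self-braiding of a representative object to $\boo$ via any isomorphism. So the proof of (a) has three parts: (i) identify $\pi_0$, (ii) identify $\pi_1$, (iii) identify $q$.

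\medskip

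For (i), an object $(X,P)$ of $\Bord^{\bs B}_n(BG)$ is a compact $\bs B$-manifold $X$ with a principal $G$-bundle $P\ra X$; by the classification of principal bundles this is the same datum, up to isomorphism, as a map $X\ra BG$ (unique up to homotopy), i.e.\ a representative of a class $[X,\bs\ga_X,\phi_P]\in\Om^{\bs B}_n(BG)$. Two objects are isomorphic in $\Bord^{\bs B}_n(BG)$ precisely when there is a morphism $[Y,Q]$ between them, i.e.\ a $\bs B$-cobordism $Y$ with $\partial Y\cong -X_0\amalg X_1$ carrying a bundle $Q$ restricting to $P_0,P_1$ --- which is exactly a bordism in the sense of Definition \ref{fm2def2} between the corresponding classes. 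Hence $\pi_0(\Bord^{\bs B}_n(BG))\cong\Om^{\bs B}_n(BG)$ as sets; compatibility of the monoidal structure (disjoint union) with the addition on $\Om^{\bs B}_n(BG)$ (also disjoint union) makes this a group isomorphism, proving \eqref{fm4eq3}. For (ii), $\pi_1=\Aut_{\Bord^{\bs B}_n(BG)}(\boo)=\Aut(\es,\es)$, whose elements are equivalence classes $[Y,Q]$ of morphisms $(\es,\es)\ra(\es,\es)$, i.e.\ $Y$ a closed $\bs B$-manifold of dimension $n+1$ with a principal $G$-bundle $Q\ra Y$, modulo the relation given by $(n+2)$-manifolds-with-corners $Z$ as in Definition \ref{fm4def1}(c) --- which for closed $Y_0,Y_1$ degenerates exactly to ordinary $\bs B$-bordism of the pairs $(Y_i,Q_i)$. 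So $\pi_1(\Bord^{\bs B}_n(BG))\cong\Om^{\bs B}_{n+1}(BG)$, proving \eqref{fm4eq4}; again composition of automorphisms (gluing along $\es$, i.e.\ disjoint union) matches the group law.

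\medskip

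For (iii), the quadratic map $q:\pi_0\ra\pi_1$ sends the class of an object $x$ to the self-braiding isomorphism $\si_{x,x}:x\ot x\ra x\ot x$, transported to an automorphism of $\boo$ by choosing an isomorphism $x\ot x\ra\boo$ (e.g.\ the one from Definition \ref{fm4def1}(i) applied after identifying $x\ot x$ with $(-x)\ot x$ via a further braiding, or more simply: $q(x)=\si_{x,x}$ regarded via the canonical identification $\Aut(x\ot x)\cong\Aut(\boo)\cong\pi_1$ coming from $x\ot x\cong\boo\ot(x\ot x)$ and the transitivity of the $\pi_1$-action). By the explicit description in Definition \ref{fm4def1}(j), $\si_{(X,P),(X,P)}$ is represented by the cylinder $(X\amalg X)\t[0,1]$ with the swap identification of the two ends. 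A standard computation --- the mapping torus of the swap on $X\amalg X$ is $X\t\cS^1$ --- shows that transporting $\si_{(X,P),(X,P)}$ to $\Aut(\boo)$ yields the class $[X\t\cS^1,P\t\cS^1]\in\Om^{\bs B}_{n+1}(BG)$, where the $\cS^1$ is given the $\U(1)$-equivariant $\bs B$-structure; for $\bs B=\bs\Spin$ this is the non-bounding structure $\cS^1_{\rm nb}$, because the bounding structure would arise from a cylinder $X\t D^2$ (an $(n+2)$-coboundary), which would force $q\equiv 0$, contradicting the known nontriviality of braidings in $\Bord^{\bs\Spin}$. This identifies $q$ with multiplication by $\al_1=[\cS^1_{\rm nb}]$, completing (a).

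\medskip

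\textbf{Parts (b), (c), (d).} Part (b) is immediate from the constructions: the change-of-group functor $F_\ga$ of \eqref{fm4eq2} acts on objects/morphisms by $P\mapsto (P\t G_2)/G_1$, which on classifying maps is post-composition with $B\ga:BG_1\ra BG_2$, and this is precisely the map on $\Om^{\bs B}_*$ induced by $B\ga$; so the diagrams commute by inspection. Part (d) is the special case $G=\{1\}$, using $B\{1\}\simeq *$ and that $F_{\inc}$ for $\inc:\{1\}\hookra G$ is post-composition with $*\ra BG$. Part (c) is a formal consequence of the Picard-groupoid axioms (Appendix \ref{fmA}): in any Picard groupoid the $\pi_1$-action on morphism sets is free and transitive, so a chosen morphism $\la$ gives the bijection $\pi_1\ra\Hom(x_0,x_1)$, $\al\mapsto \al\ot\la$ (after the identification $\Hom(\boo,\boo)\cong\Hom(\boo\ot x_0,\boo\ot x_1)\cong\Hom(x_0,x_1)$ via the unit isomorphisms); combining with \eqref{fm4eq4} gives \eqref{fm4eq5}--\eqref{fm4eq6}.

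\medskip

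\textbf{Main obstacle.} The only genuinely delicate point is part (a)(iii): pinning down $q$ precisely, and in particular verifying that the relevant $\cS^1$ carries the \emph{non-bounding} spin structure rather than the bounding one. This requires carefully unwinding the transport of the symmetry isomorphism $\si_{(X,P),(X,P)}$ through the unit constraints to an automorphism of $\boo$, and recognizing the resulting closed $(n+1)$-manifold-with-$G$-bundle as $(X\t\cS^1_{\rm nb}, P\t\cS^1)$; the bookkeeping with $\bs B$-structures on cylinders and their boundary identifications is where care is needed. Everything else is a routine matching of disjoint unions and gluings on the geometric side with the group operations on the bordism side.
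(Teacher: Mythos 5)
Your proposal is correct and follows essentially the same route as the paper's proof: $\pi_0$ and $\pi_1$ are identified via classifying maps $f_P:X\ra BG$ and $f_Q:Y\ra BG$ for objects and for endomorphisms of $\boo$ respectively, $q$ is computed as the mapping torus of the swap on $X\amalg X$, which is recognized as $X\t\cS^1$ with the $\U(1)$-equivariant $\bs B$-structure, and (b)--(d) are formal consequences. Your indirect justification that the circle is non-bounding (bounding would force $q\equiv 0$) is a legitimate substitute for the paper's direct identification, given the known nontriviality of $q$ on $\Om_0^{\bs\Spin}(*)$.
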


\begin{proof} 
For (a), let $(X,P)\in\Bord^{\bs B}_n(BG).$ Write $-X$ for $X$ with the opposite $\bs B$-structure, as in Definition \ref{fm2def1}. There is an isomorphism $\bigl[X\t[0,1],P\t[0,1]\bigr]:(-X,P)\ot(X,P)\ra\boo$ and hence $(-X,P)$ is an inverse for $(X,P)$ under the monoidal structure. If $[Y,Q]:\ab(X_0,P_0)\ra(X_1,P_1)$ is a morphism, we can prove that it has the inverse morphism
\begin{equation*}
[Y,Q]^{-1}=\bigl[-Y\amalg (Y\amalg_{X_0\amalg X_1}-Y),Q\amalg(Q\amalg_{P_0\amalg P_1}Q)\bigr],
\end{equation*}
where we note that $\pd(-Y)=-(-X_0\amalg X_1)=-X_1\amalg X_0.$ Thus, all morphisms in $\Bord^{\bs B}_n(BG)$ are isomorphisms. 

To prove \eq{fm4eq3}, let $(X,P)\in\Bord^{\bs B}_n(BG)$. The principal $G$-bundle $P\ra X$ is classified by a continuous map $f_P:X\ra BG,$ unique up to homotopy, with $f_P^*(EG)\cong P$ for $EG\ra BG$ the universal principal $G$-bundle. Hence $[X,f_P]\in\Om^{\bs B}_n(BG).$ If $[W,Q]:(X_0,P_0)\to(X_1,P_1)$ is a morphism in $\Bord^{\bs B}_n(BG),$ then $\pd W=-X_0\amalg X_1$ and the classifying map $f_Q: W\ra BG$ of the principal $G$-bundle $Q\ra W$ can be chosen to extend $f_{P_0}\amalg f_{P_1}.$ Hence $[X,f_P]$ depends only on the isomorphism class $[X,P]$, so mapping $[X,P]\mapsto[X,f_P]$ gives the map $\pi_0(\Bord^{\bs B}_n(BG))\ra\Om^{\bs B}_n(BG)$ in \eq{fm4eq3}. The inverse map takes $[X,f]$ to $[X,f^*(EG)]$. Here $f^*(EG)\ra X$ is initially a topological principal $G$-bundle, but it can be made into a smooth $G$-bundle uniquely up to isomorphism.

For \eq{fm4eq4}, morphisms $0\ra 0$ are equivalence classes $[W,Q]$ where $\pd W=-\es\amalg\es=\es$, so $W$ is without boundary. We then map $[W,Q]\mapsto[W,f]$ as for~\eq{fm4eq3}. 

We can show from the definitions that $q:\Om^{\bs B}_n(BG)\ra\Om_{n+1}^{\bs B}(BG)$ maps $P\ra X$ to the mapping torus of the $\Z_2$-action on $P\amalg P\ra X\amalg X$ that exchanges the two copies of $X$, so that $q\bigl([X,P]\bigr)=\bigl[((X\amalg X)\t[0,1])/\mathbin{\sim},\ab((P\amalg P)\t[0,1])/\mathbin{\sim}\bigr]$. But $((X\amalg X)\t[0,1])/\mathbin{\sim}\cong X\t\cS^1$ and $((P\amalg P)\t[0,1])/\mathbin{\sim}\cong P\t\cS^1$.

Part (b) is easy. Part (c) is immediate from the theory of Picard groupoids, and (d) follows from (a) with~$B\{1\}\simeq *$.	
\end{proof}

\begin{ex}
\label{fm4ex1}
From Proposition \ref{fm4prop1}(a) and Tables \ref{fm2tab1} and \ref{fm3tab4}, we see that there are equivalences of Picard groupoids
\e
\begin{aligned}
\Bord_7^{\bs\Spin}(B\SU(2))&\cong (0\op 0)\qs (\Z^2\op\Z^2), \\
\Bord_7^{\bs\Spin}(B\SU(m))&\cong (0\op 0)\qs (\Z^2\op\Z^3), \quad m\ge 4,\\
\Bord_7^{\bs\Spin}(B\Sp(m))&\cong (0\op 0)\qs(\Z^2\op\Z^3), \quad m\ge 2, \\
\Bord_7^{\bs\Spin}(B E_8)&\cong (0\op 0)\qs(\Z^2\op\Z_2^2), \\
\Bord_8^{\bs\Spin}(B\SU(2))&\cong (\Z^2\op \Z^2)\qs (\Z_2^2\op\Z_2), \\
\Bord_8^{\bs\Spin}(B\SU(m))&\cong (\Z^2\op \Z^3)\qs (\Z_2^2\op\Z_2), \quad m\ge 5,\\
\Bord_8^{\bs\Spin}(B\Sp(m))&\cong (\Z^2\op\Z^3)\qs(\Z_2^2\op\Z_2^2), \quad m\ge 2, \\
\Bord_8^{\bs\Spin}(B E_8)&\cong (\Z^2\op\Z^2)\qs(\Z_2^2\op\Z_2).
\end{aligned}
\label{fm4eq8}
\e
Here the right hand sides are Picard groupoids of the form $\pi_0\qs\pi_1$ as in Theorem \ref{fmAthm2} for abelian groups $\pi_0,\pi_1$. The decomposition of the $\pi_i$ as $A\op B$ in \eq{fm4eq8} corresponds to the splitting $\Om_n^{\bs\Spin}(BG)=\Om_n^{\bs\Spin}(*)\op\ti\Om_n^{\bs\Spin}(BG),$ where $\Om_n^{\bs\Spin}(*)$ is given in Table \ref{fm2tab1} and $\ti\Om_n^{\bs\Spin}(BG)$ in Table \ref{fm3tab4}. Note that $\pi_0\qs\pi_1$ also depends on a linear quadratic map $q:\pi_0\ra\pi_1$, which can be computed from Theorem \ref{fm3thm1} and Corollary~\ref{fm3cor1}.
\end{ex}

\subsection{\texorpdfstring{Loop bordism categories $\Bord^{\bs B}_n(\cL BG)$}{Loop bordism categories Bordₙᴮ(ℒBG)}}
\label{fm42}

\begin{dfn}
\label{fm4def2}
Fix a dimension $n\ge -1$, a tangential structure $\bs B$ in the sense of \S\ref{fm211}, and a Lie group $G$. We will define another Picard groupoid $\Bord^{\bs B}_n(\cL BG)$ that we call a {\it loop bordism category}. It is a simple modification of Definition \ref{fm4def1}: we replace the principal $G$-bundles $P\ra X$, $Q\ra Y$, $R\ra Z$ by principal $G$-bundles $P\ra X\t\cS^1$, $Q\ra Y\t\cS^1$, $R\ra Z\t\cS^1$ throughout. So, for example, objects of $\Bord^{\bs B}_n(\cL BG)$ are pairs $(X,P)$, where $X$ is a compact $n$-manifold without boundary with a $\bs B$-structure $\bs\ga_X$, which we generally omit from the notation, and $P\ra X\t\cS^1$ is a principal $G$-bundle.

In the case $G=\{1\}$ we will write $\Bord^{\bs B}_n(*)$ instead of $\Bord^{\bs B}_n(\cL B\{1\})$. Then the data $P\ra X\t\cS^1$, $Q\ra Y\t\cS^1$ is trivial, so we may write objects of $\Bord^{\bs B}_n(*)$ as $X$, morphisms as $[Y]:X_0\ra X_1$, and so on. This is equivalent to $\Bord^{\bs B}_n(*)$ in Definition~\ref{fm4def1}.

If $\ga:G_1\ra G_2$ is a morphism of Lie groups, as in \eq{fm4eq2} there is a functor
\e
F_\ga:\Bord^{\bs B}_n(\cL BG_1)\longra\Bord^{\bs B}_n(\cL BG_2)
\label{fm4eq9}
\e
mapping $(X,P)\mapsto (X,(P\t G_2)/G_1)$ on objects and $[Y,Q]\mapsto[Y,(Q\t G_2)/G_1]$ on morphisms. In particular, the morphisms $\{1\}\hookra G$, $G\twoheadrightarrow\{1\}$ induce functors $\Bord^{\bs B}_n(*)\ra\Bord^{\bs B}_n(\cL BG)$ and~$\Bord^{\bs B}_n(\cL BG)\ra\Bord^{\bs B}_n(*)$.

Similarly, a morphism of tangential structures induces a functor.
\end{dfn}

We relate the categories of Definitions \ref{fm4def1} and \ref{fm4def2}.

\begin{dfn}
\label{fm4def3}
Let $n\ge 0$ and $\bs B,G$ be as above. Define a functor
\e
I_n^{\bs B,G}:\Bord_{n-1}^{\bs B}(\cL BG)\longra \Bord^{\bs B}_n(BG)
\label{fm4eq10}
\e
to act on objects by $I_n^{\bs B,G}:(X,P)\mapsto (X\t\cS^1,P)$, and on morphisms by $I_n^{\bs B,G}:[Y,Q]\mapsto [Y\t\cS^1,Q]$. Here given the $\bs B$-structures on $X,Y$, to define the $\bs B$-structures on $X\t\cS^1,Y\t\cS^1$ we use the standard $\bs B$-structure on $\cS^1=\R/\Z$, which is invariant under the action of $\R/\Z\cong\U(1)$. So, for example, when $\bs B=\Spin$, we use the $\Spin$-structure on $\cS^1$ whose principal $\Spin(1)$-bundle is the trivial bundle $(\R/\Z)\t\Spin(1)\ra\R/\Z$. It is easy to check that $I_n^{\bs B,G}$ is a well-defined symmetric monoidal functor. Also $I^{\bs B}_n(G_1),I^{\bs B}_n(G_2)$ commute with the change-of-group functors $F_\ga$ in \eq{fm4eq2} and \eq{fm4eq9} in the obvious way.
\end{dfn}

\begin{prop}
\label{fm4prop2}
{\bf(a)} $\Bord^{\bs B}_n(\cL BG)$ is a Picard groupoid. Its invariants in Theorem\/ {\rm\ref{fmAthm2}(a)} are the $\bs B$-bordism groups
\ea
\pi_0\bigl(\Bord^{\bs B}_n(\cL BG) \bigr)&\cong\Om^{\bs B}_n(\cL BG),
\label{fm4eq11}\\
\pi_1\bigl(\Bord^{\bs B}_n(\cL BG) \bigr)&\cong\Om_{n+1}^{\bs B}(\cL BG),
\label{fm4eq12}
\ea
where\/ $\Om_m^{\bs B}(\cL BG)$ is the bordism group of the free loop space\/ $\cL BG=\Map_{C^0}(\cS^1,\ab BG)$ of the topological classifying space\/ $BG$ of\/ $G,$ and the linear quadratic map\/ $q:\Om^{\bs B}_n(\cL BG)\ra\Om_{n+1}^{\bs B}(\cL BG)$ mapping\/ $[X,P]\mapsto[X\t\cS^1,P\t\cS^1]$. 

Here in $X\t\cS^1,$ the $\cS^1$ has a $\U(1)$-equivariant\/ $\bs B$-structure with the usual orientation, so when $\bs B=\bs\Spin,$ it has the \begin{bfseries}non-bounding\end{bfseries} spin structure $\cS^1_{\rm nb}$. Note that for $\bs B=\bs\Spin,$ this means that\/ $q$ is multiplication by $\al_1=[\cS^1_{\rm nb}]$ in $\Om_1^{\bs\Spin}(*)$ in Table\/ {\rm\ref{fm2tab2}} under the natural action of\/ $\Om_*^{\bs\Spin}(*)$ on\/~$\Om_*^{\bs\Spin}(\cL BG)$.
\smallskip

\noindent{\bf(b)} The isomorphisms \eq{fm4eq11} and \eq{fm4eq12} are compatible with change of group functors, in particular with\/ $\{1\}\ra G.$
\smallskip

\noindent{\bf(c)} As in every Picard groupoid, a morphism\/ $\la:(X_0,P_0)\ab\ra(X_1,P_1)$ in\/ $\Bord^{\bs B}_n(\cL BG)$ determines a bijection
\begin{equation*}
\Om_{n+1}^{\bs B}(\cL BG)\longra\Hom_{\Bord^{\bs B}_n(\cL BG)}\bigl((X_0,P_0),(X_1,P_1)\bigr)
\end{equation*}
given by composition in the diagram of bijections
\begin{equation*}
\xymatrix@C=150pt@R=15pt{ 
*+[r]{\Om_{n+1}^{\bs B}(\cL BG)} \ar[d]^{\eq{fm4eq12}} \ar[r]_(0.22)\cong &  *+[l]{\Hom_{\Bord^{\bs B}_n(\cL BG)}\bigl((X_0,P_0),(X_1,P_1)\bigr)} \ar@{=}[d] \\
*+[r]{\Hom_{\Bord^{\bs B}_n(\cL BG)}(\boo,\boo)} \ar[r]^(0.34){\ot\la}  & *+[l]{\Hom_{\Bord^{\bs B}_n(\cL BG)}\bigl(\boo\ot(X_0,P_0),\boo\ot(X_1,P_1)\bigr).}}
\end{equation*}
\item[{\bf(d)}] There is a commutative diagram
\e
\begin{gathered}
\xymatrix@C=155pt@R=15pt{ 
*+[r]{\Om^{\bs B}_n(\cL BG)} \ar[d]^{\eq{fm4eq12}}_\cong \ar[r]^{\xi^{\bs B}_n(BG)}_{\eq{fm2eq6}} &  *+[l]{\ti\Om_{n+1}^{\bs B}(BG)} \ar[d]_{\eq{fm4eq4}} \\
*+[r]{\Aut_{\Bord_{n-1}^{\bs B}(\cL BG)}(\boo)} \ar[r]^{I_n^{\bs B,G}}_{\eq{fm4eq10}}  & *+[l]{\Aut_{\Bord^{\bs B}_n(BG)}(\boo).\!}
}
\end{gathered}
\label{fm4eq13}
\e
\end{prop}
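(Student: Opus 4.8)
The plan is to imitate the proof of Proposition \ref{fm4prop1} almost line by line, treating the extra circle factor that appears everywhere in Definition \ref{fm4def2} as a passive passenger, and then to identify the free loop space using the exponential law; part (d) is the only genuinely new statement and I would handle it by a diagram chase.

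For (a) I would first check that $\Bord^{\bs B}_n(\cL BG)$ is a Picard groupoid. The object inverses and morphism inverses are given verbatim by the formulas of Definition \ref{fm4def1}(f),(i) and the proof of Proposition \ref{fm4prop1}(a), with all principal $G$-bundles now living over products with $\cS^1$; since the $\cS^1$ plays no role in these arguments, they go through unchanged. For \eq{fm4eq11}, a principal $G$-bundle $P\ra X\t\cS^1$ is classified by a map $f_P:X\t\cS^1\ra BG$, unique up to homotopy, and by the exponential law (valid since $\cS^1$ is compact Hausdorff) this is the same datum, up to homotopy, as a map $\hat f_P:X\ra\Map_{C^0}(\cS^1,BG)=\cL BG$; sending $(X,P)\mapsto[X,\hat f_P]$, with inverse $[X,\phi]\mapsto[X,(\phi')^*EG]$ where $\phi':X\t\cS^1\ra BG$ is the adjoint of $\phi$, gives the isomorphism $\pi_0(\Bord^{\bs B}_n(\cL BG))\cong\Om^{\bs B}_n(\cL BG)$, exactly as in the proof of \eq{fm4eq3}. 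The same argument applied to morphisms $\boo\ra\boo$, namely classes $[W,Q]$ with $W$ a closed $(n+1)$-manifold and $Q\ra W\t\cS^1$, proves \eq{fm4eq12}. For the quadratic map, in any Picard groupoid $q([a])=\si_{a,a}\in\Aut(a\ot a)=\pi_1$; unwinding Definition \ref{fm4def1}(j) as in the proof of Proposition \ref{fm4prop1}(a) exhibits $q([X,P])$ as the mapping torus of the swap $\Z_2$-action on $(P\amalg P)\ra(X\amalg X)\t\cS^1$, i.e.\ the object $(X\t\cS^1,P\t\cS^1)$ in which the new $\cS^1$ carries the $\U(1)$-invariant $\bs B$-structure; for $\bs B=\bs\Spin$ this is $\cS^1_{\rm nb}$, so $q$ is multiplication by $\al_1$. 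Parts (b) and (c) are then formal: (b) holds because forming classifying maps and passing to adjoints is natural under the change-of-group functors $F_\ga$ of \eq{fm4eq9}, and (c) is a verbatim instance of the general Picard-groupoid statement of Appendix \ref{fmA}, the commuting square being the one displayed there.

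For (d) I would compute both composites in \eq{fm4eq13} on a class $[X,\phi]\in\Om^{\bs B}_n(\cL BG)$, equivalently on its adjoint $\phi':X\t\cS^1\ra BG$ or on the bundle $P=(\phi')^*EG\ra X\t\cS^1$. Going right then down uses the definition \eq{fm2eq6} of $\xi^{\bs B}_n(BG)$, which amounts to crossing with $\cS^1$ and passing to the adjoint map, followed by the isomorphism \eq{fm4eq4}; this yields the automorphism of $\boo$ in $\Bord^{\bs B}_n(BG)$ represented by $(X\t\cS^1,P)$ with $X\t\cS^1$ a closed $(n+1)$-manifold. Going down then right first applies \eq{fm4eq12} in dimension $n-1$, giving the automorphism $(X,P)$ of $\boo$ in $\Bord^{\bs B}_{n-1}(\cL BG)$ (with $X$ closed $n$-dimensional and $P\ra X\t\cS^1$), and then $I_n^{\bs B,G}$ of \eq{fm4eq10}, which turns the pre-existing loop circle into part of the base $(n+1)$-manifold and again yields $(X\t\cS^1,P)$. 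The two descriptions agree provided the $\bs B$-structure placed on that circle is the same in both. This comparison — reconciling the circle $\xi^{\bs B}_n(BG)$ introduces (via $\cS^1=\pd D^2$ in \eq{fm2eq6}) with the $\U(1)$-invariant circle built into $I_n^{\bs B,G}$ in Definition \ref{fm4def3}, so that the square genuinely commutes in $\pi_1=\Om^{\bs B}_{n+1}(BG)$ — is the one substantive point in the whole proposition that is not bookkeeping, and I expect it to be the main obstacle; I would resolve it by tracking carefully, at the level of explicit representatives, which circle is being absorbed by the exponential law in each composite and with which spin structure. The only other technical remark needed is that $\cL BG$ has the homotopy type of a CW complex, so that its bordism groups and the spectral sequences of \S\ref{fm2} are available.
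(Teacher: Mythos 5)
Your treatment of (a)--(c) is exactly the paper's proof: the exponential law converts a classifying map $f_P:X\t\cS^1\ra BG$ into $\bar f_P:X\ra\cL BG$, giving \eq{fm4eq11} on objects and \eq{fm4eq12} on automorphisms of $\boo$, and (b),(c) are formal. Your identification of $q$ via the mapping torus of the swap is also the argument the paper gives (for Proposition \ref{fm4prop1}, to which it appeals).

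For (d) the paper offers only the word ``obvious'', so you go further than it does, and your instinct that the $\bs B$-structure on the absorbed circle is the one substantive point is correct --- but as written your proof of (d) is not finished, and the point you postpone is not a formality. With the conventions as literally stated the two composites do \emph{not} agree: $\xi^{\bs B}_n(BG)$ in \eq{fm2eq6} puts the bounding structure $\cS^1_{\rm b}=\pd D^2$ on its circle, whereas Definition \ref{fm4def3} equips the circle created by $I_n^{\bs B,G}$ with the $\U(1)$-invariant structure, which for $\bs B=\bs\Spin$ is $\cS^1_{\rm nb}$. Taking $G=\{1\}$, $n=1$ and the class $[\cS^1_{\rm nb}]\in\Om_1^{\bs\Spin}(\cL B\{1\})$, the top route gives $[\cS^1_{\rm nb}\t\cS^1_{\rm b}]=0$ while the bottom route would give $[\cS^1_{\rm nb}\t\cS^1_{\rm nb}]=\al_1^2\ne 0$; note also that the top route always lands in $\ti\Om_{n+1}^{\bs B}(BG)$ while the non-bounding choice generally does not. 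So \eq{fm4eq13} forces the bounding structure in $I_n^{\bs B,G}$, and the verification you defer is precisely the corner-manifold argument of Figure \ref{fm4fig1} carried out in the proof of Proposition \ref{fm4prop3} for the parallel square \eq{fm4eq15}: there the authors check explicitly that the circle appearing in the decomposition inherits the $\bs B$-structure of $\pd D^2\subset\R^2$. You should either import that argument or at least fix the convention on $I_n^{\bs B,G}$ and track your representatives through it; until then (d) remains unproved in your write-up.
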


\begin{proof}
For (a), let $(X,P)\in\Bord^{\bs B}_n(\cL BG)$. The principal $G$-bundle $P\ra X\t\cS^1$ is classified by a continuous map $f_P:X\t\cS^1\ra BG$, unique up to homotopy. Then $f_P$ is equivalent to a continuous map $\bar f_P:X\ra\Map_{C^0}(\cS^1,BG)=\cL BG$, unique up to homotopy, by $\bar f_P(x):e^{i\th}\mapsto f_P(x,e^{i\th})$. Hence $[X,\bar f_P]\in\Om^{\bs B}_n(\cL BG)$.

If $[W,Q]:(X_0,P_0)\to(X_1,P_1)$ is a morphism in $\Bord^{\bs B}_n(\cL BG),$ then $\pd W=-X_0\amalg X_1$ and the classifying map $f_Q: W\t\cS^1\ra BG$ of the principal $G$-bundle $Q\ra W$ can be chosen to extend $f_{P_0}\amalg f_{P_1}$, so that $\bar f_Q:W\ra\cL BG$ extends $\bar f_{P_0}\amalg\bar f_{P_1}$. Thus $[X_0,\bar f_{P_0}]=[X_1,\bar f_{P_1}]$ in $\Om^{\bs B}_n(\cL BG)$, so $[X,\bar f_P]$ depends only on the isomorphism class $[X,P]$ of $(X,P)$, and mapping $[X,P]\mapsto [X,\bar f_P]$ gives the map $\pi_0(\Bord^{\bs B}_n(\cL BG))\ra\Om^{\bs B}_n(\cL BG)$ in \eq{fm4eq11}. 

For the inverse map, if $[X,\bar f]\in \Om^{\bs B}_n(\cL BG)$ then $\bar f:X\ra\cL BG$ is continuous. Define $f:X\t\cS^1\ra BG$ by $f(x,e^{i\th})=\bar f(x)(e^{i\th})$. Then $P=f^*(EG)$ is initially a topological principal $G$-bundle $P\ra X\t\cS^1$, but it can be made into a smooth $G$-bundle uniquely up to isomorphism. Thus $(X,P)$ is an object in $\Bord^{\bs B}_n(\cL BG)$, and $[X,P]\in\pi_0(\Bord^{\bs B}_n(\cL BG))$, and the inverse map takes $[X,\bar f]\mapsto[X,P]$. This gives the bijection \eq{fm4eq11}. It is easy to see from the definitions that it is a group isomorphism.

To prove \eq{fm4eq12}, morphisms $0\ra 0$ are equivalence classes $[Y,Q]$ where $\pd Y=-\es\amalg\es=\es$, so $Y$ is without boundary. We then map the morphism $[Y,Q]$ to the bordism class $[Y,\bar f_Q]$ as for the proof of \eq{fm4eq11}, but increasing dimensions from $n$ to $n+1$. Part (b) is easy, (c) is immediate from properties of monoidal groupoids, and (d) is obvious.	
\end{proof}

\subsection{\texorpdfstring{Bordism categories $\Bord_X(BG)$}{Bordism categories Bordₓ(BG)}}
\label{fm43}

The next definition is a variation of Definition \ref{fm4def1}, in which we fix the $n$-manifold $X$, and take $Y=X\t[0,1]$ and $Z=X\t[0,1]^2$.

\begin{dfn}
\label{fm4def4}
Let $X$ be a compact $n$-manifold and $G$ a Lie group. Define $\Bord_X(BG)$ to be the category with objects $P$ for $P\ra X$ a principal $G$-bundle, and morphisms $[Q]:P_0\ra P_1$ be $\sim$-equivalence classes $[Q]$ of principal $G$-bundles $Q\ra X\t[0,1]$ with chosen isomorphisms $Q\vert_{X\t\{i\}}\cong P_i$ for $i=0,1$. If $Q,Q'$ are alternative choices for $Q$, we write $Q\sim Q'$ if there exists a principal $G$-bundle $R\ra X\t[0,1]^2$ with chosen isomorphisms
\begin{align*}
R\vert_{X\t\{0\}\t[0,1]}&\cong P_0\t[0,1],  & R\vert_{X\t\{1\}\t[0,1]}&\cong P_1\t[0,1], \\ R\vert_{X\t[0,1]\t\{0\}}&\cong Q, & R\vert_{X\t[0,1]\t\{1\}}&\cong Q',
\end{align*}
which are compatible over $X\t\{0,1\}^2$ with the given isomorphisms $Q\vert_{X\t\{i\}}\cong P_i\cong Q'\vert_{X\t\{i\}}$. To define composition of morphisms $[Q]:P_0\ra P_1$ and $[Q']:P_1\ra P_2$ we set $[Q']\ci[Q]=[Q'']$, where $Q''\ra X\t[0,1]$ is given by $Q''\vert_{X\t\{t\}}=Q\vert_{X\t\{2t\}}$ for $t\in[0,\ha]$, and $Q''\vert_{X\t\{t\}}=Q'\vert_{X\t\{2t-1\}}$ for $t\in[\ha,1]$, and when $t=\ha$ we identify $Q''\vert_{X\t\{\frac{1}{2}\}}=Q\vert_{X\t\{1\}}=Q'\vert_{X\t\{0\}}$ via the given isomorphisms $Q\vert_{X\t\{1\}}\cong P_1\cong Q'\vert_{X\t\{0\}}$. To define the smooth structure on $Q''$ near $X\t\{\ha\}$ we use collars as in Definition \ref{fm4def1}(d). 

It is then easy to show that composition is associative, so that $\Bord_X(BG)$ is a category, where identity morphisms are $\id_P=[P\t[0,1]]:P\ra P$. Every morphism in $\Bord_X(BG)$ is invertible, where the inverse of $[Q]:P_0\ra P_1$ is $[Q]^{-1}=[Q']:P_1\ra P_0$, with $Q'\vert_{X\t\{t\}}=Q\vert_{X\t\{1-t\}}$ for~$t\in[0,1]$.

Now suppose that $\bs B$ is a tangential structure, and $X$ has a $\bs B$-structure $\bs\ga_X$. Since the stable tangent bundles of $X\t[0,1]$ and $X\t[0,1]^2$ are the pullbacks of the stable tangent bundle of $X$, pullback of $\bs\ga_X$ along the projections $X\t[0,1]\ra X$, $X\t[0,1]^2\ra X$ induces $\bs B$-structures on $X\t[0,1]$ and $X\t[0,1]^2$. Define a functor
\e
\Pi_X^{\bs B}:\Bord_X(BG)\longra\Bord^{\bs B}_n(BG)
\label{fm4eq14}
\e
to map $P\mapsto(X,P)$ on objects and $[Q]\mapsto\bigl[X\t[0,1],Q\bigr]$ on morphisms, using the $\bs B$-structures on $X,X\t[0,1]$. This is well defined as writing $Y=X\t[0,1]$ and $Z=X\t[0,1]^2$, the definitions above of the equivalence $\sim$ on $Q$ and $(X\t[0,1],Q)$, and of compositions of morphisms, and so on, map to those in Definition \ref{fm4def1}.

If $P\ra X$ is a principal $G$-bundle, we write $\Bord_X(BG)_P\subset\Bord_X(BG)$ to be the full subcategory with one object $P$ in $\Bord_X(BG)$. Write $\Pi_{X,P}^{\bs B}$ for the restriction of $\Pi_X^{\bs B}$ to~$\Bord_X(BG)_P\subset\Bord_X(BG)$.

If $\ga:G_1\ra G_2$ is a morphism of Lie groups as for \eq{fm4eq2} there is a functor
\begin{equation*}
F_{X,\ga}:\Bord_X(BG_1)\longra\Bord_X(BG_2)
\end{equation*}
mapping $(X,P)\mapsto (X,(P\t G_2)/G_1)$ on objects and $[Q]\mapsto[(Q\t G_2)/G_1]$ on morphisms.
\end{dfn}

In a similar way to Propositions \ref{fm4prop1} and \ref{fm4prop2}, we can use homotopy theory to give a partial description of the categories $\Bord_X(BG)$ and functors~$\Pi_X^{\bs B}$.

\begin{prop}
\label{fm4prop3}
Suppose\/ $\bs B$ is a tangential structure,\/ $X$ a compact\/ $n$-manifold with\/ $\bs B$-structure\/ $\bs\ga_X,$ $G$ a Lie group, and\/ $P\ra X$ a principal\/ $G$-bundle. Then\/ $P$ is an object in\/ $\Bord_X(BG),$ and\/ $(X,P)$ an object in\/ $\Bord^{\bs B}_n(BG),$ and\/ $\Pi_X^{\bs B}:P\mapsto(X,P)$. We have a commutative diagram
\e
\begin{gathered}
\xymatrix@C=150pt@R=18pt{ 
*+[r]{\Aut_{\Bord_X(BG)}(P)} \ar[r]_(0.45){\Pi_X^{\bs B}} \ar[d]^{\chi_P^{\bs B}} & *+[l]{\Aut_{\Bord^{\bs B}_n(BG)}(X,P)} \\
*+[r]{\Om^{\bs B}_n(\cL BG)} \ar[r]^(0.45){\xi^{\bs B}_n(BG)} & *+[l]{\ti\Om_{n+1}^{\bs B}(BG),} \ar[u]^{\eq{fm4eq5}} }
\end{gathered}
\label{fm4eq15}
\e
where\/ $\xi^{\bs B}_n(BG)$ is in Definition\/ {\rm\ref{fm2def3},} the right hand column is\/ {\rm\eq{fm4eq5}} restricted to $\ti\Om_{n+1}^{\bs B}(BG)\subset\Om_{n+1}^{\bs B}(BG),$ and\/ $\chi_P^{\bs B}$ is defined as follows: let\/ $\phi_P:X\ra BG$ be a classifying map for\/ $P$. Then for\/ $[Q]:P\ra P$ in\/ $\Aut_{\Bord_X(BG)}(P),$ as\/ $Q\ra X\t[0,1]$ is a principal\/ $G$-bundle with chosen isomorphisms\/ $Q\vert_{X\t\{0\}}\cong P\cong Q\vert_{X\t\{1\}},$ we can choose a classifying map\/ $\phi_Q:X\t[0,1]\ra BG$ for\/ $Q$ such that\/ $\phi_Q\vert_{X\t\{0\}}=\phi_Q\vert_{X\t\{1\}}=\phi_P$. Writing\/ $\cS^1=\R/\Z=[0,1]/(0\sim 1)$ with projection\/ $\pi:[0,1]\ra\cS^1,$ define\/ $\bar\phi_Q:X\t\cS^1\ra BG$ by\/ $\bar\phi_Q\ci(\id_X\t\pi)=\phi_Q$. Let\/ $\ti\phi_Q:X\ra \cL BG=\Map_{C^0}(\cS^1,BG)$ be the induced map. Then define
\e
\chi_P^{\bs B}([Q])=[X,\ti\phi_Q].
\label{fm4eq16}
\e
\end{prop}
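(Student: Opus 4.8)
The plan is to verify in turn that the recipe for $\chi_P^{\bs B}$ gives a well-defined function, and that the square \eq{fm4eq15} commutes; that $P$ and $(X,P)$ are objects of the relevant categories and that $\Pi_X^{\bs B}(P)=(X,P)$ is immediate from Definitions \ref{fm4def1} and \ref{fm4def4}. Throughout I write $EG\ra BG$ for the universal bundle, as in the proof of Proposition \ref{fm4prop1}.

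\emph{Well-definedness of $\chi_P^{\bs B}$.} First I would observe that for a fixed representative $Q\ra X\t[0,1]$ of $[Q]$, with its chosen isomorphisms $Q\vert_{X\t\{i\}}\cong P$, a classifying map $\phi_Q$ with $\phi_Q\vert_{X\t\{0\}}=\phi_Q\vert_{X\t\{1\}}=\phi_P$ exists (start from any classifying map for $Q$ and use that $X\t\{0,1\}\hookra X\t[0,1]$ is a cofibration to homotope it to agree with $\phi_P$ on the two ends through the chosen isomorphisms), and is unique up to homotopy rel $X\t\{0,1\}$, since homotopy classes of maps into $BG$ relative to a subspace, with prescribed restriction there, classify $G$-bundles with prescribed trivialisation data over that subspace. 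Hence $\bar\phi_Q$, and so $\ti\phi_Q:X\ra\cL BG$, is determined up to homotopy by $(Q,\text{isos})$, and replacing $\phi_P$ by a homotopic map changes $\ti\phi_Q$ only up to homotopy. If $(Q_0,\dots)\sim(Q_1,\dots)$ via $R\ra X\t[0,1]^2$, a classifying map for $R$ chosen to match the prescribed boundary data restricts to a homotopy rel $X\t\{0,1\}$ from $\phi_{Q_0}$ to $\phi_{Q_1}$, which descends to $\ti\phi_{Q_0}\simeq\ti\phi_{Q_1}$. So $[X,\ti\phi_Q]\in\Om^{\bs B}_n(\cL BG)$ depends only on $[Q]$, and $\chi_P^{\bs B}$ is well defined. (We will not need, and in general cannot expect, $\chi_P^{\bs B}$ itself to be a group homomorphism, although the composite $\xi_n^{\bs B}(BG)\ci\chi_P^{\bs B}$ is one.)

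\emph{Reduction of commutativity to a bordism.} Fix $[Q]\in\Aut_{\Bord_X(BG)}(P)$. On one side, $\Pi_X^{\bs B}([Q])=[X\t[0,1],Q]$. On the other, by the definition \eq{fm2eq6} of $\xi_n^{\bs B}(BG)$ one gets $\xi_n^{\bs B}(BG)(\chi_P^{\bs B}([Q]))=[X\t\cS^1_{\rm b},\hat Q]$ in $\ti\Om^{\bs B}_{n+1}(BG)$, where $\hat Q\ra X\t\cS^1$ is the bundle classified by $\bar\phi_Q$ --- the mapping torus of $Q$ obtained by gluing $Q\vert_{X\t\{0\}}\cong P\cong Q\vert_{X\t\{1\}}$ --- and $\cS^1_{\rm b}$ carries the bounding $\bs B$-structure $\bs\ga_{\cS^1}$ of \eq{fm2eq6}. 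Finally, unwinding \eq{fm4eq5}--\eq{fm4eq6} with $\la=\id_{(X,P)}$, a class $[W,Q_W]\in\ti\Om^{\bs B}_{n+1}(BG)$ represented by a closed manifold $W$ with $\bs B$-structure goes to the morphism $[W\amalg(X\t[0,1]),\,Q_W\amalg(P\t[0,1])]\colon(X,P)\ra(X,P)$. So commutativity of \eq{fm4eq15} is equivalent to the identity of morphisms
\[ [X\t[0,1],Q]=\bigl[(X\t\cS^1_{\rm b})\amalg(X\t[0,1]),\; \hat Q\amalg(P\t[0,1])\bigr]\colon(X,P)\ra(X,P) \]
in $\Bord^{\bs B}_n(BG)$, which by Definition \ref{fm4def1}(c) requires a compact $(n+2)$-manifold with corners $Z$ with $\pd Z$ decomposed as in \eq{fm4eq1} (for $Y_0=X\t[0,1]$ and $Y_1=(X\t\cS^1_{\rm b})\amalg(X\t[0,1])$), together with a compatible $G$-bundle $R\ra Z$.

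\emph{The bordism, and the main difficulty.} I would take $Z=X\t\Sigma$, where $\Sigma$ is the compact surface with corners obtained from $[0,1]^2$ by deleting an open disc from its interior, with the $\bs B$-structure inherited from the standard framing of $[0,1]^2\subset\R^2$; this restricts to the standard $\bs B$-structures on the four boundary edges $\cong[0,1]$ and, on the inner circle $c$, to $\bs\ga_{\cS^1}$ (precisely because $c$ bounds the deleted disc, so \eq{fm2eq6} applies). Then $\pd Z=X\t\pd\Sigma$ has five pieces: the four copies of $X\t[0,1]$ over the edges, meeting along $\pd^2 Z=X\t\{\text{$4$ corners}\}$, and the closed piece $X\t c=X\t\cS^1_{\rm b}$. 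Identifying the bottom edge with $-Y_0$, the top edge with the $X\t[0,1]$ summand of $Y_1$, the left and right edges with the collars $-X_0\t[0,1]$, $X_1\t[0,1]$, and $X\t c$ with $X\t\cS^1_{\rm b}\subset Y_1$ matches the data of \eq{fm4eq1}, corner identifications included. For $R$, prescribe it on $\pd Z$ to be $Q$ over the bottom edge, $\hat Q$ over $X\t c$, and $\pi_X^*P$ (for $\pi_X:X\t\Sigma\ra X$) over the other three edges; the corresponding classifying maps agree at every corner (both incident faces give values factoring through $\phi_P$, by $\phi_Q\vert_{X\t\{0\}}=\phi_Q\vert_{X\t\{1\}}=\phi_P$), defining a continuous $X\t\pd\Sigma\ra BG$. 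The key point is that this extends over $X\t\Sigma$: since $\Sigma$ is homeomorphic to an annulus with $c$ one end and the square boundary the other, the extension problem becomes extending a map $X\t\cS^1\t\{0,1\}\ra BG$ over $X\t\cS^1\t[0,1]$, solvable because the two ends are homotopic --- one is $\bar\phi_Q$, the other a reparametrised copy of $\bar\phi_Q$ (the ``$\phi_Q$-part'' compressed into one edge, constant $\phi_P$ on the others). Choosing such an extension $\phi_R$ and smoothing $R:=\phi_R^*(EG)$ gives the bundle, and $(Z,R)$ is the desired bordism. I expect the genuine work to lie in this last step: not in the idea, but in making the five boundary pieces, their $\bs B$-structures and orientations, the codimension-two corner identifications, and all the bundle isomorphisms match \emph{exactly} the conventions of Definitions \ref{fm4def1}(c) and \ref{fm4def4} and of \eq{fm4eq1}, \eq{fm2eq6} and \eq{fm4eq5} --- in particular tracking that the circle produced by $\xi_n^{\bs B}(BG)$ is the \emph{bounding} $\cS^1_{\rm b}$, which is exactly the structure inherited by the inner boundary of $\Sigma$.
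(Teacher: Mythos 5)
Your proposal is correct and follows essentially the same route as the paper's proof: the same reduction of commutativity of \eq{fm4eq15} to an equivalence of morphisms in $\Bord^{\bs B}_n(BG)$, and the same bordism $X\t\Sigma$ with $\Sigma$ the square minus an open disc, whose $\bs B$-structure inherited from $\R^2$ restricts to the bounding structure on the inner circle. The only (immaterial) difference is that the paper builds the bundle $R$ over $X\t\Sigma$ explicitly, by gluing $\pi_X^*(P)$ and $Q\t[0,1]$ over two regions of $\Sigma$, whereas you prescribe $R$ on $\pd(X\t\Sigma)$ and extend it by a homotopy argument over the annulus.
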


\begin{proof}
Let $[Q]\in\Aut_{\Bord_X(BG)}(P)$. Then $Q\ra X\t[0,1]$ is a principal $G$-bundle with $Q\vert_{X\t\{0\}}=Q\vert_{X\t\{1\}}=P$. Let $\phi_P:X\ra BG$, $\phi_Q:X\t[0,1]\ra BG$, $\bar\phi_Q:X\t\cS^1\ra BG$ and $\ti\phi_Q:X\ra \cL BG$ be as in the proposition, so that
\begin{equation*}
\chi_P^{\bs B}([Q])=[X,\ti\phi_Q]
\end{equation*}
by \eq{fm4eq16}. Note that $\phi_P$ is natural up to homotopy, and $\phi_Q$ natural up to homotopies relative to the homotopies of $\phi_P$, so $\bar\phi_Q$ and hence $\ti\phi_Q$ are natural up to homotopy. The bordism class $[X,\ti\phi_Q]$ is independent of these homotopies, and also depends only on $Q$ up to isomorphisms relative to the chosen isomorphisms $Q\vert_{X\t\{0\}}=Q\vert_{X\t\{1\}}=P$. Hence $[X,\ti\phi_Q]$ depends only on $[Q]$, and $\chi_P^{\bs B}([Q])$ is well defined.

The construction $\bar\phi_Q\mapsto\ti\phi_Q$ above is the inverse of the construction $\phi\mapsto\phi'$ in the definition of $\xi^{\bs B}_n(BG)$ in Definition \ref{fm2def3}. Thus we see from \eq{fm2eq6} that
\begin{equation*}
\xi^{\bs B}_n(BG)\ci\chi_P^{\bs B}([Q])=[X\t\cS^1,\bs\ga_X\t\bs\ga_{\cS^1},\bar\phi_Q].
\end{equation*}

Define $\bar Q\ra X\t\cS^1$ to be the principal $G$-bundle obtained by identifying $X\t\{0\}\cong X\t\{1\}$ and 
$Q\vert_{X\t\{0\}}\cong P\cong Q\vert_{X\t\{1\}}$, so that $Q\ra X\t[0,1]$ is the pullback of $\bar Q\ra X\t\cS^1$ by $\id_X\t\pi$, where $\pi:[0,1]\ra\cS^1=\R/\Z=[0,1]/(0\!\sim\! 1)$ is the projection. Then $\bar\phi_Q:X\t\cS^1\ra BG$ is a classifying map for $\bar Q$. Thus we see from the definition of the isomorphism \eq{fm4eq4} that \eq{fm4eq4} identifies
\begin{equation*}
\xymatrix@C=22pt{ 
\xi^{\bs B}_n(BG)\!\ci\!\chi_P^{\bs B}([Q])\!\in\!\ti\Om_{n+1}^{\bs B}(BG)
\ar@{<->}[r]^(0.44){\eq{fm4eq4}} &  [X\!\t\!\cS^1,\bs\ga_X\!\t\!\bs\ga_{\cS^1},\bar Q]\!\in\!\Aut_{\Bord^{\bs B}_n(BG)}(\boo). }
\end{equation*}
Hence by \eq{fm4eq5}--\eq{fm4eq6} we see that
\ea
&\eq{fm4eq5}\ci\xi^{\bs B}_n(BG)\ci\chi_P^{\bs B}([Q])
\label{fm4eq17}\\
&=\bigl[(X\t[0,1])\amalg (X\t\cS^1),(\bs\ga_X\t\bs\ga_{[0,1]})\amalg(\bs\ga_X\t\bs\ga_{\cS^1}),(P\t[0,1])\amalg\bar Q\bigr]
\nonumber
\ea
in $\Aut_{\Bord^{\bs B}_n(BG)}(X,P)$. By Definition \ref{fm4def4} we have
\e
\Pi_X^{\bs B}([Q])=\bigl[X\t[0,1],Q\bigr].
\label{fm4eq18}
\e

Thus, to prove \eq{fm4eq15} we must show that \eq{fm4eq17} and \eq{fm4eq18} agree. By Definition \ref{fm4def1} this is equivalent to
\ea
&(X\t[0,1],\bs\ga_X\t\bs\ga_{[0,1]},Q)
\label{fm4eq19}\\
&\sim (X\t\cS^1,\bs\ga_X\t\bs\ga_{\cS^1},\bar Q)\amalg (X\t[0,1],\bs\ga_X\t\bs\ga_{[0,1]},P\t[0,1]),	
\nonumber
\ea
where $\sim$ is the equivalence relation in Definition \ref{fm4def1}(c).

\begin{figure}[htb]
\centerline{$\splinetolerance{.8pt}
\begin{xy}
0;<.7cm,0cm>:
,(0,1.7)*+{Y_1}
,(0,-1.5)*+{Y_2}
,(-2,2)*+{\bu}
,(2,2)*+{\bu}
,(-2,-2)*+{\bu}
,(2,-2)*+{\bu}
,(-1.9,2);(1.9,2)**\crv{}
,(-1.8,2);(1.8,2)**\crv{}
,(-1.9,-2);(1.9,-2)**\crv{}
,(-1.8,-2);(1.8,-2)**\crv{}
,(2,-1.9);(2,1.9)**\crv{}
,(2,-1.8);(2,1.8)**\crv{}
,(-2,-1.9);(-2,1.9)**\crv{}
,(-2,-1.8);(-2,1.8)**\crv{}
,(0,1);(2,-2)**\crv{~**\dir{--}(1.2,3)}
,(0,1);(-2,-2)**\crv{~**\dir{--}(-1.2,3)}
,(0,0)*{\ellipse<.7cm>{-}}
\end{xy}$}
\caption{2-manifold with corners $Y$ in proof of Proposition \ref{fm4prop3}}
\label{fm4fig1}
\end{figure}
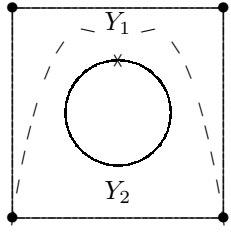

Define a 2-manifold with corners $Y\subset\R^2$ as in Figure \ref{fm4fig1}, by taking the square $[0,1]^2$ and deleting an open disc in its interior, with boundary $\cS^1$. Divide $Y$ into two regions $Y_1,Y_2$ by the dotted curves shown, where $Y_1$ is the region above and $Y_2$ the region below the dotted lines. In $Y_2$, separating the point where the dotted lines meet into two points $(0,1),(1,1)$, we may identify $Y_2\cong [0,1]^2$, such that $[0,1]\t\{0\}$ is the bottom side of the square $Y$, and $[0,1]\t\{1\}$ is the circle inside the square, with $(0,1)\sim(1,1)$. 

Then in Definition \ref{fm4def1}(c) we take $V=X\t Y$, and we define a principal $G$-bundle $R\ra V$ by taking $R$ to be $\Pi_X^*(P)$ on the product of $X\t Y_1$, and to be $Q\t[0,1]$ on $X\t Y_2\cong X\t([0,1]^2)=(X\t[0,1])\t[0,1]$. The appropriate $\bs B$-structure $\bs\ga_Y$ on $Y$ is that restricted from the standard $\bs B$-structure on $\R^2$. In particular, this implies that the boundary $\bs B$-structure on $\cS^1$ is that induced from the standard $\bs B$-structure on the closed unit disc $D^2\subset\R^2$ by identifying $S^1=\pd D^2$, as in Definition \ref{fm2def3}. This proves \eq{fm4eq19}, so \eq{fm4eq15} commutes.
\end{proof}

\section{Submanifold bordism categories}
\label{fm5}

\subsection{\texorpdfstring{Bordism categories $\Bord_{n,k}^{\bs B}(MH)$}{Bordism categories Bordₙₖᴮ(MH)}}
\label{fm51}

We define {\it submanifold bordism categories\/} $\Bord_{n,k}^{\bs B}(MH)$.

\begin{dfn}
\label{fm5def1}
Fix $0\le k\le n,$ a tangential structure $\bs B,$ a Lie group $H$, and a morphism of Lie groups $\rho:H\ra\O(k)$. We define the {\it submanifold bordism category\/} $\Bord_{n,k}^{\bs B}(MH)$ as follows.
\begin{itemize}
\setlength{\itemsep}{0pt}
\setlength{\parsep}{0pt}
\item[(a)] The {\it objects} of $\Bord_{n,k}^{\bs B}(MH)$ are quadruples $(X,\be,M,\ga),$ where $X$ is a compact $n$-manifold without boundary with $\bs B$-structure $\be$ and $M\subset X$ is a compact, embedded $(n-k)$-submanifold of $X$ without boundary, with an $H$-structure $\ga$ on the normal bundle $\nu_M\ra M$ of $M$ in $X$. Here $H$-structures are as in Definition~\ref{fm2def5}.
\item[(b)] {\it Morphisms} $[Y,\be',N,\ga']:(X_0,\be_0,M_0,\ga_0)\ra(X_1,\be_1,M_1,\ga_1)$ are equivalence classes of quadruples $(Y,\be',N,\ga'),$ see (c), where $Y$ is a compact $(n+1)$-manifold with boundary with $\bs B$-structure $\be'$, $N\subset Y$ is a compact, embedded neat $(n-k+1)$-submanifold with a normal $H$-structure $\ga'$, and a $\bs B$-structure-preserving diffeomorphism
\e
\pd Y\cong X_0\amalg X_1
\label{fm5eq1}
\e
identifying $\bs B$-structures $\be'\vert_{\pd Y}\cong -\be_0\amalg\be_1$ up to chosen isotopies (here $-\be_0$ is the opposite $\bs B$-structure to $\be_0$), which identifies $\pd N$ with $M_0\amalg M_1,$ and identifies the normal $H$-structures $\ga'\vert_{\pd N}\cong \ga_0\amalg\ga_1$ up to chosen isotopies.
\item[(c)] In the situation of (b), two choices $(Y_0,\be_0',N_0,\ga_0')$ and $(Y_1,\be_1',N_1,\ga_1')$ are {\it equivalent} if there exists a quadruple $(Z,\be'',L,\ga''),$ where $Z$ is a compact $(n+2)$-manifold with corners with a $\bs B$-structure $\be''$, and a diffeomorphism
\e
\pd Z\cong (X_0\t[0,1])\amalg (X_1\t[0,1]) \amalg (-Y_0\t\{0\})\amalg (Y_1\t\{1\})
\label{fm5eq2}
\e
that identifies $\bs B$-structures $\be''\vert_{\pd Z}\cong (-\be_0\t[0,1])\amalg(\be_1\t[0,1])\amalg (-\be'_0)\t\{0\}\amalg \be'_1\t\{1\}$ up to chosen isotopies, and along $\pd^2Z$ identifies $\pd(Y_a\t\{a\})$ with $(-X_0\t\{0\}\t\{a\})\amalg (X_1\t\{1\}\t\{a\})$ via \eq{fm5eq1} for $a=0,1$, compatibly with $\bs B$-structures. 

Moreover, $L\subset Z$ is a compact, embedded, neat $(n-k+2)$-submanifold with boundary, with a normal $H$-structure $\ga''$, and we require \eq{fm5eq2} to restrict to a diffeomorphism
\begin{equation*}
\pd L\cong (M_0\t[0,1])\amalg (M_1\t[0,1])\amalg N_0\amalg N_1,
\end{equation*}
identifying the normal $H$-structures up to chosen isotopies.
\item[(d)] If $[Y,\be',N,\ga']:(X_0,\be_0,M_0,\ga_0)\ra(X_1,\be_1,M_1,\ga_1)$ and $[\hat Y,\hat\be',\hat N,\hat\ga']:(X_1,\be_1,M_1,\ga_1)\ra(X_2,\be_2,M_2,\ga_2)$ are morphisms, the {\it composition} is 
\begin{equation*}
[\hat Y,\hat\be',\hat N,\hat\ga']\ci[Y,\be',N,\ga']=\bigl[\hat Y\amalg_{X_1}Y,\hat\be'\amalg_{\be_1}\be',\hat N\amalg_{M_1}N,\hat\ga'\amalg_{\ga_1}\ga'\bigr].
\end{equation*}
 That is, we glue $Y,\hat Y$ along their common boundary component $X_1$ to make a manifold $Y'\amalg_{X_1}Y$, and we glue the $\bs B$-structures $\be',\hat\be'$ along $X_1$ using the chosen isotopies $\be'\vert_{X_1}\simeq \be_1$, $\hat\be'\vert_{X_1}\simeq -\be_1$, and we glue the submanifolds $N,\hat N$ along their common boundary component $M_1\subset X_1$ to make a neat submanifold $\hat N\amalg_{M_1}N\subset \hat Y\amalg_{X_1}Y$, and we glue the normal $H$-structures $\ga',\hat\ga'$ along $M_1$ using $\ga'\vert_{X_1}\simeq \ga_1$, $\hat\ga'\vert_{X_1}\simeq \ga_1$. More precisely, we should choose collars $X_1\t(-\ep,0]\subset Y,$ $X_1\t[0,\ep)\subset\hat Y$ which restrict to collars $M_1\t(-\ep,0]\subset N,$ $M_1\t[0,\ep)\subset\hat Y$ such that the normal $H$-structures are also of product form, but the choices do not change the equivalence class $[\hat Y\amalg_{X_1}Y,\hat N\amalg_{M_1}N].$ Composition is associative.
\item[(e)] {\it Identities\/} are $\id_{(X,\be,M,\ga)}=\bigl[X\t[0,1],\be\t[0,1],M\t[0,1],\ga\t[0,1]\bigr]$.
\item[(f)] The {\it monoidal structure\/} on $\Bord_{n,k}^{\bs B}(MH)$ is defined as
\begin{equation*}
(X,\be,M,\ga)\ot (\hat X,\hat\be,\hat M,\hat\ga)=(X\amalg\hat X,\be\amalg\hat\be,M\amalg\hat M,\ga\amalg\hat\ga)
\end{equation*}
on objects, and similarly for morphisms.
\item[(g)] The {\it unit object} in $\Bord_{n,k}^{\bs B}(MH)$ is $\boo=(\es,\es,\es,\es).$
\item[(h)] For a pair of objects, the {\it symmetry isomorphism}
\begin{align*}
&\si_{(X_0,\be_0,M_0,\ga_0),(X_1,\be_1,M_1,\ga_1)}=[Y,\be',N,\ga']:\\
&(X_0,\be_0,M_0,\ga_0)\ot (X_1,\be_1,M_1,\ga_1)\longra (X_1,\be_1,M_1,\ga_1)\ot (X_0,\be_0,M_0,\ga_0)
\end{align*}
is the cylinders $Y=(X_0\amalg X_1)\t[0,1],$ $\be'=(\be_0\amalg\be_1)\t[0,1],$ $N=(M_0\amalg M_1)\t[0,1]$, $\ga'=(\ga_0\amalg\ga_1)\t[0,1],$ where the boundary diffeomorphisms are the obvious identifications $(X_0\amalg X_1)\t\{0\}\cong X_0\amalg X_1$ and $(X_0\amalg X_1)\t\{1\}\cong X_1\amalg X_0$, swapping round factors at $1\in\pd[0,1]$, and similarly for boundary identifications of $\be',N,\ga'$.
\end{itemize}

To simplify notation, from now on we usually omit $\bs B$-structures $\be,\be',\be''$ and $H$-structures $\ga,\ga',\ga''$, leaving them implicit, so we write objects as pairs $(X,M)$, where $X$ is a compact $n$-manifold with $\bs B$-structure and $M\subset X$ is a compact $(n-k)$-submanifold with normal $H$-structure, and morphisms are $[Y,N]:(X_0,M_0)\ra(X_1,M_1)$, and so on.

With these definitions, it is easy to check that $\Bord_{n,k}^{\bs B}(MH)$ is indeed a {\it symmetric monoidal category}, in the sense of Appendix~\ref{fmA}.

Define a symmetric monoidal category $\Bord^{\bs B}_n(*)$ as above but omitting all submanifolds $M,N,\ldots$ and normal $H$-structures $\ga,\ga',\ldots,$ so that objects of $\Bord^{\bs B}_n(*)$ are compact $n$-manifolds $X$ with $\bs B$-structure $\be$, and so on. There is a natural monoidal inclusion functor
\begin{equation*}
I_{n,k}^{\bs B,H}:\Bord^{\bs B}_n(*)\longra\Bord_{n,k}^{\bs B}(MH) 
\end{equation*}
acting by $X\mapsto(X,\es)$ on objects and $[Y]\mapsto[Y,\es]$ on morphisms.

A composition of Lie groups $H_1\,{\buildrel\io\over\longra}\, H_2\,{\buildrel\rho_2\over\longra}\,\O(k)$ induces a functor
\e
F_\io:\Bord_{n,k}^{\bs B}(MH_1)\longra\Bord_{n,k}^{\bs B}(MH_2),
\label{fm5eq3}
\e
by converting normal $H_1$-structures $\ga,\ga',\ldots$ on $M\subset X,N\subset Y,\ldots$ to normal $H_2$-structures.
\end{dfn}

The next proposition uses the theory of Picard groupoids in Appendix~\ref{fmA}.

\begin{prop}
\label{fm5prop1}
{\bf(a)} $\Bord_{n,k}^{\bs B}(MH)$ is a Picard groupoid. Its invariants in Theorem\/ {\rm\ref{fmAthm2}(a)} are the $\bs B$-bordism groups
\ea
\pi_0\bigl(\Bord_{n,k}^{\bs B}(MH) \bigr)&\cong\Om^{\bs B}_n(MH),
\label{fm5eq4}\\
\pi_1\bigl(\Bord_{n,k}^{\bs B}(MH) \bigr)&\cong\Om_{n+1}^{\bs B}(MH),
\label{fm5eq5}
\ea
and\/ $q:\Om^{\bs B}_n(MH)\ra\Om_{n+1}^{\bs B}(MH)$ mapping\/ $[X,M]\mapsto[X\t\cS^1,M\t\cS^1]$. 

Here in $X\t\cS^1,$ the $\cS^1$ has a $\U(1)$-equivariant\/ $\bs B$-structure with the usual orientation, so when $\bs B=\bs\Spin,$ it has the \begin{bfseries}non-bounding\end{bfseries} spin structure $\cS^1_{\rm nb}$. Note that for $\bs B=\bs\Spin,$ this means that\/ $q$ is multiplication by $\al_1=[\cS^1_{\rm nb}]$ in $\Om_1^{\bs\Spin}(*)$ in Table\/ {\rm\ref{fm2tab2}} under the natural action of\/ $\Om_*^{\bs\Spin}(*)$ on\/~$\Om_*^{\bs\Spin}(MH)$.
\smallskip

\noindent{\bf(b)} As in every Picard groupoid, a morphism\/ $\la:(X_0,M_0)\ab\ra(X_1,M_1)$ in\/ $\Bord_{n,k}^{\bs B}(MH)$ determines a bijection
\e
\Om_{n+1}^{\bs B}(MH)\longra\Hom_{\Bord_{n,k}^{\bs B}(MH)}\bigl((X_0,M_0),(X_1,M_1)\bigr)
\label{fm5eq6}
\e
given by composition in the diagram of bijections
\e
\begin{gathered}
\xymatrix@C=144pt@R=15pt{ 
*+[r]{\Om_{n+1}^{\bs B}(MH)} \ar[d]^{\eq{fm5eq5}} \ar[r]_(0.22)\cong &  *+[l]{\Hom_{\Bord_{n,k}^{\bs B}(MH)}\bigl((X_0,M_0),(X_1,M_1)\bigr)} \ar@{=}[d] \\
*+[r]{\Hom_{\Bord_{n,k}^{\bs B}(MH)}(\boo,\boo)} \ar[r]^(0.33){\raisebox{5pt}{$\scriptstyle\ot\la$}}  & *+[l]{\Hom_{\Bord_{n,k}^{\bs B}(MH)}\bigl(\boo\!\ot\!(X_0,M_0),\boo\!\ot\!(X_1,M_1)\bigr).}
}\!\!\!\!
\end{gathered}
\label{fm5eq7}
\e

\noindent{\bf(c)} For the category $\Bord^{\bs B}_n(*),$ the analogues of\/ {\rm\eq{fm5eq4}--\eq{fm5eq5}} are
\e
\pi_0(\Bord^{\bs B}_n(*))\cong \Om^{\bs B}_n(*),\quad \pi_1(\Bord^{\bs B}_n(*))\cong \Om_{n+1}^{\bs B}(*).
\label{fm5eq8}
\e
Under the identifications {\rm\eq{fm5eq8}, \eq{fm5eq4}, \eq{fm5eq5},} the functor $I_{n,k}^{\bs B,H}$ induces the morphisms $\Om_m^{\bs B}(*)\!\ra\!\Om_m^{\bs B}(MH)$ induced by $*\!\ra\! MH,$ $*\!\mapsto\!\iy$ for $m=n,n+1$.
\end{prop}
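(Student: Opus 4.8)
The plan is to follow the pattern of Definition~\ref{fm4def1} and Proposition~\ref{fm4prop1}, with codimension-$k$ submanifolds carrying a normal $H$-structure playing the role of principal $G$-bundles and the Pontrjagin--Thom construction (Theorem~\ref{fm2thm2}) playing the role of the classifying-map description. First I would check that $\Bord_{n,k}^{\bs B}(MH)$ is a Picard groupoid: the monoidal structure, unit, and symmetry of Definition~\ref{fm5def1}(f)--(h) are manifestly coherent, and invertibility is exhibited explicitly as in Definition~\ref{fm4def1}(f),(i) --- the object $-(X,M)$ (i.e.\ $X$ with the opposite $\bs B$-structure and the same $M$) is inverse to $(X,M)$ via the cylinder $[X\t[0,1],M\t[0,1]]$, and a morphism $[Y,N]$ has inverse $[-Y\amalg(Y\amalg_{X_0\amalg X_1}-Y),\,-N\amalg(N\amalg_{M_0\amalg M_1}-N)]$. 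Hence all objects and morphisms are invertible, so by Theorem~\ref{fmAthm2} the category is classified up to equivalence by the triple $(\pi_0,\pi_1,q)$, which is what remains to compute.

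For (a) I would compute $\pi_0$, $\pi_1$, and $q$ using the Pontrjagin--Thom correspondence only at the level of closed manifolds and bordisms between them --- this is all that enters $\pi_0,\pi_1,q$, so no family or corner version of Theorem~\ref{fm2thm2} is needed. A triple $(X,M,\gamma)$ with $X$ a closed $\bs B$-manifold of dimension $n$ and $M\subset X$ of codimension $k$ with normal $H$-structure $\gamma$ yields, by Theorem~\ref{fm2thm2}, a homotopy class $f_M\colon X\ra MH$, hence a class $[X,\bs\ga_X,f_M]\in\Om_n^{\bs B}(MH)$; conversely $M$ (with its normal $H$-structure) is recovered as the preimage of the zero section of a generic representative. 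A morphism $[Y,N]\colon(X_0,M_0)\ra(X_1,M_1)$, i.e.\ a neat $H$-structured submanifold $N$ in a $\bs B$-manifold $Y$ with $\pd Y\cong -X_0\amalg X_1$ and $\pd N\cong M_0\amalg M_1$, corresponds under the same construction applied to $Y$ to a bordism in $MH$ between $f_{M_0}$ and $f_{M_1}$, and conversely. Thus two objects are isomorphic in $\Bord_{n,k}^{\bs B}(MH)$ exactly when the corresponding classes in $\Om_n^{\bs B}(MH)$ agree, giving $\pi_0\cong\Om_n^{\bs B}(MH)$ as in \eq{fm5eq4}, compatibly with disjoint union. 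For $\pi_1=\Aut(\boo)$, a morphism $\boo\ra\boo$ is an equivalence class of pairs $(Y,N)$ with $\pd Y=\es$, hence $Y$ a closed $(n+1)$-dimensional $\bs B$-manifold and $N\subset Y$ a closed codimension-$k$ submanifold with normal $H$-structure; in the equivalence relation of Definition~\ref{fm5def1}(c) the cylinder terms $X_i\t[0,1]$, $M_i\t[0,1]$ are empty, so it reduces to the $(n+1)$-dimensional bordism relation, giving $\pi_1\cong\Om_{n+1}^{\bs B}(MH)$ as in \eq{fm5eq5}, again compatibly with the group structures.

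It then remains to identify $q$, and to deduce (b) and (c). By the general theory of Picard groupoids (Appendix~\ref{fmA}), $q$ is obtained from the symmetry on $(X,M)\ot(X,M)$, and unravelling Definition~\ref{fm5def1}(h) exactly as at the end of the proof of Proposition~\ref{fm4prop1}(a) shows that $q([X,M])$ is represented by the mapping torus of the $\Z_2$-action on $(X\amalg X,\,M\amalg M)$ swapping the two copies; this mapping torus is $(X\t\cS^1,\,M\t\cS^1)$ with the $\U(1)$-invariant $\bs B$-structure on $\cS^1$, which under Pontrjagin--Thom gives $[X\t\cS^1,M\t\cS^1]\in\Om_{n+1}^{\bs B}(MH)$. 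When $\bs B=\bs\Spin$ the invariant spin structure on $\cS^1$ is the non-bounding one $\cS^1_{\rm nb}$, so $q$ is multiplication by $\al_1=[\cS^1_{\rm nb}]$, as claimed. Part (b) is then immediate from the general fact (used also for \eq{fm4eq5}--\eq{fm4eq6}) that in a Picard groupoid post-composition with a chosen morphism $\la$ identifies $\Aut(\boo)$ with $\Hom((X_0,M_0),(X_1,M_1))$, the diagram \eq{fm5eq7} commuting by the definition of $\ot$. For (c), $\Bord_n^{\bs B}(*)$ is the classical $\bs B$-bordism Picard groupoid, with invariants $\pi_0\cong\Om_n^{\bs B}(*)$ and $\pi_1\cong\Om_{n+1}^{\bs B}(*)$ computed as above with $MH$ replaced by a point; and the functor $I_{n,k}^{\bs B,H}\colon X\mapsto(X,\es)$ sends, under Pontrjagin--Thom, the empty submanifold to the constant map $X\ra\{\iy\}\subset MH$, so on $\pi_0$ and $\pi_1$ it induces precisely the push-forward maps $\Om_m^{\bs B}(*)\ra\Om_m^{\bs B}(MH)$ along $*\mapsto\iy$.

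The main obstacle is the enhancement of Theorem~\ref{fm2thm2} from closed ambient manifolds and L-equivalence to (i) ambient manifolds equipped with a $\bs B$-structure and (ii) compact manifolds with boundary, so that morphisms and their equivalences in $\Bord_{n,k}^{\bs B}(MH)$ translate into bordisms of maps into $MH$. The $\bs B$-structure lives on the ambient manifold and is untouched by the transversality and collapse steps, so it is carried along automatically; the boundary case is the standard fact that $\bs B$-bordism of $MH$ computed via $H$-structured submanifolds coincides with the generalized homology $\Om_*^{\bs B}(MH)$. By keeping the Pontrjagin--Thom input at the level of $\pi_0$, $\pi_1$, $q$ --- which involves only closed manifolds and bordisms between them --- one avoids having to chase compositions, collars, and corners through the correspondence, since compatibility of the full category with the monoidal structure and composition is then automatic from Theorem~\ref{fmAthm2}.
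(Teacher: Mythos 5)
Your proposal is correct and follows essentially the same route as the paper: exhibit inverses of objects via the opposite $\bs B$-structure and of morphisms via the doubling construction, identify $\pi_0$ and $\pi_1$ with $\Om_n^{\bs B}(MH)$ and $\Om_{n+1}^{\bs B}(MH)$ via the Pontrjagin--Thom correspondence of Theorem \ref{fm2thm2}, compute $q$ as the mapping torus of the swap involution giving $[X\t\cS^1,M\t\cS^1]$, and deduce (b),(c) from the general theory of Picard groupoids. The paper merely states these steps more tersely, so your extra detail on the Pontrjagin--Thom translation of morphisms and the boundary case is a faithful expansion rather than a different argument.
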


\begin{proof}
For (a), let $(X,M)\in\Bord_{n,k}^{\bs B}(MH).$ Write $-X$ for $X$ with the opposite $\bs B$-structure, as in Definition \ref{fm2def1}. There is an isomorphism $\bigl[X\t[0,1],M\t[0,1]\bigr]:(-X,M)\ot(X,M)\ra\boo$ and hence $(-X,M)$ is an inverse for $(X,M)$ under the monoidal structure. If $[Y,N]:\ab(X_0,M_0)\ra(X_1,M_1)$ is a morphism, we can prove that it has the inverse morphism
\begin{equation*}
[Y,N]^{-1}=\bigl[-Y\amalg (Y\amalg_{X_0\amalg X_1}-Y),N\amalg(N\amalg_{M_0\amalg M_1}N)\bigr],
\end{equation*}
where we note that $\pd(-Y)=-(-X_0\amalg X_1)=-X_1\amalg X_0.$ Thus, all morphisms in $\Bord_{n,k}^{\bs B}(MH)$ are isomorphisms. Equations \eq{fm5eq4}--\eq{fm5eq8} follow from Theorem \ref{fm2thm2} and the definition of $\Om^{\bs B}_n(-)$. 

We can show from the definitions that $q:\Om^{\bs B}_n(MH)\ra\Om_{n+1}^{\bs B}(MH)$ maps $M\subset X$ to the mapping torus of the $\Z_2$-action on $M\amalg M\subset X\amalg X$ that exchanges the two copies of $X$, so that $q\bigl([X,M]\bigr)=\bigl[((X\amalg X)\t[0,1])/\mathbin{\sim},\ab((M\amalg M)\t[0,1])/\mathbin{\sim}\bigr]$. But $((X\amalg X)\t[0,1])/\mathbin{\sim}\cong X\t\cS^1$ and $((M\amalg M)\t[0,1])/\mathbin{\sim}\cong M\t\cS^1$. Part (b) is immediate from the theory of Picard groupoids, and (c) is straightforward.
\end{proof}

\begin{ex}
\label{fm5ex1}
From Proposition \ref{fm5prop1}(a) and Tables \ref{fm2tab1} and \ref{fm3tab1} we see that there are equivalences of Picard groupoids
\e
\begin{aligned}
\Bord_{7,4}^{\bs\Spin}(M\SO(4))&\cong (0\op 0)\qs (\Z^2\op\Z^3), \\
\Bord_{8,4}^{\bs\Spin}(M\SO(4))&\cong (\Z^2\op \Z^3)\qs(\Z_2^2\op\Z_2^3),\\
\Bord_{8,4}^{\bs\Spin}(M\Spin(4))&\cong (\Z^2\op\Z^3)\qs(\Z_2^2\op\Z_2^2).
\end{aligned}
\label{fm5eq9}
\e
Here the right hand sides are Picard groupoids of the form $\pi_0\qs\pi_1$ as in Theorem \ref{fmAthm2} for abelian groups $\pi_0,\pi_1$. The decomposition of the $\pi_i$ as $A\op B$ in \eq{fm5eq9} corresponds to the splitting $\Om_n^{\bs\Spin}(MH)=\Om_n^{\bs\Spin}(*)\op\ti\Om_n^{\bs\Spin}(MH),$ where $\Om_n^{\bs\Spin}(*)$ is given in Table\ \ref{fm2tab1} and $\ti\Om_n^{\bs\Spin}(MH)$ in Table \ref{fm3tab1}. Note that $\pi_0\qs\pi_1$ also depends on a linear quadratic map $q:\pi_0\ra\pi_1$, which can be computed from Theorem~\ref{fm3thm1}.
\end{ex}

\subsection{\texorpdfstring{Loop bordism categories $\Bord_{n-1,k}^{\bs B}(\cL MH)$}{Loop bordism categories Bordₙ₋₁ₖᴮ(ℒMH)}}
\label{fm52}

\begin{dfn}
\label{fm5def2}
Fix $k,n$ with $1\le k\le n$, a tangential structure $\bs B$, and a Lie group morphism $\rho:H\ra\O(k)$. We define a Picard groupoid, the {\it submanifold loop bordism category\/} $\Bord_{n-1,k}^{\bs B}(\cL MH)$ as for $\Bord_{n,k}^{\bs B}(MH)$ as in Definition \ref{fm5def1}, but modified as follows: we reduce the dimensions of $X,Y,Z$ by 1, so $\dim X=n-1$, $\dim Y=n$, $\dim Z=n+1$, and we take $M,N,L$ to be submanifolds of $X\t\cS^1,Y\t\cS^1,Z\t\cS^1$ instead of $X,Y,Z$. So, for example, objects of $\Bord_{n-1,k}^{\bs B}(\cL MH)$ are pairs $(X,M)$ with $X$ a compact $(n-1)$-manifold with $\bs B$-structure $\be$, and $M\subset X\t\cS^1$ a compact embedded $(n-k)$-submanifold with an $H$-structure $\ga$ on the normal bundle $\nu_M\ra M$ of $M$ in~$X\t\cS^1$. 
\end{dfn}

We relate the categories of Definitions \ref{fm5def1} and \ref{fm5def2}.

\begin{dfn}
\label{fm5def3}
Let $k,n,\bs B,H$ be as above. Define a functor
\e
I_{n,k}^{\bs B,H} :\Bord_{n-1,k}^{\bs B}(\cL MH) \longra \Bord_{n,k}^{\bs B}(MH) 
\label{fm5eq10}
\e
to act on objects by $I_{n,k}^{\bs B,H} :(X,M)\mapsto (X\t\cS^1,M)$, and on morphisms by $I_{n,k}^{\bs B,H}:[Y,N]\mapsto [Y\t\cS^1,N]$. Here given the $\bs B$-structures on $X,Y$, to define the $\bs B$-structures on $X\t\cS^1,Y\t\cS^1$ we use the standard $\bs B$-structure on $\cS^1=\R/\Z$, which is invariant under the action of $\R/\Z\cong\U(1)$. It is easy to check that $I_{n,k}^{\bs B,H}$ is a well-defined symmetric monoidal functor.
\end{dfn}

Here is the analogue of Proposition \ref{fm5prop1}.

\begin{prop}
\label{fm5prop2}
{\bf(a)} $\Bord_{n-1,k}^{\bs B}(\cL MH)$ is a Picard groupoid. Its invariants in Theorem\/ {\rm\ref{fmAthm2}(a)} are the $\bs B$-bordism groups
\ea
\pi_0\bigl(\Bord_{n-1,k}^{\bs B}(\cL MH)\bigr)&\cong\Om_{n-1}^{\bs B}(\cL MH),
\label{fm5eq11}\\
\pi_1\bigr(\Bord_{n-1,k}^{\bs B}(\cL MH)\bigr)&\cong\Om^{\bs B}_n(\cL MH),
\label{fm5eq12}
\ea
and\/ $q:\Om_{n-1}^{\bs B}(\cL MH)\ra\Om^{\bs B}_n(\cL MH)$ which maps\/ $M\subset X\t\cS^1$ to $M\t\cS^1\subset X\t\cS^1\t\cS^1$.
\smallskip

\noindent{\bf(b)} There is a commutative diagram
\e
\begin{gathered}
\xymatrix@C=155pt@R=15pt{ 
*+[r]{\Om^{\bs B}_n(\cL MH)} \ar[d]^{\eq{fm5eq12}}_\cong \ar[r]^{\xi^{\bs B}_n(MH)}_{\eq{fm2eq6}} &  *+[l]{\ti\Om_{n+1}^{\bs B}(MH)} \ar[d]_{\eq{fm5eq5}} \\
*+[r]{\Aut_{\Bord_{n-1,k}^{\bs B}(\cL MH)}(\boo)} \ar[r]^(0.52){I_{n,k}^{\bs B,H}}_(0.52){\eq{fm5eq10}}  & *+[l]{\Aut_{\Bord_{n,k}^{\bs B}(MH)}(\boo).\!} }
\end{gathered}
\label{fm5eq13}
\e
\end{prop}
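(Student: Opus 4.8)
The plan is to run the argument in parallel with the proofs of Propositions \ref{fm4prop2} and \ref{fm5prop1}, the only genuinely new input being the Pontrjagin--Thom correspondence of Theorem \ref{fm2thm2}, combined with the exponential adjunction $\Map_{C^0}(X\t\cS^1,MH)\simeq\Map_{C^0}(X,\cL MH)$. First I would note that $\Bord_{n-1,k}^{\bs B}(\cL MH)$ is a Picard groupoid by the same formal argument as in the proof of Proposition \ref{fm5prop1}(a): $-(X,M)=(-X,M)$ is an $\ot$-inverse of $(X,M)$ via the cylinder $[X\t[0,1],M\t[0,1]]$, every morphism has a two-sided inverse by the formula displayed there, and the symmetry is the one built into Definition \ref{fm5def2}; so Theorem \ref{fmAthm2} attaches to it invariants $(\pi_0,\pi_1,q)$, which must then be identified.

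For \eq{fm5eq11}, given an object $(X,M)$ --- so $X$ a closed $(n-1)$-manifold with $\bs B$-structure and $M\subset X\t\cS^1$ a closed codimension-$k$ submanifold with normal $H$-structure --- Theorem \ref{fm2thm2}, in the form due to Novikov valid for general $H$, applied to the compact $n$-manifold $X\t\cS^1$ gives a homotopy class $f_M\colon X\t\cS^1\ra MH$ depending only on the L-equivalence class of $(M,\ga_M)$, and its adjoint $\bar f_M\colon X\ra\cL MH$ determines a class $[X,\bar f_M]\in\Om_{n-1}^{\bs B}(\cL MH)$. A morphism $[Y,N]\colon(X_0,M_0)\ra(X_1,M_1)$ produces, via $Y\t\cS^1$ and the adjunction, a map $\bar f_N\colon Y\ra\cL MH$ restricting to $\bar f_{M_0}\amalg\bar f_{M_1}$ on $\pd Y$, so $[X,\bar f_M]$ depends only on the isomorphism class; the resulting homomorphism $\pi_0\ra\Om_{n-1}^{\bs B}(\cL MH)$ has inverse the map sending $[X,\bar f]$ to $(X,M)$ with $M$ the zero set of a smooth transverse perturbation of the tautological section pulled back along the adjoint $f\colon X\t\cS^1\ra MH$, as in the sketch proof of Theorem \ref{fm2thm2}. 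Running this with a closed $n$-manifold $Y$ and morphisms $\boo\ra\boo$ in place of objects gives \eq{fm5eq12}. For $q$ I would copy the proof of Proposition \ref{fm5prop1}(a): the quadratic map of a Picard groupoid sends $[(X,M)]$ to the $\boo$-automorphism built from $\si_{(X,M),(X,M)}$, which unwinds to the mapping torus of the $\Z_2$-swap on $(X\t\cS^1)\amalg(X\t\cS^1)$ carrying $M\amalg M$, i.e.\ to $M\t\cS^1\subset(X\t\cS^1)\t\cS^1$ with the added $\cS^1$ given the $\U(1)$-invariant $\bs B$-structure --- the non-bounding spin structure when $\bs B=\bs\Spin$, so that $q$ is multiplication by $\al_1$, as claimed.

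Finally, part (b) I would prove by tracing the definitions around the square \eq{fm5eq13}. Take $[Y,g]\in\Om^{\bs B}_n(\cL MH)$, equivalently a closed $n$-manifold $Y$ with $\bs B$-structure and a codimension-$k$ submanifold $N\subset Y\t\cS^1$ with normal $H$-structure. By \eq{fm2eq6}, $\xi^{\bs B}_n(MH)([Y,g])=[Y\t\cS^1,f']\in\ti\Om_{n+1}^{\bs B}(MH)$ with the $\cS^1$ carrying the $\bs B$-structure induced from $D^2$ and $f'$ the adjoint of $g$; under \eq{fm5eq5} this is the $\boo$-automorphism of $\Bord_{n,k}^{\bs B}(MH)$ given by the closed $(n+1)$-manifold $Y\t\cS^1$ with the Pontrjagin--Thom submanifold $N$ of $f'$. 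Going the other way, \eq{fm5eq12} identifies $[Y,g]$ with the $\boo$-automorphism $[Y,N]$ of $\Bord_{n-1,k}^{\bs B}(\cL MH)$, and $I_{n,k}^{\bs B,H}$ of Definition \ref{fm5def3} sends this to the $\boo$-automorphism $[Y\t\cS^1,N]$ of $\Bord_{n,k}^{\bs B}(MH)$. Comparing the two descriptions establishes commutativity, and hence \eq{fm5eq13}. I expect the main (if modest) obstacle to be exactly this comparison: one must reconcile the $\bs B$-structure that \eq{fm2eq6} places on the $\cS^1$-factor with the $\U(1)$-invariant one appended by $I_{n,k}^{\bs B,H}$, and keep track of which of the two circles is the loop coordinate and which is the suspension/mapping-torus coordinate. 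Once this bookkeeping is settled the verification is the same as for Proposition \ref{fm4prop2}(d), and everything else is a routine transcription of the gauge-theoretic arguments of Section \ref{fm4} into the submanifold setting.
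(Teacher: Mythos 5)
Your proposal is correct and follows essentially the same route as the paper, which proves (a) by repeating the proof of Proposition \ref{fm5prop1}(a) with the observation that $M\subset X\t\cS^1$ corresponds under Theorem \ref{fm2thm2} to a map $X\t\cS^1\ra MH$, i.e.\ to a map $X\ra\cL MH$, and dismisses (b) as immediate from the definitions. Your extra care about which $\bs B$-structure sits on the circle factor in (b) is a reasonable point to flag, but it does not change the argument.
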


\begin{proof}
The proof of (a) is very similar to the proof of Propositions \ref{fm5prop1}(a), with the difference that objects $(X,M)$ in $\Bord_{n-1,k}^{\bs B}(\cL MH)$ correspond to submanifolds $M\subset X\t\cS^1$, and hence to elements of $[X\t\cS^1,MH]$, modifying equation \eq{fm2eq26}. But a continuous map $X\t\cS^1\ra MH$ is equivalent to a continuous map $X\ra\cL MH$, which is why $\Om_*^{\bs B}(\cL MH)$ appear in \eq{fm5eq11}--\eq{fm5eq12}. Part (b) is obvious from the definitions.
\end{proof}

\subsection{\texorpdfstring{L-equivalence categories $\Bord_X^k(MH)$}{L-equivalence categories Bordₓᵏ(MCₙ₋ₖ)}}
\label{fm53}

The next definition is a variation of Definition \ref{fm5def1}, in which we fix the $n$-manifold $X$, and take $Y=X\t[0,1]$ and~$Z=X\t[0,1]^2$.

\begin{dfn}
\label{fm5def4}
Let $0\le k\le n$, and $X$ be a compact $n$-manifold without boundary, and $\rho:H\ra\O(k)$ be a Lie group morphism. We define the {\it L-equivalence category\/} $\Bord_X^k(MH)$ as follows:
\begin{itemize}
\setlength{\itemsep}{0pt}
\setlength{\parsep}{0pt}
\item[(a)] The {\it objects} $(M,\ga)$ are compact, embedded $(n-k)$-submanifolds $M\subset X$ with an $H$-structure $\ga$ on the normal bundle $\nu_M\ra M$ of $M$ in $X$.
\item[(b)] A {\it morphism} $[N,\ga']: (M_0,\ga_0)\ra(M_1,\ga_1)$ is represented (modulo the equivalence relation described in (c)) by a compact, neat, embedded $(n-k+1)$-submanifold $N\subset X\t[0,1]$ with an $H$-structure $\ga'$ on the normal bundle $\nu_N\ra N$, such that
\begin{equation*}
\pd N=(M_0\t\{0\})\amalg (M_1\t\{1\}),\qquad \ga'\vert_{\pd N}\cong\ga_0\amalg\ga_1.
\end{equation*}
\item[(c)] In (b), two representatives $(N_0,\ga'_0)$ and $(N_1,\ga'_1)$ are {\it equivalent} if there exists a compact, neat, embedded $(n-k+2)$-submanifold $L\subset X\t[0,1]^2$ with an $H$-structure $\ga''$ on the normal bundle $\nu_L\ra L$ such that 
\begin{align*}
\pd L&=(M_0\!\t\!\{0\}\!\t\![0,1])\!\amalg\! (M_1\!\t\!\{1\}\!\t\![0,1])\!\amalg\! (N_0\!\t\!\{0\})\!\amalg\! (N_1\!\t\!\{1\}),\\
\ga''\vert_{\pd L}&\cong (\ga_0\!\t\!\{0\}\!\t\![0,1])\!\amalg\!(\ga_1\!\t\!\{1\}\!\t\![0,1])\!\amalg\! (\ga'_0\!\t\!\{0\})\!\amalg\! (\ga'_1\!\t\!\{1\}).
\end{align*}
\item[(d)] If $[N,\ga']:(M_0,\ga_0)\ra(M_1,\ga_1)$ and $[\hat N,\hat\ga']: (M_1,\ga_1)\ra (M_2,\ga_2)$ are morphisms, the {\it composition} is the morphism $(M_0,\ga_0)\ra(M_2,\ga_2)$ obtained by mapping $N\hookra X\t[0,\ha]$ by $(x,t)\mapsto (x,\ha t)$ and mapping $\hat N\hookra X\t[\ha,1]$ by $(x,t)\mapsto (x,\ha(t+1))$, and gluing the submanifolds $N\subset X\t[0,\ha]$ and $\hat N\subset X\t[\ha,1]$ along their common boundary component $M_1\subset X\t\{\ha\}$ to make a submanifold $N\amalg_{M_1}\hat N$ in $X\t[0,1]$. To make this smooth at $M_1$ we should choose $(N,\ga')$, $(\hat N,\hat\ga')$ in their equivalence classes so that they are of product form $M_1\t(1-\ep,1]$, $M_1\t[0,\ep)$ near the $M_1$ boundary. We glue the $H$-structures in a similar way, and set 
\begin{equation*}
[\hat N,\hat\ga']\ci [N,\ga]=[N\amalg_{M_1}\hat N,\ga'\amalg_{\ga_1}\hat\ga'].
\end{equation*}
Composition is associative.
\item[(e)] The {\it identity morphism} at $(M,\ga)$ is $\id_{(M,\ga)}=\bigl[M\t[0,1],\ga\t[0,1]\bigr].$
\end{itemize}

As in Definition \ref{fm5def1}, to simplify notation, from now on we usually omit the $H$-structures $\ga,\ga',\ga''$, leaving them implicit, so we write objects as $M$, and morphisms as $[N]:M_0\ra M_1$, and so on.

Observe that $\Bord_X^k(MH)$ is a specialization of $\Bord_{n,k}^{\bs B}(MH)$ in Definition \ref{fm5def1}, in which the varying ambient manifolds $X,Y,Z$ in Definition \ref{fm5def1} are replaced by $X,X\t[0,1],X\t[0,1]^2$ respectively, for $X$ fixed. Let $\bs B$ be a tangential structure, and suppose $X$ has a $\bs B$-structure $\be$. This induces $\bs B$-structures $\be',\be''$ on $X\!\t\![0,1],X\!\t\![0,1]^2$. Define a functor
\e
\Pi_X^{\bs B}:\Bord_X^k(MH) \longra\Bord_{n,k}^{\bs B}(MH) 
\label{fm5eq14}
\e
to map $M\mapsto(X,M)$ on objects and $[N]\mapsto[X\t[0,1],N]$ on morphisms, using the $\bs B$-structures $\be,\be'$ on $X,X\t[0,1]$. This is well defined as writing $Y=X\t[0,1]$ and $Z=X\t[0,1]^2$, the definitions above of the equivalence on $N$ and $(X\t[0,1],N)$, and of compositions of morphisms, and so on, map to those in Definition \ref{fm5def1}.
\end{dfn}

The next proposition justifies the name `L-equivalence category'.

\begin{prop}
\label{fm5prop3}
In Definition\/ {\rm\ref{fm5def4},} $\Bord_X^k(MH)$ is a groupoid (that is, all morphisms are isomorphisms), and there is a natural bijection
\e
\pi_0\bigl(\Bord_X^k(MH) \bigr)\cong \La_k^H(X),
\label{fm5eq15}
\e
where $\La_k^H(X)$ is the set of L-equivalence classes in Definition\/ {\rm\ref{fm2def6},} which is described using homotopy theory in Theorem\/~{\rm\ref{fm2thm2}}.
\end{prop}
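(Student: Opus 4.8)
The statement has two parts: that $\Bord_X^k(MH)$ is a groupoid, and that $\pi_0$ of it is naturally bijective to $\La_k^H(X)$. The plan is to dispatch the groupoid claim first and then build the bijection \eqref{fm5eq15} by directly comparing the definitions of the two sides, using the Pontrjagin--Thom correspondence (Theorem~\ref{fm2thm2}) only as a sanity check and for the homotopy-theoretic description.

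\emph{Groupoid property.} Given a morphism $[N]:M_0\to M_1$, with $N\subset X\t[0,1]$ a neat $(n-k+1)$-submanifold carrying a normal $H$-structure and $\pd N=(M_0\t\{0\})\amalg(M_1\t\{1\})$, I would construct the inverse $[N]^{-1}:M_1\to M_0$ by reflecting the interval, i.e.\ taking the image $\bar N$ of $N$ under $\id_X\t(t\mapsto 1-t)$, with the induced normal $H$-structure. To check $[N]^{-1}\ci[N]=\id_{M_0}$ I would exhibit the required $(n-k+2)$-submanifold $L\subset X\t[0,1]^2$: the standard ``bent cylinder'' obtained from $N$, namely $L=\{(x,s,t): (x,s)\in N,\ s\le \min(2t,2-2t)\}$ suitably smoothed (or, more cleanly, the trace of the isotopy collapsing the composite $\bar N\amalg_{M_1}N$ onto $M_0\t[0,1]$), with normal $H$-structure pulled back from that on $N$. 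The collar/product-form conditions in Definition~\ref{fm5def4}(c),(d) are exactly what make this gluing and smoothing legitimate; this is routine once one fixes collars, so I would state it and not belabor the smoothing. This shows every morphism is invertible.

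\emph{The bijection on $\pi_0$.} By definition, $\pi_0(\Bord_X^k(MH))$ is the set of isomorphism classes of objects, i.e.\ of pairs $(M,\ga)$ with $M\subset X$ compact embedded of codimension $k$ and $\ga$ an $H$-structure on $\nu_M$, modulo the relation that $(M_0,\ga_0)\cong(M_1,\ga_1)$ iff there is a morphism between them. Unwinding Definition~\ref{fm5def4}(b),(c), such a morphism is precisely a neat embedded $(n-k+1)$-submanifold $N\subset X\t[0,1]$ with normal $H$-structure restricting to $\ga_0\amalg\ga_1$ on $\pd N=(M_0\t\{0\})\amalg(M_1\t\{1\})$ --- and since all morphisms are invertible (first part), ``there exists a morphism'' is symmetric, hence an equivalence relation whose classes are exactly the L-equivalence classes $\La_k^H(X)$ of Definition~\ref{fm2def6}. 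So the map sending an object $(M,\ga)$ to its L-equivalence class $[M,\ga]\in\La_k^H(X)$ is well-defined (isomorphic objects are L-equivalent by the same cobordism $N$) and surjective (every L-equivalence class has a representative pair, which is an object); injectivity is the observation just made that two objects with the same L-equivalence class are connected by a morphism. Naturality in $X$ (under embeddings, or at least under the maps used later) follows since the construction is defined purely in terms of submanifolds and normal structures, and one may record that \eqref{fm5eq15} is compatible with the functors $F_\io$ of \eqref{fm5eq3} and with the description $\La_k^H(X)\cong[X,MH]$ from Theorem~\ref{fm2thm2} via $\Pi_X^{\bs B}$ and Proposition~\ref{fm5prop1}.

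\emph{Main obstacle.} The only genuine point requiring care is the construction of the $(n+2)$- (here $(n-k+2)$-)dimensional witness $L$ with corners and its normal $H$-structure in the verification that $[N]^{-1}\ci[N]=\id$, and the parallel check that composition is well-defined on equivalence classes: these are the usual ``bordisms between bordisms'' and demand that one set up collars of $\pd X\t\{\ha\}$ and product-form normal $H$-structures so that the glued submanifold is smooth and the $H$-structure extends. This is technically fiddly but entirely standard (it is the same bookkeeping already used in Definition~\ref{fm5def4}(d) and in Definition~\ref{fm5def1}); I would handle it by invoking the collar neighbourhood theorem for neat submanifolds (Hirsch, as cited) and noting that the equivalence class of $L$ is independent of the collar choices, exactly as argued for $\Bord_{n,k}^{\bs B}(MH)$. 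No new idea beyond what is already in the paper is needed.
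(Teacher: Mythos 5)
Your proposal is correct and follows essentially the same route as the paper: the inverse of $[N]$ is obtained by reflecting the interval $t\mapsto 1-t$ and transporting the normal $H$-structure, and the bijection \eqref{fm5eq15} is then read off by comparing Definitions \ref{fm2def6} and \ref{fm5def4}. The paper's proof is terser (it asserts the reflected submanifold is an inverse without exhibiting the witness $L$, and states the comparison of definitions in one line), but the extra detail you supply is exactly the standard bookkeeping the paper leaves implicit.
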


\begin{proof}
If $[N]:M_0\ra M_1$ is a morphism in $\Bord_X^k(MH)$, so that $N\subset X\t[0,1]$ is a compact embedded submanifold with normal $H$-structure $\ga'$, the inverse morphism is $[N]^{-1}=[\hat N]:M_1\ra M_0$, where 
\begin{equation*}
\hat N=\bigl\{(x,1-t):(x,t)\in N\bigr\}\subset X\t[0,1].
\end{equation*}
The obvious identification $\hat N\cong N$ identifies the normal bundles $\nu_{N'}\cong\nu_N$, and we give $\hat N$ the normal $H$-structure $\hat\ga'$ corresponding to $\ga'$ under $\nu_{N'}\cong\nu_N$. Hence $\Bord_X^k(MH)$ is a groupoid. The isomorphism \eq{fm5eq15} follows by comparing Definitions \ref{fm2def6} and~\ref{fm5def4}.
\end{proof}

In a similar way to Propositions \ref{fm5prop1} and \ref{fm5prop2}, we can use homotopy theory to give a partial description of the categories $\Bord_X^k(MH)$ and functors $\Pi_X^{\bs B}$. The proof of the next proposition is essentially the same as that of Proposition~\ref{fm4prop3}.

\begin{prop}
\label{fm5prop4}
Fix\/ $0\le k\le n,$ a tangential structure $\bs B,$ a Lie group $H$, and a morphism of Lie groups $\rho:H\ra\O(k)$. Suppose $X$ is a compact\/ $n$-manifold with\/ $\bs B$-structure, and\/ $M\subset X$ is a compact, embedded\/ $(n-k)$-submanifold with normal\/ $H$-structure. Then\/ $M$ is an object in\/ $\Bord_X^k(MH),$ and\/ $(X,M)$ an object in\/ $\Bord_{n,k}^{\bs B}(MH),$ and\/ $\Pi_X^{\bs B}:M\mapsto(X,M)$. We have a commutative diagram
\e
\begin{gathered}
\xymatrix@C=150pt@R=18pt{ 
*+[r]{\Aut_{\Bord_X^k(MH)}(M)} \ar[r]_(0.45){\Pi_X^{\bs B}} \ar[d]^{\chi_M^{\bs B}} & *+[l]{\Aut_{\Bord_{n,k}^{\bs B}(MH)}(X,M)} \\
*+[r]{\Om^{\bs B}_n(\cL MH)} \ar[r]^(0.45){\xi^{\bs B}_n(MH)} & *+[l]{\ti\Om_{n+1}^{\bs B}(MH),} \ar[u]^{\eq{fm5eq6}} }
\end{gathered}
\label{fm5eq16}
\e
where\/ $\xi^{\bs B}_n(MH)$ is in Definition\/ {\rm\ref{fm2def3},} the right hand column is\/ {\rm\eq{fm5eq6}} restricted to $\ti\Om_{n+1}^{\bs B}(MH)\subset\Om_{n+1}^{\bs B}(MH),$ and\/ $\chi_M^{\bs B}$ is defined as follows: let\/ $\phi_M:X\ra MH$ be a classifying map for\/ $M$. Then for\/ $[N]:M\ra M$ in\/ $\Aut_{\Bord_X^k(MH)}(M),$ as\/ $N\subset X\t[0,1]$ is a submanifold with normal $H$-structure with\/ $\pd N=M\t\{0,1\},$ we can choose a classifying map\/ $\phi_N:X\t[0,1]\ra MH$ for\/ $N$ such that\/ $\phi_N\vert_{X\t\{0\}}=\phi_N\vert_{X\t\{1\}}=\phi_M$. Writing\/ $\cS^1=\R/\Z=[0,1]/(0\sim 1)$ with projection\/ $\pi:[0,1]\ra\cS^1,$ define\/ $\bar\phi_N:X\t\cS^1\ra MH$ by\/ $\bar\phi_N\ci(\id_X\t\pi)=\phi_N$. Let\/ $\ti\phi_N:X\ra \cL MH=\Map_{C^0}(\cS^1,MH)$ be the induced map. Then define
\e
\chi_M^{\bs B}([N])=[X,\ti\phi_N].
\label{fm5eq17}
\e
\end{prop}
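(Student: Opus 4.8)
The plan is to mimic almost verbatim the proof of Proposition \ref{fm4prop3}, since Proposition \ref{fm5prop4} is the submanifold analogue of it, with principal $G$-bundles $Q\ra X\times[0,1]$ replaced by submanifolds $N\subset X\times[0,1]$ carrying a normal $H$-structure, and the classifying map $\phi_Q:X\times[0,1]\ra BG$ replaced by the Pontrjagin--Thom classifying map $\phi_N:X\times[0,1]\ra MH$ of Theorem \ref{fm2thm2}. So first I would check that $\chi_M^{\bs B}$ is well defined: given $[N]\in\Aut_{\Bord_X^k(MH)}(M)$, the classifying map $\phi_N:X\times[0,1]\ra MH$ (restricting to $\phi_M$ on both ends) is natural up to homotopies relative to a chosen homotopy of $\phi_M$, so $\bar\phi_N$ and hence $\ti\phi_N:X\ra\cL MH$ are natural up to homotopy; moreover $[X,\ti\phi_N]$ depends only on $N$ up to the equivalence of Definition \ref{fm5def4}(c), since an $(n-k+2)$-submanifold $L\subset X\times[0,1]^2$ realising such an equivalence produces, via Theorem \ref{fm2thm2}, a classifying homotopy $X\times[0,1]^2\ra MH$ exhibiting the two maps $\ti\phi_{N_0},\ti\phi_{N_1}:X\ra\cL MH$ as bordant. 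Hence $\chi_M^{\bs B}([N])=[X,\ti\phi_N]$ is well defined, and it is clearly a group homomorphism for the composition in $\Aut_{\Bord_X^k(MH)}(M)$.

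Next I would trace the two composites around \eqref{fm5eq16}. Going down then right: by construction the passage $\bar\phi_N\mapsto\ti\phi_N$ is the inverse of the construction $\phi\mapsto\phi'$ in Definition \ref{fm2def3}, so by \eqref{fm2eq6} we get $\xi^{\bs B}_n(MH)\circ\chi_M^{\bs B}([N])=[X\times\cS^1,\bs\ga_X\times\bs\ga_{\cS^1},\bar\phi_N]$, and then \eqref{fm5eq6} identifies this with $[(X\times[0,1])\amalg(X\times\cS^1),\,(\bs\ga_X\times\bs\ga_{[0,1]})\amalg(\bs\ga_X\times\bs\ga_{\cS^1}),\,(M\times[0,1])\amalg\bar N]$ in $\Aut_{\Bord_{n,k}^{\bs B}(MH)}(X,M)$, where $\bar N\subset X\times\cS^1$ is the submanifold obtained from $N\subset X\times[0,1]$ by gluing $X\times\{0\}\cong X\times\{1\}$ and $N|_{X\times\{0\}}\cong M\cong N|_{X\times\{1\}}$, carrying the glued normal $H$-structure, and $\bar\phi_N$ is a Pontrjagin--Thom classifying map for $\bar N$. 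Going right via $\Pi_X^{\bs B}$: by Definition \ref{fm5def4} we get $\Pi_X^{\bs B}([N])=[X\times[0,1],N]$. So \eqref{fm5eq16} commutes precisely when
\begin{equation*}
(X\times[0,1],\bs\ga_X\times\bs\ga_{[0,1]},N)\sim(X\times\cS^1,\bs\ga_X\times\bs\ga_{\cS^1},\bar N)\amalg(X\times[0,1],\bs\ga_X\times\bs\ga_{[0,1]},M\times[0,1])
\end{equation*}
in the equivalence relation of Definition \ref{fm5def1}(c) with submanifolds.

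To produce this equivalence I would reuse, word for word, the geometric construction from the proof of Proposition \ref{fm4prop3}: take the $2$-manifold with corners $Y\subset\R^2$ of Figure \ref{fm4fig1} (the square $[0,1]^2$ with an open disc removed, split by the dotted curves into $Y_1$ above and $Y_2\cong[0,1]^2$ below), set $Z=X\times Y$ with the $\bs B$-structure restricted from the standard one on $X\times\R^2$, and inside $Z$ take the $(n-k+2)$-submanifold $L$ which is $M\times Y_1$ over $X\times Y_1$ and is $N\times[0,1]$ over $X\times Y_2\cong(X\times[0,1])\times[0,1]$, glued along $M\times(\text{circle})$, with the evident normal $H$-structure pulled back from $\ga$ on $M\times Y_1$ and from $\ga'$ on $N\times[0,1]$. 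Since the boundary circle of $Y$ carries the $\bs B$-structure induced from the closed unit disc $D^2$ (as in Definition \ref{fm2def3}, the one used to define $\bar\phi_N$ and $\bar N$), the pair $(Z,L)$ with its normal $H$-structure realises exactly the equivalence of Definition \ref{fm5def1}(c) above, proving \eqref{fm5eq16} commutes. The only genuinely new bookkeeping compared with Proposition \ref{fm4prop3} — and the part I expect to be the main obstacle — is making sure the \emph{normal $H$-structures} match up consistently under all these gluings, excisions and the identification of normal bundles $\nu_{\bar N}$ with $\nu_N$, i.e. that the Pontrjagin--Thom dictionary of Theorem \ref{fm2thm2} is applied compatibly at every step, together with the neatness and collar conditions from Definitions \ref{fm5def1} and \ref{fm5def4} needed to glue $L$ smoothly along $M\times(\text{circle})$; but this is routine once one is careful, exactly as in the gauge-theoretic case, so I would simply say ``the proof is essentially the same as that of Proposition \ref{fm4prop3}, replacing $BG$ and its universal bundle by $MH$ and the Pontrjagin--Thom construction of Theorem \ref{fm2thm2} throughout.''
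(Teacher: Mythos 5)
Your proposal is correct and matches the paper exactly: the paper gives no separate argument for Proposition \ref{fm5prop4}, stating only that the proof is essentially the same as that of Proposition \ref{fm4prop3}, which is precisely the translation you carry out (including the cornered surface $Y$ of Figure \ref{fm4fig1} and the check on the boundary $\bs B$-structure of the circle). Your added remark about tracking the normal $H$-structures through the gluings is the right point of care but, as you say, routine.
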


\section{Cohomology bordism categories}
\label{fm6}

\subsection{\texorpdfstring{Bordism categories $\Bord^{\bs B}_n(K(R,k))$}{Bordism categories Bordₙᴮ(K(R,k)))}}
\label{fm61}

\begin{dfn}
\label{fm6def1}
Let $R$ be a commutative ring. Write $C^\bu(X,R)$ for the cochain complex of some cohomology theory which is defined for compact smooth $n$-manifolds with corners $X$ and computes the cohomology $H^*(X,R)$. The exact choice of cohomology theory does not matter much, provided there are functorial pullbacks $f^*:C^\bu(Y,R)\ra C^\bu(X,R)$ along smooth maps $f:X\ra Y$. To be definite, we could take $C^\bu(X,R)$ to be topological singular cochains.

Fix $0\le k\le n$ and a tangential structure $\bs B$. We define the {\it cohomology bordism category\/} $\Bord^{\bs B}_n(K(R,k))$ as follows.
\begin{itemize}
\setlength{\itemsep}{0pt}
\setlength{\parsep}{0pt}
\item[(a)] The {\it objects} of $\Bord^{\bs B}_n(K(R,k))$ are triples $(X,\be,C),$ where $X$ is a compact $n$-manifold without boundary with $\bs B$-structure $\be$ and $C\in C^k(X,R)$ is a $k$-cocycle on $X$. (Here we call $C$ a {\it cocycle\/} when $\d C=0$ in $C^{k+1}(X,R)$.)
\item[(b)] {\it Morphisms} $[Y,\be',D]:(X_0,\be_0,C_0)\ra(X_1,\be_1,C_1)$ are equivalence classes of triples $(Y,\be',D),$ see (c), where $Y$ is a compact $(n+1)$-manifold with boundary with $\bs B$-structure $\be'$, and $D\in C^k(Y,R)$ is a $k$-cocycle on $Y$, with a $\bs B$-structure-preserving diffeomorphism
\e
\pd Y\cong X_0\amalg X_1
\label{fm6eq1}
\e
identifying $\bs B$-structures $\be'\vert_{\pd Y}\cong -\be_0\amalg\be_1$ up to chosen isotopies, and which identifies $D\vert_{\pd Y}$ with $C_0\amalg C_1$ in $C^k(X_0\amalg X_1,R)$.
\item[(c)] In the situation of (b), two choices $(Y_0,\be_0',D_0)$ and $(Y_1,\be_1',D_1)$ are {\it equivalent} if there exists a triple $(Z,\be'',E),$ where $Z$ is a compact $(n+2)$-manifold with corners with a $\bs B$-structure $\be''$, and a diffeomorphism
\e
\pd Z\cong (X_0\t[0,1])\amalg (X_1\t[0,1]) \amalg (Y_0\t\{0\})\amalg (Y_1\t\{1\})
\label{fm6eq2}
\e
that identifies $\bs B$-structures $\be''\vert_{\pd Z}\cong (-\be_0\t[0,1])\amalg(\be_1\t[0,1])\amalg (-\be'_0)\t\{0\}\amalg \be'_1\t\{1\}$ up to chosen isotopies, and along $\pd^2Z$ identifies $\pd(Y_a\t\{a\})$ with $(-X_0\t\{0\}\t\{a\})\amalg (X_1\t\{1\}\t\{a\})$ via \eq{fm6eq1} for $a=0,1$, compatibly with $\bs B$-structures. 

Moreover, $E\in C^k(Z,R)$ is a $k$-cocycle such that under \eq{fm6eq2} we have
\begin{align*}
E\vert_{X_0\t[0,1]}&=\Pi_{X_0}^*(C_0),& E\vert_{X_1\t[0,1]}&=\Pi_{X_1}^*(C_1),\\
E\vert_{Y_0\t\{0\}}&=D_0,& E\vert_{Y_1\t\{1\}}&=D_1.
\end{align*}
\item[(d)] If $[Y,\be',D]:(X_0,\be_0,C_0)\ra(X_1,\be_1,C_1)$ and $[\hat Y,\hat\be',\hat D]:(X_1,\be_1,C_1)\ra(X_2,\be_2,C_2)$ are morphisms, the {\it composition} is 
\begin{equation*}
[\hat Y,\hat\be',\hat D]\ci[Y,\be',D]=\bigl[\hat Y\amalg_{X_1}Y,\hat\be'\amalg_{\be_1}\be',\hat D\amalg_{C_1}D\bigr].
\end{equation*}
That is, we glue $Y,\hat Y$ along their common boundary component $X_1$ to make a manifold $Y'\amalg_{X_1}Y$, and we glue the $\bs B$-structures $\be',\hat\be'$ along $X_1$ using the chosen isotopies $\be'\vert_{X_1}\simeq \be_1$, $\hat\be'\vert_{X_1}\simeq -\be_1$, and we glue the $k$-cocycles $C,\hat D$ along their common restrictions $C_1$ on $X_1$ to make a $k$-cocycle $\hat D\amalg_{C_1}D$ on $\hat Y\amalg_{X_1}Y$. Doing these gluings requires choices, but these do not change the equivalence class. Composition is associative.
\item[(e)] {\it Identities\/} are $\id_{(X,\be,C)}=\bigl[X\t[0,1],\be\t[0,1],\Pi_X^*(C)\bigr]$.
\item[(f)] The {\it monoidal structure\/} on $\Bord^{\bs B}_n(K(R,k))$ is defined as
\begin{equation*}
(X,\be,C)\ot (\hat X,\hat\be,\hat C)=(X\amalg\hat X,\be\amalg\hat\be,C\amalg\hat C)
\end{equation*}
on objects, and similarly for morphisms.
\item[(g)] The {\it unit object} in $\Bord_{n,k}^{\bs B}(K(R,k))$ is $\boo=(\es,\es,0).$
\item[(h)] For a pair of objects, the {\it symmetry isomorphism}
\begin{align*}
&\si_{(X_0,\be_0,C_0),(X_1,\be_1,C_1)}=[Y,\be',D]:\\
&(X_0,\be_0,C_0)\ot (X_1,\be_1,C_1)\longra (X_1,\be_1,C_1)\ot (X_0,\be_0,C_0)
\end{align*}
is the cylinders $Y=(X_0\amalg X_1)\t[0,1],$ $\be'=(\be_0\amalg\be_1)\t[0,1],$ $D=\Pi_{X_0}^*(C_0)\amalg \Pi_{X_1}^*(C_1)$, where the boundary diffeomorphisms are the obvious identifications $(X_0\amalg X_1)\t\{0\}\cong X_0\amalg X_1$ and $(X_0\amalg X_1)\t\{1\}\cong X_1\amalg X_0$, swapping round factors at $1\in\pd[0,1]$.
\end{itemize}

To simplify notation, from now on we usually omit $\bs B$-structures $\be,\be',\be''$, leaving them implicit, so we write objects as pairs $(X,C)$, where $X$ is a compact $n$-manifold with $\bs B$-structure and $C\in C^k(X,R)$, and morphisms are $[Y,D]:(X_0,C_0)\ra(X_1,C_1)$, and so on. With these definitions, it is easy to check that $\Bord^{\bs B}_n(K(R,k))$ is indeed a {\it symmetric monoidal category}, in the sense of Appendix~\ref{fmA}.

The isomorphism class in $\Bord^{\bs B}_n(K(R,k))$ of an object $(X,C)$ depends only on $X$ and the cohomology class $[C]\in H^k(X,R)$. By an abuse of notation, we will sometimes write objects as $(X,\al)$ for $\al\in H^k(X,R)$, by which we mean $(X,C)$ for some choice of $k$-cocycle $C$ representing~$\al$.

A morphism of commutative rings $\rho:R_1\ra R_2$ induces a functor
\e
F_\rho:\Bord^{\bs B}_n(K(R_1,k))\longra\Bord^{\bs B}_n(K(R_2,k)),
\label{fm6eq3}
\e
by converting $R_1$-cochains $C,D,E,\ldots$ on $X,Y,Z,\ldots$ to $R_2$-cochains.
\end{dfn}

The next proposition is proved in a very similar way to Propositions \ref{fm5prop1} and \ref{fm4prop1}. It justifies the notation~$\Bord^{\bs B}_n(K(R,k))$.

\begin{prop}
\label{fm6prop1}
{\bf(a)} $\Bord^{\bs B}_n(K(R,k))$ is a Picard groupoid. Its invariants in Theorem\/ {\rm\ref{fmAthm2}(a)} are the $\bs B$-bordism groups
\ea
\pi_0\bigl(\Bord^{\bs B}_n(K(R,k))\bigr)&\cong\Om^{\bs B}_n(K(R,k)),
\label{fm6eq4}\\
\pi_1\bigl(\Bord^{\bs B}_n(K(R,k))\bigr)&\cong\Om_{n+1}^{\bs B}(K(R,k)),
\label{fm6eq5}
\ea
and\/ $q:\Om^{\bs B}_n(K(R,k))\ra\Om_{n+1}^{\bs B}(K(R,k))$ mapping\/ $[X,C]\mapsto[X\t\cS^1,\Pi_X^*(C)]$. Here $K(R,k)$ is the Eilenberg--MacLane space classifying cohomology $H^k(-,R)$ over $R,$ and in $X\t\cS^1,$ the $\cS^1$ has a $\U(1)$-equivariant\/ $\bs B$-structure with the usual orientation, so when $\bs B=\bs\Spin,$ it has the \begin{bfseries}non-bounding\end{bfseries} spin structure $\cS^1_{\rm nb}$.
\smallskip

\noindent{\bf(b)} As in every Picard groupoid, a morphism\/ $\la:(X_0,C_0)\ab\ra(X_1,C_1)$ in\/ $\Bord^{\bs B}_n(K(R,k))$ determines a bijection
\e
\Om_{n+1}^{\bs B}(K(R,k))\longra\Hom_{\Bord^{\bs B}_n(K(R,k))}\bigl((X_0,C_0),(X_1,C_1)\bigr)
\label{fm6eq6}
\e
given by composition in the diagram of bijections
\begin{equation*}
\xymatrix@C=160pt@R=15pt{ 
*+[r]{\Om_{n+1}^{\bs B}(K(R,k))} \ar[d]^{\eq{fm6eq5}} \ar[r]_(0.22)\cong &  *+[l]{\Hom_{\Bord^{\bs B}_n(K(R,k))}\bigl((X_0,C_0),(X_1,C_1)\bigr)} \ar@{=}[d] \\
*+[r]{\Hom_{\Bord^{\bs B}_n(K(R,k))}(\boo,\boo)} \ar[r]^(0.35){
\ot\la}  & *+[l]{\Hom_{\Bord^{\bs B}_n(K(R,k))}\bigl(\boo\!\ot\!(X_0,C_0),\boo\!\ot\!(X_1,C_1)\bigr).} }
\end{equation*}
\end{prop}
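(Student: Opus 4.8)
The plan is to follow the template of the proofs of Propositions \ref{fm4prop1}, \ref{fm5prop1} and \ref{fm5prop2}, replacing the geometric data there (principal $G$-bundles, or submanifolds with normal $H$-structure) by $k$-cocycles, and replacing the classifying-space / Pontrjagin--Thom dictionary by the universal property of the Eilenberg--MacLane space $K(R,k)$ recalled in \S\ref{fm23}. First I would check that $\Bord^{\bs B}_n(K(R,k))$ is a groupoid: given a morphism $[Y,D]:(X_0,C_0)\ra(X_1,C_1)$, the "bend-back" construction of Proposition \ref{fm4prop1} produces an inverse $\bigl[-Y\amalg(Y\amalg_{X_0\amalg X_1}-Y),\,\hat D\amalg(D\amalg_{C_0\amalg C_1}\hat D)\bigr]$, where $\hat D$ denotes $D$ with the reflected parametrisation; and $(-X,C)$ is an inverse of $(X,C)$ under $\ot$ via $\bigl[X\t[0,1],\Pi_X^*(C)\bigr]$. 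Together with the evident symmetric monoidal structure already recorded in Definition \ref{fm6def1}, this makes $\Bord^{\bs B}_n(K(R,k))$ a Picard groupoid, so Theorem \ref{fmAthm2}(a) attaches to it invariants $\pi_0,\pi_1$ and a linear quadratic map $q$.

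Next I would identify $\pi_0$. To an object $(X,C)$ associate the cohomology class $[C]\in H^k(X,R)$; by the universal property of $K(R,k)$ in \S\ref{fm23} there is a map $f_C:X\ra K(R,k)$, unique up to homotopy, with $f_C^*(e_k)=[C]$, and we set $\Phi\bigl([X,C]\bigr)=[X,\bs\ga_X,f_C]\in\Om^{\bs B}_n(K(R,k))$. This is well defined: an isomorphism $(X_0,C_0)\cong(X_1,C_1)$ identifies the $\bs B$-manifolds and makes $C_0,C_1$ cohomologous, so $f_{C_0}\simeq f_{C_1}$; a morphism $[Y,D]:(X_0,C_0)\ra(X_1,C_1)$ yields from $[D]\in H^k(Y,R)$ a classifying map $f_D:Y\ra K(R,k)$ that can be chosen to restrict to $f_{C_0}\amalg f_{C_1}$ on $\pd Y$, so $(Y,\bs\ga_Y,f_D)$ is a bordism and $\Phi$ descends to $\pi_0$. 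It is a group homomorphism since everything is additive under disjoint union. The inverse sends $[X,\bs\ga_X,f]$ to $[X,C]$ for $C$ any $k$-cocycle representing $f^*(e_k)$, well defined by the same arguments (together with the fact that classifying maps on a bordism extend classifying maps on the boundary). This proves \eq{fm6eq4}. Running the identical argument on $\Hom(\boo,\boo)$ --- whose elements are classes $[Y,D]$ with $Y$ a closed $(n+1)$-manifold --- gives \eq{fm6eq5}.

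For the linear quadratic map $q$ I would unwind its definition from Theorem \ref{fmAthm2}, exactly as in the proofs of Propositions \ref{fm4prop1} and \ref{fm5prop1}: $q\bigl([X,C]\bigr)$ is represented by the mapping torus of the $\Z_2$-action on $C\amalg C\ra X\amalg X$ exchanging the two copies of $X$, i.e.\ by $\bigl[((X\amalg X)\t[0,1])/\mathbin{\sim},\,((C\amalg C)\t[0,1])/\mathbin{\sim}\bigr]$, and $((X\amalg X)\t[0,1])/\mathbin{\sim}\,\cong X\t\cS^1$ with the product $\bs B$-structure built from the $\U(1)$-invariant $\bs B$-structure on $\cS^1$, under which the descended cocycle is $\Pi_X^*(C)$. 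When $\bs B=\bs\Spin$ this $\cS^1$ carries the non-bounding spin structure, so $q$ is multiplication by $\al_1=[\cS^1_{\rm nb}]$ of Table \ref{fm2tab2}, as claimed. Finally, part (b) is immediate from the general theory of Picard groupoids exactly as in Proposition \ref{fm5prop1}(b): the hom-set $\Hom\bigl((X_0,C_0),(X_1,C_1)\bigr)$ is a torsor over $\Aut(\boo)\cong\Om_{n+1}^{\bs B}(K(R,k))$, the torsor structure being postcomposition, and the displayed bijection is the composite that tensors a chosen $\la$ by $\boo$.

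The step needing the most care --- though it is routine given \S\ref{fm23} --- is the compatibility between the cochain-level equivalence relations of Definition \ref{fm6def1}(c) and the bordism relation of $\Om^{\bs B}_*(K(R,k))$: one must know that a $k$-cocycle on $Y$ (resp.\ $Z$) restricting to prescribed cocycles on $\pd Y$ (resp.\ $\pd Z$) is realised by a map to $K(R,k)$ restricting to the prescribed classifying maps on the boundary, so that cobordisms of cocycles translate to cobordisms of maps and conversely. This follows from the universal property of $K(R,k)$ together with the homotopy extension property of the boundary inclusions $\pd Y\hookra Y$, $\pd Z\hookra Z$, which are cofibrations; everything else is a verbatim transcription of the earlier proofs.
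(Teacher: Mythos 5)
Your proof is correct and follows exactly the route the paper intends: the paper does not write out a proof of this proposition but simply states that it "is proved in a very similar way to Propositions \ref{fm5prop1} and \ref{fm4prop1}", and your argument is precisely that adaptation, with the universal property of $K(R,k)$ from \S\ref{fm23} replacing the classifying-space and Pontrjagin--Thom dictionaries, and with the boundary-extension point correctly identified and handled via the cofibration property of $\pd Y\hookra Y$.
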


\begin{ex}
\label{fm6ex1}
From Proposition \ref{fm6prop1}(a) and Tables \ref{fm2tab1} and \ref{fm3tab1} we see that there are equivalences of Picard groupoids
\e
\begin{aligned}
\Bord_7^{\bs\Spin}(K(\Z,4))&\cong (0\op 0)\qs(\Z^2\op\Z_2^2), \\
\Bord_8^{\bs\Spin}(K(\Z,4))&\cong (\Z^2\op\Z^2)\qs(\Z_2^2\op\Z_2),\\
\Bord_7^{\bs\Spin}(K(\Z_2,4))&\cong (0\op 0)\qs(\Z^2\op\Z_4), \\
\Bord_8^{\bs\Spin}(K(\Z_2,4))&\cong (\Z^2\op\Z_4)\qs(\Z_2^2\op\Z_2).
\end{aligned}
\label{fm6eq7}
\e
Here the right hand sides are Picard groupoids of the form $\pi_0\qs\pi_1$ as in Theorem \ref{fmAthm2} for abelian groups $\pi_0,\pi_1$. The decomposition of the $\pi_i$ as $A\op B$ in \eq{fm6eq7} corresponds to the splitting $\Om_n^{\bs\Spin}(K(R,4))=\Om_n^{\bs\Spin}(*)\op\ti\Om_n^{\bs\Spin}(K(R,4)),$ where $\Om_n^{\bs\Spin}(*)$ is given in Table \ref{fm2tab1} and $\ti\Om_n^{\bs\Spin}(K(R,4))$ in Table \ref{fm3tab1}. Note that $\pi_0\qs\pi_1$ also depends on a linear quadratic map $q:\pi_0\ra\pi_1$, which can be computed from Theorem~\ref{fm3thm1}.
\end{ex}

\subsection{\texorpdfstring{Loop bordism categories $\Bord^{\bs B}_n(\cL K(R,k))$}{Loop bordism categories Bordₙᴮ(ℒK(R,k))}}
\label{fm62}

\begin{dfn}
\label{fm6def2}
Let $R$ be a commutative ring. Fix $n\ge -1$, $0\le k\le n+1$, and a tangential structure $\bs B$ in the sense of \S\ref{fm211}. We will define another Picard groupoid $\Bord^{\bs B}_n(\cL K(R,k))$ that we call a {\it loop bordism category}. It is a simple modification of Definition \ref{fm6def1}: we replace the $k$-cocycles $C\in C^k(X,R)$, $D\in C^k(Y,R)$, $E\in C^k(Z,R)$ by $k$-cocycles $C\in C^k(X\t\cS^1,R)$, $D\in C^k(Y\t\cS^1,R)$, $E\in C^k(Z\t\cS^1,R)$ throughout. So, for example, objects of $\Bord^{\bs B}_n(\cL K(R,k))$ are pairs $(X,C)$, where $X $ is a compact $n$-manifold without boundary with a $\bs B$-structure $\bs\ga_X$, which we generally omit from the notation, and $C\in C^k(X\t\cS^1,R)$ is a $k$-cocycle.
\end{dfn}

We relate the categories of Definitions \ref{fm6def1} and \ref{fm6def2}.

\begin{dfn}
\label{fm6def3}
Let $0\le k\le n$ and $\bs B$ be as above. Define a functor
\e
I_n^{\bs B,K(R,k)}:\Bord_{n-1}^{\bs B}(\cL K(R,k))\longra \Bord^{\bs B}_n(K(R,k))
\label{fm6eq8}
\e
to act on objects by $I_n^{\bs B,K(R,k)}:(X,C)\mapsto (X\t\cS^1,C)$, and on morphisms by $I_n^{\bs B,K(R,k)}:[Y,D]\mapsto [Y\t\cS^1,D]$. Here given the $\bs B$-structures on $X,Y$, to define the $\bs B$-structures on $X\t\cS^1,Y\t\cS^1$ we use the standard $\bs B$-structure on $\cS^1=\R/\Z$, which is invariant under the action of $\R/\Z\cong\U(1)$. So, for example, when $\bs B=\Spin$, we use the $\Spin$-structure on $\cS^1$ whose principal $\Spin(1)$-bundle is the trivial bundle $(\R/\Z)\t\Spin(1)\ra\R/\Z$. It is easy to check that $I_n^{\bs B,K(R,k)}$ is a well-defined symmetric monoidal functor.
\end{dfn}

Here is the analogue of Propositions \ref{fm5prop2} and \ref{fm4prop2}, proved in the same way.

\begin{prop}
\label{fm6prop2}
{\bf(a)} $\Bord^{\bs B}_n(\cL K(R,k))$ is a Picard groupoid. Its invariants in Theorem\/ {\rm\ref{fmAthm2}(a)} are the $\bs B$-bordism groups
\ea
\pi_0(\Bord^{\bs B}_n(\cL K(R,k)))&\cong \Om^{\bs B}_n(\cL K(R,k)),
\label{fm6eq9}\\
\pi_1(\Bord^{\bs B}_n(\cL K(R,k)))=\Aut_{\Bord^{\bs B}_n(\cL K(R,k))}(\boo)&\cong\Om_{n+1}^{\bs B}(\cL K(R,k)),
\label{fm6eq10}
\ea
where\/ $\Om_m^{\bs B}(\cL K(R,k))$ is the bordism group of the free loop space\/ $\cL K(R,k)=\Map_{C^0}(\cS^1,\ab K(R,k))$ of\/ $K(R,k),$ and the linear quadratic map\/ $q\!:\!\Om^{\bs B}_n(\cL K(R,k))\ab\ra\Om_{n+1}^{\bs B}(\cL K(R,k))$ maps\/ $[X,C]\mapsto[\Pi_X^*(C)]$. 

Here in $X\t\cS^1,$ the $\cS^1$ has a $\U(1)$-equivariant\/ $\bs B$-structure with the usual orientation, so when $\bs B=\bs\Spin,$ it has the \begin{bfseries}non-bounding\end{bfseries} spin structure $\cS^1_{\rm nb}$. Note that for $\bs B=\bs\Spin,$ this means that\/ $q$ is multiplication by $\al_1=[\cS^1_{\rm nb}]$ in $\Om_1^{\bs\Spin}(*)$ in Table\/ {\rm\ref{fm2tab2}} under the natural action of\/ $\Om_*^{\bs\Spin}(*)$ on\/~$\Om_*^{\bs\Spin}(\cL BK(R,k))$.
\smallskip

\noindent{\bf(b)} As in every Picard groupoid, a morphism\/ $\la:(X_0,C_0)\ab\to(X_1,C_1)$ in\/ $\Bord^{\bs B}_n(\cL K(R,k))$ determines a bijection
\begin{equation*}
\Om_{n+1}^{\bs B}(\cL K(R,k))\longra\Hom_{\Bord^{\bs B}_n(\cL K(R,k))}\bigl((X_0,C_0),(X_1,C_1)\bigr)
\end{equation*}
given by composition in the diagram of bijections
\begin{equation*}
\xymatrix@C=160pt@R=15pt{ 
*+[r]{\Om_{n+1}^{\bs B}(\cL K(R,k))} \ar[d]^{\eq{fm6eq10}} \ar[r]_(0.22)\cong &  *+[l]{\Hom_{\Bord^{\bs B}_n(\cL K(R,k))}\bigl((X_0,C_0),(X_1,C_1)\bigr)} \ar@{=}[d] \\
*+[r]{\Hom_{\Bord^{\bs B}_n(\cL K(R,k))}(\boo,\boo)} \ar[r]^(0.35){\ot\la}  & *+[l]{\Hom_{\Bord^{\bs B}_n(\cL K(R,k))}\bigl(\boo\!\ot\!(X_0,C_0),\boo\!\ot\!(X_1,C_1)\bigr).}
}
\end{equation*}
\item[{\bf(c)}] There is a commutative diagram
\e
\begin{gathered}
\xymatrix@C=155pt@R=15pt{ 
*+[r]{\Om^{\bs B}_n(\cL K(R,k))} \ar[d]^{\eq{fm6eq10}}_\cong \ar[r]^{\xi^{\bs B}_n(K(R,k))}_{\eq{fm2eq6}} &  *+[l]{\ti\Om_{n+1}^{\bs B}(K(R,k))} \ar[d]_{\eq{fm6eq5}} \\
*+[r]{\Aut_{\Bord_{n-1}^{\bs B}(\cL K(R,k))}(\boo)} \ar[r]^{I_n^{\bs B,K(R,k)}}_{\eq{fm6eq8}}  & *+[l]{\Aut_{\Bord^{\bs B}_n(K(R,k))}(\boo).\!}
}
\end{gathered}
\label{fm6eq11}
\e
\end{prop}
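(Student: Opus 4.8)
The plan is to follow the templates of the proofs of Propositions \ref{fm4prop2} and \ref{fm5prop2}, changing only the data attached to the ambient manifolds from principal bundles to cocycles on products with $\cS^1$. For part (a), I would first check that $\Bord^{\bs B}_n(\cL K(R,k))$ is a Picard groupoid: the argument that every object $(X,C)$ has inverse $(-X,C)$ via the cylinder $[X\t[0,1],\Pi_X^*(C)]$, and that every morphism $[Y,D]$ has inverse $\bigl[-Y\amalg(Y\amalg_{\pd Y}-Y),\,D\amalg(D\amalg D)\bigr]$, is verbatim the same as in the proofs of Propositions \ref{fm4prop1}(a) and \ref{fm6prop1}(a), now with cocycles on $X\t\cS^1,Y\t\cS^1,Z\t\cS^1$ in place of cocycles on $X,Y,Z$.

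Next I would identify $\pi_0$ and $\pi_1$. A $k$-cocycle $C\in C^k(X\t\cS^1,R)$ classifies a map $f_C:X\t\cS^1\ra K(R,k)$, unique up to homotopy (by the universal property of $K(R,k)$ recalled in \S\ref{fm23}), and by the exponential law $\Map_{C^0}(X\t\cS^1,K(R,k))\simeq\Map_{C^0}(X,\cL K(R,k))$ this is adjoint to $\bar f_C:X\ra\cL K(R,k)$. Exactly as in Proposition \ref{fm4prop2}(a), the assignment $(X,C)\mapsto[X,\bar f_C]$ descends to an isomorphism $\pi_0(\Bord^{\bs B}_n(\cL K(R,k)))\cong\Om^{\bs B}_n(\cL K(R,k))$, with inverse sending $[X,\bar f]$ to $(X,C)$ for $C$ a cocycle representing the pullback of the fundamental class of $K(R,k)$ along the adjoint $X\t\cS^1\ra K(R,k)$; likewise automorphisms $[Y,D]$ of $\boo$ (with $Y$ closed of dimension $n+1$, $D$ a cocycle on $Y\t\cS^1$) map to $[Y,\bar f_D]\in\Om^{\bs B}_{n+1}(\cL K(R,k))$, giving \eq{fm6eq10}. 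The computation of $q$ copies Proposition \ref{fm4prop2}(a): $q([X,C])$ is the mapping torus of the $\Z_2$-swap on $(X\amalg X)\t[0,1]$ carrying the cocycle $(\Pi^*C)\amalg(\Pi^*C)$, which is $(X\t\cS^1,\Pi_X^*(C))$, the $\cS^1$ factor receiving the $\U(1)$-equivariant $\bs B$-structure, i.e.\ $\cS^1_{\rm nb}$ when $\bs B=\bs\Spin$, so that $q$ is multiplication by $\al_1$.

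Part (b) is immediate from the general theory of Picard groupoids (Theorem \ref{fmAthm2}), as in Propositions \ref{fm4prop2}(c) and \ref{fm5prop1}(b). For part (c) I would unravel definitions: an automorphism of $\boo$ in $\Bord^{\bs B}_{n-1}(\cL K(R,k))$ is $[Y,D]$ with $Y$ closed of dimension $n$ and $D\in C^k(Y\t\cS^1,R)$, corresponding under \eq{fm6eq10} to $[Y,\bar f_D]\in\Om^{\bs B}_n(\cL K(R,k))$. Applying $\xi^{\bs B}_n(K(R,k))$ of \eq{fm2eq6} sends $[Y,\bar f_D]$ to $[Y\t\cS^1,f_D]$, where $f_D$ is exactly the adjoint of $\bar f_D$, i.e.\ the map classified by $D$; via \eq{fm6eq5} this is the automorphism $[Y\t\cS^1,D]$ of $\boo$. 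Going the other way, $I_n^{\bs B,K(R,k)}$ of \eq{fm6eq8} sends $[Y,D]$ directly to $[Y\t\cS^1,D]$. These coincide, so \eq{fm6eq11} commutes. The only genuine point requiring care — and the step I expect to be the main (though mild) obstacle — is verifying that the map $\phi\mapsto\phi'$ in Definition \ref{fm2def3} used to define $\xi^{\bs B}_n$ is literally inverse to the exponential-law adjunction used to identify $\pi_0$ of the loop category, and more basically that the cochain-level equivalences of Definition \ref{fm6def2} match homotopy of maps into $K(R,k)$ tightly enough for all the assignments above to be well defined; but this is the same verification already made for Proposition \ref{fm6prop1}, applied now with $X\t\cS^1$ in place of $X$.
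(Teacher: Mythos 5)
Your proposal is correct and takes essentially the same route as the paper, which gives no separate argument for this proposition but states that it is "proved in the same way" as Propositions \ref{fm4prop2} and \ref{fm5prop2}; your adaptation (classifying cocycles by maps to $K(R,k)$, applying the exponential law to land in $\cL K(R,k)$, computing $q$ as a mapping torus, and unravelling \eq{fm2eq6} and \eq{fm6eq8} for part (c)) is exactly that argument. The point you flag at the end — matching the cochain-level equivalences of Definition \ref{fm6def2} with homotopy classes of maps into $K(R,k)$ — is the same verification already implicit in Proposition \ref{fm6prop1} and is handled correctly.
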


\subsection{\texorpdfstring{Bordism categories $\Bord_X(K(R,k))$}{Bordism categories Bordₓ(K(R,k))}}
\label{fm63}

The next definition is a variation on Definition \ref{fm6def1}, in which we fix the $n$-manifold $X$, and take $Y=X\t[0,1]$ and $Z=X\t[0,1]^2$.

\begin{dfn}
\label{fm6def4}
Let $R$ be a commutative ring. Let $0\le k\le n$ and $X$ be a compact $n$-manifold. Define $\Bord_X(K(R,k))$ to be the category with objects $C$ for $C\in C^k(X,R)$ a $k$-cocycle, and morphisms $[D]:C_0\ra C_1$ be $\sim$-equivalence classes $[D]$ of $k$-cocycles $D\in C^k(X\t[0,1],R)$ with $D\vert_{X\t\{i\}}=C_i$ for $i=0,1$. If $D,D'$ are alternative choices for $D$, we write $D\sim D'$ if there exists $E\in C^k(X\t[0,1]^2,R)$ with
\begin{align*}
E\vert_{X\t\{0\}\t[0,1]}&=\Pi_X^*(C_0),  & E\vert_{X\t\{1\}\t[0,1]}&=\Pi_X^*(C_1), \\ E\vert_{X\t[0,1]\t\{0\}}&=D, & E\vert_{X\t[0,1]\t\{1\}}&=D'.
\end{align*}
To define composition of morphisms $[D]:P_0\ra P_1$ and $[D']:P_1\ra P_2$ we set $[D']\ci[D]=[D'']$, where $D''\in C^k(X\t[0,1],R)$ is given by $D''\vert_{X\t [0,\ha]}=(\id_X\t 2t)^*(D)$ and $D''\vert_{X\t [\ha,1]}=(\id_X\t (2t-1))^*(D')$, using the maps $2t:[0,\ha]\ra[0,1]$ and $2t-1:[\ha,1]\ra[0,1]$.

It is then easy to show composition is associative, so that $\Bord_X(K(R,k))$ is a category, where identity morphisms are $\id_C=\Pi_X^*(C):C\ra C$. Every morphism in $\Bord_X(K(R,k))$ is invertible, where the inverse of $[D]:C_0\ra C_1$ is $[D]^{-1}=[D']:P_1\ra P_0$ with~$D'=(\id_X\t(1-t))^{-1}(D)$.

Now suppose that $\bs B$ is a tangential structure, and $X$ has a $\bs B$-structure $\bs\ga_X$. Since the stable tangent bundles of $X\t[0,1]$ and $X\t[0,1]^2$ are the pullbacks of the stable tangent bundle of $X$, pullback of $\bs\ga_X$ along the projections $X\t[0,1]\ra X$, $X\t[0,1]^2\ra X$ induces $\bs B$-structures on $X\t[0,1]$ and $X\t[0,1]^2$. Define a functor
\e
\Pi_X^{\bs B}:\Bord_X(K(R,k))\longra\Bord^{\bs B}_n(K(R,k))
\label{fm6eq12}
\e
to map $C\mapsto(X,C)$ on objects and $[D]\mapsto\bigl[X\t[0,1],D\bigr]$ on morphisms, using the $\bs B$-structures on $X,X\t[0,1]$. This is well defined as writing $Y=X\t[0,1]$ and $Z=X\t[0,1]^2$, the definitions above of the equivalence $\sim$ on $D$ and $(X\t[0,1],D)$, and of compositions of morphisms, and so on, map to those in Definition~\ref{fm6def1}.
\end{dfn}

In a similar way to Propositions \ref{fm6prop1} and \ref{fm6prop2}, we can use homotopy theory to give a partial description of the categories $\Bord_X(K(R,k))$ and functors~$\Pi_X^{\bs B}$.

\begin{prop}
\label{fm6prop3}
Suppose\/ $\bs B$ is a tangential structure,\/ $X$ a compact\/ $n$-manifold with\/ $\bs B$-structure, and\/ $C\in C^k(X,R)$ for $0\le k\le n$. Then\/ $C$ is an object in\/ $\Bord_X(K(R,k)),$ and\/ $(X,C)$ an object in\/ $\Bord^{\bs B}_n(K(R,k)),$ and\/ $\Pi_X^{\bs B}:C\mapsto(X,C)$. We have a commutative diagram
\begin{equation*}
\xymatrix@C=150pt@R=18pt{ 
*+[r]{\Aut_{\Bord_X(K(R,k))}(C)} \ar[r]_(0.45){\Pi_X^{\bs B}} \ar[d]^{\chi_C^{\bs B}} & *+[l]{\Aut_{\Bord^{\bs B}_n(K(R,k))}(X,C)} \\
*+[r]{\Om^{\bs B}_n(\cL K(R,k))} \ar[r]^(0.45){\xi^{\bs B}_n(K(R,k))} & *+[l]{\ti\Om_{n+1}^{\bs B}(K(R,k)),} \ar[u]^{\eq{fm6eq6}} }
\end{equation*}
where\/ $\xi^{\bs B}_n(K(R,k))$ is in Definition\/ {\rm\ref{fm2def3},} the right hand column is\/ \eq{fm6eq6} restricted to $\ti\Om_{n+1}^{\bs B}(K(R,k))\subset\Om_{n+1}^{\bs B}(K(R,k)),$ and\/ $\chi_C^{\bs B}$ is defined as follows: let\/ $\phi_C:X\ra K(R,k)$ be a classifying map for\/ $C$. Then for\/ $[D]:C\ra C$ in\/ $\Aut_{\Bord_X(K(R,k))}(C),$ as\/ $D\in C^k(X\t[0,1],R)$ with\/ $D\vert_{X\t\{0\}}=C=D\vert_{X\t\{1\}},$ we can choose a classifying map\/ $\phi_D:X\t[0,1]\ra K(R,k)$ for\/ $D$ such that\/ $\phi_D\vert_{X\t\{0\}}=\phi_D\vert_{X\t\{1\}}=\phi_C$. Writing\/ $\cS^1=\R/\Z=[0,1]/(0\sim 1)$ with projection\/ $\pi:[0,1]\ra\cS^1,$ define\/ $\bar\phi_D:X\t\cS^1\ra K(R,k)$ by\/ $\bar\phi_D\ci(\id_X\t\pi)=\phi_D$. Let\/ $\ti\phi_D:X\ra \cL K(R,k)=\Map_{C^0}(\cS^1,K(R,k))$ be the induced map. Then define
\begin{equation*}
\chi_P^{\bs B}([D])=[X,\ti\phi_Q].
\end{equation*}
\end{prop}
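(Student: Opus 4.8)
The statement to prove is Proposition \ref{fm6prop3}, which is essentially identical in structure to Proposition \ref{fm4prop3}, with principal $G$-bundles $P\ra X$ replaced by $k$-cocycles $C\in C^k(X,R)$, classifying maps into $BG$ replaced by classifying maps into $K(R,k)$, and the bordism category $\Bord^{\bs B}_n(BG)$ replaced by $\Bord^{\bs B}_n(K(R,k))$. The plan is therefore to transcribe the proof of Proposition \ref{fm4prop3} almost verbatim, making the dictionary of replacements and checking that each step goes through. First I would verify that $\chi_C^{\bs B}([D])$ is well defined: the classifying map $\phi_C:X\ra K(R,k)$ is natural up to homotopy (since $H^k(X,R)\cong[X,K(R,k)]$), the extension $\phi_D:X\t[0,1]\ra K(R,k)$ is natural up to homotopies rel the homotopies of $\phi_C$, so $\bar\phi_D:X\t\cS^1\ra K(R,k)$ and hence $\ti\phi_D:X\ra\cL K(R,k)$ are natural up to homotopy, and the bordism class $[X,\ti\phi_D]$ is independent of these homotopies and of the choice of $D$ within its equivalence class $[D]$; so it depends only on $[D]$.

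Next I would observe that the construction $\bar\phi_D\mapsto\ti\phi_D$ is exactly the inverse of the construction $\phi\mapsto\phi'$ in the definition of $\xi^{\bs B}_n(K(R,k))$ in Definition \ref{fm2def3}, so that by \eq{fm2eq6} one has $\xi^{\bs B}_n(K(R,k))\ci\chi_C^{\bs B}([D])=[X\t\cS^1,\bs\ga_X\t\bs\ga_{\cS^1},\bar\phi_D]$. Then, letting $\bar D\in C^k(X\t\cS^1,R)$ be the $k$-cocycle descending from $D$ under the identification $X\t\{0\}\cong X\t\{1\}$ (using $D\vert_{X\t\{0\}}=C=D\vert_{X\t\{1\}}$), so that $\bar\phi_D$ is a classifying map for $\bar D$, the definition of the isomorphism \eq{fm6eq5} identifies $\xi^{\bs B}_n(K(R,k))\ci\chi_C^{\bs B}([D])$ with $[X\t\cS^1,\bs\ga_X\t\bs\ga_{\cS^1},\bar D]\in\Aut_{\Bord^{\bs B}_n(K(R,k))}(\boo)$. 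Pushing this through \eq{fm6eq6} gives that the composite down-and-right in the diagram equals
\begin{equation*}
\bigl[(X\t[0,1])\amalg(X\t\cS^1),(\bs\ga_X\t\bs\ga_{[0,1]})\amalg(\bs\ga_X\t\bs\ga_{\cS^1}),\Pi_X^*(C)\amalg\bar D\bigr]
\end{equation*}
in $\Aut_{\Bord^{\bs B}_n(K(R,k))}(X,C)$, while the other composite $\Pi_X^{\bs B}([D])=[X\t[0,1],\bs\ga_X\t\bs\ga_{[0,1]},D]$.

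Finally I would prove these two agree, i.e.\ that $(X\t[0,1],\bs\ga_X\t\bs\ga_{[0,1]},D)\sim(X\t\cS^1,\bs\ga_X\t\bs\ga_{\cS^1},\bar D)\amalg(X\t[0,1],\bs\ga_X\t\bs\ga_{[0,1]},\Pi_X^*(C))$ under the equivalence relation of Definition \ref{fm6def1}(c). This is the geometric heart, and I would reuse verbatim the cobounding $(n+2)$-manifold with corners from the proof of Proposition \ref{fm4prop3}: take the $2$-manifold with corners $Y\subset\R^2$ of Figure \ref{fm4fig1} (the square $[0,1]^2$ with an open disc removed), split as $Y_1\cup Y_2$, set $Z=X\t Y$ with $\bs B$-structure restricted from the standard one on $\R^2$, and define the $k$-cocycle $E\in C^k(X\t Y,R)$ by pulling back $C$ via the projection $X\t Y_1\ra X$ on the region $Y_1$, and by $\Pi_{X\t[0,1]}^*(D)$ on $X\t Y_2\cong X\t([0,1]^2)=(X\t[0,1])\t[0,1]$; these agree on the overlap since $D$ restricts to $\Pi_X^*(C)$ on $X\t\{1\}$. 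As in Definition \ref{fm2def3} the boundary $\bs B$-structure on the inner $\cS^1$ is the one bounding $D^2$, which is the $\bs\ga_{\cS^1}$ used in $\bar D$. This exhibits the required equivalence, so the diagram commutes. The main obstacle is really just bookkeeping: making sure the cocycle-level gluings (which, unlike bundles, are on the nose rather than up to isomorphism) and the restriction conditions in Definition \ref{fm6def1}(c) match the pullback formulas exactly, and that the homotopy-theoretic naturality statements for $K(R,k)$ are invoked correctly; there is no new mathematical difficulty beyond Proposition \ref{fm4prop3}.
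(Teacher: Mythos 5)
Your proposal is correct and is exactly the argument the paper intends: the paper omits a proof of this proposition (it is the cohomology analogue of Proposition \ref{fm4prop3}, whose proof is given in full, and of Proposition \ref{fm5prop4}, which the paper explicitly says is proved "essentially the same" way), and your transcription — well-definedness of $\chi_C^{\bs B}$, the identification of $\xi^{\bs B}_n(K(R,k))\ci\chi_C^{\bs B}$ with $[X\t\cS^1,\bar D]$, and the cobounding $X\t Y$ with $Y$ the square-minus-disc carrying the glued cocycle $E$ — matches it step for step. Your closing caveat about on-the-nose cocycle gluings is the right thing to watch, but it is handled by the same conventions as Definition \ref{fm6def1}(c),(d).
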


\begin{rem}
\label{fm6rem1}
{\bf(a)} One can show that the categories $\Bord_X(K(R,k))$ have an alternate, explicit, very simple description:
\begin{itemize}
\setlength{\itemsep}{0pt}
\setlength{\parsep}{0pt}
\item[(i)] Objects of $\Bord_X(K(R,k))$ are $k$-cocycles $C$ in $C_k(X,R)$ with $\d C=0$.
\item[(ii)] Morphisms $D:C_0\ra C_1$ in $\Bord_X(K(R,k))$ are cohomology classes $[D]$ of $(k-1)$-cochains $D$ in $C_{k-1}(X,R)$ with $\d D=C_1-C_0$.
\end{itemize}
Although $\Bord_X(K(R,k))$ itself is simple, the orientation functors we define on $\Bord_X(K(R,k))$ in \S\ref{fm9} will not be easy to understand.
\smallskip

\noindent{\bf(b)} There is a natural symmetric monoidal structure on $\Bord_X(K(R,k))$, from adding $k$-cocycles and $(k-1)$-cochains on $X$, making it into a Picard groupoid. Note that this is {\it unrelated\/} to the Picard groupoid structure on $\Bord^{\bs B}_n(K(R,k))$ in \S\ref{fm61}, and the functor $\Pi_X^{\bs B}:\Bord_X(K(R,k))\ra\Bord^{\bs B}_n(K(R,k))$ in \eq{fm6eq12} is {\it not monoidal}. The monoidal structure on $\Bord_X(K(R,k))$ is connected to the notion of {\it additive\/} flag structures on 7-manifolds in \S\ref{fm101}, but otherwise we will make no use of it.
\end{rem}

\section{Topological bordism categories}
\label{fm7}

The bordism categories $\Bord^{\bs B}_n(BG)$, $\Bord_{n,k}^{\bs B}(MH)$, $\Bord^{\bs B}_n(K(R,k))$ of \S\ref{fm41}, \S\ref{fm51}, \S\ref{fm61} are all equivalent as Picard groupoids to examples of a single construction.

\begin{dfn}
\label{fm7def1}
Fix $n\ge 0$, a tangential structure $\bs B$, and a topological space $T$. We will define a symmetric monoidal category $\Bord^{\bs B}_n(T)_\top$ that we call a {\it topological bordism category}.
\begin{itemize}
\setlength{\itemsep}{0pt}
\setlength{\parsep}{0pt}
\item[(a)] {\it Objects\/} of $\Bord^{\bs B}_n(T)_\top$ are pairs $(X,f)$, where $X$ is a compact $n$-manifold without boundary with a $\bs B$-structure $\bs\ga_X$, which we generally omit from the notation, and $f:X\ra T$ is a continuous map.
\item[(b)] {\it Morphisms\/} $[Y,g]:(X_0,f_0)\ra(X_1,f_1)$ in $\Bord^{\bs B}_n(T)_\top$ are equivalence classes of pairs $(Y,g),$ see (c), where $Y$ is a compact $(n+1)$-manifold with $\bs B$-structure $\bs\ga_Y$, there is a chosen isomorphism $\pd Y\cong -X_0\amalg X_1$ of the boundary preserving $\bs B$-structures, and $g:Y\ra T$ is a continuous map with $g\vert_{\pd Y}=f_0\amalg f_1$.
\item[(c)] In the situation of (b), let $(Y_0,g_0)$ and $(Y_1,g_1)$ be two choices for $(Y,g)$. We say that $(Y_0,g_0)\sim(Y_1,g_1)$ if there exists a pair $(Z,h)$, where $Z$ is a compact $(n+2)$-manifold with corners and $\bs B$-structure $\bs\ga_Z$, with a chosen isomorphism of boundaries identifying $\bs B$-structures
\e
\pd Z\cong (-X_0\t[0,1])\amalg (X_1\t[0,1]) \amalg -Y_0\amalg Y_1
\label{fm7eq1}
\e
such that along $\pd^2Z$ we identify $\pd Y_i$ with $(-X_0\amalg X_1)\t\{i\}$ for $i=0,1$, and $h:Z\ra T$ is a continuous map such that under \eq{fm7eq1} we have
\begin{equation*}
h\vert_{\pd Z}=(f_0\ci\Pi_{X_0})\amalg (f_1\ci\Pi_{X_1})\amalg g_0\amalg g_1.
\end{equation*}
It is easy to see that `$\sim$' is an equivalence relation.
\item[(d)] If $[Y,g]:(X_0,f_0)\ra(X_1,f_1)$ and $[Y',g']:(X_1,f_1)\ra(X_2,f_2)$ are morphisms, the {\it composition\/} is
\begin{equation*}
[Y',g']\ci [Y,g]=[Y'\amalg_{X_1}Y,g'\amalg_{f_1}g]:(X_0,f_0)\longra(X_2,f_2).
\end{equation*}
To define the smooth structure on $Y'\amalg_{X_1}Y$ we should choose `collars' $X_1\t(-\ep,0]\subset Y$, $X_1\t[0,\ep)\subset Y'$ of $X_1$ in $Y,Y'$, but the choices do not change the equivalence class $[Y'\amalg_{X_1}Y,g'\amalg_{f_1}g]$. Composition is associative.
\item[(e)] If $(X,f)$ is an object in $\Bord^{\bs B}_n(T)_\top,$ the {\it identity morphism\/} is
\begin{equation*}
\id_{(X,f)}=\bigl[X\t[0,1],f\ci\Pi_X\bigr]:(X,f)\longra(X,f).
\end{equation*}
\item[(f)] If $[Y,g]:(X_0,f_0)\ra(X_1,f_1)$ is a morphism, it has an inverse morphism
\begin{align*}
[Y,g]^{-1}&=\bigl[-Y\amalg (Y\amalg_{X_0\amalg X_1}-Y),g\amalg(g\amalg_{f_0\amalg f_1}g)\bigr]:\\
&\qquad (X_1,f_1)\longra(X_0,f_0),
\end{align*}
noting that $\pd(-Y)=-(-X_0\amalg X_1)=-X_1\amalg X_0$. Thus the category $\Bord^{\bs B}_n(T)_\top$ is a {\it groupoid}.
\item[(g)] Define a {\it monoidal structure\/} $\ot$ on $\Bord^{\bs B}_n(T)_\top$ by, on objects
\begin{equation*}
(X,f)\ot (X',f')=(X\amalg X',f\amalg f'),
\end{equation*}
and if $[Y,g]:(X_0,f_0)\ra(X_1,f_1)$, $[Y',g']:(X_0',f_0')\ra(X_1',f_1')$ are morphisms, then
\begin{align*}
&[Y,g]\ot [Y',g']=[Y\amalg Y',g\amalg g']:\\
&(X_0,f_0)\ot(X_0',f_0')\longra(X_1,f_1)\ot (X_1',f_1').
\end{align*}
\item[(h)] The {\it identity\/} in $\Bord^{\bs B}_n(T)_\top$ is $\boo=(\es,\es)$.
\item[(i)] If $(X,f)\in\Bord^{\bs B}_n(T)_\top$ we write $-(X,f)=(-X,f)$, that is, we give $X$ the opposite $\bs B$-structure $-\bs\ga_X$. Observe that we have an isomorphism
\begin{equation*}
\bigl[X\t[0,1],f\ci\Pi_X\bigr]:(-X,f)\ot(X,f)\longra\boo.
\end{equation*}
Thus $-(X,f)$ is an inverse for $(X,f)$ under `$\ot$'.
\item[(j)] The {\it symmetry isomorphism} $\si_{(X,f),(X',f')}=[Y,g]\colon(X,f)\ot(X',f')\ra(X',f')\ot(X,f)$ has $(Y,g)=((X\amalg X')\t[0,1],(f\ci\Pi_X)\amalg(f'\ci\Pi_{X'})$.
\end{itemize}
Hence $\Bord^{\bs B}_n(T)_\top$ is a {\it Picard groupoid}, as in~Appendix \ref{fmA}.
\end{dfn}

The following analogue of Propositions \ref{fm4prop1}, \ref{fm5prop1} and \ref{fm6prop1} is straightforward.

\begin{prop}
\label{fm7prop1}
The invariants of the Picard groupoid\/ $\Bord^{\bs B}_n(T)_\top$  in Theorem\/ {\rm\ref{fmAthm2}(a)} are the $\bs B$-bordism groups
\ea
\pi_0\bigl(\Bord^{\bs B}_n(T) \bigr)&\cong\Om^{\bs B}_n(T),
\label{fm7eq2}\\
\pi_1\bigl(\Bord^{\bs B}_n(T) \bigr)&\cong\Om_{n+1}^{\bs B}(T),
\label{fm7eq3}
\ea
and\/ $q:\Om^{\bs B}_n(T)\ra\Om_{n+1}^{\bs B}(T)$ mapping\/ $[X,f]\mapsto[X\t\cS^1,f\ci\Pi_X]$. 
\end{prop}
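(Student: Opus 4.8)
\textbf{Proof proposal for Proposition \ref{fm7prop1}.}

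The plan is to follow exactly the template established in the proofs of Propositions \ref{fm4prop1}, \ref{fm5prop1} and \ref{fm6prop1}, since $\Bord^{\bs B}_n(T)_\top$ is literally the common generalization of those three categories: take $T = BG$, $T = MH$, or $T = K(R,k)$ and the constructions coincide up to the equivalences $\Om^{\bs B}_n(BG)\cong\pi_0(\Bord^{\bs B}_n(BG))$ etc. So the argument is essentially a transcription with $f\colon X\to T$ a bare continuous map in place of the classifying data. First I would observe that $\Bord^{\bs B}_n(T)_\top$ is a Picard groupoid: Definition \ref{fm7def1}(f),(i),(j) already exhibit inverses for morphisms and objects and the symmetry isomorphism, so it only remains to cite Appendix \ref{fmA} to identify the invariants $\pi_0, \pi_1$ and the linear quadratic map $q$ of Theorem \ref{fmAthm2}(a).

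For \eq{fm7eq2}: an object $(X,f)$ has $\dim X = n$ and $f\colon X\to T$ continuous, which is exactly the data of a representative of a class in $\Om^{\bs B}_n(T)$ by Definition \ref{fm2def2}. A morphism $[Y,g]\colon(X_0,f_0)\to(X_1,f_1)$ has $\pd Y\cong -X_0\amalg X_1$ with matching $\bs B$-structures and $g\vert_{\pd Y}=f_0\amalg f_1$, i.e.\ it is precisely a bordism in the sense of Definition \ref{fm2def2}; hence two objects are isomorphic in $\Bord^{\bs B}_n(T)_\top$ if and only if they are $\bs B$-bordant, giving a bijection $\pi_0(\Bord^{\bs B}_n(T)_\top)\cong\Om^{\bs B}_n(T)$. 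One checks this bijection is a group isomorphism because the monoidal structure $\ot$ in Definition \ref{fm7def1}(g) is disjoint union, matching the addition on $\Om^{\bs B}_n(T)$. For \eq{fm7eq3}: $\pi_1$ is $\Aut_{\Bord^{\bs B}_n(T)_\top}(\boo)$, whose elements are $[Y,g]$ with $\pd Y = -\es\amalg\es = \es$, so $Y$ is a closed $(n+1)$-manifold with $\bs B$-structure and $g\colon Y\to T$ continuous; modulo the equivalence relation of Definition \ref{fm7def1}(c) (which for closed $Y$ is exactly $\bs B$-bordism of closed $(n+1)$-manifolds) this is $\Om_{n+1}^{\bs B}(T)$, and composition of automorphisms corresponds to disjoint union, i.e.\ to addition.

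For the linear quadratic map $q$: by the general theory of Picard groupoids (Theorem \ref{fmAthm2}), $q$ sends the class of an object $(X,f)$ to the automorphism of $\boo$ obtained from the symmetry isomorphism $\si_{(X,f),(X,f)}$ transported to $\boo$ via the isomorphism $(-X,f)\ot(X,f)\cong\boo$ of Definition \ref{fm7def1}(i); running through the definitions this produces the mapping torus of the $\Z_2$-action swapping the two copies of $X$ in $X\amalg X$, which is $X\t\cS^1$ with the $\U(1)$-invariant $\bs B$-structure on $\cS^1$, equipped with $f\ci\Pi_X$. This is exactly the computation already carried out at the end of the proof of Proposition \ref{fm4prop1}(a), so I would simply refer to it. I do not expect any genuine obstacle here: the content is entirely bookkeeping, and the only mild care needed is to confirm that the equivalence relation $\sim$ of Definition \ref{fm7def1}(c) restricted to closed $(n+1)$-manifolds really reduces to ordinary $\bs B$-bordism (the $Z$ with corners degenerates to a bordism with the $X_i\t[0,1]$ pieces collapsed), and that the $\cS^1$ factor carries the non-bounding spin structure $\al_1$ when $\bs B=\bs\Spin$, as in Tables \ref{fm2tab1}--\ref{fm2tab2}.
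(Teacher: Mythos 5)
Your proposal is correct and takes exactly the route the paper intends: Proposition \ref{fm7prop1} is stated in the paper as a ``straightforward analogue'' of Propositions \ref{fm4prop1}, \ref{fm5prop1} and \ref{fm6prop1}, and your argument is a faithful transcription of the proof of Proposition \ref{fm4prop1} with the classifying map replaced by a bare continuous map $f\colon X\to T$, including the mapping-torus computation of $q$. No gaps.
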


Here is the analogue of the bordism categories $\Bord_X(BG)$, $\Bord_X^k(MH)$ and $\Bord_X(K(R,k))$ of \S\ref{fm43}, \S\ref{fm53} and~\S\ref{fm63}.

\begin{dfn}
\label{fm7def2}
Let $X$ be a compact $n$-manifold and $T$ a topological space. Define $\Bord_X(T)_\top$ to be the category with objects $f$ for $f:X\ra T$ a continuous map, and morphisms $[g]:f_0\ra f_1$ be $\sim$-equivalence classes $[g]$ of continuous maps $g:X\t[0,1]\ra T$ with $g\vert_{X\t\{i\}}=f_i$ for $i=0,1$. If $g,g'$ are alternative choices for $g$, we write $g\sim g'$ if there exists a continuous map $h:X\t[0,1]^2\ra T$ with
\begin{align*}
h\vert_{X\t\{0\}\t[0,1]}&=f_0\ci\Pi_X,  & h\vert_{X\t\{1\}\t[0,1]}&=f_1\ci\Pi_X, \\ h\vert_{X\t[0,1]\t\{0\}}&=g, & h\vert_{X\t[0,1]\t\{1\}}&=g'.
\end{align*}
To define composition of morphisms $[g]:f_0\ra f_1$ and $[g']:f_1\ra f_2$ we set $[g']\ci[g]=[g'']$, where $g'':X\t[0,1]\ra T$ is given by $g''(x,t)=g(x,2t)$ for $t\in[0,\ha]$ and $g''(x,t)=g'(x,2t-1)$ for $t\in[\ha,1]$.

It is then easy to show that composition is associative, so that $\Bord_X(T)_\top$ is a category, where identity morphisms are $\id_f=[f\ci\Pi_X]:f\ra f$. Every morphism in $\Bord_X(T)_\top$ is invertible, where the inverse of $[g]:f_0\ra f_1$ is $[g]^{-1}=[g']:f_1\ra f_0$ with~$g'(x,t)=g(x,1-t)$.

Now suppose that $\bs B$ is a tangential structure, and $X$ has a $\bs B$-structure $\bs\ga_X$. Define a functor
\e
\Pi_X^{\bs B}:\Bord_X(T)_\top\longra\Bord^{\bs B}_n(T)_\top
\label{fm7eq4}
\e
to map $f\mapsto(X,f)$ on objects and $[g]\mapsto\bigl[X\t[0,1],g\bigr]$ on morphisms, using the $\bs B$-structures on~$X,X\t[0,1]$.
\end{dfn}

\begin{rem}
\label{fm7rem1}
{\bf(a)} Here is how to relate Definition \ref{fm7def1} to the categories of \S\ref{fm4}--\S\ref{fm6}. For $\Bord^{\bs B}_n(BG)$ as in \S\ref{fm41}, we can construct a functor $\Up:\Bord^{\bs B}_n(BG)\ra\Bord^{\bs B}_n(BG)_\top$ for $T=BG$ using the Axiom of Choice, as follows: for each object $(X,P)$ in $\Bord^{\bs B}_n(BG)$, we choose a classifying map $f_P:X\ra BG$ for $P$, as already used in the proof of Proposition \ref{fm4prop1}, and set $\Up:(X,P)\mapsto(X,f_P)$ on objects. For morphisms $[Y,Q]:(X_0,P_0)\ra(X_1,P_1)$ with $Q\vert_{\pd Y}\cong P_0\amalg P_1$ we choose a classifying map $f_Q:Y\ra BG$ for $Q$ with $f_Q\vert_{\pd Y}=f_{P_0}\amalg f_{P_1}$ and set $\Up:[Y,Q]\mapsto[Y,f_Q]$. Then $[Y,f_Q]$ is independent of the choice of $f_Q$. Comparing Propositions \ref{fm4prop1} and \ref{fm7prop1}, we see that $\Up$ is an equivalence of categories.

Similarly, $\Bord_{n,k}^{\bs B}(MH)$, $\Bord^{\bs B}_n(K(R,k))$ are equivalent to $\Bord^{\bs B}_n(T)_\top$ with $T=MH$ and $T=K(R,k)$.
\smallskip

\noindent{\bf(b)} The loop categories $\Bord^{\bs B}_n(\cL BG)$, $\Bord_{n,k}^{\bs B}(\cL MH)$, $\Bord^{\bs B}_n(\cL K(R,k))$ of \S\ref{fm42}, \S\ref{fm52}, \S\ref{fm62} are equivalent to $\Bord^{\bs B}_n(T)_\top$ for $T=\cL BG$, $\cL MH$, $\cL K(R,k)$.

\smallskip

\noindent{\bf(c)} In a similar way to {\bf(a)}, the categories $\Bord_X(BG)$, $\Bord_X^k(MH)$ and $\Bord_X(K(R,k))$ of \S\ref{fm43}, \S\ref{fm53} and \S\ref{fm63} are equivalent to $\Bord_X(T)_\top$ in Definition \ref{fm7def2} for $T=BG$, $MH$, $K(R,k)$, and these equivalences and those in {\bf(a)} identify the functors $\Pi_X^{\bs B}$ in \eq{fm4eq14}, \eq{fm5eq14}, \eq{fm6eq12} with their analogues in \eq{fm7eq4} up to natural isomorphism.
\smallskip

\noindent{\bf(d)} In Remark \ref{fm6rem1}(b) we noted that $\Bord_X(K(R,k))$ is a Picard groupoid, which is unrelated to the Picard groupoid structure on $\Bord^{\bs B}_n(K(R,k))$ in \S\ref{fm61}.

In a similar way, if the topological space $T$ is an {\it $E_1$-space\/} (a strong kind of H-space, whose multiplication is homotopy commutative and homotopy associative in coherent ways) then we can give $\Bord_X(T)_\top$ a symmetric monoidal structure, and if $T$ is a {\it grouplike $E_1$-space\/} this makes $\Bord_X(T)_\top$ into a Picard groupoid. But $\Pi_X^{\bs B}:\Bord_X(T)_\top\ra\Bord^{\bs B}_n(T)_\top$ is not monoidal in general.
\end{rem}

\section{Transfer functors between bordism categories}
\label{fm8}

\subsection{Transfer functors}
\label{fm81}

The next definition sets up the situation we want to discuss.

\begin{dfn}
\label{fm8def1}
{\bf(a)} Let $\cG,\cG'$ be examples of bordism categories $\Bord^{\bs B}_n(BG)$, $\Bord_{n,k}^{\bs B}(MH)$, $\Bord^{\bs B}_n(K(R,k))$, $\Bord^{\bs B}_n(T)_\top$, $\Bord^{\bs B}_n(*)$ from \S\ref{fm4}--\S\ref{fm7}. Then $\cG,\cG'$ are Picard groupoids, as in Appendix \ref{fmA}. Use the notation $\pi_i=\pi_i(\cG)$, $\pi_i'=\pi_i(\cG')$ for $i=0,1$, and $q:\pi_0\ra\pi_1$, $q':\pi'_0\ra\pi'_1$ for the linear quadratic invariants classifying $\cG,\cG'$ as Picard groupoids in Theorem \ref{fmAthm2}. Write $T,T'$ for the topological classifying spaces $MH,BG,K(R,k),T,*$ corresponding to $\cG,\cG'$ in the notation $\Bord_{n,*}^{\bs B}(T)_*$. Write $n,n'$ and $\bs B,\bs B'$ for the $n,\bs B$ in $\Bord_{n,k}^{\bs B}(MH),\ab\ldots,\ab\Bord^{\bs B}_n(K(R,k))$ in $\cG,\cG'$. Then Propositions \ref{fm4prop1}, \ref{fm5prop1}, \ref{fm6prop1} and \ref{fm7prop1} imply that $\pi_i=\Om_{n+i}^{\bs B}(T)$, $i=0,1$, and $q$ is multiplication by $\al_1=[\cS^1]\in\Om_1^{\bs B}(*)$ under the action of $\Om_*^{\bs B}(*)$ on $\Om_*^{\bs B}(T)$, where $\cS^1$ has the $\U(1)$-invariant $\bs B$-structure, and similarly for~$\pi_0',\pi_1',q'$.

In this section we will study symmetric monoidal functors $F:\cG\ra\cG'$, as in Appendix \ref{fmA}, usually with $n=n'$ and $\bs B=\bs B'$. By Theorem \ref{fmAthm2}(b),(c), any such $F$ induces group morphisms $f_0:\pi_0\ra\pi_0'$ and $f_1:\pi_1\ra\pi_1'$ with $q'\ci f_0=f_1\ci q$, and given such $f_0,f_1$, the set of $F$ up to monoidal natural isomorphism is a torsor over $H^2_\sym(\pi_0,\pi_1')$. We will call such $F$ {\it transfer functors}.

We can roughly divide transfer functors we will study into two kinds, {\bf(i)} {\it topological}, and {\bf(ii)} {\it geometric}, where:
\begin{itemize}
\setlength{\itemsep}{0pt}
\setlength{\parsep}{0pt}
\item[{\bf(i)}] To define a {\it topological\/} transfer functor $F:\cG\ra\cG'$, we require $n=n'$ and $\bs B=\bs B'$, and we choose a continuous map $\phi:T\ra T'$ up to homotopy. 

If $\cG,\cG'$ are topological bordism categories $\Bord^{\bs B}_n(T)_\top$, $\Bord^{\bs B}_n(T')_\top$, then we define $F$ explicitly by $F:(X,f)\mapsto(X,\phi\ci f)$ on objects and $F:[Y,g]\mapsto[Y,\phi\ci g]$ on morphisms. Otherwise, we combine this functor with the equivalences of $\cG,\cG'$ with $\Bord^{\bs B}_n(T)_\top$, $\Bord^{\bs B}_n(T')_\top$ in Remark \ref{fm7rem1}(a) to define $F$ uniquely up to monoidal natural isomorphism. 

In the classification of symmetric monoidal functors $F:\cG\ra\cG'$ in Theorem \ref{fmAthm2}(b),(c), the morphisms $f_i$ for $i=0,1$ are $\phi_*:\Om_{n+i}^{\bs B}(T)\ra \Om_{n+i}^{\bs B}(T')$. The condition $q'\ci f_0=f_1\ci q$ is automatic, as $\phi_*:\Om_*^{\bs B}(T)\ra \Om_*^{\bs B}(T')$ is $\Om_*^{\bs B}(*)$-linear. Thus, Theorem \ref{fmAthm2}(b),(c) tell us that $F:\cG\ra\cG'$ exists with these invariants, and lies in a $H^2_\sym(\pi_0,\pi_1')$-torsor up to natural isomorphism. The topological construction above determines $F$ uniquely up to monoidal natural isomorphism, not just modulo~$H^2_\sym(\pi_0,\pi_1')$.

Observe that if $\phi:T\ra T'$ is $k$-{\it connected\/} for $k\ge n+1$ then $\phi_*:\Om_m^{\bs B}(T)\ra \Om_m^{\bs B}(T')$ is an isomorphism for $m\le k$, so $f_0,f_1$ are isomorphisms (taking $m=n,n+1$), and $F:\cG\ra\cG'$ is an {\it equivalence of categories}. 

Actually, the construction above is not sufficiently general for some purposes. Suppose instead that we are given another topological space $T''$ and continuous maps $\phi:T\ra T''$, $\phi':T'\ra T''$ with $\phi'$ $k$-connected for $k\ge n+1$. Then as $F_{\phi'}$ is an equivalence, there is $F:\cG\ra\cG'$ unique up to monoidal natural isomorphism, such that the following diagram commutes up to monoidal natural isomorphism:
\e
\begin{gathered}
\xymatrix@C=70pt@R=17pt{
*+[r]{\cG} \drtwocell_{}\omit_{}\omit{} \ar@{..>}[d]^{F}  \ar@/^.5pc/[drr]^(0.6){F_\phi} \\
*+[r]{\cG'} \ar[rr]^(0.3){F_{\phi'}}_(0.3)\simeq && *+[l]{\Bord^{\bs B}_n(T'')_\top.} }
\end{gathered}
\label{fm8eq1}
\e
The data $f_0,f_1$ in Theorem \ref{fmAthm2}(b),(c) for $F$ is $f_i=(\phi_*')^{-1}\ci\phi_*$.
\item[{\bf(ii)}] To define a {\it geometric\/} transfer functor $F:\cG\ra\cG'$, we write down $F$ on objects $(X,M),(X,P),(X,C)$ and morphisms $[Y,N],\ab[Y,Q],\ab[Y,D]$ of $\cG,\cG'$ by some explicit geometric construction.

Sometimes defining $F$ explicitly in this way is too much to ask, unless we are willing to use the Axiom of Choice. Instead, by a looser construction, we can define $F$ via a diagram of Picard groupoids and symmetric monoidal functors commuting up to monoidal natural isomorphism:
\e
\begin{gathered}
\xymatrix@C=50pt@R=17pt{
*+[r]{\check{\mathcal G}} \ar[d]^\Pi \drtwocell_{}\omit^{}\omit{} \ar@/^.5pc/[drr]^(0.6){\check F} \\
*+[r]{\cG} \ar@{..>}[rr]^F && *+[l]{\cG'.} }
\end{gathered}
\label{fm8eq2}
\e
Here we take $\check{\mathcal G}$ to be a modification of $\cG$ in which the objects are objects $(X,M),(X,P),(X,C)$ of $\cG$ together with some choice of extra geometric data $\cE$ on $X$ (for example, a Riemannian metric on $X$, a section of a vector bundle on $X,\ldots$). The functor $\Pi$ is the `forgetful functor' which forgets the extra data $\cE$. We require that $\Pi$ should be an equivalence of categories (this happens if the set of choices of extra data $\cE$ for $X$ is contractible modulo bordisms $Y:X_0\ra X_1$). Also $\check F:\check{\mathcal G}\ra\cG'$ should be given by an explicit geometric construction involving the extra data~$\cE$.

Given such $\check{\mathcal G},\Pi,\check F$, since $\Pi$ is an equivalence, there exists $F$ unique up to monoidal natural isomorphism making \eq{fm8eq2} 2-commute, though we may need the Axiom of Choice to actually construct such $F$.
\end{itemize}

\noindent{\bf(b)} The division of transfer functors into topological and geometric in {\bf(a)} is not absolute; for example, given an topological functor, we may be able to find a geometric construction making it into a geometric functor.

\smallskip

\noindent{\bf(c)} For $\Bord^{\bs B}_n(BG)$, $\Bord_{n,k}^{\bs B}(MH)$, $\Bord^{\bs B}_n(K(R,k))$ as in \S\ref{fm4}--\S\ref{fm6}, there are obvious functors to and from $\Bord^{\bs B}_n(*)$:
\e
\begin{gathered}
\xymatrix@C=100pt@R=15pt{
\Bord^{\bs B}_n(*) \ar@<.5ex>[r]^{X\longmapsto (X,X\t G)} & \Bord^{\bs B}_n(BG), \ar@<.5ex>[l]^{(X,P)\longmapsto X} \\ 
\Bord^{\bs B}_n(*) \ar@<.5ex>[r]^{X\longmapsto (X,\es)} & \Bord_{n,k}^{\bs B}(MH), \ar@<.5ex>[l]^{(X,M)\longmapsto X} \\ 
\Bord^{\bs B}_n(*) \ar@<.5ex>[r]^{X\longmapsto (X,0)} & \Bord^{\bs B}_n(K(R,k)). \ar@<.5ex>[l]^{(X,C)\longmapsto X} }
\end{gathered}
\label{fm8eq3}
\e
When $n=n'$ and $\bs B=\bs B'$, we will always choose our functors $F:\cG\ra\cG'$ to commute with these up to monoidal natural isomorphism. Therefore the maps $f_i:\Om_{n+i}^{\bs B}(T)\ra\Om_{n+i}^{\bs B}(T')$ preserve the splittings $\Om_{n+i}^{\bs B}(T$ or $T')=\Om_{n+i}^{\bs B}(*)\op\ti\Om_{n+i}^{\bs B}(T$ or $T')$ and act as the identity on $\Om_{n+i}^{\bs B}(*)$. (This is already automatic in the topological case.) Hence, rather than writing down $f_0,f_1$, it is sufficient to specify $\ti f_i=f_i\vert_{\ti\Om_{n+i}^{\bs B}(T)}:\ti\Om_{n+i}^{\bs B}(T)\ra\ti\Om_{n+i}^{\bs B}(T')$, and to check that~$q'\ci\ti f_0=\ti f_1\ci q$.

There is an analogous condition when $n=n'$ and $\bs B$ factors through~$\bs B'$.
\smallskip

\noindent{\bf(d)} Here is how we will use these functors $F:\cG\ra\cG'$ later. In \S\ref{fm9} we will introduce {\it orientation functors\/} $\sO:\cG\ra 0\qs\Z_2$ or $\Z_2\qs\Z_2$, which control many orientation problems for moduli spaces of instantons in gauge theory, calibrated submanifolds, \ldots. Suppose we have a 2-commutative diagram of symmetric monoidal functors:
\begin{equation*}
\xymatrix@C=70pt@R=17pt{
*+[r]{\cG} \ar[d]^F \drtwocell_{}\omit^{}\omit{} \ar@/^.5pc/[drr]^(0.5){\sO} \\
*+[r]{\cG'} \ar[rr]^(0.4){\sO'} && *+[l]{\text{$0\qs\Z_2$ or $\Z_2\qs\Z_2$.}} }
\end{equation*}

Then orientability, or a choice of orientations, for $\sO'$, implies orientability, or a choice of orientations, for $\sO$, and conversely, non-orientability for $\sO$ implies non-orientability for $\sO'$. If $F$ is an equivalence of categories, then orientability and orientations for $\sO,\sO'$ are equivalent. In this way, having understood orientability/non-orientability, and the data needed to choose orientations, in one problem, we can deduce corresponding results for other problems. 
\end{dfn}

\begin{rem}
\label{fm8rem1}
{\bf(a)} Defining topological transfer functors in Definition \ref{fm8def1}(a)(i) with target $\Bord^{\bs B}_n(K(R,k))$ is easy, as continuous maps $\phi:T\ra K(R,k)$ up to homotopy are equivalent to cohomology classes $\ga\in H^k(T,R)$. When $T=BG$, classes $\ga\in H^k(BG,R)$ are called {\it characteristic classes\/} (for example Chern classes $c_i\in H^{2i}(B\U(m),\Z)$), and are well studied. Having chosen $\ga\in H^k(T,R)$, it still requires work to compute the maps~$\phi_*:\Om^{\bs B}_n(T)\ra\Om^{\bs B}_n(K(R,k))$.
\smallskip

\noindent{\bf(b)} In nearly all the examples we study we have $H^2_\sym(\pi_0,\pi_1')=0$, so there is no issue of specifying $F$ within the $H^2_\sym(\pi_0,\pi_1')$-torsor in Theorem~\ref{fmAthm2}(c).
\end{rem}

\subsection{Examples of topological transfer functors}
\label{fm82}

Theorem \ref{fm3thm3} gives examples of classifying space maps $\phi:T\ra T'$ as in Definition \ref{fm8def1}(a)(i). This yields a large family of topological transfer functors.

\begin{thm}
\label{fm8thm1}
{\bf(a)} For all\/ $n\ge 3,$ $\bs B$ there are topological transfer functors $F_{M\SU(2)}^{B\SU(2)}:\Bord_{n,4}^{\bs B}(M\SU(2))\ab\ra\Bord^{\bs B}_n(B\SU(2))$ which are equivalences of categories, induced by the homotopy equivalence $\phi:M\SU(2)\ra B\SU(2)$ in Theorem\/~{\rm\ref{fm3thm3}(a)}.
\smallskip

\noindent{\bf(b)} For all\/ $n\ge 3,$ $\bs B$ and\/ $m\ge 2$ with\/ $2m\ge n,$ there are topological transfer functors $F_{M\U(2)}^{B\SU(m)}:\Bord_{n,4}^{\bs B}(M\U(2))\ra \Bord^{\bs B}_n(B\SU(m))$ defined as in \eq{fm8eq1} using the diagrams
\begin{equation*}
\xymatrix@C=25pt@R=17pt{
*+[r]{M\U(2)}  \ar[drr]^(0.6){\psi} \\
*+[r]{B\SU(m)} \ar[rr]^{\psi'} && *+[l]{B\SU,} }
\;\>
\xymatrix@C=40pt@R=17pt{
*+[r]{\Bord_{n,4}^{\bs B}(M\U(2))} \drtwocell_{}\omit_{}\omit{} \ar@{..>}[d]^{F_{M\U(2)}^{B\SU(m)}}  \ar@/^.5pc/[drr]^(0.6){F_\psi} \\
*+[r]{\Bord^{\bs B}_n(B\SU(m))} \ar[rr]^{F_{\psi'}}_\simeq && *+[l]{\Bord^{\bs B}_n(B\SU)_\top.} }
\end{equation*}
Here $\psi$ is the $10$-connected map and\/ $\psi'$ the $(2m+1)$-connected map in Theorem\/ {\rm\ref{fm3thm3}(b),} so that\/ $\psi'_*$ is an isomorphism for $m=n,n+1$ as $2m\ge n$. Also $F_{M\U(2)}^{B\SU(m)}$ is an equivalence of categories if\/ $n\le 9,$ as $\psi$ is $10$-connected.

\smallskip

\noindent{\bf(c)} For all\/ $n\ge 3,$ $\bs B$ and\/ $m\ge 1$ with\/ $4m+2\ge n,$ there are topological transfer functors $F_{M\Spin(4)}^{B\Sp(m)}:\Bord_{n,4}^{\bs B}(M\Spin(4))\ra \Bord^{\bs B}_n(B\Sp(m))$ defined as in \eq{fm8eq1} using the diagrams
\begin{equation*}
\xymatrix@C=25pt@R=17pt{
*+[r]{M\Spin(4)}  \ar[drr]^(0.6){\chi} \\
*+[r]{B\Sp(m)} \ar[rr]^{\chi'} && *+[l]{B\Sp,} }
\;\>
\xymatrix@C=40pt@R=17pt{
*+[r]{\Bord_{n,4}^{\bs B}(M\Spin(4))} \drtwocell_{}\omit_{}\omit{} \ar@{..>}[d]^{F_{M\Spin(4)}^{B\Sp(m)}}  \ar@/^.5pc/[drr]^(0.6){F_\chi} \\
*+[r]{\Bord^{\bs B}_n(B\Sp(m))} \ar[rr]^{F_{\chi'}}_\simeq && *+[l]{\Bord^{\bs B}_n(B\Sp)_\top.} }
\end{equation*}
Here $\chi$ is the $12$-connected map and\/ $\chi'$ the $(4m+3)$-connected map in Theorem\/ {\rm\ref{fm3thm3}(c),} so that\/ $\chi'_*$ is an isomorphism for $m=n,n+1$ as $4m+2\ge n$. Also $F_{M\Spin(4)}^{B\SU(m)}$ is an equivalence of categories if\/ $n\le 11,$ as $\chi$ is $12$-connected.
\smallskip

\noindent{\bf(d)} For all\/ $n,\bs B$ there are topological transfer functors $F_{BE_8}^{K(\Z,4)}:\Bord^{\bs B}_n(BE_8)\ab\ra\Bord^{\bs B}_n(K(\Z,4)),$ induced by the $16$-connected map $\om:BE_8\ra K(\Z,4)$ in Theorem\/ {\rm\ref{fm3thm3}(d)}. They are equivalences of categories for $n\le 15$.
\end{thm}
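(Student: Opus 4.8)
The statement to prove is Theorem \ref{fm8thm1}, which asserts the existence of various topological transfer functors between submanifold/gauge-theoretic bordism categories and a classifying-space bordism category, and identifies when they are equivalences.

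\medskip

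The plan is to deduce the whole theorem formally from two inputs already available: Theorem \ref{fm3thm3}, which produces the relevant $k$-connected maps $M\SU(2)\to B\SU(2)$, $M\U(2)\to B\SU$, $B\SU(m)\to B\SU$, $M\Spin(4)\to B\Sp$, $B\Sp(m)\to B\Sp$, $BE_8\to K(\Z,4)$, and the general machinery of Definition \ref{fm8def1}(a)(i) for building topological transfer functors out of (zig-zags of) continuous maps of classifying spaces. The key mechanism is the observation recorded in Definition \ref{fm8def1}(a)(i): a continuous map $\phi:T\to T'$ induces $F_\phi$ on the corresponding topological bordism categories by $(X,f)\mapsto(X,\phi\circ f)$, $[Y,g]\mapsto[Y,\phi\circ g]$; composing with the equivalences of Remark \ref{fm7rem1}(a),(b) identifying $\Bord_{n,k}^{\bs B}(MH)$, $\Bord^{\bs B}_n(BG)$, $\Bord^{\bs B}_n(K(R,k))$ with $\Bord^{\bs B}_n(T)_\top$, we obtain the stated functors between the geometric categories, well-defined up to monoidal natural isomorphism. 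For the equivalence claims, the point is that a $k$-connected map with $k\ge n+1$ induces isomorphisms on $\Om^{\bs B}_m$ for $m\le k$ (this is exactly the Atiyah--Hirzebruch argument quoted in the proof of Corollary \ref{fm3cor1}: the coefficients $\Om^{\bs B}_*(*)$ are concentrated in degrees $\ge 0$), hence isomorphisms on $\pi_0,\pi_1$, hence (by Theorem \ref{fmAthm2}(b)) an equivalence of Picard groupoids.

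\medskip

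Concretely I would proceed case by case. For (a): apply the construction to the weak homotopy equivalence $\phi:M\SU(2)\to B\SU(2)$ of Theorem \ref{fm3thm3}(a); since $\phi$ is $\infty$-connected, $\phi_*$ is an isomorphism on all $\Om^{\bs B}_m$, so $F^{B\SU(2)}_{M\SU(2)}$ is an equivalence for every $n\ge 3$. For (b): we cannot map $M\U(2)$ directly to $B\SU(m)$, so we use the span $M\U(2)\xrightarrow{\psi}B\SU\xleftarrow{\psi'}B\SU(m)$ of Theorem \ref{fm3thm3}(b), with $\psi$ $10$-connected and $\psi'$ $(2m+1)$-connected; the hypothesis $2m\ge n$ gives $2m+1\ge n+1$, so $\psi'_*$ is an isomorphism on $\pi_0,\pi_1$ (i.e.\ in degrees $n,n+1$), hence $F_{\psi'}$ is an equivalence, and \eqref{fm8eq1} then defines $F^{B\SU(m)}_{M\U(2)}$ uniquely up to monoidal natural isomorphism via $F^{B\SU(m)}_{M\U(2)}=F_{\psi'}^{-1}\circ F_\psi$ (the inverse taken up to natural isomorphism). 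When $n\le 9$, $\psi$ is $10$-connected $\ge n+1$, so $F_\psi$ is also an equivalence, hence so is $F^{B\SU(m)}_{M\U(2)}$. Parts (c) and (d) are identical in form: (c) uses the span $M\Spin(4)\xrightarrow{\chi}B\Sp\xleftarrow{\chi'}B\Sp(m)$ with $\chi$ $12$-connected, $\chi'$ $(4m+3)$-connected, and the hypothesis $4m+2\ge n$; (d) uses the single $16$-connected map $\om:BE_8\to K(\Z,4)$ directly, giving an equivalence whenever $n\le 15$. In each case I would also note briefly that these functors commute with the forgetful functors to $\Bord^{\bs B}_n(*)$ in \eqref{fm8eq3}, so the induced $f_i$ restrict to $\ti f_i$ on reduced bordism as in Definition \ref{fm8def1}(c), and that $H^2_\sym(\pi_0,\pi_1')=0$ in the ranges of interest so there is no ambiguity within the torsor of Theorem \ref{fmAthm2}(c) (cf.\ Remark \ref{fm8rem1}(b)).

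\medskip

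There is essentially no hard step here: the theorem is a packaging result. The only thing requiring any care is the bookkeeping of connectivity thresholds versus the dimension $n$ — one must check in each case that the relevant map is at least $(n+1)$-connected (not merely $n$-connected) to get isomorphisms on $\pi_1=\Om^{\bs B}_{n+1}$, and track precisely which of the two maps in each span is used to transport back and which is only needed for the (stronger) equivalence statement. So I expect the write-up to be short, consisting of: (1) recall the general construction of Definition \ref{fm8def1}(a)(i) and the connectivity/AHSS implication; (2) feed in the maps from Theorem \ref{fm3thm3}; (3) read off the four cases, checking the numerology $2m\ge n$, $4m+2\ge n$, $n\le 9$, $n\le 15$ against the respective connectivities $2m+1$, $4m+3$, $10$, $16$.
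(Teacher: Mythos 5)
Your proposal is correct and follows exactly the route the paper intends: the paper gives no separate proof of this theorem, treating it as an immediate application of the general construction in Definition \ref{fm8def1}(a)(i) (including the zig-zag form \eq{fm8eq1}) to the connected maps supplied by Theorem \ref{fm3thm3}, with the equivalence claims read off from the connectivity thresholds via the Atiyah--Hirzebruch argument. Your case-by-case numerology check is the same bookkeeping the paper relies on.
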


\subsection{Examples of geometric transfer functors}
\label{fm83}

\begin{ex}
\label{fm8ex1}
Here are some simple geometric transfer functors: 
\begin{itemize}
\setlength{\itemsep}{0pt}
\setlength{\parsep}{0pt}
\item[(i)] $F_\ga:\Bord^{\bs B}_n(BG_1)\ra\Bord^{\bs B}_n(BG_2)$ in \eq{fm4eq2} induced by a morphism of Lie groups $\ga:G_1\ra G_2$.
\item[(ii)] $F_\io:\Bord_{n,k}^{\bs B}(MH_1)\ra\Bord_{n,k}^{\bs B}(MH_2)$ in \eq{fm5eq3} induced by a composition of Lie groups $H_1\,{\buildrel\io\over\longra}\, H_2\,{\buildrel\rho_2\over\longra}\,\O(k)$.
\item[(iii)] $F_\rho:\Bord^{\bs B}_n(K(R_1,k))\ra\Bord^{\bs B}_n(K(R_2,k))$ in \eq{fm6eq3} induced by a morphism of commutative rings $\rho:R_1\ra R_2$.
\item[(iv)] If $\bs B,\bs B'$ are tangential structures and $\bs B$ factors through $\bs B'$ as in Definition \ref{fm2def1}, there are obvious functors $\Bord_{n,k}^{\bs B}(MH)\ra\Bord_{n,k}^{\bs B'}(MH)$, $\Bord^{\bs B}_n(BG)\ra\Bord_n^{\bs B'}(BG)$, $\Bord^{\bs B}_n(K(R,k))\ra\Bord_n^{\bs B'}(K(R,k))$ by converting $\bs B$-structures into $\bs B'$-structures.
\end{itemize}
\end{ex}

Here is an example of a geometric transfer functor defined as in the second part of Definition~\ref{fm8def1}(a)(ii).

\begin{dfn}
\label{fm8def8}
Let $\bs B$ be a tangential structure and $n\ge 3$. We will define a Picard groupoid $\cBord^{\bs B}_n(B\SU(2))$ which is a modification of $\Bord^{\bs B}_n(B\SU(2))$ in Definition \ref{fm4def1}. The difference is that to each principal $\SU(2)$-bundle $P\ra X,Q\ra Y,\ldots$ in the definition of $\Bord^{\bs B}_n(B\SU(2))$, we also associate smooth, transverse sections $s,t,\ldots$ of the associated $\C^2$-bundles $(P\t\C^2)/\SU(2)\ra X,$ $(Q\t\C^2)/\SU(2)\ra Y,\ldots.$ 

In more detail, define {\it objects\/} of $\cBord^{\bs B}_n(B\SU(2))$ to be triples $(X,P,s)$, where $X$ is a compact $n$-manifold with $\bs B$-structure $\be_X$, and $P\ra X$ is a principal $\SU(2)$-bundle, and $s\in\Ga^\iy((P\t\C^2)/\SU(2))$ is a smooth, transverse section of the $\C^2$-bundle $(P\t\C^2)/\SU(2)\ra X$ associated to $P\ra X$ and the standard representation of $\SU(2)$ on~$\C^2$.

Also, {\it morphisms\/} $[Y,Q,t]:(X_0,P_0,s_0)\ra(X_1,P_1,s_1)$ in $\cBord^{\bs B}_n(B\SU(2))$ are equivalence classes of triples $(Y,Q,t),$ where $Y$ is a compact $(n+1)$-manifold with $\bs B$-structure $\be_Y$, there is a chosen isomorphism $\pd Y\cong -X_0\amalg X_1$ of the boundary preserving $\bs B$-structures, and $Q\ra Y$ is a principal $\SU(2)$-bundle with a chosen isomorphism $Q\vert_{\pd Y}\cong P_0\amalg P_1$, and $t\in\Ga^\iy((Q\t\C^2)/\SU(2))$ is a smooth, transverse section of the $\C^2$-bundle $(Q\t\C^2)/\SU(2)\ra Y$ with $t\vert_{\pd Y}\cong s_0\amalg s_1$. Equivalences $(Y_0,Q_0,t_0)\ra(Y_1,Q_1,t_1)$ are defined in the obvious way.

We make $\cBord^{\bs B}_n(B\SU(2))$ into a symmetric monoidal category in the usual way. Then $\cBord^{\bs B}_n(B\SU(2))$ is a Picard groupoid.

Define a forgetful symmetric monoidal functor
\begin{equation*}
\Pi_{B\SU(2)}^{M\SU(2)}:\cBord^{\bs B}_n(B\SU(2))\longra\Bord^{\bs B}_n(B\SU(2))
\end{equation*}
to forget all transverse sections $s,t,\ldots,$ so that $\Pi_{B\SU(2)}^{M\SU(2)}$ maps $(X,P,s)\mapsto(X,P)$ on objects, for example.

For an object $(X,P,s)$ as above, observe that as $s$ is transverse, $M=s^{-1}(0)$ is an embedded $(n-4)$-submanifold of $X$. The derivative $\d s\vert_M$ induces an isomorphism $\nu_M\ra ((P\t\C^2)/\SU(2))\vert_M$ of vector bundles on $M$, where $\nu_M$ is the normal bundle of $M$ in $X$. Since $((P\t\C^2)/\SU(2))\vert_M$ has an $\SU(2)$-structure, this induces an $\SU(2)$-structure $\ga_M$ on~$\nu_M$.

For $\Bord_{n,4}^{\bs B}(M\SU(2))$ as in \S\ref{fm51}, define a symmetric monoidal functor
\begin{equation*}
\check F_{B\SU(2)}^{M\SU(2)}:\cBord^{\bs B}_n(B\SU(2))\longra\Bord_{n,4}^{\bs B}(M\SU(2))
\end{equation*}
to act by $(X,P,s)\mapsto(X,M)$ on objects, where $M=s^{-1}(0)\subset X$ with normal $\SU(2)$-structure $\ga_M$ as above, and to act by $[Y,Q,t]\mapsto[Y,N]$ on morphisms, where $N=t^{-1}(0)\subset Y$ with normal $\SU(2)$-structure $\ga_N$. Note that our definition of $t$ being transverse includes that $t$ is transverse on each boundary or corner stratum of $N$, which implies that $N=t^{-1}(0)$ is a {\it neat\/} submanifold.
\end{dfn}

\begin{thm}
\label{fm8thm2}
$\Pi_{B\SU(2)}^{M\SU(2)},\check F_{B\SU(2)}^{M\SU(2)}$ are equivalences of Picard groupoids. Hence there exists an equivalence $F_{B\SU(2)}^{M\SU(2)}$ in a $2$-commutative diagram
\e
\begin{gathered}
\xymatrix@C=170pt@R=15pt{
*+[r]{\cBord^{\bs B}_n(B\SU(2))} \ar[d]^{\Pi_{B\SU(2)}^{M\SU(2)}} \ar@/^1pc/[dr]^(0.6){\check F_{B\SU(2)}^{M\SU(2)}}
\drtwocell_{}\omit^{}\omit{} \\
*+[r]{\Bord^{\bs B}_n(B\SU(2))} \ar[r]^(0.39){F_{B\SU(2)}^{M\SU(2)}} & *+[l]{\Bord_{n,4}^{\bs B}(M\SU(2)).} }
\end{gathered}
\label{fm8eq4}
\e
This $F_{B\SU(2)}^{M\SU(2)}$ is a \begin{bfseries}geometric transfer functor\end{bfseries}. It is one of the topological transfer functors $F_{B\SU(2)}^{M\SU(2)}$ in Theorem\/ {\rm\ref{fm8thm1}(a)}.
\end{thm}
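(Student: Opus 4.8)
The plan is to verify directly that $\Pi_{B\SU(2)}^{M\SU(2)}$ and $\check F_{B\SU(2)}^{M\SU(2)}$ are equivalences, then obtain $F_{B\SU(2)}^{M\SU(2)}$ by composing $\check F_{B\SU(2)}^{M\SU(2)}$ with a quasi-inverse of $\Pi_{B\SU(2)}^{M\SU(2)}$, and finally identify the result with the topological transfer functor from Theorem \ref{fm8thm1}(a). First I would show $\Pi_{B\SU(2)}^{M\SU(2)}$ is an equivalence using the mechanism of Definition \ref{fm8def1}(a)(ii): the extra data carried by an object $(X,P,s)$ of $\cBord^{\bs B}_n(B\SU(2))$ over an object $(X,P)$ of $\Bord^{\bs B}_n(B\SU(2))$ is a smooth transverse section $s$ of the associated $\C^2$-bundle. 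The space of smooth sections of a vector bundle is a (contractible) topological vector space, and transverse sections are dense and open in the $C^\infty$ topology; more precisely, given any object $(X,P)$ a transverse section exists, and given two transverse sections $s_0,s_1$ of $(P\t\C^2)/\SU(2)\ra X$ one can interpolate by a transverse section $t$ of the pullback bundle over $X\t[0,1]$ (after a small perturbation, rel endpoints, using that transversality is a generic condition), and similarly for the $(n+2)$-dimensional equivalence data. Hence $\Pi_{B\SU(2)}^{M\SU(2)}$ is essentially surjective and full and faithful — I would check faithfulness by the same interpolation argument applied to bordisms $Y$ — so it is an equivalence of Picard groupoids.

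Next I would show $\check F_{B\SU(2)}^{M\SU(2)}$ is an equivalence. Since source and target are Picard groupoids, by Theorem \ref{fmAthm2}(a) it suffices to check that $\check F_{B\SU(2)}^{M\SU(2)}$ induces isomorphisms on $\pi_0$ and $\pi_1$ and is compatible with the linear quadratic maps $q$. By Propositions \ref{fm4prop1} and \ref{fm5prop1}, $\pi_i(\cBord^{\bs B}_n(B\SU(2)))\cong\pi_i(\Bord^{\bs B}_n(B\SU(2)))\cong\Om_{n+i}^{\bs B}(B\SU(2))$ (the first isomorphism because $\Pi_{B\SU(2)}^{M\SU(2)}$ is an equivalence) and $\pi_i(\Bord_{n,4}^{\bs B}(M\SU(2)))\cong\Om_{n+i}^{\bs B}(M\SU(2))$, so I must show that the map on bordism groups induced by $(X,P,s)\mapsto(X,s^{-1}(0),\ga_{s^{-1}(0)})$ agrees with the isomorphism $\Om_{n+i}^{\bs B}(B\SU(2))\cong\Om_{n+i}^{\bs B}(M\SU(2))$ coming from the homotopy equivalence $M\SU(2)\simeq B\SU(2)$ of Theorem \ref{fm3thm3}(a). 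This is precisely the content of the classical Pontrjagin--Thom correspondence: the zero set of a transverse section of an associated $\C^2$-bundle, equipped with the normal $\SU(2)$-structure induced by $\d s$, recovers the submanifold classified by the composite $X\ra B\SU(2)\simeq M\SU(2)$ (compare the sketch proof of Theorem \ref{fm2thm2}). Compatibility with $q$ is immediate since both $q$'s are multiplication by $\al_1=[\cS^1_{\rm nb}]$ and the construction is product-compatible with $X\t\cS^1$. Finally, $H^2_\sym(\pi_0,\pi_1)$ vanishes in the relevant range (or the constructions determine $\check F$ on the nose up to monoidal natural isomorphism), so no cocycle ambiguity arises; this gives that $\check F_{B\SU(2)}^{M\SU(2)}$ is an equivalence.

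Having both legs as equivalences, the existence of an equivalence $F_{B\SU(2)}^{M\SU(2)}$ fitting into the $2$-commutative diagram \eq{fm8eq4} follows formally: choose a quasi-inverse $\Pi^{-1}$ to $\Pi_{B\SU(2)}^{M\SU(2)}$ (using the Axiom of Choice if necessary, as in Definition \ref{fm8def1}(a)(ii)) and set $F_{B\SU(2)}^{M\SU(2)}=\check F_{B\SU(2)}^{M\SU(2)}\ci\Pi^{-1}$, with the $2$-cell supplied by the chosen natural isomorphism $\Pi^{-1}\ci\Pi_{B\SU(2)}^{M\SU(2)}\Rightarrow\id$. This $F_{B\SU(2)}^{M\SU(2)}$ is geometric in the sense of Definition \ref{fm8def1}(a)(ii), built via the auxiliary groupoid $\cBord^{\bs B}_n(B\SU(2))$. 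To see it is one of the topological transfer functors $F_{B\SU(2)}^{M\SU(2)}$ of Theorem \ref{fm8thm1}(a) — i.e.\ 2-isomorphic to the functor induced by the homotopy equivalence $\phi:M\SU(2)\ra B\SU(2)$ — I would invoke the classification in Theorem \ref{fmAthm2}(c): since $H^2_\sym(\Om_n^{\bs B}(B\SU(2)),\Om_{n+1}^{\bs B}(M\SU(2)))$ is trivial in this setting, any two symmetric monoidal functors $\cG\ra\cG'$ inducing the same maps $f_0,f_1$ on $\pi_0,\pi_1$ are monoidally naturally isomorphic, and both the geometric $F_{B\SU(2)}^{M\SU(2)}$ and the topological one induce the Pontrjagin--Thom isomorphism on bordism groups, as established in the previous paragraph.

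\textbf{Main obstacle.} The delicate part is not the abstract Picard-groupoid bookkeeping but the transversality/genericity claims underlying essential surjectivity and fullness of $\Pi_{B\SU(2)}^{M\SU(2)}$ and the identification of $\check F$ on bordism groups: one must show that generic sections over $X$, $X\t[0,1]$ (rel boundary), and over manifolds with corners $X\t[0,1]^2$ can be chosen transverse while being compatible with prescribed transverse boundary data and product collar structures, and that the resulting zero-set submanifold is \emph{neat} (intersects boundaries and corners transversely). This is exactly the kind of relative parametric transversality argument that also appears in the sketch of Theorem \ref{fm2thm2}, and carrying it out carefully over manifolds with corners, keeping track of $\bs B$-structures and normal $H$-structures, is where the real work lies.
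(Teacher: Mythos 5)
Your proposal is correct in outline but handles the key step — that $\check F_{B\SU(2)}^{M\SU(2)}$ is an equivalence — by a genuinely different route from the paper. You reduce to the Picard-groupoid classification (Theorem \ref{fmAthm2}): it suffices that $\check F_{B\SU(2)}^{M\SU(2)}$ induce isomorphisms on $\pi_0,\pi_1$, which you identify with the Pontrjagin--Thom correspondence and the homotopy equivalence $M\SU(2)\simeq B\SU(2)$ of Theorem \ref{fm3thm3}(a). The paper instead constructs an explicit geometric quasi-inverse $(X,M)\rightsquigarrow(X,P',s')$: choose a tubular neighbourhood $\Phi:U\ra V$ of $M$, rescale the tautological section of $\pi^*(\nu_M)$ by a partition of unity so that it has unit length away from $M$, and use the fact that $\SU(2)$ acts freely and transitively on the unit sphere in $\C^2$ to glue the resulting $\SU(2)$-bundle-with-section to the trivial one on $X\sm\supp\eta_1$; it then checks that the only data forgotten by $\check F$ is the contractible choice of $\ms{s}:X\ra[0,\iy)$. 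Your route is shorter but outsources the geometric content: you must still verify that the zero-set of a transverse section, with the normal structure induced by $\d s$, realizes the \emph{specific} map of Theorem \ref{fm3thm3}(a) on bordism groups, which you assert rather than prove — the paper's explicit inverse is essentially the verification of this. Two smaller points. First, for showing $\check F$ is an equivalence you do not need $H^2_\sym$ to vanish; isomorphisms on $\pi_0,\pi_1$ already give an equivalence of Picard groupoids, so that remark is superfluous there. Second, your appeal to the vanishing of $H^2_\sym(\Om_n^{\bs B}(B\SU(2)),\Om_{n+1}^{\bs B}(M\SU(2)))$ to identify the geometric and topological transfer functors is not justified for all $n$ and $\bs B$ (Remark \ref{fm8rem1}(b) only claims this in ``nearly all'' cases); if $H^2_\sym\ne 0$, matching $\pi_0,\pi_1$ does not by itself pin down the functor up to monoidal natural isomorphism, and a direct comparison would be needed. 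The paper's proof does not address this final identification either, so this is a gap in the statement's full claim rather than a defect specific to your argument, but you should not present the vanishing as automatic.
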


\begin{proof}
To see that $\Pi_{B\SU(2)}^{M\SU(2)}$ is an equivalence of categories, note that all the vector bundles $(P\t\C^2)/\SU(2)\ra X$, $(Q\t\C^2)/\SU(2)\ra Y,\ldots$ admit transverse sections, and choices of transverse sections on a boundary $\pd Y,\pd Z,\ldots$ (with the obvious compatibility conditions at codimension 2 corners $\pd^2Z$) can always be extended to~$Y,Z,\ldots.$

To prove $\check F_{B\SU(2)}^{M\SU(2)}$ is an equivalence is more complicated. First, let $(X,M)$ be an object in $\Bord_{n,4}^{\bs B}(M\SU(2))$, so that $X$ is a compact $n$-manifold with $\bs B$-structure $\be_X$ and $M\subset X$ is a compact embedded $(n-4)$-submanifold with an $\SU(2)$-structure $\ga_M$ on its normal bundle $\nu_M\ra M$. We will construct $(X,P,s)$ in $\cBord^{\bs B}_n(B\SU(2))$ with~$\check F_{B\SU(2)}^{M\SU(2)}(X,P,s)=(X,M)$.

Choose a tubular neighbourhood for $M$ in $X$. That is, we choose an open neighbourhood $U$ of the zero section $0(M)$ in $\nu_M$, an open neighbourhood $V$ of $M$ in $X$, and a diffeomorphism $\Phi:U\ra V$ such that $\Phi\ci 0=\inc:M\hookra V$, and the derivative of $\Phi$ normal to $0(M)$ induces the identity map $\id:\nu_M\ra\nu_M$. Write $\pi:U\ra M$ for the restriction of the projection $\nu_M\ra M$ to $U$. Then $\pi^*(\nu_N)\ra U$ is a real rank 4 vector bundle, which has a tautological section $\dot s\in\Ga^\iy(\pi^*(\nu_N))$ with $\dot s(x,e)=e$ for $(x,e)\in U\subset\nu_M$, so that $x\in M$ and $e\in\nu_M\vert_x$. Note that $\dot s$ is transverse with $\dot s^{-1}(0)=0(M)$. Also the $\SU(2)$-structure $\ga_M$ on $\nu_M$ pulls back to an $\SU(2)$-structure $\pi^*(\ga_M)$ on $\pi^*(\nu_N)\ra U$. That is, $\pi^*(\nu_N)$ has the structure of a complex rank 2 vector bundle with a Hermitian metric $g_\nu$ and a complex volume form $\th_\nu$ on the fibres.

Choose a partition of unity $(\eta_1,\eta_2)$ on $X$ subordinate to the open cover $(V,X\sm M)$. Define another smooth section $\ddot s\in\Ga^\iy(\pi^*(\nu_N))$ by
\begin{equation*}
\ddot s=(\Phi^*(\eta_1)+\Phi^*(\eta_2)\md{\dot s}^2)^{-1/2}\cdot\dot s,
\end{equation*}
where $\md{\dot s}$ is defined using $g_\nu$. As $(\Phi^*(\eta_1)+\Phi^*(\eta_2)\md{\dot s}^2)^{-1/2}$ is positive, $\ddot s$ is also transverse with $\ddot s^{-1}(0)=0(M)$, but it has the extra property that $\md{\ddot s}=1$ outside $\supp\Phi^*(\eta_1)$ in~$U$.

Since $\SU(2)$ acts freely and transitively on the unit sphere in $\C^2$, a unit length section $s$ of an $\SU(2)$-vector bundle $(P\t\C^2)/\SU(2)\ra X$ induces a trivialization of the principal $\SU(2)$-bundle $P$ identifying $s$ with the constant section with value $(1,0)\in\C^2$. Thus, $\ddot s$ induces a trivialization of the $\SU(2)$-structure of $\pi^*(\nu_N)\ra U$ on $U\sm\supp\Phi^*(\eta_1)$. Define a principal $\SU(2)$-bundle $P'\ra X$ and a smooth section $s'\in \Ga^\iy((P'\t\C^2)/\SU(2))$ by
\begin{itemize}
\setlength{\itemsep}{0pt}
\setlength{\parsep}{0pt}
\item[(a)] On $X\sm\supp\eta_1\subset X$, take $P'$ to be the trivial $\SU(2)$-bundle, and $s'$ to be the the constant section with value $(1,0)\in\C^2$.
\item[(b)] On $V\subset X$ take $P'$ to be the principal $\SU(2)$-bundle associated to the $\SU(2)$-vector bundle $\Phi_*(\pi^*(\nu_N))$ and $s'$ to be $\Phi_*(\ddot s)$.
\item[(c)] On the overlap $V\sm\supp\eta_1$ of (a) and (b), we identify the two using the trivialization of $\Phi_*(\pi^*(\nu_N))$ identifying $\Phi_*(\ddot s)\cong(1,0)$, noting that $\bmd{\Phi_*(\ddot s)}=1$ on~$V\sm\supp\eta_1$.
\end{itemize}
Then $s'$ is transverse with $s^{\prime -1}(0)=M$ by construction, and the $\SU(2)$-structure on $\nu_M$ induced by $\d s'\vert_M$ is $\ga_M$. Hence $\check F_{B\SU(2)}^{M\SU(2)}(X,P',s')=(X,M)$.

This construction $(X,M)\rightsquigarrow(X,P',s')$ (which depends on arbitrary choices $U,V,\Phi,\eta_1,\eta_2$) is close to being an inverse to $\check F_{B\SU(2)}^{M\SU(2)}:(X,P,s)\mapsto(X,M)$ (which involves no arbitrary choices). If we start with $(X,P,s)$, set $(X,M)=\check F_{B\SU(2)}^{M\SU(2)}(X,P,s)$, and then construct $(X,P',s')$ as above, we can show that there is a unique isomorphism $P'\cong P$ which identifies $s'$ with $f\cdot s$, where $f:X\ra(0,\iy)$ is continuous on $X$ and smooth on $X\sm M$, with $f\vert_M\equiv 1$, and $f(x)=\bmd{s'\vert_x}/\bmd{s\vert_x}$ for $x\in X\sm M$. In effect, the only data forgotten by $\check F_{B\SU(2)}^{M\SU(2)}:(X,P,s)\mapsto(X,M)$ is the function $\ms{s}:X\ra[0,\iy)$. This data lies in a contractible set, giving objects which are all isomorphic in~$\cBord^{\bs B}_n(B\SU(2))$.

The construction $(X,M)\rightsquigarrow(X,P',s')$ also works for $(Y,N)\rightsquigarrow(Y,Q',t')$ in morphisms $[Y,N],[Y,Q',t']$, and for the equivalence relations defining morphisms, and can be made compatible with previous choices on boundaries. Using this we can show that $\check F_{B\SU(2)}^{M\SU(2)}$ is an equivalence of categories. The existence of another equivalence $F_{B\SU(2)}^{M\SU(2)}$ in a diagram \eq{fm8eq4} then follows by category theory general nonsense.
\end{proof}

\begin{rem}
\label{fm8rem2}
We can also generalize Definition \ref{fm8def8} and Theorem \ref{fm8thm2} to the transfer functors in Theorem \ref{fm8thm1}(b),(c). We define functors of Picard groupoids
\e
\begin{gathered}
\xymatrix@C=100pt@R=15pt{
*+[r]{\cBord^{\bs B}_n(B\SU(m))} \ar[d]^{\Pi_{B\SU(m)}^{M\U(2)}} \ar@/^.3pc/[dr]^{\check F_{B\SU(m)}^{M\U(2)}} \\
*+[r]{\Bord^{\bs B}_n(B\SU(m))}  & *+[l]{\Bord_{n,4}^{\bs B}(M\U(2)).} }
\end{gathered}
\label{fm8eq5}
\e
Here objects of $\cBord^{\bs B}_n(B\SU(m))$ are $(X,P,s_1,\ldots,s_{m-1})$, where $X$ is a compact $n$-manifold with $\bs B$-structure $\be_X$, and $P\ra X$ is a principal $\SU(m)$-bundle, and $s_1,\ldots,s_{m-1}\in\Ga^\iy((P\t\C^m)/\SU(m))$ are smooth sections of the $\C^m$-bundle $(P\t\C^m)/\SU(m)\ra X$, such that $\an{s_1\vert_x,\ldots,s_{m-1}\vert_x}_\C$ has $\C$-dimension $m-2$ or $m-1$ at each $x\in X$, and $s_1,\ldots,s_{m-1}$ are generic with this condition. 

Then $\Pi_{B\SU(m)}^{M\U(2)}$ maps $(X,P,s_1,\ldots,s_{m-1})\mapsto(X,P)$, and $\check F_{B\SU(m)}^{M\U(2)}$ maps $(X,P,s_1,\ldots,s_{m-1})\mapsto(X,M)$, where $M$ is the subset of $x\in X$ such that $\an{s_1\vert_x,\ldots,s_{m-1}\vert_x}_\C$ has $\C$-dimension $m-2$. It turns out that $N$ is an embedded submanifold of $X$ of codimension 4, and we can define a $\U(2)$-structure on its normal bundle $\nu_M$ in $X$. Also $\Pi_{B\SU(m)}^{M\U(2)}$ is an equivalence if $n\le 9$, and $\check F_{B\SU(m)}^{M\U(2)}$ is an equivalence if $2m\ge n$, so if $2m\ge n$ we can complete \eq{fm8eq5} with a transfer functor $F_{M\U(2)}^{B\SU(m)}$ as in Theorem~\ref{fm8thm1}(b).

Similarly, we define functors of Picard groupoids
\e
\begin{gathered}
\xymatrix@C=100pt@R=15pt{
*+[r]{\cBord^{\bs B}_n(B\Sp(m))} \ar[d]^{\Pi_{B\Sp(m)}^{M\Spin(4)}} \ar@/^.3pc/[dr]^{\check F_{B\Sp(m)}^{M\Spin(4)}} \\
*+[r]{\Bord^{\bs B}_n(B\Sp(m))}  & *+[l]{\Bord_{n,4}^{\bs B}(M\Spin(4)).} }
\end{gathered}
\label{fm8eq6}
\e
Here objects of $\cBord^{\bs B}_n(B\Sp(m))$ are $(X,P,s_1,\ldots,s_m)$, where $X$ is a compact $n$-manifold with $\bs B$-structure $\be_X$, and $P\ra X$ is a principal $\Sp(m)$-bundle, and $s_1,\ldots,s_m\in\Ga^\iy((P\t\H^m)/\Sp(m))$ are smooth sections of the $\H^m$-bundle $(P\t\H^m)/\Sp(m)\ra X$, such that $\an{s_1\vert_x,\ldots,s_m\vert_x}_\H$ has $\H$-dimension $m-1$ or $m$ at each $x\in X$, and $s_1,\ldots,s_m$ are generic with this condition. 

Then $\Pi_{B\Sp(m)}^{M\Spin(4)}$ maps $(X,P,s_1,\ldots,s_m)\mapsto(X,P)$, and $\check F_{B\Sp(m)}^{M\Spin(4)}$ maps $(X,P,s_1,\ldots,s_m)\mapsto(X,M)$, where $M$ is the subset of $x\in X$ such that $\an{s_1\vert_x,\ldots,s_m\vert_x}_\H$ has $\H$-dimension $m-1$. It turns out that $N$ is an embedded submanifold of $X$ of codimension 4, and we can define a $\Spin(4)$-structure on its normal bundle $\nu_M$ in $X$. Also $\Pi_{B\Sp(m)}^{M\Spin(4)}$ is an equivalence if $n\le 11$, and $\check F_{B\SU(m)}^{M\U(2)}$ is an equivalence if $4m+2\ge n$, so if $4m+2\ge n$ we can complete \eq{fm8eq6} with a transfer functor $F_{M\Spin(4)}^{B\Sp(m)}$ as in Theorem~\ref{fm8thm1}(c).
\end{rem}

We can make many other transfer functors by composing functors from Example \ref{fm8ex1} and Theorems \ref{fm8thm1} and \ref{fm8thm2}, or their quasi-inverses in the case they are equivalences.

\section{Orientation functors}
\label{fm9}

\subsection{Orientation functors, orientability, and orientations}
\label{fm91}

\begin{dfn}
\label{fm9def1}
By an {\it orientation functor\/} we will mean a symmetric monoidal functor $\sO:\cC\ra A\qs B$, where:
\begin{itemize}
\setlength{\itemsep}{0pt}
\setlength{\parsep}{0pt}
\item[(a)] $\cC$ is one of $\Bord^{\bs B}_n(BG),$ $\Bord_{n,k}^{\bs B}(MH),$ $\Bord^{\bs B}_n(K(R,k)),$ $\Bord^{\bs B}_n(T)_\top$ from \S\ref{fm4}--\S\ref{fm7}, and
\item[(b)] $A\qs B$ is a Picard groupoid as in Appendix \ref{fmA}, where $A,B$ are abelian groups from the list $0,\Z$ or $\Z_k$ for $k=2,\ldots.$

Our most frequent choices for $A\qs B$ will be $A=0$, $B=\Z_2$ giving $0\qs\Z_2=\Ztor$, the Picard groupoid of $\Z_2$-{\it torsors}, or $A=B=\Z_2$ giving $\Z_2\qs\Z_2=\sZtor$, the Picard groupoid of $\Z_2$-{\it graded\/ $\Z_2$-torsors}, or {\it super\/ $\Z_2$-torsors}, since these have applications to orientations of moduli spaces. Examples with $B=\Z_k$ or $\Z$ are relevant to gradings of Floer homology theories.

As in Theorem \ref{fmAthm2}(a), the Picard groupoid $A\qs B$ depends up to equivalence on $A,B$ and a choice of linear quadratic form $q:A\ra B$.
\end{itemize} 

See \S\ref{fm93}--\S\ref{fm95} for examples of orientation functors. We will mainly discuss orientation functors of two kinds:
\begin{itemize}
\setlength{\itemsep}{0pt}
\setlength{\parsep}{0pt}
\item[(i)] {\it Abstract orientation functors}, defined using choices of group morphisms $f_0:\pi_0(\cC)\ra A$, $f_1:\pi_1(\cC)\ra B$ using Theorem \ref{fmAthm2}(b); and
\item[(ii)] {\it Analytic orientation functors}, in which $\sO(X,M),\ab\ldots$ are defined using some linear elliptic operator $L$ on $X,M,\ldots,$ and may involve $\ind(L),\ab\Ker L,\ab\Coker L$, or the spectrum of~$L$.
\end{itemize}
One could also consider orientation functors defined using techniques from topology or differential geometry, but we will not focus on these.
\end{dfn}

\begin{dfn}
\label{fm9def2}
Let $\sO:\cC\ra A\qs B$ be an orientation functor. For clarity take $\cC=\Bord^{\bs B}_n(BG)$ from \S\ref{fm41}. Let $X$ be a compact $n$-manifold with a $\bs B$-structure, so that Definition \ref{fm4def4} gives a functor
\begin{equation*}
\Pi_X^{\bs B}:\Bord_X(BG)\longra\Bord^{\bs B}_n(BG).
\end{equation*}
An {\it orientation of\/ $\sO$ for\/} $X$ is a natural isomorphism $\eta_X$ in the 2-commutative diagram of functors:
\e
\begin{gathered}
\xymatrix@C=120pt@R=15pt{
*+[r]{\Bord_X(BG)} \drtwocell_{}\omit^{}\omit{^{\eta_X\,\,\,}} \ar[r]_(0.35)\boo \ar[d]^{\Pi_X^{\bs B}} & *+[l]{0\qs B=B\text{-tor}} \\
*+[r]{\Bord^{\bs B}_n(BG)} \ar[r]^(0.6)\sO & *+[l]{A\qs B.\!} \ar[u]^{F_{A\qs B}^{0\qs B}} }
\end{gathered}
\label{fm9eq1}
\e
Here $A\qs B$ is the category of $A$-graded $B$-torsors, and $F_{A\qs B}^{0\qs B}$ is the forgetful functor which forgets the $A$-grading, and $\boo:\Bord_X(BG)\ra B$-tor is the trivial functor taking every object to $B$ and every morphism to $\id_B$.

We say that $\sO$ is {\it orientable for\/} $X$ if an orientation for $X$ exists.

We make the analogous definitions for the other classes of bordism categories
$\Bord_{n,k}^{\bs B}(MH),\Bord^{\bs B}_n(K(R,k))$ and~$\Bord^{\bs B}_n(T)_\top$.
\end{dfn}

The next example gives some motivation for these definitions:

\begin{ex}
\label{fm9ex1}
Suppose $X$ is a compact 8-manifold with a $\Spin(7)$-structure $(\Om,g)$ in the sense of \cite[\S 10]{Joyc1}, which need not have $\d\Om=0$. Then there is a natural splitting $\La^2T^*X=\La^2_7T^*X\op\La^2_{21}T^*X$ into vector subbundles of ranks 7 and 21. Suppose $G$ is a Lie group and $P\ra X$ a principal $G$-bundle. A $\Spin(7)$-{\it instanton\/} on $P$ is a connection $\nabla_P$ on $P$ with $\pi^2_7(F^{\nabla_P})=0$ in $\Ga^\iy(\Ad(P)\ot\La^2_7T^*X)$. Write $\M_P^{\Spin(7)}$ for the moduli space of irreducible $\Spin(7)$-instantons on $P$. Then $\M_P^{\Spin(7)}$ is a derived manifold in the sense of \cite{Joyc2,Joyc3,Joyc4,Joyc6}, and an ordinary manifold if $\Om$ is generic. Examples of $\Spin(7)$-instantons were given by Lewis \cite{Lewi}, Tanaka \cite{Tana}, and Walpuski \cite{Walp5}. Donaldson and Thomas \cite{DoTh} proposed defining enumerative invariants of $(X,\Om,g)$ by `counting' $\Spin(7)$-instantons. In \S\ref{fm93}, using material from \cite{Upme2}, we will define an analytic orientation functor
\begin{equation*}
\sO:\Bord_8^{\bs\Spin}(BG)\longra\sZtor.
\end{equation*}
As in \S\ref{fm123}, it turns out that an orientation of $\sO$ for $X$ in the sense of Definition \ref{fm9def2} induces orientations on $\M_P^{\Spin(7)}$ for all principal $G$-bundles $P\ra X$. These are needed for the Donaldson--Thomas programme~\cite{DoTh}. 
\end{ex}

As we will see later, orientation functors can also be used to orient many other moduli spaces in gauge theory, and moduli spaces of calibrated submanifolds, and moduli spaces of coherent sheaves on Calabi--Yau 4-folds.

Our goal in this section will be to answer the following questions:

\begin{quest}
\label{fm9quest1}
Let\/ $\sO:\cC\ra A\qs B$ be an orientation functor, where $\cC$ is one of $\Bord^{\bs B}_n(BG),\Bord_{n,k}^{\bs B}(MH)$ or $\Bord^{\bs B}_n(K(R,k))$. Then we can ask:
\begin{itemize}
\setlength{\itemsep}{0pt}
\setlength{\parsep}{0pt}
\item[{\bf(a)}] Is\/ $\sO$ orientable for all compact\/ $n$-manifolds $X$ with\/ $\bs B$-structure $\bs\ga_X$?
\item[{\bf(b)}] If the answer to {\bf(a)} is no, can we give computable necessary and sufficient, or just sufficient, conditions for\/ $\sO$ to be orientable for given\/~$X,\bs\ga_X$?
\item[{\bf(c)}] If\/ $\sO$ is orientable for\/ $X,\bs\ga_X,$ can we specify additional data on\/ $X$ which can be used to determine a canonical choice of orientation for\/ $\sO$ on\/~$X$? 
\end{itemize}

Here in {\bf(c)\rm,} the orientations for\/ $\sO$ on\/ $X$ are a torsor for\/ $\Map\bigl(\Om^{\bs B}_n(X),B\bigr)$ for\/ $T=BG,MH$ or\/ $K(R,k),$ where\/ $\Map$ means just maps of sets, not group morphisms, made into a group using the group structure on\/ $B$. So the set of orientations is usually uncountably infinite. We will be most happy with the answer to\/ {\bf(c)} if the possible choices for the additional data on\/ $X$ can be made as small as possible (e.g.\ if there is only a finite choice), and in particular, if the set of such choices is much smaller than\/~$\Map\bigl(\Om^{\bs B}_n(X),B\bigr)$.
\end{quest}

\begin{rem}
\label{fm9rem1}
A very useful technique for answering Question \ref{fm9quest1} is when {\it two orientation functors factor via a transfer functor}. We will illustrate this for the case of a topological transfer functor $F:\Bord^{\bs B}_n(BG)\ra\Bord^{\bs B}_n(K(R,k))$, as in \S\ref{fm8}, though the idea works for other transfer functors as well. Suppose we are given a 2-commutative diagram of symmetric monoidal functors
\e
\begin{gathered}
\xymatrix@C=70pt@R=17pt{
*+[r]{\Bord^{\bs B}_n(BG)} \drtwocell_{}\omit^{}\omit{^{\th\,\,\,\,}} \ar[d]^F \ar@/^.8pc/[drr]^(0.6)\sO \\
*+[r]{\Bord^{\bs B}_n(K(R,k))}  \ar[rr]^(0.5){\sO'} && *+[l]{A\qs B,} }
\end{gathered}
\label{fm9eq2}
\e
where $F$ is a transfer functor and $\sO,\sO'$ are orientation functors. Let $X$ be a compact $n$-manifold with a $\bs B$-structure, and consider the diagram
\begin{equation*}
\xymatrix@C=70pt@R=13pt{
*+[r]{\Bord_X(BG)} \drrtwocell_{}\omit^{}\omit{^{\id\,\,\,}} \ar[dr]_{F_X}  \ar[rr]_(0.35)\boo \ar[ddd]^{\Pi_X^{\bs B}} &  & *+[l]{0\qs B=B\text{-tor}} \\
\drtwocell_{}\omit^{}\omit{^{\id\,\,\,}} & \Bord_X(K(R,k)) \ar[d]^{\Pi_X^{\bs B}} \ar[ur]_\boo & \\
& \Bord^{\bs B}_n(K(R,k))  \ar[dr]^{\sO'} \urtwocell_{}\omit^{}\omit{^{\eta'_X\,\,\,}} & \dlltwocell_{}\omit^{}\omit{^{\,\,\,\,\th}} \\
*+[r]{\Bord^{\bs B}_n(BG)} \ar[ur]^F \ar[rr]^(0.6)\sO && *+[l]{A\qs B.\!} \ar[uuu]^{F_{A\qs B}^{0\qs B}} }
\end{equation*}
Here $\eta_X'$ is some orientation for $\sO'$ on $X$. Composing natural isomorphisms across the diagram gives a natural isomorphism $\eta_X$ as in \eq{fm9eq1}, which is an orientation for $\sO$ on $X$. Thus, given a diagram~\eq{fm9eq2}:
\begin{itemize}
\setlength{\itemsep}{0pt}
\setlength{\parsep}{0pt}
\item[(i)] An orientation $\eta_X'$ for $\sO'$ on $X$ determines an orientation $\eta_X$ for $\sO$ on $X$.
\item[(ii)] If $\sO'$ is orientable for $X$ then $\sO$ is orientable for $X$.
\item[(iii)] If $\sO$ is not orientable for $X$ then $\sO'$ is not orientable for~$X$.
\end{itemize}
We can use this when the target categories $\Bord_X(K(R,k)),\Bord^{\bs B}_n(K(R,k))$ are smaller and simpler than the domain categories $\Bord_X(BG),\Bord^{\bs B}_n(BG)$. Then orientability for $\sO'$ is a sufficient condition for orientability for $\sO$, as in Question \ref{fm9quest1}(b), and an orientation $\eta_X'$ for $\sO'$ on $X$ is additional data which determines an orientation $\eta_X$ on $X$, as in Question~\ref{fm9quest1}(c).
\end{rem}

Here is our main theorem on orientability, proved in~\S\ref{fm191}. 

\begin{thm}
\label{fm9thm1}
Suppose $n\ge 0,$ and\/ $\bs B$ is a tangential structure, and\/ $\sO:\Bord^{\bs B}_n(BG)\ra A\qs B$ is an orientation functor, as in Definition\/ {\rm\ref{fm9def1}}. Then:
\begin{itemize}
\setlength{\itemsep}{0pt}
\setlength{\parsep}{0pt}
\item[{\bf(a)}] Consider the commutative diagram
\end{itemize}
\e
\begin{gathered}
\xymatrix@!0@C=90pt@R=35pt{
& \Aut_{\Bord_{n-1}^{\bs B}(\cL BG)}(\boo)  \ar[rr]^(0.37){I_n^{\bs B,G}}_(0.37){\eq{fm4eq10}} && *+[l]{\Aut_{\Bord^{\bs B}_n(BG)}(\boo)} \ar[dd]_(0.4){\sO(\boo)} \\
*+[r]{\Om^{\bs B}_n(\cL BG)} \ar[ur]^{\eq{fm4eq12}}_\cong \ar[d]^{\Pi^{\bs B}_n(BG)} \ar[rr]^(0.4){\xi^{\bs B}_n(BG)}_(0.35){\eq{fm2eq6}}  && \ti\Om_{n+1}^{\bs B}(BG)  \ar[ur]^(0.4){\eq{fm4eq4}} \\
*+[r]{\Om^{\bs B}_n(\cL BG;BG)} \ar@{..>}[rrr]^(0.55){\Xi_{n,\sO}^{\bs B,G}} \ar[urr]^(0.55){\eq{fm2eq7}}_(0.6){\hat\xi^{\bs B}_n(BG)} &&& *+[l]{B=\Aut_{A\qs B}(\boo),\!} }\!\!\!
\end{gathered}
\label{fm9eq3}
\e
\begin{itemize}
\setlength{\itemsep}{0pt}
\setlength{\parsep}{0pt}
\item[] where\/ $\xi^{\bs B}_n(BG),$ $\hat\xi^{\bs B}_n(BG),$ $\Pi^{\bs B}_n(BG)$ are as in Definition\/ {\rm\ref{fm2def3}}. The top parallelogram commutes by \eq{fm4eq13}. The bottom left triangle commutes by \eq{fm2eq7}. Define\/ $\Xi_{n,\sO}^{\bs B,G}$ to be the unique morphism making the bottom right quadrilateral commute.

Then\/ $\sO$ is orientable for every compact\/ $n$-manifold $X$ with $\bs B$-structure in the sense of Definition\/ {\rm\ref{fm2def3}} if and only if\/ $\Xi_{n,\sO}^{\bs B,G}\equiv\ul{0}$.
\item[{\bf(b)}] Now let\/ $X$ be a compact\/ $n$-manifold with\/ $\bs B$-structure. Then $\sO$ is orientable for $X$ if and only if there does not exist a principal\/ $G$-bundle $Q\ra X\t\cS^1$ such that\/ $[X,Q]$ represents an element of\/ $\Om^{\bs B}_n(\cL BG;BG)\sm\Ker \Xi_{n,\sO}^{\bs B,G}$. This last condition is equivalent to $[X\t\cS^1,Q]$ representing an element of\/ $\Om_{n+1}^{\bs B}(BG)\sm \Ker\pi_1(\sO),$ using the isomorphism \eq{fm4eq4}. Here $\cS^1$ has the $\bs B$-structure induced from the standard\/ $\bs B$-structure on the closed unit disc $D^2\subset\R^2$ by identifying $S^1=\pd D^2,$ so for example the bounding spin structure when $\bs B$ is $\bs\Spin$. 
\end{itemize}

Analogues of\/ {\bf(a)\rm,\bf(b)} hold for orientation functors on the other bordism categories $\Bord_{n,k}^{\bs B}(MH),\Bord^{\bs B}_n(K(R,k)),\Bord^{\bs B}_n(T)_\top,$ as follows:
\begin{itemize}
\setlength{\itemsep}{0pt}
\setlength{\parsep}{0pt}
\item[{\bf(i)}] For $\Bord_{n,k}^{\bs B}(MH),$ the analogue of\/ \eq{fm9eq3} is
\end{itemize}
\begin{equation*}
\xymatrix@!0@C=100pt@R=35pt{
& \Aut_{\Bord_{n-1,k}^{\bs B}(\cL MH)}(\boo)  \ar[rr]^(0.42){I_{n,k}^{\bs B,H}}_(0.39){\eq{fm5eq10}} && *+[l]{\Aut_{\Bord_{n,k}^{\bs B}(MH)}(\boo)} \ar[dd]_(0.4){\sO(\boo)} \\
*+[r]{\Om^{\bs B}_n(\cL MH)} \ar[ur]^{\eq{fm5eq12}}_\cong \ar[d]^{\Pi^{\bs B}_n(MH)} \ar[rr]^(0.45){\xi^{\bs B}_n(MH)}_(0.4){\eq{fm2eq6}}  && \ti\Om_{n+1}^{\bs B}(MH)  \ar[ur]^(0.4){\eq{fm5eq5}} \\
*+[r]{\Om^{\bs B}_n(\cL MH;MH)} \ar@{..>}[rrr]^(0.6){\Xi_{n,k,\sO}^{\bs B,H}} \ar[urr]^(0.55){\eq{fm2eq7}}_(0.6){\hat\xi^{\bs B}_n(MH)} &&& *+[l]{B=\Aut_{A\qs B}(\boo).\!} }\!\!\!
\end{equation*}
\begin{itemize}
\setlength{\itemsep}{0pt}
\setlength{\parsep}{0pt}
\item[] The top parallelogram commutes by \eq{fm5eq13}. The bottom left triangle commutes by \eq{fm2eq7}.

Then in {\bf(a)\rm,} $\sO:\Bord_{n,k}^{\bs B}(MH)\ra A\qs B$ is orientable for every compact\/ $n$-manifold $X$ with $\bs B$-structure if and only if\/ $\Xi_{n,k,\sO}^{\bs B,H}\equiv\ul{0},$ and in {\bf(b)\rm,} $\sO$ is orientable for $X$ if and only if there does not exist a compact\/ $(n+1-k)$-submanifold\/ $M\subset X\t\cS^1$ with normal\/ $H$-structure such that\/ $[X,M]$ represents an element of\/ $\Om^{\bs B}_n(\cL MH;MH)\sm\Ker\Xi_{n,k,\sO}^{\bs B,H},$ or equivalently $[X\t\cS^1,M]$ represents an element of\/~$\Om_{n+1}^{\bs B}(MH)\sm \Ker\pi_1(\sO)$.
\item[{\bf(ii)}] For $\Bord^{\bs B}_n(K(R,k)),$ the analogue of\/ \eq{fm9eq3} is
\end{itemize}
\begin{equation*}
\xymatrix@!0@C=95pt@R=35pt{
& \Aut_{\Bord_{n-1}^{\bs B}(\cL K(R,k))}(\boo)  \ar[rr]^(0.41){I_n^{\bs B,K(R,k)}}_(0.44){\eq{fm6eq8}} && *+[l]{\Aut_{\Bord^{\bs B}_n(K(R,k))}(\boo)} \ar[dd]_(0.4){\sO(\boo)} \\
*+[r]{\Om^{\bs B}_n(\cL K(R,k))} \ar[ur]^{\eq{fm6eq10}}_\cong \ar[d]^{\Pi^{\bs B}_n(K(R,k))} \ar[rr]^(0.5){\xi^{\bs B}_n(K(R,k))}_(0.35){\eq{fm2eq6}}  && \ti\Om_{n+1}^{\bs B}(K(R,k))  \ar[ur]^(0.4){\eq{fm6eq5}} \\
*+[r]{\Om^{\bs B}_n(\cL K(R,k);K(R,k))} \ar@{..>}[rrr]^(0.65){\Xi_{n,\sO}^{\bs B,K(R,k)}} \ar[urr]^(0.55){\eq{fm2eq7}}_(0.6){\hat\xi^{\bs B}_n(K(R,k))} &&& *+[l]{B=\Aut_{A\qs B}(\boo).\!} }\!\!\!
\end{equation*}
\begin{itemize}
\setlength{\itemsep}{0pt}
\setlength{\parsep}{0pt}
\item[] The top parallelogram commutes by \eq{fm6eq11}. The bottom left triangle commutes by \eq{fm2eq7}.

Then in {\bf(a)\rm,} $\sO:\Bord^{\bs B}_n(K(R,k))\ra A\qs B$ is orientable for every compact\/ $n$-manifold $X$ with $\bs B$-structure if and only if\/ $\Xi_{n,\sO}^{\bs B,K(R,k)}\equiv\ul{0},$ and in {\bf(b)\rm,} $\sO$ is orientable for $X$ if and only if there does not exist a cohomology class $\ga\in H^k(X\t\cS^1,R)$ such that\/ $[X,\ga]$ represents an element of\/ $\Om^{\bs B}_n(\cL K(R,k);K(R,k))\sm\Ker\Xi_{n,\sO}^{\bs B,K(R,k)},$ or equivalently $[X\t\cS^1,\ga]$ represents an element of\/~$\Om_{n+1}^{\bs B}(K(R,k))\sm \Ker\pi_1(\sO)$.
\item[{\bf(iii)}] For $\Bord^{\bs B}_n(T)_\top,$ the analogue of\/ \eq{fm9eq3} is
\end{itemize}
\begin{equation*}
\xymatrix@!0@C=100pt@R=35pt{
& \Aut_{\Bord_{n-1}^{\bs B}(\cL T)_\top}(\boo)  \ar[rr]_(0.42){I_n^{\bs B,T}} && *+[l]{\Aut_{\Bord^{\bs B}_n(T)_\top}(\boo)} \ar[dd]_(0.4){\sO(\boo)} \\
*+[r]{\Om^{\bs B}_n(\cL T)} \ar[ur]^(0.4){\eq{fm7eq3}}_\cong \ar[d]^{\Pi^{\bs B}_n(T)} \ar[rr]^(0.45){\xi^{\bs B}_n(T)}_(0.4){\eq{fm2eq6}}  && \ti\Om_{n+1}^{\bs B}(T)  \ar[ur]^(0.4){\eq{fm7eq3}} \\
*+[r]{\Om^{\bs B}_n(\cL T;T)} \ar@{..>}[rrr]^(0.6){\Xi_{n,\sO}^{\bs B,T}} \ar[urr]^(0.55){\eq{fm2eq7}}_(0.6){\hat\xi^{\bs B}_n(T)} &&& *+[l]{B=\Aut_{A\qs B}(\boo).\!} }\!\!\!
\end{equation*}
\begin{itemize}
\setlength{\itemsep}{0pt}
\setlength{\parsep}{0pt}
\item[] Then in {\bf(a)\rm,} $\sO:\Bord^{\bs B}_n(T)_\top\ra A\qs B$ is orientable for every compact\/ $n$-manifold $X$ with $\bs B$-structure if and only if\/ $\Xi_{n,\sO}^{\bs B,T}\equiv\ul{0},$ and in {\bf(b)\rm,} $\sO$ is orientable for $X$ if and only if there does not exist a map $\phi:X\ra\cL T$ such that\/ $[X,\phi]$ represents an element of\/ $\Om^{\bs B}_n(\cL T;T)\sm\Ker\Xi_{n,\sO}^{\bs B,T},$ or equivalently there does not exist a map $\phi':X\t\cS^1\ra T$ such that\/ $[X\t\cS^1,\phi']$ represents an element of\/~$\Om_{n+1}^{\bs B}(T)\sm \Ker\pi_1(\sO)$.
\end{itemize}
\end{thm}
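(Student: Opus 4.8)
\textbf{Proof proposal for Theorem \ref{fm9thm1}.}

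The plan is to prove the statement first for the gauge-theoretic case $\cC=\Bord^{\bs B}_n(BG)$ in full detail, and then observe that the arguments for the remaining cases $\Bord_{n,k}^{\bs B}(MH)$, $\Bord^{\bs B}_n(K(R,k))$, $\Bord^{\bs B}_n(T)_\top$ are word-for-word identical after substituting the appropriate bordism groups, since all four categories are Picard groupoids with the same formal structure (Propositions \ref{fm4prop1}, \ref{fm5prop1}, \ref{fm6prop1}, \ref{fm7prop1}) and each has a companion loop category together with a comparison functor $I_n^{\bs B,-}$ fitting into a diagram of the shape \eq{fm4eq13} (respectively \eq{fm5eq13}, \eq{fm6eq11}, and the analogous diagram for $T$). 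So only the first case requires genuine work.

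First I would unwind what an orientation of $\sO$ for $X$ is, using the description of morphisms in a Picard groupoid. By Definition \ref{fm9def2}, an orientation is a natural isomorphism $\eta_X$ filling \eq{fm9eq1}; since $\Bord_X(BG)$ is a groupoid with $\Pi_X^{\bs B}$ landing in $\Bord^{\bs B}_n(BG)$, and $\sO$ is monoidal, the obstruction to constructing $\eta_X$ is concentrated on automorphism groups. Concretely, a monoidal natural isomorphism between the two monoidal functors $\boo$ and $F_{A\qs B}^{0\qs B}\ci\sO\ci\Pi_X^{\bs B}$ from $\Bord_X(BG)$ to $B$-tor exists if and only if the composite group morphism
\e
\Aut_{\Bord_X(BG)}(P) \xrightarrow{\ \Pi_X^{\bs B}\ } \Aut_{\Bord^{\bs B}_n(BG)}(X,P) \xrightarrow{\ \sO(\boo)\text{-twist}\ } B
\label{fm9propeq1}
\e
vanishes for every principal $G$-bundle $P\ra X$, and moreover these vanishing conditions are compatible so that the local choices glue. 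Here the second arrow is the map that uses $\sO$ on automorphisms, identified via \eq{fm4eq5}--\eq{fm4eq6} with $\pi_1(\sO)=\sO(\boo):\Om_{n+1}^{\bs B}(BG)\to B$. I would make the gluing precise by the standard argument that $\Pi_X^{\bs B}$ is essentially surjective onto its image and any two choices of $\eta_X$ on objects differ by an element of $\Map(\pi_0(\Bord_X(BG)),B)$, reducing existence of a coherent $\eta_X$ to vanishing of \eq{fm9propeq1} for all $P$; the key categorical input is Theorem \ref{fmAthm2} on the classification of symmetric monoidal functors between Picard groupoids.

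Next I would run \eq{fm9propeq1} through the commutative square \eq{fm4eq15} of Proposition \ref{fm4prop3}, which identifies $\Pi_X^{\bs B}$ on $\Aut_{\Bord_X(BG)}(P)$ with $\xi^{\bs B}_n(BG)\ci\chi_P^{\bs B}$ followed by \eq{fm4eq5}, where $\chi_P^{\bs B}([Q])=[X,\ti\phi_Q]$ is the mapping-torus construction landing in $\Om^{\bs B}_n(\cL BG)$. Composing with $\sO(\boo)$ and using the definition of $\Xi_{n,\sO}^{\bs B,G}$ in \eq{fm9eq3} — which is exactly $\pi_1(\sO)\ci\eq{fm4eq4}^{-1}$ restricted along $\hat\xi^{\bs B}_n(BG)$, i.e.\ $\pi_1(\sO)$ transported to the relative group $\Om^{\bs B}_n(\cL BG;BG)$ — the vanishing of \eq{fm9propeq1} for all $P$ becomes: $\Xi_{n,\sO}^{\bs B,G}$ vanishes on the image of $\chi_P^{\bs B}$ composed with the projection $\Pi^{\bs B}_n(BG)$ to relative bordism (the constant-loop part $\eq{fm2eq7}$ maps to zero under $\hat\xi$, as the left triangle of \eq{fm2eq7} records). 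For \textbf{(a)}, I claim that as $X$ ranges over all compact $n$-manifolds with $\bs B$-structure and $P$ over all $G$-bundles, the classes $\Pi^{\bs B}_n(BG)([X,\ti\phi_P])$ sweep out \emph{all} of $\Om^{\bs B}_n(\cL BG;BG)$: indeed any class in $\Om^{\bs B}_n(\cL BG)$ is $[X',\bar f]$ for some $X',\bar f:X'\to\cL BG$, hence arises as $\chi_{P}^{\bs B}$ of a suitable $\Bord_{X'}$-automorphism by Proposition \ref{fm4prop1}(a) and the mapping-torus description, and projecting kills the section part. Therefore orientability for \emph{all} $X$ is equivalent to $\Xi_{n,\sO}^{\bs B,G}\equiv\ul 0$. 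For \textbf{(b)}, for fixed $X$ the relevant classes are precisely $\Pi^{\bs B}_n(BG)([X,Q])$ for $G$-bundles $Q\to X\t\cS^1$ (these are exactly the objects of $\Bord_{n-1}^{\bs B}(\cL BG)$ over $X$, via $P\leftrightarrow Q$ on $X\t\cS^1$), so $\sO$ is orientable for $X$ iff $\Xi_{n,\sO}^{\bs B,G}$ kills every such class, i.e.\ iff no $Q$ gives a class in $\Om^{\bs B}_n(\cL BG;BG)\setminus\Ker\Xi_{n,\sO}^{\bs B,G}$; and by commutativity of the bottom-right quadrilateral together with \eq{fm4eq4} and \eq{fm2eq6}, this is the same as $[X\t\cS^1,Q]\notin\Om_{n+1}^{\bs B}(BG)\setminus\Ker\pi_1(\sO)$, where the $\cS^1$ carries the disc-bounding $\bs B$-structure as required by the definition of $\xi^{\bs B}_n$ in Definition \ref{fm2def3}.

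The main obstacle I anticipate is \textbf{the gluing/coherence step}: showing that vanishing of the pointwise obstruction \eq{fm9propeq1} for every $P$ actually produces a \emph{monoidal} natural isomorphism $\eta_X$ filling \eq{fm9eq1}, rather than merely a non-monoidal one or a family of unrelated local trivializations. This needs the $H^2_\sym$-classification of Theorem \ref{fmAthm2}(c) applied to the functor $\sO\ci\Pi_X^{\bs B}:\Bord_X(BG)\to A\qs B$: one must check that the induced morphism on $\pi_0$ is trivial (which is where the pointwise vanishing of \eq{fm9propeq1} enters, together with the fact that $\Bord_X(BG)$-automorphisms surject onto the relevant subgroup of $\Om_{n+1}^{\bs B}(BG)$) and that the relevant class in $H^2_\sym(\pi_0(\Bord_X(BG)),B)$ vanishes — which in our situation is automatic because the target $0\qs B$ through which we must factor has trivial quadratic form, so no symmetric-cocycle obstruction survives. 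A secondary technical point is verifying the surjectivity claims of the previous paragraph carefully (every relative loop-bordism class over $X$, resp.\ globally, is realized by a $\Bord_X$-automorphism up to the constant-loop ambiguity); this is a direct consequence of the bijection \eq{fm4eq5}--\eq{fm4eq6} and Proposition \ref{fm4prop3}, and I would spell it out rather than treat it as obvious.

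For the three variant statements \textbf{(i)}--\textbf{(iii)}, I would simply remark that replacing $(\Bord^{\bs B}_n(BG),\Bord_{n-1}^{\bs B}(\cL BG),\Bord_X(BG),\Pi_X^{\bs B},\chi_P^{\bs B})$ by the corresponding quintuple for $MH$ (Definitions \ref{fm5def1}, \ref{fm5def2}, \ref{fm5def4}, equation \eq{fm5eq14}, Proposition \ref{fm5prop4}), for $K(R,k)$ (Definitions \ref{fm6def1}, \ref{fm6def2}, \ref{fm6def4}, equation \eq{fm6eq12}, Proposition \ref{fm6prop3}), or for $T$ (Definitions \ref{fm7def1}, \ref{fm7def2}, equation \eq{fm7eq4}), and using Propositions \ref{fm5prop2}(b), \ref{fm6prop2}(c) in place of \eq{fm4eq13}, leaves every step of the argument unchanged; the stated conclusions are the verbatim translations, with ``$G$-bundle $Q\to X\t\cS^1$'' replaced by ``$(n+1-k)$-submanifold $M\subset X\t\cS^1$ with normal $H$-structure'', ``class $\ga\in H^k(X\t\cS^1,R)$'', or ``map $\phi:X\to\cL T$'' respectively. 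This completes the proof plan.
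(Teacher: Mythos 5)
Your proposal follows essentially the same route as the paper's proof in \S\ref{fm191}: reduce orientability for $X$ to the vanishing, for every $P$, of the composite $\Aut_{\Bord_X(BG)}(P)\ra\Aut_{\Bord^{\bs B}_n(BG)}(X,P)\ra B$; transport this through the square \eq{fm4eq15} so that it becomes $\Xi_{n,\sO}^{\bs B,G}$ evaluated on the mapping-torus classes $\Pi^{\bs B}_n(BG)\ci\chi_P^{\bs B}([Q])$; and then note that these classes exhaust $\Om^{\bs B}_n(\cL BG;BG)$ as $X,P,Q$ vary (giving (a)), respectively exhaust exactly the classes represented by $[X,Q]$ for fixed $X$ (giving (b)), with (i)--(iii) identical after substitution. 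One correction: the ``main obstacle'' you single out is not actually present. Definition \ref{fm9def2} asks only for a natural isomorphism $\eta_X$ in \eq{fm9eq1}, not a monoidal one --- indeed $\Bord_X(BG)$ carries no monoidal structure and $\Pi_X^{\bs B}$ is not monoidal --- so, since $B$ is abelian and all objects of $0\qs B$ are isomorphic, existence of $\eta_X$ is exactly equivalent to the pointwise vanishing on each $\Aut_{\Bord_X(BG)}(P)$, with no gluing or $H^2_\sym$ obstruction to check. Moreover your proposed disposal of that obstruction (``the quadratic form on $0\qs B$ is trivial, so no symmetric-cocycle obstruction survives'') is not valid reasoning: $H^2_\sym(\pi_0,B)$ need not vanish just because the quadratic invariant does, e.g.\ $H^2_\sym(\Z_2,\Z_2)\cong\Z_2$ by Example \ref{fmAex7}. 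Fortunately that step is never needed, so the proof goes through.
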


\begin{rem}
\label{fm9rem2}
Observe that Theorem \ref{fm9thm1}(a) and its analogues in (i)--(iii) gives an answer to Question \ref{fm9quest1}(a), and part (b) and its analogues in (i)--(iii) gives a necessary and sufficient answer to Question \ref{fm9quest1}(b). These answers to Question \ref{fm9quest1}(a) are extremely helpful: in any given problem we just have to compute the morphisms $\Xi_{n,\sO}^{\bs B,G},\Xi_{n,k,\sO}^{\bs B,H},\Xi_{n,\sO}^{\bs B,K(R,k)}$, which can often be done with enough work.

For $\Bord^{\bs B}_n(BG),\Bord_{n,k}^{\bs B}(MH)$, the answers to Question \ref{fm9quest1}(b) are not always useful: it is not easy to show the nonexistence of a principal $G$-bundle $Q\ra X\t\cS^1$ or an $(n+1-k)$-submanifold $M\subset X\t\cS^1$ satisfying given conditions. However, for $\Bord^{\bs B}_n(K(R,k))$, checking whether there exists a cohomology class $\ga\in H^k(X\t\cS^1,R)$ satisfying given conditions is much more feasible. So our favourite strategy for answering Question \ref{fm9quest1}(b) for $\Bord^{\bs B}_n(BG),\Bord_{n,k}^{\bs B}(MH)$ will be to reduce it to Question \ref{fm9quest1}(b) for $\Bord^{\bs B}_n(K(R,k))$ by factoring via a transfer functor as in Remark~\ref{fm9rem1}.
\end{rem}

\subsection{Elliptic operator bordism}
\label{fm92}

We briefly review the construction of the elliptic bordism category $\Bord_m^{\Ell_\ell}$ and the main result from the second author~\cite{Upme2}.

\begin{dfn}[{see \cite[Def.~2.1]{Upme2}}]
\label{fm9def3}
Let $\ell\in\N$ and write `$\equiv$' for equivalence modulo $8.$ A first order elliptic differential operator $D:\Ga^\iy(E_0)\ra\Ga^\iy(E_1)$ is {\it $\ell$-adapted} if the vector bundles $E_0, E_1$ have metric $\K_\ell$-linear structures, where the (skew) field $\K_\ell$ is defined according to Table \ref{fm9tab1}, and the following conditions hold.
\begin{itemize}
\setlength{\itemsep}{0pt}
\setlength{\parsep}{0pt}
\item
If $\ell\equiv 1,$ then $E_0=E_1$ and $D$ is $\R$-linear formally skew-adjoint, $D^*=-D.$
\item
If $\ell\equiv 2,$ then $E_0=\ov E_1$ and $D$ is $\C$-linear formally skew-adjoint, $D^*=-\ov{D}.$
\item
If $\ell\equiv 3,7,$ then $E_0=E_1$ and $D$ is $\K_\ell$-linear formally self-adjoint, $D^*=D.$
\item
If $\ell\equiv 5,$ then $E_0=E_1^\diamond$ and $D$ is $\H$-linear formally self-adjoint, $D^*=D^\diamond.$
\item
If $\ell\equiv 6,$ then $E_0=\ov E_1$ and $D$ is $\C$-linear formally self-adjoint, $D^*=\ov{D}.$
\item
If $\ell\equiv 0,4,$ then $D$ is $\K_\ell$-linear with no further conditions imposed.
\end{itemize}
\end{dfn}

\begin{table}[htb]
\centerline{\begin{tabular}{|l|c|c|c|c|c|c|c|c|c|c|c|}
\hline
$\bs\ell\equiv$ & $\bs 0$ & $\bs 1$ & $\bs 2$ & $\bs 3$ & $\bs 4$ & $\bs 5$ & $\bs 6$ & $\bs 7$\\ \hline
$\K_\ell$ & $\R$ & $\R$ & $\C$ & $\H$ & $\H$ & $\H$ & $\C$ & $\R$\\ \hline
$\Ga_\ell$ & $\Z$ & $\Z_2$ & $\Z_2$ & 0 & $\Z$ & 0 & 0 & 0\\ \hline
\end{tabular}}
\caption{\parbox[t]{270pt}{$\K_\ell$ is the natural base (skew) field of the real Clifford algebra $\Cl_{\ell-1}$ and $\Ga_\ell$ is the coefficient group $KO_\ell(\mathrm{pt})$ of  K-theory.}}
\label{fm9tab1}
\end{table}

For example, the {\it real Dirac operator} (meaning the positive Dirac operator $\sD_M^+$ if $\dim M\equiv 0,4$ and the skew-adjoint Dirac operator $\sD_M^\mathrm{skew}$ if $\dim M\equiv 1,2$) on a Riemannian spin manifold $M$  is $\ell$-adapted by \cite[\S 2.1]{Upme2} with~$\ell=\dim M.$

Recall from \cite[Def.~2.6]{Upme2} that for all $k,\ell\in\N$ we can define an {\it elliptic bordism category} $\Bord_k^{\Ell_\ell}.$ The objects are pairs $(M,D_M),$ where $M$ is a compact $k$-manifold without boundary and $D_M$ is an $\ell$-adapted elliptic differential operator on $M.$ A morphism $[N,D_N]$ from $M_0$ to $M_1$ is an equivalence class of pairs $(N,D_N)$ of a bordism $N$ with $\pd N=-M_0\amalg M_1$, equipped with an $(\ell+1)$-adapted elliptic differential operator $D_N$ on $N$ that restricts to a the {\it cylindrical form} $\Cyl(D_{M_0})\vert_{M_0\t[0,\varepsilon_0)}\amalg\Cyl(D_{M_1})\vert_{M_1\t(\varepsilon_1,1]}$ from \cite[Def.~2.5]{Upme2} over a collar neighbourhood $(M_0\t[0,\varepsilon_0))\amalg(M_1\t(\varepsilon_1,1])$ of the boundary of $N.$ The pair $(N,D_N),$ modulo higher bordism, is called an elliptic operator bordism.

Recall from \cite{Upme2} that the {\it orientation bundle} $\sO_\ell(\cD_M)$ of a family $\cD_M$ of $\ell$-adapted elliptic differential operators is the principal $\Z_2$-bundle $O(\DET\cD_M)$ of orientations of the determinant line bundle of $\cD_M$ if $\ell\equiv 0,$ the principal $\Z_2$-bundle $O(\PF\cD_M)$ of orientations of the Pfaffian line bundle of $\cD_M$ if $\ell\equiv 1,$ the principal $\Z$-bundle $\SP(\cD_M)$ of spectral enumerations if $\ell\equiv 3,7,$ and trivial otherwise. For a single  $\ell$-adapted elliptic differential operator $D_M,$ the orientation bundle over a point is simply called the {\it orientation torsor} $\sO_\ell(D_M)$ and is a $\Ga_\ell$-graded $\Ga_{\ell+1}$-torsor placed in the degree of the real index $\ind_\ell D_M\in\Ga_\ell.$ Note that the word `orientation' is used loosely here and includes also orientations of the Pfaffian line bundle and Floer gradings.

\begin{thm}[{see \cite[Th.~3.1]{Upme2}}]
\label{fm9thm2}
Every\/ $(\ell+1)$-adapted elliptic operator bordism\/ $[N,D_N]: (M_0,D_{M_0})\ra(M_1,D_{M_1})$ of\/ $\ell$-adapted elliptic differential operators on compact\/ $n$-manifolds\/ $M_0,$ $M_1$ without boundary induces an isomorphism of graded orientation torsors,\/ $\sO_\ell[N,D_N]: \sO_\ell(D_{M_0})\ra\sO_\ell(D_{M_1}),$ which depends on\/ $(N,D_N)$ only up to bordism and is continuous in families. This construction is functorial and compatible with disjoint unions, so determines a symmetric monoidal functor
\begin{equation*}
\sO_\ell:\Bord_n^{\Ell_\ell}\longra\KOtor
\end{equation*}
into the category of\/ $\Ga_\ell$-graded\/ $\Ga_{\ell+1}$-torsors.

The grading of\/ $\sO_\ell(M,D_M)$ is given by the real index\/ $\ind_\ell(D_M)\in\Ga_\ell.$ If\/ $[N,D_N]$ is an elliptic operator bordism between empty manifolds, then\/ $\sO_\ell(\es,\es)=\Ga_{\ell+1}$ and the induced automorphism corresponds to the real index\/ $\ind_{\ell+1}(D_N)\in\Ga_{\ell+1}$ under the natural isomorphism\/ $\Aut(\Ga_{\ell+1})\cong\Ga_{\ell+1}.$
\end{thm}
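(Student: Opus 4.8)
The statement to prove is Theorem \ref{fm9thm2} (quoted from \cite[Th.~3.1]{Upme2}), which asserts that elliptic operator bordism induces a well-defined, functorial, monoidal map on orientation torsors.

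Since this is quoted as an external result from \cite{Upme2}, the honest plan is to \emph{cite} it rather than reprove it. But if one wanted to sketch the argument, here is the approach.

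\medskip

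The plan is to construct, for an $(\ell+1)$-adapted elliptic operator bordism $[N,D_N]:(M_0,D_{M_0})\to(M_1,D_{M_1})$, an isomorphism $\sO_\ell[N,D_N]:\sO_\ell(D_{M_0})\to\sO_\ell(D_{M_1})$ by an Atiyah--Patodi--Singer style deformation-to-cylinders argument, then verify the three properties (independence of the bordism up to higher bordism, continuity in families, functoriality). First I would reduce everything to the case $\ell\equiv 0,1,3,7$, since in the remaining residue classes $\K_\ell$-linearity forces the orientation torsor to be canonically trivial and there is nothing to prove; the monoidal functor is then forced on those degrees. For $\ell\equiv 0$: stretch the collars of $N$ to infinite cylinders $M_i\times(-\infty,0]$ resp.\ $M_i\times[0,\infty)$, complete $D_N$ to a manifold-with-cylindrical-ends operator, and use the APS index theorem together with the gluing formula for determinant lines: $\DET(D_N^{\mathrm{APS}})\cong \DET(D_{M_0})^{*}\otimes\DET(D_{M_1})$ up to a canonical trivialization coming from the contractibility of the space of APS boundary conditions. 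An orientation of $\DET D_N$ (which exists because $N$ is a \emph{spin} bordism, so $\det\sD_N$ is canonically trivial --- this is exactly where the tangential structure enters) then transports an orientation of $\DET D_{M_0}$ to one of $\DET D_{M_1}$. For $\ell\equiv 1$ one runs the same argument with the Pfaffian line of the skew-adjoint operator in place of the determinant line; for $\ell\equiv 3,7$ one counts spectral flow along $D_N$ to get the $\Z$-torsor isomorphism.

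\medskip

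The key steps, in order: (1) set up cylindrical-end completions and $\Cyl(D_M)$ from \cite[Def.~2.5]{Upme2}, and check self-adjoint/skew-adjoint extension theory gives a Fredholm (or spectral-flow) problem; (2) establish the gluing/excision isomorphism for the relevant line bundle (determinant, Pfaffian) or for spectral sections, stated canonically; (3) define $\sO_\ell[N,D_N]$ using this isomorphism and the canonical trivialization of $\det\sD_N$ on the spin bordism $N$; (4) prove bordism-invariance: given a higher bordism $(Z,D_Z):(N_0,D_{N_0})\Rightarrow(N_1,D_{N_1})$, run the gluing argument one dimension up to see the two induced isomorphisms agree --- this uses the composability of the line-bundle gluing maps and the fact that $\det\sD_Z$ is again canonically trivial; (5) continuity in families: all constructions are natural in parameters, so the isomorphisms assemble to bundle maps; (6) functoriality and monoidality: composition of bordisms corresponds to gluing of cylindrical-end manifolds, and the gluing isomorphisms compose associatively, while disjoint union gives the tensor structure; the index/grading statement $\ind_\ell(D_M)\in\Ga_\ell$ is read off from the APS index, and for a closed bordism $N$ between empty manifolds the automorphism of $\Ga_{\ell+1}$ is $\ind_{\ell+1}(D_N)$ by definition of the determinant/Pfaffian of a Fredholm operator.

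\medskip

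The main obstacle is step (4), bordism-invariance, together with making all the gluing isomorphisms in steps (2)--(3) \emph{canonical} (not just existing up to a sign or a $\Z$-shift). Getting the signs/shifts to match requires carefully fixing conventions for APS boundary conditions, for the orientation of the spectral subspace at the boundary, and for the induced boundary $\bs B$-structure on $\partial N$ versus the opposite structure on $-M_0$ --- exactly the bookkeeping that Definitions \ref{fm2def1}--\ref{fm2def2} are designed to handle. Since \cite{Upme2} carries all of this out in full, for the present paper I would simply invoke \cite[Th.~3.1]{Upme2} and restate it as Theorem \ref{fm9thm2}, with the remarks above indicating why the spin (or more general $\bs B$) tangential structure is what makes the determinant and Pfaffian lines canonically trivial along bordisms, hence why the construction produces a genuine functor $\sO_\ell:\Bord_n^{\Ell_\ell}\to\KOtor$.
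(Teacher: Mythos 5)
Your proposal to invoke \cite[Th.~3.1]{Upme2} is exactly what the paper does: Theorem \ref{fm9thm2} is quoted verbatim as an external result and no proof is given in this monograph, so citing it is the correct and intended route.

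One caution about your supplementary sketch, since it contains a conceptual slip worth correcting. You write that the orientation of $\DET(D_N)$ ``exists because $N$ is a \emph{spin} bordism, so $\det\sD_N$ is canonically trivial --- this is exactly where the tangential structure enters.'' But the elliptic bordism category $\Bord_n^{\Ell_\ell}$ of Definition \ref{fm9def3} and \cite[Def.~2.6]{Upme2} carries no tangential structure at all: its objects are pairs $(M,D_M)$ of a compact manifold with an arbitrary $\ell$-adapted elliptic operator, and its morphisms are bordisms $N$ equipped with an $(\ell+1)$-adapted operator $D_N$ restricting to cylindrical form near $\pd N$. The mechanism that makes $\sO_\ell[N,D_N]$ well defined is the algebraic structure of $D_N$ itself --- its $\K_{\ell+1}$-linearity and (skew-)adjointness as in Table \ref{fm9tab1} --- fed into the real APS index theory of \cite{Upme2}, not any triviality of a Dirac determinant line on $N$. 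Spin structures only enter later, in Definition \ref{fm9def4}, where one \emph{applies} the functor $\sO_\ell$ to twisted Dirac operators on spin manifolds to build the gauge-theoretic orientation functors; conflating the two would make your sketch fail for the Fueter-operator applications of \S\ref{fm94}, where the operators are not Dirac operators of a global spin structure. The rest of your outline (cylindrical completions, gluing of determinant/Pfaffian lines or spectral sections, bordism-invariance one dimension up, and reading off the grading from $\ind_\ell$) is consistent with the strategy of \cite{Upme2}.
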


\subsection{Analytic orientation functors in gauge theory}
\label{fm93}

We recall material from the second author~\cite{Upme2}.

\begin{dfn}
\label{fm9def4}
Let $n\ge 0$, and $\bs B$ be a tangential structure factoring through $\bs\Spin$, and $G$ be a Lie group. We will define an orientation functor
\begin{equation*}
\sO_n^{\bs B,G}:\Bord^{\bs B}_n(BG)\longra\Ga_n\qs\Ga_{n+1},
\end{equation*}
following the second author \cite[\S 3.2]{Upme2}. Here $\Bord^{\bs B}_n(BG)$ and $\Ga_n\qs\Ga_{n+1}$ are the Picard groupoids from \S\ref{fm41} and \S\ref{fm92}, with $\Ga_n,\Ga_{n+1}$ as in Table~\ref{fm9tab1}.

Let $(X,P)$ be an object in $\Bord^{\bs B}_n(BG)$. Write $\A_{X,P}$ for the infinite-dim\-en\-sion\-al moduli space of pairs $(g_X,\nabla_P)$, where $g_X$ is a Riemannian metric on $X$ and $\nabla_P$ is a connection on $P\ra X$. Note that $\A_{X,P}$ is contractible, as the moduli space of metrics $g_X$ is an infinite-dimensional open convex cone in an affine space $\Ga^\iy(S^2T^*X)$, and the moduli space of connections $\nabla_P$ is an infinite-dimensional affine space modelled on~$\Ga^\iy(T^*X\ot\Ad(P))$.

For $(g_X,\nabla_P)\in\A_{X,P}$, as $\bs B$ factors through $\bs\Spin$, the metric $g_X$ induces a real Dirac operator  $\sD_X:\Ga^\iy(E_0)\ra\Ga^\iy(E_1)$ on the compact spin Riemannian $n$-manifold $(X,g_X)$ as in Definition \ref{fm9def3} (note that the definition depends on $n\mod 8$). Write $\sD_X^{\nabla_P}:\Ga^\iy(E_0\ot\Ad(P))\ra\Ga^\iy(E_1\ot\Ad(P))$ for $\sD_X$ twisted by the real vector bundle $\Ad(P)=(P\t\g)/G$ with connection induced by $\nabla_P$. Then $\sD_X^{\nabla_P}$ is an $n$-adapted elliptic operator on $X$, as in Definition \ref{fm9def3}. Hence the orientation torsor $\sO_n(\sD_X^{\nabla_P})$ is a $\Ga_n$-graded $\Ga_{n+1}$-torsor.

Now $\sO_n(\sD_X^{\nabla_P})$ depends continuously on $(g_X,\nabla_P)\in\A_{X,P}$. As $\A_{X,P}$ is contractible and $\Ga_n,\Ga_{n+1}$ are discrete, this means that the grading in $\Ga_n$ is constant on $\A_{X,P}$, and the $\Ga_{n+1}$-torsors form a principal $\Ga_{n+1}$-bundle $R_{X,P}\ra\A_{X,P}$ on $\A_{X,P}$. Define a $\Ga_n$-graded $\Ga_{n+1}$-torsor $\sO_n^{\bs B,G}(X,P)$ to have the $\Ga_n$-grading of $\sO_n(\sD_X^{\nabla_P})$ for any $(g_X,\nabla_P)\in\A_{X,P}$, which is independent of $(g_X,\nabla_P)$, and to have $\Ga_{n+1}$-torsor the set of constant sections of $R_{X,P}\ra\A_{X,P}$, which is a $\Ga_{n+1}$-torsor as $R_{X,P}$ is a principal $\Ga_{n+1}$-bundle and $\A_{X,P}$ is contractible. This defines the functor $\sO_n^{\bs B,G}$ on objects $(X,P)$ in~$\Bord^{\bs B}_n(BG)$.

Next let $[Y,Q]:(X_0,P_0)\ra(X_1,P_1)$ be a morphism in $\Bord^{\bs B}_n(BG)$. For a representative $(Y,Q)$ for $[Y,Q]$, write $\A'_{Y,Q}$ for the contractible infinite-dim\-en\-sion\-al moduli space of pairs $(g_Y,\nabla_Q)$, where $g_Y$ is a Riemannian metric on $Y$ and $\nabla_Q$ is a connection on $Q\ra X$. There are boundary restriction morphisms $\A'_{Y,Q}\ra\A_{X_0,P_0}$ and $\A'_{Y,Q}\ra\A_{X_1,P_1}$ mapping $(g_Y,\nabla_Q)\mapsto(g_{X_0},\nabla_{P_0})=(g_Y,\nabla_Q)\vert_{X_0}$ and $(g_Y,\nabla_Q)\mapsto(g_{X_1},\nabla_{P_1})=(g_Y,\nabla_Q)\vert_{X_1}$. For $(g_Y,\nabla_Q)\in\A'_{Y,Q}$, write $\sD_Y:\Ga^\iy(E'_0)\ra\Ga^\iy(E'_1)$ for the real Dirac operator on the compact spin Riemannian $(n+1)$-manifold $(Y,g_Y)$ and $\sD_Y^{\nabla_Q}:\Ga^\iy(E'_0\ot\Ad(Q))\ra\Ga^\iy(E'_1\ot\Ad(Q))$ for $\sD_Y$ twisted by $\nabla_Q$. Then 
\begin{equation*}
[Y,\sD_Y^{\nabla_Q}]:(X_0,\sD_{X_0}^{\nabla_{P_0}})\longra(X_1,\sD_{X_1}^{\nabla_{P_1}})
\end{equation*}
is an $(n+1)$-adapted elliptic operator bordism as in \S\ref{fm93}, and so by Theorem \ref{fm9thm2} induces an isomorphism $\sO_n[Y,\sD_Y^{\nabla_Q}]:\sO_n(X_0,\sD_{X_0}^{\nabla_{P_0}})\ra\sO_n(X_1,\sD_{X_1}^{\nabla_{P_1}})$ of $\Ga_n$-graded $\Ga_{n+1}$-torsors. As this depends continuously on $(g_Y,\nabla_Q)$ in the contractible space $\A'_{Y,Q}$, we may pass to global constant sections on $\A_{X_0,P_0},\ab\A_{X_0,P_0},\ab\A'_{Y,Q}$ to obtain an isomorphism $\sO_n^{\bs B,G}([Y,Q]):\sO_n^{\bs B,G}(X_0,P_0)\ra\sO_n^{\bs B,G}(X_0,P_0)$ of $\Ga_n$-graded $\Ga_{n+1}$-torsors. Theorem \ref{fm9thm2} implies that this depends on $(Y,Q)$ only up to spin bordism, and hence only on the morphism $[Y,Q]$ in $\Bord^{\bs B}_n(BG)$. This defines $\sO_n^{\bs B,G}$ on morphisms $[Y,Q]$ in $\Bord^{\bs B}_n(BG)$.

It follows from $\sO_n$ a symmetric monoidal functor in Theorem \ref{fm9thm2} that $\sO_n^{\bs B,G}$ is a symmetric monoidal functor. It is an {\it analytic orientation functor}.

Following \cite[Def.~1.2]{JTU}, we also define the {\it normalized orientation functor\/}
\begin{equation*}
\sN_n^{\bs B,G}:\Bord^{\bs B}_n(BG)\longra\Ga_n\qs\Ga_{n+1},
\end{equation*}
by $\sN_n^{\bs B,G}(X,P)\!=\!\sO_n^{\bs B,G}(X,P)\ot\sO_n^{\bs B,G}(X,X\t G)^{-1}$ on objects, and $\sN_n^{\bs B,G}([Y,Q])\ab=\sO_n^{\bs B,G}([Y,Q])\ot \sO_n^{\bs B,G}([Y,Y\t G])^{-1}$ on morphisms $[Y,Q]:(X_0,P_0)\ra (X_1,P_1)$, where $X\t G\ra X$ and $Y\t G\ra Y$ are the trivial principal $G$-bundles, and we use the Picard groupoid structure on $\Ga_n\qs\Ga_{n+1}$. Normalized orientations (or n-orientations for short) are more convenient for some purposes.

Note in particular from Table \ref{fm9tab1} that when $n=7$, $\sO_7^{\bs B,G},\sN_7^{\bs B,G}$ map to $0\qs\Z=\ZZtor$, the Picard groupoid of $\Z$-torsors. We write $\sO_{7,\Z_k}^{\bs B,G},\sN_{7,\Z_k}^{\bs B,G}$ for the compositions of $\sO_7^{\bs B,G},\sN_7^{\bs B,G}$ with the natural symmetric monoidal functor $\ZZtor\ra\Z_k$-tor of reduction mod $k$ for $k\ge 2$. Then $\sO_{7,\Z_2}^{\bs B,G},\sN_{7,\Z_2}^{\bs B,G}$ are important for orientations of moduli spaces of $G_2$-instantons on $G_2$-manifolds, and $\sO_{7,\Z_k}^{\bs B,G},\sN_{7,\Z_k}^{\bs B,G}$ are important for the grading mod $k$ of a conjectural Floer theory based on $G_2$-instantons, in the spirit of~\cite{DoTh,DoSe}.

When $n=8$, $\sO_8^{\bs B,G},\sN_8^{\bs B,G}$ map to $\Z\qs\Z_2$, the Picard groupoid of $\Z$-graded $\Z_2$-torsors. For $k\ge 1$ we write $\sO_{8,\Z_{2k}}^{\bs B,G},\sN_{8,\Z_{2k}}^{\bs B,G}:\Bord^{\bs B}_n(BG)\ra\Z_{2k}\qs\Z_2$ for the composition of $\sO_8^{\bs B,G},\sN_8^{\bs B,G}$ with the projection $\Z\qs\Z_2\ra\Z_{2k}\qs\Z_2$ which reduces gradings mod $2k$. Then $\sO_{8,\Z_2}^{\bs B,G},\sN_{8,\Z_2}^{\bs B,G}$ are important for orientations of moduli spaces of $\Spin(7)$-instantons on $\Spin(7)$-manifolds, and of coherent sheaves on Calabi--Yau 4-folds. 

\end{dfn}

\subsection{Analytic orientation functors in submanifold bordism}
\label{fm94}

Next we define a class of analytic orientation functors for the submanifold bordism categories of \S\ref{fm51}. They are chosen for their relevance to orientations of moduli spaces of associative 3-folds in $G_2$-manifolds when $n=7$, and to moduli spaces of Cayley 4-folds in $\Spin(7)$-manifolds when $n=8$, as in \S\ref{fm14}, although the definition makes sense for all~$n\ge 4$.

\begin{dfn}
\label{fm9def5}
Let $(X,g_X)$ be an oriented, spin Riemannian $n$-manifold for $n\ge 4$, possibly with corners, and let $M\subset X$ be a compact, oriented, neat submanifold of codimension $4.$ Locally, we can choose a spin structure on $M$ and construct the $(n-4)$-adapted real Dirac operator $\Ga^\iy(E_0)\ra \Ga^\iy(E_1)$ on the spinor bundles of $M,$ linear over $\mathbb{L}=\K_{n-4}$ in Table \ref{fm9tab1}. The local spin structure on $M$ induces also a 2-out-of-3 spin structure on the normal bundle of $M$ and thus a pair $\Si^\pm_\nu\ra M$ of spinor quaternionic line bundles with natural Levi-Civita connections. Thus, as in \cite[Def.~3.4]{Upme2} we can twist the real Dirac operators over $\mathbb{L}$ to form the {\it Fueter operator\/} of~$M\subset X,$
\e
F_{g_X,M}^\pm\colon\Ga^\iy(E_0\ot_{\mathbb{L}}\Si^\pm_\nu)\longra\Ga^\iy(E_1\ot_{\mathbb{L}}\Si^\pm_\nu).
\label{fm9eq4}
\e
It is an $n$-adapted elliptic operator on $M$. This generalizes the Fueter operators in Donaldson--Segal \cite[\S 6]{DoSe} for $n=7,8$, reviewed in \S\ref{fm14} below, to all~$n\ge 4$.
\end{dfn}

The next result is a simple case-by-case verification.

\begin{lem}
\label{fm9lem1}
For every oriented, neat\/ $(n-4)$-submanifold\/ $M\subset X$ of a Riemannian spin\/ $n$-manifold\/ $(X,g_X)$ the Fueter operators\/ $F_M^\pm$ are\/ $n$-adapted elliptic differential operators, independent of the choice of local spin structure on\/ $M$. Moreover, if\/ $(X,g_X)=(\pd Y,g_Y\vert_{\pd Y})$ and\/ $M=\pd N$ for a neat\/ $(n-3)$-submanifold\/ $N\subset Y$ of a Riemannian spin\/ $(n+1)$-manifold\/ $(Y,g_Y),$ then\/ $F_N^\pm$ is isomorphic to\/ $\Cyl F_M^\pm$ on a collar neighbourhood of the boundary, as in\/~{\rm\S\ref{fm92}}.
\end{lem}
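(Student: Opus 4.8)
The statement has two parts: (i) the Fueter operator $F_M^\pm$ in \eq{fm9eq4} is a well-defined $n$-adapted elliptic differential operator, independent of the local spin structure chosen on $M$; and (ii) it behaves cylindrically near the boundary. The plan is to reduce everything to two already-established facts: first, that the real Dirac operator on an oriented Riemannian spin manifold is $\ell$-adapted with $\ell$ equal to the dimension (quoted in the excerpt after Definition \ref{fm9def3}, from \cite[\S 2.1]{Upme2}); and second, that twisting an $\ell$-adapted operator by a quaternionic-linear bundle with compatible connection shifts $\ell$ in a controlled way (the construction of the Fueter/twisted operators in \cite[Def.~3.4]{Upme2}).

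First I would treat well-definedness and independence of the local spin structure. The normal bundle $\nu_M$ of $M\subset X$ is oriented, rank $4$, and carries a $2$-out-of-$3$ spin structure (the ambient spin structure on $TX|_M$ together with a local spin structure on $TM$ determines one on $\nu_M$, and vice versa). A spin structure on an oriented rank-$4$ bundle is the data of quaternionic line bundles $\Si^\pm_\nu$ with $\nu_M\cong\Hom_\H(\Si^-_\nu,\Si^+_\nu)$, as recalled in \S\ref{fm311}. If one replaces the local spin structure on $M$ by another (they differ, over a small contractible patch, by nothing, and globally the ambiguity is a class in $H^1(M,\Z_2)$), the pair $(\Si^0_M\ot\Si^\pm_\nu)$ transforms by the \emph{same} $\Z_2$-twist on $\Si^0_M$ (the spinor bundle of $M$) as on $\Si^\pm_\nu$, so the tensor product $E_i\ot_{\mathbb L}\Si^\pm_\nu$ and the twisted operator are canonically unchanged. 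Concretely: $E_i\ot_{\mathbb L}\Si^\pm_\nu$ is the spinor bundle of $TM\op\nu_M=TX|_M$ — i.e.\ built only from the ambient spin structure and the splitting — tensored against nothing, so there is genuinely no choice involved; writing it as Dirac-on-$M$ twisted by $\Si^\pm_\nu$ is a local bookkeeping device. I would make this precise using the Clifford-algebra isomorphism $\Cl(TX|_M)\cong\Cl(TM)\wh\ot\Cl(\nu_M)$ and the corresponding factorization of irreducible Clifford modules, which shows $F_M^\pm$ is, up to canonical isomorphism, the restriction to $M$ of a Dirac-type operator associated to $TX|_M$ with a chirality/quaternionic decoration coming from $\Si^\pm_\nu$; hence independence of the local choice is automatic. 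Ellipticity and first-order-ness are inherited from the Dirac operator, and the $n$-adaptedness (the symmetry/$\K_n$-linearity conditions in Definition \ref{fm9def3}, which depend only on $n\bmod 8$) is exactly the content of the case-by-case Clifford-module computation: one checks, for each residue $n\bmod 8$, that $\K_{n-4}\ot_?\H$ together with the chirality structure produces precisely $\K_n$ and the required (skew-)adjointness. This is the "simple case-by-case verification" the lemma alludes to, parallel to \cite[Def.~3.4]{Upme2}, and I would present it as a short table rather than eight separate arguments.

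Next, the cylindrical-form assertion. Suppose $(X,g_X)=(\pd Y,g_Y|_{\pd Y})$ with $M=\pd N$ for a neat $(n-3)$-submanifold $N\subset Y$. Near $\pd Y$ we have a collar $\pd Y\t[0,\ep)$ on which $g_Y$ is a product, and since $N$ is neat, $N$ meets this collar in $M\t[0,\ep)$ with $\nu_N|_{M\t[0,\ep)}\cong\nu_M\t[0,\ep)$ (this is the normal-bundle compatibility recorded in Definition \ref{fm2def6}). On such a product collar the Dirac operator of $N$ takes the standard form $\sigma(\pd_t + B_M)$ for $B_M$ the Dirac-type operator of $M$ (the "cylindrical form" of \cite[Def.~2.5]{Upme2}); twisting by $\Si^\pm_\nu$, which is itself pulled back from $M$ with its Levi-Civita connection and hence $t$-independent on the collar, commutes with this decomposition. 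Therefore $F_N^\pm$ restricted to the collar is $\Cyl F_M^\pm$ in the sense of \S\ref{fm92}. I would just verify that the twisting is compatible with the Clifford multiplication $\sigma$ in the normal $t$-direction and with the chirality identification $E_0\leftrightarrow E_1$, which is immediate because on the collar nothing depends on $t$.

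The main obstacle I anticipate is entirely in the first part: getting the \emph{signs and linear structures} right uniformly in $n\bmod 8$, i.e.\ checking that tensoring the $(n-4)$-adapted Dirac operator over $\mathbb L=\K_{n-4}$ with the rank-one quaternionic bundles $\Si^\pm_\nu$ really lands in the $n$-adapted class $\K_n$ with the correct adjointness ($D^*=\pm D$, $\pm\ov D$, or $\pm D^\diamond$ as in Definition \ref{fm9def3}). This is bookkeeping with real Clifford algebras $\Cl_{n-5}$ versus $\Cl_{n-1}$ and the $(1,1)$-periodicity contributed by the rank-$4$ normal bundle, and while it is routine it is the place where an error would be easy to make; I would handle it by invoking the corresponding computation already done for the gauge-theoretic twisting in \cite[\S 3]{Upme2} and noting that twisting by a rank-one quaternionic bundle is the $\mathbb L=\H$ instance of the general twisting recipe there, specialized to $\Ad(P)$ replaced by $\Si^\pm_\nu$. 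Everything else — ellipticity, first order, independence of local spin structure, cylindrical form — then follows formally.
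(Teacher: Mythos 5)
Your proposal is correct, and it matches the paper's treatment: the paper offers no argument beyond the remark that the lemma "is a simple case-by-case verification," and your sketch — independence of the local spin structure via the $2$-out-of-$3$ cancellation (equivalently the Clifford-module factorization of the spinor bundle of $TX\vert_M$), a residue-mod-$8$ table for the $n$-adaptedness, and the product-collar argument for the cylindrical form using $\nu_{\pd N}\cong\nu_N\vert_{\pd N}$ from neatness — is exactly that verification carried out in detail. No gaps.
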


\begin{dfn}
\label{fm9def6}
Let $n\ge 4,$ $\bs B$ be a tangential structure factoring through $\bs\Spin,$ and $\rho:H\ra\SO(4)\subset\O(4)$ be a Lie group morphism. We will define orientation functors
\begin{equation*}
\sO_{n,4}^{\bs B,H,+},\sO_{n,4}^{\bs B,H,-}:\Bord_{n,4}^{\bs B}(MH)\longra\Ga_n\qs\Ga_{n+1}.
\end{equation*}
Here $\Bord_{n,4}^{\bs B}(MH)$ and $\Ga_n\qs\Ga_{n+1}$ are the Picard groupoids from \S\ref{fm51} and \S\ref{fm92}, with $\Ga_n,\Ga_{n+1}$ as in Table~\ref{fm9tab1}.

Let $(X,M)$ be an object in $\Bord_{n,4}^{\bs B}(MH)$. Write $\A_X$ for the contractible moduli space of Riemannian metrics $g_X$ on $X$. For $g_X\in\A_X$, the metric $g_X$ induces $n$-adapted Fueter operators $F_{g_X,M}^\pm$ on $M$ as in \eq{fm9eq4}. Hence the orientation torsor $\sO_n(F_{g_X,M}^\pm)$ from Definition \ref{fm9def3} is a $\Ga_n$-graded $\Ga_{n+1}$-torsor.

Now $\sO_n(F_{g_X,M}^\pm)$ depends continuously on $g_X\in\A_X$. As $\A_X$ is contractible and $\Ga_n,\Ga_{n+1}$ are discrete, this means that the grading in $\Ga_n$ is constant on $\A_X$, and the $\Ga_{n+1}$-torsors form a principal $\Ga_{n+1}$-bundle $R^\pm_{X,M}\ra\A_X$ on $\A_X$. Define a $\Ga_n$-graded $\Ga_{n+1}$-torsor $\sO_{n,4}^{\bs B,H,\pm}(X,M)$ to have the $\Ga_n$-grading of $\sO_n(F_{g_X,M}^\pm)$ for any $g_X\in\A_X$, which is independent of $g_X$, and to have $\Ga_{n+1}$-torsor the set of constant sections of $R^\pm_{X,M}\ra\A_X$, which is a $\Ga_{n+1}$-torsor as $R^\pm_{X,M}$ is a principal $\Ga_{n+1}$-bundle and $\A_X$ is contractible. This defines the functors $\sO_{n,4}^{\bs B,H,\pm}$ on objects $(X,M)$ in~$\Bord_{n,4}^{\bs B}(MH)$.

Next let $[Y,N]:(X_0,M_0)\ra(X_1,M_1)$ be a morphism in $\Bord_{n,4}^{\bs B}(MH)$. For a representative $(Y,N)$ for $[Y,N]$, write $\A'_Y$ for the contractible moduli space of Riemannian metrics $g_Y$ on $Y$. There are boundary restriction morphisms $\A'_Y\ra\A_{X_0}$ and $\A'_Y\ra\A_{X_1}$ mapping $g_Y\mapsto g_{X_0}=g_Y\vert_{X_0}$ and $g_Y\mapsto g_{X_1}=g_Y\vert_{X_1}$. For $g_Y\in\A'_Y$, write $F_{g_Y,N}^\pm$ for the Fueter operators on $N$. Then 
\begin{equation*}
[N,F_{g_Y,N}^\pm]:(M_0,F_{g_{X_0},M_0}^\pm)\longra(M_1,F_{g_{X_1},M_1}^\pm)
\end{equation*}
is an $(n+1)$-adapted elliptic operator bordism as in \S\ref{fm92}, and so by Theorem \ref{fm9thm2} induces an isomorphism $\sO_n[N,F_{g_Y,N}^\pm]:\sO_n(M_0,F_{g_{X_0},M_0}^\pm)\ra\sO_n(M_1,F_{g_{X_1},M_1}^\pm)$ of $\Ga_n$-graded $\Ga_{n+1}$-torsors. As this depends continuously on $g_Y$ in the contractible space $\A'_Y$, we may pass to global constant sections on $\A_{X_0},\ab\A_{X_0},\ab\A'_Y$ to obtain an isomorphism $\sO_{n,4}^{\bs B,H,\pm}([Y,N]):\sO_{n,4}^{\bs B,H,\pm}(X_0,M_0)\ra\sO_{n,4}^{\bs B,H,\pm}(X_0,M_0)$ of $\Ga_n$-graded $\Ga_{n+1}$-torsors. Theorem \ref{fm9thm2} implies that this depends on $(Y,N)$ only up to spin bordism, and hence only on the morphism $[Y,N]$ in $\Bord_{n,4}^{\bs B}(MH)$. This defines $\sO_{n,4}^{\bs B,H,\pm}$ on morphisms $[Y,Q]$ in $\Bord_{n,4}^{\bs B}(MH)$.

It follows from $\sO_n$ a symmetric monoidal functor in Theorem \ref{fm9thm2} that $\sO_{n,4}^{\bs B,H,\pm}$ is a symmetric monoidal functor. It is an {\it analytic orientation functor}.

We also form the {\it normalized orientation functor\/} $\sO_{n,4}^{\bs B,H,0}:\Bord_{n,4}^{\bs B}(MH) \ab\ra\Ga_n\qs\Ga_{n+1}$ by mapping the object $(X,M)$ to the $\Ga_{n+1}$-torsor
\begin{equation*}
\sO_{n,4}^{\bs B,H,0}(X,M)=\Hom_{\Ga_{n+1}}\bigl(\sO_{n,4}^{\bs B,H,+}(X,M),\sO_{n,4}^{\bs B,H,-}(X,M)\bigr),
\end{equation*}
placed in degree $\deg\sO_{n,4}^{\bs B,H,-}(X,M)-\deg\sO_{n,4}^{\bs B,H,+}(X,M)\in \Ga_n$, and similarly on morphisms. That is, $\sO_{n,4}^{\bs B,H,0}=(\sO_{n,4}^{\bs B,H,+})^{-1}\ot\sO_{n,4}^{\bs B,H,-}$, using the Picard groupoid structure on $\Ga_n\qs\Ga_{n+1}$.

Note in particular from Table \ref{fm9tab1} that when $n=7$, $\sO_{7,4}^{\bs B,H,*}$ maps to $0\qs\Z=\ZZtor$, the Picard groupoid of $\Z$-torsors. We write $\sO_{7,4,\Z_k}^{\bs B,H,*}$ for the composition of $\sO_{7,4}^{\bs B,H,*}$ with the natural symmetric monoidal functor $\ZZtor\ra\Z_k$-tor of reduction mod $k$ for $k\ge 2$. Then $\sO_{7,4,\Z_2}^{\bs B,H,*}$ is important for orientations of associative 3-folds in $G_2$-manifolds, as in~\S\ref{fm142}.

Similarly, when $n=8$, $\sO_{8,4}^{\bs B,H,*}$ maps to $\Z\qs\Z_2$. For $k\ge 1$ we write $\sO_{8,4,\Z_{2k}}^{\bs B,H,*}$ for the composition of $\sO_{8,4}^{\bs B,H,*}$ with the projection $\Z\qs\Z_2\ra\Z_{2k}\qs\Z_2$ reducing gradings mod $2k$. Then $\sO_{8,4,\Z_2}^{\bs B,H,*}$ is important for orientations of Cayley 4-folds in $\Spin(7)$-manifolds, as in~\S\ref{fm143}.
\end{dfn}

\subsection{Abstract orientation functors in cohomology bordism}
\label{fm95}

We define orientation functors $\sH_7^\Z,\sH_8^\Z,\sH_7^{\Z_2},\sH_8^{\Z_2}$ which will be very important for flag structures in \S\ref{fm10}, and our study of orientations and orientability in~\S\ref{fm11}.

\begin{dfn}
\label{fm9def7}
We will define abstract orientation functors
\e
\begin{split}
\sH_7^\Z&:\Bord^{\bs\Spin}_7(K(\Z,4))\longra 0\qs\Z_2,\\
\sH_8^\Z&:\Bord^{\bs\Spin}_8(K(\Z,4))\longra \Z_2\qs\Z_2,\\
\sH_7^{\Z_2}&:\Bord^{\bs\Spin}_7(K(\Z_2,4))\longra 0\qs\Z_2,\\
\sH_8^{\Z_2}&:\Bord^{\bs\Spin}_8(K(\Z_2,4))\longra \Z_2\qs\Z_2,
\end{split}
\label{fm9eq5}
\e
by specifying the data that classifies them in Theorem \ref{fmAthm2}. Here for the Picard groupoid $\Z_2\qs\Z_2$, the linear quadratic form $q:\Z_2\ra\Z_2$ in Theorem \ref{fmAthm2}(a) is $\id:\Z_2\ra\Z_2$. By Proposition \ref{fm6prop1}, for $n=7,8$ and $i=0,1$ we have
\begin{align*}
\pi_i(\Bord^{\bs\Spin}_n(K(\Z,4)))&\cong \Om_{n+i}^{\bs\Spin}(K(\Z,4))\cong\Om_{n+i}^{\bs\Spin}(*)\op\ti\Om_{n+i}^{\bs\Spin}(K(\Z,4)),\\
\pi_i(\Bord^{\bs\Spin}_n(K(\Z_2,4)))&\cong \Om_{n+i}^{\bs\Spin}(K(\Z_2,4))\cong\Om_{n+i}^{\bs\Spin}(*)\op\ti\Om_{n+i}^{\bs\Spin}(K(\Z_2,4)),
\end{align*}
where $\Om_{n+i}^{\bs\Spin}(*)$ is given in Tables \ref{fm2tab1}--\ref{fm2tab2} and $\ti\Om_{n+i}^{\bs\Spin}(K(\Z,4)),\ti\Om_{n+i}^{\bs\Spin}(K(\Z_2,4))$ in Table \ref{fm3tab1}. We define $\pi_i(\sH_n^\Z)$ and $\pi_i(\sH_n^{\Z_2})$ for $n=7,8$ and $i=0,1$ by
\e
\begin{gathered}
\pi_i(\sH_n^\Z)\vert_{\Om_{n+i}^{\bs\Spin}(*)}\equiv \pi_i(\sH_n^{\Z_2})\vert_{\Om_{n+i}^{\bs\Spin}(*)}\equiv 0, \quad \pi_0(\sH_7^\Z)\equiv\pi_0(\sH_7^{\Z_2})\equiv 0, \\
\pi_1(\sH_7^\Z)\vert_{\ti\Om_8^{\bs\Spin}(K(\Z,4))}=\pi_0(\sH_8^\Z)\vert_{\ti\Om_8^{\bs\Spin}(K(\Z,4))}\quad\text{maps}\quad  \ze_2\mapsto \ul{1},\quad \ze_3\mapsto \ul{0},\\
\pi_1(\sH_8^\Z)\vert_{\ti\Om_8^{\bs\Spin}(K(\Z,4))}\quad\text{maps}\quad \al_1\ze_2\mapsto \ul{1}, \\
\pi_1(\sH_7^{\Z_2})\vert_{\ti\Om_8^{\bs\Spin}(K(\Z_2,4))}=\pi_0(\sH_8^{\Z_2})\vert_{\ti\Om_8^{\bs\Spin}(K(\Z_2,4))}\quad\text{maps}\quad  \ze_2\mapsto \ul{1},\\
\pi_1(\sH_8^{\Z_2})\vert_{\ti\Om_8^{\bs\Spin}(K(\Z_2,4))}\quad\text{maps}\quad \al_1\ze_2\mapsto \ul{1}.
\end{gathered}
\label{fm9eq6}
\e

These satisfy the condition $q'\ci f_0=f_1\ci q$ in Theorem \ref{fmAthm2}(b), trivially for $\sH_7^\Z,\sH_7^{\Z_2}$, and as $q:\ze_2\mapsto\al_1\ze_2$, $q:\ze_3\mapsto 0$ for $\sH_8^\Z,\sH_8^{\Z_2}$. Hence there exist symmetric monoidal functors \eq{fm9eq5} with these invariants $\pi_i(\sH_n^\Z),\pi_i(\sH_n^{\Z_2})$ by Theorem \ref{fmAthm2}(b). Theorem \ref{fmAthm2}(c) and Example \ref{fmAex6} say that the sets of such functors $\sH_7^\Z,\sH_8^\Z,\sH_7^{\Z_2},\sH_8^{\Z_2}$ modulo monoidal natural isomorphism are torsors over $H^2_\sym(0,\Z_2)=0$ and $H^2_\sym(\Z^4,\Z_2)=0$ and $H^2_\sym(0,\Z_2)=0$ and $H^2_\sym(\Z^2\op\Z_4,\Z_2)=\Z_2$ respectively. Hence $\sH_7^\Z,\sH_8^\Z,\sH_7^{\Z_2}$ are unique up to monoidal natural isomorphism. 

There are two choices for $\sH_8^{\Z_2}$ up to monoidal natural isomorphism, and we choose one arbitrarily. By Lemma \ref{fmAlem1}(b), $\sH_8^{\Z_2}$ is unique up to {\it non-monoidal\/} natural isomorphism. This will be all we care about, as we will only be interested $\sH_8^{\Z_2}$ in relation to orientations for $\sH_8^{\Z_2}$ on a compact spin 8-manifold $X$, and these do not use the monoidal structure.

By the uniqueness of $\sH_7^\Z,\sH_8^\Z$, there exist monoidal natural isomorphisms $\eta_7,\eta_8$ in the diagrams
\e
\begin{gathered}
\xymatrix@C=30pt@R=17pt{
*+[r]{\Bord^{\bs\Spin}_7(K(\Z,4))} \drtwocell_{}\omit_{}\omit{\,\,\eta_7} \ar[d]^{F_{K(\Z,4)}^{K(\Z_2,4)}}  \ar@/^1.5pc/[drr]^(0.7){\sH_7^\Z} \\
*+[r]{\Bord^{\bs\Spin}_7(K(\Z_2,4))} \ar[rr]^(0.7){\sH_7^{\Z_2}} && *+[l]{0\qs\Z_2,} }
\;\>
\xymatrix@C=30pt@R=17pt{
*+[r]{\Bord^{\bs\Spin}_8(K(\Z,4))} \drtwocell_{}\omit_{}\omit{\,\,\eta_8} \ar[d]^{F_{K(\Z,4)}^{K(\Z_2,4)}}  \ar@/^1.5pc/[drr]^(0.7){\sH_8^\Z} \\
*+[r]{\Bord^{\bs\Spin}_8(K(\Z_2,4))} \ar[rr]^(0.7){\sH_8^{\Z_2}} && *+[l]{\Z_2\qs\Z_2,} }\!\!\!\!
\end{gathered}
\label{fm9eq7}
\e
where $F_{K(\Z,4)}^{K(\Z_2,4)}$ are the topological transfer functors induced by the obvious map $K(\Z,4)\ra K(\Z_2,4)$. Here by Theorem \ref{fmAthm2}(d), $\eta_7$ is unique, and $\eta_8$ lies in a torsor over $\Hom(\Z^4,\Z_2)=\Z_2^4$. We choose $\eta_8$ arbitrarily in this torsor.

There is a natural functor $\inc:\Bord^{\bs\Spin}_n(*)\ra\Bord^{\bs\Spin}_n(K(\Z,4))$ for $n=7,8$ mapping $X\mapsto (X,0)$ on objects and $[Y]\mapsto[Y,0]$ on morphisms. In a similar way to \eq{fm9eq7}, since $\pi_i(\sH_n^\Z)\vert_{\Om_{n+i}^{\bs\Spin}(*)}\equiv 0$, there exist unique monoidal natural isomorphisms $\ze_7,\ze_8$ in the diagrams
\e
\begin{gathered}
\xymatrix@C=30pt@R=17pt{
*+[r]{\Bord^{\bs\Spin}_7(*)} \drtwocell_{}\omit_{}\omit{\,\,\ze_7} \ar[d]^{\inc}  \ar@/^1.5pc/[drr]^(0.7){\boo} \\
*+[r]{\Bord^{\bs\Spin}_7(K(\Z,4))} \ar[rr]^(0.7){\sH_7^\Z} && *+[l]{0\qs\Z_2,} }
\;\>
\xymatrix@C=30pt@R=17pt{
*+[r]{\Bord^{\bs\Spin}_8(*)} \drtwocell_{}\omit_{}\omit{\,\,\ze_8} \ar[d]^{\inc}  \ar@/^1.5pc/[drr]^(0.7){\boo} \\
*+[r]{\Bord^{\bs\Spin}_8(K(\Z,4))} \ar[rr]^(0.7){\sH_8^\Z} && *+[l]{0\qs\Z_2.} }\!\!\!\!
\end{gathered}
\label{fm9eq8}
\e
\end{dfn}

\section{Flag structures}
\label{fm10}

{\it Flag structures\/} in 7 dimensions were introduced in the first author \cite[\S 3.1]{Joyc5} and used to construct orientations on moduli spaces of associative 3-folds in $G_2$-manifolds in \cite[\S 3.2]{Joyc5}, and also to construct orientations on moduli spaces of $G_2$-instantons on $G_2$-manifolds in the authors~\cite{JoUp}. 

\subsection{Flag structures in 7 dimensions}
\label{fm101}

We recall the following from~\cite[\S 3.1]{Joyc5}.

\begin{dfn}
\label{fm10def1}
Let $X$ be an oriented 7-manifold, and consider pairs $(N,s)$ of a compact, oriented $3$-submanifold $N\subset X$, and a non-vanishing section $s$ of the normal bundle $\nu_N$ of $N$ in $X$.
We call $(N,s)$ a {\it flagged submanifold\/} in~$X$.

For non-vanishing sections $s,s'$ of $\nu_N$ define
\e
d(s,s') \coloneqq N \bullet \bigl\{ t\cdot s(y) + (1-t)\cdot s'(y) \enskip\big|\enskip t\in [0,1],\enskip y\in N\bigr\} \in \Z,
\label{fm10eq1}
\e
using the intersection product `$\bu$' between a $3$-cycle and a $4$-chain
whose boundary does not meet the cycle, where we identify $N$ with the zero section in $\nu_N$. For all non-vanishing sections $s,s',s''$ of $\nu_N$, this satisfies
\e
d(s,s'')=d(s,s')+d(s',s'').
\label{fm10eq2}
\e

Let $(N_0,s_0),(N_1,s_1)$ be disjoint flagged submanifolds with $[N_0]=[N_1]$ in $H_3(X,\Z)$. Choose an integral 4-chain $C$ with $\pd C=N_1-N_0$. Let $N_0',N_1'$ be small perturbations of $N_0,N_1$ in the normal directions $s_0,s_1$. Then $N_0'\cap N_0=N_1'\cap N_1=\es$ as $s_0,s_1$ are non-vanishing, and $N_0'\cap N_1=N_1'\cap N_0=\es$ as $N_0,N_1$ are disjoint and $N_0',N_1'$ are close to $N_0,N_1$. Define $D((N_0,s_0),(N_1,s_1))$ to be the intersection number $(N_1'-N_0')\bu C$ in homology over $\Z$. Here we regard
\begin{equation*}
[C] \in H_4(X,N_0\cup N_1,\Z),\qquad
[N_0'], [N_1'] \in H_3(X\sm (N_0\cup N_1),\Z).
\end{equation*}
Note that since $N_0', N_1'$ are small perturbations and $N_0, N_1$ are disjoint we have $(N_0 \cup N_1) \cap (N_0' \cup N_1') = \emptyset$.
This is independent of the choices of $C$ and~$N_0',N_1'$.
\end{dfn}

In \cite[Prop.s~3.3 \& 3.4]{Joyc5} we show that if $(N_0,s_0),(N_1,s_1),\ab(N_2,s_2)$ are disjoint flagged submanifolds with $[N_0]=[N_1]=[N_2]$ in $H_3(X,\Z)$  then
\e
\label{fm10eq3}
\begin{split}
D((N_0,s_0),(N_2,s_2))&\equiv D((N_0,s_0),(N_1,s_1))\\
&\qquad +D((N_1,s_1),(N_2,s_2))\mod 2,
\end{split}
\e
and if $(N',s')$ is any small deformation of $(N,s)$ with $N,N'$ disjoint then
\e
D((N,s),(N',s'))\equiv 0\mod 2.
\label{fm10eq4}
\e

\begin{dfn}
\label{fm10def2}
A {\it flag structure\/} on $X$ is a map
\e
F:\bigl\{\text{flagged submanifolds $(N,s)$ in $X$}\bigr\}\longra\{\pm 1\},
\label{fm10eq5}
\e
satisfying:
\begin{itemize}
\setlength{\itemsep}{0pt}
\setlength{\parsep}{0pt}
\item[(i)] $F(N,s) = F(N,s')\cdot (-1)^{d(s,s')}$.
\item[(ii)] If $(N_0,s_0),(N_1,s_1)$ are disjoint flagged submanifolds in $X$ with $[N_0]=[N_1]$ in $H_3(X,\Z)$ then
\begin{equation*}
F(N_1,s_1)=F(N_0,s_0)\cdot (-1)^{D((N_0,s_0),(N_1,s_1))}.
\end{equation*}
This is a well behaved condition by \eq{fm10eq3}--\eq{fm10eq4}.
\end{itemize}
We call $F$ an {\it additive flag structure\/} if it also satisfies
\begin{itemize}
\setlength{\itemsep}{0pt}
\setlength{\parsep}{0pt}
\item[(iii)] If $(N_0,s_0),(N_1,s_1)$ are disjoint flagged submanifolds then
\end{itemize}
\begin{equation*}
F(N_0\amalg N_1,s_0\amalg s_1)=F(N_0,s_0)\cdot F(N_1,s_1).
\end{equation*}
In \cite{Joyc5}, additive flag structures are just called flag structures.
\end{dfn}

Here is \cite[Prop.~3.6]{Joyc5}:

\begin{prop}
\label{fm10prop1}
Let\/ $X$ be an oriented\/ $7$-manifold. Then:
\begin{itemize}
\setlength{\itemsep}{0pt}
\setlength{\parsep}{0pt}
\item[{\bf(a)}] There exists an additive flag structure $F$ on $X$.
\item[{\bf(b)}] If\/ $F,F'$ are additive flag structures on $X$ then there exists a unique group morphism $H_3(X,\Z)\ra\{\pm 1\},$ denoted $F'/F,$ such that
\e
F'(N,s)=F(N,s)\cdot (F'/F)[N] \qquad\text{for all\/ $(N,s)$.}
\label{fm10eq6}
\e
\item[{\bf(c)}] Let\/ $F$ be an additive flag structure on $X$ and\/ $\ep:H_3(X,\Z)\ra\{\pm 1\}$ a morphism, and define $F'$ by \eq{fm10eq6} with\/ $F'/F=\varepsilon$. Then $F'$ is an additive flag structure on~$X$.
\end{itemize}
Thus the set of additive flag structures on $X$ is a torsor over $\Hom(H_3(X,\Z),\Z_2)$.
\end{prop}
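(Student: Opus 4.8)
\textbf{Proof proposal for Proposition \ref{fm10prop1}.}

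The plan is to prove the three parts essentially by direct geometric/topological arguments about flagged submanifolds, following the structure of \cite[\S 3.1]{Joyc5}, since all the combinatorial preliminaries (the cocycle-type identities \eq{fm10eq2}, \eq{fm10eq3}, \eq{fm10eq4}) are already in place. For part \textbf{(b)}, suppose $F,F'$ are additive flag structures. I would first observe that $F'(N,s)/F(N,s)\in\{\pm1\}$ is independent of the section $s$: by Definition \ref{fm10def2}(i) both $F'(N,s')$ and $F(N,s')$ pick up the same factor $(-1)^{d(s,s')}$ when $s$ is replaced by $s'$, so the ratio is unchanged. Next I would show it depends only on the homology class $[N]\in H_3(X,\Z)$: given disjoint flagged submanifolds $(N_0,s_0),(N_1,s_1)$ with $[N_0]=[N_1]$, Definition \ref{fm10def2}(ii) says $F'(N_1,s_1)/F'(N_0,s_0)=F(N_1,s_1)/F(N_0,s_0)=(-1)^{D((N_0,s_0),(N_1,s_1))}$, so the ratios agree; the disjointness hypothesis is harmless because one can always perturb $N_1$ to be disjoint from $N_0$ and use \eq{fm10eq4} to compare, and every integral $3$-homology class in a $7$-manifold is represented by an embedded oriented $3$-submanifold carrying a non-vanishing normal section (the normal bundle has rank $4>3$, so a generic section is non-vanishing). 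Define $(F'/F)[N]$ to be this ratio; additivity of $F$ and $F'$ (Definition \ref{fm10def2}(iii)) together with the fact that disjoint union represents addition in $H_3(X,\Z)$ forces $F'/F:H_3(X,\Z)\to\{\pm1\}$ to be a group morphism, and \eq{fm10eq6} holds by construction. Uniqueness is immediate since \eq{fm10eq6} determines $(F'/F)[N]$ on every class.

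For part \textbf{(c)}, given an additive flag structure $F$ and a morphism $\varepsilon:H_3(X,\Z)\to\{\pm1\}$, I would simply define $F'(N,s)=F(N,s)\cdot\varepsilon[N]$ and verify the three axioms. Axiom (i) holds because $\varepsilon[N]$ does not depend on $s$, so the $s$-dependence of $F'$ is exactly that of $F$; axiom (ii) holds because for $[N_0]=[N_1]$ we get $\varepsilon[N_0]=\varepsilon[N_1]$, so $F'(N_1,s_1)/F'(N_0,s_0)=F(N_1,s_1)/F(N_0,s_0)=(-1)^{D(\cdots)}$; axiom (iii) holds because $[N_0\amalg N_1]=[N_0]+[N_1]$ and $\varepsilon$ is a morphism, so $\varepsilon[N_0\amalg N_1]=\varepsilon[N_0]\varepsilon[N_1]$, which combines with additivity of $F$. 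Parts (b) and (c) together then give the final torsor statement: (c) shows $\Hom(H_3(X,\Z),\Z_2)$ acts on the set of additive flag structures, (b) shows the action is free and transitive (the morphism $F'/F$ is the unique element taking $F$ to $F'$), and (a) shows the set is non-empty, so it is a torsor.

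The genuine content is part \textbf{(a)}: the existence of an additive flag structure. Here I would argue as in \cite[Prop.~3.6]{Joyc5}. Fix a set of generators $[N_1],\dots,[N_k]$ of $H_3(X,\Z)$ (finitely generated since $X$ is compact --- or, without compactness, choose representatives class by class), represented by disjoint embedded oriented $3$-submanifolds $N_i$ with chosen non-vanishing normal sections $s_i$, and fix arbitrary values $F(N_i,s_i)\in\{\pm1\}$. For an arbitrary flagged submanifold $(N,s)$, write $[N]=\sum_i a_i[N_i]$, pick a representative built (up to homology) as a disjoint union of $|a_i|$ copies of $\pm N_i$ with their sections, connect $(N,s)$ to this model by \eq{fm10eq1}--\eq{fm10eq2} (to handle the section) and by a $4$-chain $C$ with $\partial C = N - (\text{model})$ (to handle the homology), and \emph{define} $F(N,s)$ by the formula forced by axioms (i),(ii),(iii). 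The work is to check this is well defined, i.e.\ independent of the choice of model representative, of $C$, and of the decomposition $\sum a_i[N_i]$; this is exactly where the mod-$2$ cocycle identities \eq{fm10eq3} and the vanishing \eq{fm10eq4} are used, ensuring that two different $4$-chains $C,C'$ (whose difference is a $4$-cycle) give the same sign because intersection numbers of $4$-cycles with the perturbed $3$-submanifolds are controlled mod $2$ by the hypotheses, and that reordering or regrouping the model is consistent by additivity. The main obstacle is organizing this well-definedness check cleanly --- tracking that all the ambiguities (choice of collar, choice of $C$ modulo $4$-cycles, choice of bounding $3$-chain for the section modulo $3$-cycles, relations among the generators of $H_3(X,\Z)$) cancel mod $2$ --- but since \eq{fm10eq2}, \eq{fm10eq3}, \eq{fm10eq4} are precisely the identities needed, and the argument is carried out in \cite[\S 3.1]{Joyc5}, it is a bookkeeping exercise rather than a conceptual one. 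Once $F$ is defined it is additive by construction, completing the proof.
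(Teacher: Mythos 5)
Your parts (b) and (c) are complete and correct, and your sketch of (a) is sound in outline; but be aware that the paper does not actually prove Proposition \ref{fm10prop1} at all --- it is imported verbatim from \cite[Prop.~3.6]{Joyc5} (the text introduces it with ``Here is [Joyc5, Prop.~3.6]''), so what you have reconstructed is the cited source's geometric argument rather than anything in this monograph. Within the monograph's own framework there is a genuinely different proof of the substantive part: Theorem \ref{fm10thm1} identifies flag structures with orientations of the functor $\sH_7^\Z$ on $\Bord_7^{\bs\Spin}(K(\Z,4))$, Remark \ref{fm10rem1}(e) deduces existence from Theorem \ref{fm9thm1} together with the computation $\Im\hat\xi^{\bs\Spin}_7(K(\Z,4))=\Z\an{2\ze_2,\ze_3}\subseteq\Ker\pi_1(\sH_7^\Z)$, and Remark \ref{fm10rem1}(b) recovers the torsor statement (for additive structures, over $\Hom(H^4(X,\Z),\Z_2)\cong\Hom(H_3(X,\Z),\Z_2)$) from the classification of monoidal natural isomorphisms in Theorem \ref{fmAthm2}(d). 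That route trades your well-definedness bookkeeping for a spin bordism computation, at the cost of assuming $X$ compact and spin, whereas the geometric argument works for any oriented $7$-manifold.

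One imprecision in your part (a) worth flagging: you cannot ``fix arbitrary values $F(N_i,s_i)$'' on all generators. If $[N_i]$ has odd torsion order $m$, then axioms (ii) and (iii) force $F(N_i,s_i)^m=F(N_i,s_i)$ to equal the sign determined by a nullbordism of $m$ disjoint copies, so the value on odd-torsion generators is determined, not free --- consistently with the torsor being over $\Hom(H_3(X,\Z),\Z_2)$, which kills odd torsion. Your consistency check must therefore both verify relations and, where the relation has odd coefficient, read off the forced value; this is absorbed into the bookkeeping you defer to \cite[\S 3.1]{Joyc5}, but as stated your construction would be inconsistent for a wrong arbitrary choice.
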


The next theorem will be proved in \S\ref{fm192}. The proof involves defining a modified bordism category $\tBord^{\bs\Spin}_7(K(\Z,4))$ equivalent to $\Bord^{\bs\Spin}_7(K(\Z,4))$, but with objects $(X,N)$ for $X$ a spin 7-manifold and $N\subset X$ a 3-submanifold, and an orientation functor $\ti\sH_7^\Z:\tBord^{\bs\Spin}_7(K(\Z,4))\ab\ra 0\qs\Z_2$ equivalent to $\sH_7^\Z$, which maps $(X,N)$ to a $\Z_2$-torsor of maps $\{s:(N,s)$ flagged$\}\ra\{\pm 1\}$.

\begin{thm}
\label{fm10thm1}
Let\/ $X$ be a compact spin\/ $7$-manifold. Then a flag structure on $X$ in the sense of Definition\/ {\rm\ref{fm10def2}} is equivalent to an orientation on $X$ for the orientation functor $\sH_7^\Z$ of Definition\/~{\rm\ref{fm9def7}}.
\end{thm}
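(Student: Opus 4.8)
The plan is to prove Theorem \ref{fm10thm1} by constructing an explicit equivalence of Picard groupoids between $\Bord^{\bs\Spin}_7(K(\Z,4))$ and a modified category $\tBord^{\bs\Spin}_7(K(\Z,4))$ whose objects are pairs $(X,N)$ with $X$ a compact spin $7$-manifold and $N\subset X$ a compact oriented $3$-submanifold (rather than a cocycle $C\in C^4(X,\Z)$), using Poincar\'e duality and the Pontrjagin--Thom correspondence; under this equivalence the orientation functor $\sH_7^\Z$ transports to an orientation functor $\ti\sH_7^\Z:\tBord^{\bs\Spin}_7(K(\Z,4))\ra 0\qs\Z_2$. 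The key point is that an orientation on $X$ for $\ti\sH_7^\Z$, via the analogue of the diagram \eq{fm9eq1}, is a natural isomorphism $\eta_X$ from the trivial functor to $\ti\sH_7^\Z\ci\Pi_X^{\bs\Spin}$ on $\Bord_X(K(\Z,4))$, which by the explicit description of $\Bord_X(K(\Z,4))$ in Remark \ref{fm6rem1}(a) amounts to assigning a $\Z_2$-torsor to each $3$-submanifold $N\subset X$ together with compatibility isomorphisms along bordisms $N_0\ra N_1$ inside $X\t[0,1]$. I would then identify, for each $(X,N)$, the value $\ti\sH_7^\Z(X,N)$ with the set of maps $\{s:(N,s)\text{ flagged}\}\ra\{\pm 1\}$ satisfying condition (i) of Definition \ref{fm10def2}, made into a $\Z_2$-torsor by global sign change; this requires showing that a nowhere-vanishing normal section $s$ of $\nu_N$ provides exactly the data needed to trivialize the relevant Pfaffian/Fueter orientation torsor up to the indeterminacy measured by $d(s,s')$ in \eq{fm10eq1}.

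The steps, in order, would be: (1) Set up $\tBord^{\bs\Spin}_7(K(\Z,4))$ and the forgetful/Poincar\'e-dual equivalence $\tBord^{\bs\Spin}_7(K(\Z,4))\simeq\Bord^{\bs\Spin}_7(K(\Z,4))$, checking it is a symmetric monoidal equivalence of Picard groupoids (invariants $\pi_0,\pi_1$ match by Proposition \ref{fm6prop1} and Theorem \ref{fm2thm2}, and one verifies $H^2_\sym$ obstructions vanish so the transported functor $\ti\sH_7^\Z$ is determined up to monoidal natural isomorphism, as in Definition \ref{fm9def7}). (2) Compute $\ti\sH_7^\Z(X,N)$ concretely: using the relation between $K(\Z,4)$, Thom spaces, and the analytic Fueter/Pfaffian orientation functors from \S\ref{fm94}, show $\ti\sH_7^\Z(X,N)$ is canonically the $\Z_2$-torsor of "flag sign assignments" $s\mapsto F(N,s)$ modulo the rule $F(N,s)=F(N,s')(-1)^{d(s,s')}$. (3) Unwind what a natural isomorphism $\eta_X:\boo\Ra\ti\sH_7^\Z\ci\Pi_X^{\bs\Spin}$ on $\Bord_X(K(\Z,4))$ says: on objects it picks, for each $3$-submanifold $N\subset X$, a flag sign assignment $F_N$, i.e.\ a consistent choice of $F(N,s)\in\{\pm1\}$; on morphisms (bordisms $[C]$ with $\pd C = N_1-N_0$ in $X\t[0,1]$) the naturality square forces exactly the compatibility $F(N_1,s_1)=F(N_0,s_0)(-1)^{D((N_0,s_0),(N_1,s_1))}$ of Definition \ref{fm10def2}(ii). (4) Conclude that the data of $\eta_X$ is precisely a flag structure $F$ on $X$ in the sense of \eq{fm10eq5}, and conversely; check this correspondence is a bijection (not just a surjection), using that $\Bord_X(K(\Z,4))$ is generated by its objects and morphisms and that Proposition \ref{fm10prop1} describes the torsor of flag structures in a way compatible with the $\Map(\Om^{\bs\Spin}_7(X),\Z_2)$-torsor structure on orientations from Question \ref{fm9quest1}(c).

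The main obstacle will be step (2): translating the abstract/analytic definition of $\sH_7^\Z$ (specified in Definition \ref{fm9def7} purely by its invariants $\pi_0,\pi_1$ on spin bordism, or alternatively via the Fueter orientation functor $\sO_{7,4,\Z_2}^{\bs B,H,*}$ of \S\ref{fm94}) into the very hands-on combinatorial/intersection-theoretic quantity $d(s,s')$ and the sign $D((N_0,s_0),(N_1,s_1))$. Concretely, I need to show that the mod-$2$ index of the relevant twisted Dirac/Fueter operator on $N$, as its normal framing is changed, jumps by $d(s,s')\bmod 2$; this is where the earlier computation $\ti\Om_8^{\bs\Spin}(K(\Z,4))\cong\Z\an{\ze_2,\ze_3}$ with $\pi_1(\sH_7^\Z):\ze_2\mapsto\ul1$, $\ze_3\mapsto\ul0$ from \eq{fm9eq6} must be matched against the generators $\ze_2,\ze_3$ in their submanifold descriptions \eq{fm3eq4}--\eq{fm3eq5}, and I would lean on Proposition \ref{fm3prop6} and the explicit generators $\vartheta_2,\vartheta_3$ in $\S\ref{fm341}$ to pin down the normalization. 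A secondary technical point is handling the passage between integral cochains and actual submanifolds carefully enough that the naturality squares in step (3) genuinely reproduce \eq{fm10eq3}--\eq{fm10eq4}; here I would invoke the definition of composition in $\Bord_X(K(\Z,4))$ and the additivity \eq{fm10eq2} of $d(\cdot,\cdot)$, following the pattern of Proposition \ref{fm6prop3} and its proof. Once step (2) is in place, steps (3) and (4) are essentially a dictionary translation and should be routine, modulo bookkeeping of signs and orientations.
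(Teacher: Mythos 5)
Your proposal follows essentially the same route as the paper: define a submanifold model $\tBord^{\bs\Spin}_7(K(\Z,4))$ of the cohomology bordism category, prove it equivalent to $\Bord^{\bs\Spin}_7(K(\Z,4))$ via Poincar\'e duality together with Thom's realizability of classes in $H_3(X,\Z)$ by embedded oriented $3$-submanifolds, identify the orientation functor on the submanifold model with the $\Z_2$-torsor of flag-sign assignments $F(N,s)$ subject to $F(s)=(-1)^{d(s,s')}F(s')$, and then read off that an orientation for this functor on a fixed $X$ is exactly a flag structure, with the naturality squares along bordisms $C\subset X\t[0,1]$ reproducing Definition \ref{fm10def2}(ii) via the identity $(-1)^{C\bu C'}=(-1)^{D((N_0,s_0),(N_1,s_1))}$. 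Two small technical remarks: the paper's objects carry a chosen fundamental cycle $[N]_\fund\in C_3(X,\Z)$ (needed so the comparison with the chain-level category is an honest functor), and morphisms use an intersection number $C\bu C'$ of the $4$-chain with its push-off along a vector field extending the normal sections $s_0,s_1$ — these are the bookkeeping devices that make your steps (3)--(4) go through.

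The one place where your plan diverges, and where you have misjudged the difficulty, is step (2). You propose to \emph{transport} $\sH_7^\Z$ across the equivalence and then \emph{compute} what it becomes, and you identify as the main obstacle the need to show that the mod-$2$ index of a twisted Dirac/Fueter operator on $N$ jumps by $d(s,s')\bmod 2$ under change of normal framing. No such analytic computation is needed, because $\sH_7^\Z$ is not an analytic orientation functor: by Definition \ref{fm9def7} it is an \emph{abstract} one, specified only by $\pi_0(\sH_7^\Z)$, $\pi_1(\sH_7^\Z)$ and Theorem \ref{fmAthm2}(b),(c), and it is unique up to monoidal natural isomorphism since $H^2_\sym(\Om_7^{\bs\Spin}(K(\Z,4)),\Z_2)=0$ (as $\Om_7^{\bs\Spin}(K(\Z,4))=0$). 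The paper therefore runs the identification in the opposite direction: it \emph{defines} $\ti\sH_7^\Z$ combinatorially on $\tBord^{\bs\Spin}_7(K(\Z,4))$ by \eq{fm19eq3}--\eq{fm19eq5}, computes $\pi_1(\ti\sH_7^\Z\ci\Phi)([Y,\be])=\int_Y\be\cup\be\bmod 2$, checks this agrees with $\pi_1(\sH_7^\Z)$ on the generators $\ze_2,\ze_3$ of $\ti\Om_8^{\bs\Spin}(K(\Z,4))$ using their submanifold descriptions \eq{fm3eq4}--\eq{fm3eq5} and \eq{fm9eq6}, and concludes $\ti\sH_7^\Z\ci\Phi\cong\sH_7^\Z$ from the classification theorem. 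You already cite exactly the right matching of generators, so your proof closes once you drop the Fueter-operator detour and invoke Theorem \ref{fmAthm2}(c) instead.
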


\begin{rem}
\label{fm10rem1}
{\bf(a)} By Definition \ref{fm9def2} and Theorem \ref{fm10thm1}, if $X$ is a compact spin 7-manifold then a flag structure on $X$ is equivalent to a natural isomorphism $F$ in the 2-commutative diagram
\e
\begin{gathered}
\xymatrix@C=160pt@R=15pt{
*+[r]{\Bord_X(K(\Z,4))} \drtwocell_{}\omit^{}\omit{^{F\,\,\,\,}} \ar[r]_(0.62){\boo} \ar[d]^{\Pi_X^{\bs\Spin}} & *+[l]{0\qs\Z_2} \\
*+[r]{\Bord_7^{\bs\Spin}(K(\Z,4)))} \ar[r]^(0.62){\sH_7^\Z} & *+[l]{\Z_2\qs\Z_2.} \ar[u]^{F_{\Z_2\qs\Z_2}^{0\qs\Z_2}} }
\end{gathered}
\label{fm10eq7}
\e
From now on, we will identify flag structures with such natural isomorphisms.

The set of flag structures is a torsor over $\Map(H^4(X,\Z),\Z_2)$, where $\Map$ means arbitrary maps $H^4(X,\Z)\ra\Z_2$, not just group morphisms.

Note that $\Map(H^4(X,\Z),\Z_2)$ is usually very large. So it is desirable to impose extra conditions on flag structures to cut down the choices.
\smallskip

\noindent{\bf(b)} {\it Additive\/} flag structures correspond to natural isomorphisms $F$ in \eq{fm10eq7} which are {\it monoidal\/} with respect to the monoidal structures on $\Bord_X(K(\Z,4))$ discussed in Remark \ref{fm6rem1}(b), and on $0\qs\Z_2$. Note that the monoidal structure on $\Bord_X(K(\Z,4))$ (from adding cohomology classes on a fixed $X$) is {\it unrelated\/} to the monoidal structure on $\Bord_7^{\bs\Spin}(K(\Z,4))$ (from taking disjoint unions $X_1\amalg X_2$). The set of additive flag structures is a torsor for $\Hom(H^4(X,\Z),\Z_2)$, which is finite (this agrees with $\Hom(H_3(X,\Z),\Z_2)$ in Proposition \ref{fm10prop1} by Poincar\'e duality).
\smallskip

\noindent{\bf(c)} We say that a flag structure $F$ {\it factors via\/} $\Z_2$ if we can factor \eq{fm10eq7} into a diagram of natural isomorphisms for some $F'$, where $\eta_7$ is as in \eq{fm9eq7}
\e
\begin{gathered}
\xymatrix@C=63pt@R=15pt{
*+[r]{\Bord_X(K(\Z,4))} \drtwocell_{}\omit^{}\omit{^{\id\,\,\,\,}} \ar@/^1.2pc/[rr]^{\boo}_(0.52){\Downarrow\,\,\id} \ar[r]_(0.65){F_{K(\Z,4)}^{K(\Z_2,4)}} \ar[d]^{\Pi_X^{\bs\Spin}} & \Bord_X(K(\Z_2,4)) \ar[d]^{\Pi_X^{\bs\Spin}} \ar[r]_(0.62){\boo} \drtwocell_{}\omit^{}\omit{^{F'\,\,\,\,}} & *+[l]{0\qs\Z_2} \\
*+[r]{\Bord_7^{\bs\Spin}(K(\Z,4)))} \ar[r]^(0.65){F_{K(\Z,4)}^{K(\Z_2,4)}} \ar@/_1.2pc/[rr]_{\sH_7^\Z}^(0.52){\Uparrow\,\,\eta_7} & \Bord_7^{\bs\Spin}(K(\Z_2,4))) \ar[r]^(0.62){\sH_7^{\Z_2}} & *+[l]{\Z_2\qs\Z_2.} \ar[u]^{F_{\Z_2\qs\Z_2}^{0\qs\Z_2}} }\!\!\!\!
\end{gathered}
\label{fm10eq8}
\e
Here the possible choices for $F'$ are a torsor for $\Map(H^4(X,\Z_2),\Z_2)$. But $F$ only involves $F'$ on objects in the image of $\Bord_X(K(\Z,4))\ra \Bord_X(K(\Z_2,4))$. Thus flag structures $F$ which factor via $\Z_2$ are a torsor for the finite group
\begin{equation*}
\Map\bigl(\Im(H^4(X,\Z)\ra H^4(X,\Z_2)),\Z_2\bigr).
\end{equation*}

\noindent{\bf(d)} We say that a flag structure $F$ {\it is natural at zero\/} if the composition of natural isomorphisms across the following diagram 
\e
\begin{gathered}
\xymatrix@C=63pt@R=15pt{
*+[r]{\Bord_X(*)} \drtwocell_{}\omit^{}\omit{^{\id\,\,\,\,}} \ar@/^1.2pc/[rr]^{\boo}_(0.52){\Downarrow\,\,\id} \ar[r]_(0.55){\inc} \ar[d]^{\Pi_X^{\bs\Spin}} & \Bord_X(K(\Z,4)) \ar[d]^{\Pi_X^{\bs\Spin}} \ar[r]_(0.62){\boo} \drtwocell_{}\omit^{}\omit{^{F\,\,\,\,}} & *+[l]{0\qs\Z_2} \\
*+[r]{\Bord_7^{\bs\Spin}(*)} \ar[r]^(0.55){\inc} \ar@/_1.3pc/[rr]_{\boo}^(0.52){\Uparrow\,\,\ze_7} & \Bord_7^{\bs\Spin}(K(\Z,4))) \ar[r]^(0.62){\sH_7^\Z} & *+[l]{\Z_2\qs\Z_2} \ar[u]^{F_{\Z_2\qs\Z_2}^{0\qs\Z_2}} }\!\!\!\!
\end{gathered}
\label{fm10eq9}
\e
is the identity natural isomorphism $\id:\boo\Ra\boo$, where $\ze_7$ is as in \eq{fm9eq8}, and the inclusion functors $\inc$ are as in Definition \ref{fm9def7}. This prescribes natural values for $F$ at any exact~$C\in C^4(X,\Z)$. Flag structures $F$ natural at zero always exist, and form a torsor over $\Map(H^4(X,\Z)\sm\{0\},\Z_2)$, since \eq{fm10eq9} prescribes $F$ over $0\in H^4(X,\Z)$. If we require $F$ to factor via $\Z_2$, then they form a torsor for~$\Map\bigl(\Im(H^4(X,\Z)\ra H^4(X,\Z_2))\sm\{0\},\Z_2\bigr).$

In Definition \ref{fm10def2}, requiring $F$ in \eq{fm10eq5} to be natural at zero means requiring $F(\es,\es)=1$. Additive flag structures are automatically natural at zero.
\smallskip

\noindent{\bf(e)} Combining $\Im\hat\xi^{\bs\Spin}_7(K(\Z,4))\!=\!\Z\an{2\ze_2,\ze_3}$ and $\Im\hat\xi^{\bs\Spin}_7(K(\Z_2,4))\!=\!\Z_2\an{2\ze_2}$ in Table \ref{fm3tab3}, and \eq{fm9eq6} which implies that 
\begin{align*}
\Im\hat\xi^{\bs\Spin}_7(K(\Z,4))&\subseteq\Ker\bigl(\pi_1(\sH_7^\Z)\vert_{\ti\Om_8^{\bs\Spin}(K(\Z,4))}\bigr),\\
\Im\hat\xi^{\bs\Spin}_7(K(\Z_2,4))&\subseteq\Ker\bigl(\pi_1(\sH_7^{\Z_2})\vert_{\ti\Om_8^{\bs\Spin}(K(\Z_2,4))}\bigr),
\end{align*}
and so Theorem \ref{fm9thm1}(ii) shows that $\sH_7^\Z$ and $\sH_7^{\Z_2}$ are orientable for any compact spin 7-manifold $X$. This provides an alternative proof to Proposition \ref{fm10prop1}(a) that flag structures, and flag structures factoring via $\Z_2$, exist for any compact spin 7-manifold $X$.
\end{rem}

\subsection{Flag structures in 8 dimensions}
\label{fm102}

For our applications in \S\ref{fm12}--\S\ref{fm14}, we would like a notion of `flag structure' in 8 dimensions that plays the same r\^ole for orienting moduli spaces on $\Spin(7)$-manifolds and Calabi--Yau 4-folds, that 7-dimensional flag structures do for moduli spaces on $G_2$-manifolds in \cite{Joyc5,JoUp}.

We have chosen to define flag structures in 8 dimensions by replacing $\sH_7^\Z$ in Theorem \ref{fm10thm1} by $\sH_8^\Z$. It is natural to ask whether there is an equivalent explicit geometric definition of 8-dimensional flag structures similar to Definition \ref{fm10def2}. The authors do have such a definition, but it is so complicated that we have decided not to explain it.

\begin{dfn}
\label{fm10def3}
Let $X$ be a compact spin 8-manifold. A {\it flag structure\/} $F$ on $X$ is an orientation on $X$ for the orientation functor $\sH_8^\Z$ of Definition \ref{fm9def7}. So by Definition \ref{fm9def2}, $F$ is a natural isomorphism in the 2-commutative diagram
\e
\begin{gathered}
\xymatrix@C=160pt@R=15pt{
*+[r]{\Bord_X(K(\Z,4))} \drtwocell_{}\omit^{}\omit{^{F\,\,\,\,}} \ar[r]_(0.62){\boo_{\Z_2}} \ar[d]^{\Pi_X^{\bs\Spin}} & *+[l]{0\qs\Z_2} \\
*+[r]{\Bord_8^{\bs\Spin}(K(\Z,4)))} \ar[r]^(0.62){\sH_8^\Z} & *+[l]{\Z_2\qs\Z_2.} \ar[u]^{F_{\Z_2\qs\Z_2}^{0\qs\Z_2}} }
\end{gathered}
\label{fm10eq10}
\e

We say that a flag structure $F$ {\it factors via\/} $\Z_2$ if as in \eq{fm10eq8} we can factor \eq{fm10eq10} into a diagram of natural isomorphisms for some $F'$, where $\eta_8$ is as in~\eq{fm9eq7}
\e
\begin{gathered}
\xymatrix@C=63pt@R=15pt{
*+[r]{\Bord_X(K(\Z,4))} \drtwocell_{}\omit^{}\omit{^{\id\,\,\,\,}} \ar@/^1.2pc/[rr]^{\boo}_(0.52){\Downarrow\,\,\id} \ar[r]_(0.65){F_{K(\Z,4)}^{K(\Z_2,4)}} \ar[d]^{\Pi_X^{\bs\Spin}} & \Bord_X(K(\Z_2,4)) \ar[d]^{\Pi_X^{\bs\Spin}} \ar[r]_(0.62){\boo} \drtwocell_{}\omit^{}\omit{^{F'\,\,\,\,}} & *+[l]{0\qs\Z_2} \\
*+[r]{\Bord_8^{\bs\Spin}(K(\Z,4)))} \ar[r]^(0.65){F_{K(\Z,4)}^{K(\Z_2,4)}} \ar@/_1.2pc/[rr]_{\sH_8^\Z}^(0.52){\Uparrow\,\,\eta_8} & \Bord_7^{\bs\Spin}(K(\Z_2,4))) \ar[r]^(0.62){\sH_8^{\Z_2}} & *+[l]{\Z_2\qs\Z_2.} \ar[u]^{F_{\Z_2\qs\Z_2}^{0\qs\Z_2}} }\!\!\!\!
\end{gathered}
\label{fm10eq11}
\e

We say that a flag structure $F$ {\it is natural at zero\/} if as in \eq{fm10eq9} the composition of natural isomorphisms across the following diagram 
\begin{equation*}
\xymatrix@C=63pt@R=15pt{
*+[r]{\Bord_X(*)} \drtwocell_{}\omit^{}\omit{^{\id\,\,\,\,}} \ar@/^1.2pc/[rr]^{\boo}_(0.52){\Downarrow\,\,\id} \ar[r]_(0.55){\inc} \ar[d]^{\Pi_X^{\bs\Spin}} & \Bord_X(K(\Z,4)) \ar[d]^{\Pi_X^{\bs\Spin}} \ar[r]_(0.62){\boo} \drtwocell_{}\omit^{}\omit{^{F\,\,\,\,}} & *+[l]{0\qs\Z_2} \\
*+[r]{\Bord_8^{\bs\Spin}(*)} \ar[r]^(0.55){\inc} \ar@/_1.3pc/[rr]_{\boo}^(0.52){\Uparrow\,\,\ze_8} & \Bord_8^{\bs\Spin}(K(\Z,4))) \ar[r]^(0.62){\sH_8^\Z} & *+[l]{\Z_2\qs\Z_2} \ar[u]^{F_{\Z_2\qs\Z_2}^{0\qs\Z_2}} }\!\!\!\!
\end{equation*}
is the identity natural isomorphism $\id:\boo\Ra\boo$, where $\ze_8$ is as in \eq{fm9eq8}.\end{dfn}

\begin{thm}
\label{fm10thm2}
Let\/ $X$ be a compact spin\/ $8$-manifold. Then
\smallskip

\noindent{\bf(a)} $X$ admits a flag structure if and only if the following condition holds:
\begin{itemize}
\setlength{\itemsep}{0pt}
\setlength{\parsep}{0pt}
\item[$\bf(*)$] There does not exist a class\/ $\al\in H^3(X,\Z)$ such that\/ $\int_X\bar\al\cup\Sq^2(\bar\al)=\ul{1}$ in $\Z_2,$ where $\bar\al\in H^3(X,\Z_2)$ is the mod\/ $2$ reduction of\/ $\al,$ and\/ $\Sq^2(\bar\al)\in H^5(X,\Z_2)$ is its Steenrod square.
\end{itemize}
Then the set of flag structures $F$ on $X$ is a torsor for $\Map(H^4(X,\Z),\Z_2)$.

We may also require $F$ to be natural at zero, and such flag structures form a torsor for $\Map(H^4(X,\Z)\sm\{0\},\Z_2)$.
\smallskip

\noindent{\bf(b)} $X$ admits a flag structure factoring via $\Z_2$ if and only if the following holds:
\begin{itemize}
\setlength{\itemsep}{0pt}
\setlength{\parsep}{0pt}
\item[$\bf(\dag)$] There does not exist\/ $\bar\al\in H^3(X,\Z_2)$ such that\/ $\int_X\bar\al\cup\Sq^2(\bar\al)=\ul{1}$ in $\Z_2$.
\end{itemize}
Then the set of flag structures $F$ which factor via $\Z_2$ are a torsor for the finite group
$\Map\bigl(\Im(H^4(X,\Z)\ra H^4(X,\Z_2)),\Z_2\bigr)$.

We may also require $F$ to be natural at zero, and such flag structures form a torsor for $\Map(\Im(H^4(X,\Z)\ra H^4(X,\Z_2))\sm\{0\},\Z_2)$.
\end{thm}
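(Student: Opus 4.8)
The plan is to deduce everything from the general orientability criterion Theorem \ref{fm9thm1}(ii), applied to the orientation functors $\sH_8^\Z$ on $\Bord^{\bs\Spin}_8(K(\Z,4))$ and $\sH_8^{\Z_2}$ on $\Bord^{\bs\Spin}_8(K(\Z_2,4))$, combined with the explicit formulae for $\hat\xi^{\bs\Spin}_8(K(\Z,4))$ and $\hat\xi^{\bs\Spin}_8(K(\Z_2,4))$ in Theorem \ref{fm3thm2}(c). By Definition \ref{fm10def3}, a flag structure on a compact spin $8$-manifold $X$ is exactly an orientation of $\sH_8^\Z$ for $X$ in the sense of Definition \ref{fm9def2}; and, reading off the diagram \eq{fm10eq11} together with the natural isomorphism $\eta_8$ of \eq{fm9eq7}, a flag structure factoring via $\Z_2$ corresponds to an orientation of $\sH_8^{\Z_2}$ for $X$ (the construction $F'\rightsquigarrow F$ is obtained by composing $F'$, whiskered by $F^{K(\Z_2,4)}_{K(\Z,4)}$, with $\eta_8$ and the identity $2$-cells). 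So in both cases one must decide when these functors are orientable for $X$, and then compute the torsor of orientations.

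\textbf{The orientability computation.} For part (a) I would apply Theorem \ref{fm9thm1}(ii)(b) to $\sO=\sH_8^\Z$: it is orientable for $X$ if and only if $\Xi^{\bs\Spin,K(\Z,4)}_{8,\sH_8^\Z}([X,\ga])=\ul 0$ for every $\ga\in H^4(X\t\cS^1,\Z)$. By the defining square in \eq{fm9eq3} (the $K(R,k)$ version) together with the identification of $\pi_1$ of a functor, $\Xi^{\bs\Spin,K(\Z,4)}_{8,\sH_8^\Z}=\pi_1(\sH_8^\Z)\ci\hat\xi^{\bs\Spin}_8(K(\Z,4))$. Since $\hat\xi^{\bs\Spin}_8(K(\Z,4))$ lands in $\ti\Om^{\bs\Spin}_9(K(\Z,4))=\Z_2\an{\al_1\ze_2}$ (Table \ref{fm3tab1}), on which $\pi_1(\sH_8^\Z)$ sends $\al_1\ze_2\mapsto\ul 1$ by \eq{fm9eq6}, Theorem \ref{fm3thm2}(c) gives $\Xi^{\bs\Spin,K(\Z,4)}_{8,\sH_8^\Z}([X,\ga])=\int_X\bar\be\cup\Sq^2(\bar\be)$, where the mod $2$ reduction of $\ga$ is $\bar\be\bt[\cS^1]+\bar\de\bt 1$ under the K\"unneth isomorphism, with $\bar\be\in H^3(X,\Z_2)$. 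As $\ga$ runs over $H^4(X\t\cS^1,\Z)$ its integral K\"unneth component $\ga_3\in H^3(X,\Z)$ (whose reduction is $\bar\be$) runs over all of $H^3(X,\Z)$, e.g.\ via $\ga=\ga_3\bt[\cS^1]$, so the set of values of $\Xi$ equals $\{\int_X\bar\al\cup\Sq^2(\bar\al):\al\in H^3(X,\Z)\}$, which is $\{\ul 0\}$ precisely when $(*)$ holds. Part (b) is identical with $\sH_8^{\Z_2}$, $\hat\xi^{\bs\Spin}_8(K(\Z_2,4))$ in place of $\sH_8^\Z$, $\hat\xi^{\bs\Spin}_8(K(\Z,4))$; the only change is that cohomology classes on $X\t\cS^1$ are now $\Z_2$-classes, so $\bar\be$ ranges over all of $H^3(X,\Z_2)$ rather than over the image of $H^3(X,\Z)$, which is exactly why the stronger condition $(\dag)$, a condition on all of $H^3(X,\Z_2)$, appears. (In particular $(\dag)\Rightarrow(*)$, consistently with ``factors via $\Z_2$'' $\Rightarrow$ ``flag structure''.)

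\textbf{The torsor statements.} When an orientation exists, the set of orientations of $\sH_8^\Z$ for $X$ is a torsor over the group of natural automorphisms of the trivial functor $\boo\colon\Bord_X(K(\Z,4))\ra\Z_2\text{-tor}$; because $\Z_2$ is abelian, such a natural automorphism is just a function constant on isomorphism classes, i.e.\ an element of $\Map\bigl(\pi_0(\Bord_X(K(\Z,4))),\Z_2\bigr)=\Map(H^4(X,\Z),\Z_2)$ using Remark \ref{fm6rem1}(a). Requiring $F$ to be natural at zero pins down the value of this function at $0\in H^4(X,\Z)$; such $F$ exist since any flag structure may be corrected at $0$, giving the torsor over $\Map(H^4(X,\Z)\sm\{0\},\Z_2)$. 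For part (b), orientations of $\sH_8^{\Z_2}$ for $X$ form a torsor over $\Map(H^4(X,\Z_2),\Z_2)$, but the flag structure $F$ built from $F'$ via \eq{fm10eq11} depends on $F'$ only through its restriction along $F^{K(\Z_2,4)}_{K(\Z,4)}\colon\Bord_X(K(\Z,4))\ra\Bord_X(K(\Z_2,4))$, which on $\pi_0$ is the reduction map $H^4(X,\Z)\ra H^4(X,\Z_2)$. Hence $F'_1,F'_2$ give the same $F$ iff they agree on $\Im(H^4(X,\Z)\ra H^4(X,\Z_2))$, and so the flag structures factoring via $\Z_2$ form a torsor over the finite group $\Map\bigl(\Im(H^4(X,\Z)\ra H^4(X,\Z_2)),\Z_2\bigr)$, with the ``natural at zero'' version again removing the value at $0$.

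\textbf{Main obstacle.} None of the steps is deep, but the bookkeeping is delicate in two places. First one must keep the K\"unneth decompositions straight and exploit that, for $R=\Z$, an integral class on $X\t\cS^1$ has an \emph{integral} $H^3(X)$-component --- this is precisely what separates $(*)$ from $(\dag)$, and it is easy to conflate them. Second, the torsor computation in (b) requires tracking exactly which data each functor sees: one must confirm both that the restriction map from orientations of $\sH_8^{\Z_2}$ to ``$F'$ evaluated on mod-$2$ reductions of integral classes'' is surjective onto the $F$'s factoring via $\Z_2$, and that its fibres are precisely the functions supported off $\Im(H^4(X,\Z)\ra H^4(X,\Z_2))$.
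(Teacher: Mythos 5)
Your proposal is correct and follows essentially the same route as the paper: orientability of $\sH_8^\Z$ (resp.\ $\sH_8^{\Z_2}$) is tested via Theorem \ref{fm9thm1}(ii) using the explicit formula for $\hat\xi^{\bs\Spin}_8$ from Theorem \ref{fm3thm2}(c), and the torsor statements come from $\pi_0(\Bord_X(K(\Z,4)))=H^4(X,\Z)$ together with the arguments of Remark \ref{fm10rem1}(c),(d). Your write-up is in fact somewhat more explicit than the paper's about the K\"unneth bookkeeping separating $(*)$ from $(\dag)$ and about why only the restriction of $F'$ along $H^4(X,\Z)\ra H^4(X,\Z_2)$ matters, but these are exactly the points the paper delegates to the cited remarks.
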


\begin{proof}
For (a), since $\pi_1(\sH_8^\Z)$ is nonzero on $\ti\Om_9^{\bs\Spin}(K(\Z,4))=\Z_2\an{\al_1\ze_2}$ by \eq{fm9eq6}, it follows from Theorem \ref{fm9thm1}(ii) and the explicit description of $\hat\xi^{\bs\Spin}_8(K(\Z,4))$ in Theorem \ref{fm3thm2}(c) that $\sH_8^\Z$ is orientable for $X$ (that is, flag structures exist) if and only if condition $(*)$ holds, as this is the condition for there to exist $[X,f]$ in $\Om^{\bs\Spin}_8(\cL K(\Z,4);K(\Z,4))$ with $\hat\xi^{\bs\Spin}_8(K(\Z,4))([X,f])\ne 0$ in \eq{fm3eq22}. The set of flag structures is then a torsor for $\Map(\pi_0(\Bord_X(K(\Z,4))),\Z_2)$, where $\pi_0(\Bord_X(K(\Z,4)))=H^4(X,\Z)$. The last part holds as in Remark~\ref{fm10rem1}(d).

For (b), we see from \eq{fm10eq11} that $X$ admits a flag structure factoring via $\Z_2$ if and only if $\sH_8^{\Z_2}$ is orientable for $X$. Following the argument above, we find that this is true if and only if $(\dag)$ holds. The last parts hold as in Remark~\ref{fm10rem1}(c),(d).
\end{proof}

\begin{ex}
\label{fm10ex1}
Consider the compact spin 8-manifold $\SU(3)$. The projection $\SU(3)\ra\SU(3)/\SU(2)=\cS^5$ is a fibration with fibre $\SU(2)=\cS^3$, so from the Serre spectral sequence we see that $H^*(X,R)\cong H^*(\cS^3\t\cS^5,R)$. Thus $H^3(\SU(3),\Z)=\Z\an{\al}$, $H^5(\SU(3),\Z)=\Z\an{\be}$, and $H^3(\SU(3),\Z_2)=\Z\an{\bar\al}$, $H^5(\SU(3),\Z_2)=\Z_2\an{\bar\be}$. One can show that $\Sq^2(\bar\al)=\bar\be$, so that $\int_{\SU(3)}\bar\al\cup\Sq^2(\bar\al)=\ul 1$ in $\Z_2$. Therefore Theorem \ref{fm10thm2}$(*)$ does not hold for~$\SU(3)$. 

By a theorem of Samelson, $\SU(3)$ has a left-invariant integrable complex structure, which has trivial canonical bundle. Thus we can regard $\SU(3)$ as a `non-K\"ahler Calabi--Yau 4-fold'. Note that in \S\ref{fm13}, we will want to know whether Calabi--Yau 4-folds satisfy Theorem~\ref{fm10thm2}$(*)$.
\end{ex}

\begin{rem}
\label{fm10rem2}
It is natural to ask whether there is a good notion of {\it additive\/} flag structure $F$ in 8 dimensions, parallel to the 7-dimensional version in Definition \ref{fm10def2}. This should be a compatibility between $F$ and the symmetric monoidal structure on $\Bord_X(K(\Z,4))$ discussed in Remark \ref{fm6rem1}(b). 

This does not work very well. The mod 2 intersection form on $H^4(X,\Z)$ gives obstructions to making the functor $F_{\Z_2\qs\Z_2}^{0\qs\Z_2}\ci\sH_8^\Z\ci\Pi_X^{\bs\Spin}$ in \eq{fm10eq10} symmetric monoidal. Even when it does work, it turns out not to be very useful, for reasons explained in Remark \ref{fm13rem4}. So we have not developed the idea.
\end{rem}

\section{Factorizations of orientation functors}
\label{fm11}

\subsection{Explicit computation of orientation functors}
\label{fm111}

We compute some orientation functors, which will be applied later to study orientations on moduli spaces. The next theorem will be proved in~\S\ref{fm193}.

\begin{thm}
\label{fm11thm1}
{\bf(a)} In Definition\/ {\rm\ref{fm9def4}} with\/ $n=7$ and\/ $\bs B=\bs\Spin,$ consider the normalized orientation functors 
\begin{equation*}
\sN_7^{\bs\Spin,G}:\Bord_7^{\bs\Spin}(BG)\longra 0\qs\Z=\ZZtor
\end{equation*}
for $G=\SU(2),$ or\/ $\SU(m)$ for\/ $m\ge 4,$ or\/ $\Sp(m)$ for\/ $m\ge 2,$ or\/ $E_8$. As in Theorem\/ {\rm\ref{fmAthm2},} using {\rm\eq{fm5eq4}--\eq{fm5eq5}} these are classified up to monoidal natural isomorphism by two morphisms
\begin{align*}
\pi_0(\sN_7^{\bs\Spin,G})&:\Om_7^{\bs\Spin}(BG)\longra 0,\\ 
\pi_1(\sN_7^{\bs\Spin,G})&:\Om_8^{\bs\Spin}(BG)\longra\Z, 
\end{align*}
and an element of an\/ $H_\sym^2(\Om_7^{\bs\Spin}(BG),\Z)$-torsor. As $\Om_7^{\bs\Spin}(BG)\ab=0$ by Corollary\/ {\rm\ref{fm3cor1},} this torsor is trivial, so the only nontrivial invariant is $\pi_1(\sN_7^{\bs\Spin,G})$. In the splitting
\e
\Om_n^{\bs\Spin}(BG)=\Om_n^{\bs\Spin}(*)\op\ti\Om_n^{\bs\Spin}(BG),
\label{fm11eq1}
\e
$\pi_1(\sN_7^{\bs\Spin,G})$ is\/ $0$ on\/ $\Om_8^{\bs\Spin}(*),$ by definition of normalized orientations. It acts on the generators of\/ $\ti\Om_8^{\bs\Spin}(BG)$ in Table\/ {\rm\ref{fm3tab4}} as in Table\/~{\rm\ref{fm11tab1}}.
\smallskip

\begin{table}[htb]
\centerline{
\begin{tabular}{|l|l|l|}
\hline
 & $\ze_1$ & $\ze_2$  \\
\hline
$\pi_1\bigl(\sN_7^{\bs\Spin,\SU(2)}\bigr)^{\vphantom{|^l}}_{\vphantom{|_l}}$  & $-8$ & $1$  \\
\hline
\end{tabular}
\quad
\begin{tabular}{|l|l|l|l|}
\hline
 & $\frac{\ze_1}{2}$ & $\ze_2$ &  $\ze_3$ \\
\hline
$\pi_1\bigl(\sN_7^{\bs\Spin,\SU(m)}\bigr)^{\vphantom{|^l}}_{\vphantom{|_l}}$, $m\ge 4$  & $-2m$ & $1$ & $0$ \\
\hline
\end{tabular}
}
\smallskip

\centerline{
\begin{tabular}{|l|l|l|l|}
\hline
 & $\ze_1$ & $\ze_2$ &  $\ze_2'$ \\
\hline
$\pi_1\bigl(\sN_7^{\bs\Spin,\Sp(m)}\bigr)^{\vphantom{|^l}}_{\vphantom{|_l}}$, $m\ge 2$  & $-4(m+1)$ & $1$ & $0$ \\
\hline
\end{tabular}
\quad
\begin{tabular}{|l|l|l|}
\hline
 &  $\ze_2$ &  $\ze_3$ \\
\hline
$\pi_1\bigl(\sN_7^{\bs\Spin,E_8}\bigr)^{\vphantom{|^l}}_{\vphantom{|_l}}$  & 1 & 0 \\
\hline
\end{tabular}
}
\caption{The morphisms $\pi_1\bigl(\sN_7^{\bs\Spin,G}\bigr)$}
\label{fm11tab1}
\end{table}

\noindent{\bf(b)} In Definition\/ {\rm\ref{fm9def4}} with\/ $n=8$ and\/ $\bs B=\bs\Spin,$ consider the normalized orientation functors 
\begin{equation*}
\sN_8^{\bs\Spin,G}:\Bord_8^{\bs\Spin}(BG)\longra \Z\qs\Z_2
\end{equation*}
for $G=\SU(2),$ or\/ $\SU(m)$ for\/ $m\ge 5,$ or\/ $\Sp(m)$ for\/ $m\ge 2,$ or\/ $E_8$. As in Theorem\/ {\rm\ref{fmAthm2},} using {\rm\eq{fm5eq4}--\eq{fm5eq5}} these are classified up to monoidal natural isomorphism by two morphisms
\begin{align*}
\pi_0(\sN_8^{\bs\Spin,G})&:\Om_8^{\bs\Spin}(BG)\longra \Z,\\
\pi_1(\sN_8^{\bs\Spin,G})&:\Om_9^{\bs\Spin}(BG)\longra\Z_2=\{\ul{0},\ul{1}\},
\end{align*}
and an element of a torsor over $H_\sym^2(\Om_8^{\bs\Spin}(BG),\Z_2)$. As $\Om_8^{\bs\Spin}(BG)\ab\cong\Z^k$ by Corollary\/ {\rm\ref{fm3cor1},} this torsor is trivial, since calculation gives $H^2(\Z^k,\Z_2)\cong\Alt(\Z^k,\Z_2)\cong \Z_2^{k(k-1)/2},$ so $H_\sym^2(\Z^k,\Z_2)=0$ by \eq{fmAeq9}. We have
\begin{equation*}
\pi_0(\sN_8^{\bs\Spin,G})=\pi_1(\sN_7^{\bs\Spin,G}),
\end{equation*}
where the right hand side is given in\/ {\bf(a)}. In\/ {\rm\eq{fm11eq1},} $\pi_1(\sN_8^{\bs\Spin,G})$ is\/ $0$ on\/ $\Om_9^{\bs\Spin}(*),$ and acts on the generators of\/ $\ti\Om_9^{\bs\Spin}(BG)$ in Table\/ {\rm\ref{fm3tab4}} as in Table\/~{\rm\ref{fm11tab2}}.
\smallskip

\begin{table}[htb]
\centerline{
\begin{tabular}{|l|l|}
\hline
 &  $\al_1\ze_2$  \\
\hline
$\pi_1\bigl(\sN_8^{\bs\Spin,\SU(2)}\bigr)^{\vphantom{|^l}}_{\vphantom{|_l}}$  & \ul{1}  \\
\hline
\end{tabular}
\quad
\begin{tabular}{|l|l|}
\hline
 & $\al_1\ze_2$  \\
\hline
$\pi_1\bigl(\sN_8^{\bs\Spin,\SU(m)}\bigr)^{\vphantom{|^l}}_{\vphantom{|_l}}$, $m\ge 5$ & \ul{1} \\
\hline
\end{tabular}
}
\smallskip

\centerline{
\begin{tabular}{|l|l|l|}
\hline
 &  $\al_1\ze_2$ &  $\al_1\ze_2'$ \\
\hline
$\pi_1\bigl(\sN_8^{\bs\Spin,\Sp(m)}\bigr)^{\vphantom{|^l}}_{\vphantom{|_l}}$, $m\ge 2$  & \ul{1} & \ul{0} \\
\hline
\end{tabular}
\quad
\begin{tabular}{|l|l|}
\hline
 &  $\al_1\ze_2$ \\
\hline
$\pi_1\bigl(\sN_8^{\bs\Spin,E_8}\bigr)^{\vphantom{|^l}}_{\vphantom{|_l}}$  & \ul{1} \\
\hline
\end{tabular}
}
\caption{The morphisms $\pi_1\bigl(\sN_8^{\bs\Spin,G}\bigr)$}
\label{fm11tab2}
\end{table}

\noindent{\bf(c)} The analogues of\/ {\bf(a)\rm,\bf(b)} for the orientation functors
\begin{align*}
\sN_{7,\Z_k}^{\bs\Spin,G}&:\Bord_7^{\bs\Spin}(BG)\longra 0\qs\Z_k=\text{\rm$\Z_k$-tor},\\
\sN_{8,\Z_{2k}}^{\bs\Spin,G}&:\Bord_8^{\bs\Spin}(BG)\longra \Z_{2k}\qs\Z_2
\end{align*}
in Definition\/ {\rm\ref{fm9def4}} are obtained by reducing mod\/ $k$ or\/ $2k$ in Table\/~{\rm\ref{fm11tab1}}.
\end{thm}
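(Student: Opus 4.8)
### Proof plan for Theorem \ref{fm11thm1}

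The plan is to compute the two invariants $\pi_0(\sN_n^{\bs\Spin,G})$ and $\pi_1(\sN_n^{\bs\Spin,G})$ of each normalized orientation functor by evaluating them on the explicit bordism generators listed in Corollary \ref{fm3cor1}, using index theory for the twisted Dirac operators that define $\sO_n^{\bs B,G}$ in Definition \ref{fm9def4}. The first observation, which handles the $H^2_\sym$-torsor ambiguity and the vanishing on $\Om_*^{\bs\Spin}(*)$, is bookkeeping: normalization kills the contribution of the trivial $G$-bundle, so $\pi_i(\sN_n^{\bs\Spin,G})$ vanishes on $\Om_{n+i}^{\bs\Spin}(*)$ in the splitting \eq{fm11eq1}; and the torsor is trivial because $\Om_7^{\bs\Spin}(BG)=0$ in part (a), while in part (b) a short group-cohomology calculation gives $H^2_\sym(\Z^k,\Z_2)=0$ via \eq{fmAeq9} (here I would note $H^2(\Z^k,\Z_2)\cong\Alt(\Z^k,\Z_2)$ since the quadratic part vanishes over $\Z^k$). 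The identity $\pi_0(\sN_8^{\bs\Spin,G})=\pi_1(\sN_7^{\bs\Spin,G})$ is then immediate from compatibility of $\sO_n$ with the dimension shift: both sides measure the $\Z$-valued index, resp. the spectral-flow automorphism, of the twisted Dirac operator on the same closed $8$-manifold with $G$-bundle, via Theorem \ref{fm9thm2}.

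The core computation is $\pi_1(\sN_7^{\bs\Spin,G})$ on the generators $\ze_1,\frac{\ze_1}{2},\ze_2,\ze_2',\ze_3$ of $\ti\Om_8^{\bs\Spin}(BG)$. By Theorem \ref{fm9thm2} and Definition \ref{fm9def4}, for a closed spin $8$-manifold $Y$ with principal $G$-bundle $Q$ this invariant is the index $\ind(\sD_Y\ot\ad(Q)) - \ind(\sD_Y\ot\ad(Y\times G))$ of the positive Dirac operator twisted by the adjoint bundle (a real index, landing in $\Z$ after dividing out the $\C$- or $\H$-linearity in dimension $8$; I would be careful here about the factor relating the complex index to the real orientation grading, cf.\ Table \ref{fm9tab1}). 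The Atiyah--Singer formula gives this as $\int_Y \hat A(Y)\,(\ch(\ad(Q)) - \dim\g)$, so it is a characteristic-number computation. Using the explicit description of the generators as bundles $P\to X$ with prescribed Chern/symplectic Pontrjagin classes from Corollary \ref{fm3cor1}(b) --- in particular \eq{fm3eq32}, \eq{fm3eq35}, \eq{fm3eq37}, \eq{fm3eq19} --- and the standard expansions $\ch(\ad P) = \dim\g + (\text{quadratic Casimir terms}) + \cdots$, each entry of Table \ref{fm11tab1} reduces to a finite calculation. For $\SU(m)$ and $\Sp(m)$ one substitutes $\ad P = P\times_G\g$ and expands $\ch(\g)$ in terms of $c_2(P), c_4(P)$ resp.\ $q_1(P), q_2(P)$; the degree-$4$ term of $\ch(\ad P)$ is proportional to $c_2(P)$ (with constant the dual Coxeter number), which after pairing with $\hat A_1(Y) = -p_1(TY)/24$ and with the degree-$8$ terms produces the stated coefficients $-8, -2m, -4(m+1)$ on the "$\ze_1$-type" generators and $1$ on $\ze_2$. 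For $E_8$, since $\ad P$ is the only fundamental representation, $\ch(\ad P)$ in degree $4$ is a multiple of $a_1(P)$, and the computation runs through \eq{fm3eq19}. That $\pi_1$ vanishes on $\ze_3$ (and $\ze_2'$) follows because those generators have $c_2(P)=0$ resp.\ the relevant Casimir combination vanishing, so the twisted index equals the untwisted one.

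For $\pi_1(\sN_8^{\bs\Spin,G})$ on $\ti\Om_9^{\bs\Spin}(BG)$, the relevant generators are $\al_1\ze_2$ (and $\al_1\ze_2'$ for $\Sp(m)$), realized on $9$-manifolds of the form $X\times\cS^1_{\rm nb}$. Here the invariant is a $\Z_2$-valued spectral flow / Pfaffian orientation obstruction (dimension $9\equiv 1 \bmod 8$), and the key tool is that multiplication by $\al_1=[\cS^1_{\rm nb}]$ intertwines the $\Z$-valued index in dimension $8$ with the $\Z_2$-valued obstruction in dimension $9$ --- this is precisely the module structure over $\Om_*^{\bs\Spin}(*)$ respected by $\sO_*$, together with the fact that $\al_1$ acts as the generator of $KO_1(\pt)=\Z_2$. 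Concretely, $\pi_1(\sN_8^{\bs\Spin,G})(\al_1\,\beta)$ equals the mod-$2$ reduction of $\pi_1(\sN_7^{\bs\Spin,G})(\beta)$ for $\beta\in\ti\Om_8^{\bs\Spin}(BG)$; applied to $\beta=\ze_2$ (value $1$) this gives $\ul 1$, and to $\beta=\ze_2'$ (value $0$) this gives $\ul 0$, matching Table \ref{fm11tab2}. Part (c) is then formal: reducing the $\Z$- or $\Z$-graded target mod $k$ or $2k$ via the functor $\ZZtor\to\Z_k\text{-tor}$ of Definition \ref{fm9def4} reduces all table entries mod $k$, resp.\ $2k$.

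The main obstacle I expect is bookkeeping the precise normalization constants relating the \emph{real} orientation/index grading used in Theorem \ref{fm9thm2} (the group $\Ga_\ell = KO_\ell(\pt)$, which in dimension $8$ is $\Z$ but arises as half, or a quarter, of a complex or quaternionic index) to the honest Atiyah--Singer complex index $\int_Y\hat A(Y)\ch(\ad Q)$. Getting the factors of $8$ versus $16$ right --- visible already in the signature normalizations $-\operatorname{sign}(M)/8$ vs.\ $-\operatorname{sign}(M)/16$ in \eq{fm3eq13} vs.\ \eq{fm3eq10} --- is where sign and divisibility errors would creep in, and it must be cross-checked against the known values for $\SU(2)$ (the $-8$ on $\ze_1$, which is consistent with $\ze_1 = 2\frac{\ze_1}{2}$ under $M\SU(2)\to M\U(2)$ giving $-2m$ with $m\to\infty$ matching $-8$ only after accounting for the change of generator, so care is needed). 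I would pin these down once and for all on a single well-understood example (e.g.\ $\HP^2$ with the appropriate $\Sp(1)$-bundle) and propagate.
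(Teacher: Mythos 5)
Your proposal is correct and follows essentially the same route as the paper: the torsor and $\pi_0$ bookkeeping, the Atiyah--Singer computation of $\pi_1(\sN_7^{\bs\Spin,G})$ on the explicit generators via $\ch(\Ad(P)\ot\C)$ (the paper uses $(\Ad(P)\ot\C)\op\C=E^*\ot E$ for $\SU(m)$ and $\Ad(P)\ot\C=S^2_\C E$ for $\Sp(m)$, then rewrites in terms of the submanifold data to match \eq{fm3eq13}, \eq{fm3eq16}), and the deduction of Table \ref{fm11tab2} from Table \ref{fm11tab1} via $q'\ci\pi_0(\sN_8^{\bs\Spin,G})=\pi_1(\sN_8^{\bs\Spin,G})\ci q$ with $q$ multiplication by $\al_1$ and $q'$ reduction mod $2$. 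Your worry about real-versus-complex index normalization is resolved exactly as you anticipate: the numerical real index equals the complex index of the complexified operator, and the $\SU(2)$ entry $-8$ is the $m=2$ formula evaluated on $\ze_1$ with $\sign(M)=-16$.
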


Now Theorem \ref{fm9thm1}(a) gives a necessary and sufficient condition for the orientation functors $\sO=\sN_7^{\bs\Spin,G},\sN_{7,\Z_k}^{\bs\Spin,G},\sN_8^{\bs\Spin,G}$ to be orientable for all compact spin 7- or 8-manifolds $X$: we must have $\Xi_{n,\sO}^{\bs\Spin,G}\equiv\ul{0}$, which by \eq{fm9eq3} is equivalent to $\Im\hat\xi^{\bs\Spin}_n(BG)\subseteq\Ker\pi_1(\sO)$. The images $\Im\hat\xi^{\bs\Spin}_n(BG)$ are given in Table \ref{fm3tab5}, and the kernels $\Ker\pi_1(\sO)$ are determined by the actions of $\pi_1(\sO)$ in Tables \ref{fm11tab1}--\ref{fm11tab2}. Thus we deduce:

\begin{cor}
\label{fm11cor1}
Of the orientation functors\/ $\sN_7^{\bs\Spin,G},\sN_{7,\Z_k}^{\bs\Spin,G},\sN_8^{\bs\Spin,G}$ and\/ $\sN_{8,\Z_{2k}}^{\bs\Spin,G}$ described in Theorem\/ {\rm\ref{fm11thm1}} for\/ $k\ge 2$ and\/ $G=\SU(2),$ or\/ $\SU(m)$ for\/ $m\ge 4$ $(n=7)$ or\/ $m\ge 5$ $(n=8),$ or\/ $\Sp(m)$ for\/ $m\ge 2,$ or\/ $E_8,$ the following, and only the following, are orientable for all compact spin\/ $7$- or\/ $8$-manifolds\/~{\rm$X$:}
\begin{equation*}
\sN_{7,\Z_2}^{\bs\Spin,\SU(2)},\;\>
\sN_{7,\Z_2}^{\bs\Spin,\SU(m)},\;\>
\sN_{7,\Z_2}^{\bs\Spin,E_8},\;\>
\sN_8^{\bs\Spin,\SU(2)},\;\>
\sN_{8,\Z_{2k}}^{\bs\Spin,\SU(2)}.
\end{equation*}
\end{cor}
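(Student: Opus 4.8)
The plan is to apply the orientability criterion of Theorem~\ref{fm9thm1}(a), which says that an orientation functor $\sO:\Bord_n^{\bs\Spin}(BG)\ra A\qs B$ is orientable for every compact spin $n$-manifold $X$ if and only if $\Xi_{n,\sO}^{\bs\Spin,G}\equiv\ul{0}$, and by the commutative diagram \eq{fm9eq3} this is equivalent to the inclusion $\Im\hat\xi^{\bs\Spin}_n(BG)\subseteq\Ker\pi_1(\sO)$. Thus the corollary reduces to a purely bookkeeping comparison, for each group $G$ in the list and each $n=7,8$, between the subgroup $\Im\hat\xi^{\bs\Spin}_n(BG)\subseteq\ti\Om_n^{\bs\Spin}(BG)$ given in Table~\ref{fm3tab5} (via Corollary~\ref{fm3cor1}(d)) and the kernel of the morphism $\pi_1(\sO)$ recorded in Tables~\ref{fm11tab1}--\ref{fm11tab2} (via Theorem~\ref{fm11thm1}, together with the reduction-mod-$k$ statement of Theorem~\ref{fm11thm1}(c)). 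Here I would use that for the normalized functors $\pi_1(\sO)$ vanishes on the $\Om_*^{\bs\Spin}(*)$-summand, so only the action on generators $\ze_1,\frac{\ze_1}{2},\ze_2,\ze_2',\ze_3,\al_1\ze_2,\al_1\ze_2'$ of $\ti\Om_*^{\bs\Spin}(BG)$ matters.

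The core of the argument is then a finite case check. First, in dimension $n=7$ with target $0\qs\Z$: Table~\ref{fm3tab5} gives $\Im\hat\xi^{\bs\Spin}_7(BG)=\Z\an{\ze_1,2\ze_2}$ for $G=\SU(2)$ and $\Z\an{\frac{\ze_1}{2},2\ze_2,\ze_3}$, $\Z\an{\ze_1,\ze_2-\ze_2',\ze_2+\ze_2'}$, $\Z\an{2\ze_2,\ze_3}$ for $G=\SU(m),\Sp(m),E_8$ respectively, while Table~\ref{fm11tab1} shows $\pi_1(\sN_7^{\bs\Spin,G})(\ze_2)=1$ in $\Z$; since $2\ze_2\mapsto 2\ne 0$ (and $\ze_2\pm\ze_2'\mapsto 1\ne0$ for $\Sp(m)$), one sees $\Im\hat\xi^{\bs\Spin}_7(BG)\not\subseteq\Ker\pi_1(\sN_7^{\bs\Spin,G})$, so $\sN_7^{\bs\Spin,G}$ is never orientable for all $X$. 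After reducing mod $k$ as in Theorem~\ref{fm11thm1}(c), the image of $2\ze_2$ becomes $\ul{2}\bmod k$, which is $\ul{0}$ iff $k\mid 2$, i.e.\ $k=2$; one must additionally confirm that the other listed generators (namely $\ze_1$ or $\frac{\ze_1}{2}$ or $\ze_1,\ze_3$) also land in $\Ker\pi_1$ when $k=2$, which follows because $\pi_1(\sN_7^{\bs\Spin,G})$ sends $\ze_1\mapsto$ a multiple of $8$, $\frac{\ze_1}{2}\mapsto$ a multiple of $2$, and $\ze_3\mapsto 0$ (from Table~\ref{fm11tab1}), all even. This pins down exactly $\sN_{7,\Z_2}^{\bs\Spin,\SU(2)},\sN_{7,\Z_2}^{\bs\Spin,\SU(m)},\sN_{7,\Z_2}^{\bs\Spin,E_8}$ as orientable in the $\Z_k$-reduced family, and shows $\sN_{7,\Z_k}^{\bs\Spin,\Sp(m)}$ fails for all $k\ge2$ because $\ze_2-\ze_2'\mapsto\ul{1}$. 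Second, in dimension $n=8$ with target $\Z\qs\Z_2$: the relevant morphism is $\pi_1(\sN_8^{\bs\Spin,G}):\Om_9^{\bs\Spin}(BG)\ra\Z_2$ from Table~\ref{fm11tab2}, and $\Im\hat\xi^{\bs\Spin}_8(BG)$ here means $\Im\hat\xi^{\bs\Spin}_8(BG)\subseteq\ti\Om_9^{\bs\Spin}(BG)$; wait — more precisely the condition of Theorem~\ref{fm9thm1}(a) for orientability over all $X$ involves $\Xi$ which factors through $\hat\xi^{\bs\Spin}_8(BG):\Om_8^{\bs\Spin}(\cL BG;BG)\ra\ti\Om_9^{\bs\Spin}(BG)$. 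From Table~\ref{fm3tab5}, $\Im\hat\xi^{\bs\Spin}_8(B\SU(2))=\Z_2\an{\,}$ is empty (blank entry, i.e.\ trivial) in row $n=9$, so the condition is vacuous and $\sN_8^{\bs\Spin,\SU(2)}$ is orientable for all $X$, whereas for $G=\SU(m)$ ($m\ge5$), $\Sp(m)$, $E_8$ the image in dimension $9$ is $\Z_2\an{\al_1\ze_2}$ (or $\Z_2\an{\al_1(\ze_2+\ze_2')}$), and Table~\ref{fm11tab2} gives $\pi_1(\sN_8^{\bs\Spin,G})(\al_1\ze_2)=\ul{1}$, so these are not orientable for all $X$; the mod-$2k$ reductions do not change the value $\ul{1}\in\Z_2$ in the second (torsor) slot, so $\sN_{8,\Z_{2k}}^{\bs\Spin,\SU(m)}$ etc.\ also fail, while $\sN_{8,\Z_{2k}}^{\bs\Spin,\SU(2)}$ remains orientable since the obstruction was already trivial.

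I would organize the write-up as: (1) invoke Theorem~\ref{fm9thm1}(a) to restate the problem as the inclusion $\Im\hat\xi^{\bs\Spin}_n(BG)\subseteq\Ker\pi_1(\sO)$, noting via Corollary~\ref{fm3cor1} and the change-of-group functors that the reduced bordism groups and the maps $\hat\xi$ are as in Table~\ref{fm3tab5}; (2) observe that $\pi_1$ of a normalized functor is determined on $\ti\Om_*^{\bs\Spin}(BG)$ by Tables~\ref{fm11tab1}--\ref{fm11tab2}, and that reduction mod $k$ acts as in Theorem~\ref{fm11thm1}(c); (3) run through the eight-or-so cases in a compact display or table, in each case either exhibiting a generator of $\Im\hat\xi^{\bs\Spin}_n(BG)$ outside $\Ker\pi_1(\sO)$ (non-orientability) or verifying all generators lie in the kernel (orientability). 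The main obstacle I anticipate is not conceptual but organizational: one must be scrupulous about which row of Table~\ref{fm3tab5} ($n=8$ versus $n=9$) feeds into the orientability condition for $\Bord_8^{\bs\Spin}(BG)$ — it is the $\ti\Om_9^{\bs\Spin}(BG)$ data, since $\pi_1$ of a functor on $\Bord_8$ concerns $\Om_9$ — and about the distinction between the $\Z$-valued slot and the $\Z_2$-valued slot in the target $\Z\qs\Z_2$ when reducing mod $2k$. A secondary subtlety is checking, for $G=\Sp(m)$ in dimension $7$, that although $\ze_2-\ze_2'$ and $\ze_2+\ze_2'$ generate $\Im\hat\xi^{\bs\Spin}_7(B\Sp(m))$ together with $\ze_1$, the functor value $\pi_1(\sN_7^{\bs\Spin,\Sp(m)})$ sends $\ze_2-\ze_2'\mapsto 1-0=1$ which is odd, hence nonzero mod every $k\ge2$, ruling out $\Sp(m)$ entirely from the orientable list — this is the one place where the presence of the generator $\ze_2'$ genuinely matters and must not be overlooked.
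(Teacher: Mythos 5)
Your proposal is correct and follows exactly the paper's own argument: the paper likewise invokes Theorem \ref{fm9thm1}(a) to reduce orientability for all $X$ to the inclusion $\Im\hat\xi^{\bs\Spin}_n(BG)\subseteq\Ker\pi_1(\sO)$, and then reads off the finitely many cases from Table \ref{fm3tab5} and Tables \ref{fm11tab1}--\ref{fm11tab2}. Your case-by-case checks (including the decisive facts that $\ze_2-\ze_2'\mapsto 1$ kills $\Sp(m)$ for every $k$, and that the $n=9$ row of Table \ref{fm3tab5} is what governs the $\Bord_8$ functors) all agree with the paper.
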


The next theorem will be proved in~\S\ref{fm194}.

\begin{thm}
\label{fm11thm2}
{\bf(a)} In Definition\/ {\rm\ref{fm9def6}} with\/ $n=7,$ $\bs B=\bs\Spin,$ and\/ $H=\SO(4),$ consider the three orientation functors for $*=+,-,0$
\e
\sO_{7,4}^{\bs\Spin,\SO(4),*}:\Bord_{7,4}^{\bs\Spin}(M\SO(4))\longra 0\qs\Z=\ZZtor.
\label{fm11eq2}
\e
As in Theorem\/ {\rm\ref{fmAthm2},} using {\rm\eq{fm5eq4}--\eq{fm5eq5}} these are classified up to monoidal natural isomorphism by two morphisms
\begin{align*}
\pi_0(\sO_{7,4}^{\bs\Spin,\SO(4),*})&:\Om_7^{\bs\Spin}(M\SO(4))\longra 0,\\ 
\pi_1(\sO_{7,4}^{\bs\Spin,\SO(4),*})&:\Om_8^{\bs\Spin}(M\SO(4))\longra\Z, 
\end{align*}
and an element of an\/ $H_\sym^2(\Om_7^{\bs\Spin}(M\SO(4)),\Z)$-torsor. As $\Om_7^{\bs\Spin}(M\SO(4))\ab=0$ by Theorem\/ {\rm\ref{fm3thm1}(a),} this torsor is trivial, so the only nontrivial invariant is $\pi_1(\sO_{7,4}^{\bs\Spin,\SO(4),*})$. In the splitting
\e
\Om_n^{\bs\Spin}(M\SO(4))=\Om_n^{\bs\Spin}(*)\op\ti\Om_n^{\bs\Spin}(M\SO(4)),
\label{fm11eq3}
\e
$\pi_1(\sO_{7,4}^{\bs\Spin,\SO(4),*})$ is\/ $0$ on\/ $\Om_8^{\bs\Spin}(*),$ and acts as in Table\/ {\rm\ref{fm11tab3}} on the generators of $\ti\Om_8^{\bs\Spin}(M\SO(4))$ in Table\/~{\rm\ref{fm3tab1}}.
\smallskip

\begin{table}[htb]
\centerline{\begin{tabular}{|l|l|l|l|}
\hline
 & $\frac{\ze_1}{4}$ & $\ze_2$ &  $\ze_3$ \\
\hline
$\pi_1\bigl(\sO_{7,4}^{\bs\Spin,\SO(4),+}\bigr)^{\vphantom{|^l}}_{\vphantom{|_l}}$  & $-1$ & 0 & 0 \\
\hline
$\pi_1\bigl(\sO_{7,4}^{\bs\Spin,\SO(4),-}\bigr)^{\vphantom{|^l}}_{\vphantom{|_l}}$   & $-1$ & 1 & 0 \\
\hline
$\pi_1\bigl(\sO_{7,4}^{\bs\Spin,\SO(4),0}\bigr)^{\vphantom{|^l}}_{\vphantom{|_l}}$   & 0 & 1 & 0 \\
\hline
\end{tabular}}
\caption{The morphisms $\pi_1\bigl(\sO_{7,4}^{\bs\Spin,\SO(4),*}\bigr)$}
\label{fm11tab3}
\end{table}

\noindent{\bf(b)} In Definition\/ {\rm\ref{fm9def6}} with\/ $n=8,$ $\bs B=\bs\Spin,$ and $H=\SO(4),$ consider the three orientation functors for $*=+,-,0$
\e
\sO_{8,4}^{\bs\Spin,\SO(4),*}:\Bord_{8,4}^{\bs\Spin}(M\SO(4))\longra \Z\qs\Z_2.
\label{fm11eq4}
\e
As in Theorem\/ {\rm\ref{fmAthm2},} using {\rm\eq{fm5eq4}--\eq{fm5eq5}} these are classified up to monoidal natural isomorphism by two morphisms
\begin{align*}
\pi_0(\sO_{8,4}^{\bs\Spin,\SO(4),*})&:\Om_8^{\bs\Spin}(M\SO(4))\longra \Z,\\
\pi_1(\sO_{8,4}^{\bs\Spin,\SO(4),*})&:\Om_9^{\bs\Spin}(M\SO(4))\longra\Z_2=\{\ul{0},\ul{1}\},
\end{align*}
and an element of a torsor over $H_\sym^2(\Om_8^{\bs\Spin}(M\SO(4)),\Z_2)$. As $\Om_8^{\bs\Spin}(M\SO(4))\ab\cong\Z^5$ by Theorem\/ {\rm\ref{fm3thm1}(a),} this torsor is trivial, since calculation gives $H^2(\Z^k,\Z_2)\ab\cong\Alt(\Z^k,\Z_2)\cong \Z_2^{k(k-1)/2},$ so $H_\sym^2(\Z^k,\Z_2)=0$ by \eq{fmAeq9}. We have
\e
\pi_0(\sO_{8,4}^{\bs\Spin,\SO(4),*})=\pi_1(\sO_{7,4}^{\bs\Spin,\SO(4),*}),
\label{fm11eq5}
\e
where the right hand sides are given in {\bf(a)}. In\/ {\rm\eq{fm11eq3},} $\pi_1(\sO_{8,4}^{\bs\Spin,\SO(4),*})$ is $0$ on $\Om_9^{\bs\Spin}(*),$ and acts on the generators of\/ $\ti\Om_9^{\bs\Spin}(M\SO(4))$ in Table\/ {\rm\ref{fm3tab1}} as in Table\/~{\rm\ref{fm11tab4}}.

\begin{table}[htb]
\centerline{\begin{tabular}{|l|l|l|l|}
\hline
 & $\al_1\frac{\ze_1}{4}$ & $\al_1\ze_2$  & $\eta$ \\
\hline
$\pi_1\bigl(\sO_{8,4}^{\bs\Spin,\SO(4),+}\bigr)^{\vphantom{|^l}}_{\vphantom{|_l}}$ & \ul{1} & \ul{0} & ? \\
\hline
$\pi_1\bigl(\sO_{8,4}^{\bs\Spin,\SO(4),-}\bigr)^{\vphantom{|^l}}_{\vphantom{|_l}}$ & \ul{1} & \ul{1} & ? \\
\hline
$\pi_1\bigl(\sO_{8,4}^{\bs\Spin,\SO(4),0}\bigr)^{\vphantom{|^l}}_{\vphantom{|_l}}$ & \ul{0} & \ul{1} & \ul{0} \\
\hline
\end{tabular}}
\caption{The morphisms $\pi_1(\sO_{8,4}^{\bs\Spin,\SO(4),*})$}
\label{fm11tab4}
\end{table}

\noindent{\bf(c)} If\/ $\rho:H\ra\SO(4)\subset\O(4)$ is a morphism of Lie groups, then the functors $\sO_{7,4}^{\bs\Spin,H,*},\sO_{8,4}^{\bs\Spin,H,*}$ are determined from $\sO_{7,4}^{\bs\Spin,\SO(4),*},\sO_{8,4}^{\bs\Spin,\SO(4),*}$ by Proposition\/ {\rm\ref{fm11prop3}} with\/ $H_1=H$ and\/ $H_2=\SO(4)$. For\/ $H=\SU(2),\U(2),\Spin(4),$ Theorem\/ {\rm\ref{fm3thm1}(a),(b)} determine the groups\/ $\Om_n^{\bs\Spin}(MH)$ and morphisms\/ $\Om_n^{\bs\Spin}(MH)\ab\ra\Om_n^{\bs\Spin}(M\SO(4))$ for\/ $n=7,8,9$. Combining this with\/ {\bf(a)\rm,\bf(b)} gives the analogue of\/ {\bf(a)\rm,\bf(b)} for $H=\SU(2),\U(2),\Spin(4),$ with\/ $\pi_i(\sO_{n,4}^{\bs\Spin,H,*})$ given in Tables\/ {\rm\ref{fm11tab5}} and\/~{\rm\ref{fm11tab6}}.

\begin{table}[htb]
\centerline{
\begin{tabular}{|l|l|l|}
\hline
 & $\ze_1$ & $\ze_2$  \\
\hline
$\pi_1\bigl(\sO_{7,4}^{\bs\Spin,\SU(2),+}\bigr)^{\vphantom{|^l}}_{\vphantom{|_l}}$  & $-4$ & 0 \\
\hline
$\pi_1\bigl(\sO_{7,4}^{\bs\Spin,\SU(2),-}\bigr)^{\vphantom{|^l}}_{\vphantom{|_l}}$   & $-4$ & 1 \\
\hline
$\pi_1\bigl(\sO_{7,4}^{\bs\Spin,\SU(2),0}\bigr)^{\vphantom{|^l}}_{\vphantom{|_l}}$   & 0 & 1 \\
\hline
\end{tabular}
\quad
\begin{tabular}{|l|l|l|l|}
\hline
 & $\frac{\ze_1}{2}$ & $\ze_2$ &  $\ze_3$ \\
\hline
$\pi_1\bigl(\sO_{7,4}^{\bs\Spin,\U(2),+}\bigr)^{\vphantom{|^l}}_{\vphantom{|_l}}$  & $-2$ & 0 & 0 \\
\hline
$\pi_1\bigl(\sO_{7,4}^{\bs\Spin,\U(2),-}\bigr)^{\vphantom{|^l}}_{\vphantom{|_l}}$   & $-2$ & 1 & 0 \\
\hline
$\pi_1\bigl(\sO_{7,4}^{\bs\Spin,\U(2),0}\bigr)^{\vphantom{|^l}}_{\vphantom{|_l}}$   & 0 & 1 & 0 \\
\hline
\end{tabular}} 
\smallskip

\centerline{
\begin{tabular}{|l|l|l|l|}
\hline
 & $\ze_1$ & $\ze_2$ &  $\ze_2'$ \\
\hline
$\pi_1\bigl(\sO_{7,4}^{\bs\Spin,\Spin(4),+}\bigr)^{\vphantom{|^l}}_{\vphantom{|_l}}$  & $-4$ & 0 & 0 \\
\hline
$\pi_1\bigl(\sO_{7,4}^{\bs\Spin,\Spin(4),-}\bigr)^{\vphantom{|^l}}_{\vphantom{|_l}}$   & $-4$ & 1 & $-1$ \\
\hline
$\pi_1\bigl(\sO_{7,4}^{\bs\Spin,\Spin(4),0}\bigr)^{\vphantom{|^l}}_{\vphantom{|_l}}$   & 0 & 1 & $-1$ \\
\hline
\end{tabular}
}
\caption{The morphisms $\pi_1\bigl(\sO_{7,4}^{\bs\Spin,H,*}\bigr)$}
\label{fm11tab5}
\end{table}

\begin{table}[htb]
\centerline{\begin{tabular}{|l|l|}
\hline
 & $\al_1\ze_2$  \\
\hline
$\pi_1\bigl(\sO_{8,4}^{\bs\Spin,\SU(2),+}\bigr)^{\vphantom{|^l}}_{\vphantom{|_l}}$  & \ul{0}  \\
\hline
$\pi_1\bigl(\sO_{8,4}^{\bs\Spin,\SU(2),-}\bigr)^{\vphantom{|^l}}_{\vphantom{|_l}}$ & \ul{1}  \\
\hline
$\pi_1\bigl(\sO_{8,4}^{\bs\Spin,\SU(2),0}\bigr)^{\vphantom{|^l}}_{\vphantom{|_l}}$  & \ul{1}  \\
\hline
\end{tabular}
\quad
\begin{tabular}{|l|l|}
\hline
 & $\al_1\ze_2$  \\
\hline
$\pi_1\bigl(\sO_{8,4}^{\bs\Spin,\U(2),+}\bigr)^{\vphantom{|^l}}_{\vphantom{|_l}}$  & \ul{0}  \\
\hline
$\pi_1\bigl(\sO_{8,4}^{\bs\Spin,\U(2),-}\bigr)^{\vphantom{|^l}}_{\vphantom{|_l}}$ & \ul{1}  \\
\hline
$\pi_1\bigl(\sO_{8,4}^{\bs\Spin,\U(2),0}\bigr)^{\vphantom{|^l}}_{\vphantom{|_l}}$  & \ul{1}  \\
\hline
\end{tabular}}
\smallskip

\centerline{\begin{tabular}{|l|l|l|}
\hline
 &  $\al_1\ze_2$  & $\al_1\ze_2'$ \\
\hline
$\pi_1\bigl(\sO_{8,4}^{\bs\Spin,\Spin(4),+}\bigr)^{\vphantom{|^l}}_{\vphantom{|_l}}$  & \ul{0} & \ul{0} \\
\hline
$\pi_1\bigl(\sO_{8,4}^{\bs\Spin,\Spin(4),-}\bigr)^{\vphantom{|^l}}_{\vphantom{|_l}}$  & \ul{1} & \ul{1} \\
\hline
$\pi_1\bigl(\sO_{8,4}^{\bs\Spin,\Spin(4),0}\bigr)^{\vphantom{|^l}}_{\vphantom{|_l}}$  & \ul{1} & \ul{1} \\
\hline
\end{tabular}}
\caption{The morphisms $\pi_1(\sO_{8,4}^{\bs\Spin,H,*})$}
\label{fm11tab6}
\end{table}

\noindent{\bf(d)} The analogues of\/ {\bf(a)\rm--\bf(c)} for the orientation functors
\begin{align*}
\sO_{7,4,\Z_k}^{\bs\Spin,H,*}&:\Bord_{7,4}^{\bs\Spin}(MH)\longra 0\qs\Z_k=\text{\rm$\Z_k$-tor},\\
\sO_{8,4,\Z_{2k}}^{\bs\Spin,H,*}&:\Bord_{8,4}^{\bs\Spin}(MH)\longra \Z_{2k}\qs\Z_2
\end{align*}
in Definition\/ {\rm\ref{fm9def6}} for\/ $H=\SO(4),\SU(2),\U(2),\Spin(4)$ are obtained by reducing mod\/ $k$ or\/ $2k$ in Tables\/ {\rm\ref{fm11tab3}} and\/~{\rm\ref{fm11tab5}}.
\end{thm}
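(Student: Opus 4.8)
The plan is to identify both orientation functors through the classification of symmetric monoidal functors between Picard groupoids in Theorem~\ref{fmAthm2}. Thus I would first observe that $\sO_{7,4}^{\bs\Spin,\SO(4),*}$ and $\sO_{8,4}^{\bs\Spin,\SO(4),*}$ are determined up to monoidal natural isomorphism by the pair $\pi_0,\pi_1$ together with a class in an $H^2_\sym$-torsor; the torsor is trivial since $\Om_7^{\bs\Spin}(M\SO(4))=0$ and $\Om_8^{\bs\Spin}(M\SO(4))\cong\Z^5$ is free, so only $\pi_0,\pi_1$ on the generators of Theorem~\ref{fm3thm1} matter. For $n=7$, $\pi_0=0$ is automatic as $\Om_7^{\bs\Spin}(M\SO(4))=0$, and I would prove the identity \eqref{fm11eq5}, $\pi_0(\sO_{8,4}^{\bs\Spin,\SO(4),*})=\pi_1(\sO_{7,4}^{\bs\Spin,\SO(4),*})$, by noting that for a closed spin $8$-manifold $W$ with a codimension-$4$ submanifold $M\subset W$ carrying a normal $\SO(4)$-structure, both sides equal, via Theorem~\ref{fm9thm2}, the real index $\ind_8$ of the $8$-adapted Fueter operator $F^*_{g_W,M}$ (the real Dirac operator of $M$, twisted over $\mathbb L=\K_4$ by $\Si^*_{\nu_M}$): this index is the $\Ga_8$-grading of the object $(W,M)$ in the $n=8$ picture and the automorphism of $\sO_{7,4}^{\bs\Spin,\SO(4),*}(\boo)=\Ga_8$ induced by the bordism $[W,M]\colon\boo\ra\boo$ in the $n=7$ picture. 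That $\pi_1$ vanishes on $\Om_8^{\bs\Spin}(*)$ and $\Om_9^{\bs\Spin}(*)$ is clear, since the Fueter operator of the empty submanifold is the zero operator.

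The core of the proof is then the computation of $\pi_1(\sO_{7,4}^{\bs\Spin,\SO(4),*})$ on the generators $\tfrac{\ze_1}{4},\ze_2,\ze_3$ of $\ti\Om_8^{\bs\Spin}(M\SO(4))$ in Table~\ref{fm3tab1}, which by the above reduces to evaluating $\ind_8 F^*_{g_W,M}$ on their explicit representatives: $M=K3/\Z_2\an{\al,\be}$ with flat normal $\SO(4)$-bundle for $\tfrac{\ze_1}{4}$; $M=\ov{\HP}^1$ with normal bundle the tautological quaternionic line bundle for $\ze_2$ (the summand $[\HP^2,\es]$ contributing $0$); and $M=\CP^2\subset\CP^3$ with complex normal bundle $\cO(1)\op\ul\C$ for $\ze_3$. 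For each I would apply the Atiyah--Singer index theorem to the corresponding twisted Dirac operator $\sD^+_M\ot\Si^*_{\nu_M}$ — well defined even when $M$ is not spin because $TM\op\nu_M\cong TW|_M$ is spin — giving $\ind=\int_M\hat A(TM)\,\ch(\Si^*_{\nu_M})$; expanding in degree $4$ using $\ch(\Si^+_{\nu_M})-\ch(\Si^-_{\nu_M})=e(\nu_M)+\cdots$ and $\ch(\Si^+_{\nu_M})+\ch(\Si^-_{\nu_M})=4+\tfrac12 p_1(\nu_M)+\cdots$, and evaluating the resulting characteristic numbers via $p_1(TM)[M]=3\sign(M)$, the rational vanishing of $e$ and $p_1$ of the flat normal bundle of $K3/\Z_2\an{\al,\be}$, and the known $e,p_1$ of the quaternionic Hopf bundle on $\cS^4$. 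This reproduces Table~\ref{fm11tab3}, and the entries of $\pi_0(\sO_{8,4}^{\bs\Spin,\SO(4),*})$ then follow from \eqref{fm11eq5}.

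For $\pi_1(\sO_{8,4}^{\bs\Spin,\SO(4),*})$ on $\ti\Om_9^{\bs\Spin}(M\SO(4))=\Z_2\an{\al_1\tfrac{\ze_1}{4},\al_1\ze_2,\eta}$, the first two generators lie in the image of the linear quadratic map $q$ (multiplication by $\al_1$), so the identity $q'\ci\pi_0=\pi_1\ci q$ of Theorem~\ref{fmAthm2}(b), with $q'$ reduction mod $2$ on $\Z\qs\Z_2$, gives $\pi_1(\sO_{8,4}^{\bs\Spin,\SO(4),*})(\al_1\ga)=\pi_0(\sO_{8,4}^{\bs\Spin,\SO(4),*})(\ga)\bmod 2$, which together with the previous paragraph yields those columns of Table~\ref{fm11tab4}. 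The one genuinely new $n=8$ computation is $\pi_1(\sO_{8,4}^{\bs\Spin,\SO(4),0})(\eta)=\ul 0$: here $\eta$ is represented by the non-spin Wu $5$-manifold $\SU(3)/\SO(3)$ embedded in codimension $4$, and $\pi_1(\sO_{8,4}^{\bs\Spin,\SO(4),0})(\eta)$ is the difference of the mod-$2$ indices $\ind_9$ of the $9$-adapted skew-adjoint Fueter operators $F^+,F^-$. I would show this vanishes by applying the $KO$-theoretic mod-$2$ index theorem to the virtual bundle $\Si^+_{\nu_M}\ominus\Si^-_{\nu_M}$, which is controlled topologically by the normal Euler class, and checking that the relevant $KO$-characteristic number on the $5$-manifold vanishes — equivalently, that $F^+\op(F^-)^*$ carries a quaternionic symmetry forcing an even-dimensional kernel. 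The individual indices $\ind_9(F^\pm)$ admit no closed form, which is why those entries remain `?'.

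Parts (c) and (d) are then formal. For (c) I would use Proposition~\ref{fm11prop3} to identify $\sO_{n,4}^{\bs\Spin,H,*}$ with $\sO_{n,4}^{\bs\Spin,\SO(4),*}$ precomposed with the change-of-group functor $F_\io$ of \eqref{fm5eq3}, so $\pi_i(\sO_{n,4}^{\bs\Spin,H,*})=\pi_i(\sO_{n,4}^{\bs\Spin,\SO(4),*})\ci(F_\io)_*$, and then read off $(F_\io)_*$ on generators from Theorem~\ref{fm3thm1}(a),(b) — for instance $\ze_1\mapsto4\tfrac{\ze_1}{4}$, $\tfrac{\ze_1}{2}\mapsto2\tfrac{\ze_1}{4}$, $\ze_2'\mapsto\tfrac{\ze_1}{4}+\ze_2+4\ze_3$ — substituting to obtain Tables~\ref{fm11tab5} and~\ref{fm11tab6}. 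For (d), composing with the reduction functors $\ZZtor\ra\Z_k\text{-tor}$ and $\Z\qs\Z_2\ra\Z_{2k}\qs\Z_2$ simply reduces the computed $\pi_0,\pi_1$ modulo $k$ resp.\ $2k$. The main obstacle throughout is the bookkeeping in the index computations of the second and third paragraphs: one must track carefully the normalization conventions for the real index $\ind_\ell$ and the $\mathbb L=\K_\ell$-linearity (which introduce signs and factors of $2$ when passing between the $KO$-index and ordinary characteristic numbers), the twisted spin structures on the non-spin generators, and — for $\eta$ — the fact that the vanishing of the difference of mod-$2$ indices must be extracted from a structural symmetry argument rather than a characteristic-number formula.
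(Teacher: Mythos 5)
Your strategy is the same as the paper's proof in \S\ref{fm194}: reduce everything to $\pi_0,\pi_1$ via Theorem \ref{fmAthm2} (the $H^2_\sym$-torsors being trivial since $\Om_7^{\bs\Spin}(M\SO(4))=0$ and $\Om_8^{\bs\Spin}(M\SO(4))\cong\Z^5$ is free); compute $\pi_1(\sO_{7,4}^{\bs\Spin,\SO(4),\pm})$ on $\frac{\ze_1}{4},\ze_2,\ze_3$ as $\int_N\hat A(TN)\ch(\Si^\pm_\nu)$ by Atiyah--Singer on the explicit representatives of \S\ref{fm311} (the paper rewrites this in terms of $p_1(TN),e(\nu_N),p_1(\nu_N)$ as \eq{fm19eq9}--\eq{fm19eq11} and evaluates via Table \ref{fm19tab1}); obtain \eq{fm11eq5} from Definition \ref{fm9def6}; get the $\al_1\frac{\ze_1}{4}$ and $\al_1\ze_2$ columns of Table \ref{fm11tab4} from the compatibility $q'\ci\pi_0=\pi_1\ci q$ of Theorem \ref{fmAthm2}(b); and deduce (c),(d) formally from Proposition \ref{fm11prop3}, the change-of-group maps in Theorem \ref{fm3thm1}(a),(b), and reduction mod $k$ or $2k$. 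All of this matches the paper.

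The one genuine gap is your argument for $\pi_1(\sO_{8,4}^{\bs\Spin,\SO(4),0})(\eta)=\ul{0}$. You propose either a ``$KO$-theoretic mod-$2$ index theorem'' applied to $\Si^+_{\nu}\ominus\Si^-_{\nu}$, ``controlled by the normal Euler class'', or an unjustified quaternionic symmetry on $F^+\op(F^-)^*$ forcing an even-dimensional kernel. Neither works as stated: an index in $\Ga_9=\Z_2$ is a mod-$2$ quantity that cannot be extracted from rational characteristic numbers (the Euler class only records $c_2(\Si^+_\nu)-c_2(\Si^-_\nu)$ rationally, and the Wu manifold representative is torsion in homology), and you give no construction of the claimed quaternionic structure. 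The observation you need is the one the paper uses: by \eq{fm3eq6} the normal bundle of $\SU(3)/\SO(3)\t\{1\}$ in $(\SU(3)\t\cS^3)/\SO(3)\t\cS^1_{\rm b}$ splits as $\nu\cong E\op\ul{\R}$, and the bundle automorphism acting by $-1$ on the $\ul{\R}$ factor exchanges the two locally defined spinor bundles $\Si^+_\nu\leftrightarrow\Si^-_\nu$. Hence $F^+_M\cong F^-_M$, so $\pi_1(\sO_{8,4}^{\bs\Spin,\SO(4),+})(\eta)=\pi_1(\sO_{8,4}^{\bs\Spin,\SO(4),-})(\eta)$ and their difference $\pi_1(\sO_{8,4}^{\bs\Spin,\SO(4),0})(\eta)$ vanishes, with no index computation required. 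Without this identification of $\Si^+_\nu$ with $\Si^-_\nu$ (or some equivalent), the $\eta$ entry of Table \ref{fm11tab4} is not established by your argument.
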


The next corollary is proved as for Corollary \ref{fm11cor1}, but using Tables \ref{fm3tab3} and~\ref{fm11tab3}--\ref{fm11tab6}.

\begin{cor}
\label{fm11cor2}
Of the orientation functors\/ $\sO_{7,4}^{\bs\Spin,H,*},$ $\sO_{7,4,\Z_k}^{\bs\Spin,H,*},$ $\sO_{8,4}^{\bs\Spin,H,*},$ $\sO_{8,4,\Z_{2k}}^{\bs\Spin,H,*}$ described in Theorem\/ {\rm\ref{fm11thm2}} for\/ $H=\SO(4),\SU(2),\U(2),\Spin(4),$ $*=+,-,0,$ and\/ $k\ge 2,$ the following, and only the following, are orientable for all compact spin\/ $7$- or\/ $8$-manifolds\/~$X$:
\begin{gather*}
\sO_{7,4,\Z_2}^{\bs\Spin,\SO(4),0},\sO_{7,4,\Z_2}^{\bs\Spin,\SU(2),*},\sO_{7,4,\Z_4}^{\bs\Spin,\SU(2),+},
\sO_{7,4,\Z_2}^{\bs\Spin,\U(2),*},\sO_{7,4,\Z_2}^{\bs\Spin,\Spin(4),*},\\
\sO_{7,4,\Z_4}^{\bs\Spin,\Spin(4),+},\sO_{8,4}^{\bs\Spin,\SU(2),*},\sO_{8,4,\Z_{2k}}^{\bs\Spin,\SU(2),*},
\sO_{8,4}^{\bs\Spin,\U(2),+},\sO_{8,4,\Z_{2k}}^{\bs\Spin,\U(2),+},\\ 
\sO_{8,4}^{\bs\Spin,\Spin(4),*},\sO_{8,4,\Z_{2k}}^{\bs\Spin,\Spin(4),*}.
\end{gather*}
\end{cor}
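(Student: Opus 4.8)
The proof follows the template of Corollary \ref{fm11cor1} verbatim, with the submanifold bordism categories replacing the gauge-theoretic ones. The plan is first to apply the submanifold analogue Theorem \ref{fm9thm1}(i) of the orientability criterion: for each orientation functor $\sO$ among $\sO_{n,4}^{\bs\Spin,H,*}$ and its mod-$k$, mod-$2k$ reductions, on $\Bord_{n,4}^{\bs\Spin}(MH)$ with $n\in\{7,8\}$ and $H\in\{\SO(4),\SU(2),\U(2),\Spin(4)\}$, the functor $\sO$ is orientable for every compact spin $n$-manifold $X$ if and only if $\Xi_{n,4,\sO}^{\bs\Spin,H}\equiv\ul 0$. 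By the commutative diagram in Theorem \ref{fm9thm1}(i), together with the fact that $\sO(\boo)$ followed by the isomorphism \eq{fm5eq5} is precisely $\pi_1(\sO)$, this is equivalent to the inclusion $\Im\hat\xi^{\bs\Spin}_n(MH)\subseteq\Ker\pi_1(\sO)$ inside $\ti\Om_{n+1}^{\bs\Spin}(MH)$; here the rows $n=8$ and $n=9$ of Table \ref{fm3tab3} supply $\Im\hat\xi^{\bs\Spin}_7(MH)$ and $\Im\hat\xi^{\bs\Spin}_8(MH)$ respectively.

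The second step is a direct substitution. The images $\Im\hat\xi^{\bs\Spin}_n(MH)$ are the explicit free abelian or $\Z_2$-groups on multiples of $\ze_1,\frac{\ze_1}{2},\frac{\ze_1}{4},\ze_2,\ze_2',\ze_3$ (and, for $M\SO(4)$ in dimension $9$, also $\eta$) read off from Table \ref{fm3tab3}, and $\pi_1(\sO)$ on these generators is read off from Tables \ref{fm11tab3}--\ref{fm11tab6}, using Theorem \ref{fm11thm2}(d) to pass to the mod-$k$ and mod-$2k$ reductions. For each functor one then checks whether every listed generator of $\Im\hat\xi^{\bs\Spin}_n(MH)$ lies in $\Ker\pi_1(\sO)$. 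If it does, $\sO$ is orientable for all spin $n$-manifolds and appears in the list; if some generator $\ga$ has $\pi_1(\sO)(\ga)\ne\ul 0$, then $\Xi_{n,4,\sO}^{\bs\Spin,H}\not\equiv\ul 0$, so $\sO$ is not orientable for all $X$ and is excluded. The divisibility bookkeeping is what distinguishes, for instance, $\sO_{7,4,\Z_2}^{\bs\Spin,\SU(2),-}$ (where $2\ze_2\mapsto 2\equiv 0\bmod 2$, $\ze_1\mapsto -4\equiv 0$) from $\sO_{7,4,\Z_4}^{\bs\Spin,\SU(2),-}$ (where $2\ze_2\mapsto 2\not\equiv 0\bmod 4$), and it governs the mod-$2k$ behaviour of the $\SU(2)$ functors in dimension $8$, where $\Im\hat\xi^{\bs\Spin}_8(M\SU(2))=0$, so all $*$ and all $k$ give orientability.

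One entry needs a word of care: the values $\pi_1(\sO_{8,4}^{\bs\Spin,\SO(4),\pm})(\eta)$ in Table \ref{fm11tab4} are left undetermined, and $\eta$ does occur among the generators of $\Im\hat\xi^{\bs\Spin}_8(M\SO(4))$. This is harmless for the conclusion: for $*=0$ one has $\pi_1(\al_1\frac{\ze_1}{4})=\ul 0$ but $\pi_1(\al_1\ze_2)=\ul 1$, so $\sO_{8,4}^{\bs\Spin,\SO(4),0}$ is non-orientable regardless; for $*=+,-$ one already has $\pi_1(\al_1\frac{\ze_1}{4})=\ul 1\ne\ul 0$, so those are non-orientable too; and accordingly none of the three $\SO(4)$ functors in dimension $8$ appears in the list. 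For $H=\SU(2),\U(2),\Spin(4)$ the class $\eta$ never arises, so the criterion is entirely explicit there.

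The work is routine rather than conceptual; the main obstacle is simply the volume of the case analysis --- four groups $H$, three signs $*$, two dimensions, and the families of coefficient reductions --- together with the need to be careful about which generators of $\ti\Om_{n+1}^{\bs\Spin}(MH)$ actually lie in the subgroup $\Im\hat\xi^{\bs\Spin}_n(MH)$ as opposed to the full group, and about the mod-$k$ versus mod-$2k$ divisibility. All of this is mechanical once Tables \ref{fm3tab3} and \ref{fm11tab3}--\ref{fm11tab6} are assembled, exactly as in the proof of Corollary \ref{fm11cor1}.
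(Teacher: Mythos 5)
Your proposal is correct and is exactly the paper's argument: the paper proves Corollary \ref{fm11cor2} "as for Corollary \ref{fm11cor1}, but using Tables \ref{fm3tab3} and \ref{fm11tab3}--\ref{fm11tab6}", i.e.\ by reducing orientability to $\Im\hat\xi^{\bs\Spin}_n(MH)\subseteq\Ker\pi_1(\sO)$ via Theorem \ref{fm9thm1}(i) and then checking generators. Your treatment of the undetermined entries $\pi_1(\sO_{8,4}^{\bs\Spin,\SO(4),\pm})(\eta)$ is also the right observation and matches the conclusion.
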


\subsection{Factorizing orientation functors via transfer functors}
\label{fm112}

We now consider examples of diagrams of the form
\e
\begin{gathered}
\xymatrix@C=60pt@R=15pt{ *+[r]{\cC} \drtwocell_{}\omit^{}\omit{_{\,\,\,\,\,\,\,\la}} \ar[d]^F \ar@/^.6pc/[drr]^(0.5){\sO}  \\
*+[r]{\cC'} \ar[rr]^{\sO'} && *+[l]{A\qs B,} }
\end{gathered}
\label{fm11eq6}
\e
where $\cC,\cC'$ are bordism categories, as in \S\ref{fm4}--\S\ref{fm6}, and $F$ is a transfer functor, as in \S\ref{fm8}, and $\sO,\sO'$ are orientation functors, as in \S\ref{fm9}, and $\la$ is a monoidal natural isomorphism. Then we say that {\it the orientation functor\/ $\sO$ factors via the orientation functor\/}~$\sO'$.

We will apply this as follows: suppose $X$ is a compact $n$-manifold with $\bs B$-structure, and $\cC_X,\cC_X'$ are the bordism categories $\Bord_X(\cdots)$ associated to $\cC,\cC'$ as in \S\ref{fm43}, \S\ref{fm53}, \S\ref{fm63}, and $\eta'_X$ is an orientation for $\sO'$ on $X$, then composing natural isomorphisms across the diagram
\e
\begin{gathered}
\xymatrix@C=60pt@R=13pt{ *+[r]{\cC_X} \ar@<-1ex>@{}[dr]^= \ar[d]^{F_X} \ar[r]_(0.3){\Pi_X} & \cC \drtwocell_{}\omit^{}\omit{_{\,\,\,\,\,\,\,\la}} \ar[d]^F \ar@/^.6pc/[drr]^(0.5){\sO}  \\
*+[r]{\cC'_X} \ar@/_.6pc/[drrr]_(0.3)\boo \ar[r]^(0.7){\Pi'_X} & \cC' \drrtwocell_{}\omit^{}\omit{_{\,\,\,\,\,\,\,\eta'_X}} \ar[rr]^{\sO'} && *+[l]{A\qs B} \ar[d] \\
&&& *+[l]{0\qs B} }
\end{gathered}
\label{fm11eq7}
\e
gives an orientation $\eta_X$ for $\sO$ on $X$. Hence, if $\sO'$ is orientable for $X$ then so is $\sO$. Conversely, if $\sO$ is not orientable for $X$, then neither is~$\sO'$.

We will be particularly interested in the case in which $\cC,\sO$ control orientations on moduli spaces in some geometric problem we care about in 7 or 8 dimensions, as in \S\ref{fm12}--\S\ref{fm14}, and $\sO'$ is $\sH_7^\Z$ or $\sH_8^\Z$ from Definition \ref{fm9def7}. Then an orientation for $\sO'$ is a {\it flag structure\/} in the sense of~\S\ref{fm10}.

\subsubsection{Factorizing gauge theory orientation functors}
\label{fm1121}

The next definition will be useful for comparing gauge theory orientation problems for different Lie groups.

\begin{dfn}
\label{fm11def1}
Let $\io:G\ra H$ be a morphism of Lie groups, with induced Lie algebra morphism $\io_*:\g\ra\h$. We say that $\io:G\ra H$ is {\it of complex type\/} if $\io_*:\g\ra\h$ is injective, and the quotient $G$-representation $\m=\h/\io_*(\g)$ is of complex type, that is, the real vector space $\m$ may be made into a complex vector space such that the action of $G$ on $\m$ is complex linear.
\end{dfn}

As in \cite[Prop.~3.9]{Upme2}, the next proposition follows easily from \cite[\S 2.2]{JTU}. The reason for the reduction to $\Z_2$ in \eq{fm11eq8} is that $\sO_n^{\bs B,G}$ and $\sO_n^{\bs B,H}\ci F_\io$ involve indices of {\it real} elliptic operators $\sD_X^{\nabla_P},\sD_X^{\nabla_Q}$ whose kernels and cokernels differ by {\it complex\/} vector spaces, so in particular, $\ind_\R(\sD_X^{\nabla_P})\equiv\ind_\R(\sD_X^{\nabla_Q})\mod 2$, and orientations on the (co)kernels of $\sD_X^{\nabla_P}$ and $\sD_X^{\nabla_Q}$ can be identified.

\begin{prop}
\label{fm11prop1}
Work in the situation of Definitions\/ {\rm\ref{fm9def4}} and\/ {\rm\ref{fm11def1}}.
\smallskip

\noindent{\bf(a)} Let\/ $\io:G\ra H$ be a morphism of Lie groups of complex type. Then for\/ $F_\io$ as in \eq{fm4eq9} there exists a canonical monoidal natural isomorphism\/ $\ep_{n,G}^{\bs B,H}$ making the following diagram commute:
\e
\begin{gathered}
\xymatrix@!0@C=60pt@R=30pt{
*+[r]{\Bord^{\bs B}_n(BG)} \ar[rrrr]^(0.48){F_\io} \ar[d]^{\sO_n^{\bs B,G}} & \drrtwocell_{}\omit^{}\omit{_{\,\,\,\,\,\,\,\,\,\,\,\,\ep_{n,G}^{\bs B,H}}} &&& *+[l]{\Bord^{\bs B}_n(BH)} \ar[d]_{\sO_n^{\bs B,H}}  \\
*+[r]{\Ga_n\qs\Ga_{n+1}} \ar[rr] && \Z_2\qs\Z_2 && *+[l]{\Ga_n\qs\Ga_{n+1}.} \ar[ll] }
\end{gathered}
\label{fm11eq8}
\e
Here the functors $\Ga_n\qs\Ga_{n+1}\ra\Z_2\qs\Z_2$ are induced by the obvious morphisms $\Ga_k\ra\Z_2,$ which are nontrivial if\/ $k\equiv 0,1,2,4\mod 8$. As in Remark\/ {\rm\ref{fm9rem1},} we can use \eq{fm11eq8} to compare gauge theory orientation problems with groups $G,H$.

\smallskip

\noindent{\bf(b)} Let\/ $G_1,G_2$ be Lie groups. Then there exists a canonical natural isomorphism\/ $\ze_{n,G_1,G_2}^{\bs B}$ making the following diagram commute:
\begin{equation*}
\xymatrix@C=170pt@R=15pt{
*+[r]{\Bord^{\bs B}_n(B(G_1\t G_2))} \ar[r]_(0.7){\sO_n^{\bs B,G_1\t G_2}} \ar[d]^{(F_{\Pi_{G_1}},F_{\Pi_{G_2}})} \drtwocell_{}\omit^{}\omit{^{\ze_{n,G_1,G_2}^{\bs B}\,\,\,\,\,\,\,\,\,\,\,\,\,\,\,\,\,}} & *+[l]{\Ga_n\qs\Ga_{n+1}}  \\
*+[r]{\Bord^{\bs B}_n(BG_1)\t\Bord^{\bs B}_n(BG_2)} \ar[r]^(0.62){\sO_n^{\bs B,G_1}\t\sO_n^{\bs B,G_2}} & *+[l]{\Ga_n\qs\Ga_{n+1}\!\t\!\Ga_n\qs\Ga_{n+1}.} \ar[u]^\ot }
\end{equation*}

\noindent{\bf(c)} The analogues of\/ {\bf(a)\rm,\bf(b)} hold with\/ $\sN_n^{\bs B,G}$ in place of\/ $\sO_n^{\bs B,G}$ throughout.
\smallskip

\noindent{\bf(d)} When $G$ is an abelian Lie group there is a canonical monoidal natural isomorphism from $\sN_n^{\bs B,G}:\Bord^{\bs B}_n(BG)\ra\Ga_n\qs\Ga_{n+1}$ to the constant functor\/~$\boo$.
\end{prop}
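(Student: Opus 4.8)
The statement has four parts, and the plan is to reduce all of them to the single analytic input recalled from \cite{JTU} about how (normalized) orientation torsors of twisted real Dirac operators behave under adding a complex summand to the twisting bundle, then package this using the Picard-groupoid formalism of Appendix~\ref{fmA} and the explicit construction of $\sO_n^{\bs B,G},\sN_n^{\bs B,G}$ in Definition~\ref{fm9def4}. First I would recall the key fact (this is \cite[\S 2.2]{JTU}, also used in \cite[Prop.~3.9]{Upme2}): if $E\to X$ is a real vector bundle with connection and $\sD_X$ a real Dirac operator on a compact spin $n$-manifold, then the orientation torsor $\sO_n(\sD_X\ot E)$ of the $E$-twisted operator carries, canonically, the same information as $\sO_n(\sD_X\ot(E\op F))$ whenever $F$ is a \emph{complex} vector bundle with connection, because $\Ker$ and $\Coker$ of $\sD_X\ot F$ are complex vector spaces, hence carry canonical orientations, and canonically oriented; this produces a canonical isomorphism of $\Ga_n$-graded $\Ga_{n+1}$-torsors after reducing $\Ga_n\ra\Z_2$ (which kills exactly the $\C$- or $\H$-linearity ambiguity), and it is continuous in families over the contractible spaces $\A_{X,P}$, $\A'_{Y,Q}$ of metrics-and-connections, hence descends to the constant-section torsors defining $\sO_n^{\bs B,G}$. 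This is the ``canonical monoidal natural isomorphism'' we need.

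For part~(a): given $\io:G\ra H$ of complex type, for a principal $G$-bundle $P\ra X$ the bundle $\Ad(Q)$ of $Q=(P\t H)/G$ satisfies $\Ad(Q)\cong\Ad(P)\op (P\t\m)/G$ as bundles with connection, where $\m=\h/\io_*(\g)$ is a complex $G$-representation by hypothesis, so $(P\t\m)/G$ is a complex bundle with connection. Hence $\sD_X^{\nabla_Q}\cong \sD_X^{\nabla_P}\op(\sD_X\ot (P\t\m)/G)$, and the analytic fact above gives a canonical isomorphism $\sO_n(\sD_X^{\nabla_P})\ra\sO_n(\sD_X^{\nabla_Q})$ of $\Z_2$-graded $\Z_2$-torsors, natural in $(g_X,\nabla_P)$ and compatible with the bordism isomorphisms $\sO_n[Y,\sD_Y^{\nabla_R}]$ (apply the same splitting on $(n+1)$-manifolds $Y$, using Theorem~\ref{fm9thm2}); passing to constant sections as in Definition~\ref{fm9def4} yields $\ep_{n,G}^{\bs B,H}$ filling~\eq{fm11eq8}. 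Monoidality is immediate since all constructions commute with disjoint unions. For part~(b): over $X$ one has $\Ad(P_1\t P_2)\cong \Ad(P_1)\op\Ad(P_2)$ as bundles with connection (pulling back $P_i$ along the projections), so $\sD_X^{\nabla_{P_1\t P_2}}\cong\sD_X^{\nabla_{P_1}}\op\sD_X^{\nabla_{P_2}}$; the standard additivity of orientation torsors of direct-sum families (again continuous in families and compatible with bordisms via Theorem~\ref{fm9thm2}) gives the canonical $\ze_{n,G_1,G_2}^{\bs B}$. Part~(c) follows formally: $\sN$ is defined from $\sO$ by tensoring with the inverse of the value on the trivial bundle, the trivial $H$-bundle corresponds under $F_\io$ to the trivial $G$-bundle (up to canonical isomorphism), so $\ep_{n,G}^{\bs B,H}$ and $\ze_{n,G_1,G_2}^{\bs B}$ descend to the normalized functors; here because the trivial-bundle contributions cancel, one recovers a genuine $\Ga_n\qs\Ga_{n+1}$-valued (not merely $\Z_2\qs\Z_2$-valued) statement in the $\ep$-case, as asserted. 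Part~(d): if $G$ is abelian then $\g$ is the trivial $G$-representation, so $\Ad(P)\cong\ul{\g}$ is the trivial flat bundle of rank $\dim G$ for every $P$, and hence $\sD_X^{\nabla_P}\cong (\sD_X)^{\op\dim G}$ independently of $P$ and $\nabla_P$; thus $\sO_n^{\bs B,G}(X,P)\cong\sO_n^{\bs B,G}(X,X\t G)$ canonically, naturally in morphisms (the bordism isomorphisms are likewise those of $(\sD_Y)^{\op\dim G}$, independent of the bundle), so $\sN_n^{\bs B,G}(X,P)=\sO_n^{\bs B,G}(X,P)\ot\sO_n^{\bs B,G}(X,X\t G)^{-1}$ is canonically trivial, and these trivializations assemble into a monoidal natural isomorphism $\sN_n^{\bs B,G}\Ra\boo$.

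\textbf{Main obstacle.} The routine parts are the bundle-splitting identities and the formal bookkeeping with Picard groupoids; the real content---and the step I expect to require the most care---is verifying that the ``add a complex bundle'' isomorphism is genuinely \emph{canonical and monoidal} at the level of the constant-section torsors, i.e.\ that it is compatible with the elliptic-operator-bordism functoriality of Theorem~\ref{fm9thm2} on the $(n+1)$-dimensional cobordisms (where one must also add the complex bundle $(R\t\m)/G$ with its connection and check the cylindrical-form condition near the boundary is preserved), and that no sign or grading ambiguity survives the reduction $\Ga_n\ra\Z_2$. Once this compatibility is pinned down from \cite[\S 2.2]{JTU}, everything else is a direct consequence; concretely I would isolate it as the statement that for any $(\ell+1)$-adapted elliptic operator bordism $[N,D_N]$ and any complex vector bundle $F$ with connection on $N$, the bordism isomorphism $\sO_\ell[N,D_N\op(D_N\text{-type twist by }F)]$ agrees with $\sO_\ell[N,D_N]$ under the canonical trivialization of the $F$-part, modulo $8$ in the grading---a case-by-case check across the eight values of $\ell\bmod 8$ exactly parallel to the verification in \cite[Def.~3.4, Lem.~3.5]{Upme2}.
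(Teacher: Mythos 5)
Your argument is correct and is essentially the paper's: the proposition is deduced from the single analytic input of \cite[\S 2.2]{JTU} that twisting a real Dirac operator by an additional \emph{complex} bundle with connection changes its kernel and cokernel by complex vector spaces, which carry canonical orientations, so that after the reduction $\Ga_n\qs\Ga_{n+1}\ra\Z_2\qs\Z_2$ the orientation torsors of $\sD_X^{\nabla_P}$ and $\sD_X^{\nabla_Q}$ are canonically identified; parts (b)--(d) then follow from the direct-sum decompositions $\Ad(Q)\cong\Ad(P)\op(P\t\m)/G$, $\Ad(P_1\t P_2)\cong\Ad(P_1)\op\Ad(P_2)$, and $\Ad(P)\cong\ul\g$ exactly as you describe. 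The paper gives no more detail than this, and your isolation of the compatibility with the bordism functoriality of Theorem \ref{fm9thm2} as the point requiring care is the right one.

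One correction to your treatment of part (c): the parenthetical claim that for the normalized functors ``one recovers a genuine $\Ga_n\qs\Ga_{n+1}$-valued (not merely $\Z_2\qs\Z_2$-valued) statement in the $\ep$-case, as asserted'' is false, and is not what the proposition asserts --- part (c) only claims the analogue of diagram \eq{fm11eq8}, which still passes through $\Z_2\qs\Z_2$. The complex summands $(P\t\m)/G$ and $X\t\m$ appearing in $\sO_n^{\bs B,H}\ci F_\io$ and in its normalization are different bundles, and their index contributions do not cancel. Concretely, for $n=7$ and the complex-type morphism $\SU(m)\t\U(1)\ra\SU(m+1)$ with $m\ge 4$, the class $\frac{\ze_1}{2}$ is preserved by stabilization, yet Table \ref{fm11tab1} gives $\pi_1(\sN_7^{\bs\Spin,\SU(m)})(\frac{\ze_1}{2})=-2m$ versus $-2(m+1)$ for $\SU(m+1)$; by Theorem \ref{fmAthm2}(b) no monoidal natural isomorphism over $0\qs\Z$ can exist, and only the mod $2$ reductions agree. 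This overstatement does not affect the validity of your proof of the proposition as actually stated.
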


The next proposition follows easily from Proposition \ref{fm11prop1} and Theorem \ref{fm9thm1}.
 
\begin{prop}
\label{fm11prop2}
In Theorem\/ {\rm\ref{fm9thm1},} take the orientation functor\/ $\sO$ to be\/ $\sO_n^{\bs B,G}$ from Definition\/ {\rm\ref{fm9def4}}. Then the morphisms\/ $\Xi_{n,\sO_n^{\bs B,G}}^{\bs B,G}$ in Theorem\/ {\rm\ref{fm9thm1}} satisfy:
\begin{itemize}
\setlength{\itemsep}{0pt}
\setlength{\parsep}{0pt}
\item[{\bf(a)}] Let\/ $\io:G\ra H$ be a morphism of Lie groups of complex type, in the sense of Definition\/ {\rm\ref{fm11def1}}. Then the following diagram commutes:
\begin{equation*}
\xymatrix@C=70pt@R=15pt{
*+[r]{\Om^{\bs B}_n(\cL BG;BG)} \ar[rr]_{B\io_\rel^{\bs B}} \ar[d]^{\Xi_{n,\sO_n^{\bs B,G}}^{\bs B,G}} && *+[l]{\Om^{\bs B}_n(\cL BH;BH)} \ar[d]_{\Xi_{n,\sO_n^{\bs B,H}}^{\bs B,H}} \\
*+[r]{\Ga_{n+1}} \ar[r] & \Z_2 & *+[l]{\Ga_{n+1}.\!} \ar[l] }	
\end{equation*}
\item[{\bf(b)}] Let\/ $G_1,G_2$ be Lie groups. Then the following diagram commutes:
\begin{equation*}
\xymatrix@C=186pt@R=20pt{
*+[r]{\Om^{\bs B}_n(\cL B(G_1\t G_2);B(G_1\t G_2))} \ar[r]_(0.65){\Xi_{n,\sO_n^{\bs B,G_1\t G_2}}^{\bs B,G_1\t G_2}} \ar[d]^{((B\Pi_{G_1})^{\bs B}_\rel,(B\Pi_{G_2})^{\bs B}_\rel)} & *+[l]{\Ga_{n+1}}  \\
*+[r]{\Om^{\bs B}_n(\cL BG_1;BG_1)\t\Om^{\bs B}_n(\cL BG_2;BG_2)} \ar[r]^(0.67){\Xi_{n,\sO_n^{\bs B,G_1}}^{\bs B,G_1}\t\Xi_{n,\sO_n^{\bs B,G_2}}^{\bs B,G_2}} & *+[l]{\Ga_{n+1}\t\Ga_{n+1}.\!} \ar[u]^+ }	
\end{equation*}
\end{itemize}	
\end{prop}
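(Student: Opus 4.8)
The plan is to reduce everything to the defining property of $\Xi_{n,\sO}^{\bs B,G}$ recorded in Theorem \ref{fm9thm1}(a): unwinding diagram \eq{fm9eq3}, the morphism $\Xi_{n,\sO_n^{\bs B,G}}^{\bs B,G}$ is precisely the composite
\[
\Om^{\bs B}_n(\cL BG;BG)\xrightarrow{\,\hat\xi^{\bs B}_n(BG)\,}\ti\Om_{n+1}^{\bs B}(BG)\xrightarrow{\,\eq{fm4eq4}\,}\Aut_{\Bord^{\bs B}_n(BG)}(\boo)\xrightarrow{\,\sO_n^{\bs B,G}(\boo)\,}\Ga_{n+1},
\]
and similarly with $H$ in place of $G$, and with $G_1\t G_2$. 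So it suffices to establish naturality of each of the three arrows along $B\io$ (resp.\ along the two projections $B\Pi_{G_i}$), and then to feed the resulting square into Proposition \ref{fm11prop1}.

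First I would record that the loop-space constructions are natural in the space. From the tautological formula \eq{fm2eq6}, for any continuous map $f:S\to T$ of connected spaces one has $\xi^{\bs B}_n(T)\circ\cL f_*=\ti f_*\circ\xi^{\bs B}_n(S)$, and since $\cL f_*$ respects the splitting \eq{fm2eq5} this descends to $\hat\xi^{\bs B}_n(T)\circ f_\rel^{\bs B}=\ti f_*\circ\hat\xi^{\bs B}_n(S)$ on relative bordism. Applying this with $f=B\io$ (resp.\ $f=B\Pi_{G_i}$) handles the first arrow. For the second arrow I would invoke Proposition \ref{fm4prop1}(b): the isomorphism \eq{fm4eq4} is compatible with change-of-group functors, so under \eq{fm4eq4} the map $B\io_*$ on $\ti\Om_{n+1}^{\bs B}(-)$ corresponds to $F_\io(\boo)$ on $\Aut_{\Bord^{\bs B}_n(-)}(\boo)$, and likewise $B(\Pi_{G_i})_*$ corresponds to $F_{\Pi_{G_i}}(\boo)$. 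For the third arrow I would use Proposition \ref{fm11prop1}(a): evaluated on the unit $\boo$, the natural isomorphism $\ep_{n,G}^{\bs B,H}$ of \eq{fm11eq8} says exactly that the two composites $\Aut_{\Bord^{\bs B}_n(BG)}(\boo)\to\Ga_{n+1}\to\Z_2$ — one directly via $\sO_n^{\bs B,G}(\boo)$, the other via $F_\io(\boo)$ followed by $\sO_n^{\bs B,H}(\boo)$ — agree. Chaining these three facts for a class $\alpha\in\Om^{\bs B}_n(\cL BG;BG)$ shows that $\Xi_{n,\sO_n^{\bs B,H}}^{\bs B,H}\bigl(B\io_\rel^{\bs B}(\alpha)\bigr)$ and $\Xi_{n,\sO_n^{\bs B,G}}^{\bs B,G}(\alpha)$ have the same image under $\Ga_{n+1}\to\Z_2$, which is the content of (a).

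Part (b) runs identically, with the natural isomorphism $\ze_{n,G_1,G_2}^{\bs B}$ of Proposition \ref{fm11prop1}(b) in place of $\ep_{n,G}^{\bs B,H}$. Here the relevant point is that $\ze_{n,G_1,G_2}^{\bs B}$ evaluated on $\boo$ identifies $\pi_1(\sO_n^{\bs B,G_1\t G_2})$ with $+\circ\bigl(\pi_1(\sO_n^{\bs B,G_1})\t\pi_1(\sO_n^{\bs B,G_2})\bigr)\circ\bigl(F_{\Pi_{G_1}}(\boo),F_{\Pi_{G_2}}(\boo)\bigr)$, since $\ot$ on $\Ga_n\qs\Ga_{n+1}$ induces the sum map $\Ga_{n+1}\t\Ga_{n+1}\to\Ga_{n+1}$ on automorphism groups of units; combined with the additivity of $\hat\xi^{\bs B}_n$ and \eq{fm4eq4} this gives $\Xi_{n,\sO_n^{\bs B,G_1\t G_2}}^{\bs B,G_1\t G_2}(\alpha)=\Xi_{n,\sO_n^{\bs B,G_1}}^{\bs B,G_1}\bigl((B\Pi_{G_1})^{\bs B}_\rel(\alpha)\bigr)+\Xi_{n,\sO_n^{\bs B,G_2}}^{\bs B,G_2}\bigl((B\Pi_{G_2})^{\bs B}_\rel(\alpha)\bigr)$ in $\Ga_{n+1}$, with no reduction to $\Z_2$ needed.

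I expect the only genuinely delicate point to be the bookkeeping identifying \eq{fm4eq4} and the boundary map $\hat\xi^{\bs B}_n$ across the change-of-group functors — that is, verifying that the $\cS^1$-factor and the conventions for $\bs B$-structures used in Definition \ref{fm2def3} and in Definitions \ref{fm4def1}--\ref{fm4def3} are carried along by $F_\io$ and by $F_{\Pi_{G_i}}$ so that no sign or twist is introduced. This is straightforward from the definitions but must be spelled out; once it is in place, assembling the large commutative diagram built from \eq{fm9eq3} for the two groups together with the naturality squares above, and invoking Proposition \ref{fm11prop1}, finishes the proof with no further computation.
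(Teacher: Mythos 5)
Your proposal is correct and follows exactly the route the paper intends: the paper gives no detailed argument, stating only that the proposition "follows easily from Proposition \ref{fm11prop1} and Theorem \ref{fm9thm1}", and your write-up is precisely the unwinding of $\Xi_{n,\sO}^{\bs B,G}$ via diagram \eq{fm9eq3}, the naturality of $\hat\xi^{\bs B}_n$ and of \eq{fm4eq4} under change of group, and the natural isomorphisms $\ep_{n,G}^{\bs B,H}$ and $\ze_{n,G_1,G_2}^{\bs B}$ evaluated on $\boo$. No gaps; your version simply supplies the bookkeeping the authors left implicit.
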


To apply Propositions \ref{fm11prop1} and \ref{fm11prop2} it will be helpful to have a list of morphisms $\io:G\ra H$ of complex type. The next theorem will be proved in~\S\ref{fm195}.

\begin{thm}
\label{fm11thm3}
Here is a list of Lie group morphisms\/ $\io:G\ra H$ of complex type, as in Definition\/ {\rm\ref{fm11def1},} for all\/~{\rm$m\ge 1$:}
\ea
\begin{aligned}
E_7\!\t\!\U(1)&\longra E_8, & \!\!\!\!\!E_6\!\t\!\U(1)^2&\longra E_8, & \!\!\!\!\!\Spin(14)\!\t\!\U(1)&\longra E_8, \\
\!\!\SU(8)\!\t\!\U(1)&\longra E_8,  &  \!\!\!\!\!\!\!\!\!\!\Sp(3)\!\t\!\U(1)&\longra F_4, & \!\!\!\!\!\Spin(7)\!\t\!\U(1)&\longra F_4, \\
G_2&\longra\Spin(8),\!\!\!\!\! & \U(m)&\longra\SU(m\!+\!1),\!\!\!\!\!\!\!\!\!\! & \Spin(m)&\longra\SO(m),\!\!\!\!\!\!\!\!\!\!\!\!\!
\end{aligned}
\label{fm11eq9}\\
\begin{aligned}
\SU(m)\t\U(1)&\longra\SU(m+1),  & \Sp(m)\t\U(1)&\longra\Sp(m+1), \\
\SO(m)\t\U(1)&\longra\SO(m+2), & \Spin(m)\t\U(1)&\longra\Spin(m+2).
\end{aligned}
\nonumber 
\ea
\end{thm}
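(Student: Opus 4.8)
The plan is to verify each of the thirteen Lie group morphisms in \eqref{fm11eq9} satisfies Definition \ref{fm11def1}, namely that $\io_*:\g\ra\h$ is injective and the quotient $G$-representation $\m=\h/\io_*(\g)$ admits a $G$-invariant complex structure. Injectivity of $\io_*$ is immediate in every case: the morphisms are either standard inclusions of subgroups ($\Spin(m)\ra\SO(m)$, $G_2\ra\Spin(8)$, $\U(m)\ra\SU(m+1)$ as a block-diagonal embedding, the various products of classical groups with tori or with each other sitting inside a larger classical or exceptional group) or well-known maximal-rank embeddings into exceptional groups. For the torus factors one must check the relevant $\U(1)$ embeds nontrivially, which is clear from the construction of each inclusion. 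So the real content is the complex-structure claim on $\m$.

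The main step is therefore to identify the $G$-representation $\m$ in each case and exhibit the complex structure. There are two mechanisms, and I would organize the proof around them. First, when $G$ itself is a product with a $\U(1)$ factor, the adjoint action of that central $\U(1)$ on $\m$ typically decomposes $\m$ into weight spaces $\m_k$ for nonzero weights $k\in\Z$, occurring in conjugate pairs $\m_k\cong\bar\m_{-k}$; pairing up $\m_k$ with $\m_{-k}$ and declaring $\m_k$ to be the holomorphic part gives a $G$-invariant complex structure. This handles $E_7\t\U(1)\ra E_8$, $E_6\t\U(1)^2\ra E_8$, $\Spin(14)\t\U(1)\ra E_8$, $\SU(8)\t\U(1)\ra E_8$, $\Sp(3)\t\U(1)\ra F_4$, $\Spin(7)\t\U(1)\ra F_4$, and the four families $\SU(m)\t\U(1)\ra\SU(m+1)$, $\Sp(m)\t\U(1)\ra\Sp(m+1)$, $\SO(m)\t\U(1)\ra\SO(m+2)$, $\Spin(m)\t\U(1)\ra\Spin(m+2)$. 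Concretely these are all instances where $H/G$ is a (co)adjoint orbit or a Hermitian-type homogeneous space: for the four families one computes $\m\cong\C^m$ as a $\U(m)$- or $\Sp(m)\t\U(1)$- or $\SO(m)\t\SO(2)$-representation directly from the matrix block decomposition of the Lie algebra, and the complex structure is manifest. For the exceptional cases I would invoke the standard fact (e.g. from the classification of irreducible Hermitian symmetric spaces and their noncompact-type analogues, or Besse's tables) that $E_8/(E_7\cdot\U(1))$, $E_8/(\Spin(14)\cdot\U(1))$, etc., are Hermitian, equivalently that $\m$ is a complex $G$-representation.

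The second mechanism covers $\U(m)\ra\SU(m+1)$, $\Spin(m)\ra\SO(m)$, and $G_2\ra\Spin(8)$, where there is no torus to exploit. For $\U(m)\ra\SU(m+1)$, embedded as $A\mapsto\mathrm{diag}(A,(\det A)^{-1})$, one computes $\m=\su(m+1)/\u(m)\cong\C^m$ with the standard $\U(m)$-action, which is complex. For $\Spin(m)\ra\SO(m)$ (the nontrivial double cover composed with inclusion), $\io_*$ is an isomorphism onto $\so(m)$, so $\m=0$, which is vacuously of complex type; the same holds for $G_2\ra\Spin(8)$ in the sense that... actually here $\m=\spin(8)/\g_2$ is $8+8=16$-dimensional, so this case needs the genuine observation that under $G_2$ the representation $\so(8)/\g_2\cong\R^8\op\R^8$ is a sum of two copies of the (real) $7$-dimensional representation plus something — I would instead use that $G_2\subset\Spin(7)\subset\Spin(8)$ and $\Spin(8)/\Spin(7)=\cS^7$, $\Spin(7)/G_2=\cS^7$, together with the known $G_2$-decomposition $\so(8)|_{G_2}=\g_2\op\R^7\op\R^7\op\R$... so I would present $\m$ explicitly and exhibit the pairing making it complex, using the octonionic model of $\spin(8)$. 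The hard part, and where I expect to spend the most care, is precisely this non-torus exceptional-type bookkeeping — verifying that the specific real representations $\m$ that arise genuinely carry $G$-invariant complex structures rather than just even real dimension — so I would lean on the octonion/Jordan-algebra descriptions of $\Spin(8)$, $F_4$, and $E_8$ and on the classification of prehomogeneous/parabolic-type representations to pin down each $\m$ unambiguously, rather than attempting ad hoc computations.
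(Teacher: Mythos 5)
Your overall strategy matches the paper's: the torus cases are handled by decomposing $\m$ into nonzero weight spaces for a commuting $\U(1)$ and pairing $\m_k$ with $\m_{-k}$ (the paper packages this as: if $G=Z(T)$ for a torus $T\subseteq H$ then $G\hookra H$ is of complex type, plus the observations that covering maps are of complex type and that complex type is closed under composition), and the covering cases $\Spin(m)\ra\SO(m)$ are vacuous since $\m=0$. However, two points need repair. First, your appeal to the classification of Hermitian symmetric spaces for the exceptional cases is incorrect: $E_8/(E_7\cdot\U(1))$, $E_8/(\Spin(14)\cdot\U(1))$, $F_4/(\Sp(3)\cdot\U(1))$, etc.\ are \emph{not} symmetric spaces and do not appear in Besse's tables ($E_8$ and $F_4$ have no Hermitian symmetric quotients at all). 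They are generalized flag manifolds $H/Z(T)$, i.e.\ coadjoint orbits, and the correct justification is exactly the weight-space argument you state first; you should delete the symmetric-space citation and instead verify (as the paper does, by deleting vertices from the Dynkin diagram) that each of these subgroups really is, up to finite cover, the centralizer of a torus, so that the trivial weight space of the $\U(1)$-action on $\h$ is precisely $\g$.

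Second, your treatment of $G_2\ra\Spin(8)$ is a genuine gap: the dimensions are wrong ($\dim\spin(8)/\g_2=28-14=14$, not $8+8=16$), the proposed decomposition $\g_2\op\R^7\op\R^7\op\R$ has dimension $29\ne 28$, and no argument is actually given — only a plan to consult octonionic models. The correct and elementary argument is the chain $G_2\hookra\Spin(7)\hookra\Spin(8)$: both $\spin(7)/\g_2$ (tangent space of $\Spin(7)/G_2\cong\cS^7$) and $\spin(8)/\spin(7)$ restrict to the irreducible $7$-dimensional $G_2$-representation $\La_7$, so $\m=\spin(8)/\g_2\cong\La_7\op\La_7\cong\La_7\ot_\R\C$, which is manifestly a complex $G_2$-representation. (A minor further slip: for $\Sp(m)\t\U(1)\ra\Sp(m+1)$ the complement is not $\C^m$ but $\C^{2m}\op\C$ of real dimension $4m+2$; the conclusion still follows since all $\U(1)$-weights are nonzero.)
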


Here we do not specify the actual morphisms $\io$, although these are implicit in the proof, as we will not need them later. To prove Theorem \ref{fm11thm3}, we show:
\begin{itemize}
\setlength{\itemsep}{0pt}
\setlength{\parsep}{0pt}
\item[(i)] Suppose a Lie group $H$ has a torus subgroup $T\subseteq H$, and write $G=Z(T)$ for the centralizer of $T$. Then $\inc:G\hookra H$ is of complex type.
\item[(ii)] Let $\io:G\ra H$ be a morphism of connected Lie groups which is a covering map, e.g. $\Spin(n)\,{\buildrel 2:1\over\longra}\,\SO(n)$. Then $\io$ is of complex type.
\item[(iii)] Compositions of complex type morphisms are of complex type.
\end{itemize}
Using these we can easily construct many examples of complex type morphisms.

\subsubsection{Factorizing submanifold orientation functors}
\label{fm1122}

\begin{prop}
\label{fm11prop3}
Work in the situation of Definition\/ {\rm\ref{fm9def6}}. Suppose we are given a composition of Lie groups $H_1\,{\buildrel\io\over\longra}\, H_2\,{\buildrel\rho_2\over\longra}\,\SO(4)\subset\O(4)$. Then for\/ $F_\io$ as in \eq{fm5eq3} there exist canonical monoidal natural isomorphisms\/ $\ep_{n,n-4,H_1}^{\bs B,H_2,*}$ for $*=+,-,0$ making the following diagram commute:
\e
\begin{gathered}
\xymatrix@!0@C=60pt@R=30pt{
*+[r]{\Bord_{n,4}^{\bs B}(MH_1)} \ar[rrrr]^(0.48){F_\io} \ar@/_1pc/[drrrr]_(0.35){\sO_{n,4}^{\bs B,H_1,*}} & \drrtwocell_{}\omit^{}\omit{_{\,\,\,\,\,\,\,\,\,\,\,\,\,\,\,\,\,\,\,\,\,\,\,\,\ep_{n,n-4,H_1}^{\bs B,H_2,*}}} &&& *+[l]{\Bord_{n,4}^{\bs B}(MH_2)} \ar[d]_{\sO_{n,4}^{\bs B,H_2,*}}  \\
&&&& *+[l]{\Ga_n\qs\Ga_{n+1}.}
}
\end{gathered}
\label{fm11eq10}
\e
As in Remark\/ {\rm\ref{fm9rem1},} we can use \eq{fm11eq8} to compare submanifold orientation problems with normal orientations $H_1,H_2$.
\end{prop}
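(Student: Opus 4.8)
The plan is to unwind both sides of \eqref{fm11eq10} to the level of Dirac/Fueter operators and apply the functoriality of the analytic orientation machinery from \cite{Upme2}, together with the key observation that passing from $H_1$ to $H_2$ only changes the relevant elliptic operators by tensoring with a bundle whose kernels and cokernels differ by a $\Ga_{n+1}$-module structure that is already "built in". More precisely, recall from Definition \ref{fm9def6} that $\sO_{n,4}^{\bs B,H,\pm}(X,M)$ is constructed from the Fueter operators $F_{g_X,M}^\pm:\Ga^\iy(E_0\ot_{\mathbb L}\Si^\pm_\nu)\ra\Ga^\iy(E_1\ot_{\mathbb L}\Si^\pm_\nu)$, where $\Si^\pm_\nu$ are the spinor quaternionic line bundles of the normal bundle $\nu_M$, equipped with their Levi-Civita connections. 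The crucial point is that the Fueter operators depend on the normal $\SO(4)$-structure of $M\subset X$ \emph{only through the induced Riemannian geometry of $\nu_M$} --- that is, through the metric, orientation, and $2$-out-of-$3$ spin structure on $\nu_M$, hence through $\Si^\pm_\nu$ with their natural connections. A morphism $\rho_2\ci\io:H_1\ra\SO(4)$ converts an $H_1$-structure $\ga$ on $\nu_M$ into an $\SO(4)$-structure, and the associated Riemannian data on $\nu_M$ --- and in particular the spinor bundles $\Si^\pm_\nu$ --- are literally \emph{the same} whether computed via the $H_1$-structure or via its image $\SO(4)$-structure (the $H_1$-structure refines the $\SO(4)$-structure but does not alter the underlying metric connection on $\nu_M$). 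Therefore $F_{g_X,M}^\pm$ is unchanged under $F_\io$, and the orientation torsor $\sO_n(F_{g_X,M}^\pm)$ is literally the same $\Ga_n$-graded $\Ga_{n+1}$-torsor.

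First I would make this precise for objects: given $(X,M,\ga)$ in $\Bord_{n,4}^{\bs B}(MH_1)$, observe that $F_\io(X,M,\ga)=(X,M,(\rho_2\ci\io)_*(\ga))$ has the identical normal bundle $\nu_M$ with the identical metric, orientation, and $2$-out-of-$3$ spin structure, since these are determined by the $\SO(4)$-reduction, which is the same in both cases. Hence for every $g_X\in\A_X$ the Fueter operators $F_{g_X,M}^\pm$ agree on the nose, so the principal $\Ga_{n+1}$-bundles $R^\pm_{X,M}\ra\A_X$ agree, and thus $\sO_{n,4}^{\bs B,H_1,\pm}(X,M,\ga)=\sO_{n,4}^{\bs B,H_2,\pm}(F_\io(X,M,\ga))$ as $\Ga_n$-graded $\Ga_{n+1}$-torsors. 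This gives the component of $\ep_{n,n-4,H_1}^{\bs B,H_2,\pm}$ at $(X,M,\ga)$ as the identity (or, more carefully, a canonical identification). Next I would do the same for morphisms $[Y,N,\ga']:(X_0,M_0)\ra(X_1,M_1)$: the bordism Fueter operator $F_{g_Y,N}^\pm$ again depends only on the $\SO(4)$-reduction of $\nu_N$, hence is unchanged by $F_\io$, so the isomorphisms $\sO_n[N,F_{g_Y,N}^\pm]$ furnished by Theorem \ref{fm9thm2} coincide, and the two functors $\sO_{n,4}^{\bs B,H_1,\pm}$ and $\sO_{n,4}^{\bs B,H_2,\pm}\ci F_\io$ agree on morphisms under the identification already chosen on objects. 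This shows the identifications assemble into a \emph{monoidal} natural isomorphism $\ep_{n,n-4,H_1}^{\bs B,H_2,\pm}$, monoidality being immediate since $F_\io$, $\sO_{n,4}^{\bs B,H_i,\pm}$ are all symmetric monoidal and the identifications are compatible with disjoint unions. Finally, the case $*=0$ follows formally from the cases $*=+,-$ together with the definition $\sO_{n,4}^{\bs B,H,0}=(\sO_{n,4}^{\bs B,H,+})^{-1}\ot\sO_{n,4}^{\bs B,H,-}$ and the compatibility of $F_\io$ with the Picard groupoid structure on $\Ga_n\qs\Ga_{n+1}$; one sets $\ep_{n,n-4,H_1}^{\bs B,H_2,0}=(\ep_{n,n-4,H_1}^{\bs B,H_2,+})^{-1}\ot\ep_{n,n-4,H_1}^{\bs B,H_2,-}$.

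The main obstacle I anticipate is a bookkeeping one rather than a conceptual one: verifying that the $2$-out-of-$3$ spin structure on $\nu_M$ (and hence the spinor quaternionic line bundles $\Si^\pm_\nu$ with their Levi-Civita connections) really is insensitive to the refinement of the $\SO(4)$-structure to an $H_1$-structure, uniformly across all the relevant $H_1$ --- namely $H_1=\SU(2),\U(2),\Spin(4)$ mapping to $\SO(4)$ as in \eqref{fm3eq1}. For $H_1=\Spin(4)$ the $H_1$-structure \emph{is} precisely a spin structure on $\nu_M$, which is exactly the data feeding into the Fueter operator, so there is genuinely nothing extra; for $H_1=\U(2)$ or $\SU(2)$ one must check that the induced spin structure coming from the $\U(2)$- or $\SU(2)$-structure (via $\U(2)\hookra\SO(4)$ or $\SU(2)\ra\Spin(4)\ra\SO(4)$, cf.\ the discussion around \eqref{fm3eq4}) is the same as the one determined by the underlying oriented Euclidean structure together with the orientation of $X$ and the spin structure on $X$ --- this is the ``$2$-out-of-$3$'' compatibility used throughout Definition \ref{fm9def6}. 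This is a pointwise statement about representations of $\SU(2),\U(2),\Spin(4)$ in $\SO(4)$ and their double covers, and is the content of the ``simple case-by-case verification'' flagged before Lemma \ref{fm9lem1}; I would reduce it to that lemma and the standard identifications of spinor bundles of rank-$4$ bundles with quaternionic line bundles recalled in \S\ref{fm311}. Once this compatibility is in hand, the construction of $\ep_{n,n-4,H_1}^{\bs B,H_2,*}$ and its monoidality are automatic, and the stated diagram \eqref{fm11eq10} commutes on the nose.
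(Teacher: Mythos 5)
Your proposal is correct and is essentially the paper's own argument: Definition \ref{fm9def6} uses the normal $H$-structure only through the induced orientation of $\nu_M,\nu_N$ (the spinor bundles $\Si^\pm_\nu$ come from the locally chosen spin structure of Definition \ref{fm9def5}, not from the $H$-structure), so $F_\io$ leaves the Fueter operators unchanged and $\ep_{n,n-4,H_1}^{\bs B,H_2,*}$ is the identity. Your anticipated obstacle about comparing spin structures induced by $\U(2)$- or $\SU(2)$-structures is therefore moot, since the construction never uses the $H$-structure to produce $\Si^\pm_\nu$ in the first place.
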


\begin{proof}
The definition of $\sO_{n,4}^{\bs B,H,*}$ in Definition \ref{fm9def6} did not use the normal $H$-structures except to require that $\rho:H\ra\O(4)$ factors via $\SO(4)$, and use the induced orientations of $\nu_M\ra M$, $\nu_N\ra N$. These are unchanged by $F_\io$. So the proposition holds with $\ep_{n,n-4,H_1}^{\bs B,H_2,*}$ the identity natural isomorphism.
\end{proof}

\subsubsection{Factorizations mixing classes of orientation functors}
\label{fm1123}

To prove the next theorem, we observe using Definition \ref{fm9def4} and Theorems \ref{fm3thm1}, \ref{fm11thm1}, and \ref{fm11thm2} that for each triangle in \eq{fm11eq11}--\eq{fm11eq18}, the two routes round the triangle agree on the generators of $\pi_i(\cC)$ for $i=0,1$, where $\cC$ is the Picard groupoid in the top left hand corner of the triangle, and then we apply Theorem \ref{fmAthm2} to deduce the existence of the monoidal natural isomorphisms $\la_*^*$, and Table \ref{fm2tab1} and Theorem \ref{fm3thm1} to describe the torsor in which $\la_*^*$ lives.

\begin{thm}
\label{fm11thm4}
{\bf(a)} As in equation\/ {\rm\eq{fm11eq6},} there exist monoidal natural isomorphisms $\la_1,\ldots,\la_8$ in the following diagrams of Picard groupoids:
\begin{gather}
\begin{gathered}
\text{\begin{small}$\displaystyle
\!\!\!\!\!\!\!\!\xymatrix@!0@C=70pt@R=30pt{ *+[r]{\Bord_{7,4}^{\bs\Spin}(M\SU(2))} \drrtwocell_{}\omit^{}\omit{^{\la_1\,\,\,}} \ar@<.8ex>[d]^{F_{M\SU(2)}^{B\SU(2)}}_\simeq \ar@/^1.6pc/[drr]^(0.75){\sO_{7,4}^{\bs\Spin,\SU(2),0}}  \\
*+[r]{\Bord_7^{\bs\Spin}(B\SU(2))} \ar[rr]^(0.7){\sN_{7}^{\bs\Spin,\SU(2)}} && *+[l]{0\qs\Z,} }
\;
\xymatrix@!0@C=70pt@R=30pt{ *+[r]{\Bord_{8,4}^{\bs\Spin}(M\SU(2))} \drrtwocell_{}\omit^{}\omit{^{\la_2\,\,\,}} \ar@<.8ex>[d]^{F_{M\SU(2)}^{B\SU(2)}}_\simeq \ar@/^1.6pc/[drr]^(0.75){\sO_{8,4}^{\bs\Spin,\SU(2),0}}  \\
*+[r]{\Bord_8^{\bs\Spin}(B\SU(2))} \ar[rr]^(0.7){\sN_{8}^{\bs\Spin,\SU(2)}} && *+[l]{\Z\qs\Z_2,} }\!\!\!\!\!\!\!\!\!\!\!\!$\end{small}}
\end{gathered}
\label{fm11eq11}
\allowdisplaybreaks\\
\begin{gathered}
\text{\begin{small}$\displaystyle
\!\!\!\!\!\!\!\!\xymatrix@!0@C=70pt@R=30pt{ *+[r]{\Bord_{7,4}^{\bs\Spin}(M\U(2))} \drrtwocell_{}\omit^{}\omit{^{\la_3\,\,\,}} \ar@<.8ex>[d]^{F^{B\SU(m)}_{M\U(2)}}_\simeq \ar@/^1.6pc/[drr]^(0.75){\sO_{7,4,\Z_2}^{\bs\Spin,\U(2),0}}  \\
*+[r]{\Bord_7^{\bs\Spin}(B\SU(m))} \ar[rr]^(0.7){\sN_{7,\Z_2}^{\bs\Spin,\SU(m)}} && *+[l]{0\qs\Z_2,} }
\;
\xymatrix@!0@C=70pt@R=30pt{ *+[r]{\Bord_{8,4}^{\bs\Spin}(M\U(2))} \drrtwocell_{}\omit^{}\omit{^{\la_4\,\,\,}} \ar@<.8ex>[d]^{F^{B\SU(m)}_{M\U(2)}}_\simeq \ar@/^1.6pc/[drr]^(0.75){\sO_{8,4,\Z_2}^{\bs\Spin,\U(2),0}}  \\
*+[r]{\Bord_8^{\bs\Spin}(B\SU(m))} \ar[rr]^(0.7){\sN_{8,\Z_2}^{\bs\Spin,\SU(m)}} && *+[l]{\Z_2\qs\Z_2,} }\!\!\!\!\!\!\!\!\!\!\!\!$\end{small}}
\end{gathered}
\label{fm11eq12}\\
\begin{gathered}
\text{\begin{small}$\displaystyle
\!\!\!\!\!\!\!\!\xymatrix@!0@C=70pt@R=30pt{ *+[r]{\Bord_{7,4}^{\bs\Spin}(M\Spin(4))} \drrtwocell_{}\omit^{}\omit{^{\la_5\,\,\,}} \ar@<.8ex>[d]^{F^{B\Sp(m)}_{M\Spin(4)}}_\simeq \ar@/^1.6pc/[drr]^(0.75){\sO_{7,4,\Z_2}^{\bs\Spin,\Spin(4),0}}  \\
*+[r]{\Bord_7^{\bs\Spin}(B\Sp(m))} \ar[rr]^(0.7){\sN_{7,\Z_2}^{\bs\Spin,\Sp(m)}} && *+[l]{0\qs\Z_2,} }
\;
\xymatrix@!0@C=70pt@R=30pt{ *+[r]{\Bord_{8,4}^{\bs\Spin}(M\Spin(4))} \drrtwocell_{}\omit^{}\omit{^{\la_6\,\,\,}} \ar@<.8ex>[d]^{F^{B\Sp(m)}_{M\Spin(4)}}_\simeq \ar@/^1.6pc/[drr]^(0.75){\sO_{8,4,\Z_2}^{\bs\Spin,\Spin(4),0}}  \\
*+[r]{\Bord_8^{\bs\Spin}(B\Sp(m))} \ar[rr]^(0.7){\sN_{8,\Z_2}^{\bs\Spin,\Sp(m)}} && *+[l]{\Z_2\qs\Z_2,} }\!\!\!\!\!\!\!\!\!\!\!\!$\end{small}}
\end{gathered}
\label{fm11eq13}\\
\begin{gathered}
\text{\begin{small}$\displaystyle
\!\!\!\!\!\!\!\!\xymatrix@!0@C=70pt@R=30pt{ *+[r]{\Bord_7^{\bs\Spin}(BE_8)} \drrtwocell_{}\omit^{}\omit{^{\la_7\,\,\,}} \ar@<.8ex>[d]^{F_{BE_8}^{K(\Z,4)}}_\simeq \ar@/^1.4pc/[drr]^(0.75){\sN_{7,\Z_2}^{\bs\Spin,E_8}}  \\
*+[r]{\Bord_7^{\bs\Spin}(K(\Z,4))} \ar[rr]^(0.7){\sH_7^\Z} && *+[l]{0\qs\Z_2,} }
\;
\xymatrix@!0@C=70pt@R=30pt{ *+[r]{\Bord_8^{\bs\Spin}(BE_8)} \drrtwocell_{}\omit^{}\omit{^{\la_8\,\,\,}} \ar@<.8ex>[d]^{F_{BE_8}^{K(\Z,4)}}_\simeq \ar@/^1.4pc/[drr]^(0.75){\sN_{8,\Z_2}^{\bs\Spin,E_8}}  \\
*+[r]{\Bord_8^{\bs\Spin}(K(\Z,4))} \ar[rr]^(0.7){\sH_8^\Z} && *+[l]{\Z_2\qs\Z_2.} }\!\!\!\!\!\!\!\!\!\!\!\!$\end{small}}
\end{gathered}
\label{fm11eq14}
\end{gather}
Here in \eq{fm11eq12} we have $m\ge 4$ or $m\ge 5$, and in \eq{fm11eq13} we have $m\ge 2$. In each diagram, the left hand column is a transfer functor from Theorem\/ {\rm\ref{fm8thm1},} which is an equivalence of categories, and the rightwards morphisms are orientation functors defined in\/ {\rm\S\ref{fm93}--\S\ref{fm95}}. 

The monoidal natural isomorphisms $\la_1,\la_3,\la_5,\la_7$ are unique. Also\/ $\la_2,\ab\la_4,\ab\la_6,\ab\la_8$ lie in torsors over $\Z_2^4,\Z_2^5,\Z_2^5,\Z_2^4$ respectively, although if we require them to commute with the functors \eq{fm8eq3} to and from $\Bord^{\bs\Spin}_8(*),$ the choices are $\Z_2^2,\ab\Z_2^3,\ab\Z_2^3,\ab\Z_2^2$. These choices of\/ $\la_i$ are a kind of orientation convention.
\smallskip

\noindent{\bf(b)} For $*=-,0,$ but \begin{bfseries}not\end{bfseries} for $*=+,$ there exist monoidal natural isomorphisms $\la_9^*,\ldots,\la_{14}^*$ in the following diagrams of Picard groupoids:
\ea
\begin{gathered}
\text{\begin{small}$\displaystyle
\!\!\!\!\!\!\!\!\xymatrix@!0@C=70pt@R=30pt{ *+[r]{\Bord_7^{\bs\Spin}(M\SU(2))} \drrtwocell_{}\omit^{}\omit{^{\la_9^*\,\,\,}} \ar@<.8ex>[d]^{F_{M\SU(2)}^{K(\Z,4)}} \ar@/^1.6pc/[drr]^(0.75){\sO_{7,4,\Z_2}^{\bs\Spin,\SU(2),*}}  \\
*+[r]{\Bord_7^{\bs\Spin}(K(\Z,4))} \ar[rr]^(0.7){\sH_7^\Z} && *+[l]{0\qs\Z_2,} }
\;
\xymatrix@!0@C=70pt@R=30pt{ *+[r]{\Bord_8^{\bs\Spin}(M\SU(2))} \drrtwocell_{}\omit^{}\omit{^{\la_{10}^*\,\,\,\,\,}} \ar@<.8ex>[d]^{F_{M\SU(2)}^{K(\Z,4)}} \ar@/^1.6pc/[drr]^(0.75){\sO_{8,4,\Z_2}^{\bs\Spin,\SU(2),*}}  \\
*+[r]{\Bord_8^{\bs\Spin}(K(\Z,4))} \ar[rr]^(0.7){\sH_8^\Z} && *+[l]{\Z_2\qs\Z_2,} }\!\!\!\!\!\!\!\!\!\!\!\!$\end{small}}
\end{gathered}
\label{fm11eq15}\\
\begin{gathered}
\text{\begin{small}$\displaystyle
\!\!\!\!\!\!\!\!\xymatrix@!0@C=70pt@R=30pt{ *+[r]{\Bord_7^{\bs\Spin}(M\U(2))} \drrtwocell_{}\omit^{}\omit{^{\la_{11}^*\,\,\,\,\,}} \ar@<.8ex>[d]^{F_{M\U(2)}^{K(\Z,4)}} \ar@/^1.6pc/[drr]^(0.75){\sO_{7,4,\Z_2}^{\bs\Spin,\U(2),*}}  \\
*+[r]{\Bord_7^{\bs\Spin}(K(\Z,4))} \ar[rr]^(0.7){\sH_7^\Z} && *+[l]{0\qs\Z_2,} }
\;
\xymatrix@!0@C=70pt@R=30pt{ *+[r]{\Bord_8^{\bs\Spin}(M\U(2))} \drrtwocell_{}\omit^{}\omit{^{\la_{12}^*\,\,\,\,\,}} \ar@<.8ex>[d]^{F_{M\U(2)}^{K(\Z,4)}} \ar@/^1.6pc/[drr]^(0.75){\sO_{8,4,\Z_2}^{\bs\Spin,\U(2),*}}  \\
*+[r]{\Bord_8^{\bs\Spin}(K(\Z,4))} \ar[rr]^(0.7){\sH_8^\Z} && *+[l]{\Z_2\qs\Z_2,} }\!\!\!\!\!\!\!\!\!\!\!\!$\end{small}}
\end{gathered}
\label{fm11eq16}\\
\begin{gathered}
\text{\begin{small}$\displaystyle
\!\!\!\!\!\!\!\!\xymatrix@!0@C=70pt@R=30pt{ *+[r]{\Bord_7^{\bs\Spin}(M\Spin(4))} \drrtwocell_{}\omit^{}\omit{^{\la_{13}^*\,\,\,\,\,}} \ar@<.8ex>[d]^{F_{M\Spin(4)}^{K(\Z,4)}} \ar@/^1.6pc/[drr]^(0.75){\sO_{7,4,\Z_2}^{\bs\Spin,\Spin(4),*}}  \\
*+[r]{\Bord_7^{\bs\Spin}(K(\Z,4))} \ar[rr]^(0.7){\sH_7^\Z} && *+[l]{0\qs\Z_2,} }
\;
\xymatrix@!0@C=70pt@R=30pt{ *+[r]{\Bord_8^{\bs\Spin}(M\Spin(4))} \drrtwocell_{}\omit^{}\omit{^{\la_{14}^*\,\,\,\,\,}} \ar@<.8ex>[d]^{F_{M\Spin(4)}^{K(\Z,4)}} \ar@/^1.6pc/[drr]^(0.75){\sO_{8,4,\Z_2}^{\bs\Spin,\Spin(4),*}}  \\
*+[r]{\Bord_8^{\bs\Spin}(K(\Z,4))} \ar[rr]^(0.7){\sH_8^\Z} && *+[l]{\Z_2\qs\Z_2.} }\!\!\!\!\!\!\!\!\!\!\!\!$\end{small}}
\end{gathered}
\label{fm11eq17}
\ea

For $*=0,$ but \begin{bfseries}not\end{bfseries} for\/  $*=+,-,$ there exists a monoidal natural isomorphism $\la_{15}^0$ in the following diagram of Picard groupoids:
\e
\begin{gathered}
\xymatrix@C=100pt@R=15pt{ *+[r]{\Bord_7^{\bs\Spin}(M\SO(4))} \drrtwocell_{}\omit^{}\omit{^{\la_{15}^0\,\,\,\,\,}} \ar@<.8ex>[d]^{F_{M\SO(4)}^{K(\Z,4)}} \ar@/^1.6pc/[drr]^(0.75){\sO_{7,4,\Z_2}^{\bs\Spin,\SO(4),0}}  \\
*+[r]{\Bord_7^{\bs\Spin}(K(\Z,4))} \ar[rr]^(0.7){\sH_7^\Z} && *+[l]{0\qs\Z_2.} }
\end{gathered}
\label{fm11eq18}
\e
The analogue of\/ \eq{fm11eq18} for $\sO_{8,4,\Z_2}^{\bs\Spin,\SO(4),*}$ does not commute for any\/~$*=+,-,0$.

The monoidal natural isomorphisms $\la_9^*,\la_{11}^*,\la_{13}^*,\la_{15}^0$ are unique. Also\/ $\la_{10}^*,\ab\la_{12}^*,\ab\la_{14}^*$ lie in torsors over $\Z_2^4,\Z_2^5,\Z_2^5$ respectively, although if we require them to commute with the functors \eq{fm8eq3} the choices are\/~$\Z_2^2,\Z_2^3,\Z_2^3$. 
\end{thm}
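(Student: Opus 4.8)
The plan is to treat all fourteen (families of) triangles uniformly via the classification of symmetric monoidal functors of Picard groupoids in Theorem \ref{fmAthm2}. In every triangle $\cC\ra\cC'\ra A\qs B$ the left-hand column is a transfer functor from \S\ref{fm8} (an equivalence in \eqref{fm11eq11}--\eqref{fm11eq14}), the hypotenuse is one of the orientation functors $\sN_n^{\bs\Spin,G},\sO_{n,4}^{\bs\Spin,H,*},\sH_n^\Z,\sH_n^{\Z_2}$ of \S\ref{fm9}, and the claimed $\la_i$ (resp.\ $\la_i^*$) is a monoidal natural isomorphism between two symmetric monoidal functors $\cC\ra A\qs B$, namely $\sO$ and $\sO'\ci F$. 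By Theorem \ref{fmAthm2}(b),(c) such an isomorphism exists if and only if the two functors induce the same pair $(f_0,f_1)$ on $(\pi_0,\pi_1)$ \emph{and} determine the same class in the $H^2_\sym(\pi_0(\cC),B)$-torsor, and, when it exists, the set of such isomorphisms is a torsor over $\Hom(\pi_0(\cC),B)$. First I would record that in each case $\pi_0(\cC)=\Om_n^{\bs\Spin}(T)=\Om_n^{\bs\Spin}(*)\op\ti\Om_n^{\bs\Spin}(T)$ is free abelian: it is $0$ for $n=7,$ $T\in\{M\SU(2),M\U(2),M\Spin(4),M\SO(4),BE_8\}$ and a $\Z^k$ for $n=8,$ by Tables \ref{fm2tab1}, \ref{fm3tab1}, \ref{fm3tab4}. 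Hence $H^2(\pi_0(\cC),B)\cong\Alt(\pi_0(\cC),B)$ has no symmetric part, so $H^2_\sym(\pi_0(\cC),B)=0$ by \eqref{fmAeq9} (exactly as in the proofs of Theorems \ref{fm11thm1} and \ref{fm11thm2}). Thus the torsor obstruction vanishes automatically, and the whole problem reduces to checking the equalities $\pi_i(\sO)=\pi_i(\sO')\ci\pi_i(F)$ for $i=0,1$.

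Next I would compute the three maps on $\pi_0,\pi_1$ for each triangle, purely by assembling earlier results. The maps $\pi_i(F)$ on the transfer functors are computed from Propositions \ref{fm4prop1}, \ref{fm5prop1}, \ref{fm6prop1} together with Theorem \ref{fm3thm3}, Corollary \ref{fm3cor1} and the ``same letter'' convention of \S\ref{fm311}, \S\ref{fm341}, which says precisely that the $k$-connected and weak-equivalence maps $M\SU(2)\ra B\SU(2),$ $M\U(2)\ra B\SU,$ $M\Spin(4)\ra B\Sp,$ $BE_8\ra K(\Z,4),$ $MH\ra K(\Z,4)$ send the generators $\de,\varep,\ze_1,\tfrac{\ze_1}{2},\tfrac{\ze_1}{4},\ze_2,\ze_2',\ze_3,\eta$ to the like-named generators (with the divisibilities of Theorem \ref{fm3thm1}, e.g.\ $\ze_1\mapsto 4\tfrac{\ze_1}{4}$ and $\tfrac{\ze_1}{4}\mapsto 0$ in $K(\Z,4)$, $\eta\mapsto\al_1\ze_2$). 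The maps $\pi_i$ of the gauge-theoretic functors $\sN_n^{\bs\Spin,G}$ are read off Tables \ref{fm11tab1}--\ref{fm11tab2}; of the submanifold functors $\sO_{n,4}^{\bs\Spin,H,*}$ off Tables \ref{fm11tab3}--\ref{fm11tab6}; and of $\sH_n^\Z,\sH_n^{\Z_2}$ off \eqref{fm9eq6} in Definition \ref{fm9def7}. One then verifies generator by generator (finitely many in each case) that $\pi_i(\sO)=\pi_i(\sO')\ci\pi_i(F)$; for instance in the second diagram of \eqref{fm11eq12} one checks $\pi_1(\sO_{8,4,\Z_2}^{\bs\Spin,\U(2),0})(\al_1\ze_2)=\ul1=\pi_1(\sN_{8,\Z_2}^{\bs\Spin,\SU(m)})(\al_1\ze_2)$, and in \eqref{fm11eq14} that the mod-$2$ reductions of $\pi_1(\sN_7^{\bs\Spin,E_8})$ (sending $\ze_2\mapsto1,\ze_3\mapsto0$) agree with $\pi_1(\sH_7^\Z)$. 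This produces $\la_1,\dots,\la_8$ and $\la_9^*,\dots,\la_{15}^0$, and the torsor statements follow from $\Hom(\pi_0(\cC),B)$: it is $\Hom(0,\Z)=0$ for $\la_1,\la_3,\la_5,\la_7$ (hence uniqueness), and $\Hom(\Om_8^{\bs\Spin}(T),B)=\Z_2^4,\Z_2^5,\Z_2^5,\Z_2^4$ for $\la_2,\la_4,\la_6,\la_8$, reducing to $\Z_2^2,\Z_2^3,\Z_2^3,\Z_2^2$ once one restricts to isomorphisms commuting with the functors \eqref{fm8eq3} to and from $\Bord^{\bs\Spin}_8(*)$, which forces agreement on the $\Om_8^{\bs\Spin}(*)=\Z^2$ summand of $\pi_0$.

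For the negative assertions in part (b) the same classification applies in the contrapositive: since $H^2_\sym$ vanishes, a monoidal natural isomorphism $\sO^{+}\Rightarrow\sH_n^\cdot\ci F$ would force $\pi_1(\sO^{+})=\pi_1(\sH_n^\cdot)\ci\pi_1(F)$, so it suffices to exhibit one generator violating this. For \eqref{fm11eq15}--\eqref{fm11eq17} with $*=+,$ Tables \ref{fm11tab5}--\ref{fm11tab6} give $\pi_1(\sO_{7,4}^{\bs\Spin,H,+})(\ze_2)\equiv\ul0\pmod2$ while $\pi_1(\sH_7^\Z)\ci\pi_1(F)$ sends $\ze_2\mapsto\ze_2\mapsto\ul1$; for the $M\SO(4)$ case one uses that $\pi_1(\sO_{7,4}^{\bs\Spin,\SO(4),\pm})(\tfrac{\ze_1}{4})=-1\equiv\ul1$ but $\tfrac{\ze_1}{4}$ maps to $0\in\ti\Om_8^{\bs\Spin}(K(\Z,4))$ by Theorem \ref{fm3thm1}(b), and similarly, in dimension $8$, $\pi_1(\sO_{8,4}^{\bs\Spin,\SO(4),*})(\eta)\ne\pi_1(\sH_8^\Z)(\al_1\ze_2)$ because $\eta\mapsto\al_1\ze_2$ in $K(\Z,4)$; this rules out commutativity of the $8$-dimensional analogue of \eqref{fm11eq18} for every $*$. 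The positive statement $*=-,0$ in \eqref{fm11eq15}--\eqref{fm11eq17} is checked by the same generator computation, now giving equality.

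I expect the main obstacle to be the bookkeeping inside the second step: tracking every one of the generators $\de,\dots,\eta$ simultaneously through the zig-zags defining the transfer functors and through the three different kinds of orientation functors, and in particular reconciling the two normalization conventions in play --- $\sN_n^{\bs\Spin,G}$ normalizes by subtracting the trivial $G$-bundle, whereas $\sO_{n,4}^{\bs\Spin,H,0}=(\sO_{n,4}^{\bs\Spin,H,+})^{-1}\ot\sO_{n,4}^{\bs\Spin,H,-}$ normalizes by the ratio of the two Fueter twists. This reconciliation relies on the excision/index identities already packaged into Theorem \ref{fm11thm2} (relating $\sD_X\ot\ad(P)$ to the Fueter operators twisted by $\Si^\pm_{\nu_M}$) and on Propositions \ref{fm11prop1}--\ref{fm11prop3}; once the data of \S\ref{fm3} and of Theorems \ref{fm11thm1}--\ref{fm11thm2} are laid side by side, this is a bounded computation, but it is where any sign or factor-of-two error would be concealed, so I would carry it out case by case against Tables \ref{fm11tab1}--\ref{fm11tab6}.
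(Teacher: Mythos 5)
Your proposal is correct and follows essentially the same route as the paper: the paper's proof is precisely the observation that the two routes round each triangle agree (or, for the negative claims, disagree) on the generators of $\pi_0$ and $\pi_1$ computed in Theorems \ref{fm3thm1}, \ref{fm11thm1}, \ref{fm11thm2} and Definition \ref{fm9def7}, followed by an application of Theorem \ref{fmAthm2} with $H^2_\sym(\pi_0(\cC),B)=0$ and the torsor $\Hom(\pi_0(\cC),B)$. The only small slip is in the $8$-dimensional $M\SO(4)$ case for $*=\pm$, where the $\eta$-entries of Table \ref{fm11tab4} are unknown, so you should instead exhibit the disagreement on $\al_1\tfrac{\ze_1}{4}$, which $\pi_1(\sO_{8,4}^{\bs\Spin,\SO(4),\pm})$ sends to $\ul{1}$ while its image in $\ti\Om_9^{\bs\Spin}(K(\Z,4))$ is $0$.
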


\begin{thm}
\label{fm11thm5}
{\bf(a)} The following orientation functors from {\rm\S\ref{fm93}--\S\ref{fm95}} factor via the orientation functor $\sH_7^\Z:\Bord_7^{\bs\Spin}(K(\Z,4))\ra 0\qs\Z_2$ from Definition\/ {\rm\ref{fm9def4},} which is used to define flag structures on $7$-manifolds in\/~{\rm\S\ref{fm101}:}
\begin{itemize}
\setlength{\itemsep}{0pt}
\setlength{\parsep}{0pt}
\item[{\bf(i)}] $\sN_{7,\Z_2}^{\bs\Spin,G}:\Bord_7^{\bs\Spin}(BG)\ra 0\qs\Z_2$ for\/ $G$ any of the following compact, connected Lie groups, where $E_6,E_7$ are the simply-connected versions:
\e
E_8,\; E_7,\; E_6,\; G_2,\; \SU(m),\; \U(m),\; \Spin(2m), \quad \text{for $m\ge 1$.}
\label{fm11eq19}
\e
\item[{\bf(ii)}] $\sO_{7,4,\Z_2}^{\bs\Spin,H,0}:\Bord_7^{\bs\Spin}(MH)\ra 0\qs\Z_2$ for any $\rho:H\ra\O(4)$ which factors via $\SO(4)\hookra\O(4)$.
\item[{\bf(iii)}] $\sO_{7,4,\Z_2}^{\bs\Spin,H,-}:\Bord_7^{\bs\Spin}(MH)\ra 0\qs\Z_2$ for any $\rho:H\ra\O(4)$ which factors via\/ $\U(2)\hookra\O(4)$ or\/ $\Spin(4)\ra\O(4)$.
\end{itemize}

Hence, the orientation functors in {\bf(a)(i)\rm--\bf(iii)} are orientable for every compact spin $7$-manifold\/ $X,$ and after choosing a natural isomorphism $\la_*^*$ as in\/ {\rm\eq{fm11eq11}--\eq{fm11eq18},} if\/ $X$ is a compact spin\/ $7$-manifold, then a flag structure on $X$ induces an orientation for any one of these orientation functors on\/~$X$.
\smallskip

\noindent{\bf(b)} The following orientation functors from {\rm\S\ref{fm93}--\S\ref{fm95}} factor via the orientation functor $\sH_8^\Z:\Bord_8^{\bs\Spin}(K(\Z,4))\ra\Z_2\qs\Z_2$ from Definition\/ {\rm\ref{fm9def4},} which is used to define flag structures on $8$-manifolds in\/~{\rm\S\ref{fm102}:}
\begin{itemize}
\setlength{\itemsep}{0pt}
\setlength{\parsep}{0pt}
\item[{\bf(i)}] $\sN_{8,\Z_2}^{\bs\Spin,G}:\Bord_8^{\bs\Spin}(BG)\ra \Z_2\qs\Z_2$ for\/ $G$ any of Lie groups in\/~\eq{fm11eq19}.
\item[{\bf(ii)}] $\sO_{8,4,\Z_2}^{\bs\Spin,H,-}$ and\/ $\sO_{8,4,\Z_2}^{\bs\Spin,H,0}:\Bord_8^{\bs\Spin}(MH)\ra \Z_2\qs\Z_2$ for any $\rho:H\ra\O(4)$ which factors via\/ $\U(2)\hookra\O(4)$ or\/ $\Spin(4)\ra\O(4)$.
\end{itemize}
Hence the orientation functors in {\bf(b)(i)\rm--\bf(ii)} are orientable for every compact spin $8$-manifold\/ $X$ satisfying the condition Theorem\/ {\rm\ref{fm10thm2}$(*)$,} and after choosing a natural isomorphism $\la_*^*$ as in\/ {\rm\eq{fm11eq11}--\eq{fm11eq17},} if\/ $X$ is a compact spin\/ $8$-manifold, then a flag structure on $X$ induces an orientation for any one of these orientation functors on\/~$X$.
\end{thm}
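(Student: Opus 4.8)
The plan is to reduce every case in the statement to one of the basic factorizations $\la_1,\dots,\la_{15}^0$ produced in Theorem \ref{fm11thm4}, and then to read off orientability and the effect of a flag structure from the general pasting construction of Remark \ref{fm9rem1}. A useful preliminary observation is that any composite of transfer functors, and of quasi-inverses of transfer functors that happen to be equivalences, is again a transfer functor in the sense of \S\ref{fm8}; this is what lets us chain the basic factorizations together, so that Remark \ref{fm9rem1} applies to the composites we build.

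For the submanifold functors in (a)(ii), (a)(iii) and (b)(ii): if $\rho\colon H\ra\O(4)$ factors through $\SO(4)$ (respectively through $\U(2)\hookra\O(4)$, respectively through $\Spin(4)\ra\O(4)$), then Proposition \ref{fm11prop3} with $H_2=\SO(4)$ (respectively $\U(2)$, $\Spin(4)$) gives a canonical identification $\sO_{n,4,\Z_2}^{\bs\Spin,H,*}\cong\sO_{n,4,\Z_2}^{\bs\Spin,H_2,*}\ci F_\io$. So it is enough to treat $H=\SO(4),\U(2),\Spin(4)$, and these are exactly the functors appearing in the natural isomorphisms $\la_{15}^0$ of \eq{fm11eq18} and $\la_{11}^*,\la_{12}^*,\la_{13}^*,\la_{14}^*$ of \eq{fm11eq16}--\eq{fm11eq17} (for $*=-,0$), which already display them as $\sH_7^\Z$ or $\sH_8^\Z$ precomposed with a transfer functor.

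For the gauge-theoretic functors in (a)(i) and (b)(i): the cases $G=E_8$ and $G=\SU(2),\SU(m)$ ($m\ge4$) come directly from Theorem \ref{fm11thm4}, using $\la_7,\la_8$ of \eq{fm11eq14}, and using $\la_1,\dots,\la_4$ of \eq{fm11eq11}--\eq{fm11eq12} (whose left columns are equivalences) together with $\la_9^0,\la_{10}^0,\la_{11}^0,\la_{12}^0$ to identify $\sN_{n,\Z_2}^{\bs\Spin,\SU(2)}$ and $\sN_{n,\Z_2}^{\bs\Spin,\SU(m)}$ with $\sO_{n,4,\Z_2}^{\bs\Spin,\SU(2),0}$ and $\sO_{n,4,\Z_2}^{\bs\Spin,\U(2),0}$. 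For the remaining $G$ I would use the complex-type morphisms of Theorem \ref{fm11thm3} together with Proposition \ref{fm11prop1}: for $\io\colon G\ra H$ of complex type the mod-$2$ reductions satisfy $\sN_{n,\Z_2}^{\bs\Spin,G}\cong\sN_{n,\Z_2}^{\bs\Spin,H}\ci F_\io$ by Proposition \ref{fm11prop1}(a),(c), and $\sN_n^{\bs\Spin,G_1\t G_2}\cong\sN_n^{\bs\Spin,G_1}\ot\sN_n^{\bs\Spin,G_2}$ with $\sN_n^{\bs\Spin,\U(1)}\cong\boo$ by Proposition \ref{fm11prop1}(b),(d); hence a complex-type morphism $G\t(\text{torus})\ra H$ shows $\sN_{n,\Z_2}^{\bs\Spin,G}$ factors via $\sN_{n,\Z_2}^{\bs\Spin,H}$, and so via $\sH_n^\Z$ whenever the $H$-functor does. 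Theorem \ref{fm11thm3} then propagates the conclusion from $E_8$ to $E_7$ (via $E_7\t\U(1)\ra E_8$) and $E_6$ (via $E_6\t\U(1)^2\ra E_8$); from $\SU(m)$ ($m\ge4$) to $\SU(3)$ (via $\SU(3)\t\U(1)\ra\SU(4)$) and hence, with $\SU(1)=\{1\}$ abelian, to all $\SU(m)$, and then to all $\U(m)$ (via $\U(m)\ra\SU(m+1)$); and, along the chain of complex-type morphisms $\Spin(2k)\t\U(1)\ra\Spin(2k+2)$ for $k=2,\dots,6$ followed by $\Spin(14)\t\U(1)\ra E_8$, to $\Spin(2m)$ for $1\le m\le7$, and then, via $G_2\ra\Spin(8)$, to $G_2$.

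The one remaining case, which I expect to be the main obstacle, is $\Spin(2m)$ with $2m>14$, since there is then no complex-type morphism to a group already treated. Here I would argue by stability: the map $B\Spin(14)\ra B\Spin(2m)$ is highly connected (the fibres are Stiefel manifolds of high connectivity), so the change-of-group functor $F\colon\Bord_n^{\bs\Spin}(B\Spin(14))\ra\Bord_n^{\bs\Spin}(B\Spin(2m))$ is an equivalence for $n\le9$; and the quotient representation $\spin(2m)/\spin(14)$ is an even number of copies of the standard real representation of $\Spin(14)$ plus a trivial summand, so, after reducing mod $2$ and using that the square of a $\Z_2$-torsor is canonically trivial, $\sN_{n,\Z_2}^{\bs\Spin,\Spin(2m)}\ci F\cong\sN_{n,\Z_2}^{\bs\Spin,\Spin(14)}$, whence the $\Spin(2m)$-functor factors via $\sH_n^\Z$ too. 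Finally, orientability and the effect of flag structures are formal: $\sH_7^\Z$ is orientable for every compact spin $7$-manifold (Remark \ref{fm10rem1}(e)) and $\sH_8^\Z$ for every compact spin $8$-manifold satisfying $(*)$ (Theorem \ref{fm10thm2}(a)), so each functor above is orientable for such $X$ by the first arrows of the pasting diagram of Remark \ref{fm9rem1}; and a flag structure on $X$, which by Theorem \ref{fm10thm1} and Definition \ref{fm10def3} is precisely an orientation for $\sH_n^\Z$ on $X$, pastes through that diagram---once the conventions $\la_*^*$ of Theorem \ref{fm11thm4} are fixed---to an orientation on $X$ for the functor in question.
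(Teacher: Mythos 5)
Your proposal is correct and follows essentially the same route as the paper's proof: base cases supplied by the natural isomorphisms of Theorem \ref{fm11thm4}, propagation through the complex-type morphisms of Theorem \ref{fm11thm3} via Proposition \ref{fm11prop1}(a)--(d), reduction of the submanifold cases to $H=\SO(4),\U(2),\Spin(4)$ by Proposition \ref{fm11prop3}, and the formal pasting of Remark \ref{fm9rem1} together with Theorem \ref{fm10thm1}, Remark \ref{fm10rem1}(e) and Theorem \ref{fm10thm2}(a) for the orientability and flag-structure conclusions. The one place you genuinely deviate is the stabilization step for large groups: the paper handles $\SU(m)$, $m\ge 8$, and $\Spin(2m)$, $m\ge 7$, by an induction combining the downward complex-type implication with the fact that $F_\io\colon\Bord_n^{\bs\Spin}(B\SU(m))\ra\Bord_n^{\bs\Spin}(B\SU(m+1))$ (and likewise for $\Spin$) is an equivalence, whereas you treat large $\SU(m)$ directly through $\la_3,\la_4$ and $\la_{11}^0,\la_{12}^0$, and compare $\Spin(2m)$ directly with $\Spin(14)$ using that $\spin(2m)/\spin(14)$ is a trivial summand plus an even number of copies of the standard representation. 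Your representation-theoretic computation is right --- the even multiple is $\R^{14}\ot_\R\C^{m-7}$, hence of complex type, and normalization cancels the trivial summand mod $2$ --- but note that $\Spin(14)\hookra\Spin(2m)$ is not itself of complex type (the trivial summand can be odd-dimensional), so you cannot just cite Proposition \ref{fm11prop1}(a); to promote your object-level identification of torsors to a monoidal natural isomorphism of functors you still need the bordism-compatibility underlying that proposition, which a one-line appeal to the same machinery (or simply running the paper's induction with $\Spin(2m)\t\U(1)\ra\Spin(2m+2)$) supplies.
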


\begin{proof}
For (a)(i), the case $G=E_8$ follows from \eq{fm11eq14}. Proposition \ref{fm11prop1}(a),(c) and Theorem \ref{fm11thm3} imply that $\sN_{7,\Z_2}^{\bs\Spin,G}$ factors via $\sN_{7,\Z_2}^{\bs\Spin,E_8}$ for $G$ in the list
\begin{equation*}
E_7\t\U(1),\quad E_6\t\U(1)^2,\quad\SU(8)\t\U(1),\quad \Spin(14)\t\U(1).
\end{equation*}
Hence $\sN_{7,\Z_2}^{\bs\Spin,G}$ for these $G$ also factor via $\sH_7^\Z$. From Proposition \ref{fm11prop1}(b)--(d) we deduce that $\sN_{7,\Z_2}^{\bs\Spin,G\t\U(1)^k}$ factors via $\sH_7^\Z$ if and only if $\sN_{7,\Z_2}^{\bs\Spin,G}$ does. Thus (i) follows for $G=E_7,E_6,\SU(8),\Spin(14)$.

From the complex type morphisms $\SU(m)\t\U(1)\ra\SU(m+1)$ and $\Spin(m)\ab\t\ab\U(1)\ra\Spin(m+2)$ in Theorem \ref{fm11thm3}, and (i) for $\SU(8),\Spin(14)$, we deduce (i) for $G=\SU(m)$ with $1\le m\le 8$ and $G=\Spin(2m)$ with $1\le m\le 7$ by the arguments above. Then we deduce (i) for $G=G_2$ from the complex type morphism $G_2\ra\Spin(8)$ and (i) for $\Spin(8)$, and we deduce (i) for $G=\U(m)$ with $1\le m\le 7$ from the complex type morphism $\U(m)\ra\SU(m+1)$ and (i) for $\SU(m)$ for $2\le m\le 8$.

The argument above using the complex type morphism $\SU(m)\t\U(1)\hookra\SU(m+1)$ tells us that if (i) holds for $G=\SU(m+1)$, then (i) holds for $G=\SU(m)$. Now the functor $F_\io:\Bord_7^{\bs\Spin}(B\SU(m))\ra \Bord_7^{\bs\Spin}(B\SU(m+1))$ induced by the inclusion $\io:\SU(m)\hookra\SU(m+1)$ is an equivalence of categories provided that $m\ge 5$. Because of this, if $m\ge 5$ then (i) holds for $G=\SU(m)$ if and only if (i) holds for $G=\SU(m+1)$. So as (i) holds for $G=\SU(8)$, by induction on $m$ it holds for $\SU(m)$ for all $m\ge 8$. A similar proof using $\Spin(2m)\hookra\Spin(2m+2)$ shows that (i) holds for $G=\Spin(2m)$ for all $m\ge 7$. This completes~(a)(i).

Part (a)(ii) follows from equations \eq{fm11eq18} and \eq{fm11eq10} for $\rho:H\ra\SO(4)$. Part (a)(iii) follows from \eq{fm11eq16}--\eq{fm11eq17} for $*=-$ and \eq{fm11eq10} in the same way. The last part of (a) is immediate from equation \eq{fm11eq7} and \S\ref{fm101}. The proof of (b) is very similar to that of (a), using Theorem \ref{fm10thm2}(a).
\end{proof}

\section[Applications to moduli spaces in gauge theory]{Applications to moduli spaces in \\ gauge theory}
\label{fm12}

\subsection{\texorpdfstring{Connection moduli spaces $\A_P,\B_P$ and orientations}{Connection moduli spaces 𝒜ₚ,ℬₚ and orientations}}
\label{fm121}

The following definitions are taken from Joyce, Tanaka and Upmeier~\cite[\S 1--\S 2]{JTU}.

\begin{dfn}
\label{fm12def1}
Suppose we are given the following data:
\begin{itemize}
\setlength{\itemsep}{0pt}
\setlength{\parsep}{0pt}
\item[(a)] A compact, connected manifold $X$ of dimension $n>0$.
\item[(b)] A Lie group $G$, with $\dim G>0$, and centre $Z(G)\subseteq G$, and Lie algebra $\g$.
\item[(c)] A principal $G$-bundle $\pi:P\ra X$. We write $\Ad(P)\ra X$ for the vector bundle with fibre $\g$ defined by $\Ad(P)=(P\t\g)/G$, where $G$ acts on $P$ by the principal bundle action, and on $\g$ by the adjoint action.
\end{itemize}

Write $\A_P$ for the set of connections $\nabla_P$ on the principal bundle $P\ra X$. This is a real affine space modelled on the infinite-dimensional vector space $\Ga^\iy(\Ad(P)\ot T^*X)$, and we make $\A_P$ into a topological space using the $C^\iy$ topology on $\Ga^\iy(\Ad(P)\ot T^*X)$. Here if $E\ra X$ is a vector bundle then $\Ga^\iy(E)$ denotes the vector space of smooth sections of $E$. Note that $\A_P$ is contractible.
 
Write $\G_P=\Aut(P)$ for the infinite-dimensional Lie group of $G$-equivariant diffeomorphisms $\ga:P\ra P$ with $\pi\ci\ga=\pi$. Then $\G_P$ acts on $\A_P$ by gauge transformations, and the action is continuous for the topology on~$\A_P$. 

There is an inclusion $Z(G)\hookra\G_P$ mapping $z\in Z(G)$ to the principal bundle action of $z$ on $P$. This maps $Z(G)$ into the centre $Z(\G_P)$ of $\G_P$, so we may take the quotient group $\G_P/Z(G)$. The action of $Z(G)\subset\G_P$ on $\A_P$ is trivial, so the $\G_P$-action on $\A_P$ descends to a $\G_P/Z(G)$-action. 

Each $\nabla_P\in\A_P$ has a (finite-dimensional) {\it stabilizer group\/} $\Stab_{\G_P}(\nabla_P)\subset\G_P$ under the $\G_P$-action on $\A_P$, with $Z(G)\subseteq\Stab_{\G_P}(\nabla_P)$. As $X$ is connected, $\Stab_{\G_P}(\nabla_P)$ is isomorphic to a closed Lie subgroup $H$ of $G$ with $Z(G)\subseteq H$. As in \cite[p.~133]{DoKr} we call $\nabla_P$ {\it irreducible\/} if $\Stab_{\G_P}(\nabla_P)=Z(G)$, and {\it reducible\/} otherwise. Write $\A_P^\irr,\A_P^\red$ for the subsets of irreducible and reducible connections in $\A_P$. Then $\A_P^\irr$ is open and dense in $\A_P$, and $\A_P^\red$ is closed and of infinite codimension in the infinite-dimensional affine space $\A_P$.

We write $\B_P=[\A_P/(\G_P/Z(G))]$ for the moduli space of gauge equivalence classes of connections on $P$, considered as a {\it topological stack\/} in the sense of Metzler \cite{Metz} and Noohi \cite{Nooh1,Nooh2}. Write $\B_P^\irr=[\A_P^\irr/(\G_P/Z(G))]$ for the substack $\B_P^\irr\subseteq\B_P$ of irreducible connections. As $\G_P/Z(G)$ acts freely on $\A_P^\irr$, we may consider $\B_P^\irr$ as a topological space (which is an example of a topological stack).
\end{dfn}

We define (n-)orientation bundles $O^{E_\bu}_P,N^{E_\bu}_P$ on the moduli spaces~$\B_P$:

\begin{dfn} 
\label{fm12def2}	
Work in the situation of Definition \ref{fm12def1}, with the same notation. Suppose we are given real vector bundles $E_0,E_1\ra X$, of the same rank $r$, and a linear elliptic partial differential operator $D:\Ga^\iy(E_0)\ra\Ga^\iy(E_1)$, of degree $d$. As a shorthand we write $E_\bu=(E_0,E_1,D)$. With respect to connections $\nabla_{E_0}$ on $E_0\ot\bigot^iT^*X$ for $0\le i<d$, when $e\in\Ga^\iy(E_0)$ we may write
\e
D(e)=\ts\sum_{i=0}^d a_i\cdot \nabla_{E_0}^ie,
\label{fm12eq1}
\e
where $a_i\in \Ga^\iy(E_0^*\ot E_1\ot S^iTX)$ for $i=0,\ldots,d$. The condition that $D$ is {\it elliptic\/} is that $a_d\vert_x\cdot\ot^d\xi:E_0\vert_x\ra E_1\vert_x$ is an isomorphism for all $x\in X$ and $0\ne\xi\in T_x^*X$, and the {\it symbol\/} $\si(D)$ of $D$ is defined using~$a_d$.

Let $\nabla_P\in\A_P$. Then $\nabla_P$ induces a connection $\nabla_{\Ad(P)}$ on the vector bundle $\Ad(P)\ra X$. Thus we may form the twisted elliptic operator
\e
\begin{split}
D^{\nabla_{\Ad(P)}}&:\Ga^\iy(\Ad(P)\ot E_0)\longra\Ga^\iy(\Ad(P)\ot E_1),\\
D^{\nabla_{\Ad(P)}}&:e\longmapsto \ts\sum_{i=0}^d (\id_{\Ad(P)}\ot a_i)\cdot \nabla_{\Ad(P)\ot E_0}^ie,
\end{split}
\label{fm12eq2}
\e
using the connections $\nabla_{\Ad(P)\ot E_0}$ on $\Ad(P)\ot E_0\ot\bigot^iT^*X$ for $0\le i<d$ induced by $\nabla_{\Ad(P)}$ and~$\nabla_{E_0}$.

Since $D^{\nabla_{\Ad(P)}}$ is a linear elliptic operator on a compact manifold $X$, it has finite-dimensional kernel $\Ker(D^{\nabla_{\Ad(P)}})$ and cokernel $\Coker(D^{\nabla_{\Ad(P)}})$. The {\it determinant\/} $\det(D^{\nabla_{\Ad(P)}})$ is the 1-dimensional real vector space
\begin{equation*}
\det(D^{\nabla_{\Ad(P)}})=\det\Ker(D^{\nabla_{\Ad(P)}})\ot\bigl(\det\Coker(D^{\nabla_{\Ad(P)}})\bigr)^*,
\end{equation*}
where if $V$ is a finite-dimensional real vector space then $\det V=\La^{\dim V}V$. Recall that the index is $\ind_P^{E_\bu}=\dim\Ker(D^{\nabla_{\Ad(P)}})-\dim\Coker(D^{\nabla_{\Ad(P)}})\in\Z.$

These operators $D^{\nabla_{\Ad(P)}}$ vary continuously with $\nabla_P\in\A_P$, so they form a family of elliptic operators over the base topological space $\A_P$. Thus as in Atiyah and Singer \cite{AtSi}, there is a natural real line bundle $\hat L{}^{E_\bu}_P\ra\A_P$ with fibre $\hat L{}^{E_\bu}_P\vert_{\nabla_P}=\det(D^{\nabla_{\Ad(P)}})$ at each $\nabla_P\in\A_P$. It is equivariant under the action of $\G_P/Z(G)$ on $\A_P$, and so pushes down to real line bundles $L^{E_\bu}_P\ra\B_P$ on the topological stacks $\B_P$. We call $L^{E_\bu}_P$ the {\it determinant line bundle\/} of $\B_P$. The restriction $L^{E_\bu}_P\vert_{\B_P^\irr}$ is a topological real line bundle in the usual sense on the topological space~$\B_P^\irr$.

For a real line bundle $L\ra T$ we write $O(L)=(L\sm 0(T))/(0,\iy)$ for the principal $\Z_2$-bundle of (fibrewise) orientations on $L$. That is, we take the complement of the zero section of $L$ and quotient by $(0,\iy)$ acting on the fibres by scalar multiplication.

Define the {\it orientation bundles\/} $\hat O^{E_\bu}_P=\hat O(L^{E_\bu}_P)\ra\A_P$ and $O^{E_\bu}_P=O(L^{E_\bu}_P)\ra\B_P$. The fibres of $O^{E_\bu}_P\ra\B_P$ are orientations on the real line fibres of $L^{E_\bu}_P\ra\B_P$. The restriction $O^{E_\bu}_P\vert_{\B^\irr_P}$ is a principal $\Z_2$-bundle on the topological space $\B^\irr_P$, in the usual sense.

We say that $\B_P$ is {\it orientable\/} if $O^{E_\bu}_P$ is isomorphic to the trivial principal $\Z_2$-bundle $\B_P\t\Z_2\ra\B_P$. An {\it orientation\/} $\om$ on $\B_P$ is an isomorphism $\om:O^{E_\bu}_P\,{\buildrel\cong\over\longra}\,\B_P\t\Z_2$ of principal $\Z_2$-bundles. As $\B_P$ is connected, if $\B_P$ is orientable it has exactly two orientations.

We also define the {\it normalized orientation bundle}, or {\it n-orientation bundle\/} a principal $\Z_2$-bundle $N_P^{E_\bu}\ra\cB_P$, by $N_P^{E_\bu}=O_P^{E_\bu}\ot_{\Z_2}O_{X\t G}^{E_\bu}\vert_{[\nabla^0]}$. That is, we tensor $O_P^{E_\bu}$ with the orientation torsor $O_{X\t G}^{E_\bu}\vert_{[\nabla^0]}$ of the trivial principal $G$-bundle $X\t G\ra X$ at the trivial connection $\nabla^0$. An {\it n-orientation\/} of $\cB_P$ is an isomorphism $\nu:N^{E_\bu}_P\,{\buildrel\cong\over\longra}\,\cB_P\t\Z_2$. Note that $\cB_P$ has an orientation if and only if it has an n-orientation.
\end{dfn}

\begin{rem}
\label{fm12rem1}
{\bf(i)} Up to continuous isotopy, and hence up to isomorphism, $L^{E_\bu}_P,O^{E_\bu}_P$ in Definition \ref{fm12def2} depend on the elliptic operator $D:\Ga^\iy(E_0)\ra\Ga^\iy(E_1)$ up to continuous deformation amongst elliptic operators, and thus only on the {\it symbol\/} $\si(D)$ of $D$ (essentially, the highest order coefficients $a_d$ in \eq{fm12eq1}), up to deformation.
\smallskip

\noindent{\bf(ii)} For orienting moduli spaces of `instantons' in gauge theory, as in \S\ref{fm122}--\S\ref{fm123}, we usually start not with an elliptic operator on $X$, but with an {\it elliptic complex\/}
\e
\smash{\xymatrix@C=28pt{ 0 \ar[r] & \Ga^\iy(E_0) \ar[r]^{D_0} & \Ga^\iy(E_1) \ar[r]^(0.55){D_1} & \cdots \ar[r]^(0.4){D_{k-1}} & \Ga^\iy(E_k) \ar[r] & 0. }}
\label{fm12eq3}
\e
If $k>1$ and $\nabla_P$ is an arbitrary connection on a principal $G$-bundle $P\ra X$ then twisting \eq{fm12eq3} by $(\Ad(P),\nabla_{\Ad(P)})$ as in \eq{fm12eq2} may not yield a complex (that is, we may have $D^{\nabla_{\Ad(P)}}_{i+1}\ci D^{\nabla_{\Ad(P)}}_i\ne 0$), so the definition of $\det(D_\bu^{\nabla_{\Ad(P)}})$ does not work, though it does work if $\nabla_P$ satisfies the appropriate instanton-type curvature condition. To get round this, we choose metrics on $X$ and the $E_i$, so that we can take adjoints $D_i^*$, and replace \eq{fm12eq3} by the elliptic operator
\e
\smash{\xymatrix@C=90pt{ \Ga^\iy\bigl(\bigop_{0\le i\le k/2}E_{2i}\bigr) \ar[r]^(0.48){\sum_i(D_{2i}+D_{2i-1}^*)} & \Ga^\iy\bigl(\bigop_{0\le i< k/2}E_{2i+1}\bigr), }}
\label{fm12eq4}
\e
and then Definitions \ref{fm12def2} works with \eq{fm12eq4} in place of~$E_\bu$.
\end{rem}

\begin{prop}
\label{fm12prop1}
{\bf(a)} In the situation of Definitions\/ {\rm\ref{fm12def1}--\ref{fm12def2},} suppose\/ $(X,g)$ is a compact spin Riemannian $7$-manifold and\/ $E_\bu$ is a first order elliptic operator whose symbol is isomorphic to that of the Dirac operator $\sD_X$ of\/ $X$. Then:
\begin{itemize}
\setlength{\itemsep}{0pt}
\setlength{\parsep}{0pt}
\item[{\bf(i)}] $\B_P$ is orientable for every principal\/ $G$-bundle $P\ra X$ if and only if\/ $\sO_{7,\Z_2}^{\bs\Spin,G}$ (or equivalently, $\sN_{7,\Z_2}^{\bs\Spin,G}$) in Definition\/ {\rm\ref{fm9def4}} is orientable for\/~$X$. 
\item[{\bf(ii)}] An orientation of\/ $\sO_{7,\Z_2}^{\bs\Spin,G}$ for\/ $X$ is equivalent to an orientation of\/ $\B_P$ for all principal\/ $G$-bundles $P\ra X,$ depending on $P$ only up to isomorphism.
\item[{\bf(iii)}] An n-orientation of\/ $\sN_{7,\Z_2}^{\bs\Spin,G}$ for\/ $X$ is equivalent to an n-orientation of\/ $\B_P$ for all principal\/ $G$-bundles $P\ra X,$ up to isomorphisms of\/~$P$.
\end{itemize}

\noindent{\bf(b)} Suppose instead that\/ $(X,g)$ is a compact spin Riemannian $8$-manifold and\/ $E_\bu$ is a first order elliptic operator whose symbol is isomorphic to that of the positive Dirac operator $\sD^+_X$ of\/ $X$. Then the analogues of\/ {\bf(i)\rm--\bf(iii)} hold for $\sO_8^{\bs\Spin,G},\sN_8^{\bs\Spin,G}$ in Definition\/~{\rm\ref{fm9def4}}.
\end{prop}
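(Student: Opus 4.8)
The plan is to show that an orientation of $\B_P$ in the sense of Definition \ref{fm12def2} and an orientation of $\sO_n^{\bs\Spin,G}$ (resp.\ $\sN_n^{\bs\Spin,G}$) for $X$ in the sense of Definition \ref{fm9def2} encode the same information, naturally in $P$; throughout, $X$ carries a fixed spin structure, the one used to define $\sD_X$ and $\Bord_n^{\bs\Spin}(BG)$. \textbf{Step 1: identify $O_P^{E_\bu}$ with the bundle underlying the analytic orientation functor.} By Remark \ref{fm12rem1}(i), $O_P^{E_\bu}\ra\B_P$ depends up to isomorphism only on the symbol of $D$ up to deformation, so I may replace $E_\bu$ by the (possibly twisted and folded) real Dirac operator data used in Definition \ref{fm9def4}. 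Composing Definition \ref{fm9def4} with the functor $\Pi_X^{\bs\Spin}\colon\Bord_X(BG)\ra\Bord_n^{\bs\Spin}(BG)$ of Definition \ref{fm4def4}, the functor $\sO_n^{\bs\Spin,G}\ci\Pi_X^{\bs\Spin}$ sends an object $P$ to the $\Ga_{n+1}$-torsor of constant sections of the principal $\Ga_{n+1}$-bundle $R_{X,P}\ra\A_{X,P}$ with fibre $\sO_n(\sD_X^{\nabla_P})$ over $(g_X,\nabla_P)$; restricting to a fixed metric (the inclusion $\A_P\hookra\A_{X,P}$ is a homotopy equivalence of contractible spaces) gives a principal $\Ga_{n+1}$-bundle $R_P\ra\A_P$. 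From \cite{Upme2} and \S\ref{fm92} one reads off: for $n=8$ ($\ell\equiv 0$), $\sO_8(\sD_X^{\nabla_P})$ is the orientation torsor of the determinant line of $\sD_X^{+,\nabla_{\Ad(P)}}$ placed in degree $\ind_P^{E_\bu}$, so $\hat O_P^{E_\bu}\ra\A_P$ is $\G_P/Z(G)$-equivariantly isomorphic to $R_P$; for $n=7$ ($\ell\equiv 7$), $\sO_7(\sD_X^{\nabla_P})$ is the $\Z$-torsor $\SP(\sD_X^{\nabla_P})$ of spectral enumerations, whose reduction mod $2$ is the orientation torsor of the index-zero operator $E_\bu$, so $\hat O_P^{E_\bu}\ra\A_P$ is $\G_P/Z(G)$-equivariantly isomorphic to the mod $2$ reduction $\bar R_P$ of $R_P$. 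In either case, pushing down along $\A_P\ra\B_P$ yields an isomorphism $O_P^{E_\bu}\cong\bar R_P/(\G_P/Z(G))$ of principal $\Z_2$-bundles on $\B_P$ (with $\bar R_P=R_P$ for $n=8$).

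\textbf{Step 2: translate the orientability criterion.} Since $\A_P$ is contractible and $\B_P=[\A_P/(\G_P/Z(G))]$ is connected, a principal $\Z_2$-bundle on $\B_P$ is trivializable if and only if its monodromy homomorphism vanishes, and in that case its trivializations form a $\Z_2$-torsor; by Step 1 this torsor is canonically identified with the $\Z_2$-torsor underlying $\sO_n^{\bs\Spin,G}(X,P)$. On the other side, an orientation of $\sO=\sO_{7,\Z_2}^{\bs\Spin,G}$ (resp.\ $\sO_8^{\bs\Spin,G}$) for $X$ is, by Definition \ref{fm9def2}, a choice for each object $P$ of $\Bord_X(BG)$ of a point $\eta_X(P)$ of that same $\Z_2$-torsor, such that for every morphism $[Q]\colon P_0\ra P_1$ the isomorphism $\sO_n^{\bs\Spin,G}([X\t[0,1],Q])$ carries $\eta_X(P_0)$ to $\eta_X(P_1)$. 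Two observations complete the dictionary: (a) a bordism $Q\ra X\t[0,1]$ forces $P_0\cong P_1$, since the restrictions to $X\t\{0,1\}$ of a classifying map for $Q$ are homotopic, so morphisms of $\Bord_X(BG)$ only connect isomorphic $G$-bundles, and the mapping-cylinder morphism induced by an isomorphism $P_0\cong P_1$ acts on $\sO_n^{\bs\Spin,G}$ as the push-forward along $\B_{P_0}\cong\B_{P_1}$; (b) by Proposition \ref{fm4prop3} (diagram \eq{fm4eq15}) together with Theorem \ref{fm9thm2}, an automorphism $[Q]\colon P\ra P$ is sent by $\sO_n^{\bs\Spin,G}\ci\Pi_X^{\bs\Spin}$ to ``multiplication by'' the real index in $\Ga_{n+1}$ of the twisted Dirac operator on the mapping-torus bundle over $X\t\cS^1$, whose reduction mod $2$ is precisely the monodromy of $O_P^{E_\bu}$ around the corresponding loop in $\B_P$.

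\textbf{Step 3: conclude.} For (i): $\sO$ is orientable for $X$ if and only if every automorphism of every object of $\Bord_X(BG)$ acts trivially on the relevant $\Z_2$-torsor (choose $\eta_X$ on one object per isomorphism class and transport along morphisms; well-definedness is exactly this condition), which by Step 2(b) says every $O_P^{E_\bu}$ has trivial monodromy, i.e.\ $\B_P$ is orientable for all $P$. For (ii): when this holds the transport in the previous sentence is unobstructed, so an orientation of $\sO$ for $X$ is the same as a family of orientations $\om_P$ of $\B_P$, one per isomorphism class of $G$-bundle, compatible with isomorphisms $P\cong P'$ by Step 2(a) --- i.e.\ ``depending on $P$ only up to isomorphism'' --- and conversely any such family assembles into a natural isomorphism $\eta_X$. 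For (iii): as $\sN_n^{\bs\Spin,G}(X,P)=\sO_n^{\bs\Spin,G}(X,P)\ot\sO_n^{\bs\Spin,G}(X,X\t G)^{-1}$ and $N_P^{E_\bu}=O_P^{E_\bu}\ot_{\Z_2}O_{X\t G}^{E_\bu}\vert_{[\nabla^0]}$ are each obtained from the $\sO$-side by tensoring with one fixed $\Z_2$-torsor, the bijection of (ii) transports verbatim to n-orientations, and (using that $\B_P$ is orientable iff n-orientable, and $\sO$ orientable iff $\sN$ orientable) the $\sN$-version of (i) is the same statement. Part (b) is the identical argument with $\sD_X$, $n=7$, $\Ga_7$, $\Ga_8$ replaced by $\sD_X^+$, $n=8$, $\Ga_8$, $\Ga_9$.

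The main obstacle is Step 1: reconciling the ``elliptic operator bordism'' packaging of the orientation data from \cite{Upme2} that underlies $\sO_n^{\bs\Spin,G}$ with the ``determinant line bundle on $\B_P$'' packaging of Definition \ref{fm12def2}, and in particular, for $n=7$, identifying the reduction mod $2$ of the $\Z$-valued spectral-enumeration bundle $\SP$ with the $\Z_2$ orientation bundle $O_P^{E_\bu}$ of the index-zero Dirac-type operator $E_\bu$, as well as verifying that the functor's action on automorphisms of $\Bord_X(BG)$ genuinely computes the monodromy of $O_P^{E_\bu}$. Much of this is contained in or directly adapted from \cite{Upme2}, but the bookkeeping --- the role of the fixed spin structure on $X$, the homotopy equivalence $\A_P\hookra\A_{X,P}$, and the $\G_P/Z(G)$-equivariance of all identifications --- needs care; the remaining steps are routine category theory and standard facts about $\Z_2$-bundles on quotient stacks $[\A_P/\G_P]$.
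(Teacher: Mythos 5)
Your proposal is correct and follows essentially the same route as the paper: reduce to the Dirac operator by deformation-invariance of the symbol, identify $\sO_{n}^{\bs\Spin,G}(X,P)$ (after fixing the metric and using contractibility of $\A_P$) with the torsor of trivializations of $\hat O_P^{E_\bu}\ra\A_P$, match the action of automorphisms in $\Bord_X(BG)$ with the $\G_P/Z(G)$-action (equivalently the monodromy of $O_P^{E_\bu}$), and then read off (i)--(iii). The only point the paper makes more explicit than you do is the surjectivity of the mapping-cylinder map $\G_P\ra\Hom_{\Bord_X(BG)}(P,P)$, which is what guarantees that compatibility with gauge transformations already gives compatibility with \emph{all} morphisms of $\Bord_X(BG)$ in the converse direction of (ii); you use this implicitly when speaking of ``the corresponding loop in $\B_P$''.
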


\begin{proof}
For (a), since orientations of $O^{E_\bu}_P,N^{E_\bu}_P$ are unchanged by continuous deformations of $E_\bu$, we can assume that $E_\bu$ is the Dirac operator $\sD_X$. Now $\sO_{7,\Z_2}^{\bs\Spin,G}(X,P)$ in Definition \ref{fm9def4} is defined to be the $\Z_2$-reduction of the $\Z$-torsor of constant sections of the principal $\Z$-bundle over $\A_{X,P}$ with fibre $\sO_7(D_X^{\nabla_P})$ at $(g_X,\nabla_P)$. So fixing $g_X=g$, $\sO_{7,\Z_2}^{\bs\Spin,G}(X,P)$ is the $\Z_2$-torsor of constant sections of the principal $\Z_2$-bundle over $\A_P$ in Definition \ref{fm12def1} with fibre $\sO_7(D_X^{\nabla_P})\ot_\Z\Z_2$ at $\nabla_P$. But $\sO_7(D_X^{\nabla_P})\ot_\Z\Z_2$ is the $\Z_2$-torsor of orientations of $D_X^{\nabla_P}$. Thus $\sO_{7,\Z_2}^{\bs\Spin,G}(X,P)$ is canonically isomorphic to the $\Z_2$-torsor of trivializations of $\hat O^{E_\bu}_P\ra\A_P$ in Definition~\ref{fm12def2}.

Now $O^{E_\bu}_P\ra\B_P$ is the quotient of $\hat O^{E_\bu}_P\ra\A_P$ by $\G_P/Z(G)$. There is a surjective group morphism from $\G_P=\Aut(P)$ to $\Hom_{\Bord_X(BG)}(P,P)$ taking $\ga\in\G_P$ to the isomorphism class $[(P\t[0,1])_\ga]:P\ra P$ of its mapping cone $(P\t[0,1])_\ga$, that is, to the principal $G$-bundle $P\t[0,1]\ra X\t[0,1]$ where the chosen boundary isomorphism $P\vert_{X\t\{0\}}\ra P$ is the identity $\id_P$, but $P\vert_{X\t\{1\}}\ra P$ is $\ga$. Under the functor $\sO_{7,\Z_2}^{\bs\Spin,G}$, $\Hom_{\Bord_X(BG)}(P,P)$ acts on the $\Z_2$-torsor $\sO_{7,\Z_2}^{\bs\Spin,G}(X,P)$, so composing with $\G_P\ra\Hom_{\Bord_X(BG)}(P,P)$ gives an action of $\G_P$ on $\sO_{7,\Z_2}^{\bs\Spin,G}(X,P)$. Then $Z(G)\subset\G_P$ acts trivially on $\sO_{7,\Z_2}^{\bs\Spin,G}(X,P)$, so the action descends to $\G_P/Z(G)$, and the $\G_P/Z(G)$-action on $\sO_{7,\Z_2}^{\bs\Spin,G}(X,P)$ induced by $\sO_{7,\Z_2}^{\bs\Spin,G}$ agrees with the $\G_P/Z(G)$-action on the $\Z_2$-torsor of trivializations of $\hat O^{E_\bu}_P\ra\A_P$ under the identification of this with~$\sO_{7,\Z_2}^{\bs\Spin,G}(X,P)$.

Therefore an orientation of $\B_P$, that is, a trivialization of $O^{E_\bu}_P\ra\B_P$, is canonically equivalent to a natural isomorphism $\eta_X^P$ in the diagram
\e
\begin{gathered}
\xymatrix@C=80pt@R=15pt{
*+[r]{\Bord_X(BG)_P} \drtwocell_{}\omit^{}\omit{^{\eta_X^P\,\,\,}} \ar@/^.9pc/[drr]^(0.6)\boo \ar[d]^{\Pi_X^{\bs B}} \\
*+[r]{\Bord_n^{\bs\Spin}(BG)} \ar[rr]^(0.4){\sO_{7,\Z_2}^{\bs\Spin,G}} && *+[l]{0\qs \Z_2=\Ztor,\!} }
\end{gathered}
\label{fm12eq5}
\e
where $\Bord_X(BG)_P\subset\Bord_X(BG)$ is the full subcategory with one object $P$.

An orientation of $\sO_{7,\Z_2}^{\bs\Spin,G}$ for $X$ is a natural isomorphism $\eta_X$ in \eq{fm9eq1} for $\sO_{7,\Z_2}^{\bs\Spin,G}$, with $A=0$, $B=\Z_2$. Restricting \eq{fm9eq1} to $\Bord_X(BG)_P\subset\Bord_X(BG)$ gives \eq{fm12eq5}. Thus, an orientation of $\sO_{7,\Z_2}^{\bs\Spin,G}$ for $X$ induces orientations of $\B_P$ for all principal $G$-bundles $P\ra X$. As isomorphisms $\ga:P\ra P'$ lift to morphisms $[(P\t[0,1])_\ga]:P\ra P'$ in $\Bord_X(BG)$ by the mapping cone construction, these orientations on $\B_P$ depend on $P$ only up to isomorphism. Conversely, as the morphism $\G_P\ra\Hom_{\Bord_X(BG)}(P,P)$ is surjective, choices of orientation of $\B_P$ for all principal $G$-bundles $P\ra X,$ depending on $P$ only up to isomorphism, determine a unique orientation of $\sO_{7,\Z_2}^{\bs\Spin,G}$ for $X$. This proves~(a)(ii).

The proof of (a)(iii) is the same, but with $N^{E_\bu}_P,\sN_{7,\Z_2}^{\bs\Spin,G}$ in place of $O^{E_\bu}_P,\ab\sO_{7,\Z_2}^{\bs\Spin,G}$. Noting that $\B_P$ is orientable if and only if it is n-orientable, as $N^{E_\bu}_P,\sN_{7,\Z_2}^{\bs\Spin,G}$ differ from $O^{E_\bu}_P,\sO_{7,\Z_2}^{\bs\Spin,G}$ by the fixed $\Z_2$-torsor $O_{X\t G}^{E_\bu}\vert_{[\nabla^0]}$, then (i) follows from (ii),(iii), since if $\B_P$ is (n-)orientable for every principal $G$-bundle $P\ra X$ then we can choose an (n-)orientation for each isomorphism class of principal $G$-bundles $P\ra X$, and so construct an orientation on $\sO_{7,\Z_2}^{\bs\Spin,G}$ for~$X$.

Part (b) is proved as for (a).
\end{proof}

\begin{rem}
\label{fm12rem2}
We restrict to $n=7$ or 8 in Proposition \ref{fm11prop1} as that is what we need for our applications. The analogue holds when $n\equiv 7,8\mod 8$, and also when $n\equiv 1\mod 8$ if we use {\it Pfaffian orientations\/} of skew-adjoint operators, as in Freed \cite[\S 3]{Free}. When $n\equiv 2,3,4,5,6\mod 8$ the real Dirac operator $\sD_X$ is $\C$- or $\H$-linear, so moduli spaces $\B_P$ with orientation bundles $O^{E_\bu}_P$ defined using Dirac operators have canonical orientations for essentially trivial reasons.
\end{rem}

Combining Theorem \ref{fm11thm5}, Proposition \ref{fm12prop1}, and the material on flag structures in \S\ref{fm10}, yields:

\begin{thm}
\label{fm12thm1}
Let\/ $G$ be any of the Lie groups in the list\/ \eq{fm11eq19}. Then:
\begin{itemize}
\setlength{\itemsep}{0pt}
\setlength{\parsep}{0pt}
\item[{\bf(a)}] In Definitions\/ {\rm\ref{fm12def1}--\ref{fm12def2},} suppose\/ $(X,g)$ is a compact spin Riemannian $7$-manifold and\/ $E_\bu$ is a first order elliptic operator whose symbol is isomorphic to that of the Dirac operator $\sD_X$ of\/ $X$. Then $\B_P$ is orientable (equivalently, n-orientable) for every principal\/ $G$-bundle $P\ra X$. A choice of flag structure $F$ on $X$ determines n-orientations on $\B_P$ for all principal\/ $G$-bundles $P\ra X$. If we also choose an orientation for\/ $\det E_\bu\cong\R$ then a choice of flag structure on $X$ determines orientations on $\B_P$ for all\/~$P$.
\item[{\bf(b)}] In Definitions\/ {\rm\ref{fm12def1}--\ref{fm12def2},} suppose\/ $(X,g)$ is a compact spin Riemannian $8$-manifold and\/ $E_\bu$ is a first order elliptic operator whose symbol is isomorphic to that of the positive Dirac operator $\sD^+_X$ of\/ $X$. Suppose also that\/ $X$ satisfies condition Theorem\/ {\rm\ref{fm10thm2}$(*)$}. Then $\B_P$ is orientable (equivalently, n-orientable) for every principal\/ $G$-bundle $P\ra X$. A choice of flag structure $F$ on $X$ determines n-orientations on $\B_P$ for all principal\/ $G$-bundles $P\ra X$. If we also choose an orientation for\/ $\det E_\bu\cong\R$ then a choice of flag structure on $X$ determines orientations on $\B_P$ for all\/~$P$.

When $G=E_8,$ as $F_{BE_8}^{K(\Z,4)}$ in \eq{fm11eq14} is an equivalence, $\B_P$ is orientable for every\/ $E_8$-bundle $P\ra X$ \begin{bfseries}if and only if\end{bfseries} Theorem\/ {\rm\ref{fm10thm2}$(*)$} holds.
\end{itemize}
\end{thm}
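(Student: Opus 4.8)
The plan is to assemble Theorem \ref{fm12thm1} from three ingredients already established: Proposition \ref{fm12prop1}, which converts (n-)orientability and (n-)orientations of the moduli spaces $\B_P$ into (n-)orientability and (n-)orientations \emph{for} $X$ of the normalized orientation functors $\sO_{7,\Z_2}^{\bs\Spin,G}\simeq\sN_{7,\Z_2}^{\bs\Spin,G}$ (in dimension $7$), resp.\ $\sO_8^{\bs\Spin,G}\simeq\sN_8^{\bs\Spin,G}$ (in dimension $8$); Theorem \ref{fm11thm5}, which shows these factor through $\sH_7^\Z$, resp.\ $\sH_8^\Z$, for every $G$ in the list \eq{fm11eq19}; and the identification in \S\ref{fm10} of flag structures on $X$ with orientations of $\sH_7^\Z$, resp.\ $\sH_8^\Z$, for $X$ (Theorem \ref{fm10thm1}, Definition \ref{fm10def3}). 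Since (n-)orientability and (n-)orientations of $\B_P$ are unchanged under continuous deformation of $E_\bu$, I would first replace $E_\bu$ by $\sD_X$ (for $n=7$), resp.\ $\sD_X^+$ (for $n=8$), so that Proposition \ref{fm12prop1} applies directly.

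For part~(a): Proposition \ref{fm12prop1}(a) gives that $\B_P$ is (n-)orientable for every principal $G$-bundle $P\ra X$ iff $\sN_{7,\Z_2}^{\bs\Spin,G}$ is orientable for $X$, and that an n-orientation of $\sN_{7,\Z_2}^{\bs\Spin,G}$ for $X$ is exactly an isomorphism-invariant family of n-orientations on all the $\B_P$. Theorem \ref{fm11thm5}(a)(i), for $G$ in \eq{fm11eq19}, says both that $\sN_{7,\Z_2}^{\bs\Spin,G}$ is orientable for every compact spin $7$-manifold $X$ and that a flag structure on $X$ induces an orientation of $\sN_{7,\Z_2}^{\bs\Spin,G}$ for $X$ (via the factorization diagram \eq{fm11eq7} through $\sH_7^\Z$, using Theorem \ref{fm10thm1}); this yields the orientability statement and the claim that a flag structure determines n-orientations on all $\B_P$. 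To pass from n-orientations to orientations, recall from Definition \ref{fm12def2} that $N_P^{E_\bu}=O_P^{E_\bu}\ot_{\Z_2}O_{X\t G}^{E_\bu}|_{[\nabla^0]}$, and that the second factor is the orientation torsor of $\sD_X$ coupled to the trivial flat bundle $\ul{\g}$ — that is, of $\dim\g$ copies of $\sD_X$ — hence is canonically trivialized by a choice of orientation of $\det E_\bu\cong\R$; tensoring the n-orientations with this trivialization produces orientations of all the $\B_P$.

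Part~(b) runs identically with $\sN_8^{\bs\Spin,G}$, Proposition \ref{fm12prop1}(b), Theorem \ref{fm11thm5}(b)(i) and $\sH_8^\Z$ replacing their $7$-dimensional counterparts; one only has to note that an orientation of $\sN_8^{\bs\Spin,G}:\Bord_8^{\bs\Spin}(BG)\ra\Z\qs\Z_2$ for $X$ involves only the underlying $\Z_2$-torsor (the functor $F_{\Z\qs\Z_2}^{0\qs\Z_2}$ in \eq{fm9eq1} discards the $\Z$-grading), so it coincides with an orientation of the mod-$2$ version $\sN_{8,\Z_2}^{\bs\Spin,G}$ to which Theorem \ref{fm11thm5}(b)(i) literally applies. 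The new feature is that $\sH_8^\Z$ — and hence $\sN_{8,\Z_2}^{\bs\Spin,G}$ — is orientable for $X$ exactly when condition $(*)$ of Theorem \ref{fm10thm2} holds (Theorem \ref{fm10thm2}(a)); under $(*)$ flag structures exist and the argument of part~(a) carries over verbatim. For the sharper $E_8$ statement, the transfer functor $F_{BE_8}^{K(\Z,4)}$ of \eq{fm11eq14} is $16$-connected, hence an equivalence of categories in dimension $n=8$ (Theorem \ref{fm8thm1}(d)), and $\la_8$ identifies $\sN_{8,\Z_2}^{\bs\Spin,E_8}$ with $\sH_8^\Z\ci F_{BE_8}^{K(\Z,4)}$; since transfer along an equivalence preserves orientability (Definition \ref{fm8def1}(d), Remark \ref{fm9rem1}), $\sN_{8,\Z_2}^{\bs\Spin,E_8}$ is orientable for $X$ iff $\sH_8^\Z$ is, i.e.\ iff $(*)$ holds, and Proposition \ref{fm12prop1}(b)(i) turns this into the claimed equivalence for the $\B_P$.

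I expect no essential difficulty here, since the theorem is a synthesis of results proved earlier in the monograph; the one point requiring genuine care is the bookkeeping of normalization conventions — matching $\sO$ against $\sN$, and the $\Z$-graded functor $\sN_8^{\bs\Spin,G}$ against its mod-$2$ reduction $\sN_{8,\Z_2}^{\bs\Spin,G}$ — so that the conclusion of Proposition \ref{fm12prop1} feeds exactly into the hypothesis of Theorem \ref{fm11thm5}, together with making precise how a choice of orientation of $\det E_\bu$ mediates, through the torsor $O_{X\t G}^{E_\bu}|_{[\nabla^0]}$, between n-orientations and orientations.
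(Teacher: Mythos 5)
Your proposal is correct and follows exactly the route the paper takes: the monograph derives Theorem \ref{fm12thm1} by "combining Theorem \ref{fm11thm5}, Proposition \ref{fm12prop1}, and the material on flag structures in \S\ref{fm10}", which is precisely your chain of reductions, and your extra bookkeeping (deforming $E_\bu$ to the Dirac operator, identifying orientations of $\sN_8^{\bs\Spin,G}$ with those of its mod-$2$ reduction, trivializing $O_{X\t G}^{E_\bu}\vert_{[\nabla^0]}$ via an orientation of $\det E_\bu$, and the $E_8$ equivalence argument) is all sound.
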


\begin{rem}
\label{fm12rem3}
Cao--Gross--Joyce \cite[Th.~1.11]{CGJ} claimed to prove orientability of $\B_P$ in Theorem \ref{fm12thm1}(b) for $G=\U(m)$ or $\SU(m)$, without assuming Theorem \ref{fm10thm2}$(*)$. {\bf Unfortunately, there is a mistake in the proof of \cite[Th.~1.11]{CGJ}}, as the proof in \cite[\S 2.4]{CGJ} relies on the claim that the natural map $\pi_5(\SU(4))\cong\Z\ra H_5(\SU(4),\Z)\cong\Z$ is an isomorphism, whereas it is $24\cdot-:\Z\ra\Z$. Example \ref{fm12ex1} below gives a counterexample to \cite[Th.~1.11]{CGJ}. One of the goals of this monograph is to fix the problems with \cite{CGJ} under additional conditions on~$X$.
\end{rem}

\begin{ex}
\label{fm12ex1}
As in Example \ref{fm10ex1}, the compact spin 8-manifold $\SU(3)$ does not satisfy Theorem \ref{fm10thm2}$(*)$. Writing $\cS^1=\R/\Z$, define a principal $\SU(3)$-bundle $Q\ra\SU(3)\t\cS^1$ by
\begin{equation*}
\xymatrix@C=80pt{ Q=\frac{\ts \SU(3)\t\R\t\SU(3)}{\ts \Z,\;\> n:(\ga,x,\de)\,\mapsto(\ga,x+n,\de\ga^n)}\ar[r]^(0.63){(\ga,x,\de)\Z\mapsto (\ga,x+\Z)} & {\begin{subarray}{l}\ts \SU(3)\t\R/\Z \\ \ts =\SU(3)\t\cS^1.\end{subarray}} }
\end{equation*}
Then $c_2(Q)=\al\bt\Pd[*]$ for $H^3(\SU(3),\Z)=\Z\an{\al}$ and $H^1(\cS^1,\Z)=\Z\an{\Pd([*])}$. Thus, writing $\phi_Q:\SU(3)\t\cS^1\ra B\SU(3)$ for the classifying map of $Q$, so that $[\SU(3)\t\cS^1,\phi_Q]\in\Om_9^{\bs\Spin}(B\SU(3))$, then composing with $c_2:B\SU(3)\ra K(\Z,4)$ gives $[\SU(3)\t\cS^1,\al\bt\Pd[*]]\in\Om_9^{\bs\Spin}(K(\Z,4))$.

As in Example \ref{fm10ex1} we have $\int_{\SU(3)}\bar\al\cup\Sq^2(\bar\al)=\ul{1}\in\Z_2$. Hence by Theorem \ref{fm3thm2}(c), $[\SU(3)\t\cS^1,\al\bt\Pd[*]]$ is the nonzero element $\al_1\ze_2$ in $\Om_9^{\bs\Spin}(K(\Z,4))$, so $\pi_1(\sH_8^\Z)\bigl([\SU(3)\t\cS^1,\al\bt\Pd[*]]\bigr)=\ul{1}$ in $\Z_2$ by \eq{fm9eq6}. As $\sN_{8,\Z_2}^{\bs\Spin,\SU(3)}$ factors via $\sH_8^\Z$ by Theorem \ref{fm11thm5}(b), we see that $\pi_1(\sN_{8,\Z_2}^{\bs\Spin,\SU(3)})\bigl([\SU(3)\t\cS^1,\phi_Q]\bigr)=\ul{1}$.

Write $P$ for the trivial $\SU(3)$-bundle $P=\SU(3)\t\SU(3)\ra\SU(3)$. The bundle $Q\ra\SU(3)\t\cS^1$ induces a map $\cS^1\ra\B_P$, and the calculation above implies that the monodromy of the orientation bundle $O_P\ra\B_P$ around this loop is $\ul{1}$ in $\Z_2$. Therefore $\B_P$ is not orientable. This contradicts~\cite[Th.~1.11]{CGJ}.

If $\io:\SU(3)\hookra H$ is a complex type Lie group morphism, it follows from Proposition \ref{fm11prop1} that $\B_R$ is not orientable for $R$ the trivial $H$-bundle over $\SU(3)$. This holds when $H$ is $\SU(m),\U(m)$ or $\Spin(2m)$ for $m\ge 3$, $G_2,E_6,E_7,$ or~$E_8$.
\end{ex}

\begin{ex}
\label{fm12ex2}
Theorem \ref{fm12thm1} tells us nothing about orientability for the Lie groups $G=\Sp(m)$ for $m\ge 2$, which do not appear in \eq{fm11eq19}. 
\begin{itemize}
\setlength{\itemsep}{0pt}
\setlength{\parsep}{0pt}
\item[(i)] It follows from \cite[Ex.~2.23]{JoUp} that for $X$ is the compact spin 7-manifold $\cS^7$, $\B_P$ is (n-)orientable for any principal $\Sp(m)$-bundle $P\ra\cS^7$, all $m\ge 2$.
\item[(ii)] By \cite[Ex.~2.24]{JoUp}, for $X$ the compact spin 7-manifold $\Sp(2)\t_{\Sp(1)\t\Sp(1)}\Sp(1)$, $\B_{X\t\Sp(2)}$ is not orientable for the trivial $\Sp(2)$-bundle $X\t\Sp(2)\ra X$. Using Proposition \ref{fm11prop1}(a), Theorem \ref{fm11thm3} and the isomorphism $\Spin(5)\ab\cong\Sp(2)$, we deduce that $\B_{X\t G}$ is not orientable for the trivial $G$-bundle $X\t G\ra X$ for any Lie group $G$ on the list
\e
F_4, \quad \Sp(m+1), \quad  \Spin(2m+3), \quad \SO(2m+3), \quad \text{where $m\ge 1.$}
\label{fm12eq6}
\e
\item[(iii)] As in \cite[Ex.~1.14]{CGJ} for $G=\Sp(m)$, it follows from (ii) that for $X$ the compact spin 8-manifold $\Sp(2)\t_{\Sp(1)\t\Sp(1)}\Sp(1)\t\cS^1$, $\B_{X\t G}$ is not orientable for the trivial $G$-bundle $X\t G\ra X$ for any $G$ on~\eq{fm12eq6}.
\end{itemize}
\end{ex}

Here are some cases when we can make the orientation on $\B_P$ independent of the choice of flag structure.

\begin{prop}
\label{fm12prop2}
{\bf(a)} In Theorem\/ {\rm\ref{fm12thm1}(a),} suppose $G=\U(m),$ and\/ $P\ra X$ is a principal\/ $\U(m)$-bundle with\/ $c_2(P)-c_1(P)^2=0$ in $H^4(X,\Z_2)$. By requiring the flag structure $F$ to factor via $\Z_2$ and be natural at zero, the n-orientation on $\B_P$ is independent of the choice of\/ $F,$ and so is canonical.
\smallskip

\noindent{\bf(b)} In Theorem\/ {\rm\ref{fm12thm1}(b),} suppose $G=\U(m),$ and\/ $P\ra X$ is a principal\/ $\U(m)$-bundle with\/ $c_2(P)-c_1(P)^2=0$ in $H^4(X,\Z)$. By requiring the flag structure $F$ to be natural at zero, the n-orientation on $\B_P$ is independent of the choice of\/ $F,$ and so is canonical. 

If\/ $X$ satisfies Theorem\/ {\rm\ref{fm10thm2}$(\dag)$} then we can also require $F$ to factor via $\Z_2,$ and the above holds if instead\/ $c_2(P)-c_1(P)^2=0$ in $H^4(X,\Z_2)$.
\end{prop}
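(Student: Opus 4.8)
The plan is to show that the n-orientation on $\B_P$ induced by a flag structure $F$ via Theorem \ref{fm12thm1} depends on $F$ only through a single $\Z_2$-value, namely the value at the class $c_2(P)-c_1(P)^2\in H^4(X,\Z)$ of the map $H^4(X,\Z)\ra\Z_2$ underlying $F$. Granted this, requiring $F$ to be natural at zero (and, in part (a) and the last sentence of part (b), to factor via $\Z_2$) forces that value to be the canonical one whenever $c_2(P)-c_1(P)^2$ vanishes, so the n-orientation does not depend on the choice of $F$. The case $m=1$ is immediate since $\U(1)$ is abelian, so $\sN_n^{\bs\Spin,\U(1)}\cong\boo$ has a canonical trivialization by Proposition \ref{fm11prop1}(d); so assume $m\ge 2$.

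\emph{Step 1: the transfer functor for $\U(m)$.} By Theorem \ref{fm11thm5}(a)(i) (resp.\ (b)(i)) the functor $\sN_{7,\Z_2}^{\bs\Spin,\U(m)}$ (resp.\ $\sN_{8,\Z_2}^{\bs\Spin,\U(m)}$) factors up to monoidal natural isomorphism as $\sH_7^\Z\ci\sT$ (resp.\ $\sH_8^\Z\ci\sT$) for a transfer functor $\sT\colon\Bord_n^{\bs\Spin}(B\U(m))\ra\Bord_n^{\bs\Spin}(K(\Z,4))$, and the derivation of Theorem \ref{fm12thm1} shows that it is through this factorization together with the diagram \eq{fm11eq7} that $F$ induces the n-orientation on $\B_P$. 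Since $\sT$ is a topological transfer functor with target $\Bord_n^{\bs\Spin}(K(\Z,4))$, it is classified up to monoidal natural isomorphism by a characteristic class $\ga\in H^4(B\U(m),\Z)=\Z\an{c_1^2,c_2}$, and I would identify $\ga=c_2-c_1^2$ by following the construction of the factorization in the proof of Theorem \ref{fm11thm5}(a)(i): the complex-type morphism $\U(m)\ra\SU(m+1)$ of Theorem \ref{fm11thm3} sends a $\U(m)$-bundle with associated $\C^m$-bundle $E$ to the $\SU(m+1)$-bundle with associated $\C^{m+1}$-bundle $E\op(\det E)^{-1}$, whose total Chern class $c(E)\bigl(1-c_1(E)\bigr)$ has vanishing $c_1$ and degree-$4$ part $c_2(E)-c_1(E)^2$, so $\U(m)\ra\SU(m+1)$ pulls $c_2$ back to $c_2-c_1^2$; and the factorization of $\sN_{n,\Z_2}^{\bs\Spin,\SU(m+1)}$ via $\sH_n^\Z$ is classified by $c_2\in H^4(B\SU(m+1),\Z)$, which is what the Chern-class formulas \eq{fm3eq26} together with the construction of $M\U(2)\ra M\SO(4)\ra K(\Z,4)$ as the classifying map of the Thom class encode (this is the assertion made in the Introduction following \eq{fm1eq10}). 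Consequently the fixed-$X$ functor $\sT_X\colon\Bord_X(B\U(m))\ra\Bord_X(K(\Z,4))$ sends a bundle $P\ra X$ to an object isomorphic in $\Bord_X(K(\Z,4))$ to $(X,c_2(P)-c_1(P)^2)$. Carrying out this identification carefully is the main technical work: one must track the object-level functor through the chain $\U(m)\ra\SU(m+1)$, $B\SU(m+1)\simeq B\SU$, $M\U(2)\ra B\SU$, $M\U(2)\ra M\SO(4)\ra K(\Z,4)$, and check that the Chern- and Thom-class identities are compatible with the $2$-commutative diagrams of \S\ref{fm8}, so as to pin $\ga$ down to exactly $c_2-c_1^2$ (the coefficient of $c_2$ being $\pm 1$ is precisely what makes the integral hypothesis of part (b) the correct one).

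\emph{Step 2: the n-orientation depends on one bit of $F$.} Unwinding Proposition \ref{fm12prop1} and diagram \eq{fm11eq7}, the n-orientation on $\B_P$ is the $\Z_2$-torsor isomorphism obtained by evaluating the flag-structure natural isomorphism $F$ of \eq{fm10eq7} (resp.\ \eq{fm10eq10}) on the object $\sT_X(P)$ and restricting to the full subcategory of $\Bord_X(K(\Z,4))$ on that object. Now $F$ is a natural isomorphism from the constant functor $\boo$ to $\sH_n^\Z\ci\Pi_X^{\bs\Spin}$; by Remark \ref{fm6rem1} the connected components of $\Bord_X(K(\Z,4))$ are indexed by $H^4(X,\Z)$, and naturality, together with the orientability of $\sH_n^\Z$ for spin $n$-manifolds (Remark \ref{fm10rem1}(e)), which guarantees consistency of $F$ around loops of morphisms, shows that the component of $F$ at an object $(X,\al)$ is determined by the value at $\al$ of the map $H^4(X,\Z)\ra\Z_2$ underlying $F$ (this is exactly the torsor structure of Remark \ref{fm10rem1}(a)). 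Hence the n-orientation on $\B_P$ depends on $F$ only through the value of that map at $c_2(P)-c_1(P)^2$, independently of the remaining Chern data of $P$.

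\emph{Step 3: conclusion.} In part (b), if $c_2(P)-c_1(P)^2=0$ in $H^4(X,\Z)$ then $\sT_X(P)$ lies in the component of the zero object of $\Bord_X(K(\Z,4))$, and requiring $F$ to be natural at zero pins the underlying map $H^4(X,\Z)\ra\Z_2$ to its canonical value there; so the n-orientation on $\B_P$ is the same for all such $F$, hence canonical. In part (a), and in the last sentence of part (b), where instead $c_2(P)-c_1(P)^2=0$ only in $H^4(X,\Z_2)$ and $F$ is required to factor via $\Z_2$: by \eq{fm10eq8} (resp.\ \eq{fm10eq11}) $F$ is induced from a natural isomorphism $F'$ on $\Bord_X(K(\Z_2,4))$, and since $\sH_n^\Z\cong\sH_n^{\Z_2}\ci F_{K(\Z,4)}^{K(\Z_2,4)}$ by \eq{fm9eq7} the functor $\sN_{n,\Z_2}^{\bs\Spin,\U(m)}$ also factors through $\sH_n^{\Z_2}$, so Steps 1--2 repeated with $\sH_n^{\Z_2}$ in place of $\sH_n^\Z$ show the n-orientation on $\B_P$ depends on $F'$ only through its value on the component of $\overline{c_2(P)-c_1(P)^2}=0$ in $\Bord_X(K(\Z_2,4))$, whose components are indexed by $\Im\bigl(H^4(X,\Z)\ra H^4(X,\Z_2)\bigr)$; naturality at zero descends to $F'$ (Remark \ref{fm10rem1}(d)) and fixes that value, giving canonicity. (In part (a) condition $(*)$ is vacuous in dimension $7$, and in part (b) the existence of $F$, resp.\ of $F$ factoring via $\Z_2$, is exactly Theorem \ref{fm10thm2}$(*)$, resp.\ $(\dag)$.)
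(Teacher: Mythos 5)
Your proposal is correct and follows essentially the same route as the paper's proof: both identify the image of $P$ under the chain $\Bord_X(B\U(m))\ra\Bord_X(B\SU(m+1))\ra\Bord_X(K(\Z,4))$ (via $E\mapsto E\op\La^mE^*$, so $c_2\mapsto c_2(P)-c_1(P)^2$), observe that the induced n-orientation depends on $F$ only through its value on the component of $c_2(P)-c_1(P)^2$, and use naturality at zero (plus factoring via $\Z_2$ for the mod-2 statements) to fix that value. Your write-up is more detailed about pinning down the classifying class of the transfer functor and about the component structure of $\Bord_X(K(\Z,4))$, but the substance is the same.
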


\begin{proof}
Theorem \ref{fm12thm1} works by pulling back orientations along the sequence of bordism categories $\Bord_X(B\U(m))\!\ra\! \Bord_X(B\SU(m+1))\!\ra\!\Bord_X(K(\Z,4))$, followed by $\Bord_X(K(\Z,4))\ra\Bord_X(K(\Z_2,4))$ for flag structures factoring via $\Z_2$. A $\U(m)$-bundle $P$ in $\Bord_X(B\U(m))$ is mapped to an $\SU(m+1)$-bundle $Q$ in $\Bord_X(B\SU(m+1))$ with $c_2(Q)=c_2(P)-c_1(P)^2$, as the $\C^m$-bundle $E$ corresponding to $P$ is mapped to the $\C^{m+1}$-bundle $E\op\La^mE^*$, and $c_2(E\op\La^mE^*)=c_2(E)-c_1(E)^2$. This is mapped to a 4-cocycle $C$ in $\Bord_X(K(\Z,4))$ with $[C]=c_2(Q)=c_2(P)-c_1(P)^2$ in $H^4(X,\Z)$, and further mapping to $c_2(P)-c_1(P)^2$ in $H^4(X,\Z_2)$ for flag structures factoring via~$\Z_2$.

A flag structure $F$ natural at zero is determined independent of the choice of $F$ at $C\in\Bord_X(K(\Z,4))$ with $[C]=0$ in $H^4(X,\Z)$. Thus, $F$ determines an orientation on $\B_P$ independent of the choice of $F$ if $c_2(P)-c_1(P)^2=0$ in $H^4(X,\Z)$. If also $F$ factors via $\Z_2$ then the orientation on $\B_P$ is independent of $F$ if $c_2(P)-c_1(P)^2=0$ in $H^4(X,\Z_2)$. The proposition now follows from the existence of such flag structures  in Remark \ref{fm10rem1} and Theorem~\ref{fm10thm2}.
\end{proof}

\subsection{\texorpdfstring{$G_2$-instantons on $G_2$-manifolds}{G₂-instantons on G₂-manifolds}}
\label{fm122}

We discuss the exceptional holonomy group $G_2$ in 7 dimensions, and $G_2$-in\-stan\-tons. See the first author \cite[\S 10]{Joyc1} for background on this.

\begin{dfn}
\label{fm12def3}	
Let $\R^7$ have coordinates $(x_1,\dots,x_7)$. Write $\d{\bf x}_{ijk}$ for the 3-form $\d x_i\w\d x_j\w\d x_k$ on $\R^7$. Define a 3-form $\vp_0$ on $\R^7$ by
\begin{equation*}
\vp_0=\d{\bf x}_{123}+\d{\bf x}_{145}+\d{\bf x}_{167}+\d{\bf x}_{246}-\d{\bf x}_{257}-\d{\bf x}_{347}-\d{\bf x}_{356}.
\end{equation*}
The subgroup of $\GL(7,\R)$ preserving $\vp_0$ is the holonomy group $G_2$. It also preserves the orientation and the Euclidean metric $g_0=\d x_1^2+\cdots+\d x_7^2$ on~$\R^7$. 

Let $X$ be a 7-manifold. A $G_2$-{\it structure\/} $(\vp,g)$ on $X$ is a 3-form $\vp$ and Riemannian metric $g$ on $X$, such that for all $x\in X$ there exist isomorphisms $T_xX\cong\R^7$ identifying $\vp\vert_x\cong\vp_0$ and $g\vert_x\cong g_0$. We call $(\vp,g)$ {\it torsion-free\/} if $\d\vp=\d(*\vp)=0$. This implies that $\Hol(g)\subseteq G_2$.  A $G_2$-structure induces an orientation and spin structure on $X$.  A $G_2$-{\it manifold\/} $(X,\vp,g)$ is a 7-manifold $X$ with a $G_2$-structure $(\vp,g)$. Examples of compact, torsion-free $G_2$-manifolds with holonomy $G_2$ were constructed by the first author~\cite[\S 11--\S 12]{Joyc1}. 

Suppose $(X,\vp,g)$ is a compact $G_2$-manifold with $\d(*\vp)=0$. Let $G$ be a Lie group, and $P\ra X$ a principal $G$-bundle. A $G_2$-{\it instanton\/} on $P$ is a connection $\nabla_P$ on $P$ whose curvature satisfies  $F^{\nabla_P}\w*\vp=0$ in $\Ga^\iy(\Ad(P)\ot\La^6T^*X)$. Write $\M_P^{G_2}$ for the moduli space of irreducible $G_2$-instantons on $P$, as a subspace of $\B_P^\irr$ in Definition \ref{fm12def1}. As $\d(*\vp)=0$, the deformation theory of $\M_P^{G_2}$ is controlled by an elliptic complex, so one can show that $\M_P^{G_2}$ is a derived manifold of virtual dimension 0 in the sense of \cite{Joyc2,Joyc3,Joyc4,Joyc6}. If $\vp$ is generic in its cohomology class, then $\M_P^{G_2}$ is an ordinary 0-manifold. Examples and constructions of $G_2$-instantons are given in \cite{MNS,SaEa,SaWa,Walp2,Walp3,Walp4}. 

As in \cite[\S 4.1]{JTU}, the orientation bundle of $\M_P^{G_2}$ is the restriction to $\M_P^{G_2}$ of $O^{E_\bu}_P\ra\B_P$, for $E_\bu$ the Dirac operator of the spin structure on $X$ induced by $(\vp,g)$, so we may orient $\M_P^{G_2}$ by restricting orientations on $O^{E_\bu}_P\ra\B_P$. We also define {\it normalized orientations\/} on $\M_P^{G_2}$ using~$N^{E_\bu}_P\ra\B_P$.
\end{dfn}

The next theorem follows from Theorem \ref{fm12thm1}(a) and Definition \ref{fm12def3}. It was already known for $G=\SU(m)$ and $\U(m)$ by \cite[Cor.~1.4]{JoUp}. Walpuski \cite[\S 6.1]{Walp1} earlier proved orientability for $G=\SU(m)$.

\begin{thm}
\label{fm12thm2}
Let\/ $(X,\vp,g)$ be a compact\/ $G_2$-manifold with\/ $\d(*\vp)=0,$ and let\/ $G$ be any of the Lie groups in the list\/ \eq{fm11eq19}. Then a choice of flag structure $F$ on $X$ in the sense of\/ {\rm\S\ref{fm101}} determines normalized orientations on $G_2$-instanton moduli spaces $\M_P^{G_2}$ for all principal\/ $G$-bundles $P\ra X$.

If we also choose an orientation for\/ $\det\sD_X\cong\R$ then normalized orientations are equivalent to orientations on\/ $\M_P^{G_2}$.
\end{thm}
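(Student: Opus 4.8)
The plan is to deduce Theorem \ref{fm12thm2} directly from Theorem \ref{fm12thm1}(a) together with the identification of orientation bundles recalled in Definition \ref{fm12def3}. First I would note that a compact $G_2$-manifold $(X,\vp,g)$ is in particular a compact Riemannian $7$-manifold, and its $G_2$-structure canonically induces an orientation and a spin structure on $X$ (see \cite[\S 10]{Joyc1}); thus $(X,g)$ is a compact spin Riemannian $7$-manifold of the kind to which Theorem \ref{fm12thm1}(a) applies, with $G$ any of the Lie groups in \eq{fm11eq19}. I would take $E_\bu$ to be the real Dirac operator $\sD_X$ of this spin structure, which trivially has symbol isomorphic to that of $\sD_X$, so that the hypotheses of Theorem \ref{fm12thm1}(a) are met.

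Next I would apply Theorem \ref{fm12thm1}(a): for every principal $G$-bundle $P\ra X$ the moduli space $\B_P$ is orientable, equivalently n-orientable, and a choice of flag structure $F$ on $X$ in the sense of \S\ref{fm101} determines n-orientations of $N^{E_\bu}_P\ra\B_P$ for all $P$, compatibly with isomorphisms $P\cong P'$; if in addition an orientation for $\det\sD_X\cong\R$ is chosen, then $F$ also determines orientations of $O^{E_\bu}_P\ra\B_P$. To pass from $\B_P$ to $\M_P^{G_2}\subseteq\B_P^\irr$, I would invoke Definition \ref{fm12def3} (following \cite[\S 4.1]{JTU}): since $\d(*\vp)=0$ the deformation theory of $\M_P^{G_2}$ is governed by an elliptic complex whose associated determinant line, and hence orientation bundle (respectively n-orientation bundle), is canonically the restriction of $O^{E_\bu}_P$ (respectively $N^{E_\bu}_P$) with $E_\bu=\sD_X$. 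Restricting the (n-)orientations produced on $\B_P$ therefore yields (n-)orientations of $\M_P^{G_2}$ for all $P$, still depending on $P$ only up to isomorphism, which is the first assertion.

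For the final sentence, I would recall from Definition \ref{fm12def2} that $N^{E_\bu}_P=O^{E_\bu}_P\ot_{\Z_2}O^{E_\bu}_{X\t G}\vert_{[\nabla^0]}$, where the fixed $\Z_2$-torsor $O^{E_\bu}_{X\t G}\vert_{[\nabla^0]}$ is the orientation torsor of $\sD_X$ twisted by the trivial connection on $\Ad(X\t G)$, i.e.\ of $\dim\g$ copies of $\sD_X$, whose determinant line is $(\det\sD_X)^{\ot\dim\g}$. A choice of orientation of $\det\sD_X\cong\R$ therefore trivializes $O^{E_\bu}_{X\t G}\vert_{[\nabla^0]}$ and identifies the n-orientation bundle of $\M_P^{G_2}$ with its orientation bundle, so with this extra datum n-orientations and orientations of $\M_P^{G_2}$ are equivalent, matching the last part of Theorem \ref{fm12thm1}(a). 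I expect no real obstacle here beyond Theorem \ref{fm12thm1}(a) itself (which rests on the factorizations of \S\ref{fm11} and Theorem \ref{fm9thm1}); the only point requiring care is the use of Definition \ref{fm12def3}, namely that the deformation-theoretic orientation bundle of the derived moduli space $\M_P^{G_2}$ coincides with the restriction of the Dirac determinant bundle $O^{E_\bu}_P$ on $\B_P$ --- this is precisely where the hypothesis $\d(*\vp)=0$ is used, and it is imported from \cite{JTU} rather than reproved here.
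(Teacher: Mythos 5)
Your proof is correct and follows exactly the paper's route: the paper derives Theorem \ref{fm12thm2} in one line from Theorem \ref{fm12thm1}(a) combined with the identification in Definition \ref{fm12def3} of the orientation bundle of $\M_P^{G_2}$ with $O^{E_\bu}_P\vert_{\M_P^{G_2}}$ for $E_\bu=\sD_X$. Your additional explanation of why an orientation of $\det\sD_X$ converts n-orientations into orientations is a correct unpacking of Definition \ref{fm12def2}, left implicit in the paper.
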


\begin{rem}
\label{fm12rem4}
{\bf(a)} Donaldson and Segal \cite{DoSe} propose defining enumerative invariants of $(X,\vp,g)$ by counting $\M_P^{G_2}$, {\it with signs}, and adding correction terms from associative 3-folds in $X$, as in \S\ref{fm142}. The signs come from an orientation on $\M_P^{G_2}$. Thus, Theorem \ref{fm12thm2} contributes to the Donaldson--Segal programme.

\smallskip

\noindent{\bf(b)} One can imagine trying to define a {\it Floer theory\/} using $G_2$-instantons on principal $G$-bundles, similar to instanton Floer homology for compact oriented 3-manifolds \cite{Dona}, in which the Floer complex is generated by $G_2$-instantons on a compact $G_2$-manifold $(X,\vp,g)$, and the differentials obtained by counting $\Spin(7)$-instantons on $X\t\R$. There are of course serious analytic difficulties.

Our theory is relevant to {\it gradings\/} of such a Floer theory, if it exists: an orientation for $X$ of the orientation functor $\sN_7^{\bs\Spin,G}$ or $\sN_{7,\Z_k}^{\bs\Spin,G}$ from \S\ref{fm93} would induce a grading of the Floer theory over $\Z$ or~$\Z_k$.

In fact, we have proved a {\it negative result\/}: out of the groups $G=\SU(m),$ $\Sp(m)$ for $m\ge 2$ and $E_8$, Corollary \ref{fm11cor1} shows that $\sN_{7,\Z_2}^{\bs\Spin,\SU(m)}$ and $\sN_{7,\Z_2}^{\bs\Spin,E_8}$ are orientable for all compact spin 7-manifolds $X$, but no other $\sN_7^{\bs\Spin,G}$ or $\sN_{7,\Z_k}^{\bs\Spin,G}$ have this property. Thus, even for $G=\SU(2)$, there are obstructions to grading the Floer theory over $\Z$ or $\Z_k$ for any $k>2$ (at least, using the methods of this monograph), which are nontrivial for some compact spin 7-manifold~$X$.
\end{rem}

\subsection{\texorpdfstring{$\Spin(7)$-instantons on $\Spin(7)$-manifolds}{Spin(7)-instantons on Spin(7)-manifolds}}
\label{fm123}

Next we discuss the exceptional holonomy group $\Spin(7)$ in 8 dimensions, and $\Spin(7)$-instantons. See the first author \cite[\S 10]{Joyc1} for background on this.

\begin{dfn}
\label{fm12def4}	
Let $\R^8$ have coordinates $(x_1,\dots,x_8)$. Write $\d{\bf x}_{ijkl}$ for the 4-form $\d x_i\w\d x_j\w\d x_k\w\d x_l$ on $\R^8$. Define a 4-form $\Om_0$ on $\R^8$ by
\begin{align*}
\Om_0=\phantom{-}&\d{\bf x}_{1234}+\d{\bf x}_{1256} +\d{\bf
x}_{1278}+\d{\bf
x}_{1357}-\d{\bf x}_{1368}-\d{\bf x}_{1458}-\d{\bf x}_{1467}\\
-\,&\d{\bf x}_{2358}-\d{\bf x}_{2367}-\d{\bf x}_{2457}+\d{\bf
x}_{2468}+\d{\bf x}_{3456}+\d{\bf x}_{3478}+\d{\bf x}_{5678}.
\end{align*}
The subgroup of $\GL(8,\R)$ preserving $\Om_0$ is the holonomy
group $\Spin(7)$. It is a compact, connected, simply-connected,
semisimple, 21-dimensional Lie group, which is isomorphic to the double cover of $\SO(7)$. This group also preserves the orientation on $\R^8$ and the Euclidean metric $g_0=\d x_1^2+\cdots+\d x_8^2$ on~$\R^8$. 

Let $X$ be an 8-manifold. A $\Spin(7)$-{\it structure\/} $(\Om,g)$ on $X$ is a 4-form $\Om$ and Riemannian metric $g$ on $X$, such that for all $x\in X$ there exist isomorphisms $T_xX\cong\R^8$ identifying $\Om\vert_x\cong\Om_0$ and $g\vert_x\cong g_0$. We call $(\Om,g)$ {\it torsion-free\/} if $\d\Om=0$. This implies that $\Hol(g)\subseteq\Spin(7)$. 

A $\Spin(7)$-{\it manifold\/} $(X,\Om,g)$ is an 8-manifold $X$ with a $\Spin(7)$-structure $(\Om,g)$. Examples of compact torsion-free $\Spin(7)$-manifolds with holonomy $\Spin(7)$ were constructed by the first author \cite[\S 13--\S 15]{Joyc1}. Calabi--Yau 4-folds, and hyperk\"ahler 8-manifolds, are also torsion-free $\Spin(7)$-manifolds.

Let $(X,\Om,g)$ be a compact $\Spin(7)$-manifold. Then $(\Om,g)$ induces a splitting $\La^2T^*X=\La^2_7T^*X \op\La^2_{21}T^*X$ into vector subbundles of ranks $7,21$, the eigenspaces of $\al\mapsto *(\al\w\Om)$. Suppose $G$ is a Lie group and $P\ra X$ a principal $G$-bundle. A $\Spin(7)$-{\it instanton\/} on $P$ is a connection $\nabla_P$ on $P$ whose curvature satisfies $\pi^2_7(F^{\nabla_P})=0$ in $\Ga^\iy(\Ad(P)\ot\La^2_7T^*X)$. 

Write $\M_P^{\Spin(7)}$ for the moduli space of irreducible $\Spin(7)$-instantons on $P$, as a subspace of $\B_P^\irr$ in Definition \ref{fm12def1}. The deformation theory of $\M_P^{\Spin(7)}$ is controlled by an elliptic complex, so one can show that $\M_P^{\Spin(7)}$ is a derived manifold in the sense of \cite{Joyc2,Joyc3,Joyc4,Joyc6}. If $\Om$ is generic amongst $\Spin(7)$ 4-forms, then $\M_P^{\Spin(7)}$ is an ordinary manifold. Examples of $\Spin(7)$-instantons were given by Lewis \cite{Lewi}, Tanaka \cite{Tana}, and Walpuski~\cite{Walp5}. 

As in \cite[\S 4.1]{JTU}, the orientation bundle of $\M_P^{\Spin(7)}$ is the restriction to $\M_P^{\Spin(7)}$ of $O^{E_\bu}_P\ra\B_P$, for $E_\bu$ the positive Dirac operator of the spin structure on $X$ induced by $(\Om,g)$, so we may orient $\M_P^{\Spin(7)}$ by restricting orientations on $O^{E_\bu}_P$. We also define {\it normalized orientations\/} on $\M_P^{\Spin(7)}$ using~$N^{E_\bu}_P\ra\B_P$.
\end{dfn}

The next theorem follows from Theorem \ref{fm12thm1}(b) and Definition~\ref{fm12def4}.

\begin{thm}
\label{fm12thm3}
Let\/ $(X,\Om,g)$ be a compact\/ $\Spin(7)$-manifold, and suppose that\/ $X$ satisfies condition Theorem\/ {\rm\ref{fm10thm2}$(*)$}. Let\/ $G$ be any of the Lie groups in the list\/ \eq{fm11eq19}. Then a choice of flag structure $F$ on $X$ as in\/ {\rm\S\ref{fm102}} determines normalized orientations on $\Spin(7)$-instanton moduli spaces $\M_P^{\Spin(7)}$ for all principal\/ $G$-bundles $P\ra X$. If we also choose an orientation for\/ $\det\sD^+_X\cong\R$ then normalized orientations are equivalent to orientations on\/~$\M_P^{\Spin(7)}$.
\end{thm}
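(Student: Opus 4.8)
The plan is to deduce Theorem \ref{fm12thm3} directly from Theorem \ref{fm12thm1}(b), using the description of the orientation bundle of $\M_P^{\Spin(7)}$ recalled in Definition \ref{fm12def4}. First I would observe that a $\Spin(7)$-structure $(\Om,g)$ on the compact $8$-manifold $X$ induces an orientation, a Riemannian metric and a spin structure on $X$, so that $(X,g)$ is a compact spin Riemannian $8$-manifold, and the moduli space $\M_P^{\Spin(7)}$ of irreducible $\Spin(7)$-instantons is a subspace of $\B_P^\irr\subset\B_P$ as in Definition \ref{fm12def1}. By \cite[\S 4.1]{JTU}, as recalled in Definition \ref{fm12def4}, the orientation bundle of $\M_P^{\Spin(7)}$ --- defined from its deformation elliptic complex, regarding $\M_P^{\Spin(7)}$ as a derived manifold --- is the restriction to $\M_P^{\Spin(7)}$ of $O^{E_\bu}_P\ra\B_P$, where $E_\bu$ is a first order elliptic operator with the symbol of the positive Dirac operator $\sD^+_X$ of the induced spin structure; similarly the normalized orientation bundle of $\M_P^{\Spin(7)}$ is the restriction of $N^{E_\bu}_P\ra\B_P$.

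Next I would apply Theorem \ref{fm12thm1}(b) with this choice of $E_\bu$: since $X$ is a compact spin Riemannian $8$-manifold, $E_\bu$ is a first order elliptic operator whose symbol is isomorphic to that of $\sD^+_X$, the group $G$ lies in the list \eq{fm11eq19}, and $X$ satisfies condition Theorem \ref{fm10thm2}$(*)$, the theorem gives that $\B_P$ is orientable (equivalently n-orientable) for every principal $G$-bundle $P\ra X$, and that a choice of flag structure $F$ on $X$ in the sense of \S\ref{fm102} determines n-orientations on $\B_P$ for all such $P$; moreover, if one also chooses an orientation for $\det E_\bu\cong\R$, the flag structure determines honest orientations on $\B_P$. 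Restricting these (n-)orientations along the inclusion $\M_P^{\Spin(7)}\hookra\B_P$ and using the identification of orientation bundles from the previous step yields the (n-)orientations on $\M_P^{\Spin(7)}$ claimed in the theorem, naturally in $P$ up to isomorphism. For the final sentence, I note that $\det E_\bu=\det\sD^+_X$, so a choice of orientation for $\det\sD^+_X\cong\R$ together with a flag structure converts the normalized orientation on $\M_P^{\Spin(7)}$ into an orientation, and conversely; this is precisely the relation between $O^{E_\bu}_P$ and $N^{E_\bu}_P=O^{E_\bu}_P\ot_{\Z_2}O^{E_\bu}_{X\t G}\vert_{[\nabla^0]}$ of Definition \ref{fm12def2}.

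Since essentially all of the substantive work has already been carried out --- the computation of the relevant spin bordism groups and of the maps $\hat\xi^{\bs\Spin}_8$ in \S\ref{fm3}, the construction and factorization of the analytic orientation functors $\sN_{8,\Z_2}^{\bs\Spin,G}$ through $\sH_8^\Z$ in Theorem \ref{fm11thm5}(b), and the translation into statements about $\B_P$ in Theorem \ref{fm12thm1}(b) --- the only point here requiring any care is the compatibility of the two a priori different notions of orientation bundle on $\M_P^{\Spin(7)}$: the intrinsic one coming from the $\Spin(7)$-instanton deformation complex, and the extrinsic restriction of $O^{E_\bu}_P$ from $\B_P$. I would handle this by citing \cite[\S 4.1]{JTU}, where the relevant excision and stabilization argument identifying these bundles is given, after the standard folding of the deformation complex into a single elliptic operator as in Remark \ref{fm12rem1}(ii), which has the same symbol data as $\Ad(P)\ot\sD^+_X$. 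I expect no genuinely new obstacle to arise beyond this bookkeeping.
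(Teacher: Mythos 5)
Your proposal is correct and follows exactly the route the paper takes: Theorem \ref{fm12thm3} is deduced from Theorem \ref{fm12thm1}(b) applied with $E_\bu$ the positive Dirac operator of the spin structure induced by $(\Om,g)$, together with the identification in Definition \ref{fm12def4} of the orientation bundle of $\M_P^{\Spin(7)}$ with the restriction of $O^{E_\bu}_P\ra\B_P$ from \cite[\S 4.1]{JTU}. Your extra care about the compatibility of the intrinsic and extrinsic orientation bundles is exactly the right bookkeeping point, and nothing further is needed.
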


\begin{rem}
\label{fm12rem5}
{\bf(a)} Cao--Gross--Joyce \cite[Cor.~1.12]{CGJ} claim the result of Theorem \ref{fm12thm3} for $G=\U(m)$ or $\SU(m)$, without assuming Theorem \ref{fm10thm2}$(*)$. {\bf Unfortunately, as in Remark \ref{fm12rem3}, there is a mistake in the proof.}
\smallskip

\noindent{\bf(b)} As in Donaldson--Thomas \cite{DoTh} and Donaldson--Segal \cite{DoSe}, one might hope to define enumerative invariants of compact $\Spin(7)$-manifolds $(X,\Om,g)$, similar to Donaldson invariants of compact oriented 4-manifolds, by `counting' moduli spaces $\M_P^{\Spin(7)}$. The orientations on $\M_P^{\Spin(7)}$ in Theorem \ref{fm12thm3} would be necessary for this.
\smallskip

\noindent{\bf(c)} The proof of Theorem \ref{fm12thm3} implicitly involved a choice of natural isomorphism $\la_8$ in \eq{fm11eq14} in Theorem \ref{fm11thm4}(a), where there are 4 possibilities for $\la$ respecting trivializations on $\Om_8^{\bs\Spin}(*)$. A choice of $\la$ amounts to a bordism-invariant {\it orientation convention}.
\smallskip

\noindent{\bf(d)} There are usually many choices for the flag structure $F$ in Theorem \ref{fm12thm3}. To reduce the choice, if $X$ satisfies Theorem \ref{fm10thm2}$(\dag)$ we can require $F$ to factor via $\Z_2$, and then there are only finitely many possibilities for~$F$.
\end{rem}

\section[Applications to enumerative invariants of Calabi--Yau 4-folds]{Applications to enumerative invariants \\ of Calabi--Yau 4-folds}
\label{fm13}

\subsection{Calabi--Yau 4-folds and DT4 invariants}
\label{fm131}

\begin{dfn}
\label{fm13def1}	
A {\it Calabi--Yau\/ $m$-fold\/} $X$ is a connected smooth projective $\C$-scheme of complex dimension $m$ with trivial canonical bundle $K_X\cong\cO_X$. By the Calabi Conjecture, $X$ admits Ricci flat K\"ahler metrics $g$, which have holonomy $\Hol(g)\subseteq\SU(m)$. Often one includes the condition $\Hol(g)=\SU(m)$ in the definition of Calabi--Yau $m$-fold. See the first author \cite[\S 6]{Joyc1} for background on Calabi--Yau geometry.
\end{dfn}

We can consider coherent sheaves on $X$, including (algebraic) vector bundles (i.e.\ locally free coherent sheaves). Write $\coh(X)$ for the abelian category of coherent sheaves on $X$. See Hartshorne \cite[\S II.5]{Hart} and Huybrechts and Lehn \cite{HuLe}. We can also consider the bounded derived category $D^b\coh(X)$ of complexes of coherent sheaves. For triangulated categories and derived categories see Gelfand and Manin \cite{GeMa}, and for properties of $D^b\coh(X)$ see Huybrechts \cite{Huyb}. We will be interested in moduli spaces $\M$ of objects in $\coh(X)$ or $D^b\coh(X)$. 

We summarize some ideas from Derived Algebraic Geometry \cite{PTVV,Toen1,Toen2,ToVa,ToVe1,ToVe2} and Donaldson--Thomas type invariants of Calabi--Yau 4-folds~\cite{BoJo,CaLe,OhTh}:
\begin{itemize}
\setlength{\itemsep}{0pt}
\setlength{\parsep}{0pt}
\item[(a)] Let $X$ be a smooth projective $\C$-scheme. Then To\"en and Vaqui\'e \cite{ToVa} construct a {\it derived moduli stack\/} $\bs\M$ of objects in $\coh(X)$ or in $D^b\coh(X)$, as a locally finitely presented derived $\C$-stack in the sense of To\"en and Vezzosi \cite{Toen1,Toen2,ToVe1,ToVe2}. It has a {\it virtual dimension\/} $\vdim_\C\bs\M$, a locally constant map $\bs\M\ra\Z$. The classical truncation $\M=t_0(\bs\M)$ is the usual moduli stack, as an Artin $\C$-stack or higher $\C$-stack.
\item[(b)] Pantev, To\"en, Vaqui\'e and Vezzosi \cite{PTVV} introduced a theory of shifted symplectic Derived Algebraic Geometry, defining $k$-{\it shifted symplectic structures\/} $\om$ on a derived stack $\bs\cS$ for $k\in\Z$. If $X$ is a Calabi--Yau $m$-fold and $\bs\M$ is a derived moduli stack of objects in $\coh(X)$ or $D^b\coh(X)$ then $\bs\M$ has a $(2-m)$-shifted symplectic structure,~\cite[Cor.~2.13]{PTVV}.

Also $\bL_i:i^*(\bL_{\bcM})\ra\bL_\M$ is an $m$-{\it Calabi--Yau obstruction theory\/} on $\M$, a classical truncation of the shifted symplectic structure on~$\bcM$.
\item[(c)] If $(\bs\cS,\om)$ is a $k$-shifted symplectic derived stack for $k$ even, Borisov--Joyce \cite[\S 2.4]{BoJo} define a notion of {\it orientation\/} on $(\bs\cS,\om)$, or on the classical truncation $\cS=t_0(\bs\cS)$. See Definition \ref{fm13def2} below.
\item[(d)] Let $(\bs\cS,\om)$ be a proper, oriented $-2$-shifted symplectic derived scheme with $\cS=t_0(\bs\cS)$. Then Borisov--Joyce \cite[Cor.~1.2]{BoJo} construct a {\it virtual class\/} $[\bs\cS]_\virt$ in $H_*(\cS,\Z)$ using Derived Differential Geometry \cite{Joyc2,Joyc3,Joyc4,Joyc6}, of real dimension $\vdim_\C\bs\cS=\ha\vdim_\R\bs\cS$. Note that this is {\it half the expected dimension}. Oh--Thomas \cite{OhTh} provide an alternative definition of $[\bs\cS]_\virt$ in the style of Behrend--Fantechi~\cite{BeFa}.

Oh--Thomas \cite{OhTh} define their virtual class $[\M]_\virt$ only when $\M$ is a projective moduli scheme of Gieseker stable sheaves on a Calabi--Yau 4-fold $X$. However, Kiem--Park \cite[\S 8]{KiPa} provide an alternative definition which works for $\M$ a proper Deligne--Mumford stack with a 4-Calabi--Yau obstruction theory satisfying an `isotropic cone' condition.
\item[(e)] Let $X$ be a Calabi--Yau 4-fold, and write $K(\coh(X))$ for the numerical Grothendieck group of $\coh(X)$, which is the image of the Chern character map $\ch:K_0(\coh(X))\ra H^{\rm even}(X,\Q)$. Write $C(\coh(X))\subset K(\coh(X))$ for the set of classes $\lb E\rb\in K(\coh(X))$ of nonzero objects $E\in\coh(X)$. Let $\tau$ be a Gieseker stability condition on $\coh(X)$. 

Then for each $\al\in C(\coh(X))$ we have have moduli schemes $\M_\al^\rst(\tau)\subseteq\M_\al^\ss(\tau)$ of $\tau$-(semi)stable coherent sheaves in class $\al$. Here $\M_\al^\rst(\tau)$ is a fine moduli scheme which is the classical truncation $t_0(\bs\M_\al^\rst(\tau))$ of a $-2$-shifted symplectic derived moduli scheme $\bs\M_\al^\rst(\tau)$, and $\M_\al^\ss(\tau)$ is a coarse moduli scheme, which is proper.

Suppose that $\M_\al^\rst(\tau)=\M_\al^\ss(\tau)$, that is, there are no strictly semistable sheaves in class $\al$. Then $\M_\al^\ss(\tau)=t_0(\bs\M_\al^\ss(\tau))$ is the classical truncation of a proper $-2$-shifted symplectic derived moduli scheme $\bs\M_\al^\ss(\tau)$. 

Suppose $\bs\M_\al^\ss(\tau)$ is orientable, and choose an orientation. Then by (d) we get a virtual class $[\bs\M_\al^\ss(\tau)]_\virt$ in $H_*(\M_\al^\ss(\tau),\Z)$. Borisov--Joyce \cite{BoJo} propose to define Donaldson--Thomas type `DT4 invariants' of $X$ using these virtual classes. Cao--Leung \cite{CaLe} make a similar proposal using gauge theory rather than Derived Algebraic Geometry.
\item[(f)] The study of DT4 invariants is now a thriving field. See \cite{Bojk1,Bojk2,BoJo,Cao1,Cao2,CaKo1,CaKo2,CKM,CaLe,CMT1,CMT2,COT1,COT2,CaQu,CaTo1,CaTo2,CaTo3,GJT,Joyc7,KiPa,OhTh,Park} for some papers in this area.
\end{itemize}

We define orientations in (c), following Borisov--Joyce~\cite[\S 2.4]{BoJo}.

\begin{dfn}
\label{fm13def2}
Let $\M$ be an Artin or higher $\C$-stack with a 4-Calabi--Yau obstruction theory $\la:\cL^\bu\ra\bL_\M$, $\om:(\cL^\bu)^\vee\,{\buildrel\sim\over\longra}\,\cL^\bu[-2]$. Then we have a determinant line bundle $\det\cL^\bu\ra \M$, and $\om$ induces an isomorphism $\det\om:(\det\cL^\bu)^*\ra\det\cL^\bu$. A (4-{\it Calabi--Yau\/}) {\it orientation\/} for $(\M,\la,\om)$ is a choice of isomorphism $\mu:\cO_\M\ra\det\cL^\bu$ with~$\mu\ci\mu^*=\det\om$. 

Here $\mu$ is basically a square root of $\det\om$. Locally on $\M$ in the \'etale topology there are two choices for $\mu$, and there is a principal $\Z_2$-bundle $O_\M\ra \M$ parametrizing choices of $\mu$. We say that $(\M,\la,\om)$ is {\it orientable\/} if $O_\M$ is trivializable, and an {\it orientation\/} is a trivialization~$O_\M\cong \M\t\Z_2$.
\end{dfn}

Answering the following question is very important for developing a theory of DT4 invariants, as without orientations, DT4 invariants cannot be defined:

\begin{quest}
\label{fm13quest1}
Let\/ $X$ be a Calabi--Yau $4$-fold, and\/ $\M$ the moduli stack of objects in $\coh(X)$ or $D^b\coh(X),$ with its natural\/ $4$-Calabi--Yau obstruction theory. Is $\M$ orientable in the sense of Definition\/ {\rm\ref{fm13def2}?} If so, what extra data on $X$ is needed to construct an orientation on {\rm$\M$?}
\end{quest}

\begin{rem}
\label{fm13rem1}
Cao--Gross--Joyce \cite[Cor.~1.17]{CGJ} claimed to prove moduli stacks $\M$ of objects in $\coh(X)$ or $D^b\coh(X)$ are orientable for any (compact) Calabi--Yau 4-fold $X$. This was extended to compactly-supported coherent sheaves on noncompact Calabi--Yau 4-folds by Bojko \cite{Bojk1}. {\bf Unfortunately, as in Remark \ref{fm12rem3} and \ref{fm12rem5}(a), there is a mistake in the proof of \cite[Th.~1.11]{CGJ}, which invalidates \cite[Cor.~1.17]{CGJ}, and also the main result in \cite{Bojk1}.} The first author would like to apologize for this. Theorem \ref{fm13thm2} below corrects the mistake, and answers Question \ref{fm13quest1}, under the extra condition 
Theorem~\ref{fm10thm2}$(*)$.
\end{rem}

The next theorem summarizes parts of Cao--Gross--Joyce \cite[Th.~1.15]{CGJ}, which is not affected by the mistake in \cite[Th.~1.11]{CGJ}, plus background material from Joyce--Tanaka--Upmeier \cite[\S 2]{JTU}.

\begin{thm}
\label{fm13thm1}
Let\/ $X$ be a projective Calabi--Yau\/ $4$-fold.
\smallskip

\noindent{\bf(a)} Write\/ $\M$ for the moduli stack of objects\/ $G^\bu$ in\/ $D^b\coh(X),$ a higher stack. It has a decomposition\/ $\M=\coprod_{\al\in K^0_\top(X)}\M_\al,$ where\/ $\M_\al$ is the substack of complexes\/ $G^\bu$ with class\/ $\lb G^\bu\rb=\al$ in the topological K-theory of the underlying\/ $8$-manifold of\/ $X$. There is a natural\/ $4$-Calabi--Yau obstruction theory\/ $\phi:\cF^\bu\ra\bL_\M$, $\th:\cF^\bu\,{\buildrel\sim\over\longra}\,(\cF^\bu)^\vee[2]$ on\/ $\M,$ and hence a principal\/ $\Z_2$-bundle\/ $O^{\cF^\bu}\ra\M$ of orientations on\/ $\M$ as in Definition\/ {\rm\ref{fm13def2},} restricting to\/ $O^{\cF^\bu}_\al\ra\M_\al$.

Write\/ $\M^\top$ for the \begin{bfseries}topological realization\end{bfseries} of\/ $\M,$ a topological space natural up to homotopy equivalence, as in Simpson {\rm\cite{Simp},} Blanc\/ {\rm\cite[\S 3.1]{Blan},} and\/ {\rm\cite[\S 2.5]{CGJ}}. Then\/ $O_{\cF^\bu}$ lifts to a principal\/ $\Z_2$-bundle\/ $O^{\cF^\bu,\top}\ra\M^\top,$ restricting to\/ $O^{\cF^\bu,\top}_\al\ra\M^\top_\al,$ such that trivializations of\/ $O^{\cF^\bu}_\al$ and\/ $O^{\cF^\bu,\top}_\al$ are naturally in\/ {\rm 1-1} correspondence.
\smallskip

\noindent{\bf(b)} Write\/ $\cC=\Map_{C^0}(X, B\U\t\Z),$ where\/ $B\U=\varinjlim_{n\ra\iy}B\U(n)$ is the unitary classifying space. It has a natural decomposition\/ $\cC=\coprod_{\al\in K^0_\top(X)}\cC_\al,$ where\/ $\cC_\al$ is connected. Taking the elliptic operator\/ $E_\bu\ra X$ to be the positive Dirac operator\/ $\slashed{D}_+$ of the spin structure on\/ $X$ induced by the Calabi--Yau\/ $4$-fold structure, which for a Calabi--Yau\/ $4$-fold\/ $X$ may be written
\begin{equation*}
\slashed{D}_+=\db+\db^*:\Ga^\iy(\La^{0,{\rm even}}T^*X)\longra \Ga^\iy(\La^{0,{\rm odd}}T^*X),
\end{equation*}
in {\rm\cite[\S 2.4]{JTU}} we construct a principal\/ $\Z_2$-bundle\/ $O_\cC\ra\cC,$ restricting to\/ $O_{\cC_\al}\ra\cC_\al$. It is thought of as a bundle of orientations on\/ $\cC,$ and is obtained from the bundles\/ $O_P^{E_\bu}\ra\B_P$ in {\rm\S\ref{fm121}} for\/ $\U(m)$-bundles\/ $P\ra X$ in a limiting process as~$m\ra\iy$.

From the definition of\/ $O_{\cC_\al},$ if\/ $k\in\N$ then and\/ $\Xi_{\al,k}:\cC_\al\ra\cC_{\al+k\lb\cO_X\rb}$ is the homotopy equivalence induced by direct sum with the trivial vector bundle\/ $\bigop^{k}\cO_X\ra X,$ then there is a canonical isomorphism\/ $O_{\cC_\al}\cong \Xi_{\al,k}^*(O_{\cC_{\al+k\lb\cO_X\rb}})$. 

\textup(Actually, for a general spin\/ $8$-manifold,\/ $O_{\cC_\al}$ and\/ $\Xi_{\al,k}^*(O_{\cC_{\al+k\lb\cO_X\rb}})$ differ by the\/ $\Z_2$-torsor\/ $\Or(\det\slashed{D}_+)^{\ot^k},$ so in general we should restrict to\/ $k$ even. But as\/ $X$ is a Calabi--Yau\/ $4$-fold there is a canonical isomorphism\/ $\Or(\det\slashed{D}_+)\cong\Z_2$.\textup)
\smallskip

\noindent{\bf(c)} We relate {\bf(a)\rm,\bf(b)} as follows: using the classifying morphism of the universal complex\/ $\cU^\bu\ra X\t\M,$ as in {\rm\cite[Th.~1.15]{CGJ}} we can define a continuous map\/ $\Phi:\M^\top\ra\cC,$ natural up to homotopy, restricting to\/ $\Phi_\al:\M_\al^\top\ra\cC_\al$ for\/ $\al\in K^0_\top(X)$. Then there are natural isomorphisms\/ $O^{\cF^\bu,\top}_\al\cong \Phi^*(O_{\cC_\al})$ of principal\/ $\Z_2$-bundles on\/ $\M^\top_\al$. Hence, a trivialization of\/ $O_{\cC_\al}$ induces trivializations of\/ $O^{\cF^\bu,\top}_\al$ and\/~$O^{\cF^\bu}_\al$.
\smallskip

\noindent{\bf(d)} Let\/ $P\ra X$ be a principal\/ $\U(m)$-bundle, and\/ $O_P^{E_\bu}\ra\B_P$ be as in {\rm\S\ref{fm121}} for\/ $E_\bu\ra X$ the positive Dirac operator\/ $\slashed{D}_+$ of the spin structure on\/ $X$ induced by the Calabi--Yau\/ $4$-fold structure. Write\/ $\be=\lb P\rb\in K^0_\top(X)$.

Write\/ $\B_P^\top$ for the topological realization of the topological stack\/ $\B_P,$ a topological space natural up to homotopy equivalence. Then\/ $O_P^{E_\bu}$ lifts to a principal\/ $\Z_2$-bundle\/ $O_P^{E_\bu,\top}\ra\B_P^\top,$ such that trivializations of\/ $O_P^{E_\bu}$ and\/ $O_P^{E_\bu,\top}$ are naturally in\/ {\rm 1-1} correspondence.
\smallskip

\noindent{\bf(e)} We relate {\bf(b)\rm,\bf(d)} as follows: using the universal principal\/ $\U(m)$-bundle\/ $U_P\ra X\t\B_P$ we can define a continuous map\/ $\Psi_\be:\B_P^\top\ra\cC_\be,$ natural up to homotopy. Then the construction of\/ $O_{\cC_\be}$ implies that there is a natural isomorphism\/ $O_P^{E_\bu,\top}\cong \Psi_\be^*(O_{\cC_\be})$ of principal\/ $\Z_2$-bundles on\/ $\B_P^\top$. Hence, a trivialization of\/ $O_{\cC_\be}$ induces trivializations of\/ $O_P^{E_\bu}$ and\/ $O_P^{E_\bu,\top}$.
\smallskip

\noindent{\bf(f)} In {\bf(d)\rm,\bf(e)\rm,} suppose\/ $m\ge 5$. Then\/ $\Psi_\be:\B_P^\top\ra\cC_\be$ induces isomorphisms\/ $\pi_i(\B_P^\top)\ra\pi_i(\cC_\be)$ for\/ $i=0,1$. Therefore {\bf(e)} induces a {\rm 1-1} correspondence between trivializations of\/ $O_{\cC_\al},$\/ $O_P^{E_\bu},$ and\/ $O_P^{E_\bu,\top},$ so in particular, a trivialization of\/ $O_P^{E_\bu}$ induces a trivialization of\/ $O_{\cC_\be}$.
\smallskip

\noindent{\bf(g)} Let\/ $\al\in K^0_\top(X)$  and set\/ $k=\max(5-\rank\al,0),$ $m=\min(5,\rank\al),$ and\/ $\be=\al+k\lb\cO_X\rb$. Then there exists a principal\/ $\U(m)$-bundle\/ $P\ra X,$ unique up to isomorphism, with\/ $\lb P\rb=\be$ in\/ $K^0_\top(X)$. By {\bf(a)\rm--\bf(f)\rm,} we now see that a trivialization of\/ $O_P^{E_\bu}$ induces trivializations of\/ $O_P^{E_\bu,\top},O_{\cC_\be},O_{\cC_\al},O^{\cF^\bu,\top}_\al,$ and\/ $O^{\cF^\bu}_\al$. That is, an orientation on\/ $\B_P$ induces an orientation on\/~$\M_\al$.
\end{thm}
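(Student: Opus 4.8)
The plan is to establish parts (a)--(g) in turn and then, in (g), chase a trivialization of the orientation bundle $O_P^{E_\bu}$ of the connection moduli stack $\B_P$ along the chain of spaces and maps that (a)--(f) assemble, namely
\[
\B_P^\top \xrightarrow{\ \Psi_\be\ } \cC_\be \xleftarrow{\ \Xi_{\al,k}\ } \cC_\al \xleftarrow{\ \Phi_\al\ } \M_\al^\top,
\]
together with the topological realizations $\B_P\rightsquigarrow\B_P^\top$ and $\M_\al^\top\rightsquigarrow\M_\al$. Most of the input is already available: for (a) I would construct $\bs\M$ as the derived moduli stack of perfect complexes on $X$ after To\"en--Vaqui\'e, take its $-2$-shifted symplectic structure from Pantev--To\"en--Vaqui\'e--Vezzosi and the induced $4$-Calabi--Yau obstruction theory $\phi:\cF^\bu\ra\bL_\M$, $\th:\cF^\bu\xrightarrow{\sim}(\cF^\bu)^\vee[2]$ on the classical truncation $\M=t_0(\bs\M)$; then $\det\cF^\bu\ra\M$ carries $\det\th:(\det\cF^\bu)^*\xrightarrow{\sim}\det\cF^\bu$, and $O^{\cF^\bu}\ra\M$ is the $\Z_2$-bundle of isomorphisms $\mu:\cO_\M\ra\det\cF^\bu$ with $\mu\circ\mu^*=\det\th$, as in Definition \ref{fm13def2}. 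Passing to the topological realization $\M^\top$ in the sense of Blanc and Simpson, $O^{\cF^\bu}$ pulls back to $O^{\cF^\bu,\top}\ra\M^\top$; since realization preserves $\pi_0$ and $\pi_1$ (as in \cite[\S 2.5]{CGJ}), trivializations of $O^{\cF^\bu}$ and $O^{\cF^\bu,\top}$ correspond. Part (d) is the identical construction applied to the connection moduli stack $\B_P$ and the bundle $O_P^{E_\bu}$ of \S\ref{fm121}.

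For (b) I would take the orientation bundle $O_\cC\ra\cC=\Map_{C^0}(X,B\U\times\Z)$ of \cite[\S 2.4]{JTU}, defined as a limit over $m$ of the $O_P^{E_\bu}$ for $\U(m)$-bundles $P\ra X$ with $E_\bu$ the positive Dirac operator $\slashed{D}_+$; the stabilization isomorphism $O_{\cC_\al}\cong\Xi_{\al,k}^*(O_{\cC_{\al+k\lb\cO_X\rb}})$ follows from excision and additivity of the index bundle under direct sum with $\bigop^k\cO_X$, the discrepancy being the $\Z_2$-torsor $\Or(\det\slashed{D}_+)^{\ot k}$, which is canonically trivial here because for a Calabi--Yau $4$-fold $\slashed{D}_+=\db+\db^*$ and $\det\slashed{D}_+\cong\det H^*(X,\cO_X)\cong\C$ canonically, so no parity restriction on $k$ is needed. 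For (c) and (e) I would use the classifying maps of the universal complex $\cU^\bu\ra X\times\M$ and the universal bundle $U_P\ra X\times\B_P$ to define $\Phi_\al:\M_\al^\top\ra\cC_\al$ and $\Psi_\be:\B_P^\top\ra\cC_\be$, and then read off the isomorphisms $O^{\cF^\bu,\top}_\al\cong\Phi_\al^*(O_{\cC_\al})$ and $O_P^{E_\bu,\top}\cong\Psi_\be^*(O_{\cC_\be})$ directly from the naturality of the index and determinant constructions under these classifying maps.

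The crux is part (f). Via the identifications of $\B_P^\top$ and $\cC_\be$ with the relevant components of mapping spaces into $B\U(m)$ and $B\U$ (as in \cite{JTU}), the map $\Psi_\be$ is modeled by the map induced by postcomposition with the stabilization $B\U(m)\ra B\U$, which by Theorem \ref{fm3thm3}(b) is $(2m+1)$-connected. Since $X$ is a CW complex of real dimension $8$, the standard obstruction-theoretic estimate shows $\Map(X,B\U(m))\ra\Map(X,B\U)$ is $(2m+1-8)$-connected, hence induces isomorphisms on $\pi_0$ and $\pi_1$ once $2m-7\ge 2$, i.e.\ $m\ge 5$. Combined with (e) and (b) this yields the claimed $1$--$1$ correspondence between trivializations of $O_P^{E_\bu}$, $O_P^{E_\bu,\top}$ and $O_{\cC_\be}$. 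I expect this to be the one step requiring genuine care rather than bookkeeping: one must check that the analytic, index-theoretic bundles $O_P^{E_\bu,\top}$ and $O_{\cC_\be}$ on the topological realizations really do agree with pullbacks of the abstract bundle $O_\cC$, and that the connectivity bound is applied to the correct path-components.

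Finally, for (g), given $\al\in K^0_\top(X)$ the choices of $k,m,\be$ are arranged so that $m=\rank\be\ge 5$ and $\be-\al=k\lb\cO_X\rb$; since $X$ is $8$-dimensional, a class in $K^0_\top(X)$ of virtual rank $\ge 5$ is realized by a genuine rank-$m$ complex vector bundle, unique up to isomorphism, so the $\U(m)$-bundle $P$ with $\lb P\rb=\be$ exists and is unique. A trivialization of $O_P^{E_\bu}$ then induces, successively: a trivialization of $O_P^{E_\bu,\top}$ by (d); of $O_{\cC_\be}$ by (e) and (f); of $O_{\cC_\al}$ by the stabilization isomorphism of (b); of $O^{\cF^\bu,\top}_\al$ by (c); and of $O^{\cF^\bu}_\al$ by (a). That is, an orientation on $\B_P$ induces an orientation on $\M_\al$, as required.
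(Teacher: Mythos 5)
Your proposal is correct and follows essentially the same route as the paper, which gives no proof of its own here but explicitly presents the theorem as a summary of Cao--Gross--Joyce \cite[Th.~1.15]{CGJ} and Joyce--Tanaka--Upmeier \cite[\S 2]{JTU}: the shifted-symplectic construction of $O^{\cF^\bu}$, the limiting construction of $O_\cC$, the classifying-map comparisons, the connectivity estimate $\Map(X,B\U(m))\ra\Map(X,B\U)$ being $(2m-7)$-connected for $\dim X=8$, and the stable-range realization of $\be$ by a unique $\U(m)$-bundle are all exactly the cited arguments. The only slip is cosmetic: Theorem \ref{fm3thm3}(b) concerns $B\SU(m)\ra B\SU$, not $B\U(m)\ra B\U$, though the $(2m+1)$-connectivity of the latter is standard and your use of it is correct.
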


\begin{rem}
\label{fm13rem2}
We offer some explanation of Theorem \ref{fm13thm1}. For simplicity, let us start with moduli spaces $\M_\al^{\vect,\ss}(\tau)$ of Gieseker stable vector bundles $E\ra X$ in class $\al\in K^0_\top(X)$ with $c_1(\al)=0$, where $\rank\al=r\ge 4$. 

By the Hitchin--Kobayashi correspondence, every such $E\ra X$ admits a natural Hermitian--Einstein connection $\nabla_E$, and then $(E,\nabla_E)$ is a $\Spin(7)$-instanton. Every $\Spin(7)$-instanton connection on the complex vector bundle $E\ra X$ comes from an algebraic vector bundle structure on $E$ in this way. As $r\ge 4$, every complex vector bundle $E'\ra X$ with $\lb E'\rb=\al$ has $E'\cong E$. 

This induces an isomorphism from $\M_\al^{\vect,\ss}(\tau)$ to the moduli space $\M_P^{\Spin(7)}$ of irreducible $\Spin(7)$-instantons on the principal $\U(r)$-bundle $P\ra X$ associated to $E$, and hence an inclusion $\M_\al^{\vect,\ss}(\tau)\hookra\B_P$. Since DT4 orientations on $\M_\al^{\vect,\ss}(\tau)$ are basically orientations of $\Spin(7)$ instanton moduli spaces, as in \S\ref{fm123}, an orientation on $\B_P$ pulls back to a DT4 orientation of $\M_\al^{\vect,\ss}(\tau)$.

Now $\M_\al^{\vect,\ss}(\tau)$ is a finite-dimensional $\C$-scheme, whereas $\B_P$ is an infinite-dimensional topological stack. One might think that $\M_\al^{\vect,\ss}(\tau)$ is a simpler object, but in fact orientations on $\B_P$ are much easier to understand. In examples it is difficult to describe $\M_\al^{\vect,\ss}(\tau)$ explicitly. It could have $N\gg 0$ connected components, so that $\M_\al^{\vect,\ss}(\tau)$ would have $2^N$ orientations, but $\B_P$ is connected and so has only 2 orientations. Thus pulling back orientations from $\B_P$ to $\M_\al^{\vect,\ss}(\tau)$ gives orientations with fewer arbitrary choices.

Theorem \ref{fm13thm1} gives orientations not just on moduli spaces of vector bundles $\Vect(X)$, but also of coherent sheaves $\coh(X)$, and complexes in $D^b\coh(X)$. The rough analogue in Differential Geometry of passing from $\Vect(X)$ to $D^b\coh(X)$ is taking the limit $r\ra\iy$, for $r=\rank E$. More precisely, the analogue in Topology is passing from $\coprod_{r\ge 0}\Map_{C^0}(X,B\U(r))$ to $\Map_{C^0}(X,B\U\t\Z)$, where $B\U=\varinjlim_{n\ra\iy}B\U(n)$, and the $\Z$ factor keeps track of the rank~$r$. 

In the notation of \S\ref{fm7}, we can understand orientations in Theorem \ref{fm13thm1} as natural trivializations of an orientation functor
\begin{equation*}
F_X:\Bord_X(B\U\t\Z)_\top\longra\Z\qs\Z_2.
\end{equation*}
\end{rem}

\subsection{Orientability and canonical orientations for Calabi--Yau 4-fold moduli spaces}
\label{fm132}

We can now prove one of our main results:

\begin{thm}
\label{fm13thm2}
Let\/ $X$ be a Calabi--Yau\/ $4$-fold, and suppose that\/ $X$ satisfies condition Theorem\/ {\rm\ref{fm10thm2}$(*)$}. Then the moduli stacks\/ $\M$ of all objects in $D^b\coh(X),$ and\/ $\M_\al\subset\M$ of objects $F^\bu$ in $D^b\coh(X)$ with $\lb F^\bu\rb=\al\in K^0_\top(X),$ are orientable in the sense of Definition\/ {\rm\ref{fm13def2}}. A choice of flag structure $F$ on $X$ in the sense of\/ {\rm\S\ref{fm102}} determines an  orientation on the moduli stacks $\M,\M_\al$. Such orientations are necessary for defining \begin{bfseries}DT4 invariants\end{bfseries} of\/ $X,$ as in Borisov--Joyce\/ {\rm\cite{BoJo}} and Oh--Thomas\/ {\rm\cite{OhTh}}.

If\/ $c_2(\al)-c_1(\al)^2=0$ in $H^4(X,\Z)$ then we can construct a canonical orientation on $\M_\al$ without choosing a flag structure.

If\/ $X$ satisfies Theorem\/ {\rm\ref{fm10thm2}$(\dag)$} and\/ $c_2(\al)-c_1(\al)^2=0$ in $H^4(X,\Z_2)$ then we can construct a canonical orientation on $\M_\al$ without choosing a flag structure.
\end{thm}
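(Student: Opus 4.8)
The plan is to deduce the statement from Theorem~\ref{fm13thm1}, which identifies DT4 orientations on the moduli stacks with gauge-theoretic orientations on a single connection moduli space $\B_P$, combined with Theorem~\ref{fm12thm1}(b) on orientability and flag structures, and Proposition~\ref{fm12prop2}(b) on making orientations canonical. First I would carry out the reduction to $\B_P$: for $\al\in K^0_\top(X)$, following Theorem~\ref{fm13thm1}(g) put $k=\max(5-\rank\al,0)$, $\be=\al+k\lb\cO_X\rb$, and let $P\ra X$ be the unique principal $\U(m)$-bundle, $m=\rank\be\ge 5$, with $\lb P\rb=\be$ in $K^0_\top(X)$. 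Taking the elliptic operator $E_\bu$ to be the positive Dirac operator $\slashed{D}_+=\db+\db^*$ of the spin structure induced by the Calabi--Yau $4$-fold structure, Theorem~\ref{fm13thm1}(a)--(g) then yields a natural one-to-one correspondence between trivializations of the orientation bundle $O^{\cF^\bu}_\al\ra\M_\al$ of Definition~\ref{fm13def2} and trivializations of $O^{E_\bu}_P\ra\B_P$ from \S\ref{fm121}. The stabilization by $\bigop^k\cO_X$ introduces no ambiguity, since on a Calabi--Yau $4$-fold there is a canonical isomorphism $\Or(\det\slashed{D}_+)\cong\Z_2$, as used in Theorem~\ref{fm13thm1}(b). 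Since $\M=\coprod_\al\M_\al$, an orientation on $\M$ is precisely a family of orientations on the $\M_\al$.

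Next I would invoke Theorem~\ref{fm12thm1}(b). The group $\U(m)$ lies in the list~\eq{fm11eq19}, and $X$ is a compact spin $8$-manifold satisfying condition Theorem~\ref{fm10thm2}$(*)$, so that theorem shows $\B_P$ is orientable; transporting along the correspondence above gives orientability of each $\M_\al$, and hence of $\M$. Moreover Theorem~\ref{fm12thm1}(b) shows that a flag structure $F$ on $X$, together with the canonical orientation of $\det\slashed{D}_+\cong\R$ available on a Calabi--Yau $4$-fold, determines orientations on $\B_P$; these transport to orientations on $\M_\al$ and $\M$. That such orientations are necessary for DT4 invariants is immediate, since the virtual classes constructed in \cite{BoJo,OhTh} require a choice of orientation in the sense of Definition~\ref{fm13def2} as input.

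For the last two assertions I would apply Proposition~\ref{fm12prop2}(b). Adding the trivial summand $\bigop^k\cO_X$ changes neither $c_1$ nor $c_2$, so $c_2(P)-c_1(P)^2=c_2(\be)-c_1(\be)^2=c_2(\al)-c_1(\al)^2$. Hence if $c_2(\al)-c_1(\al)^2=0$ in $H^4(X,\Z)$, Proposition~\ref{fm12prop2}(b) shows that requiring the flag structure $F$ to be natural at zero makes the n-orientation on $\B_P$ independent of $F$, so that transporting along the first step gives a canonical orientation on $\M_\al$. Likewise, if $X$ satisfies Theorem~\ref{fm10thm2}$(\dag)$ and $c_2(\al)-c_1(\al)^2=0$ in $H^4(X,\Z_2)$, then by Proposition~\ref{fm12prop2}(b) we may in addition require $F$ to factor via $\Z_2$, and again the n-orientation on $\B_P$, hence the orientation on $\M_\al$, is independent of the choice of $F$.

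The main obstacle I expect is checking that the orientation on $\M_\al$ produced in this way is genuinely intrinsic, that is, independent of the auxiliary choices $k$, $m$, $P$ made in the first step. This reduces to verifying that the chain of natural isomorphisms in Theorem~\ref{fm13thm1}(b)--(g) is compatible both with the canonical trivialization $\Or(\det\slashed{D}_+)\cong\Z_2$ and with the stabilization maps $\Xi_{\al,k}:\cC_\al\ra\cC_{\al+k\lb\cO_X\rb}$, so that the ``natural at zero'' normalization of $F$ descends consistently through them. This compatibility is essentially already contained in the statements of Theorem~\ref{fm13thm1} and Proposition~\ref{fm12prop2}, so modulo it the theorem follows formally.
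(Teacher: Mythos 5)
Your proposal is correct and follows essentially the same route as the paper's own proof: reduce to the gauge-theoretic orientation problem on $\B_P$ via Theorem \ref{fm13thm1}(g), apply Theorem \ref{fm12thm1}(b) with $G=\U(m)$ together with the canonical trivialization of $\det\sD^+_X$ on a Calabi--Yau $4$-fold, and use Proposition \ref{fm12prop2}(b) for the last two assertions. The compatibility issues you flag at the end (stabilization and the canonical trivialization of $\Or(\det\slashed{D}_+)$) are indeed absorbed into the statement of Theorem \ref{fm13thm1}, exactly as you suggest.
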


\begin{proof}
Theorem \ref{fm12thm1}(b) shows that a flag structure $F$ on $X$ determines normalized orientations on $\check O^{E_\bu}_P\ra\cB_P$ for all principal $\U(m)$-bundles $P\ra X$. Since a Calabi--Yau 4-fold $X$ has a canonical trivialization of $\det\sD^+_X$, normalized orientations on $\check O^{E_\bu}_P\ra\cB_P$ are equivalent to orientations $O^{E_\bu}_P\ra\cB_P$. The first part of the theorem then follows from Theorem \ref{fm13thm1}(g). The last two parts follow from Proposition~\ref{fm12prop2}(b).
\end{proof}

\begin{ex}
\label{fm13ex1}
If $X$ is a smooth sextic in $\CP^5$ then $X$ is a Calabi--Yau 4-fold, and the Lefschetz Hyperplane Theorem implies that $H^3(X,\Z)=0$, so Theorem \ref{fm10thm2}$(*)$,$(\dag)$ hold trivially, and Theorem \ref{fm13thm2} applies. The same applies to Calabi--Yau 4-folds defined as complete intersections of ample hypersurfaces in smooth toric varieties, a large class.
\end{ex}

\begin{rem}
\label{fm13rem3}
{\bf(a)} The higher $\C$-stack $\M$ in Theorems \ref{fm13thm1} and \ref{fm13thm2} contains as open Artin $\C$-substacks the moduli stacks $\M^{\rm coh},\M^{\rm coh,ss},\M^{\rm vect}$ of coherent sheaves, and semistable coherent sheaves, and algebraic vector bundles on $X$, respectively. The principal $\Z_2$-bundle $O^{\cF^\bu}\ra\M$, and orientations on $\M$, may be restricted to $\M^{\rm coh},\ldots,\M^{\rm vect}$. Thus, Theorem \ref{fm13thm2} is still interesting if we only care about $\M^{\rm coh},\ldots,\M^{\rm vect}$ rather than~$\M$.
\smallskip

\noindent{\bf(b)} What Theorem \ref{fm13thm2} really means is that we have an {\it algorithm\/} for constructing orientations on $\M_\al$, which depends only on $X,\al$ and the flag structure $F$.

Note that other algorithms are possible, which would yield orientations on $\M_\al$ differing from those in Theorem \ref{fm13thm2} by a sign depending on natural invariants in the problem such as $\rank\al$, $\chi(X)$, $\int_Xc_1(\al)^4$ and $\int_Xc_2(\al)c_2(X)$. We have no way to say which of these algorithms is `best', if this even makes sense.

\smallskip

\noindent{\bf(c)} The authors do not know an example of a Calabi--Yau 4-fold $X$ for which Theorem \ref{fm10thm2}$(*)$ does not hold. But note from Example \ref{fm10ex1} that Theorem \ref{fm10thm2}$(*)$ fails for $\SU(3)$, which is a compact complex 4-manifold with trivial canonical bundle, and so a `non-K\"ahler Calabi--Yau 4-fold'.
\end{rem}

One frequent theme in the literature on DT4 invariants, which appears in \cite{Bojk2,Cao1,Cao2,CaKo1,CaKo2,CKM,CaLe,CMT1,CMT2,COT1,COT2,CaQu,CaTo1,CaTo2,CaTo3} and we summarize in Conjecture \ref{fm13conj1}, are relations of the form
\e
\text{Conventional invariants of $X$}\simeq \text{DT4 invariants of $X$,}
\label{fm13eq1}
\e
where by `conventional invariants' of $X$ we mean things like the Euler characteristic and Gromov--Witten invariants, and the relation `$\simeq$' may involve change of variables in a generating function, etc. 

\begin{conj}
\label{fm13conj1}
Let\/ $X$ be a projective Calabi--Yau\/ $4$-fold. Then:
\smallskip

\noindent{\bf(a)} Cao--Kool\/ {\rm\cite[Conj~1.1]{CaKo1}} propose an explicit generating function for invariants\/ $\int_{\Hilb^n(X)}c_n(L^{[n]})$ for\/ $L\ra X$ a line bundle. See also {\rm\cite{Bojk2,CaQu}}.
\smallskip

\noindent{\bf(b)} Bojko {\rm\cite{Bojk2}} proposes formulae for integrals of Segre classes, Verlinde classes and Nekrasov genera over\/ $\Hilb^n(X)$. 
\smallskip

\noindent{\bf(c)} Cao--Maulik--Toda {\rm\cite[Conj.~1.3]{CMT1}} relate genus\/ $0$ Gromov--Witten invariants of\/ $X$ and\/ $1$-dimensional DT4 invariants. Cao--Toda {\rm\cite[Conj.~1.2]{CaTo3}} make a related conjecture. See also {\rm\cite{Cao1,Cao2}}.
\smallskip

\noindent{\bf(d)} Cao--Maulik--Toda {\rm\cite[Conj.s 1.5 \& 1.6]{CMT2}} relate genus\/ $0,1$ Gromov--Witten invariants of\/ $X$ and Pandharipande--Thomas style DT4 invariants. Cao--Toda {\rm\cite[Conj.~1.6]{CaTo2}} make a related conjecture. See also {\rm\cite{CKM,CaTo1}}.
\smallskip

\noindent{\bf(e)} Cao--Kool\/ {\rm\cite[Conj.~1.1]{CaKo2}} relate genus\/ $0,1$ Gromov--Witten invariants of\/ $X$ and rank\/ $1$ DT4 invariants. See also {\rm\cite{CaLe}}.
\smallskip

\noindent{\bf(f)} For holomorphic symplectic $4$-folds $X,$ Cao--Oberdieck--Toda {\rm\cite[Conj.~2.2]{COT2}} relate reduced genus\/ $0,1,2$ Gromov--Witten invariants of\/ $X$ and reduced DT4 invariants counting $1$-dimensional sheaves, and also {\rm\cite[Conj.~1.10]{COT1}} to reduced Pandharipande--Thomas style DT4 invariants.\end{conj}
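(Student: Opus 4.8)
The honest starting point is that Conjecture \ref{fm13conj1} is a \emph{survey} of conjectures due to other authors, not a theorem this monograph sets out to prove: items (a)--(f) will not be established here. What the monograph can contribute, and the first point I would stress, is that each of (a)--(f) presupposes that the DT4 side of a relation of the form \eq{fm13eq1} is well-defined, which in turn requires orientations on the relevant moduli stacks in the sense of Definition \ref{fm13def2}. Every moduli stack occurring in (a)--(f) parametrises sheaves or complexes on a Calabi--Yau $4$-fold with $c_1(\al)=c_2(\al)=0$ in $H^*(X,\Z)$, so (assuming Theorem \ref{fm10thm2}$(*)$, and in any case once the canonical choice is fixed) Theorem \ref{fm13thm2} supplies a \emph{canonical} orientation with no flag-structure input, and hence pins down the signs of all the DT4 invariants in play. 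Thus the concrete first task in any serious attack is to check that the virtual classes and invariants used by the cited authors are computed against this canonical orientation, or else to record the global sign ambiguity, which by Remark \ref{fm13rem3}(b) can depend only on invariants such as $\rank\al$, $\chi(X)$, $\int_X c_1(\al)^4$ and $\int_X c_2(\al)c_2(X)$.

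For the Hilbert-scheme statements (a) and (b) I would proceed by torus localisation. When $X$ is toric one can apply the Oh--Thomas virtual-localisation formula \cite{OhTh} to reduce $\int_{\Hilb^n(X)}c_n(L^{[n]})$, and the Segre, Verlinde and Nekrasov integrals, to explicit sums over fixed solid (four-dimensional) partitions; each contribution is a ratio of $K$-theoretic weights together with a sign coming from a square root of an equivariant Euler class, and the key point is that the canonical orientation of Theorem \ref{fm13thm2} pins down these square-root signs coherently across fixed loci, after which one matches the resulting combinatorial series with the proposed generating functions. Passing from toric to general $X$, and from equivariant to ordinary invariants, is then a deformation-invariance argument for the Borisov--Joyce (or Kiem--Park \cite{KiPa}) virtual class.

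For the Gromov--Witten / DT4 correspondences (c)--(f) the natural tool is wall-crossing: work inside the vertex/Lie algebra on the homology of the moduli stack of objects in $D^b\coh(X)$ built by Gross--Joyce--Tanaka and Joyce \cite{GJT,Joyc7}, express the Pandharipande--Thomas-type or $1$-dimensional DT4 generating series in terms of generators via the universal wall-crossing formula, and compare with a degeneration (or localisation) computation of the genus $0,1,2$ Gromov--Witten side together with the conjectural four-fold vertex. Here Theorem \ref{fm13thm2} is again essential, because the wall-crossing formula is only meaningful once orientations on \emph{all} the moduli stacks entering the computation have been chosen compatibly, and the canonical choice --- available precisely because $c_1=c_2=0$ throughout --- is the compatible one. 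For the holomorphic-symplectic case (f) one additionally needs the reduced virtual class and a reduced wall-crossing formalism.

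The main obstacle is that there is at present no uniform technique that proves any of (a)--(f) in full generality: the Hilbert-scheme conjectures have so far been checked only in families of examples, and the GW/DT4 correspondences rest on a conjectural four-fold vertex and, in (f), on control of reduced invariants for holomorphic-symplectic targets. What this monograph removes is the logically prior obstacle --- the sign ambiguity in the DT4 invariants --- so that what remains between Conjecture \ref{fm13conj1} and a theorem is the (still very hard) enumerative geometry rather than a foundational one.
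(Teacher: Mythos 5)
You have correctly identified that Conjecture \ref{fm13conj1} is a survey of other authors' conjectures which the paper does not prove, and your central observation --- that every moduli space occurring in (a)--(f) has $c_1(\al)=c_2(\al)=0$, so Theorem \ref{fm13thm2} supplies canonical orientations pinning down the signs of the DT4 invariants --- is exactly the point the paper makes in Corollary \ref{fm13cor1}. Your further sketch of localisation and wall-crossing strategies goes beyond anything in the paper but is a reasonable account of the state of the art and does not misrepresent what remains open.
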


Although we state this as a conjecture, we emphasize that the cited papers also contain many theorems. All parts of Conjecture \ref{fm13conj1} involve only moduli spaces $\M_\al$ on $X$ with $c_1(\al)=c_2(\al)=0$ in $H^*(X,\Z)$. Thus the second paragraph of Theorem \ref{fm12thm2} implies:

\begin{cor}
\label{fm13cor1}
As in Conjecture\/ {\rm\ref{fm13conj1},} for a Calabi--Yau\/ $4$-fold\/ $X$ there are conjectures in {\rm\cite{Bojk2,Cao1,Cao2,CaKo1,CaKo2,CKM,CaLe,CMT1,CMT2,COT1,COT2,CaQu,CaTo1,CaTo2,CaTo3}} of the form \eq{fm13eq1} relating conventional invariants of\/ $X$ \textup(which require no choice of orientation\textup) and DT4 invariants of\/ $X$ \textup(which do require a choice of orientation\textup), an apparent paradox. If\/ $X$ satisfies Theorem\/ {\rm\ref{fm10thm2}$(*)$} then Theorem\/ {\rm\ref{fm13thm2}} provides canonical orientations for all the moduli spaces\/ $\M_\al$ occurring in Conjecture\/ {\rm\ref{fm13conj1},} resolving this paradox.
\end{cor}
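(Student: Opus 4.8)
The plan is to deduce this directly from the last paragraph of Theorem \ref{fm13thm2}. First I would verify the key input: every moduli space $\M_\al$ that enters one of the conjectural identities \eq{fm13eq1} collected in Conjecture \ref{fm13conj1} carries a class $\al\in K^0_\top(X)$ with $c_1(\al)=0$ and $c_2(\al)=0$ in $H^*(X,\Z)$. This is a case-by-case inspection of the cited papers. In parts \textbf{(a)},\textbf{(b)} the moduli spaces are the Hilbert schemes $\Hilb^n(X)$ of $0$-dimensional subschemes $Z\subset X$, whose ideal sheaves $I_Z$ have Chern character $1-n[\pt]$, so $c_1(I_Z)=c_2(I_Z)=0$. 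In the curve-counting parts \textbf{(c)},\textbf{(d)},\textbf{(e)},\textbf{(f)} the objects are either $1$-dimensional coherent sheaves $F$ (of rank $0$ with $\ch_1(F)=\ch_2(F)=0$, whence $c_1(F)=0$ and $c_2(F)=-\ch_2(F)=0$), or rank $1$ sheaves $I_Z$ with $Z$ at most $1$-dimensional, or two-term Pandharipande--Thomas complexes $[\cO_X\ra F]$ of class $\lb\cO_X\rb-\lb F\rb$; in each instance $\ch_1=\ch_2=0$, so again $c_1(\al)=c_2(\al)=0$ in $H^*(X,\Z)$.

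Next, since $c_1(\al)=c_2(\al)=0$ in $H^*(X,\Z)$ trivially implies $c_2(\al)-c_1(\al)^2=0$ in $H^4(X,\Z)$, the final part of Theorem \ref{fm13thm2} applies to each such $\M_\al$: under the hypothesis that $X$ satisfies Theorem \ref{fm10thm2}$(*)$, it produces a canonical orientation on $\M_\al$, requiring no choice of flag structure. (We do not even need condition Theorem \ref{fm10thm2}$(\dag)$, since the stronger integral vanishing $c_2(\al)-c_1(\al)^2=0$ in $H^4(X,\Z)$ holds.) Feeding these canonical orientations into the virtual-class constructions of Borisov--Joyce \cite{BoJo} and Oh--Thomas \cite{OhTh} canonically fixes the signs of all the DT4 invariants appearing on the right-hand sides of \eq{fm13eq1}. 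Hence every identity assembled in Conjecture \ref{fm13conj1} becomes a comparison of two canonically-normalized numbers with no residual sign ambiguity; since the left-hand (conventional) side was already canonically defined, this is exactly the resolution of the apparent paradox.

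The only genuine labour is the bookkeeping of the first step: tracking down, for each of \cite{Bojk2,Cao1,Cao2,CaKo1,CaKo2,CKM,CaLe,CMT1,CMT2,COT1,COT2,CaQu,CaTo1,CaTo2,CaTo3}, precisely which moduli spaces occur and confirming the Chern-class vanishing, together with checking in the holomorphic-symplectic case \textbf{(f)} that the reduced DT4 theories used there fit into the orientation framework of \S\ref{fm13} in the same way as the usual ones (the reduced and standard obstruction theories differ only by a fixed trivialized line, which leaves the orientation $\Z_2$-bundle $O_\M$ unchanged). I expect this routine verification, rather than any conceptual step, to be where care is needed; once it is in place, the corollary is an immediate consequence of Theorem \ref{fm13thm2}.
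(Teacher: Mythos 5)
Your proposal is correct and follows essentially the same route as the paper: the paper's entire argument is the sentence preceding the corollary, asserting that all moduli spaces in Conjecture \ref{fm13conj1} have $c_1(\al)=c_2(\al)=0$ in $H^*(X,\Z)$ and then invoking the final part of Theorem \ref{fm13thm2}. You simply carry out the case-by-case Chern-class bookkeeping (and the remark about reduced obstruction theories in part (f)) that the paper leaves implicit.
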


\begin{rem}
\label{fm13rem4}
In the situation of Theorem \ref{fm13thm2}, supposing Theorem \ref{fm10thm2}$(*)$ holds, if $\al,\be\in K^0_\top(X)$ then given orientations $O_\al,O_\be$ on $\M_\al,\M_\be$, as in \cite[\S 3.3]{CGJ} we can construct an orientation $O_\al\star O_\be$ on $\M_{\al+\be}$, by relating orientations at $[E_\al^\bu]\in\M_\al$ and $[E_\be^\bu]\in\M_\be$ to that at $[E_\al^\bu\op E_\be^\bu]\in\M_{\al+\be}$. This satisfies $O_\be\star O_\al=(-1)^{\chi(\al,\al)\chi(\be,\be)+\chi(\al,\al)}O_\al\star O_\be$ and $(O_\al\star O_\be)\star O_\ga=O_\al\star(O_\be\star O_\ga)$. Thus, if we have constructed orientations $O_\al$ on $\M_\al$ for all $\al\in K^0_\top(X)$ then there are signs $\ep_{\al,\be}\in\{\pm 1\}$ with $O_{\al+\be}=\ep_{\al,\be}\,O_\al\star O_\be$ for all $\al,\be\in K^0_\top(X)$, which satisfy $\ep_{\al,\be}\ep_{\be,\al}=(-1)^{\chi(\al,\al)\chi(\be,\be)+\chi(\al,\al)}$ and~$\ep_{\al,\be+\ga}\ep_{\be,\ga}=\ep_{\al,\be}\ep_{\al+\be,\ga}$.

To develop a nice theory of DT4 invariants, it would be helpful if we could compute the signs $\ep_{\al,\be}$ for the orientations $O_\al$ constructed in Theorem \ref{fm13thm2} from a flag structure $F$. In general we cannot arrange that $\ep_{\al,\be}\equiv 1$ because of the identity~$\ep_{\al,\be}\ep_{\be,\al}=(-1)^{\chi(\al,\al)\chi(\be,\be)+\chi(\al,\al)}$. 

As in Remark \ref{fm13rem2}, we can interpret orientations on $\M$ in terms of an orientation functor $F_X:\Bord_X(B\U\t\Z)_\top\ra\Z\qs\Z_2$. Now $B\U\t\Z$ is a group-like H-space, and therefore $\Bord_X(B\U\t\Z)_\top$ is a Picard groupoid as in Remark \ref{fm7rem1}(d). The signs $\ep_{\al,\be}$ are related to obstructions to making $F_X$ and its trivialization monoidal. One might expect that all this is related to the question of defining additive flag structures in 8 dimensions discussed in Remark~\ref{fm10rem2}.

Unfortunately, our construction of orientations $O_\al$ on $\M_\al$ in Theorem \ref{fm13thm2} is not well adapted to computing the signs $\ep_{\al,\be}$. This is because the construction involves pulling orientations back along a functor 
\e
\Pi_{B\U\t\Z}^{K(\Z,4)}:\Bord_X(B\U\t\Z)_\top\longra \Bord_X(K(\Z,4)).
\label{fm13eq2}
\e
However, although both categories in \eq{fm13eq2} are symmetric monoidal, the functor $\Pi_{B\U\t\Z}^{K(\Z,4)}$ is {\it not\/} monoidal. To see this, note that on isomorphism classes of objects \eq{fm13eq2} maps $\al$ in $K^0_\top(X)$ to $c_2(\al)-c_1(\al)^2$ in $H^4(X,\Z)$, which is  not additive in $\al$. This non-additivity arises because we define orientations on $\B_P$ for $\U(m)$-vector bundles $E\ra X$ in \S\ref{fm12} by turning them into $\SU(m+1)$-vector bundles $E\op\La^mE^*\ra X$, which is not compatible with direct sums.
\end{rem}

\subsection{Extensions of the theory}
\label{fm133}

We now discuss applications of our orientability results to certain generalizations of DT4 invariants which either are under development, or may be developed in future. Much of what we outline has not been studied at the time of writing, and we claim no results in this section. The first author would like to thank Chenjing Bu for helpful discussions on these ideas.
\smallskip

\noindent{\bf(a)} Let $X$ be a projective Calabi--Yau 4-fold, and $G^{\sst\C}$ be a reductive complex algebraic $\C$-group. Consider the derived moduli stack $\bs\M$ of all algebraic principal $G^{\sst\C}$-bundles $P\ra X$, and open substacks $\bs\M^\ss_\al\subset\bs\M$ of bundles $P$ with fixed topological invariants $\al$, and satisfying some (semi)stability condition.

By Pantev--To\"en--Vaqui\'e--Vezzosi \cite[\S 2]{PTVV}, as $G^{\sst\C}$ is reductive, the quotient stack $[*/G^{\sst\C}]$ has a 2-shifted symplectic structure. Now $\bs\M$ is the derived mapping stack $\Map(X,[*/G^{\sst\C}])$, where $X$ is a Calabi--Yau 4-fold and $[*/G^{\sst\C}]$ is 2-shifted symplectic, so by \cite[\S 2]{PTVV}, $\bs\M$ and hence $\bs\M^\ss_\al\subset\bs\M$ have $-2$-shifted symplectic structures. The classical truncations $\M,\M^\ss_\al$ have 4-Calabi--Yau obstruction theories, as in \S\ref{fm131}. Thus as in Definition \ref{fm13def2} we have a notion of {\it orientation\/} on $\M$ and~$\M^\ss_\al$.

When $G^{\sst\C}=\GL(r,\C)$, algebraic principal $G$-bundles $P^{\sst\C}\ra X$ are equivalent to rank $r$ algebraic vector bundles $E\ra X$, which are examples of (torsion-free) coherent sheaves. The $-2$-shifted symplectic structures on $\bs\M,\bs\M^\ss_\al$ are the restrictions of those discussed in \S\ref{fm131} to the open substacks of vector bundles.

\smallskip

\noindent{\bf(b)} Let $G^{\sst\R}$ be the maximal compact subgroup of $G^{\sst\C}$, a real Lie group. Let $Q\ra X$ be a $C^\iy$ principal $G^{\sst\R}$-bundle, in the sense of Differential Geometry. Then we can consider moduli spaces $\B_Q$ of connections on $Q$, and orientations on $\B_Q$, as in \S\ref{fm121}.

As the inclusion $G^{\sst\R}\hookra G^{\sst\C}$ is a homotopy equivalence, for any $C^\iy$ principal $G^{\sst\C}$-bundle $P\ra X$ there exists a $C^\iy$ principal $G^{\sst\C}$-bundle $Q\ra X$, unique up to isomorphism, such that $P\cong Q\t_{G^{\sst\R}}G^{\sst\C}$. Given a $C^\iy$ principal $G^{\sst\R}$-bundle $Q\ra X$, write $\bs\M_Q\subset\bs\M$ for the open and closed substack of algebraic principal $G^{\sst\C}$-bundles $P\ra X$ whose underlying $C^\iy$ principal $G^{\sst\C}$-bundle $P_{C^\iy}\ra X$ has~$P_{C^\iy}\cong Q\t_{G^{\sst\R}}G^{\sst\C}$, and $\M_Q$ for the classical truncation of~$\bs\M_Q$.

The authors expect that there should be a generalization of Theorem \ref{fm13thm1} to principal $G^{\sst\C}$- and $G^{\sst\R}$-bundles, such that $\B_Q$ orientable implies that $\M_Q$ is orientable, and an orientation of $\B_Q$ lifts to an orientation of~$\M_Q$.

If this is true, then our orientability theory, and results such as Theorem \ref{fm12thm1}(b), could be used to prove orientability of, and construct canonical orientations for, moduli spaces $\M$ and $\M^\ss_\al$ for suitable groups~$G^{\sst\C}$.
\smallskip

\noindent{\bf(c)} It is natural to hope that one could build enumerative invariants `counting' (semistable) principal $G^{\sst\C}$-bundles on $X$, generalizing DT4 invariants in \S\ref{fm131}--\S\ref{fm132}. However, there is a problem with this. To form a CY4 virtual class, one needs {\it proper\/} moduli schemes. 

It is well known in Algebraic Geometry that if $X$ is smooth projective $\C$-scheme with $\dim X>1$, then moduli schemes of semistable rank $r$ vector bundles (i.e.\ principal $\GL(r,\C)$-bundles) are typically not proper (i.e.\ non-compact). To compactify the moduli spaces, we need to enlarge them to moduli schemes of semistable rank $r$ {\it torsion-free sheaves}.

So, we want a generalization of principal $G^{\sst\C}$-bundles, analogous to torsion-free sheaves, for which we expect semistable moduli schemes to be proper. There is a literature on this, some important papers are G\'omez--Sols \cite{GoSo} and Fernandez Herrero--G\'omez--Zamora \cite{FGZ}. Given a representation $\rho:G^{\sst\C}\hookra\GL(r,\C)$, they define a {\it principal\/ $\rho$-sheaf\/} $(\cE,P,\psi)$ to be a rank $r$ torsion-free sheaf $\cE$ on $X$ and an isomorphism $\psi:\cE\vert_U\ra P\t_{G^{\sst\C},\rho}\C^r$, for $U\subseteq X$ the dense open subset where $\cE$ is locally free, and $P\ra U$ a principal $G^{\sst\C}$-bundle. G\'omez--Sols \cite{GoSo} show that principal $\rho$-sheaves have projective (hence proper) coarse moduli schemes.

Unfortunately, the authors do not expect the $-2$-shifted symplectic structure on moduli stacks of principal $G^{\sst\C}$-bundles to extend to moduli stacks of principal $\rho$-sheaves. So it may not be possible to use \cite{FGZ,GoSo} to define DT4 invariants `counting' principal $G^{\sst\C}$-bundles.
\smallskip

\noindent{\bf(d)} When $G^{\sst\C}$ is $\O(r,\C),\SO(r,\C),\Spin(r,\C)$ or $\Sp(r/2,\C)$, there is an alternative method for compactifying moduli spaces of principal $G^{\sst\C}$-bundles that the authors expect will have the good properties we want.

Observe that a principal $\O(r,\C)$-bundle $P\ra X$ is equivalent to a rank $r$ vector bundle $\cE\ra X$ with an isomorphism $\om:\cE\ra\cE^\vee$ with $\om=\om^\vee$. Similarly, for $r$ even, a principal $\Sp(r/2,\C)$-bundle $P\ra X$ is equivalent to a rank $r$ vector bundle $\cE\ra X$ with an isomorphism $\om:\cE\ra\cE^\vee$ with $\om=-\om^\vee$. We propose that moduli spaces of semistable principal $\O(r,\C)$- or $\Sp(r/2,\C)$-bundles should be compactified by enlarging them to moduli spaces of rank $r$ perfect complexes $\cE^\bu$, semistable under a suitable stability condition, with an isomorphism $\om:\cE^\bu\ra(\cE^\bu)^\vee$ in $D^b\coh(X)$ with $\om=\pm\om^\vee$.

Derived moduli stacks of such $(\cE^\bu,\om)$ are the fixed points $\bs\M^{\Z_2}$ of a $\Z_2$-action on the derived moduli stack $\bs\M$ of perfect complexes $\cE^\bu$. Since $\bs\M$ is $-2$-shifted symplectic by \cite[Cor.~2.13]{PTVV}, it follows that $\bs\M^{\Z_2}$ is $-2$-shifted symplectic. To generalize $G^{\sst\C}=\O(r,\C)$ to $\SO(r,\C)$ or $\Spin(r,\C)$, we add an orientation, or a spin structure, to $(\cE^\bu,\om)$. This makes sense, and is well behaved.

The first author's PhD student Chenjing Bu \cite{Bu1,Bu2} is developing the foundations of an exciting theory of enumerative invariants `counting' $\O(r,\C)$- or $\Sp(r/2,\C)$-bundles compactified in this way, and their wall crossing formulae. There is not a Calabi--Yau 4-fold version of the theory available at the time of writing, but it seems clear that this should be possible.
\smallskip

\noindent{\bf(e)} Write $\bs\M^\O,\bs\M^\SO,\bs\M^\Spin,\bs\M^\Sp,$ for the $-2$-shifted symplectic derived moduli stacks of pairs $(\cE^\bu,\om)$ on a Calabi--Yau 4-fold $X$ in {\bf(d)}, with orientations or spin structures for $\bs\M^\SO,\bs\M^\Spin$. We decompose $\bs\M^\O=\bs\M^\O_{\rm ev}\amalg \bs\M^\O_{\rm od}$ into substacks with $\rank\cE^\bu$ even or odd, and similarly for $\bs\M^\SO,\bs\M^\Spin$. 

The authors expect that there should be a generalization of Theorem \ref{fm13thm1} which says (roughly) that orientations of $\bs\M^\O_{\rm ev}$ (or $\bs\M^\O_{\rm od}$) can be pulled back from orientations of $\B_Q$ in \S\ref{fm121} for $Q\ra X$ a principal $\O(2m)$-bundle (or a principal $\O(2m+1)$-bundle, respectively) for $m\gg 0$, where we stabilize using the inclusions $\O(2m)\hookra \O(2m+2)$ or $\O(2m+1)\hookra \O(2m+3)$ (note that these are of {\it complex type}, whereas $\O(2m)\hookra \O(2m+1)$ is not), and similarly for $\bs\M^\SO_{\rm ev},\bs\M^\SO_{\rm od},\bs\M^\Spin_{\rm ev},\bs\M^\Spin_{\rm od}$ and $\bs\M^\Sp$. Since $\Spin(2m)$ lies on the list \eq{fm11eq19}, from Theorem \ref{fm12thm1}(b) we make the prediction that if $X$ satisfies condition Theorem \ref{fm10thm2}$(*)$ then $\bs\M^\Spin_{\rm ev}$ should be orientable, and a choice of flag structure on $X$ should determine an orientation on $\bs\M^\Spin_{\rm ev}$.

\section[Applications to moduli spaces of submanifolds]{Applications to moduli spaces of \\ submanifolds}
\label{fm14}

In  \S\ref{fm142}--\S\ref{fm143} we will study orientations on various moduli spaces $\M_\al$ from calibrated geometry. In good cases $\M_\al$ is a manifold. Our approach is to write down an embedding $\M_\al\hookra\cB_\al$ of $\M_\al$ into an {\it infinite-dimensional\/} moduli space $\cB_\al$ defined in \S\ref{fm141}. If $\M_\al$ is a moduli space of submanifolds $M\subset X$ satisfying a p.d.e., then $\cB_\al$ is the moduli space of all submanifolds in the same L-equivalence class~$\al\in\La_k^H(X)$.

We will construct principal $\Z_2$-bundles $O_{\cB_\al}\ra\cB_\al$ such that the orientation bundle $O_{\M_\al}\ra\M_\al$ is $O_{\cB_\al}\vert_{\M_\al}$. Thus, an orientation of $\cB_\al$ (that is, a trivialization $O_{\cB_\al}\cong\cB_\al\t\Z_2$) induces an orientation of $\M_\al$. We will show that orientations of $\cB_\al$ are induced by orientations of analytic orientation functors on $X$, as in \S\ref{fm94}. Hence by the results of \S\ref{fm9} and \S\ref{fm10}, we can sometimes show that $\cB_\al$ and thus $\M_\al$ are orientable, and that a flag structure on $X$ induces an orientation on $\cB_\al$ and hence on~$\M_\al$.

\subsection{Infinite-dimensional moduli spaces of submanifolds}
\label{fm141}

The next definition shows how the analytic orientation functors of \S\ref{fm94} lead to principal $\Z_2$-bundles on infinite-dimensional moduli spaces of submanifolds. The restrictions to codimension 4 submanifolds, and to $n\equiv 1,7,8\mod 8$, are to work with these particular orientation functors.

\begin{dfn}
\label{fm14def1}
Let $(X,g_X)$ be a compact, oriented, spin Riemannian manifold of dimension $n\ge 4$, and let $\rho:H\ra\SO(4)\subset\O(4)$ be a Lie group morphism. Using the notation of Definition \ref{fm2def6}, for each $\al$ in $\La_{n-4}^H(X)$ write $\cB_\al^H$ for the infinite-dimensional moduli space of pairs $(M,\ga_M)$ where $M\subset X$ is a compact, oriented $(n-4)$-submanifold and $\ga_M$ is an isotopy class of normal $H$-structures for $M\subset X$ with $[M,\ga_M]=\al$. By Hamilton \cite[Ex.~4.4.7]{Hami}, $\cB_\al^H$ has the structure of a Fr\'echet manifold, though we care about $\cB_\al^H$ primarily as a topological space.

Suppose that $n\equiv 1,7,8\mod 8$. Define principal $\Z_2$-bundles $O_\al^+,O_\al^-,O_\al^0\ra\cB_\al^H$ to have fibres at $M\in\cB_\al^H$
\e
O_\al^\pm\vert_M=\begin{cases} \Or(\Ker F_M^\pm), & n\equiv 1\mod 8, \\
\Or(\Ker F_M^\pm)\ot_{\Z_2}\Or(\Coker F_M^\pm), & n\equiv 7,8\mod 8, \end{cases}
\label{fm14eq1}
\e
and $O_\al^0\vert_M=O_\al^+\vert_M\ot_{\Z_2}O_\al^-\vert_M$, where $F_M^\pm$ are the Fueter operators of \eq{fm9eq4}, and $\Or(V)$ is the $\Z_2$-torsor of orientations on $V$. Here $F_M^\pm$ is linear over $\K_n$ in Table \ref{fm9tab1}, and we restrict to $n\equiv 1,7,8\mod 8$ as otherwise $F_M^\pm$ is linear over $\C$ or $\H$ and $\Or(\cdots)$ are canonically trivial. As in Definition \ref{fm9def3}, by the constructions of determinant line bundles when $n\equiv 7,8\mod 8$, and Pfaffian line bundles when $n\equiv 1\mod 8$, these are the fibres of principal $\Z_2$-bundles.
\end{dfn}

\begin{prop}
\label{fm14prop1}
In Definition\/ {\rm\ref{fm14def1},} an orientation for $X$ in the sense of Definition\/ {\rm\ref{fm9def2}} of the orientation functors $\sO_{n,4}^{\bs\Spin,H,+},$ $\sO_{n,4}^{\bs\Spin,H,-},$ $\sO_{n,4}^{\bs\Spin,H,0}$ in Definition\/ {\rm\ref{fm9def6},} or their $\Z_2$-reductions $\sO_{n,4,\Z_2}^{\bs\Spin,H,+},$ $\sO_{n,4,\Z_2}^{\bs\Spin,H,-},$ $\sO_{n,4,\Z_2}^{\bs\Spin,H,0}$ when $n\equiv 7\mod 8,$ induces trivializations of the principal\/ $\Z_2$-bundles $O_\al^+\ra\cB_\al^H,$ $O_\al^-\ra\cB_\al^H,$ $O_\al^0\ra\cB_\al^H,$ respectively, for all\/~$\al\in\La_{n-4}^H(X)$. 
\end{prop}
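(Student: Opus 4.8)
The plan is to recognize $\cB_\al^H$ as a classifying space for the $L$-equivalence category $\Bord_X^{n-4}(MH)$ of \S\ref{fm53}, and then transfer a trivialization of the relevant orientation functor along the functor $\Pi_X^{\bs\Spin}:\Bord_X^{n-4}(MH)\ra\Bord_{n,4}^{\bs\Spin}(MH)$ of \eq{fm5eq14}, in exactly the style of Remark \ref{fm9rem1} and equation \eq{fm11eq7}. First I would observe that a point $(M,\ga_M)\in\cB_\al^H$ is the same thing as an object of $\Bord_X^{n-4}(MH)$ with class $\al$, and a path in $\cB_\al^H$ from $(M_0,\ga_{M_0})$ to $(M_1,\ga_{M_1})$ determines (by the trace construction $N=\bigcup_{t}M_t\t\{t\}\subset X\t[0,1]$) a morphism in $\Bord_X^{n-4}(MH)$; conversely, by Proposition \ref{fm5prop3}, $\pi_0(\Bord_X^{n-4}(MH))\cong\La_{n-4}^H(X)$ and $\pi_1(\Bord_X^{n-4}(MH))$ acting on a given object corresponds to $\pi_1(\cB_\al^H)$ acting on the relevant component. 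So the component of $\cB_\al^H$ is, up to the usual identification, the classifying space of the full subcategory $\Bord_X^{n-4}(MH)_{(M,\ga_M)}$.

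Next I would identify the principal $\Z_2$-bundles $O_\al^\pm\ra\cB_\al^H$ with the bundles arising from the analytic orientation functors. Fixing the metric $g_X$, the fibre $O_\al^\pm\vert_M$ in \eq{fm14eq1} is, by Definition \ref{fm9def3}, precisely the $\Z_2$-reduction of the orientation torsor $\sO_n(F_{g_X,M}^\pm)$ attached to the Fueter operator $F_{g_X,M}^\pm$; and by Lemma \ref{fm9lem1}, these fit into a continuous family over $\cB_\al^H$ whose monodromy is computed by $\sO_n$ applied to elliptic operator bordisms, i.e.\ by the functor $\sO_{n,4}^{\bs\Spin,H,\pm}$ composed with $\Pi_X^{\bs\Spin}$. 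In the language of Definition \ref{fm9def2}, a trivialization of $O_\al^\pm\ra\cB_\al^H$ for every $\al$, compatible with isomorphisms, is exactly a natural isomorphism $\eta_X$ in the $2$-commutative square
\[
\xymatrix@C=100pt@R=15pt{
\Bord_X^{n-4}(MH) \drtwocell_{}\omit^{}\omit{^{\eta_X\,\,\,}} \ar[r]_(0.55)\boo \ar[d]^{\Pi_X^{\bs\Spin}} & 0\qs\Z_2 \\
\Bord_{n,4}^{\bs\Spin}(MH) \ar[r]^(0.6){\sO_{n,4}^{\bs\Spin,H,\pm}} & \Ga_n\qs\Ga_{n+1}, \ar[u] }
\]
using that for $n\equiv 1,7,8\mod 8$ the target $\Ga_n\qs\Ga_{n+1}$ maps to $0\qs\Z_2$ by the functors of Table \ref{fm9tab1} (for $n\equiv 7$ this is the $\Z_2$-reduction of $0\qs\Z$, which is why the $\Z_2$-reduced functors $\sO_{n,4,\Z_2}^{\bs\Spin,H,\pm}$ appear). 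The case $*=0$ is the tensor product, and $\sO_{n,4}^{\bs\Spin,H,0}=(\sO_{n,4}^{\bs\Spin,H,+})^{-1}\ot\sO_{n,4}^{\bs\Spin,H,-}$ by Definition \ref{fm9def6}, so a trivialization of $O_\al^0$ corresponds to a natural isomorphism for $\sO_{n,4}^{\bs\Spin,H,0}$.

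Finally, given an orientation for $X$ of $\sO_{n,4}^{\bs\Spin,H,*}$ (or its $\Z_2$-reduction) in the sense of Definition \ref{fm9def2}, which by definition is a natural isomorphism $\eta_X$ fitting into the diagram \eq{fm9eq1} with $\Bord_X(K(R,k))$ replaced by $\Bord_X^{n-4}(MH)$ and $\Pi_X^{\bs B}$ by \eq{fm5eq14}, I would simply restrict $\eta_X$ to each full subcategory $\Bord_X^{n-4}(MH)_{(M,\ga_M)}$ and read off, via the identifications above, a trivialization of $O_\al^*\ra\cB_\al^H$; naturality of $\eta_X$ under morphisms in $\Bord_X^{n-4}(MH)$ gives the compatibility of these trivializations over each component and under isomorphisms of submanifolds. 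The only genuinely nontrivial step is the identification of the monodromy of $O_\al^\pm$ around loops in $\cB_\al^H$ with $\sO_{n,4}^{\bs\Spin,H,\pm}\ci\Pi_X^{\bs\Spin}$ applied to the corresponding automorphism $[X\t[0,1],N]$; this requires knowing that the Fueter operator of the trace submanifold $N\subset X\t[0,1]$ is the cylindrical completion near the boundary (Lemma \ref{fm9lem1}) and that Theorem \ref{fm9thm2} computes the induced orientation-torsor isomorphism, so the family $O_\al^\pm$ is literally the one pushed down from the elliptic-operator family along $\A_X\ra *$ as in Definition \ref{fm9def6}. Everything else is formal manipulation of Picard groupoids and restriction of natural transformations. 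I expect the main obstacle to be purely expository: carefully matching the Fr\'echet-manifold topology on $\cB_\al^H$ with the categorical $\pi_0,\pi_1$ data so that "trivialization of $O_\al^*\ra\cB_\al^H$ for all $\al$" and "natural isomorphism $\eta_X$" are genuinely the same object, which is the content of a $\cB_\al^H$-analogue of Proposition \ref{fm12prop1}.
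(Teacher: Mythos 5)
Your proposal is correct and follows essentially the same route as the paper: unwind the natural isomorphism $\eta_X$ to get an isomorphism $O_\al^*\vert_M\ra\Z_2$ at each object $M$, then use the trace construction (a Fr\'echet path $t\mapsto M_t$ in $\cB_\al^H$ gives a neat submanifold $N=\coprod_t M_t\t\{t\}\subset X\t[0,1]$, hence a morphism in $\Bord_X^{n-4}(MH)$) together with naturality of $\eta_X$ to see that these fibrewise isomorphisms vary continuously. One caveat: your claim that a trivialization of $O_\al^*\ra\cB_\al^H$ for all $\al$ is \emph{exactly} a natural isomorphism $\eta_X$ (and that components of $\cB_\al^H$ are classifying spaces of the subcategories of $\Bord_X^{n-4}(MH)$) is an overstatement --- morphisms in $\Bord_X^{n-4}(MH)$ are arbitrary L-equivalences $N\subset X\t[0,1]$, not just traces of isotopies, so naturality of $\eta_X$ is strictly stronger data than continuity of trivializations over $\cB_\al^H$; this is precisely the point of Remark \ref{fm14rem1}. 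Since the proposition only asserts the forward implication, this does not affect your proof.
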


\begin{proof}
By Definition \ref{fm9def2}, an orientation of $\sO_{n,4}^{\bs\Spin,H,+}$ for $X$ is a natural isomorphism $\eta_X:F_{A\qs\Z_2}^{0\qs\Z_2}\ci\sO_{n,4}^{\bs\Spin,H,+}\ci\Pi_X^{\bs\Spin}\Ra\boo$. But unwinding the definitions shows that $F_{A\qs\Z_2}^{0\qs\Z_2}\ci\sO_{n,4}^{\bs\Spin,H,+}\ci\Pi_X^{\bs\Spin}$ maps $M\mapsto O_\al^+\vert_M$ for $\al=[M]$. Thus $\eta_X(M)$ gives an isomorphism~$O_\al^+\vert_M\ra\Z_2$.

We claim these isomorphisms depend continuously on $M\in\cB_\al^H$, and so define trivializations of $O_\al^+\ra\cB_\al^H$ for all $\al\in\La_{n-4}^H(X)$. To see this, note that a Fr\'echet-smooth map $\ga:[0,1]\ra\cB_\al^H$ gives a smooth 1-parameter family of $(n-4)$-submanifolds $M_t\subset X$ for $t\in[0,1]$, so that $N=\coprod_{t\in[0,1]}M_t\t\{t\}$ is an $(n-3)$-submanifold of $X\t[0,1]$, which defines a morphism $[N]:M_0\ra M_1$ in $\Bord_X^{n-4}(MH)$. The compatibility of $\eta_X$ with morphisms in $\Bord_X^{n-4}(MH)$ then implies that the isomorphisms $O_\al^+\vert_{M_t}\ra\Z_2$ are continuous in $t\in[0,1]$. The cases $\sO_{n,4}^{\bs\Spin,H,-},$ $\sO_{n,4}^{\bs\Spin,H,0}$ are the same. The proposition follows.
\end{proof}

\begin{rem}
\label{fm14rem1}
Although classes $\al\in\La_{n-4}^H(X)$ correspond to isomorphism classes in $\Bord_X^{n-4}(MH)$, the spaces $\cB_\al^H$ have many connected components, as for $M_0,M_1\subset X$ to be L-equivalent is weaker than for them to be connected by a smooth family of diffeomorphic submanifolds $M_t\subset X$ for $t\in[0,1]$. So there are many more choices of orientations for $O_\al^+\ra\cB_\al^H$ than those coming from an orientation of $\sO_{n,4}^{\bs\Spin,H,+}$ for~$X$.
\end{rem}

Combining Theorem \ref{fm11thm5}, Proposition \ref{fm14prop1}, and the material on flag structures in \S\ref{fm10}, yields:

\begin{thm}
\label{fm14thm1}
{\bf(a)} In Definition\/ {\rm\ref{fm14def1},} suppose\/ $(X,g_X)$ is a compact spin Riemannian $7$-manifold. Then $O_\al^0\ra\cB_\al^H$ is trivializable for any $\rho:H\ra\SO(4)$ and all\/ $\al\in\La_3^H(X),$ and\/ $O_\al^-\ra\cB_\al^H$ is trivializable for any $\rho:H\ra\SO(4)$ which factors via $\U(2)$ or $\Spin(4),$ and all\/ $\al\in\La_3^H(X)$. In each case a choice of flag structure $F$ on $X$ determines trivializations of\/ $O_\al^0,O_\al^-\ra\cB_\al^H$. 
\smallskip

\noindent{\bf(b)} In Definition\/ {\rm\ref{fm14def1},} suppose\/ $(X,g_X)$ is a compact spin Riemannian $8$-mani\-fold satisfying Theorem\/ {\rm\ref{fm10thm2}$(*)$}. Then $O_\al^0\ra\cB_\al^H$ and\/ $O_\al^-\ra\cB_\al^H$ are trivializable for any $\rho:H\ra\SO(4)$ which factors via\/ $\U(2)\hookra\SO(4)$ or\/ $\Spin(4)\twoheadrightarrow\SO(4),$ and all\/ $\al\in\La_4^H(X)$. In each case a choice of flag structure $F$ on $X$ determines trivializations of\/ $O_\al^0,O_\al^-\ra\cB_\al^H$. 
\end{thm}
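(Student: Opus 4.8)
The plan is to reduce Theorem \ref{fm14thm1} to Theorem \ref{fm14prop1} and Theorem \ref{fm11thm5}, exactly in the spirit of the ``factorization via transfer functors'' philosophy of \S\ref{fm112}. The structure of the argument is: (i) interpret trivializability of the principal $\Z_2$-bundles $O_\al^0, O_\al^-\ra\cB_\al^H$ as existence of an orientation on $X$ for the relevant orientation functor; (ii) quote Theorem \ref{fm11thm5} to get that this orientation functor is orientable for every compact spin $7$-manifold (part (a)), or for every compact spin $8$-manifold satisfying $(*)$ (part (b)); (iii) quote Theorem \ref{fm10thm2} and the discussion of \S\ref{fm101}--\S\ref{fm102} to conclude that a choice of flag structure $F$ on $X$ pins down the trivialization.

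First I would set up (i). By Proposition \ref{fm14prop1}, an orientation for $X$ of the orientation functor $\sO_{n,4}^{\bs\Spin,H,0}$ (resp.\ $\sO_{n,4}^{\bs\Spin,H,-}$) in the sense of Definition \ref{fm9def2} induces trivializations of $O_\al^0\ra\cB_\al^H$ (resp.\ $O_\al^-\ra\cB_\al^H$) for all $\al\in\La_{n-4}^H(X)$; when $n\equiv 7\bmod 8$ one may equivalently work with the $\Z_2$-reduced functors $\sO_{n,4,\Z_2}^{\bs\Spin,H,*}$, since the bundles $O_\al^*$ in \eq{fm14eq1} are valued in $\Z_2$-torsors. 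So it suffices to show that these orientation functors are orientable for the manifolds $X$ in question, and that flag structures induce orientations for them. This is exactly the content of Theorem \ref{fm11thm5}: part (a)(ii) of that theorem says $\sO_{7,4,\Z_2}^{\bs\Spin,H,0}$ factors via $\sH_7^\Z$ for any $\rho:H\ra\O(4)$ factoring through $\SO(4)$, and part (a)(iii) says $\sO_{7,4,\Z_2}^{\bs\Spin,H,-}$ factors via $\sH_7^\Z$ for any $\rho:H\ra\O(4)$ factoring through $\U(2)\hookra\O(4)$ or $\Spin(4)\ra\O(4)$. Since $\sH_7^\Z$ is orientable for any compact spin $7$-manifold (this is recorded in Remark \ref{fm10rem1}(e), and also follows from $\Im\hat\xi^{\bs\Spin}_7(K(\Z,4))\subseteq\Ker\pi_1(\sH_7^\Z)$ together with Theorem \ref{fm9thm1}(ii)), the diagram chase \eq{fm11eq7} gives that $\sO_{7,4,\Z_2}^{\bs\Spin,H,0}$ and, in the factored cases, $\sO_{7,4,\Z_2}^{\bs\Spin,H,-}$ are orientable for any compact spin $7$-manifold $X$. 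For part (b), Theorem \ref{fm11thm5}(b)(ii) gives that $\sO_{8,4,\Z_2}^{\bs\Spin,H,-}$ and $\sO_{8,4,\Z_2}^{\bs\Spin,H,0}$ factor via $\sH_8^\Z$ for $\rho:H\ra\O(4)$ factoring through $\U(2)$ or $\Spin(4)$, and $\sH_8^\Z$ is orientable for $X$ iff $X$ satisfies $(*)$ by Theorem \ref{fm10thm2}(a); so under $(*)$ these orientation functors are orientable for $X$, and Proposition \ref{fm14prop1} converts that into trivializability of $O_\al^0,O_\al^-\ra\cB_\al^H$.

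For the flag-structure statements, I would invoke the chain of natural isomorphisms already assembled in \S\ref{fm11}: for $n=7$ one chooses a $\la_{13}^*$ as in \eq{fm11eq17} (for the $\Spin(4)$ case), a $\la_{11}^*$ as in \eq{fm11eq16} (for the $\U(2)$ case), and a $\la_{15}^0$ as in \eq{fm11eq18} (for the general-$\SO(4)$, $*=0$ case), together with Proposition \ref{fm11prop3} to push down from $\SO(4)$ to a general $H$; composing with the diagram \eq{fm11eq7} shows that an orientation of $\sH_7^\Z$ on $X$ --- i.e.\ a flag structure on $X$ in the sense of \S\ref{fm101}, by Theorem \ref{fm10thm1} --- determines an orientation of $\sO_{7,4,\Z_2}^{\bs\Spin,H,*}$ on $X$, hence by Proposition \ref{fm14prop1} a trivialization of $O_\al^*\ra\cB_\al^H$ for all $\al$. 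The same argument with $\la_{12}^*,\la_{14}^*$ as in \eq{fm11eq16}--\eq{fm11eq17}, $\sH_8^\Z$, and Theorem \ref{fm10thm2}(a) handles part (b). I expect the only genuine subtlety --- and hence the ``main obstacle'' --- is bookkeeping: making sure that the reduction from general $H$ with $\rho:H\ra\SO(4)$ to the specific cases $\SO(4),\U(2),\Spin(4)$ via Proposition \ref{fm11prop3} is applied with the correct factorization hypothesis in each clause (general $\SO(4)$ only for $*=0$, the $\U(2)$ and $\Spin(4)$ cases for both $*=0$ and $*=-$, and never for $*=+$, matching the non-commutativity assertions in Theorem \ref{fm11thm4}(b)), and that the $\Z_2$-reduction identification between $\sO_{n,4}^{\bs\Spin,H,*}$-orientations on $X$ and trivializations of the $\Z_2$-torsor-valued bundles $O_\al^*$ of \eq{fm14eq1} is spelled out correctly in the case $n\equiv 7\bmod 8$ versus $n\equiv 8\bmod 8$. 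None of this requires new computation; it is an assembly of Proposition \ref{fm14prop1}, Theorem \ref{fm11thm5}, Theorem \ref{fm10thm1}, and Theorem \ref{fm10thm2}.
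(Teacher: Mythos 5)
Your proposal is correct and follows exactly the route the paper takes: Theorem \ref{fm14thm1} is stated in the paper as an immediate consequence of combining Theorem \ref{fm11thm5}, Proposition \ref{fm14prop1}, and the flag-structure material of \S\ref{fm10}, which is precisely your assembly. Your extra care about which factorization hypothesis applies to which clause ($\SO(4)$ only for $*=0$; $\U(2)$, $\Spin(4)$ for $*=0,-$) is the right bookkeeping and matches Theorem \ref{fm11thm5}(a)(ii)--(iii) and (b)(ii).
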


\subsection{\texorpdfstring{Associative 3-folds in $G_2$-manifolds}{Associative 3-folds in G₂-manifolds}}
\label{fm142}

See the first author \cite[\S 10]{Joyc1} for background on $G_2$ and associative 3-folds.

\begin{dfn}
\label{fm14def2}
Let $(X,\vp,g)$ be a compact $G_2$-manifold, as in Definition \ref{fm12def3}, and $M\subset X$ be an oriented 3-submanifold. We call $M$ an {\it associative\/ $3$-fold\/} if $M$ is calibrated with respect to $\vp$, that is, if $\vp\vert_M=\vol_M$ is the volume form of $M$, defined using the orientation of $M$ and the Riemannian metric $g\vert_M$. (We do not need $\d\vp=0$, though this is part of the usual definition of calibration.) 

Let $\rho:H\ra\SO(4)\subset\O(4)$ be a Lie group morphism. Using the notation of Definition \ref{fm2def6}, for each $\al$ in $\La_3^H(X)$ write $\M_\al^{\ass,H}$ for the moduli space of pairs $(M,\ga_M)$ where $M\subset X$ is a compact associative 3-fold and $\ga_M$ is an isotopy class of normal $H$-structures for $M\subset X$ with $[M,\ga_M]=\al$, such that the orientation induced by $\ga_M$ on the fibres of $\nu_M$ agrees with that induced by the orientations on $X$ and $M$. Then $\M_\al^{\ass,H}$ is a subset of $\cB_\al^H$ in Definition~\ref{fm14def1}. 

As in McLean \cite[\S 5]{McLe}, the deformation theory of $\M_\al^{\ass,H}$ is controlled by an elliptic operator, so one can show that $\M_\al^{\ass,H}$ is a derived manifold of virtual dimension 0 in the sense of \cite{Joyc2,Joyc3,Joyc4,Joyc6}. If $\vp$ is generic then $\M_\al^{\ass,H}$ is an ordinary 0-manifold. Examples of compact associative 3-folds in compact 7-manifolds with holonomy $G_2$ are given in~\cite[\S 12]{Joyc1}. 
\end{dfn}

The next theorem was stated as a conjecture for $H=\SO(4)$, and partially proved, in the first author \cite[\S 3.2]{Joyc5}. The proof given there was complete when $\M_\al^{\ass,\SO(4)}$ is unobstructed. We give a new proof.

\begin{thm}
\label{fm14thm2}
Let\/ $(X,\vp,g)$ be a compact\/ $G_2$-manifold and\/ $\rho:H\ra\SO(4)\subset\O(4)$ be a Lie group morphism. Then a choice of flag structure $F$ on $X$ in the sense of\/ {\rm\S\ref{fm101}} determines orientations on the moduli spaces of associative $3$-folds $\M_\al^{\ass,H}$ for all\/ $\al\in \La_3^H(X)$.
\end{thm}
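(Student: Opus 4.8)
The plan is to mirror the structure already used for gauge theory in \S\ref{fm12} and reduce the statement to the orientability of an analytic orientation functor from \S\ref{fm94}, which is then controlled by a flag structure via the factorizations of \S\ref{fm112}. First I would recall from McLean's deformation theory \cite{McLe} that the deformation theory of $\M_\al^{\ass,H}$ is controlled by a Fueter-type operator; indeed, as in \cite[\S 6]{DoSe} and Definition \ref{fm9def5}, the relevant operator for an associative $3$-fold $M\subset X$ in a $G_2$-manifold is the Fueter operator $F_{g_X,M}^+$ of \eq{fm9eq4} with $n=7$ (or a stabilization thereof after adding a trivial summand). The key point is that the orientation bundle $O_{\M_\al^{\ass,H}}\ra\M_\al^{\ass,H}$ of the derived manifold $\M_\al^{\ass,H}$ is the restriction to $\M_\al^{\ass,H}$ of the principal $\Z_2$-bundle $O_\al^0\ra\cB_\al^H$ on the infinite-dimensional moduli space $\cB_\al^H$ from Definition \ref{fm14def1}. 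Here one uses that the obstruction bundle of the McLean complex, after the standard identification of normal vector fields with twisted spinors, is the cokernel of $F_{g_X,M}^0$, so that a trivialization of $O_\al^0$ restricts to an orientation of the derived manifold $\M_\al^{\ass,H}$.

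With this reduction in hand, the statement follows immediately from Theorem \ref{fm14thm1}(a): for $X$ a compact spin $7$-manifold and any Lie group morphism $\rho:H\ra\SO(4)\subset\O(4)$, the bundle $O_\al^0\ra\cB_\al^H$ is trivializable, and a choice of flag structure $F$ on $X$ in the sense of \S\ref{fm101} determines a trivialization of $O_\al^0\ra\cB_\al^H$ for all $\al\in\La_3^H(X)$. Restricting this trivialization to $\M_\al^{\ass,H}\subseteq\cB_\al^H$ gives the desired orientation. Concretely, the chain of implications is: a flag structure $F$ on $X$ is an orientation for $\sH_7^\Z$ on $X$ (Theorem \ref{fm10thm1}); by the factorization $\sO_{7,4,\Z_2}^{\bs\Spin,\SO(4),0}\cong \sH_7^\Z\ci F_{M\SO(4)}^{K(\Z,4)}$ of \eq{fm11eq18} (Theorem \ref{fm11thm4}, Theorem \ref{fm11thm5}(a)(ii)), composing natural isomorphisms across the analogue of \eq{fm11eq7} produces an orientation for $\sO_{7,4,\Z_2}^{\bs\Spin,\SO(4),0}$ on $X$; pulling this back along $F_\io$ via Proposition \ref{fm11prop3} with $H_1=H$, $H_2=\SO(4)$ gives an orientation for $\sO_{7,4,\Z_2}^{\bs\Spin,H,0}$ on $X$; and Proposition \ref{fm14prop1} converts this into a trivialization of $O_\al^0\ra\cB_\al^H$ for all $\al$.

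The one part that needs genuine care, and which I expect to be the main obstacle, is the first reduction: the precise identification of the orientation bundle of the derived manifold $\M_\al^{\ass,H}$ with $O_\al^0\vert_{\M_\al^{\ass,H}}$. One must check that the obstruction theory arising from McLean's elliptic complex, after the spinorial identification of the normal-bundle geometry of an associative $3$-fold (which uses the $2$-out-of-$3$ spin structure on $\nu_M$ and the pair of quaternionic line bundles $\Si^\pm_\nu$ in Definition \ref{fm9def5}), really produces the operator $F^0_{g_X,M}=(F^+_{g_X,M})^{-1}\ot F^-_{g_X,M}$ up to stabilization and continuous deformation, and that the induced identification of orientation torsors is compatible as $M$ varies in $\cB_\al^H$ (so that it matches the bundle structure of $O_\al^0$, not merely its fibres). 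This is the analogue of the fact in \S\ref{fm12} that the orientation bundle of a $G_2$-instanton moduli space is the restriction of $O_P^{E_\bu}$ for $E_\bu$ the twisted Dirac operator; here the role of $E_\bu$ is played by the Fueter operator, and the role of $\B_P$ by $\cB_\al^H$. This compatibility is where the analytic input genuinely enters; everything downstream is formal manipulation of the functorial machinery of \S\S\ref{fm9}--\S\ref{fm11} together with the explicit computations in Tables \ref{fm11tab3}--\ref{fm11tab5}. For the case $H=\SO(4)$ this also recovers, and completes, the partial result of \cite[\S 3.2]{Joyc5}, where the argument was only complete in the unobstructed case; the bordism-categorical approach removes the unobstructedness hypothesis because it never needs to analyze $\M_\al^{\ass,H}$ directly, only its embedding into $\cB_\al^H$.
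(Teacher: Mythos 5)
Your overall strategy is the paper's: identify the orientation bundle of $\M_\al^{\ass,H}$ with the restriction of $O_\al^0\ra\cB_\al^H$ from Definition \ref{fm14def1}, then quote Theorem \ref{fm14thm1}(a) to get a trivialization of $O_\al^0$ from a flag structure; the functorial chain you describe (flag structure $\Rightarrow$ orientation of $\sH_7^\Z$ $\Rightarrow$ orientation of $\sO_{7,4,\Z_2}^{\bs\Spin,\SO(4),0}$ $\Rightarrow$ orientation of $\sO_{7,4,\Z_2}^{\bs\Spin,H,0}$ $\Rightarrow$ trivialization of $O_\al^0$) is exactly the content of Theorem \ref{fm14thm1}(a). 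The problem is the step you yourself flag as ``the main obstacle'': you never establish that the orientation bundle of $\M_\al^{\ass,H}$ is $O_\al^0\vert_{\M_\al^{\ass,H}}$, and your sketch of what must be checked is not correct. McLean's theorem identifies the deformation operator of an associative $3$-fold with $F_M^-$ (the Dirac operator twisted by $\Si_\nu^-$), not with $F_M^+$ as you write, and certainly not with ``$F^0_{g_X,M}=(F^+_{g_X,M})^{-1}\ot F^-_{g_X,M}$'', which is not an elliptic operator but a formal difference of orientation torsors. So the direct conclusion from McLean is that the orientation bundle is $O_\al^-\vert_{\M_\al^{\ass,H}}$ --- and that is not good enough: Theorem \ref{fm14thm1}(a) only trivializes $O_\al^-$ when $\rho$ factors through $\U(2)$ or $\Spin(4)$, and by Corollary \ref{fm11cor2} the functor $\sO_{7,4,\Z_2}^{\bs\Spin,\SO(4),-}$ is genuinely non-orientable on some compact spin $7$-manifolds, so no trivialization of $O_\al^-\ra\cB_\al^H$ exists for general $\rho:H\ra\SO(4)$.

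The missing idea that bridges $O_\al^-$ and $O_\al^0$ is special to the associative locus: for $M$ associative there are canonical isomorphisms $E_0\cong E_1\cong\Si_\nu^+$, so $F_M^+$ can be continuously deformed to the Hodge--de Rham operator $\ast(\d+\d^*)$ on $\Ga^\iy(\La^0T^*M\op\La^2T^*M)$. Its kernel $H^0(M,\R)\op H^2(M,\R)$ has locally constant dimension, so its orientation torsor is canonically and \emph{continuously} trivialized along $\M_\al^{\ass,H}$ (this continuity is exactly the caveat of Remark \ref{fm14rem2}(c): self-adjointness alone gives only fibrewise trivializations, which may jump). Hence $O_\al^+\vert_{\M_\al^{\ass,H}}$ is canonically trivial, and therefore $O_\al^-\vert_{\M_\al^{\ass,H}}\cong O_\al^0\vert_{\M_\al^{\ass,H}}$ canonically. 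Only after this step may you invoke Theorem \ref{fm14thm1}(a). You should supply this argument explicitly; as written, your proof has a hole precisely at the one point where, as you correctly observe, the analytic input genuinely enters.
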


\begin{proof}
By McLean \cite[Th.~5.2]{McLe}, the deformation complex of\/ $M\subset X$ can be identified with the Fueter operator from \eq{fm9eq4} for\/ $n=7,$
\begin{equation*}
F_M^-=(\sD_M)^{\Si_\nu^-}:\Ga^\iy(E_0\ot_\H\Si_\nu^-)\longra\Ga^\iy(E_1\ot_\H\Si_\nu^-).
\end{equation*}
Hence the orientation bundle of $\M_\al^{\ass,H}$ is canonically isomorphic to the restriction to $\M_\al^{\ass,H}\subset\cB_\al^H$ of the principal $\Z_2$-bundle $O_\al^-\ra\cB_\al^H$ in Definition~\ref{fm14def1}.

Now Theorem \ref{fm14thm1}(a) does {\it not\/} tell us that $O_\al^-\ra\cB_\al^H$ is trivializable. In fact, Corollary \ref{fm11cor2} says $\sO_{7,4,\Z_2}^{\bs\Spin,\SO(4),-}$ can be non-orientable for some spin 7-manifolds $X$, so $O_\al^-\ra\cB_\al^H$ could conceivably be nontrivial for such $X$. So one might expect that associative moduli spaces could be non-orientable.

To get round this, note that as $M$ is associative there are canonical isomorphisms $E_0\cong E_1\cong\Si_\nu^+$. Thus we can identify $E_i\ot_\H\Si_\nu^+\cong\La^0T^*M\op\La^1T^*M.$ The Fueter operator $F_M^+$ can be deformed into the Hodge--de Rham operator
\begin{equation*}
\ast(\d+\d^*):\Ga^\iy(\La^0T^*M\op\La^2T^*M)\longra\Ga^\iy(\La^0T^*M\op\La^2T^*M).
\end{equation*}
Since the orientation $\Z_2$-torsor of $\ast(\d+\d^*)$ has a canonical trivialization, it follows that the restriction of $O_\al^+\ra\cB_\al^H$ to $\M_\al^{\ass,H}\subset\cB_\al^H$ is canonically trivial. (But see Remark \ref{fm14rem2}(c) for a warning about this.)

Combining the two facts above, the orientation bundle of\/ $\M_\al^{\ass,H}$ is canonically isomorphic to the restriction to $\M_\al^{\ass,H}\subset\cB_\al^H$ of the principal\/ $\Z_2$-bundle $O_\al^0\ra\cB_\al^H$. Thus, trivializations of $O_\al^0\ra\cB_\al^H$ restrict to orientations of $\M_\al^{\ass,H}$. The theorem now follows from Theorem~\ref{fm14thm1}(a).
\end{proof}

\begin{rem}
\label{fm14rem2}
{\bf(a)} Orienting moduli spaces of associative 3-folds is important in the Donaldson--Segal programme \cite{DoSe}, which aims to define enumerative invariants of $(X,\vp,g)$ by counting $G_2$-instantons and associative 3-folds on $X$, with signs.
\smallskip

\noindent{\bf(b)} One could ask whether we can use our methods to prove existence of {\it Floer gradings\/} in $\Z$ or $\Z_k$ for $k>2$ on moduli spaces of associative 3-folds, that one could use in a conjectural Floer theory defined using associative 3-folds in $X$ and asymptotically cylindrical Cayley 4-folds in $X\t\R$. However, Corollary \ref{fm11cor2} shows that there must be obstructions to doing this.
\smallskip

\noindent{\bf(c)} Here is a rather confusing point. In dimension 7, at least for torsion-free $G_2$-structures, the elliptic operators $F_M^\pm$ and $\ast(\d+\d^*)$ are {\it self-adjoint}, so that $\Ker F_M^\pm\cong\Coker F_M^\pm$, and $\Or(\Ker F_M^\pm)\cong\Or(\Coker F_M^\pm)$, and in \eq{fm14eq1} we have a canonical isomorphism $\Or(\Ker F_M^\pm)\ot_{\Z_2}\Or(\Coker F_M^\pm)\cong\Z_2$. Na\"\i vely, this might lead us to believe that the $\Z_2$-bundles $O_\al^+,O_\al^-,O_\al^0\ra\cB_\al^H$ in Definition \ref{fm14def1} are canonically trivial.

In fact this is {\it false}. Although we have canonical isomorphisms $O_\al^*\vert_M\cong\Z_2$ at each point $M\in\cB_\al^H$, these isomorphisms {\it do not depend continuously on\/} $M\in\cB_\al^H$, and can jump discontinuously when $\Ker F_M^\pm$ jump in dimension, so they do {\it not\/} give trivializations of the principal $\Z_2$-bundles~$O_\al^*\ra\cB_\al^H$.

The proof of Theorem \ref{fm14thm2} is an exception to this. On $\M_\al^{\ass,H}\subset\cB_\al^H$ we can replace $F_M^+$ by $\ast(\d+\d^*)$. As the kernel of $\ast(\d+\d^*)$ is isomorphic to $H^0(M,\R)\op H^2(M,\R)$ it cannot jump in dimension as $M$ varies smoothly, so the induced canonical isomorphisms $O_\al^+\vert_M\cong\Z_2$ depend continuously on $M$ in $\M_\al^{\ass,H}$, and $O_\al^+\vert_{\M_\al^{\ass,H}}$ is canonically trivial. We do not expect $O_\al^+$ to be trivial on $\cB_\al^H$ in general.
\end{rem}

\subsection{\texorpdfstring{Cayley 4-folds in $\Spin(7)$-manifolds}{Cayley 4-folds in Spin(7)-manifolds}}
\label{fm143}

See the first author \cite[\S 10]{Joyc1} for background on $\Spin(7)$ and Cayley 4-folds.

\begin{dfn}
\label{fm14def3}
Let $(X,\Om,g)$ be a compact $\Spin(7)$-manifold, as in Definition \ref{fm12def4}, and let $M\subset X$ be an oriented 4-submanifold. We call $M$ a {\it Cayley\/ $4$-fold\/} if $M$ is calibrated with respect to $\Om$, that is, if $\Om\vert_M=\vol_M$ is the volume form of $M$, defined using the orientation of $M$ and the Riemannian metric $g\vert_M$. (We do not need $\d\Om=0$ here.) 

Let $\rho:H\ra\SO(4)\subset\O(4)$ be a Lie group morphism. Using the notation of Definition \ref{fm2def6}, for each $\al$ in $\La_4^H(X)$ write $\M_\al^{\Cay,H}$ for the moduli space of pairs $(M,\ga_M)$ where $M\subset X$ is a compact Cayley 4-fold and $\ga_M$ is an isotopy class of normal $H$-structures for $M\subset X$ with $[M,\ga_M]=\al$, such that the orientation induced by $\ga_M$ on the fibres of $\nu_M$ agrees with that induced by the orientations on $X$ and $M$. Then $\M_\al^{\Cay,H}$ is a subset of $\cB_\al^H$ in Definition~\ref{fm14def1}. 

By McLean \cite[Th.~6.3]{McLe}, the deformation complex of $M\subset X$ can be identified with the elliptic Fueter operator from \eq{fm9eq4} for $n=8,$
\begin{equation*}
F_M^-=(\sD_M^+)^{\Si_\nu^-}:\Ga^\iy(\Si_M^+\ot_\H\Si_\nu^-)\longra\Ga^\iy(\Si_M^-\ot_\H\Si_\nu^-).
\end{equation*}
The index of\/ $F_M^-$ is given by the Atiyah--Singer index formula as
\e
\ind F_M^-=\int_M\wh{A}(TM)\ch(\Si^-_\nu)=\frac{\chi(M)+\sign(M)}{2}-[M]\bu[M].
\label{fm14eq2}
\e
Using this one can show that $\M_\al^{\Cay,H}$ is a derived manifold of virtual dimension \eq{fm14eq2} in the sense of \cite{Joyc2,Joyc3,Joyc4,Joyc6}. If $\Om$ is generic then $\M_\al^{\Cay,H}$ is an ordinary manifold of dimension \eq{fm14eq2}. Examples of compact Cayley 4-folds in compact 8-manifolds with holonomy $\Spin(7)$ are given in \cite[\S 14]{Joyc1}. From Definition \ref{fm14def1}, we see that the orientation bundle $O_\al^{\Cay,H}$ of\/ $\M_\al^{\Cay,H}$ is the restriction to $\M_\al^{\Cay,H}\subset\cB_\al^H$ of $O_\al^-\ra\cB_\al^H$. Hence Theorem \ref{fm14thm1}(b) implies:
\end{dfn}

\begin{thm}
\label{fm14thm3}
Let\/ $(X,\Om,g)$ be a compact\/ $\Spin(7)$-manifold satisfying Theorem\/ {\rm\ref{fm10thm2}$(*)$,} and suppose $\rho:H\ra\SO(4)$ factors via $\U(2)\hookra\SO(4)$ or\/ $\Spin(4)\ra\SO(4)$. Then the moduli spaces $\M_\al^{\Cay,H}$ in Definition\/ {\rm\ref{fm14def3}} are orientable for all\/ $\al\in\La_4^H(X)$. A choice of flag structure $F$ on $X$ in the sense of\/ {\rm\S\ref{fm102}} determines orientations on $\M_\al^{\Cay,H}$ for all\/~$\al\in\La_4^H(X)$.
\end{thm}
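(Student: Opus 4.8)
The plan is to deduce Theorem \ref{fm14thm3} from the already-established orientability results for the infinite-dimensional moduli spaces $\cB_\al^H$. The key observation, recorded in Definition \ref{fm14def3}, is that McLean's identification of the deformation complex of a Cayley $4$-fold $M\subset X$ with the Fueter operator $F_M^-$ means that the orientation bundle $O_\al^{\Cay,H}$ of the moduli space $\M_\al^{\Cay,H}$ is canonically isomorphic to the restriction to $\M_\al^{\Cay,H}\subset\cB_\al^H$ of the principal $\Z_2$-bundle $O_\al^-\ra\cB_\al^H$ from Definition \ref{fm14def1}. So the theorem reduces entirely to the statement that $O_\al^-\ra\cB_\al^H$ is trivializable, and that a flag structure $F$ on $X$ determines a trivialization --- and this is exactly the content of Theorem \ref{fm14thm1}(b), given that $X$ is a compact spin $8$-manifold satisfying Theorem \ref{fm10thm2}$(*)$ and that $\rho:H\ra\SO(4)$ factors via $\U(2)\hookra\SO(4)$ or $\Spin(4)\twoheadrightarrow\SO(4)$.

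Concretely, the steps would be: first, invoke Definition \ref{fm14def3} to note that $X$ admits a spin structure induced by the $\Spin(7)$-structure $(\Om,g)$, so $(X,g)$ is a compact spin Riemannian $8$-manifold, and $\M_\al^{\Cay,H}\subset\cB_\al^H$ with the identification $O_\al^{\Cay,H}\cong O_\al^-\vert_{\M_\al^{\Cay,H}}$. Second, apply Theorem \ref{fm14thm1}(b): since $X$ satisfies Theorem \ref{fm10thm2}$(*)$ and $\rho$ factors via $\U(2)$ or $\Spin(4)$, the bundle $O_\al^-\ra\cB_\al^H$ is trivializable for all $\al\in\La_4^H(X)$, and a choice of flag structure $F$ on $X$ in the sense of \S\ref{fm102} determines a trivialization. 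Third, restrict this trivialization along the inclusion $\M_\al^{\Cay,H}\hookra\cB_\al^H$ to obtain, via the identification of orientation bundles, an orientation on $\M_\al^{\Cay,H}$; this restriction is functorial in $\al$ and depends only on $F$. This establishes both the orientability claim and the flag-structure claim.

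Tracing the dependency chain one level further for completeness: Theorem \ref{fm14thm1}(b) itself follows by combining Proposition \ref{fm14prop1} --- which says an orientation for $X$ of the orientation functors $\sO_{8,4,\Z_2}^{\bs\Spin,H,-}$ or $\sO_{8,4,\Z_2}^{\bs\Spin,H,0}$ in the sense of Definition \ref{fm9def2} induces trivializations of $O_\al^-\ra\cB_\al^H$ or $O_\al^0\ra\cB_\al^H$ --- with Theorem \ref{fm11thm5}(b)(ii), which shows that for $\rho:H\ra\SO(4)$ factoring via $\U(2)$ or $\Spin(4)$ the functors $\sO_{8,4,\Z_2}^{\bs\Spin,H,-}$ and $\sO_{8,4,\Z_2}^{\bs\Spin,H,0}$ factor via $\sH_8^\Z:\Bord_8^{\bs\Spin}(K(\Z,4))\ra\Z_2\qs\Z_2$, together with Theorem \ref{fm10thm2}(a), which shows that $\sH_8^\Z$ is orientable for $X$ precisely when condition $(*)$ holds and that flag structures are exactly orientations for $\sH_8^\Z$. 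Since all of these are stated earlier in the excerpt, no further work is needed; the proof of Theorem \ref{fm14thm3} is genuinely just the assembly of these inputs.

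The main (and essentially only) obstacle is a bookkeeping one: one must check that the identification $O_\al^{\Cay,H}\cong O_\al^-\vert_{\M_\al^{\Cay,H}}$ is compatible with the trivializations, i.e.\ that the trivialization of $O_\al^-\ra\cB_\al^H$ produced from $F$ via Proposition \ref{fm14prop1} restricts correctly under McLean's isomorphism of deformation complexes. This is the same continuity-of-isomorphisms subtlety flagged in Remark \ref{fm14rem2}(c) --- the pointwise identifications must vary continuously over $\M_\al^{\Cay,H}$. Here, however, there is no issue: the identification $O_\al^{\Cay,H}\cong O_\al^-\vert_{\M_\al^{\Cay,H}}$ is McLean's canonical isomorphism of the deformation elliptic operator with $F_M^-$ itself (not a jumping pointwise trivialization), so it is manifestly continuous in families, and the pullback of a trivialization is a trivialization. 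Thus the obstacle dissolves on inspection, and the proof is short.
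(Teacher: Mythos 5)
Your proposal is correct and is essentially identical to the paper's own argument: Definition \ref{fm14def3} records that McLean's theorem identifies the deformation complex with $F_M^-$, so $O_\al^{\Cay,H}$ is the restriction of $O_\al^-\ra\cB_\al^H$, and the theorem then follows immediately from Theorem \ref{fm14thm1}(b). Your extra tracing of the dependency chain and the continuity check are consistent with how the paper assembles these inputs, so nothing further is needed.
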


Theorem \ref{fm14thm3} is one of our main results. We know of no other results in the literature on orientability of moduli spaces of Cayley 4-folds.

\begin{rem}
\label{fm14rem3}
{\bf(a)} Since $X$ is spin, if $M\subset X$ is a Cayley 4-fold then spin structures on the normal bundle $\nu_M$ of $M\subset X$ are equivalent to spin structures on $M$ by 2-out-of-3. Thus taking $H=\Spin(4)$ in Theorem \ref{fm14thm3} implies that if $(X,\Om,g)$ is a compact $\Spin(7)$-manifold satisfying Theorem \ref{fm10thm2}$(*)$ then moduli spaces of compact, {\it spin\/} Cayley 4-folds $M\subset X$ are orientable, and a flag structure $F$ on $X$ determines orientations on them.
\smallskip

\noindent{\bf(b)} Although studying moduli spaces $\M_\al^{\Cay,H}$ of Cayley 4-folds $M\subset X$ equipp\-ed with a normal $H$-structure up to isotopy may seem unnatural, these do occur in gauge theory problems. In a similar way to the $G_2$-instanton case discussed in Donaldson and Segal \cite{DoSe}, it is expected that a sequence $(\nabla_n)_{n=1}^\iy$ of $\Spin(7)$-instantons on a principal $\SU(2)$-bundle $P\ra X$ over a $\Spin(7)$-manifold $(X,\Om,g)$ may `bubble' (develop removable singularity) along a Cayley 4-fold~$M\subset X$. 

If the limit connection $\nabla_\iy$ is trivial on $X\sm M$, one can show that $M$ has a normal $\SU(2)$-structure natural up to isotopy. If instead $P$ is an $\SU(m)$-bundle for $m\ge 3$ then $M$ has a normal $\U(2)$-structure natural up to isotopy. So the cases $H=\SU(2),\U(2)$ in Theorem \ref{fm14thm3} are relevant to orienting compactifications of moduli spaces of $\Spin(7)$-instantons with structure group~$\SU(m)$.
\end{rem}

\section{Proof of Theorem \ref{fm3thm1}}
\label{fm15}

For each of $T=M\SU(2)$, $M\U(2)$, $M\Spin(4)$, $M\SO(4)$, and $K(\Z,4)$ we will proceed with the following steps, which will prove Theorem \ref{fm3thm1}(a),(d). Later in \S\ref{fm158}--\S\ref{fm159} we will prove Theorem~\ref{fm3thm1}(b),(c).
\begin{itemize}
\setlength{\itemsep}{0pt}
\setlength{\parsep}{0pt}
\item[(i)] Compute $\ti\Om_n^{\bs\Spin}(T)$ using the Atiyah--Hirzebruch spectral sequence \eq{fm2eq8}, as explained in \S\ref{fm22}. To do this, we first need to determine $\ti H_p(T,\Z)$, $\ti H_p(T,\Z_2)$ and the action of the Steenrod square $\Sq^2$. By Proposition \ref{fm2prop1} this will provide the differentials of the spectral sequence on the $E^2$-page. Possible higher differentials will be computed using the naturality of the spectral sequence, leading to the $E^\iy$-page.
\item[(ii)] Verify the isomorphisms \eq{fm3eq9}--\eq{fm3eq21}, thus proving Theorem~\ref{fm3thm1}(d).
\item[(iii)] Solve the extension problems \eq{fm2eq9} and verify the generators listed in Table \ref{fm3tab1}, partly using step (ii). This proves Theorem~\ref{fm3thm1}(a).
\end{itemize}

\subsection{\texorpdfstring{Computation of $\ti\Om_n^{\bs{\mathrm{Spin}}}(K(\Z,4))$}{Computation of Ωₙˢᵖⁱⁿ(K(ℤ,4))}}
\label{fm151}

\subsubsection{Description of the (co)homology}
\label{fm1511}

We describe the (co)homology of $K(\Z,4)$ and choose notation for the generators.

\begin{nota}
\label{fm15nota1}
A bar accent `$\bar{*}$' indicates the specialization to $\Z_2$-(co)hom\-ol\-ogy of a symbol $*$ in $\Z$-(co)homology. In cohomology $e_4^3,e_4e_6',\ldots$ mean cup products of $e_4,e_6',\ldots$; cohomology variables $e_i$ originate from $H^*(K(\Z,4),\Z)$, and $\bar e'_i$ are variables in $H^*(K(\Z,4),\Z_2)$ not coming from $H^*(K(\Z,4),\Z)$. Where there are nontrivial pairings between homology and cohomology over $\Z$ or $\Z_2$, we write $\ep_i,\ep_i'$ for the homology variables dual to $\ep_i,\ep_i'$, so for example $\an{e_4,\ep_4}=1$. Homology products such as $\bar\ep_4\bar\ep_6'$ mean the homology class dual to $\bar e_4\bar e_6'$, in the given bases we have written down.
\end{nota}

From Serre \cite[Th.~3]{Serr} we find that in the range $n\le 10$, the $\Z_2$-cohomology $H^*(K(\Z,4),\Z_2)$ has generators $\bar e_4,\bar e_6',\bar e_7,\bar e_{10}'$, where
\e
\bar e_6'=\Sq^2(\bar e_4), \qquad \bar e_7=\Sq^3(\bar e_4), \qquad \bar e_{10}'=\Sq^4(\bar e_6')=\Sq^4\Sq^2(\bar e_4).
\label{fm15eq1}
\e
Since $\Sq^2\circ\Sq^2=\Sq^3\circ\Sq^1$ by \eq{fm2eq15} and $\Sq^1(\bar e_4)=0$ as $H^5(K(\Z,4),\Z_2)=0$, we see that $\Sq^2(\bar e_6')=0$. Also $\Sq^2(\bar e_4^2)=0$ by the Cartan formula.

The homology groups $H_{n+i}(K(\Z,n),\Z)$ are computed by Breen--Mikhailov--Touz\'e \cite[App.~B]{BMT} for $n\le 11$ and $i\le 10$ (alternatively, one could use for this the Bockstein spectral sequence described in \S\ref{fm1571} below). This leads to the following table.
\begin{equation}
\begin{aligned}
&\!\begin{tabular}{c|ccccccccccc}
$n$ & 
$\!\!0,1,2,3,5,7,9\!\!\!\!\!\!\!$ &  $4$  & $6$  & $8$ & $10$ \\
\hline
\parbox[top][4ex][c]{2cm}{$\!\!\!\ti H_n(K(\Z,4),\Z)\!\!\!$} & 
$0$ &  $\!\!\Z\an{\ep_4}\!\!$ & $\!\!\Z_2\an{\ep_6'}\!\!$ & $\!\!\Z\an{\ep_4^2}\!\op\!\Z_3\an{\ep_8}\!\!$ & $\!\!\!\Z_2\an{\ep_{10}',\ep_4\ep_6'}\!\!\!$
\end{tabular}\!\!
\\
&\begin{tabular}{c|cccccccccccc}
$n$ & 
$0,1,2,3,5,6,10$ & $4$ & $7$ &  $8$ & $9$   \\
\hline
\parbox[top][4ex][c]{2cm}{$\!\!\!\!\ti H^n(K(\Z,4),\Z)\!\!\!$} & 
$0$ &  $\Z\an{e_4}$ & $\Z_2\an{e_7}$  & $\Z\an{e_4^2}$ & $\Z_3\an{e_9}$ 
\end{tabular}
\\
&\!\!\begin{tabular}{c|ccccccccccc}
$n$ & 
$\!\!0,1,2,3,5,9\!\!\!\!\!\!$ & $4$ & $6$ & $7$ & $8$ & $10$ \\
\hline
\parbox[top][4ex][c]{2.1cm}{$\!\!\!\ti H_n(K(\Z,4),\Z_2)\!\!\!\!\!$} & 
$0$ & $\!\!\Z_2\an{\bar\ep_4}\!$ & $\!\!\Z_2\an{\bar\ep_6'}\!\!\!$ & $\!\!\Z_2\an{\bar\ep_7}\!\!$ & $\!\!\Z_2\an{\bar\ep_4^2}\!\!\!$ & $\!\!\!\Z_2\an{\bar\ep_{10}',\bar\ep_4\bar\ep_6'}\!\!\!$ \end{tabular}
\\
&\begin{tabular}{c|ccccccccccc}
$n$ & 
$\!\!0,1,2,3,5,9\!\!\!\!\!\!\!\!$ & $4$ & $6$ & $7$ & $8$ & $10$ \\
\hline
\parbox[top][4ex][c]{2.1cm}{$\!\!\!\!\ti H^n(K(\Z,4),\Z_2)\!\!\!\!\!\!$} & 
$0$ &  $\!\!\Z_2\an{\bar e_4}\!\!\!$ & $\!\!\Z_2\an{\bar e_6'}\!\!\!$ & $\!\!\Z_2\an{\bar e_7}\!\!\!$ & $\!\!\Z_2\an{\bar e_4^2}\!\!\!$ & $\!\!\Z_2\an{\bar e_{10}', \bar e_4\bar e_6'}\!\!\!$  \end{tabular}
\end{aligned}
\label{fm15eq2}
\end{equation}

\subsubsection{Computation of the spectral sequence}
\label{fm1512}

Consider the Atiyah--Hirzebruch spectral sequence
\begin{equation*}
 \ti H_p(K(\Z,4),\Om^{\bs\Spin}_q(*))\Longra\ti\Om^{\bs\Spin}_{p+q}(K(\Z,4))
\end{equation*}
whose $E^2$-page for $p+q\le 10$ is shown in Figure~\ref{fm15fig1}.

\begin{figure}[htb]
\centering
\begin{tikzpicture}
\matrix (m) [matrix of math nodes,
nodes in empty cells,nodes={minimum width=9ex,
minimum height=5ex,outer sep=-5pt},
column sep=.4ex,row sep=1ex]{
4 & \Z\an{\al_4\ep_4} & \Z_2\an{\al_4\ep_6'}  \\
2 &  \Z_2\an{\al_1^2\bar\ep_4} & \Z_2\an{\al_1^2\bar\ep_6'} & \Z_2\an{\al_1^2\bar\ep_7} &  \Z_2\an{\al_1^2\bar\ep_4^2} \\
1 &  \Z_2\an{\al_1\bar\ep_4}  & \Z_2\an{\al_1\bar\ep_6'} & \Z_2\an{\al_1\bar\ep_7} & \Z_2\an{\al_1\bar\ep_4^2} \\
0 & \Z\an{\ep_4} & \Z_2\an{\ep_6'}  & & \Z\an{\ep_4^2}\!\op\!\Z_3\an{\ep_8} & \Z_2\an{\ep_{10}',\ep_4\ep_6'}  \\
\quad\strut & 4 & 6 & 7 & 8  & 10 \strut \\};
\draw[thick] (m-1-1.north east) node[above]{$q$} -- (m-5-1.east);
\draw[thick] (m-5-1.north) -- (m-5-6.north east) node[right]{$p$};

\draw[-stealth] (m-4-3) --  node[below]{\scriptsize $d^2_{6,0}$}  (m-3-2);
\draw[-stealth] (m-3-3) --  node[left, xshift=-0.5ex, yshift=-0.2ex]{\scriptsize $d^2_{6,1}$}  (m-2-2);

\end{tikzpicture}
\caption{$E^2$-page of $\ti H_p(K(\Z,4),\Om^{\bs\Spin}_q(*))\Ra\ti\Om^{\bs\Spin}_{p+q}(K(\Z,4))$, $p+q\le 10$}
\label{fm15fig1}
\end{figure}

Note that Stong \cite{Ston2} has already computed $\ti\Om_n^{\bs\Spin}(K(\Z,4))$ and obtained the following result.
\e
\begin{tabular}{c|cccccccccc}
$n$ & 
$\!\!0,1,2,3,5,6,7\!\!\!$ & $4$ & $8$ & $9$ \\
\hline
\parbox[top][4ex][c]{2.4cm}{$\!\!\ti\Om_n^{\bs\Spin}(K(\Z,4))$} & 
$0$ & $\Z$ & $\Z^2$ & $\Z_2$ 
\end{tabular}
\label{fm15eq3}
\e
Nevertheless we will compute the spectral sequence for $\ti\Om_n^{\bs\Spin}(K(\Z,4))$, as this will yield some facts needed later in the proof. Moreover, we wish to determine explicit generators for $\ti\Om_n^{\bs\Spin}(K(\Z,4))$.

As $\Sq^2(\bar e_4)=\bar e_6'$, $\Sq^2(\bar e_6')=\Sq^2(\bar e_4^2)=0$, Proposition \ref{fm2prop1} implies that the only non-trivial differentials in Figure \ref{fm15fig1} are $d_{6,0}^2$ and $d_{6,1}^2$. This leads to the $E^3$-page of the spectral sequence shown in Figure \ref{fm15fig2}. 

\begin{figure}[htb]
\centering
\begin{tikzpicture}
\matrix (m) [matrix of math nodes,
nodes in empty cells,nodes={minimum width=9ex,
minimum height=5ex,outer sep=-5pt},
column sep=.4ex,row sep=1ex]{
4 & \Z\an{\al_4\ep_4} & \Z_2\an{\al_4\ep_6'}  \\
2 &   & \Z_2\an{\al_1^2\bar\ep_6'} & \Z_2\an{\al_1^2\bar\ep_7} &  \Z_2\an{\al_1^2\bar\ep_4^2} \\
1 &   &  & \Z_2\an{\al_1\bar\ep_7} & \Z_2\an{\al_1\bar\ep_4^2} \\
0 & \Z\an{\ep_4} &  & & \Z\an{\ep_4^2}\!\op\!\Z_3\an{\ep_8} & \Z_2\an{\ep_{10}',\ep_4\ep_6'}  \\
\quad\strut & 4 & 6 & 7 & 8  & 10 \strut \\};
\draw[thick] (m-1-1.north east) node[above]{$q$} -- (m-5-1.east);
\draw[thick] (m-5-1.north) -- (m-5-6.north east) node[right]{$p$};
\draw[-stealth,bend right=15] (m-4-6) to  node[right,xshift=5.5ex,yshift=-1.5ex]{\scriptsize $d^3_{10,0}$}  (m-2-4);
\end{tikzpicture}
\caption{$E^3$-page of $\ti H_p(K(\Z,4),\Om^{\bs\Spin}_q(*))\Ra\ti\Om^{\bs\Spin}_{p+q}(K(\Z,4))$, $p+q\le 10$}
\label{fm15fig2}
\end{figure}

The only possible non-zero higher differential in Figure \ref{fm15fig2} is $d_{10,0}^3$. Now $\ti\Om_9^{\bs\Spin}(K(\Z,4))\cong\Z_2$ from \eq{fm15eq3} implies that $d_{10,0}^3$ is surjective. Hence Figure \ref{fm15fig2} leads to the $E^\iy$-page shown in Figure \ref{fm15fig3}. For later reference we record the following.

\begin{lem}
\label{fm15lem1}
For the Atiyah--Hirzebruch spectral sequence of\/ $K(\Z,4)$ we have
\e
\label{fm15eq4}
d^3_{10,0}(\ep_{10}')=\al_1^2\bar\ep_7,\qquad
d^3_{10,0}(\ep_4\ep_6')=\al_1^2\bar\ep_7.
\e
\end{lem}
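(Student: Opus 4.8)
\textbf{Proof proposal for Lemma \ref{fm15lem1}.}

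The plan is to pin down the differential $d^3_{10,0}$ on the two generators $\ep'_{10}$ and $\ep_4\ep'_6$ of $\ti H_{10}(K(\Z,4),\Z_2)$, using the naturality of the Atiyah--Hirzebruch spectral sequence together with the already-computed bordism group $\ti\Om_9^{\bs\Spin}(K(\Z,4))\cong\Z_2$ from \eq{fm15eq3}. We already know from Figure \ref{fm15fig2} that on the $E^3$-page the only differential out of position $(10,0)$ that can be nonzero is $d^3_{10,0}:E^3_{10,0}\ra E^3_{7,2}$, with $E^3_{10,0}=\Z_2\an{\ep'_{10},\ep_4\ep'_6}$ and $E^3_{7,2}=\Z_2\an{\al_1^2\bar\ep_7}$. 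Since $\ti\Om_9^{\bs\Spin}(K(\Z,4))\cong\Z_2$ forces the class $\al_1^2\bar\ep_7$ (which would otherwise survive to contribute to $\ti\Om_9^{\bs\Spin}(K(\Z,4))$, making it too big) to be killed, $d^3_{10,0}$ must be surjective onto $\Z_2\an{\al_1^2\bar\ep_7}$. Thus at least one of $d^3_{10,0}(\ep'_{10})$ and $d^3_{10,0}(\ep_4\ep'_6)$ equals $\al_1^2\bar\ep_7$; the content of the lemma is that \emph{both} do.

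First I would handle $d^3_{10,0}(\ep'_{10})$. The class $\bar\ep'_{10}$ is dual to $\bar e'_{10}=\Sq^4\Sq^2(\bar e_4)$ by \eq{fm15eq1}. I would exploit the structure of $H^*(K(\Z,4),\Z_2)$ as a module over the Steenrod algebra, or more directly the fact that $\ep'_{10}$ is detected by a composite of Steenrod operations applied to the fundamental class, to reduce to a universal computation. Concretely, the image of $\ep'_{10}$ under $d^3_{10,0}$ is a secondary/tertiary cohomology operation phenomenon, but for the spin AHSS one can sidestep the subtlety by naturality: map in a suitable test space $Y$ (for instance a space built from spheres and $K(\Z,4)$, or $K(\Z,4)$ itself with a known low-dimensional spin bordism calculation) where the relevant class is hit by something whose $d^3$ is computable, and pull back. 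A clean route is to use the fact — which will follow from the surjectivity of $d^3_{10,0}$ and the module structure over $\Om^{\bs\Spin}_*(*)$ described in \S\ref{fm22}, under which $d^3$ is $\al_1$-linear — that the differentials $d^3$ out of $\Z_2$-classes in filtration $0$ are computed by the tertiary operation dual to the Steenrod relation expressing how $\Sq^4\Sq^2$ and the class $\bar e_4\bar e'_6$ interact. The second generator, $\ep_4\ep'_6$, I would treat by the multiplicative structure: $\bar e_4\bar e'_6 = \bar e_4\cup\Sq^2(\bar e_4)$, and since $\ep_4\in\ti H_4(K(\Z,4),\Z_2)$ survives to the permanent cycle $\de\in\ti\Om_4^{\bs\Spin}(K(\Z,4))$ while the factor $\bar\ep'_6$ carries the relevant Steenrod information, I expect $d^3_{10,0}(\ep_4\ep'_6)=\al_1^2\bar\ep_7$ to fall out of a Leibniz-type argument combined with $d^2_{6,1}(\bar\ep'_6)=\al_1^2\bar\ep_4$ (wait — one must be careful: $\bar e_4\cup\Sq^2\bar e_4$ in degree $10$ pairs with a class whose $d^3$ is governed by the same tertiary operation). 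In practice the cleanest approach is: establish $d^3_{10,0}(\ep'_{10})=\al_1^2\bar\ep_7$ first by naturality, then use that $\ep'_{10}+\ep_4\ep'_6$ must \emph{not} survive — or rather, determine whether it survives — by consulting \eq{fm15eq3}: since $\ti\Om_{10}^{\bs\Spin}(K(\Z,4))$ is not needed, instead I would use a dimension count on the $E^\iy$-page together with an independent computation of one more piece of $\ti\Om_*^{\bs\Spin}(K(\Z,4))$, or better, invoke the known answer for $\ti\Om_9^{\bs\Spin}(K(\Z,4))$ applied to a skeletal filtration argument to show $\ker d^3_{10,0}$ is $1$-dimensional, hence both generators map nontrivially.

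The main obstacle I anticipate is the $d^3_{10,0}(\ep'_{10})$ computation: a $d^3$ differential in the AHSS corresponds to a secondary cohomology operation (after the primary $\Sq^2$-type $d^2$'s vanish on the relevant class), and computing it directly from the definition of the spectral sequence is delicate. The way I would circumvent this is \emph{not} to compute the operation intrinsically, but to use naturality with respect to a well-chosen map — most promisingly, the map $M\SO(4)\ra K(\Z,4)$ classifying the Thom class (diagram \eq{fm3eq1}), for which $\ti\Om_*^{\bs\Spin}(M\SO(4))$ is being computed in parallel in this same section and whose generators are given by explicit geometric manifolds (the Wu manifold class $\eta$ in particular, living in $\ti\Om_9^{\bs\Spin}(M\SO(4))$, which maps to $\al_1\ze_2$ by \eq{fm3eq7}). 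Tracking how $\eta$ and the degree-$10$ classes behave under this map, combined with the fact that $\pi_1(\sH_8^\Z)$ acts on $\al_1\ze_2$ as $\ul 1$, should let me read off both values of $d^3_{10,0}$ without ever writing down a secondary operation explicitly. If that fails, the fallback is the standard technique of comparing with the Postnikov tower of the spin bordism spectrum $M\mathrm{Spin}$ through dimension $9$, where the relevant $k$-invariant is the product of $\Sq^2$'s detecting $\al_1^2$, reducing the $d^3$ to the Adem relation $\Sq^2\Sq^2\Sq^2 = \Sq^3\Sq^1\Sq^2 + \cdots$ applied to $\bar e_4$.
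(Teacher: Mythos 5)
Your framework is right --- surjectivity of $d^3_{10,0}$ does follow from $\ti\Om_9^{\bs\Spin}(K(\Z,4))\cong\Z_2$, and the rest must come from naturality --- but the proposal never actually determines the differential on either generator, and its concluding step is a non sequitur. A surjection $\Z_2\an{\ep_{10}',\ep_4\ep_6'}\ra\Z_2\an{\al_1^2\bar\ep_7}$ \emph{always} has a one-dimensional kernel; the entire content of the lemma is that this kernel is $\Z_2\an{\ep_{10}'+\ep_4\ep_6'}$ rather than $\Z_2\an{\ep_{10}'}$ or $\Z_2\an{\ep_4\ep_6'}$, so ``$\ker d^3_{10,0}$ is $1$-dimensional, hence both generators map nontrivially'' establishes nothing. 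The one concrete comparison you name, $\phi:M\SO(4)\ra K(\Z,4)$, cannot separate the two generators: the image of $E^3_{10,0}(M\SO(4))=\Z_2\an{\om_6^{\prime T}}$ in $E^3_{10,0}(K(\Z,4))$ is spanned by the single element $\ep_{10}'+\ep_4\ep_6'$, so at best it sees the sum; and in any case $d^3_{10,0}(M\SO(4))$ is not independently available --- in \S\ref{fm1522} it is deduced \emph{from} the present lemma, so your route is circular in the paper's development. The Wu-manifold class $\eta$ and the value $\pi_1(\sH_8^\Z)(\al_1\ze_2)=\ul{1}$ only re-detect that $E^\iy_{8,1}\neq 0$, i.e.\ the surjectivity you already have; they carry no information about which generator of $E^3_{10,0}$ is killed.

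What is missing are two auxiliary spaces mapping to $K(\Z,4)$ whose spectral sequences are known and which between them separate the generators. The paper uses, first, the suspension map $\chi:\Si K(\Z,3)\ra K(\Z,4)$: under the induced map of spectral sequences $\de_9'\mapsto\ep_{10}'$ and $\al_1^2\bar\de_3^2\mapsto\al_1^2\bar\ep_7$, and $d^3_{9,0}$ in the $K(\Z,3)$ spectral sequence is an isomorphism $\Z_2\an{\de_9'}\ra\Z_2\an{\al_1^2\bar\de_3^2}$ (see \S\ref{fm1712}), which gives $d^3_{10,0}(\ep_{10}')=\al_1^2\bar\ep_7$. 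Second, it uses $\mu:K(\Z,2)\w K(\Z,2)\ra K(\Z,4)$ classifying $fg$: there $E^3_{7,2}=0$, so $d^3_{10,0}$ vanishes identically, and since $\mu_*$ carries a class of $E^3_{10,0}(K(\Z,2)\w K(\Z,2))$ onto $\ep_{10}'+\ep_4\ep_6'$, one concludes $d^3_{10,0}(\ep_{10}'+\ep_4\ep_6')=0$; the second equality then follows from the first. Without these (or equivalent) comparisons, neither your ``universal computation'' for $\ep_{10}'$ nor your treatment of $\ep_4\ep_6'$ closes.
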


\begin{proof}
We use the Atiyah--Hirzebruch spectral sequence of $K(\Z,3)$ discussed in \S\ref{fm171} below. The morphism from Figure \ref{fm17fig1} to Figure \ref{fm15fig2} yields a commutative diagram
\begin{equation*}
 \begin{tikzcd}
  E^3_{9,0}(K(\Z,3))=\Z_2\an{\de_9'}\dar{d^3_{9,0}}\rar & E^3_{10,0}(K(\Z,4))=\Z_2\an{\ep_{10}',\ep_4\ep_6'}\dar{d^3_{10,0}}\\
  E^3_{6,2}(K(\Z,3))=\Z_2\an{\al_1^2\bar\de_3^2}\rar & E^3_{7,2}(K(\Z,4))=\Z_2\an{\al_1^2\bar\ep_7}.
 \end{tikzcd}
\end{equation*}
The horizontal maps are induced by the suspension in ordinary homology composed with $\chi_*$. We have seen above that $\de_9'\mapsto\bar\ep_{10}'$, $\al_1^2\bar\de_3^2\mapsto\al_1^2\bar\ep_7$ and that $d^3_{9,0}:\Z_2\an{\de_9'}\ra\Z_2\an{\al_1^2\bar\de_3^2}$ in Figure \ref{fm17fig1} is an isomorphism, hence $d^3_{10,0}(\bar\ep_{10}')=\al_1^2\bar\ep_7$. This proves the first equality in \eq{fm15eq4}.

To prove the second equality, consider the space $K(\Z,2)\w K(\Z,2)$. Since $K(\Z,2)\simeq\CP^\iy$, $\ti H^*(K(\Z,2)\w K(\Z,2),\Z)$ is the subring of the polynomial ring $\Z[f,g]$ consisting of all polynomials $p(f,g)$ satisfying $p(f,0)=p(0,g)=0$. Let $\mu:K(\Z,2)\w K(\Z,2)\ra K(\Z,4)$ be a classifying map with $\mu^*(e_4)=fg$. Consider the Atiyah--Hirzebruch spectral sequence $\ti H_p(K(\Z,2)\w K(\Z,2),\Om^{\bs\Spin}_q(*))\Ra\Om^{\bs\Spin}_{p+q}(K(\Z,2)\w K(\Z,2))$ and recall Proposition \ref{fm2prop1} to compute $d^2_{p,0}, d^2_{p,1}$. We have $\Sq^2(\bar f^m \bar g^n)=m \bar f^{m+1}\bar g^n+n \bar f^m \bar g^{n+1}$ in $\Z_2$-cohomology. Write $\varphi^i\ga^j$ for the homology classes dual to the cohomology classes $f^ig^j$. The spectral sequence has $E^2_{10,0}=\Z\an{\varphi^4\ga^1,\varphi^3\ga^2,\varphi^2\ga^3,\varphi^1\ga^4}$ and $E^2_{7,2}=0$. Hence $E^3_{7,2}=0$ and from the expression for the Steenrod square we get $E^3_{10,0}=\Z\an{\varphi^4\ga^1+\varphi^3\ga^2, \varphi^2\ga^3+\varphi^1\ga^4}$. The map $\mu$ induces a morphism of spectral sequences, so in particular a morphism from the $E^3$-page of the spectral sequence for $K(\Z,2)\w K(\Z,2)$ to Figure \ref{fm15fig2}. This yields a commutative diagram
\begin{equation*}
 \begin{tikzcd}
  \begin{array}{c}
  E^3_{10,0}(K(\Z,2)\w K(\Z,2))\\
  =\Z\an{\varphi^4\ga^1+\varphi^3\ga^2, \varphi^2\ga^3+\varphi^1\ga^4}\dar{d^3_{10,0}}
  \end{array}
  \rar & E^3_{10,0}(K(\Z,4))=\Z_2\an{\ep_{10}',\ep_4\ep_6'}\dar{d^3_{10,0}}\\
  E^3_{7,2}(K(\Z,2)\w K(\Z,2))=0\rar & E^3_{7,2}(K(\Z,4))=\Z_2\an{\al_1^2\bar\ep_7}.
 \end{tikzcd}
\end{equation*}
The horizontal maps are induced by $\mu_*:\ti H_n(K(\Z,2)\w K(\Z,2),\Om_q^{\bs\Spin}(*))\ra \ti H_n(K(\Z,4),\Om_q^{\bs\Spin}(*))$. Now $\mu_*(\varphi^4\ga^1)=\ep_{10}'$, $\mu_*(\varphi^3\ga^2)=\ep_4\ep_6'$, $\mu_*(\varphi^2\ga^3)=\ep_4\ep_6'$, $\mu_*(\varphi^1\ga^4)=\ep_{10}'$, so the commutative diagram implies $d^3_{10,0}(\ep_{10}'+\ep_4\ep_6')=0$. The second equation in \eq{fm15eq4} then follows from the first equation.
\end{proof}

\begin{figure}[htb]
\centering
\begin{tikzpicture}
\matrix (m) [matrix of math nodes,
nodes in empty cells,nodes={minimum width=9ex,
minimum height=5ex,outer sep=-5pt},
column sep=.4ex,row sep=1ex]{
4 & \Z\an{\al_4\ep_4}  \\
2 &   & \Z_2\an{\al_1^2\bar\ep_6'} &  \\
1 &   &  & \Z_2\an{\al_1\bar\ep_7} & \Z_2\an{\al_1\bar\ep_4^2} \\
0 & \Z\an{\ep_4} &  & & \Z\an{\ep_4^2}\!\op\!\Z_3\an{\ep_8} &    \\
\quad\strut & 4 & 6 & 7 & 8  \strut \\};
\draw[thick] (m-1-1.north east) node[above]{$q$} -- (m-5-1.east);
\draw[thick] (m-5-1.north) -- (m-5-6.north east) node[right]{$p$};
\end{tikzpicture}
\caption{$E^\iy$-page of $\ti H_p(K(\Z,4),\Om^{\bs\Spin}_q(*))\Ra\ti\Om^{\bs\Spin}_{p+q}(K(\Z,4))$, $p+q\le 9$}
\label{fm15fig3}
\end{figure}

\subsubsection{Determining the filtration and generators}
\label{fm1513}

From Figure \ref{fm15fig3} we see that the filtration of $\ti\Om_n^{\bs\Spin}(K(\Z,4))$ is trivial for $n\le 7$. For $n=8$ we claim that the filtration is
\begin{equation}
\begin{tikzcd}[column sep=small]
0\arrow[r,symbol=\subset,"\Z\an{\al_4\ep_4}"' yshift=-1.5ex] & F_{4,8}\!\cong\!\Z\arrow[r,symbol=\subset,"\Z_2\an{\al_1^2\bar\ep_6'}"' yshift=-1.5ex] & F_{6,8}\!\cong\!\Z\arrow[r,symbol=\subset,"\Z_2\an{\al_1\bar\ep_7}"' yshift=-1.5ex] & F_{7,8}\!\cong\!\Z\arrow[r,symbol=\subset,"\Z\an{\ep_4^2}\op\Z_3\an{\ep_8}"' yshift=-1.5ex] & F_{8,8}\!=\!\ti\Om_8^{\bs\Spin}(K(\Z,4))\!\cong\! \Z^2
\end{tikzcd}\!\!\!
\label{fm15eq5}
\end{equation}
Here the graded pieces are as in Figure \ref{fm15fig3}. By \eq{fm15eq3} we have $\ti\Om^{\bs\Spin}_8(K(\Z,4))\ab\cong\Z^2$. Therefore all the extensions in \eq{fm15eq5} must be nontrivial, as claimed (for instance, we cannot have $F_{6,8}\cong\Z\op\Z_2$, as then $\ti\Om^{\bs\Spin}_n(K(\Z,4))$ would have a $\Z_2$ subgroup, contradicting $\ti\Om^{\bs\Spin}_8(K(\Z,4))\cong\Z^2$).

We verify that the elements given in Table \ref{fm3tab1} are indeed generators and prove \eq{fm3eq18} and \eq{fm3eq19}.

Let $n=4$. From Proposition \ref{fm2prop2} we have an isomorphism $\ti\Om^{\bs\Spin}_4(K(\Z,4))\ra H_4(K(\Z,4),\Z)\cong\Z$, $[X,\al]\mapsto\int_X\al$, which proves \eq{fm3eq18}. The generator $\de$ is represented by $[\cS^3\t\cS^1_{\rm b},\al]$, where $\al\in H^3(\cS^3\t\cS^1_{\rm b})$ is Poincar\'e dual to a point. As $\an{\al,[\cS^3\t\cS^1_{\rm b}]}=1$, $\de$ maps to $1$ under the isomorphism \eq{fm2eq11} and hence $\de$ generates $\ti\Om^{\bs\Spin}_4(K(\Z,4))$.

Let $n=8$. We can use \eq{fm15eq5} to successively choose generators $\ze'_2,\ze'_3$ of $F_{4,8}=\Z\an{\ze'_3}$ and $\ti\Om^{\bs\Spin}_8(K(\Z,4))=F_{8,8}=\Z\an{\ze'_2,\ze'_3}$ with $12\ze'_3=\al_4\ep_4$ and $\ze'_2\mod\an{\ze'_3}=\ep_4^2\mod\an{\ze'_3}$. Recall that $\al_4\ep_4$ is represented by $K3\t\cS^4$ and the cohomology class $\Pd[K3\t\{*\}]$. Observe that \eq{fm3eq19} is a well-defined morphism. Table \ref{fm15tab1} shows the values of the integrals $\int_X\al\cup\al$ and $\int_X\al\cup p_1(TX)$ for $[K3\t\cS^4,\Pd[K3\t\{*\}]]$ and for $\ze_2,\ze_3$. From this we see that equation \eq{fm3eq19} maps $[K3\t\cS^4,\Pd[K3\t\{*\}]],\ze_2,\ze_3$ to $(0,12),\ab(1,0),\ab(0,1)$ respectively. 

\begin{table}[htb]
\centerline{\begin{tabular}{|l|l|l|l|}
\hline
 & $[K3\t\cS^4,\Pd[K3\t\{*\}]]$ & $\ze_2$ &  $\ze_3$ \\
\hline
$\ts\int_{X_{\vphantom{(}}}^{\vphantom{(}}\al\cup\al$  &  0 & 1 & 0 \\
\hline
$\ts\int_{X_{\vphantom{(}}}^{\vphantom{(}}\al\cup p_1(TX)$  &  48 & $-2$ & 4 \\
\hline
\end{tabular}}
\caption{Invariants of $[K3\t\cS^4,\Pd[K3\t\{*\}]],\ze_2,\ze_3$}
\label{fm15tab1}
\end{table}

As $\ti\Om^{\bs\Spin}_8(K(\Z,4))\cong\Z^2$ this forces $[K3\t\cS^4,\Pd[K3\t\{*\}]]=12\ze_3$, so $\ze_3'=\ze_3$. Also, as $\int_X\al\cup\al=1$ for $\ze_2$ we see that $\ze'_2\mod\an{\ze_3}=\ep_4^2\mod\an{\ze'_3}$, so  $\ze_2$ is a possible choice for $\ze_2'$. This proves $\ti\Om^{\bs\Spin}_8(K(\Z,4))=\Z\an{\ze_2,\ze_3}$, and as \eq{fm3eq19} maps $\ze_2,\ze_3$ to $(1,0),(0,1)$ it also follows that \eq{fm3eq19} is an isomorphism.

Let $n=9$. By Figure \ref{fm15fig3} we have a commutative diagram of extensions
\begin{equation}
\begin{tikzcd}[column sep=small]
	0\rar & \begin{array}{l}F_{7,8}\\=\Z\an{\ze_3}\end{array}\rar\dar{\al_1} & \begin{array}{l}F_{8,8}= \\ \Z\an{\ze_2,\ze_3}\end{array}\rar\dar{\al_1} & \begin{array}{l} E^\iy_{8,0}= \\ \Z\an{\ep_4^2}\op\Z_3\an{\ep_8}\dar{\al_1}\end{array}\rar & 0\\
	0\rar & 0\rar & F_{8,9}\!=\!\ti\Om^{\bs\Spin}_9(K(\Z,4))\rar & E^\iy_{8,1}\!=\!\Z_2\an{\al_1\ep_4^2}\rar & 0.
\end{tikzcd}\!\!\!\!\!\!
\label{fm15eq6}
\end{equation}
This proves $\al_1\ze_3=0$, claimed in \eq{fm3eq8}, and that $\al_1\ze_2$ generates $\ti\Om^{\bs\Spin}_9(K(\Z,4))$. This completes the proof of Table \ref{fm3tab1} for $K(\Z,4)$.

\subsection{\texorpdfstring{Computation of $\ti\Om_n^{\bs{\mathrm{Spin}}}(M\SO(4))$}{Computation of Ωₙˢᵖⁱⁿ(MSO(4))}}
\label{fm152}

\subsubsection{Description of the (co)homology}
\label{fm1521}

Brown \cite[Th.~1.5]{Brow1} shows that $H^*(B\SO(4),\Z)=\Z[p_1,e,W_3]/\an{2W_3=0}$, which leads the following tables of (co)homology groups.
\begin{equation*}
\renewcommand{\arraystretch}{1.5}
\begin{tabular}{c|cccccc}
$p$ & 
$\!\!0,1\!\!\!\!\!\!\!\!\!\!\!$ & $2$ & $3$ & $4$ & $5$ & $6$\\
\hline
\parbox[top][4ex][c]{2.1cm}{$\!\!\!\ti H_p(B\SO(4),\Z)\!\!$} & 
$0$ &  $\!\!\!\Z_2\an{\om_2}\!$ & 0 & $\!\!\!\Z\an{\pi_1,\varep}$ & $\!\!\!\Z_2\an{\om_5}$ & $\!\!\Z_2\an{\om_6,\om_6'}$
\\
\parbox[top][4ex][c]{2.1cm}{$\!\!\!\ti H^p(B\SO(4),\Z)\!\!$} & 
$0$ &  $0$ & $\!\!\!\Z_2\an{W_3}\!$ & $\!\!\Z\an{p_1,e}$ & $0$ & $\!\!\Z_2\an{W_3^2}$
\\
\parbox[top][4ex][c]{2.2cm}{$\!\!\!\ti H_p(B\SO(4),\Z_2)\!\!$} & 
$0$ &  $\!\!\!\Z_2\an{\bar\om_2}\!$ & $\!\!\!\Z_2\an{\bar\om_3}\!$ & $\!\!\!\Z_2\an{\bar\om_2^2,\bar\om_4}\!\!$ & $\!\!\!\Z_2\an{\bar\om_2\bar\om_3}\!\!$ & $\!\!\!\Z_2\an{\bar\om_3^2,\bar\om_2^3,\bar\om_2\bar\om_4}\!\!\!\!$
\\
\parbox[top][4ex][c]{2.2cm}{$\!\!\!\ti H^p(B\SO(4),\Z_2)\!\!\!$} & 
$0$ &  $\!\!\!\Z_2\an{\bar w_2}\!$ & $\!\!\!\Z_2\an{\bar w_3}\!$ & $\!\!\Z_2\an{\bar w_2^2,\bar w_4}\!\!$ & $\!\!\!\Z_2\an{\bar w_2\bar w_3}\!\!$ & $\!\!\!\Z_2\an{\bar w_3^2,\bar w_2^3,\bar w_2\bar w_4}\!\!\!\!$
\end{tabular}
\end{equation*}
Here we use the following notation. The Stiefel--Whitney classes are written $\bar w_k,$ so the polynomials $P(\bar w_2, \bar w_2,\ldots)$ form a basis of $H^*(B\SO(4),\Z_2)$ and we write the dual basis of $H_*(B\SO(4),\Z_2)$ as $P(\bar\om_2,\bar\om_3,\ldots).$ The Pontrjagin and Euler classes form a basis $\{p_1, e\}$ of $\ti H^4(B\SO(4),\Z)$ and we write $\{\pi_1,\varep\}$ for the dual basis of $\ti H_4(B\SO(4),\Z).$ The remaining classes are defined using the Bockstein homomorphism by
\begin{equation*}
W_3=\be(\bar w_2), \;\>
\om_2=\be(\bar\om_3),\;\>
\om_5=\be(\bar\om_3^2),\;\>
\om_6=\be(\bar\om_2^2\bar\om_3),\;\>
\om_6'=\be(\bar\om_3\bar\om_4).
\end{equation*}
The mod $2$ reductions of the integral (co)homology classes are
\begin{align}
p_1&\mapsto\bar w_2^2, &
e&\mapsto\bar w_4, &
W_3&\mapsto\bar w_3, &
\om_2&\mapsto\bar\om_2, &
\pi_1&\mapsto\bar\om_2^2,\nonumber\\
\varep &\mapsto\bar\om_4,&
\om_5& \mapsto\bar\om_2\bar\om_3, &
\om_6&\mapsto\bar\om_2^3, &
\om_6'&\mapsto\bar\om_2\bar\om_4.
\label{fm15eq7}
\end{align}

The (co)homologies of the Thom space $M\SO(4)$ are the same as for $B\SO(4)$ except for a degree shift under the (co)homological Thom isomorphisms. Recall from Milnor--Stasheff \cite[p.~130]{MiSt} that
\begin{equation}
\label{fm15eq8}
\Sq^k(\bar t)=\begin{cases}
0, & \text{if $k=1,$}\\
\bar w_k^T, & \text{if $k\ge 2.$}\end{cases}
\end{equation}

\subsubsection{Computation of the spectral sequence}
\label{fm1522}

Consider the Atiyah--Hirzebruch spectral sequence
\begin{equation*}
 \ti H_p(M\SO(4),\Om^{\bs\Spin}_q(*))\Longra\ti\Om^{\bs\Spin}_{p+q}(M\SO(4)),
\end{equation*}
whose $E^2$-page for $p+q\le 10$ is shown in Figure~\ref{fm15fig4}.

\begin{figure}[htb]
\!\!\!\!\!\begin{tikzpicture}
\matrix (m) [matrix of math nodes,
nodes in empty cells,nodes={minimum width=9ex,
minimum height=5ex,outer sep=-5pt},
column sep=.0ex,row sep=1ex]{
4 & \Z\an{\al_4 \tau}   \\
2 & \!\Z_2\an{\al_1^2 \bar\tau} & \!\Z_2\an{\al_1^2 \bar\om_2^T} & \!\Z_2\an{\al_1^2 \bar\om_3^T} & \!{\substack{\ts \Z_2\langle\al_1^2 (\bar\om_2^2)^T,\\ \ts \quad\al_1^2 \bar\om_4^T\rangle}} \\
1 & \!\Z_2\an{\al_1 \bar\tau} & \!\Z_2\an{\al_1 \bar\om_2^T} & \!\Z_2\an{\al_1 \bar\om_3^T} & {\substack{\ts \!\Z_2\langle\al_1 (\bar\om_2^2)^T,\\ \ts\quad \al_1 \bar\om_4^T\rangle}} & \!\!\Z_2\an{\al_1 (\bar\om_2\bar\om_3)^T} \\
0 & \!\Z\an{\tau} & \Z_2\an{ \om_2^T} &  & \Z\an{ \pi_1^T, \varep^T} & \!\!\Z_2\an{ \om_5^T}\!\! & \!\!\!\!\!\Z_2\an{ \om_6^T, \om_6^{\prime T}}\!\! \\
\quad\strut & 4 & 6 & 7 & 8  & 9 & 10 \strut \\};
\draw[thick] (m-1-1.north east) node[above]{$q$} -- (m-5-1.east);
\draw[thick] (m-5-1.north) -- (m-5-7.north east) node[right]{$p$};

\draw[-stealth] (m-4-3) --  node[left, xshift=-0.5ex, yshift=-0.2ex]{\scriptsize $d^2_{6,0}$}  (m-3-2);
\draw[-stealth] (m-3-3) --  node[left, xshift=-0.5ex, yshift=-0.2ex]{\scriptsize $d^2_{6,1}$}  (m-2-2);
\draw[-stealth] (m-4-7) --  node[right,xshift=3ex]{\scriptsize $d^2_{10,0}$}  (m-3-5);

\end{tikzpicture}\!\!\!\!\!
\caption{$E^2$-page of $\ti H_p(M\SO(4),\Om^{\bs\Spin}_q(*))\Rightarrow\ti\Om^{\bs\Spin}_{p+q}(M\SO(4))$, $p+q\le 10$}
\label{fm15fig4}
\end{figure}

From Proposition \ref{fm2prop1} and \eq{fm15eq8} we find that the only non-trivial differentials $d_{p,q}^2$ with $p+q\le 10$ are $d_{6,0}^2$, $d_{6,1}^2$, $d_{10,0}^2$, and that for these we have
\begin{equation*}
d^2_{6,0}( \om_2^T)=\al_1 \bar\tau, \;\> d^2_{6,1}(\al_1 \bar\om_2^T)=\al_1^2 \bar\tau, \;\>
d^2_{10,0}(\om_6^T)=\al_1 (\bar\om_2^2)^T,\;\>
d^2_{10,0}(\om^{\prime T}_6)=0.
\end{equation*}
This leads to the $E^3$-page of the spectral sequence shown in Figure \ref{fm15fig5}. Observe that the only possible non-zero higher differentials are $d^3_{9,0}$ and $d^3_{10,0}$.

\begin{figure}[htb]
\centering
\begin{tikzpicture}
\matrix (m) [matrix of math nodes,
nodes in empty cells,nodes={minimum width=10ex,
minimum height=5ex,outer sep=-5pt},
column sep=.0ex,row sep=1ex]{
4 & \Z\an{\al_4 \tau}   \\
2 & & \!\Z_2\an{\al_1^2 \bar\om_2^T} & \!\Z_2\an{\al_1^2 \bar\om_3^T}\\
1 & &  & \!\Z_2\an{\al_1 \bar\om_3^T} & \!\Z_2\an{\al_1 \bar\om_4^T}\\
0 & \!\Z\an{\tau} &  &  & \Z\an{\pi_1^T,\varep^T} & \!\!\Z_2\an{\om_5^T}\!\! & \Z_2\an{\om_6^{\prime T}}\\
\quad\strut & 4 & 6 & 7 & 8  & 9 & 10 \strut \\};
\draw[thick] (m-1-1.north east) node[above]{$q$} -- (m-5-1.east);
\draw[thick] (m-5-1.north) -- (m-5-7.north east) node[right]{$p$};
\draw[-stealth,bend right=10] (m-4-7.north) to node[above]{\scriptsize $d^3_{10,0}$} (m-2-4.east);
\draw[-stealth] (m-4-6.north) to node[above]{\scriptsize $d^3_{9,0}$} (m-2-3.east);\end{tikzpicture}
\caption{$E^3$-page of $\ti H_p(M\SO(4),\Om^{\bs\Spin}_q(*))\Rightarrow\ti\Om^{\bs\Spin}_{p+q}(M\SO(4))$ for $p+q\le 10$. This is also the $E^\iy$-page for $p+q\le 9$.}
\label{fm15fig5}
\end{figure}

We will show that $d^3_{9,0}=d^3_{10,0}=0$, so the spectral sequence converges at the $E^3$-page, and Figure \ref{fm15fig5} is also the $E^\iy$-page of the spectral sequence in the region $p+q\le 9$. Consider the classifying map $\phi:M\SO(4)\ra K(\Z,4)$ of the Thom class in $H^4(M\SO(4),\Z)$. As $\phi$ induces a morphism of spectral sequences, it induces a morphism from Figure \ref{fm15fig5} to Figure \ref{fm15fig2}. In particular, this gives commutative diagrams
\begin{equation*}
 \begin{tikzcd}
  E^3_{9,0}(M\SO(4))=\Z_2\an{\om_5^T}\rar{\phi_*}\dar{d^3_{9,0}(M\SO(4))} & E^3_{9,0}(K(\Z,4))=0\dar{d^3_{9,0}(K(\Z,4))}\\
  E^3_{6,2}(M\SO(4))=\Z_2\an{\al_1^2\bar\om_2^T}\arrow[r,"\phi_*","\cong"'] & E^3_{6,2}(K(\Z,4))=\Z_2\an{\al_1^2\bar\ep_6'}
 \end{tikzcd}
\end{equation*}
and
\begin{equation*}
 \begin{tikzcd}
  E^3_{10,0}(M\SO(4))=\Z_2\an{\om_6'^T}\rar{\phi_*}\dar{d^3_{10,0}(M\SO(4))} & E^3_{10,0}(K(\Z,4))=\Z_2\an{\ep_{10}',\ep_4\ep_6'}\dar{d^3_{10,0}(K(\Z,4))}\\
  E^3_{7,2}(M\SO(4))=\Z_2\an{\al_1^2\bar\om_3^T}\arrow[r,"\phi_*","\cong"'] & E^3_{7,2}(K(\Z,4))=\Z_2\an{\al_1^2\bar\ep_7}.
 \end{tikzcd}
\end{equation*}
Since $\phi_*(\bar\om_2^T)=\bar\ep_6'$, the bottom horizontal map in the first diagram is an isomorphism, which implies $d^3_{9,0}(M\SO(4))=0$. Since $\phi_*(\bar\om_3^T)=\bar\ep_7$, the bottom horizontal map in the second diagram is an isomorphism. Moreover, $\phi_*(\om_6'^T)=\ep_{10}'+\ep_4\ep_6'$ and $d_{10,0}^3(K(\Z,4))(\varep_{10}')=d_{10,0}^3(K(\Z,4))(\varep_4\varep_6')=\al_1^2\bar\varep_7$ by \eq{fm15eq4}, hence $d^3_{10,0}(M\SO(4))=0$ by the commutative diagram.

\subsubsection{Determining the filtration and generators}
\label{fm1523}

From Figure \ref{fm15fig5} we see that $\ti\Om_n^{\bs\Spin}(M\SO(4))=0$ for $n=0,1,2,3,\ab 5,\ab 6,\ab 7$.

Let $n=4$. By Proposition \ref{fm2prop2} there is an isomorphism $\ti\Om^{\bs\Spin}_4(M\SO(4))\ra H_4(M\SO(4),\Z)\cong\Z$, $[X,M]\mapsto\int_M 1$ and $\de\mapsto1$, hence $\ti\Om^{\bs\Spin}_4(M\SO(4))=\Z\an{\de}$.

Let $n=8$. Observe that \eq{fm3eq17} is a well-defined morphism $\ti\Om_8^{\bs\Spin}(M\SO(4))\ab\ra\frac{1}{4}\Z\op\Z^2$ (we will shortly show that it actually maps to $\Z^3\subset\frac{1}{4}\Z\op\Z^2$). For $\frac{\ze_1}{4},\ze_2,\ze_3$, calculation shows that the invariants are as in Table \ref{fm15tab2}, so $\frac{\ze_1}{4},\ze_2,\ze_3$ are mapped to $(1,0,0),(0,1,0),(0,0,1)$ under \eq{fm3eq17}. According to Figure \ref{fm15fig5} the group $\ti\Om^{\bs\Spin}_8(M\SO(4))$ has a filtration
\begin{equation}
\begin{tikzcd}
0\arrow[r,symbol=\subset,"\Z\an{\al_4\tau}"' yshift=-1.5ex] & F_{4,8}\arrow[r,symbol=\subset,"\Z_2\an{\al_1^2\bar\om_2^T}"' yshift=-1.5ex] & F_{6,8}\arrow[r,symbol=\subset,"\Z_2\an{\al_1\bar\om_3^T}"' yshift=-1.5ex] & F_{7,8}\arrow[r,symbol=\subset,"\Z\an{\pi_1^T,\ep^T}"' yshift=-1.5ex] & F_{8,8}=\ti\Om_8^{\bs\Spin}(M\SO(4)).
\end{tikzcd}
\label{fm15eq9}
\end{equation}
By \eq{fm2eq11} the projection $F_{8,8}\ra F_{8,8}/F_{7,8}=E^\iy_{8,0}=\Z\an{\pi_1^T,\varep^T}\cong\Z^2$ is given by $[X,M]\mapsto\bigl(\ts\int_M p_1(\nu_M),\ts\int_M e(\nu_M)\bigr)$, so from Table \ref{fm15tab2} we see that $\ze_2,\ze_3$ are lifts to $F_{8,8}$ of a basis of $F_{8,8}/F_{7,8}$. Moreover, $F_{4,8}\cong\Z\an{\al_4\tau}\subset\ti\Om^{\bs\Spin}_8(M\SO(4))$ is generated by
\e
 \ze_1=[K3\t\cS^3\t\cS^1_{\rm b},K3\t\{N\}\t\{1\}],
\label{fm15eq10}
\e
since $\al_4=[K3]$ and $[\cS^3\t\cS^1_{\rm b},\{N\}\t\{1\}]$ maps to $\tau\in H_4(M\SO(4),\Z)$. Note that \eq{fm15eq10} is mapped to $(4,0,0)$ by \eq{fm3eq17}, so its invariants are those of $4\frac{\ze_1}{4}$ in Table \ref{fm15tab2}.

\begin{table}[htb]
\centerline{\begin{tabular}{|l|l|l|l|}
\hline
 & $\frac{\ze_1}{4}$ & $\ze_2$ &  $\ze_3$ \\
\hline
$\int_{{M_j}_{\vphantom{(}}}^{\vphantom{(}}p_1(TM_j)$  &  $-12$ & 0 & 3 \\
\hline
$\int_{{M_j}_{\vphantom{(}}}^{\vphantom{(}}e(\nu)$  &  0 & 1 & 0 \\
\hline
$\int_{{M_j}_{\vphantom{(}}}^{\vphantom{(}}p_1(\nu)$ &  0 & $-2$ & 1 \\
\hline
\end{tabular}}
\caption{Invariants of $\frac{\ze_1}{4},\ze_2,\ze_3$}
\label{fm15tab2}
\end{table}

From \eq{fm15eq9} we see there are three possibilities:
\begin{itemize}
\setlength{\itemsep}{0pt}
\setlength{\parsep}{0pt}
\item[(i)] $F_{6,8}\cong F_{7,8}\cong\Z$, the maps $F_{4,8}\hookrightarrow F_{6,8}$ and $F_{6,8}\hookrightarrow F_{7,8}$ are $2\cdot {}:\Z\ra\Z$, and $\ze_1$ in \eq{fm15eq10} is divisible by 4 in $\ti\Om^{\bs\Spin}_8(M\SO(4))$.
\item[(ii)] $F_{7,8}\cong\Z\op\Z_2$, and $\ze_1$ in \eq{fm15eq10} is divisible by 2 but not by 4.
\item[(iii)] $F_{7,8}\cong\Z\op\Z_2^2$ or $\Z\op\Z_4$, and $\ze_1$ is indivisible in $\ti\Om^{\bs\Spin}_8(M\SO(4))$.
\end{itemize}

From this and the fact that $\ze_1,\ze_2,\ze_3$ map to $(4,0,0),(0,1,0),(0,0,1)$ under \eq{fm3eq17}, we see that the image of \eq{fm3eq17} in $\frac{1}{4}\Z\op\Z^2$ must be $\Z^3$ in case (i), $2\Z\op\Z^2$ in case (ii), and $4\Z\op\Z^2$ in case (iii). But the latter two cases contradict that \eq{fm3eq17} maps $\frac{\ze_1}{4}\mapsto(1,0,0)$. Thus case (i) must hold. This forces \eq{fm3eq17} to be an isomorphism and also $\ti\Om^{\bs\Spin}_8(M\SO(4))=\Z\an{\frac{\ze_1}{4},\ze_2,\ze_3}$. Let $\phi:M\SO(4)\ra K(\Z,4)$ be the classifying map of the Thom class in $H^4(M\SO(4),\Z)$. Then $\phi_*(\ze_1)=[K3\t\cS^3\t\cS^1_{\rm b},\al]$ where $\al$ is Poincar\'e dual to $K3\t\{N\}\t\{1\}$. Hence $\al^2=0$ and $\int_{K3\t\{N\}\t\{1\}}p_1(\nu)=0$, so \eq{fm3eq19} proves $\phi_*(\ze_1)=0$, which implies that~$\phi_*\bigl(\frac{\ze_1}{4}\bigr)=0$.

Let $n=9$. From Figure \ref{fm15fig5} we know that $\ti\Om_9^{\bs\Spin}(M\SO(4))$ has a 3-step filtration with quotients $\Z_2,\Z_2,\Z_2$. There is a commutative diagram of extensions
\begin{equation*}
\begin{tikzcd}
\begin{array}{l}
 F_{6,8}\\=\Z\an{2\frac{\ze_1}{4}}\end{array}\dar\rar[symbol=\subset,"E_{7,1}^\iy=\Z_2\an{\al_1\bar\om_3^T}" {xshift=-2ex, yshift=2ex}] & \begin{array}{l}F_{7,8}\\=\Z\an{\frac{\ze_1}{4}}\end{array}\dar{\al_1}\rar[symbol=\subset,"E_{8,0}^\iy=\Z\an{\pi_1^T,\varep^T}" {xshift=-1ex,yshift=2ex}] & \begin{array}{l}F_{8,8}\\=\Z\an{\frac{\ze_1}{4},\ze_2,\ze_3}\end{array}\!\!\!\!=\ti\Om_8^{\bs\Spin}(M\SO(4))\dar[xshift=-14ex]{\al_1}\\
 F_{6,9}=0\rar[symbol=\subset,"E_{7,2}^\iy \\=\Z_2\an{\al_1^2\bar\om_2^T}"' yshift=-1ex]  & F_{7,9}\cong\Z_2\rar[symbol=\subset,"E_{8,1}^\iy=\Z_2\an{\al_1\bar\om_4^T}"' yshift=-1ex] & F_{8,9}
\underset{E^\iy_{9,0}=\Z_2\an{\om_5^T}}{\subset} \ti\Om_9^{\bs\Spin}(M\SO(4)).
\end{tikzcd}
\end{equation*}
Observe that $\al_1$ maps $E_{7,1}^\iy\ra E_{7,2}^\iy$ isomorphically, so $\al_1\frac{\ze_1}{4}$ is a generator of $F_{7,9}$. Multiplication by $\al_1$ also induces a morphism of extensions
\begin{equation*}
\begin{tikzcd}[ampersand replacement=\&]
F_{7,8}=\Z\ban{\frac{\ze_1}{4}}\rar\dar{\al_1} \& F_{8,8}=\Z\ban{\frac{\ze_1}{4},\ze_2,\ze_3}\dar{\al_1}\arrow[r,"{\bigl(\begin{smallmatrix}0&-2&1\\0&1&0\end{smallmatrix}\bigr)}" yshift=1ex] \& E^\iy_{8,0}=\Z\an{\pi_1^T,\varep^T}\dar{(0,1)}\\
F_{7,9}=\Z_2\ban{\al_1\frac{\ze_1}{4}}\rar \& F_{8,9}\rar \& E^\iy_{8,1}=\Z_2\an{\al_1\bar\om_4^T},
\end{tikzcd}
\end{equation*}
where the matrix is a consequence of the second and third row of Table \ref{fm15tab2} and where the right vertical map is $(0,1)$ because $\al_1\pi_1^T=\al_1(\bar\om_2^2)^T$ and $\al_1\varep^T=\al_1\bar\om_4^T$ by \eq{fm15eq7}. By the commutative diagram $\al_1\ze_2$ maps to a non-zero element in $E^\iy_{8,1}$, hence $\al_1\ze_2$ is a generator of $F_{8,9}/F_{7,9}$. Moreover, we have $F_{8,9}\cong\Z_2^2$ rather than $\Z_4$, as this would imply $2\al_1\frac{\ze_1}{4}=\al_1\ze_2$, contradicting that $\al_1$ has order $2$, while $\al_1\ze_2\neq 0$ in $\ti\Om_9^{\bs\Spin}(K(\Z,4))$.

The final step of the filtration of $\ti\Om_9^{\bs\Spin}(M\SO(4))$ is obtained by adjoining an element which maps to the generator of $H_9(M\SO(4),\Z)\cong\Z_2\an{\om_5^T}$. As $\om_5^T\mod{2}=(\bar\om_2\bar\om_3)^T$, any $[X,M]\in \ti\Om_9^{\bs\Spin}(M\SO(4))$ with $\int_M w_2(\nu_M)w_3(\nu_M)\neq 0$ generates $F_{9,9}/F_{8,9}$. Observe that $\eta$ has this property. We have $F_{9,9}\cong\Z_2^3$, in other words, we claim the extension problem $0\ra F_{8,9}\ra F_{9,9}\ra\Z_2\ra 0$ is trivial: the other possibilities are $\Z_2\op \Z_4$ and $\Z_4\op \Z_2$ in which case $\eta=2\al_1\frac{\ze_1}{4}$ or $\eta=2\al_1\ze_2$ and by applying $\phi_*$ we would get $\phi_*(\eta)=2\cdot\phi_*(\al_1\frac{\ze_1}{4})=0$ or $\phi_*(\eta)=2\phi_*(\al_1\ze_2)=0$, which contradicts $\phi_*(\eta)\neq 0$, which we show next. This completes the proof of Table \ref{fm3tab1} for $M\SO(4)$.

For this, observe that $\eta$ has a natural lift to $\hat\eta\in\ti\Om_8^{\bs\Spin}(\Om M\SO(4))$; elements of $\ti\Om_n^{\bs\Spin}(\Om M\SO(4))$ are bordism classes $[X,M]$ where $X$ is a compact spin $n$-manifold and $M\subset X\t\cS^1$ is a compact oriented $(n-4)$-submanifold with $M\cap(X\t\{-1\})=\es$. There is a commutative diagram
\begin{equation}
\begin{tikzcd}
	\ti\Om_8^{\bs\Spin}(\Om M\SO(4))\rar\dar{(\Om\phi)_*} & \ti\Om_9^{\bs\Spin}(M\SO(4))\dar{\phi_*}\\
	\begin{array}{l}\ti\Om_8^{\bs\Spin}(\Om K(\Z,4))\\ \cong\ti\Om_8^{\bs\Spin}(K(\Z,3))\end{array}\rar & \ti\Om_9^{\bs\Spin}(K(\Z,4)).
\end{tikzcd}
\label{fm15eq11}
\end{equation}

Observe that there is well-defined morphism $\ti\Om_8^{\bs\Spin}(K(\Z,3))\ra\Z_2$, $[X,\al]\mapsto\int_X \al\cup\Sq^2(\al)$. The image $(\Om\phi)_*(\hat\eta)$ in $\ti\Om_8^{\bs\Spin}(K(\Z,3))$ is given by $X=(\SU(3)\t\cS^3)/\SO(3)$ and the cohomology class $\al$ dual to the fundamental class of the Wu manifold $M$, embedded as $(\SU(3)\t\{N\})/\SO(3)$. Since $\int_{(\SU(3)\t\cS^3)/\SO(3)}\al\cup\Sq^2(\al)=\int_M w_2(\nu_M)\cup w_3(\nu_M)\neq 0$, it follows that $(\Om\phi)_*(\hat\eta)\neq0$. Moreover, the bottom horizontal map in \eq{fm15eq11} is an isomorphism by Lemma \ref{fm15lem2} in the following section, so $\phi_*(\eta)\neq 0$ and hence
\e
 \phi_*(\eta)=\al_1\ze_2
\label{fm15eq12}
\e
in $\ti\Om_9^{\bs\Spin}(K(\Z,4))$.

\subsection{Proofs of two technical lemmas}
\label{fm153}

For a topological group $G$, recall the map $\chi:\Si G\ra BG$ from \eq{fm3eq54}. Write $\ti H^*(X)\ra \ti H^{*+1}(\Si X)$, $a\mapsto a^\si$ for the suspension isomorphism.

\begin{lem}
\label{fm15lem2}
For\/ $n=8$ and\/ $G=K(\Z,3)$ the induced map\/ $\ti\Om_8^{\bs\Spin}(K(\Z,3))\cong\ti\Om_9^{\bs\Spin}(\Si K(\Z,3))\xrightarrow{\chi_*}\ti\Om_9^{\bs\Spin}(K(\Z,4))$ is an isomorphism.
\end{lem}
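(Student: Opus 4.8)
The plan is to show that $\chi_*:\ti\Om_9^{\bs\Spin}(\Si K(\Z,3))\ra\ti\Om_9^{\bs\Spin}(K(\Z,4))$ is an isomorphism by comparing Atiyah--Hirzebruch spectral sequences. By \S\ref{fm22} and \S\ref{fm2}, the suspension $\si:\ti\Om_8^{\bs\Spin}(K(\Z,3))\ra\ti\Om_9^{\bs\Spin}(\Si K(\Z,3))$ is always an isomorphism, so it suffices to treat the second map. First I would recall that $\chi:\Si K(\Z,3)\ra K(\Z,4)$ classifies the suspension of the primary class $e_3\in H^3(K(\Z,3),\Z)$, i.e.\ $\chi^*(e_4)=e_3^\si$ in $\ti H^4(\Si K(\Z,3),\Z)$; equivalently $\chi$ is the canonical map realizing $\Si K(\Z,n-1)\ra K(\Z,n)$, which is $(2n-2)$-connected and hence here $6$-connected. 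So $\chi_*$ is an isomorphism on $\ti H_p(-,\Z_2)$ and $\ti H_p(-,\Z)$ for $p\le 5$ and surjective for $p=6$; this already handles most of the columns contributing to degree $9$ in the two spectral sequences, but not all, so a finer argument is needed in the remaining columns $p=7,8,9$.

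The key step is to pin down the morphism of spectral sequences on the $E^2$-page in those three columns. On the $K(\Z,3)$ side the relevant page is computed in \S\ref{fm17} (Figure \ref{fm17fig1}); on the $K(\Z,4)$ side it is Figure \ref{fm15fig2}. I would compare the columns $p=7,8,9$ for total degree $p+q\le 9$. The point is that the reduced homology of $\Si K(\Z,3)$ in degrees $7,8,9$ is the suspension of that of $K(\Z,3)$ in degrees $6,7,8$, and under $\chi_*$ these map to the homology of $K(\Z,4)$ in degrees $7,8,9$ via the composition "inclusion of based loops then evaluation", which on $E^2$-pages is exactly the suspension map in ordinary (co)homology. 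Using the explicit generators: on the $K(\Z,3)$ side $H^*(K(\Z,3),\Z_2)$ has $\bar e_3,\Sq^2\bar e_3=\bar e_5,\Sq^3\bar e_3=\bar e_6=\bar e_3^2,\ldots$, and suspending kills products; on the $K(\Z,4)$ side $\bar e_4,\bar e_6'=\Sq^2\bar e_4,\bar e_7=\Sq^3\bar e_4$. One checks $\chi^*(\bar e_6')=\bar e_5^\si$ and $\chi^*(\bar e_7)=\bar e_6^\si=(\bar e_3^2)^\si$ etc., so the stable classes match up and the decomposable classes on the $K(\Z,4)$ side (like $\bar e_4\bar e_6'$, $\bar e_4^2$) are not in the image — but these decomposables live in columns contributing only to total degrees $\ge 10$, or their images are handled by the $\Z_3$-torsion argument below, so they do not affect degree $9$. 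Concretely, Lemma \ref{fm15lem1} already identified the differential $d^3_{10,0}$ on $K(\Z,4)$ using precisely this comparison with $K(\Z,3)$; I would run the analogous bookkeeping one total degree lower. The upshot should be that the induced map on $E^\iy$-pages in total degree $9$ is an isomorphism in every filtration quotient: the $\Z_3$-summand $\Z_3\an{\ep_8}$ contributing to $\ti\Om_9$ via $\al_1\ep_8$ — wait, $\al_1\ep_8$ has order dividing $\gcd(2,3)=1$, so this $\Z_3$ plays no role in degree $9$ — and the $\Z_2$-quotients coming from $E^\iy_{8,1}$ and $E^\iy_{7,2}$ on both sides are matched because the relevant integral and mod-$2$ homology classes in degrees $7,8$ correspond under $\chi_*$ (being suspensions of classes in degrees $6,7$ on the $K(\Z,3)$ side).

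The cleanest way to finish is then a five-lemma argument: $\chi_*$ induces a morphism of the (finite, two- or three-step) filtrations on $\ti\Om_9^{\bs\Spin}(\Si K(\Z,3))$ and $\ti\Om_9^{\bs\Spin}(K(\Z,4))$ coming from the AHSS, and we have shown it is an isomorphism on each associated graded piece $E^\iy_{p,9-p}$; hence it is an isomorphism on the total groups. Alternatively, and perhaps more simply, one can observe directly that $\ti\Om_8^{\bs\Spin}(K(\Z,3))\cong\Z_2$ and $\ti\Om_9^{\bs\Spin}(K(\Z,4))\cong\Z_2$ (both from Tables \ref{fm3tab6} and \ref{fm3tab1}, or from \eq{fm15eq3}), so it is enough to show $\chi_*$ is nonzero; and the explicit generator $\upsilon=[\SU(3),\phi]$ of $\ti\Om_8^{\bs\Spin}(K(\Z,3))$, which has $\int_{\SU(3)}\bar\al\cup\Sq^2\bar\al=\ul 1$ by Example \ref{fm10ex1} (with $\bar\al=\phi^*\bar e_3$), maps under $\chi_*\circ\si$ to the mapping-torus-type class which Theorem \ref{fm3thm2}(c) evaluates via $\int_X\bar\be\cup\Sq^2\bar\be$ to $\ul 1$, hence to the nonzero element $\al_1\ze_2\in\ti\Om_9^{\bs\Spin}(K(\Z,4))$ — indeed this is exactly the computation \eq{fm3eq58} recording $\upsilon\mapsto\al_1\ze_2$ for $G=\SU$, pushed forward to $K(\Z,3)$. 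Either way the map is an isomorphism of groups each abstractly $\cong\Z_2$.

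The main obstacle I anticipate is not the connectivity bookkeeping but the careful identification of the differentials and filtration quotients in degree $9$ on the $K(\Z,4)$ side — in particular making sure that the $\Z_3$-torsion class $\ep_8$ and the decomposable classes $\bar e_4\bar e_6',\bar e_4^2$ genuinely contribute nothing to $\ti\Om_9^{\bs\Spin}(K(\Z,4))$ and that no hidden extension links a piece in the image of $\chi_*$ to a piece not in the image. This is why I would prefer to lean on the "both groups are $\Z_2$, exhibit a generator mapping to a generator" route: it sidesteps the extension problems entirely, reducing the lemma to the single cohomological computation $\int_{\SU(3)}\bar\al\cup\Sq^2\bar\al\neq 0$ from Example \ref{fm10ex1} together with the formula \eq{fm3eq22} of Theorem \ref{fm3thm2}(c), both already available.
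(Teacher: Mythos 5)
Your fallback route (``both groups are $\Z_2$, so exhibit a generator hitting a generator'') is the right idea in spirit, but as written neither of your two routes closes the argument, and the first one actually fails. The problem with the spectral-sequence comparison is a filtration shift: the generator $\upsilon$ of $\ti\Om_8^{\bs\Spin}(K(\Z,3))\cong\Z_2$ is detected in the top filtration quotient $E^\iy_{8,0}=\Z_2\an{\de_3\de_5'}$ (Figure \ref{fm17fig2}), so after suspension it sits in $E^\iy_{9,0}(\Si K(\Z,3))$; but $H_9(K(\Z,4),\Z)=0$, so $E^\iy_{9,0}(K(\Z,4))=0$, and the only nonzero piece of $\ti\Om_9^{\bs\Spin}(K(\Z,4))$ in total degree $9$ is $E^\iy_{8,1}=\Z_2\an{\al_1\bar\ep_4^2}$ (Figure \ref{fm15fig3}). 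A morphism of spectral sequences preserves filtration, so $\chi_*$ induces the \emph{zero} map on every associated graded piece in total degree $9$; the isomorphism on the total groups is invisible to the associated graded, and the five-lemma argument you propose cannot detect it. This is exactly why the paper abandons the direct comparison and instead forms the mapping cone $C$ of $\chi$, computes $\ti H^*(C,\Z_2)$ from the cofibre sequence, shows $\Sq^2\ne 0$ on the class $\bar c$ with $\jmath^*(\bar c)=\bar e_4^2$ via the general formula of Lemma \ref{fm15lem3} for $\Sq^2$ on the cone of $\Si\Om K\to K$, deduces $\ti\Om_9^{\bs\Spin}(C)=0$ from the AHSS of $C$, and reads off surjectivity (hence isomorphism, both groups being $\Z_2$) from the long exact sequence \eq{fm15eq13}.

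Your preferred second route has a different problem: it invokes Theorem \ref{fm3thm2}(c) (formula \eq{fm3eq22}) and the entry $\upsilon\mapsto\al_1\ze_2$ of \eq{fm3eq58}, both of which are proved only in \S\ref{fm17}--\S\ref{fm18}, after this lemma. Within the paper's logic this is at best precarious and arguably circular: Lemma \ref{fm15lem2} is used in \S\ref{fm1523} to establish $\phi_*(\eta)=\al_1\ze_2$ (equation \eq{fm15eq12}), and that fact is fed into \S\ref{fm181} as an input to the proof of Theorem \ref{fm3thm2}. Moreover the proof of $\upsilon\mapsto\al_1\ze_2$ in \S\ref{fm176} is itself the same mapping-cone-plus-$\Sq^2$ computation (for $\chi:\Si\SU\to B\SU$) that the paper carries out here; so your shortcut does not avoid the essential work of Lemma \ref{fm15lem3}, it only relocates it to a result you have not proved. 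To repair your argument you would need to supply, independently of Theorem \ref{fm3thm2}, an invariant on $\ti\Om_9^{\bs\Spin}(K(\Z,4))$ that detects $\al_1\ze_2$ and evaluates nontrivially on the image of $\upsilon$ --- which is essentially what the mapping-cone computation provides.
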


\begin{proof}
Let $C$ be the mapping cone of $\chi:\Si K(\Z,3)\ra K(\Z,4)$. To prove the lemma, we will use the induced long exact sequence
\begin{equation}
\begin{tikzcd}[column sep=3.4ex]
 \cdots\rar & \ti\Om^{\bs\Spin}_n(\Si K(\Z,3))\rar{\chi_*} & \ti\Om^{\bs\Spin}_n(K(\Z,4))\rar{\jmath_*} & \ti\Om^{\bs\Spin}_n(C)\rar{\partial} & \cdots
\end{tikzcd}\!\!
\label{fm15eq13}
\end{equation}
and determine $\ti\Om^{\bs\Spin}_n(C)$ using the Atiyah--Hirzebruch spectral sequence. We compute the $\Z_2$-(co)homology groups of $C$ using the long exact sequence
\begin{equation*}
\begin{tikzcd}[column sep=4ex]
 {}& \begin{array}[t]{l}\ti H^n(\Si K(\Z,3),\Z_2)\\ \mathrlap{\overset{\si}{\cong}H^{n-1}(K(\Z,3),\Z_2)}\end{array}\arrow[l,"\chi^*"'] & \ti H^n(K(\Z,4),\Z_2)\arrow[l,"\jmath^*"'] & \ti H^n(C,\Z_2)\arrow[l,"\de"'] & {}\lar
\end{tikzcd}
\end{equation*}
Recall $\ti H^n(K(\Z,4),\Z_2)$ from \eq{fm15eq2}. The groups $\ti H^n(K(\Z,3),\Z_2)$ are shown in \eq{fm17eq2} below. Using $\chi^*(\bar e_4)=(\bar d_3)^\si$ and the fact that $\chi^*$ commutes with stable cohomology operations, the long exact sequence implies $\ti H^8(C,\Z_2)=\Z_2\an{\bar c}$ for the unique class $\bar c$ satisfying $\jmath^*(\bar c)=\bar e_4^2$ and $\ti H^{10}(C,\Z_2)=\Z_2\an{\de(\bar d_3\bar d_5')^\si,\bar b}$ for a class $\bar b$ satisfying $\jmath^*(\bar b)=\bar e_4\bar e_6'$. Moreover $\ti H^n(C,\Z_2)=0$ for all  other values $8\neq n\le 9$. To compute $\Sq^2(\bar c)$, we will use the following lemma, which is applicable since $\chi$ can be identified with the canonical map $\Si\Om K(\Z,4)\ra K(\Z,4)$. In the notation of the lemma, we have $\bar d=\bar e_4$ and $\bar e=\bar d_3$, hence $\Sq^2(\bar c)$ is the image of $\bar d_3\cup\Sq^2(\bar d_3)=\bar d_3\cup \bar d_5'$ under the suspension and codifferential, which is the generator of $\ti H^{10}(C,\Z_2)$. Therefore, $\Sq^2(\bar c)\neq 0$.

By considering also the long exact sequence in $\Z$-homology one finds the homology groups of $C$ as in the following table.
\begin{equation*}
\begin{tabular}{c|ccccc}
$n$ & $0,1,2,3,4,5,6,7,9$ & $8$ & $10$\\
\hline
\parbox[top][4ex][c]{2cm}{$\ti H_n(C,\Z_2)$} & $0$ & $\Z_2$ & $\Z_2^2$ \\
\parbox[top][4ex][c]{2cm}{$\ti H_n(C,\Z)$} & $0$ & $\Z$ & $\Z_2^2$ \\
\end{tabular}
\end{equation*}
The $E^2$-page of the Atiyah--Hirzebruch spectral sequence $\ti H_p(C,\ti\Om^{\bs\Spin}_q(*))\Ra\ti\Om^{\bs\Spin}_{p+q}(C)$ is shown in Figure \ref{fm15fig6}. The only possible non-zero differential in the region $p+q\le 10$ is $d^2_{10,0}$, which by Proposition \ref{fm2prop1} is dual to the Steenrod square $\Sq^2:\ti H^8(C,\Z_2)\ra \ti H^{10}(C,\Z_2)$, hence non-zero. This leads to the $E^\iy$-page shown in Figure~\ref{fm15fig7}.

\begin{figure}[htb]
\centering
\begin{tikzpicture}
\matrix (m) [matrix of math nodes,
nodes in empty cells,nodes={minimum width=11.8ex,
minimum height=5ex,outer sep=-5pt},
column sep=.5ex,row sep=1ex]{
1 & \Z_2 & \\
0 & \Z & \Z_2^2 \\
\quad\strut & 8 \strut & 10\strut \\};
\draw[thick] (m-1-1.north east) node[above]{$q$} -- (m-3-1.east);
\draw[thick] (m-3-1.north) -- (m-3-3.north east) node[right]{$p$};
\draw[-stealth] (m-2-3) -- node[above right]{\scriptsize $d^2_{10,0}$}  (m-1-2);
\end{tikzpicture}
\caption{$E^2$-page of $\ti H_p(C,\ti\Om^{\bs\Spin}_q(*))\Ra\ti\Om^{\bs\Spin}_{p+q}(C)$, $p+q\le 10$}
\label{fm15fig6}
\end{figure}

\begin{figure}[htb]
\centering
\begin{tikzpicture}
\matrix (m) [matrix of math nodes,
nodes in empty cells,nodes={minimum width=11.8ex,
minimum height=5ex,outer sep=-5pt},
column sep=.5ex,row sep=1ex]{
1 & 0 \\
0 & \Z \\
\quad\strut & 8 \strut \\};
\draw[thick] (m-1-1.north east) node[above]{$q$} -- (m-3-1.east);
\draw[thick] (m-3-1.north) -- (m-3-2.north east) node[right]{$p$};
\end{tikzpicture}

\caption{$E^\iy$-page of $\tilde H_p(C;\tilde\Om^{\bs\Spin}_q(*))\Ra\ti\Om^{\bs\Spin}_{p+q}(C)$, $p+q\le 9$}
\label{fm15fig7}
\end{figure}

All extensions are trivial in Figure \ref{fm15fig7}, so $\ti\Om^{\bs\Spin}_n(C)=0$ for all $8\neq n\le 9$. By \eq{fm15eq13}, the map $\chi_*:\ti\Om_9^{\bs\Spin}(\Si K(\Z,3))\ra\ti\Om_9^{\bs\Spin}(K(\Z,4))$ is then surjective.
\end{proof}

\begin{lem}
\label{fm15lem3}
Let\/ $\chi:\Si\Om K\ra K$ be the map adjoint to\/ $\id_{\Om K}:\Om K\ra\Om K$ and form the mapping cone cofibre sequence $\Si\Om K\overset{\chi}{\longra}K\overset{\jmath}{\longra}C$. Let\/ $\bar d\in \ti H^n(K,\Z_2)$, and suppose there is a unique class\/ $\bar c\in \ti H^{2n}(C,\Z_2)$ satisfying\/ $\jmath^*(\bar c)=\bar d\cup \bar d,$ whose Steenrod square we wish to compute.

Let\/ $\bar e\in \ti H^{n-1}(\Om K,\Z_2)$ be the unique class with\/ $\bar e^\si=\chi^*(\bar d)$. Then\/ $\Sq^2(\bar c)\in\ti H^{2n+2}(C,\Z_2)$ equals the image of\/ $\Sq^2(\bar e)\cup \bar e\in H^{2n}(\Om K,\Z_2)$ under the suspension isomorphism and the co-differential of the mapping cone sequence,
\begin{equation*}
 \ti H^{2n}(\Om K,\Z_2)\overset{\si}{\longra}\ti H^{2n+1}(\Si\Om K,\Z_2)\overset{\de}{\longra}\ti H^{2n+2}(C,\Z_2).
\end{equation*}
\end{lem}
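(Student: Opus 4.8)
\textbf{Proof proposal for Lemma \ref{fm15lem3}.}

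The plan is to compute the Steenrod square $\Sq^2(\bar c)$ by a naturality argument applied to the universal example. First I would set up the algebraic picture of the mapping cone: the cofibre sequence $\Si\Om K\overset{\chi}{\longra}K\overset{\jmath}{\longra}C\overset{\partial}{\longra}\Si^2\Om K$ induces in $\Z_2$-cohomology a long exact sequence, and $\jmath^*:\ti H^{2n}(C,\Z_2)\ra \ti H^{2n}(K,\Z_2)$ together with the codifferential $\de:\ti H^{2n+1}(\Si\Om K,\Z_2)\ra\ti H^{2n+2}(C,\Z_2)$ are the relevant maps. The class $\bar c$ is characterized by $\jmath^*(\bar c)=\bar d\cup\bar d$; such a $\bar c$ exists because $\chi^*(\bar d\cup\bar d)=\chi^*(\bar d)\cup\chi^*(\bar d)$, and $\chi^*(\bar d)=\bar e^\si$ is a suspension class, so its square vanishes (suspension classes have trivial cup products). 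Uniqueness is assumed in the hypothesis. The point is that $\Sq^2(\bar c)$ is not determined by $\jmath^*$ alone — since $\jmath^*(\Sq^2\bar c)=\Sq^2(\bar d\cup\bar d)=0$ by the Cartan formula and $\Sq^1\bar d$-type terms (more precisely $\Sq^2(\bar d^2)=(\Sq^1\bar d)^2$, but in the universal case one arranges this to vanish, or carries it along) — so $\Sq^2(\bar c)$ lies in the image of $\de$, and the task is to identify which element.

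The key step is to pass to the \emph{universal} situation, i.e.\ to replace $\bar d$ by the fundamental class of an Eilenberg--MacLane space. Let $\iota\in H^n(K(\Z_2,n),\Z_2)$ be the fundamental class, let $K' = K(\Z_2,n)$, and form its own mapping cone $C'$ of $\chi':\Si\Om K'\ra K'$. Then $\bar d:K\ra K(\Z_2,n)$ classifies $\bar d$, and by naturality of the mapping cone construction there is an induced map $C\ra C'$ (using that $\chi$ is natural in the space, being adjoint to the identity of the loop space) under which $\bar c$ pulls back from the corresponding universal class $\bar c'\in H^{2n}(C',\Z_2)$, provided $\bar c'$ is unique — which one checks from the known structure of $H^*(\Om K(\Z_2,n),\Z_2)$ and the Serre exact sequence. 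Hence it suffices to compute $\Sq^2(\bar c')$ in the universal case. There, $\Om K' = K(\Z_2,n-1)$, and the relevant cohomology rings and the action of $\Sq^2$ are completely known from Serre's theorem (Theorem \ref{fm2thm1} and its $\Z_2$ analogue, together with the Cartan and Adem relations). I would then compute $\de$ on $\Sq^2(\bar e)\cup\bar e$ directly: by the multiplicative structure of the long exact sequence of the pair, and the fact that $\bar e^\si = \chi'^*(\iota)$, one gets $\partial$-dual statements relating $\Sq^2$ on $C'$ to the "secondary" product $\Sq^2(\bar e)\cup\bar e$ on $\Om K'$. The mechanism is the standard one: for a two-cell-type attachment $\Si A \overset{\chi}{\to}K\to C$ with $\chi^*(\bar d)=\bar e^\si$, the Steenrod square of a class $\bar c$ mapping to $\bar d^2$ is governed by $\Sq^2$ applied to $\bar e$ modulo the suspension, i.e.\ $\Sq^2(\bar c) = \de(\si(\Sq^2(\bar e)\cup\bar e))$, which is exactly the claim.

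The main obstacle I anticipate is making the identification $\Sq^2(\bar c)=\de(\si(\Sq^2(\bar e)\cup\bar e))$ rigorous rather than heuristic: one must be careful that the "uniqueness of $\bar c$" hypothesis genuinely pins down the class (so that naturality transports it), and that the product $\Sq^2(\bar e)\cup\bar e$ — rather than, say, $\bar e\cup\Sq^2(\bar e)$ or a sum — is the correct combination. The cleanest way to control this is to work at the cochain level, or equivalently via the relative cup product $H^{n+1}(\Si\Om K)\ot H^{n+1}(\Si\Om K,\text{cone point})\to H^{2n+2}(C)$ that computes the attaching-map contribution, using that $\chi$ is the evaluation $\Si\Om K\to K$. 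Combining the Cartan formula for $\Sq^2$ of a cup product, the Kudo transgression theorem (to relate $\Sq^2$ across the suspension $\si$), and the known formula for $\Sq^2$ on the fundamental class of $K(\Z_2,n-1)$, one obtains the stated identity; the sign/ordering issues disappear mod $2$, so the only real care needed is the bookkeeping with the long exact sequence and the transgression.
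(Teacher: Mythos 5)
Your general framing is fine as far as it goes -- the reduction to a universal example via naturality of $\chi$, and the observation that $\jmath^*(\Sq^2\bar c)=\Sq^2(\bar d\cup\bar d)=0$ forces $\Sq^2(\bar c)\in\Im\de$ -- but the proposal stops exactly where the real work begins. The content of the lemma is the identification of \emph{which} element of $\Im\de$ the class $\Sq^2(\bar c)$ is, and your justification of the formula $\Sq^2(\bar c)=\de\bigl((\Sq^2\bar e\cup\bar e)^\si\bigr)$ amounts to asserting that ``the mechanism is the standard one'' and then writing down the conclusion. Neither of the tools you name delivers it: the Cartan formula acts on $K$ and on $\Si\Om K$, where everything in sight vanishes, and Kudo's transgression theorem concerns a single transgressive class in the Serre spectral sequence, whereas the answer here is the \emph{decomposable} $\Sq^2(\bar e)\cup\bar e$ in $H^*(\Om K,\Z_2)$, which the cohomology suspension annihilates and which no transgression argument can see. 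Nor does the universal case become module-theoretically determined: in the application in Lemma~\ref{fm15lem2} one has $\Ker\jmath^*\cap\ti H^{10}(C,\Z_2)=\Z_2\an{\de(\bar d_3\bar d_5')^\si}$, so both $0$ and the nonzero element are consistent with Serre's computation of $H^*(K(\Z_2,m),\Z_2)$ and its $\Sq^2$-action, and the lemma is needed precisely to rule out $0$. (There is also a small secondary issue: in the universal case $\bar c'$ is generally \emph{not} unique, since $\iota_{n-1}^2$ is not a suspension class, so $\de$ need not vanish on $H^{2n-1}(\Si\Om K')$; this is repairable, but it undercuts the ``one checks uniqueness from Serre's theorem'' step.)

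The missing mechanism is supplied in the paper by a genuinely geometric construction that your sketch does not contain. One realizes $\bar c=(\De'')^*(d\t d)$, where $\De'':C\ra K\w K$ is induced on mapping cones from the diagonal $\De:K\ra K\t K$ using an explicit null-homotopy of $\pi\ci\De\ci\chi$ (built from the map $\De'$ adjoint to concatenating $(\ga,*)$ with $(*,\ga)$ in $K\vee K$). The Cartan formula then gives $\Sq^2(\bar c)=(\De'')^*(\Sq^2 d\t d)+(\De'')^*\tau^*(\Sq^2 d\t d)$, and the sum of $\De''$ and $\tau\ci\De''$ -- two maps on the cone differing only in the choice of null-homotopy -- is measured by a difference map $k:\Si^2\Om K\ra K\w K$ obtained by concatenating the homotopy with its twist. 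Identifying $k$, up to homotopy, with the double suspension of the diagonal of $\Om K$ followed by $\chi\w\chi$ is what produces the cup product $\Sq^2(\bar e)\cup\bar e$ and the codifferential $\de$. To complete your approach you would have to supply an argument of this kind (or an equivalent functional-operation computation); as written, the central step is a genuine gap.
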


\begin{proof}
View the one-point union $K\vee K$ as the subcomplex $(K\t\{*\})\cup(\{*\}\t K)$ of $K\t K$. As the inclusion of a subcomplex is a cofibration, the mapping cone of the inclusion is just the smash product $K\w K$. There is a homotopy-commutative diagram whose rows are cofibre sequences
\begin{equation*}
 \begin{tikzcd}
   \Si\Om K\rar{\chi}\dar{\De'} & K\rar{\jmath}\dar{\De} & C\dar{\De''}\\
   K \vee K\rar{\io} & K\t K\rar{\pi} & K\w K,
 \end{tikzcd}
\end{equation*}
defined as follows. The map $\De'$ is adjoint to the map $\Om K\ra\Om(K\vee K)$ that sends a loop $\ga:[0,1]\ra K,$ $s\mapsto \ga(s)$ to the concatenation of the loop $(\ga(s),*)$ with the loop $(*,\ga(s))$ in $K\vee K$. There is a homotopy $\{h_t\}_{t\in[0,1]}:\De\circ\chi\simeq\io\circ\De'$, namely
\begin{equation*}
 h_t(\ga): s\mapsto
 \begin{cases}
 \bigl(\ga\bigl(\frac{2s}{1+t}\bigr),*\bigr) & \text{if $s\in[0,\frac{1-t}{2}]$,}\\
 \bigl(\ga\bigl(\frac{2s}{1+t}\bigr),\ga\bigl(\frac{2s-1+t}{1+t}\bigr)\bigr)& \text{if $s\in[\frac{1-t}{2},\frac{1+t}{2}]$,}\\
 \bigl(*,\ga\bigl(\frac{2s-1+t}{1+t}\bigr)\bigr) & \text{if $s\in[\frac{1+t}{2},1]$.}\\
 \end{cases}
\end{equation*}
The map $\pi\circ\De:K\ra K\w K$ and the null-homotopy $\{\pi\circ h_t\}_{t\in[0,1]}:\pi\circ\De\circ\chi\simeq \pi\circ\io\circ\De'=*$ together induce a map $\De'':C\ra K\w K$ on the mapping cone (the induced map on the mapping cone depends on the choice of null-homotopy).

Let $\tau: K\w K\ra K\w K$ be the map that exchanges the two factors of $K$. Then $\tau\circ\pi\circ h_t$ defines another null-homotopy of $\pi\circ\De\circ\chi=\tau\circ\pi\circ\De\circ\chi$ whose induced map on the mapping cone is simply $\tau\circ\De'':C\ra K\w K$.

Observe that $\jmath^*(\De'')^*(d \t d)=\De^*(d\t\bar d)=d^2$, in terms of the cross product on (relative) cohomology, so $c=(\De'')^*(d \t d)$ by the characterization of $t$. Using the naturality and the Cartan formula for the Steenrod operations, we compute
\begin{align*}
 \Sq^2(c)=(\De'')^*\Sq^2(d \t d)&=(\De'')^*\bigl(\Sq^2(d)\t d + d\t\Sq^2(d)\bigr)\\
 &=(\De'')^*\bigl(\Sq^2(d)\t d\bigr) + (\De'')^*\tau^*\bigl(\Sq^2(d)\t d\bigr).
\end{align*}
As $\jmath^*(\De'')^*=\pi^*\De^*$ and $\jmath^*(\De'')^*\tau^*=\pi^*\De^*\tau^*=\pi^*\De^*$ we have $\jmath^*\Sq^2(c)=0$ in $\Z_2$-cohomology, so from the long exact sequence of the mapping cone we know that $\Sq^2(c)$ is in the image of $\de$. We can write $\Sq^2(c)$ in an explicit way as an image under $\de$ as follows. Let $\{k_t\}_{t\in[0,1]}$ be the concatenation of the homotopy $\{\pi\circ h_t\}_{t\in[0,1]}$ with the inverse homotopy $\{\tau\circ\pi\circ h_t\}_{t\in[0,1]}$. Then $\{k_t\}_{t\in[0,1]}$ is a homotopy from the constant map $\Si\Om K\ra K\w K$ to itself, so can be viewed as a map $k:\Si^2\Om K\ra K\w K$. As a special case of a general formula for mapping cones that describes the dependence in cohomology of the induced map on the mapping cone on the choice of homotopy, we have
\begin{equation*}
 (\De'')^*\bigl(\Sq^2(d)\t d\bigr) + (\De'')^*\tau^*\bigl(\Sq^2(d)\t d\bigr)=\de k^*(\Sq^2(d)\t d).
\end{equation*}
It is not hard to see that $k$ is homotopic to the map
\begin{multline*}
 \Si^2\Om K=\cS^1\w\cS^1\w\Om K\xrightarrow{\id_{\cS^1}\w\id_{\cS^1}\w\De}\cS^1\w\cS^1\w\Om K\w\Om K\cong \cS^1\w\Om K\w\cS^1\w\Om K\\
 \xrightarrow{\chi\w\chi} K\w K,
\end{multline*}
so $k^*(\Sq^2(d)\t d)=(\Sq^2(e)\cup e)^\si$ and therefore $\Sq^2(c)=\de(\Sq^2(e)\cup e)^\si$.
\end{proof}

\subsection{\texorpdfstring{Computation of $\ti\Om_n^{\bs{\mathrm{Spin}}}(M\Spin(4))$}{Computation of Ωₙˢᵖⁱⁿ(MSpin(4))}}
\label{fm154}

\subsubsection{Description of the (co)homology}
\label{fm1541}

As $\Spin(4)=\Sp(1)\t\Sp(1)$, we have $B\Spin(4)=B\Sp(1)\t B\Sp(1)$ and hence $H^*(B\Spin(4),\Z)=\Z[c_2^+,c_2^-]$ is a polynomial ring in two variables $c_2^\pm$, the second Chern classes of the spinor quaternionic line bundles $\Si^\pm\ra B\Spin(4)$. Dually, $H_4(B\Spin(4),\Z)=\Z\an{\ga_2^+,\ga_2^-}$ is generated by homology classes $\{\ga_2^+,\ga_2^-\}$ dual to $\{c_2^+, c_2^-\}$. As the $\Z$-(co)homology of $B\Spin(4)$ has no torsion, the $\Z_2$-(co)homology is obtained by reduction modulo $2$. The (co)homologies of the Thom space $M\Spin(4)$ are the same as for $B\Spin(4)$ except for a degree shift under the (co)homological Thom isomorphism.

\subsubsection{Computation of the spectral sequence}
\label{fm1542}

Consider the Atiyah--Hirzebruch spectral sequence
\begin{equation*}
 \ti H_p(M\Spin(4),\Om^{\bs\Spin}_q(*))\Longra\ti\Om^{\bs\Spin}_{p+q}(M\Spin(4)),
\end{equation*}
whose $E^2$-page for $p+q\le 10$ is shown in Figure \ref{fm15fig8}.

\begin{figure}[htb]
\centering
\begin{tikzpicture}
  \matrix (m) [matrix of math nodes,
    nodes in empty cells,nodes={minimum width=11.8ex,
    minimum height=5ex,outer sep=-5pt},
    column sep=1ex,row sep=1ex]{
    4 & \Z\an{\al_4\tau}  &\\
	2 & \Z_2\an{\al_1^2\bar\tau} & \Z_2\an{\al_1^2(\bar\ga_2^+)^T,\al_1^2(\bar\ga_2^-)^T} &\\
	1 & \Z_2\an{\al_1\bar\tau}  & \Z_2\an{\al_1(\bar\ga_2^+)^T,\al_1(\bar\ga_2^-)^T} & \\
	0 & \Z\an{\tau}  & \Z\an{(\ga_2^+)^T,(\ga_2^-)^T} \\
    \quad\strut  & 4 &  8  \strut \\};
 \draw[thick] (m-1-1.north east) node[above]{$q$} -- (m-5-1.east);
 \draw[thick] (m-5-1.north) -- (m-5-3.north east) node[right]{$p$};
\end{tikzpicture}
\caption{$E^2$-page of $\ti H_p(M\Spin(4),\Om_q^{\bs\Spin}(*))\!\Ra\ab\ti\Om^{\bs\Spin}_{p+q}(M\Spin(4))$, $p\!+\!q\!\le\! 10$, which is also the $E^\iy$-page for $p+q\le 9$.}
\label{fm15fig8}
\end{figure}

All differentials vanish in this region, so this is also the $E^\iy$-page for $p+q\le 9$. Moreover, all extension problems are trivial.

\subsubsection{Generators}
\label{fm1543}

Let $n=4$. By Proposition \ref{fm2prop2} we have an isomorphism $\ti\Om^{\bs\Spin}_4(M\Spin(4))\ra H_4(M\Spin(4),\Z)\cong\Z$, $[X,M]\mapsto\int_M 1$ and it is easy to see that $\de\mapsto1$, hence $\de$ generates $\ti\Om^{\bs\Spin}_4(M\Spin(4))$.

Let $n=5,6$. From Figure \ref{fm15fig8} we see that the generators in dimensions $5,6$ are obtained from the generator of $\ti\Om^{\bs\Spin}_4(M\Spin(4))$ by multiplication by~$\al_1, \al_1^2$.

Let $n=8$. Observe that \eq{fm3eq15} is a well-defined morphism which, by Rokhlin's theorem \cite[p.89]{LaMi}, takes values in $\Z^3$. It is easy to check that \eq{fm3eq15} maps $\ze_1\mapsto(-1,0,0)$, $\ze_2\mapsto (0,1,0)$, and $\ze_2'\mapsto(0,0,-1)$, where we note that $\ze_2'$ is obtained from $\ze_2$ by exchanging the two normal spinor bundles {\it and} reversing the orientation on $M$. As $\ti\Om_8^{\bs\Spin}(M\Spin(4))\cong\Z^3$ and \eq{fm3eq15} maps to a basis, it follows that \eq{fm3eq15} an isomorphism and that $\ze_1,\ze_2,\ze_2'$ generate $\ti\Om_8^{\bs\Spin}(M\Spin(4))$.

Let $n=9$. From Figure \ref{fm15fig8} we see that the generator in dimension $9$ is obtained from the generator in dimension $8$ by multiplication with $\al_1$. This completes the proof of Table \ref{fm3tab1} for $M\Spin(4)$.

\subsection{\texorpdfstring{Computation of $\ti\Om_n^{\bs{\mathrm{Spin}}}(M\U(2))$}{Computation of Ωₙˢᵖⁱⁿ(MU(2))}}
\label{fm155}

\subsubsection{Description of the (co)homology}
\label{fm1551}

Recall that $H^*(B\U(2),\Z)=\Z[c_1,c_2]$ is a polynomial ring on the Chern classes. Write $\ga_1\in H_2(B\U(2),\Z)$ and $\ga_2\in H_4(B\U(2),\Z)$ for the homology classes dual to $c_1$ and $c_2$. This gives the following table of (co)homology groups.
\begin{equation*}
\begin{tabular}{c|cccccc}
$p$ & 
$\!\!0,1,3,5$ & $2$ & $4$ & $6$\\
\hline
\parbox[top][4ex][l]{2.4cm}{$\ti H_p(B\U(2),\Z)$} & 
$0$ &  $\Z\an{\ga_1}$ & $\Z\an{\ga_1^2,\ga_2}$ & $\Z\an{\ga_1^3,\ga_1\ga_2}$ \\
\parbox[top][4ex][l]{2.4cm}{$\ti H^p(B\U(2),\Z)$} & 
$0$ &  $\Z\an{c_1}$ & $\Z\an{c_1^2,c_2}$ & $\Z\an{c_1^3,c_1c_2}$ \\
\parbox[top][4ex][l]{2.4cm}{$\ti H_p(B\U(2),\Z_2)$} & 
$0$ &  $\Z_2\an{\bar\ga_1}$ & $\Z_2\an{\bar\ga_1^2,\bar\ga_2}$ & $\Z_2\an{\bar\ga_1^3,\bar\ga_1\bar\ga_2}$ \\
\parbox[top][4ex][l]{2.4cm}{$\ti H^p(B\U(2),\Z_2)$} & 
$0$ &  $\Z_2\an{\bar c_1}$ & $\Z_2\an{\bar c_1^2,\bar c_2}$ & $\Z_2\an{\bar c_1^3,\bar c_1\bar c_2}$
\end{tabular}
\end{equation*}
The (co)homologies of the Thom space $M\U(2)$ are the same as for $B\U(2)$ except for a degree shift under the (co)homological Thom isomorphism.

The inclusion $\U(2)\hookrightarrow\SO(4)$ induces a map $\ka:B\U(2)\ra B\SO(4)$, which acts on $\Z$- and $\Z_2$-cohomology by
\begin{align*}
\ka^*&:p_1\longmapsto c_1^2-2c_2,&
\ka^*&:e\longmapsto c_2,&
\ka^*&:W_3\longmapsto 0,\\
\ka^*&:\bar w_2\longmapsto \bar c_1,&
\ka^*&:\bar w_3\longmapsto 0,&
\ka^*&:\bar w_4\longmapsto \bar c_2.
\end{align*}
From this we deduce the action of $\ka$ on $\Z$- and $\Z_2$-homology to be
\begin{align*}
\ka_*&:\ga_1\longmapsto\om_2,&
\ka_*&:\ga_1^2\longmapsto \pi_1,&
\ka_*&:\ga_2\longmapsto -2\pi_1+\varep,&
\ka_*&:\ga_1^3\longmapsto \om_6, \\
\ka_*&:\ga_1\ga_2\longmapsto \om_6', &
\ka_*&:\bar\ga_1\longmapsto \bar\om_2,&
\ka_*&:\bar\ga_1^2\longmapsto \bar\om_2^2,&
\ka_*&:\bar\ga_2\longmapsto\bar\om_4, \\
\ka_*&:\bar\ga_1^3\longmapsto \bar\om_2^3,&
\ka_*&:\bar\ga_1\bar\ga_2\longmapsto\bar\om_2\bar\om_4.
\end{align*}
The inclusion $\U(2)\hookrightarrow\SO(4)$ induces a map $\mu:M\U(2)\ra M\SO(4)$. As $\ka,\mu$ commute with the Thom isomorphism, we see that $\mu$ acts on homology by
\begin{align*}
\mu_*&:\tau\longmapsto\tau,&
\mu_*&:\ga_1^T\longmapsto\om_1^T,&
\mu_*&:(\ga_1^2)^T\longmapsto \pi^T_1,\\
\mu_*&:\ga^T_2\longmapsto -2\pi^T_1+\varep^T,&
\mu_*&:(\ga_1^3)^T\longmapsto \om^T_6, &
\mu_*&:(\ga_1\ga_2)^T\longmapsto \om_6^{\prime T}, \\
\mu_*&:\bar\tau\longmapsto\bar\tau,&
\mu_*&:\bar\ga_1^T\longmapsto \bar\om_2^T,&
\mu_*&:(\bar\ga_1^2)^T\longmapsto(\bar\om_2^2)^T,\\
\mu_*&:\bar\ga^T_2\longmapsto\bar\om_4^T.
\end{align*}

\subsubsection{Computation of the spectral sequence}
\label{fm1552}

Consider the Atiyah--Hirzebruch spectral sequence
\begin{equation*}
 \ti H_p(M\U(2),\Om^{\bs\Spin}_q(*))\Longra\ti\Om^{\bs\Spin}_{p+q}(M\U(2)),
\end{equation*}
whose $E^2$-page for $p+q\le 10$ is shown in Figure \ref{fm15fig9}. Since $\mu:M\U(2)\ra M\SO(4)$ induces a morphism of the spectral sequences, we can compare Figures \ref{fm15fig4} and \ref{fm15fig9} to see that
\begin{equation*}
d^2_{6,0}(\ga_1^T)=\al_1\bar\tau,\;\>
d^2_{8,0}=0,\;\>
d^2_{10,0}\bigl((\ga_1^3)^T\bigr)=\al_1\bar\ga_1^2,\;\>
d^2_{10,0}\bigl((\ga_1\ga_2)^T\bigr)=0.
\end{equation*}
The differentials $d^2_{6,1}=d^2_{6,0}\circ\rho_2$, $d^2_{8,1}=d^2_{8,0}\circ\rho_2$ are obtained by composing with the reduction modulo two. This leads to the $E^3$-page of the spectral sequence for $p+q\le 9$, as in Figure \ref{fm15fig10}.

\begin{figure}[htb]
\centering
\begin{tikzpicture}
  \matrix (m) [matrix of math nodes,
    nodes in empty cells,nodes={minimum width=11.8ex,
    minimum height=5ex,outer sep=-5pt},
    column sep=.8ex,row sep=1ex]{
    4 & \Z\an{\al_4\tau} & \Z\an{\al_4\ga^T_1} &\\
	2 & \Z_2\an{\al_1^2\bar\tau} & \Z_2\an{\al_1^2\bar\ga^T_1} & \Z_2\an{\al_1^2(\bar\ga^2_1)^T,\al_1^2\bar\ga_2^T} &\\
	1 & \Z_2\an{\al_1\bar\tau} & \Z_2\an{\al_1\bar\ga^T_1} & \Z_2\an{\al_1(\bar\ga^2_1)^T,\al_1\bar\ga_2^T} & \\
	0 & \Z\an{\tau} & \Z\an{\ga^T_1} & \Z\an{(\ga^2_1)^T,\ga_2^T} & \Z\an{(\ga^3_1)^T,(\ga_1\ga_2)^T} \\
    \quad\strut  & 4 & 6 & 8 & 10 \strut \\};
  \draw[-stealth] (m-4-5) -- node[right,yshift=0.5ex]{\scriptsize $d^2_{10,0}$} (m-3-4);
  \draw[-stealth] (m-4-4) -- node[right,yshift=0.25ex]{\scriptsize $d^2_{8,0}$} (m-3-3);
  \draw[-stealth] (m-3-4) -- node[right,yshift=0.25ex]{\scriptsize $d^2_{8,1}$} (m-2-3);
  \draw[-stealth] (m-4-3) -- node[right,yshift=0.1ex]{\scriptsize $d^2_{6,0}$} (m-3-2);
  \draw[-stealth] (m-3-3) -- node[right,yshift=0.1ex]{\scriptsize $d^2_{6,1}$} (m-2-2);
 \draw[thick] (m-1-1.north east) node[above]{$q$} -- (m-5-1.east);
 \draw[thick] (m-5-1.north) -- (m-5-5.north east) node[right]{$p$};
\end{tikzpicture}
\caption{$E^2$-page of $\ti H_p(M\U(2),\Om^{\bs\Spin}(*)_q)\Rightarrow\ti\Om^{\bs\Spin}_{p+q}(M\U(2))$, $p+q\le 10$}
\label{fm15fig9}
\end{figure}

\begin{figure}[htb]
\centering
\begin{tikzpicture}
\matrix (m) [matrix of math nodes, nodes in empty cells,nodes={minimum width=12ex, minimum height=5ex,outer sep=-5pt}, column sep=1ex,row sep=1ex]{
4 & \Z\an{\al_4\tau}  &\\
2 & 0 & \Z_2\an{\al_1^2\bar\ga_1^T} & \\
1 & 0 & 0 & \Z_2\an{\al_1\bar\ga_2^T} \\
0 & \Z\an{\tau} & \Z\an{2\ga_1^T} & \Z\an{(\ga^2_1)^T,\ga_2^T} \\
\quad\strut & 4 & 6 & 8 \strut \\};
\draw[thick] (m-1-1.north east) node[above]{$q$} -- (m-5-1.east);
\draw[thick] (m-5-1.north) -- (m-5-5.north east) node[right]{$p$};
\end{tikzpicture}
\caption{$E^3$-page of $\ti H_p(M\U(2),\Om^{\bs\Spin}(*)_q)\Rightarrow\ti\Om^{\bs\Spin}_{p+q}(M\U(2))$, $p+q\le 9$. This is also the $E^\iy$-page in this region.}
\label{fm15fig10}
\end{figure}

There are no higher differentials (for reasons of degree), and the spectral sequence collapses at the $E^3$-page. Hence Figure \ref{fm15fig10} is also the $E^\iy$-page for $p+q\le 9$. The only non-trivial extension problem is in dimension $n=8$, where $\ti\Om_8^{\bs\Spin}(M\U(2))$ could be either $\Z^3$ or $\Z^3\op\Z_2$. Consider the classifying map $\phi:M\U(2)\ra K(\Z,4)$ of the Thom class. By definition, $\phi^*(e_4)=t$, so $\phi^*(\bar e_6')=\phi^*(\Sq^2(\bar e_4))=\Sq^2(\bar t)=w_2\cup \bar t=\bar c_1\cup \bar t$, by \eq{fm15eq8}. Hence the induced map $\phi_*$ from Figure \ref{fm15fig10} to Figure \ref{fm15fig3} maps $\al_4\tau\mapsto\al_4\ep_4$ and $\al_1^2\bar\ga_1^T\mapsto\al_1^2\bar\ep_6'$. Let $\ti F_{p,q}$ be the filtration of $\ti\Om_n^{\bs\Spin}(M\U(2))$ whose associated graded modules are given by the $E^\iy$-page shown in Figure \ref{fm15fig10}. Then $\phi$ induces a map of extensions
\begin{equation*}
 \begin{tikzcd}
 	0\rar & \ti F_{4,8}\cong\Z\an{\al_4\tau}\rar\arrow[d,"\phi_*","\cong"'] & \ti F_{6,8}\dar{\phi_*}\rar & \Z_2\an{\al_1^2\bar\ga_1^T}\rar\arrow[d,"\phi_*","\cong"'] & 0\\
 	0\rar & F_{4,8}\cong\Z\an{\al_4\ep_4}\rar & F_{6,8}\rar & \Z_2\an{\al_1^2\bar\ep_6'}\rar & 0,\!
 \end{tikzcd}
\end{equation*}
where the far left and far right map are isomorphisms. Hence the map in the middle is also an isomorphism by the $5$-lemma, so $\ti F_{6,8}\cong F_{6,8}\cong\Z$ by \eq{fm15eq5}. The group $\ti\Om_8^{\bs\Spin}(M\U(2))=\ti F_{8,8}$ is thus an extension of $E_{8,0}^\iy\cong\Z^2$ by $\ti F_{6,8}\cong\Z$, which must be $\Z^3$.

\subsubsection{Generators}
\label{fm1553}

Let $n=4$. By Proposition \ref{fm2prop2} we have an isomorphism $\ti\Om^{\bs\Spin}_4(M\U(2))\ra H_4(M\U(2),\Z)\cong\Z$, $[X,M]\mapsto\int_M 1$ and it is easy to see that $\de\mapsto1$, hence $\de$ generates $\ti\Om^{\bs\Spin}_4(M\U(2))$.

Let $n=6$. By Proposition \ref{fm2prop2} we have an injective map $\ti\Om^{\bs\Spin}_6(M\U(2))\ra H_6(M\U(2),\Z)\cong\Z,$ $[X,M]\mapsto\int_M c_1(\nu_M)$ with cokernel $\Z_2$. We have $\varep\mapsto 2$, so this is the generator of $\ti\Om^{\bs\Spin}_6(M\U(2))$.

Let $n=8$. Observe that \eq{fm3eq13} is a well-defined morphism that maps $\ze_1\mapsto(1,0,0)$, $\ze_2\mapsto(0,1,0)$, $\ze_2'\mapsto(0,0,1)$. As $\ti\Om^{\bs\Spin}_8(M\U(2))\cong\Z^3$ and \eq{fm3eq13} maps to a basis, the map \eq{fm3eq13} must be an isomorphism and take values in $\Z^3$. Moreover, it follows that $\ze_1,\ze_2,\ze_2'$ generate $\ti\Om^{\bs\Spin}_8(M\U(2))$.

Let $n=9$. From Figure \ref{fm15fig10} we see that the generator in dimension $9$ is obtained by multiplying the generator $\ze_2$ (detected by $[X,M]\mapsto\ts\int_M c_2(\nu_M)$, corresponding to $\ga_2^T\in E^\iy_{8,0}$) by $\al_1$. Hence $\ti\Om^{\bs\Spin}_8(M\U(2))=\Z_2\an{\al_1\ze_2}$. This complete the proof of Table \ref{fm3tab1} for $M\U(2)$.

\subsection{\texorpdfstring{Computation of $\ti\Om_n^{\bs{\mathrm{Spin}}}(M\SU(2))$}{Computation of Ωₙˢᵖⁱⁿ(MSU(2))}}
\label{fm156}

\subsubsection{Description of the (co)homology}
\label{fm1561}

Recall that $H^*(B\SU(2),\Z)=\Z[c_2]$ is a polynomial ring on the second Chern class. Let $\ga_2\in H_4(B\SU(2),\Z)$ be dual to $c_2$. The (co)homologies of the Thom space $M\SU(2)$ are the same as for $B\SU(2)$ except for a degree shift under the (co)homological Thom isomorphism.

\subsubsection{Computation of the spectral sequence}
\label{fm1562}

Consider the Atiyah--Hirzebruch spectral sequence
\begin{equation*}
 \ti H_p(M\SU(2),\Om^{\bs\Spin}_q(*))\Longra\ti\Om^{\bs\Spin}_{p+q}(M\SU(2)),
\end{equation*}
whose $E^2$-page for $p+q\le 10$ is shown in Figure~\ref{fm15fig11}.

\begin{figure}[htb]
\centering
\begin{tikzpicture}
  \matrix (m) [matrix of math nodes,
    nodes in empty cells,nodes={minimum width=11.8ex,
    minimum height=5ex,outer sep=-5pt},
    column sep=1ex,row sep=1ex]{
    4 & \Z\an{\al_4\tau}  &\\
	2 & \Z_2\an{\al_1^2\bar\tau} & \Z_2\an{\al_1^2\bar\ga_2^T} &\\
	1 & \Z_2\an{\al_1\bar\tau}  & \Z_2\an{\al_1\bar\ga_2^T} & \\
	0 & \Z\an{\tau}  & \Z\an{\ga_2^T} \\
    \quad\strut  & 4 &  8  \strut \\};
 \draw[thick] (m-1-1.north east) node[above]{$q$} -- (m-5-1.east);
 \draw[thick] (m-5-1.north) -- (m-5-3.north east) node[right]{$p$};
\end{tikzpicture}
\caption{$E^2$-page of $\ti H_p(M\SU(2),\Om_q^{\bs\Spin}(*))\!\Ra\!\ti\Om^{\bs\Spin}_{p+q}(M\SU(2))$, $p\!+\!q\!\le\! 10$.}
\label{fm15fig11}
\end{figure}

All differentials vanish in this region, so the $E^2$-page is also the $E^\iy$-page for $p+q\le 9$. All extensions are trivial in this region.

\subsubsection{Generators}
\label{fm1563}

Let $n=4$. By Proposition \ref{fm2prop2} we have an isomorphism $\ti\Om^{\bs\Spin}_4(M\SU(2))\ra H_4(M\SU(2),\Z)\cong\Z$, $[X,M]\mapsto\int_M 1$ which maps $\de\mapsto1$, hence $\de$ generates $\ti\Om^{\bs\Spin}_4(M\SU(2))$.

Let $n=8$. Observe that \eq{fm3eq10} is a well-defined morphism which, by Rokhlin's theorem \cite[p.~89]{LaMi}, takes values in $\Z^2$. It is easy to check that $\ze_1\mapsto(1,0)$, $\ze_2\mapsto(0,1)$. Since $\ti\Om^{\bs\Spin}_8(M\SU(2))\cong\Z^2$ and \eq{fm3eq10} maps to a basis, the map \eq{fm3eq10} must be an isomorphism. Moreover, it follows that $\ze_1,\ze_2$ generate $\ti\Om^{\bs\Spin}_8(M\SU(2))$.

Let $n=9$. From Figure \ref{fm15fig11} we see that the generator in dimension $9$ is obtained from the generator $\ze_2$ (which is detected by $\int_M c_2(\nu_M)$, corresponding to $E^\iy_{8,0}=\Z\an{\ga_2^T}$) by multiplication with $\al_1$. This proves Table \ref{fm3tab1} for $M\SU(2)$.

\subsection{\texorpdfstring{Computation of $\ti\Om_n^{\bs{\mathrm{Spin}}}(K(\Z_2,4))$}{Computation of Ωₙˢᵖⁱⁿ(K(ℤ₂,4))}}
\label{fm157}

\subsubsection{Description of the (co)homology}
\label{fm1571}

The integer cohomology groups of $K(\Z_2,4)$ are not commonly found in the literature. A general algorithmic framework for computing the cohomology of Eilenberg--MacLane spaces was developed by Cartan \cite{Car}. Moreover, May \cite[Th.~10.4]{May} computes the Bockstein spectral sequence of $K(\Z_{p^t},n)$, which implies the integer cohomology groups, but the result is somewhat complicated. For convenience, we present here an elementary calculation in degrees $\le 10$.

For this we begin with the $\Z_2$-cohomology of $K(\Z_2,4)$, which was described by Serre \cite{Serr}. Let $\bar f_4'\in \ti H^4(K(\Z_2,4),\Z_2)$ be the primary class as in \S\ref{fm23}. The $\Z_2$-cohomology is then a polynomial algebra on generators
\begin{align*}
 \bar f_4',\enskip
 \bar f_5=\Sq^1(\bar f_4'),\enskip
 \bar f_6'=\Sq^2(\bar f_4'),\enskip
 \bar f_7=\Sq^3(\bar f_4'),\; \bar f_7'=\Sq^2\Sq^1(\bar f_4'),\\
 \bar f_8=\Sq^3\Sq^1(\bar f_4'),\enskip
 \bar f_9'=\Sq^4\Sq^1(\bar f_4'),\enskip
 \bar f_{10}'=\Sq^4\Sq^2(\bar f_4'),\enskip
 \ldots.
\end{align*}
This determines $\ti H^n(K(\Z_2,4),\Z_2)$ for all $n\le 10$. Dually, $\ti H_n(K(\Z_2,4),\Z_2)$ are generated as groups by homology classes  $\bar \varphi_4', \bar \varphi_5,\ldots, \bar \varphi_8,(\bar \varphi_4')^2$, again using Notation \ref{fm15nota1}. These groups are recorded in \eq{fm15eq17} and \eq{fm15eq19}.

\begin{prop}
\label{fm15prop4}
For each\/ $n\in\N,$ the reduced cohomology\/ $\ti H^*(K(\Z_2,n),\Z)$ consists entirely of elements of order\/ $2^k$ with\/~$k\ge 1$.
\end{prop}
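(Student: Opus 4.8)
The plan is to prove Proposition~\ref{fm15prop4} by a two-part argument: first establishing the result rationally, and then promoting this to the statement about $2$-power torsion using the structure of $K(\Z_2,n)$ as a space all of whose homotopy is $2$-torsion.

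First I would show that $\ti H^*(K(\Z_2,n),\Z)\ot\Q=0$, i.e.\ all reduced integral cohomology is torsion. This follows because $K(\Z_2,n)$ is a simply-connected (for $n\ge 2$) space with finite homotopy groups in each degree, so by a standard Serre class argument (or by induction on $n$ via the path-loop fibration $K(\Z_2,n-1)\to PK(\Z_2,n)\to K(\Z_2,n)$, starting from the case $n=1$ where $K(\Z_2,1)=\RP^\iy$ has $\ti H^*(\RP^\iy,\Z)$ all $2$-torsion) the reduced homology and cohomology groups are finite abelian groups in each degree. Concretely, one can run the Serre spectral sequence of the fibration with $\Z$ coefficients: if $\ti H^*(K(\Z_2,n-1),\Z)$ is finite in each positive degree, then since the total space $PK(\Z_2,n)$ is contractible, all differentials must eventually kill everything, forcing $\ti H^*(K(\Z_2,n),\Z)$ to be finite in each positive degree. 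This gives the rational vanishing.

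Next I would pin down that the torsion is all $2$-primary, with no odd torsion. The cleanest route is to observe that $K(\Z_2,n)$ has the property that multiplication by any odd integer $m$ on $\pi_n(K(\Z_2,n))=\Z_2$ is an isomorphism (indeed the identity), hence induces a self-homotopy-equivalence of $K(\Z_2,n)$; therefore multiplication by $m$ is an isomorphism on $\ti H^*(K(\Z_2,n),\Z)$ for every odd $m$. Combined with the finiteness established above, each $\ti H^k(K(\Z_2,n),\Z)$ is a finite abelian group on which every odd integer acts invertibly, which forces it to be a $2$-group, i.e.\ annihilated by some $2^k$ with $k\ge 1$ (and $k\ge 1$ rather than $k=0$ precisely because the group is reduced cohomology, so nontrivial elements have order a positive power of $2$). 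Alternatively, one can use the universal coefficient theorem together with the fact that $\ti H_*(K(\Z_2,n),\Z)$ is built up, via the same fibration induction, from copies of $\Z_2$ (the base case $\RP^\iy$ has $\ti H_k(\RP^\iy,\Z)$ equal to $\Z_2$ for $k$ odd and $0$ for $k$ even), so all homology torsion is $2$-primary, hence so is cohomology torsion.

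The main obstacle, such as it is, is purely a matter of bookkeeping the induction on $n$ cleanly: one must make sure the Serre spectral sequence argument genuinely produces finiteness (not merely torsion) in each degree so that the ``odd integers act invertibly $\Rightarrow$ $2$-group'' step applies, and one must be careful that the statement is about \emph{reduced} cohomology so that the degree-zero group $\Z$ is excluded and every surviving class really does have order a positive power of $2$. Since only the low-degree range $n\le 10$ is actually used elsewhere in the paper (see the explicit tables in \S\ref{fm1571}), one could alternatively bypass the general induction and simply invoke the explicit computation of $H^*(K(\Z_2,4),\Z)$ that the $\Z_2$-cohomology and Bockstein data already determine, but the conceptual argument above is shorter and applies for all $n$.
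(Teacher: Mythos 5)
Your overall decomposition (finiteness in positive degrees, plus ``the torsion is $2$-primary'') is reasonable, and the first half --- finiteness of $\ti H^*(K(\Z_2,n),\Z)$ in each degree via the path--loop fibration induction from $\RP^\iy$ --- is correct and standard. But the main argument you give for the second half has a genuine error. The self-map $f_m:K(\Z_2,n)\ra K(\Z_2,n)$ realizing multiplication by an odd integer $m$ on $\pi_n=\Z_2$ is homotopic to the \emph{identity} (a self-map of $K(A,n)$ is determined up to homotopy by its effect on $\pi_n$, and multiplication by an odd $m$ is the identity on $\Z_2$), so $f_m^*=\id$ on cohomology and yields no information whatsoever about the algebraic operation of multiplication by $m$ on the groups $\ti H^k(K(\Z_2,n),\Z)$. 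The step ``hence multiplication by $m$ is an isomorphism on $\ti H^*$'' tacitly identifies $f_m^*$ with multiplication by $m$, which is false in degrees above $n$: for example the self-map of $K(\Z,2)=\CP^\iy$ inducing multiplication by $m$ on $\pi_2$ induces multiplication by $m^k$ on $H^{2k}$, and in general the effect of such a map on higher cohomology is governed by cohomology operations, not by scalar multiplication. As written, the claim that every odd integer acts invertibly on $\ti H^k$ is essentially the statement to be proved, so this route is circular.

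The fix is your own fallback: run the same Serre spectral sequence induction with coefficients that see the odd torsion and the free part at once. This is what the paper does --- it localizes the spectral sequence of $\Om K(\Z_2,n+1)\ra PK(\Z_2,n+1)\ra K(\Z_2,n+1)$ away from $2$; by induction the fibre's reduced localized cohomology vanishes, so the $E_2$-page collapses to the bottom row, and contractibility of the path space forces $\ti H^*(K(\Z_2,n+1),\Z)$ to vanish after inverting $2$. Equivalently one can argue in homology with the Serre class of finite $2$-groups. Either way the rational and odd-primary statements are handled simultaneously, so the separate finiteness step is not needed.
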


\begin{proof}
We can equivalently prove that the localization $\ti H^*(K(\Z_2,n),\Z)_{(2)}$ at the prime $2$ vanishes. This is proved by induction.

For the base case, recall that $K(\Z_2,1)\simeq\RP^\iy$ and that $\ti H^*(\RP^\iy,\Z)$ is generated as a ring by an element of order two. In particular, $\ti H^*(K(\Z_2,1),\Z)_{(2)}$ vanishes. For the inductive step, consider the Serre spectral sequences of the path space fibration
\begin{equation*}
 K(\Z_2,n)\simeq \Om K(\Z_2,n+1) \longra PK(\Z_2,n+1) \longra K(\Z_2,n+1).
\end{equation*}
Since localization is an exact functor, it induces a spectral sequence
\begin{equation*}
 \ti H^p(K(\Z_2,n+1);H^q(K(\Z_2,n),\Z)_{(2)})\Longrightarrow \ti H^{p+q}(PK(\Z_2,n+1),\Z)_{(2)}.
\end{equation*}

The path space $PK(\Z_2,n+1)$ is contractible, $H^*(K(\Z_2,n),\Z)_{(2)}$ vanishes in positive degree by the inductive hypothesis, and $H^0(K(\Z_2,n),\Z)_{(2)}\cong\Z_{(2)}$. Therefore the $E_2$-page of the spectral sequence reduces to a single row
\begin{equation*}
 E^{p,0}_2=\ti H^p(K(\Z_2,n+1),\Z_{(2)})\cong \ti H^p(K(\Z_2,n+1),\Z)_{(2)}
\end{equation*}
and, in particular, degenerates at the $E_2$-page. Hence $\ti H^p(K(\Z_2,n+1),\Z)_{(2)}\cong \ti H^p(PK(\Z_2,n+1),\Z)_{(2)}=0$ for all $p$, thus completing the inductive step.
\end{proof}

The Bockstein spectral sequence gives a method for computing the integral cohomology of a topological space $X$ from the cohomology with $\Z_p$-coefficients and the rational cohomology. For each prime $p$, the short exact sequence of coefficients
\begin{equation*}
\begin{tikzcd}[row sep=0]
	0\rar & \Z \rar{\mu_p} & \Z\rar{\rho_p} & \Z_p\rar & 0
\end{tikzcd}
\end{equation*}
induces an exact couple (see McCleary~\cite[p.37]{McCl})
\begin{equation*}
\xymatrix@C=60pt@R=15pt{
*+[r]{H^*(X,\Z)} \ar[rr]_{\mu_p} && *+[l]{H^*(X,\Z)} \ar[dl]^{\rho_p} \\
 & E_1=H^*(X,\Z_p) \ar[ul]^{\be_p}_(0.4){[1]} }
\end{equation*}
where $\be_p$ increases the degree by $1$. The spectral sequence is singly graded, the first page being $E_1^q=H^q(X,\Z_p)$, $d_1=\rho_p\circ\be_p$. The higher pages are obtained iteratively by forming derived couples. After $r$ iterations, the exact couple takes the form
\begin{equation*}
\xymatrix@C=60pt@R=15pt{
*+[r]{p^rH^*(X,\Z)} \ar[rr]_{\mu_p} && *+[r]{p^rH^*(X,\Z)} \ar[dl]^{\rho_p\left(\frac{1}{p^r}\text{\textvisiblespace}\right)} \\
 & E_{r+1}, \ar[ul]^{\be_p}_(0.4){[1]} }
\end{equation*}
where $E_{r+1}$ consists of elements $x\in H^*(X,\Z_p)$ such that $\be_p(x)$ is divisible by $p^r$, modulo classes of the form $x=\rho_p\left(\be_p(y)/p^s\right)$ for $0\le s\le r-1$ and $y\in H^{*-1}(X,\Z_p)$ with $\be_p(y)$ divisible by $p^s$. The differential is $d_{r+1}=\rho_p(\be_p/p^r)$.

Moreover, there is one summand $\Z_{p^r}$ in $H^*(X,\Z)$ for each summand $\Z_p$ in the image of the $r$-th differential $d_r:E_r\ra E_r$; if $y$ generates the $\Z_p$ summand and $d_r(x)=y$, then $\be_p(x)/p^{r-1}$ generates the summand $\Z_{p^r}$ in $H^*(X,\Z)$. A class $x\in H^*(X,\Z_p)$ that survives to the $E_\iy$-page corresponds to an infinite cyclic summand $\Z\an{\be_p(x)}$ in $H^*(X,\Z)$. Moreover, $\rho_p(\be_p(x))=d_1(x)$. In this way one reconstructs the entire integral cohomology of $X$.

With this background in place, we now calculate $H^*(K(\Z_2,4),\Z)$. By Proposition \ref{fm15prop4} it suffices to consider the prime $p=2$. By \eq{fm2eq16} the first differential is the Steenrod operation $\Sq^1=\rho_2\circ\be_2$. The Adem relation \eq{fm2eq15} allows the computation of the action of $\Sq^1$ on all classes \eq{fm15eq17}:
\begin{align*}
\bar f_4'&\mapsto \bar f_5, & \bar f_5&\mapsto 0, & \bar f_6'&\mapsto \bar f_7,\\
\bar f_7&\mapsto 0, &\bar f_7'&\mapsto \bar f_8, & \bar f_8&\mapsto 0, \\ 
\bar f_4'^2&\mapsto 0, & \bar f_9'&\mapsto \bar f_5^2, &\bar f_4'\bar f_5&\mapsto \bar f_5^2,\\
\bar f_{10}'&\mapsto \bar f_{11}, &\bar f_4'\bar f_6'&\mapsto\bar f_5\bar f_6'+\bar f_4'\bar f_7, &\bar f_5^2&\mapsto 0.
\end{align*}

Hence $E_2^8=\Z_2\an{\bar f_4'^2}$, $E_2^9=\Z_2\an{\bar f_9'+\bar f_4'\bar f_5}$, and $E_2^q=0$ for $8,9 \neq q\le 10$. Moreover, the image of the first differential gives rise to $\Z_2$-summands in $H^*(K(\Z_2,4),\Z)$ on generators
\begin{align*}
 f_5&=\Sq^1_\Z(\bar f_4'),
 &f_7&=\Sq^3_\Z(\bar f_4'),
 &f_8&=\Sq^3_\Z\Sq^1(\bar f_4'),
 &&f_5^2,
\end{align*}
where we recall $\Sq_\Z$ from \eq{fm2eq17}. We claim that
\e
\label{fm15eq14}
 d_2^8(\bar f_4'^2)=\bar f_9'+\bar f_4'\bar f_5,
\e
which implies that $H^9(K(\Z_2,4),\Z)$ is isomorphic to $\Z_4$, and that the Bockstein spectral sequence degenerates on the $E_2$-page. Recall the Pontrjagin square from \S\ref{fm23}.

\begin{lem}
\label{fm15lem4}
{\bf(a)} $\be_2(\bar f_4'^2)=2\be_4\cP(\bar f_4')$. \quad {\bf(b)} $\rho_2(\be_4\cP(\bar f_4'))\neq 0$.
\smallskip

\noindent{\bf(c)} $d_2^8(\bar f_4'^2)\neq 0,$ hence \eq{fm15eq14} holds.
\end{lem}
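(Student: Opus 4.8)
\textbf{Proof plan for Lemma \ref{fm15lem4}.}

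The plan is to establish the three parts in order, since each feeds into the next. For part (a), I would use the axioms of the Pontrjagin square from Definition \ref{fm2def4}. Applying the mod-$2$ reduction $\bar\rho_2:\Z_4\to\Z_2$ to $\cP(\bar f_4')\in H^8(K(\Z_2,4),\Z_4)$ gives the cup square $\bar f_4'^2$ by the Normalization axiom. Lifting $\bar f_4'^2$ to $\Z$ and back requires understanding the connecting maps; the key point is that the two short exact sequences $0\to\Z\xrightarrow{\mu_2}\Z\xrightarrow{\rho_2}\Z_2\to 0$ and $0\to\Z_2\xrightarrow{\ti\mu_2}\Z_4\xrightarrow{\bar\rho_2}\Z_2\to 0$ fit into a commutative ladder of coefficient sequences, so the Bockstein $\be_2:H^8(-,\Z_2)\to H^9(-,\Z)$ factors as $\be_2 = \mu_2\circ(\text{integral Bockstein }\be_4)\circ(\text{something})$, where $\be_4:H^8(-,\Z_4)\to H^9(-,\Z)$ is the connecting map for the second sequence. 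Tracing through: $\be_2(\bar f_4'^2) = \be_2(\bar\rho_2\cP(\bar f_4'))$, and the standard compatibility of Bocksteins under the ladder gives $\be_2\circ\bar\rho_2 = 2\cdot\be_4 = \mu_2\circ\be_4$ on $H^*(-,\Z_4)$, where the factor of $2$ comes from the fact that $\Z_2\hookrightarrow\Z_4$ as $2\Z_4$. This yields $\be_2(\bar f_4'^2) = 2\be_4\cP(\bar f_4')$, treating $\be_4\cP(\bar f_4')$ as a class in $H^9(K(\Z_2,4),\Z)$.

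For part (b), I would compute $\rho_2(\be_4\cP(\bar f_4'))\in H^9(K(\Z_2,4),\Z_2)$ directly. The composition $\rho_2\circ\be_4$ is a stable cohomology operation $H^8(-,\Z_4)\to H^9(-,\Z_2)$; on the image of $\bar\rho_2$ it must equal some combination of Steenrod operations. Using the Addition formula and Normalization axioms of $\cP$ together with the fact that $\cP(\bar f_4')$ is a canonical (natural, stable-in-the-appropriate-sense) construction, the mod-$2$ reduction of its integral Bockstein should evaluate on the universal example to a specific polynomial in the $\Sq^I(\bar f_4')$. The cleanest route: it is classical (see e.g. the Browder--Thomas characterization used around \eqref{fm2eq25}) that $\rho_2\be_4\cP(\bar x) = \Sq^1(\bar x^2) = 0 \pmod 2$ would be too naive — rather $\rho_2\be_4\cP(\bar x)$ detects the failure of $\bar x^2$ to lift, and on $\bar f_4'$ one gets $\rho_2\be_4\cP(\bar f_4') = \bar f_9' + \bar f_4'\bar f_5$ (the same class appearing in the Bockstein computation above). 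Since this class is nonzero in $H^9(K(\Z_2,4),\Z_2)$ — it is one of the polynomial generators modulo decomposables — we conclude $\rho_2(\be_4\cP(\bar f_4'))\neq 0$, hence $\be_4\cP(\bar f_4')$ is not $2$-divisible, so by part (a), $\be_2(\bar f_4'^2) = 2\be_4\cP(\bar f_4')$ is an element of order exactly $2$ in $H^9(K(\Z_2,4),\Z)$ — wait, this needs care: I should instead argue that $\be_2(\bar f_4'^2)$ is divisible by $2$ (by part (a)) but the quotient $\be_4\cP(\bar f_4')$ is \emph{not} itself divisible by $2$ (by part (b)).

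For part (c), I would invoke the description of the differential $d_2$ in the Bockstein spectral sequence recalled just before the lemma: $d_2 = \rho_2(\be_2/2)$ on classes $x$ with $\be_2(x)$ divisible by $2$. By part (a), $\be_2(\bar f_4'^2) = 2\be_4\cP(\bar f_4')$ is divisible by $2$, so $\bar f_4'^2$ survives to the $E_2$-page and $d_2^8(\bar f_4'^2) = \rho_2(\be_2(\bar f_4'^2)/2) = \rho_2(\be_4\cP(\bar f_4'))$, which is nonzero by part (b). Since $E_2^9 = \Z_2\an{\bar f_9' + \bar f_4'\bar f_5}$ is one-dimensional, $d_2^8(\bar f_4'^2)$ must equal $\bar f_9' + \bar f_4'\bar f_5$, giving \eqref{fm15eq14}. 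The main obstacle I anticipate is part (b): pinning down exactly which mod-$2$ class $\rho_2\be_4\cP(\bar f_4')$ is. The safest approach is probably to reduce to the universal case where $\bar f_4'$ is the fundamental class, use naturality to pull back from $K(\Z_2,4)$ along the identity, and then either cite the known formula for the secondary-type operation $\rho_2\be_4\cP$ on a degree-$4$ mod-$2$ class, or verify it by testing against a low-dimensional example such as a product of copies of $\RP^\infty$ (or $\CP^\infty$) where $\cP$ and all Bocksteins are computable by hand, using that a nonzero polynomial in the generators of $H^*(K(\Z_2,4),\Z_2)$ is detected on a suitable finite product of Eilenberg--MacLane spaces of lower type.
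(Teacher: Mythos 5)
Your parts (a) and (c) follow the paper's proof almost verbatim: (a) is the naturality of the Bockstein for the ladder comparing $0\to\Z\xrightarrow{\mu_4}\Z\to\Z_4\to 0$ with $0\to\Z\xrightarrow{\mu_2}\Z\to\Z_2\to 0$, giving $\be_2\circ\bar\rho_2=\mu_2\circ\be_4=2\be_4$ (note the factor of $2$ comes from the left-hand vertical map $\mu_2:\Z\to\Z$ in this ladder, not from $\ti\mu_2:\Z_2\hookrightarrow\Z_4$, which belongs to the other ladder and is only needed in part (b)); and (c) is exactly the identification $d_2^8(\bar f_4'^2)=\rho_2(\be_2(\bar f_4'^2)/2)=\rho_2(\be_4\cP(\bar f_4'))$ together with $E_2^9=\Z_2\an{\bar f_9'+\bar f_4'\bar f_5}$.

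The genuine gap is part (b), which you yourself flag as the main obstacle but do not close. You assert $\rho_2\be_4\cP(\bar f_4')=\bar f_9'+\bar f_4'\bar f_5$ as ``classical'' and offer two unexecuted fallbacks: citing an unnamed formula, or computing $\cP$ and its Bocksteins on a product of copies of $\RP^\infty$. Neither is a proof as it stands, and the second is not routine: you would have to compute the Pontrjagin square of a non-liftable odd-degree product like $t_1t_2t_3t_4$ and its integral Bockstein, and then check that the pullback of $\bar f_9'+\bar f_4'\bar f_5$ (or at least of whichever combination you obtain) is nonzero there. The lemma only needs \emph{non-vanishing}, and there is a much shorter route, which is the one the paper takes: argue by contradiction. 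If $\rho_2\be_4\cP(\bar f_4')=0$, then by universality of $K(\Z_2,4)$ the operation $\rho_2\circ\be_4\circ\cP$ vanishes on $H^4(X,\Z_2)$ for every $X$. Applying $\rho_2\circ\be_4$ to the addition formula $\cP(\bar x+\bar y)=\cP(\bar x)+\cP(\bar y)+\ti\mu_2(\bar x\cup\bar y)$ and using $\be_4\circ\ti\mu_2=\be_2$ (naturality for the ladder with right-hand vertical $\ti\mu_2$) then forces $\Sq^1(\bar x\cup\bar y)=\rho_2\be_2(\bar x\cup\bar y)=0$ for all $\bar x,\bar y\in H^4(X,\Z_2)$ and all $X$; this fails already for $\bar x=\bar f_4'\t 1$, $\bar y=1\t\bar f_4'$ on $K(\Z_2,4)\t K(\Z_2,4)$, since $\Sq^1(\bar f_4'\t\bar f_4')=\bar f_5\t\bar f_4'+\bar f_4'\t\bar f_5\neq 0$. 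I recommend replacing your part (b) with this argument; the precise identification $\rho_2(\be_4\cP(\bar f_4'))=\bar f_9'+\bar f_4'\bar f_5$ then falls out for free in part (c), since the class lies in $\ker\Sq^1$ and $E_2^9$ is one-dimensional, so you never need to compute it directly.
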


\begin{proof}
(a) Consider the commutative diagram
\begin{equation*}
\begin{tikzcd}[column sep=large]
	0\rar & \Z\rar{\mu_4}\dar{\mu_2} & \Z\rar{\rho_4}\dar{\id} & \Z_4\dar{\bar\rho_2}\rar & 0\\
	0\rar & \Z\rar{\mu_2} & \Z\rar{\rho_2} & \Z_2\rar & 0\mathrlap{.}
\end{tikzcd}
\end{equation*}
By the naturality of the Bockstein homomorphism with respect to short exact coefficient sequences, we have $2\be_4=\be_2\circ\rho_2$, which implies
\begin{equation*}
 2\be_4\cP(\bar f_4')=\be_2\rho_2\cP(\bar f_4')=\be_2(\bar f_4'^2).
\end{equation*}
 
\noindent
(b) The proof is by contradiction, so suppose that $\rho_2\be_4\cP(\bar f_4')=0$. Then the cohomology operation
\e
\label{fm15eq15}
 \rho_2\circ\be_4\circ\cP: H^4(X,\Z_2)\longra H^9(X,\Z_2)
\e
vanishes for every topological space $X$. The commutative diagram
\begin{equation*}
\begin{tikzcd}[column sep=large]
	0\rar & \Z\rar{\mu_2}\dar{\id} & \Z\rar{\rho_2}\dar{\mu_2} & \Z_2\dar{\ti\mu_2}\rar & 0\\
	0\rar & \Z\rar{\mu_4} & \Z\rar{\rho_4} & \Z_4\rar & 0
\end{tikzcd}
\end{equation*}
and the naturality of the Bockstein homomorphism imply $\be_4\circ\ti\mu_2=\be_2$. By applying $\rho_2\circ\be_4$ to Definition \ref{fm2def4}(b), we find that
\begin{align*}
\rho_2\circ\be_4\circ\cP(\bar x+\bar y)&=\rho_2\circ\be_4\circ\cP(\bar x)+\rho_2\circ\be_4\circ\cP(\bar y)+\rho_2\circ\be_4\circ\ti\mu_2(\bar x\cup \bar y)\\
 &=\rho_2\circ\be_4\circ\ti\mu_2(\bar x\cup \bar y)=\rho_2\circ\be_2(\bar x\cup \bar y)=\Sq^1(\bar x\cup \bar y).
\end{align*}
Since we have assumed that the operation \eq{fm15eq15} vanishes, the left hand side of the above equation vanishes which implies $\Sq^1(\bar x\cup \bar y)=0$ for every topological space $X$ and $\bar x,\bar y\in H^4(X,\Z_2)$. This, however, is false: for example, take $X=K(\Z_2,4)\t K(\Z_2,4)$ and $\bar x=\bar f_4'\t 1$, $\bar y=1\t\bar f_4'$. Then
\begin{equation*}
 \Sq^1(\bar x\cup \bar y)=\Sq^1(\bar f_4'\t\bar f_4')=\bar f_5\t \bar f_4'+\bar f_4'\t\bar f_5\neq 0 \in H^9(X,\Z_2).
\end{equation*}

\noindent
(c) By (a), $\be_4\cP(\bar f_4')=\be_2(\bar f_4'^2)/2$, so $d_2^8(\bar f_4'^2)=\rho_2(\be_4\cP(\bar f_4'))$ and this class is non-zero by (b).
\end{proof}

We thus find that the $\Z_4$-summand in $H^9(K(\Z_2,4),\Z)$ is generated by
\begin{equation*}
 f_9=\be_4\cP(\bar f_4'),
\end{equation*}
and this class satisfies $2 f_9=\be_2(\bar f_4'^2)$ and $\rho_2(f_9)=\bar f_9'+\bar f_4'\bar f_5$. This completes the calculation of $H^q(K(\Z_2,4),\Z)$ for $q\le 10$, recorded in \eq{fm15eq16}.

Dually, we obtain the $\Z$-homology of $K(\Z_2,4)$ from the homological Bockstein spectral sequence. This leads to $\Z_2$-summands in $H_*(K(\Z_2,4),\Z)$ generated by $\varphi_4', \varphi_6', \varphi_7', \varphi_9', \varphi_{10}', \varphi_4'\varphi_6'$ and a class $\ti\varphi_8\in H_8(K(\Z_2,4),\Z)$ that generates a $\Z_4$-summand such that $\rho_2(\ti\varphi_8)=(\bar\varphi_4')^2$. This is recorded in~\eq{fm15eq18}.
\ea
&\begin{tabular}{P{2.5cm}|ccccccc}
 $n$ & $5$ & $7$ & $8$ & $9$ & $10$ & 0--3,4,6 \\
 \hline
 $\tilde H^n(K(\Z_2,4),\Z)$\!\!\! & $\!\!\Z_2\an{f_5}\!\!$ & $\!\!\Z_2\an{f_7}\!\!$ & $\!\!\Z_2\an{f_8}\!\!$ & $\!\!\Z_4\an{f_9}\!\!$ & $\!\!\Z_2\an{f_5^2}\!\!$ & $0$ \\
\end{tabular}\!\!\!
\label{fm15eq16}
\allowdisplaybreaks\\
&\begin{tabular}{P{2.5cm}|ccccccccc}
 $n$ & $4$ & $5$ & $6$ & $7$ & $8$\\
  \hline
 $\tilde H^n(K(\Z_2,4),\Z_2)$\!\!\! & $\!\!\Z_2\an{\bar f_4'}\!\!$ & $\!\!\Z_2\an{\bar f_5}\!\!$ & $\!\!\Z_2\an{\bar f_6'}\!\!$ & $\!\!\Z_2\an{\bar f_7,\bar f_7'}\!\!$ &  $\!\!\Z_2\an{\bar f_8,(\bar f_4')^2}\!\!$
\end{tabular}
\nonumber\\
&\begin{tabular}{P{2.5cm}|ccc}
 $\cdots$ & $9$ & $10$ & 0--3 \\
  \hline
  & $\!\!\Z_2\an{\bar f_9', \bar f_4'\bar f_5}\!\!$ & $\!\!\Z_2\an{\bar f_{10}',\bar f_4'\bar f_6',\bar f_5^2}\!\!$ & $0$
\end{tabular}
\label{fm15eq17}
\allowdisplaybreaks\\
&\begin{tabular}{P{2.5cm}|cccccc}
 $n$ & $4$ & $6$ & $7$ & $8$ & $9$ & $10$\\
  \hline
 $\tilde H_n(K(\Z_2,4),\Z)$\!\!\! & $\!\!\Z_2\an{\varphi_4'}\!\!$ & $\!\!\Z_2\an{\varphi_6'}\!\!$ & $\!\!\Z_2\an{\varphi_7'}\!\!$ & $\!\!\Z_4\an{\ti\varphi_8}\!\!$ & $\!\!\Z_2\an{\varphi_9'}\!\!$ & $\!\!\Z_2\an{\varphi_{10}',\varphi_4'\varphi_6'}\!\!$  \\
\end{tabular}
\nonumber\\
&\begin{tabular}{P{2.5cm}|c}
 $\cdots$ & 0--3,5 \\
  \hline
  & $0$ \\
\end{tabular}
\label{fm15eq18}
\allowdisplaybreaks\\
&\begin{tabular}{P{2.5cm}|ccccc}
 $n$ & $4$ & $5$ & $6$ & $7$ & $8$\\
  \hline
 $\tilde H_n(K(\Z_2,4),\Z_2)$\!\!\!\! & $\!\!\Z_2\an{\bar \varphi_4'}\!\!$ & $\!\!\Z_2\an{\bar \varphi_5}\!\!$ & $\!\!\Z_2\an{\bar \varphi_6'}\!\!$ & $\!\!\Z_2\an{\bar \varphi_7,\bar \varphi_7'}\!\!$ &  $\!\!\Z_2\an{\bar \varphi_8,(\bar \varphi_4')^2}\!\!$
\end{tabular}
\nonumber\\
&\begin{tabular}{P{2.5cm}|ccc}
$\cdots$ & $9$ & $10$ & 0--3 \\
\hline
& $\!\!\Z_2\an{\bar \varphi_9', \bar \varphi_4'\bar \varphi_5}\!\!$ & $\!\!\Z_2\an{\bar \varphi_{10}',\bar \varphi_4'\bar \varphi_6',\bar \varphi_5^2}\!\!$ & $0$
\end{tabular}
\label{fm15eq19}
\ea

\subsubsection{Computation of the spectral sequence}
\label{fm1572}

Consider the Atiyah--Hirzebruch spectral sequence
\begin{equation*}
 \ti H_p(K(\Z_2,4),\Om^{\bs\Spin}_q(*))\Longra\ti\Om^{\bs\Spin}_{p+q}(K(\Z_2,4)).
\end{equation*}
The $E^2$-page of the spectral sequence is given by \eq{fm15eq18} and \eq{fm15eq19}. By Proposition \ref{fm2prop1}, the differentials on the $E^2$-page can be deduced from
\begin{align*}
 \Sq^2(\bar f_4')&=\bar f_6',
 &\Sq^2(\bar f_5)&=\bar f_7',
&\Sq^2(\bar f_6')&=\bar f_8,\\
 \Sq^2(\bar f_7)&=\bar f_9'
&\Sq^2(\bar f_7')&=0,
\end{align*}
which results in the $E^3$-page of the spectral sequence shown in Figure \ref{fm15fig12} along with all possibly non-zero higher differentials.

\begin{figure}[htb]
\centering
\begin{tikzpicture}
  \matrix (m) [matrix of math nodes,
    nodes in empty cells,nodes={minimum width=10ex,
    minimum height=5ex,outer sep=-5pt},
    column sep=1ex,row sep=1ex]{
	4 & \Z_2\an{\al_4\varphi_4'}\\
	2 & && \Z_2\an{\al_1^2\bar\varphi_7'}\\
	1 & &&& \Z_2\an{\al_1\bar\varphi_4'^2}\\
	0 & \Z_2\an{\varphi_4'} && & \Z_4\an{\ti\varphi_8} & \Z_2\an{\varphi_{10}',\varphi_4'\varphi_6'}  \\
    \quad\strut  & 4 && 7 &  8 & 10  \strut \\};
 \draw[thick] (m-1-1.north east) node[above]{$q$} -- (m-5-1.east);
 \draw[thick] (m-5-1.north) -- (m-5-6.north east) node[right]{$p$};
% The differentials
 \draw[-stealth,bend right=15] (m-4-6) to node[above]{\scriptsize $d^3_{10,0}$} (m-2-4);
 \draw[-stealth] (m-2-4) -- node[above]{\scriptsize $d^3_{7,2}$} (m-1-2);
 \draw[-stealth,bend left=10] (m-3-5) to node[below]{\scriptsize $d^4_{8,1}$} (m-1-2);

\end{tikzpicture}
\caption{$E^3$-page of $\ti H_p(K(\Z_2,4),\Om^{\bs\Spin}_q(*))\Ra\ti\Om^{\bs\Spin}_{p+q}(K(\Z_2,4))$, $p\!+\!q\!\le\! 10$. We will show that $d_{7,2}^3\neq 0$ and $d_{10,0}^3=0$.}
\label{fm15fig12}
\end{figure}

We claim that $d_{10,0}^3=0$, $d_{8,1}^4=0$, and $d_{7,2}^3\neq 0$. For this we will compare with the Atiyah--Hirzebruch spectral sequence of $K(\Z,4)$ and use naturality of the differentials and of the filtrations. Consider the map $\psi:K(\Z,4)\ra K(\Z_2,4)$, unique up to homotopy, with $\psi^*(\bar f_4')=\bar e_4$. Then $\psi$ maps the $\Z$-homology by
\e
\begin{aligned}
 \ep_4&\longmapsto\varphi_4',
&\ep_6'&\longmapsto\varphi_6',
&\ep_4^2&\longmapsto\pm\ti\varphi_8,\\
\ep_8&\longmapsto 0,
&\ep_{10}'&\longmapsto\varphi_{10}',
&\ep_4\ep_6'&\longmapsto\varphi_4'\varphi_6',
\end{aligned}
\label{fm15eq20}
\e
and the $\Z_2$-homology by
\e
\begin{aligned}
 \bar\ep_4&\longmapsto\bar\varphi_4',
&\bar\ep_6'&\longmapsto\bar\varphi_6',
&\bar\ep_7&\longmapsto\bar\varphi_7,\\
 \bar\ep_4^2&\longmapsto\bar\varphi_4'^2,
&\bar\ep_{10}'&\longmapsto\bar\varphi_{10}'
&\bar\ep_4\bar\ep_6'&\longmapsto\bar\varphi_4'\bar\varphi_6'.
\end{aligned}
\label{fm15eq21}
\e

Now \eq{fm15eq4} maps both classes to $\al_1^2\bar\ep_7$, $\psi$ maps $\bar\ep_7\mapsto\bar\varphi_7$, and $\al_1^2\bar\varphi_7$ vanishes in the quotient group $E^3_{7,2}$ as in Figure \ref{fm15fig12}. Since $\psi$ maps $\ep_4\ep_6'\mapsto \varphi_4'\varphi_6'$ and $\ep_{10}'\mapsto \varphi_{10}'$ we find $d_{10,0}^3=0$ in Figure \ref{fm15fig12}. Moreover, we have $d_{8,1}^4=0$ for the spectral sequence of $K(\Z,4)$ and since $\psi$ maps $\bar\ep_4^2$ to $\bar\varphi_4'^2$ this implies $d_{8,1}^4=0$.

Finally, we claim $d_{7,2}^3\neq 0$. For this we compare the filtrations of the spectral sequences for $K(\Z,4)$, see \eq{fm15eq5}, and $K(\Z_2,4)$. Assume by contradiction that $d_{7,2}^3=0$, so $E^\iy_{4,4}\cong\Z_2$ for the spectral sequence of $K(\Z_2,4)$. Then $\psi$ would induces a map of filtrations
\begin{equation*}
\begin{tikzcd}[column sep=small]
0\arrow[r,symbol=\subset,"\Z\an{\al_4\ep_4}" yshift=1.5ex] & F_{4,8}\cong\Z\dar{\psi_*}\arrow[r,symbol=\subset,"\Z_2\an{\al_1^2\bar\ep_6'}" yshift=1.5ex] & F_{6,8}\cong\Z\arrow[r,symbol=\subset,"\Z_2\an{\al_1\bar\ep_7}" yshift=1.5ex] & F_{7,8}\cong\Z\dar{\psi_*}\arrow[r,symbol=\subset,"\Z\an{\ep_4^2}\op\Z_3\an{\ep_8}" yshift=1.5ex] & F_{8,8}=\ti\Om_8^{\bs\Spin}(K(\Z,4))\cong \Z^2\dar{\psi_*}\\
0\arrow[r,symbol=\subset,"\Z_2\an{\al_4\varphi_4'}"' yshift=-1.5ex] & F_{4,8}'\cong\Z_2\arrow[rr,"="] & & F_{7,8}'\cong\Z_2\arrow[r,symbol=\subset,"\Z_4\an{\ti\varphi_8}"' yshift=-1.5ex] & F_{8,8}'=\ti\Om_8^{\bs\Spin}(K(\Z_2,4)).
\end{tikzcd}
\end{equation*}
As $\psi$ maps $\ep_4\mapsto\varphi_4'$, the far left vertical map is non-zero. On the other hand, the generator of $F_{4,8}$ becomes divisible by $4$ in $F_{7,8}$ while $F_{4,8}'=F_{7,8}'$, so $\psi$ maps the generator to zero, a contradiction. Hence $d_{7,2}^3\neq 0$. We thus obtain the $E^\iy$-page shown in Figure \ref{fm15fig13} in the range $p+q\le 9$. All extension problems are trivial.

\begin{figure}[htb]
\centering
\begin{tikzpicture}
  \matrix (m) [matrix of math nodes,
    nodes in empty cells,nodes={minimum width=11.8ex,
    minimum height=5ex,outer sep=-5pt},
    column sep=1ex,row sep=1ex]{
	1 & & \Z_2\an{\al_1\bar\varphi_4'^2}\\
	0 & \Z_2\an{\varphi_4'} & \Z_4\an{\ti\varphi_8}\\
    \quad\strut  & 4  &  8 \strut \\};
 \draw[thick] (m-1-1.north east) node[above]{$q$} -- (m-3-1.east);
 \draw[thick] (m-3-1.north) -- (m-3-3.north east) node[right]{$p$};

\end{tikzpicture}
\caption{$E^\iy$-page of $\ti H_p(K(\Z_2,4),\Om^{\bs\Spin}_q(*))\Ra\ti\Om^{\bs\Spin}_{p+q}(K(\Z_2,4))$, $p\!+\!q\!\le\! 9$.}
\label{fm15fig13}
\end{figure}

Moreover, by comparing Figures \ref{fm15fig3} and \ref{fm15fig13} and using \eq{fm15eq20}, \eq{fm15eq21} we see that the map $\psi_*:\ti\Om_n^{\bs\Spin}(K(\Z,4))\ra\ti\Om_n^{\bs\Spin}(K(\Z_2,4))$ is surjective for all $n\le 9$. In particular, $\de$ and $\al_1\ze_2$ generate $\ti\Om_n^{\bs\Spin}(K(\Z_2,4))$ for $n=3,9$. We know that $\ti\Om_8^{\bs\Spin}(K(\Z_2,4))\cong\Z_4$, where the isomorphism is given by the Pontrjagin square, $[X,\bar\al]\mapsto \int_X\cP(\bar\al)$. As in Definition \ref{fm2def4}(c), if $\bar\al$ can be lifted to integral cohomology class, then $\int_X\cP(\bar\al)=\int_X\al\cup\al\bmod{4}$. Now \eq{fm3eq19} maps $\ze_2\mapsto(1,0)$ and $\ze_3\mapsto(0,1)$, hence $\int_X\cP(\bar\al)=1$ for $\ze_2$ and $\int_X\cP(\bar\al)=0$ for $\ze_3$. This proves that $\ti\Om_8^{\bs\Spin}(K(\Z_2,4))=\Z_4\an{\ze_2}$ and proves Table \ref{fm3tab1} for $K(\Z_2,4)$.

\subsection{Proof of Theorem \ref{fm3thm1}(b)}
\label{fm158}

The generators in Table \ref{fm3tab1} have already been verified in \S\ref{fm151}--\S\ref{fm157}, so it only remains to prove \eq{fm3eq7}.

Using \eq{fm3eq13}, one finds that
$\ti\Om_8^{\bs\Spin}(M\{1\})\ra\ti\Om_8^{\bs\Spin}(M\U(2))$ maps $\ze_1\mapsto 2\frac{\ze_1}{2}$.  Similarly, \eq{fm3eq17} shows that $\ti\Om_8^{\bs\Spin}(M\U(2))\ra\ti\Om_8^{\bs\Spin}(M\SO(4))$ maps $\frac{\ze_1}{2}\mapsto 2\frac{\ze_1}{4}$.

We prove $\frac{\ze_1}{4}\mapsto 0$ in $\ti\Om_8^{\bs\Spin}(K(\Z,4))$ using the isomorphism \eq{fm3eq19}, whose components we can rewrite using the fact that $\al$ is Poincar\'e dual to $M$ as
\begin{align*}
 \int_X\al\cup\al&=\int_M e(\nu_M),\\
 \int_X\al\cup\left[\frac{p_1(TX)}{4}+\frac{\al}{2}\right]&=\int_M \frac{p_1(TX)|_M}{4}+\frac12\int_M e(\nu_M)\\
 &=\int_M \frac{p_1(TM)+p_1(\nu_M)}{4}+\frac12\int_M e(\nu_M).
\end{align*}
Now \eq{fm3eq17} maps $\frac{\ze_1}{4}\mapsto(1,0,0)$, which implies $\int_M e(\nu_M)=0$, $\int_M p_1(\nu_M)=0$, so the isomorphism \eq{fm3eq19} maps $\frac{\ze_1}{4}$ to zero, as claimed.

We have already shown $\eta\mapsto\al_1\ze_2$ in $\ti\Om_9^{\bs\Spin}(K(\Z,4))$ in \eq{fm15eq12}.

We prove $\ze_2'\mapsto\frac{\ze_1}{4}+\ze_2+4\ze_3$ in $\ti\Om_8^{\bs\Spin}(M\SO(4))$. From $\int_M c_2(\Si_\nu^+)=0$ and $\int_M c_2(\Si_\nu^-)=-1$ we find $\int_M e(\nu_M)=\int_M c_2(\Si_\nu^+)-c_2(\Si_\nu^-)=1$ and $\int_M p_1(\nu_M)=-2\int_M c_2(\Si_\nu^+)-2\int_M c_2(\Si_\nu^-)=2$, so \eq{fm3eq17} maps $\ze_2'\mapsto(1,1,4)$.

We prove $\ze_3\!\mapsto\! 0$ in $\ti\Om_8^{\bs\Spin}(K(\Z_2,4))$ using the isomorphism $[X,\bar\al]\!\mapsto\!\int_X\cP(\bar\al)$ from \eq{fm3eq21}. By Definition \ref{fm2def4}(c), if $\bar\al$ can be lifted to integral cohomology class, then $\int_X\cP(\bar\al)=\int_X \al\cup\al\bmod{4}$. Now \eq{fm3eq19} maps $\ze_3\mapsto(0,1)$, hence $\int_X\cP(\bar a)=0$ for $\ze_3$.

\subsection{Proof of Theorem \ref{fm3thm1}(c)}
\label{fm159}

We prove $\al_1\ze_1=0$ in $\ti\Om_9^{\bs\Spin}(M\{1\})$. As $M\{1\}\cong\cS^4$, the suspension isomorphism gives $\ti\Om_9^{\bs\Spin}(M\{1\})\cong\ti\Om_9^{\bs\Spin}(\cS^4)\cong\Om_5^{\bs\Spin}(*)=0$ by Table \ref{fm2tab2}, hence $\al_1\ze_1=0$ in $\ti\Om_9^{\bs\Spin}(M\{1\})$.

We prove $\al_1\frac{\ze_1}{2}=0$ in $\ti\Om_9^{\bs\Spin}(M\U(2))$. The upper horizontal map in the commutative diagram
\begin{equation*}
\begin{tikzcd}
\ti\Om_8^{\bs\Spin}(M\U(2))\rar\dar{\al_1} & \ti\Om_8^{\bs\Spin}(M\SO(4))\dar{\al_1}\\
\ti\Om_9^{\bs\Spin}(M\U(2))\rar & \ti\Om_9^{\bs\Spin}(M\SO(4))
\end{tikzcd}
\end{equation*}
maps $\frac{\ze_1}{2}\mapsto2\frac{\ze_1}{4}$. Therefore, the image of $\al_1\frac{\ze_1}{2}$ in $\ti\Om_9^{\bs\Spin}(M\SO(4))$ is $\al_12\frac{\ze_1}{4}=(2\al_1)\frac{\ze_1}{4}=0$, since $\al_1$ has order two.  As the bottom horizontal map is injective by Table \ref{fm3tab1}, this proves $\al_1\frac{\ze_1}{2}=0$ in $\ti\Om_9^{\bs\Spin}(M\U(2))$.

We prove $\al_1\ze_3=0$ in $\ti\Om_9^{\bs\Spin}(M\U(2))$. According to \eq{fm3eq7} the upper horizontal map in the commutative diagram
\begin{equation*}
\begin{tikzcd}
\ti\Om_8^{\bs\Spin}(M\U(2))\rar\dar{\al_1} & \ti\Om_8^{\bs\Spin}(K(\Z_2,4))\dar{\al_1}\\
\ti\Om_9^{\bs\Spin}(M\U(2))\rar & \ti\Om_9^{\bs\Spin}(K(\Z_2,4))
\end{tikzcd}
\end{equation*}
maps $\ze_3\mapsto 0$, so $\al_1\ze_3=0$ in $\ti\Om_9^{\bs\Spin}(K(\Z_2,4))$. Since the lower horizontal map is an isomorphism by Table \ref{fm3tab1}, we conclude that $\al_1\ze_3=0$ in~$\ti\Om_9^{\bs\Spin}(M\U(2))$.

\section{Proof of Theorem \ref{fm3thm3}}
\label{fm16}

\subsection{Construction of maps}
\label{fm161}

The construction of the maps appearing in Theorem \ref{fm3thm3} will use the K-theory groups $K_\F(X;A)$ over the fields $\F=\R$, $\C$, or $\H$, which we therefore briefly review. These were originally constructed by Bott \cite{Bott}. As in \cite[\S I.9]{LaMi}, for a well-behaved subspace $A\subset X$ of a topological space, a class in $K_\F(X;A)$ can be represented by a chain complexes of $\F$-vector bundles
\begin{equation}
 \xi:
 \begin{tikzcd}
 	V^0\rar{\d} & V^1\rar{\d} & \cdots\rar{\d} & V^n,
 \end{tikzcd}
 \qquad
 \d^2=0,
\label{fm16eq1}
\end{equation}
over $X$ such that the restriction $\xi|_A$ is exact. The precise conditions under which two chain complexes represent the same element $[\xi]$ in $K_\F(X;A)$ are a little intricate, see \cite[\S I.9]{LaMi}. We only note the following special cases:
\begin{itemize}
\item If \eq{fm16eq1} is exact over the entire space $X$, then $[\xi]=0$.
\item If all differentials vanish, then $[\xi]=\sum_{i=0}^n (-1)^i[V^i]$ in $K_\F(X;A)$, where $[V^i]$ denotes the $K$-theory class defined by the complex of length zero.
\end{itemize}

Depending on the field, set
\begin{equation*}
 G_\F=
 \begin{cases}
 	\O=\colim\O(n) & \text{if $\F=\R$},\\
 	\U=\colim\U(n) & \text{if $\F=\C$},\\
 	\Sp=\colim\Sp(n) & \text{if $\F=\H$}.
 \end{cases}
\end{equation*}
These are topological groups with classifying spaces $BG_\F$ and they represent the $K_\F$-theory groups in terms of sets of homotopy classes of pointed maps:
\begin{equation*}
 K_\F(X;A)\overset{\cong}{\longra}[X/A,BG_\F\t\Z]^\circ,\qquad [\xi]\longmapsto [(f_\xi,\operatorname{vdim}_\xi)]
\end{equation*}
In particular, every complex $\xi$ as in \eq{fm16eq1} has a {\it classifying map}
\begin{equation*}
 f_\xi:X/A\longra BG_\F,
\end{equation*}
which is well-defined up to homotopy.

We will apply this setup to Thom spaces. Recall from \S\ref{fm25} that a representation $\rho:H\ra\SO(k)$ defines a Thom space $MH=D_\rho/S_\rho$, where $S_\rho\subset D_\rho$ are the unit sphere and unit disk subbundles of the tautological rank $k$ vector bundle $\pi:E_\rho\ra BH$; we write $\pi$ also for the projection $D_\rho\ra BH$ of the unit disk bundle. As above, a chain complex of $\F$-vector bundles $\xi^H$ over $D_\rho$ such that the restriction $\xi|_{S_\rho}$ is exact has a classifying map
\e
 f_\xi^H: MH\longra BG_\F.
\label{fm16eq2}
\e

The maps in Theorem \ref{fm3thm3}(a)--(c) will be classifying maps of certain chain complexes $\xi^H$ of vector bundles over $MH$. In \eq{fm16eq3}, \eq{fm16eq5}, \eq{fm16eq7} below, we will write the pullback of a vector bundle as a set of pairs in which the first entry corresponds to the base and the second entry to the fibre.

\smallskip
\noindent
{\bf(a)}
Since $S_\rho\cong E\Sp(1)$ is contractible, we have $M\Sp(1)=D_\rho/S_\rho\simeq D_\rho\simeq B\Sp(1)$, which proves Theorem \ref{fm3thm3}(a). As this proof does not generalize to parts (b), (c) and does not show formula \eq{fm3eq24}, we give another proof.

If $\rho:\Sp(1)\ra\SO(4)$ is the inclusion, then $\pi:E_\rho\ra B\Sp(1)$ is a quaternionic line bundle. Let $\ul\H$ be the trivial quaternionic line bundle. Define a complex $\xi^{\Sp(1)}$ of vector bundles over $D_\rho$ by
\begin{equation}
\begin{tikzcd}[row sep=0]
 {[-1]} & {[0]}\\
 \pi^*(\ul\H)\rar & \pi^*(E_\rho),\\
 (x,q)\arrow[r,mapsto] & (x,qx),
\end{tikzcd}
\label{fm16eq3}
\end{equation}
using the scalar multiplication of $\H$ on $E_\rho$. As indicated, $\pi^*(\ul\H)$ is placed in degree $-1$ and $\pi^*(E_\rho)$ in degree $0$. Since \eq{fm16eq3} is an isomorphism over points with $x\neq 0$, the restriction $\xi^{\Sp(1)}|_{S_\rho}$ is exact. We obtain a classifying map
\e
 f_\xi^{\Sp(1)}:M\Sp(1)\longra B\Sp(1)\subset B\Sp.
 \label{fm16eq4}
\e
As indicated, the map has image in $B\Sp(1)=B\SU(2)$ since $f_\xi^{\Sp(1)}$ is just the classifying map of the quaternionic {\it line} bundle $\pi^*(E_\rho)\ra D_\rho$ with the framing \eq{fm16eq3}. We prove Theorem \ref{fm3thm3}(a) in \S\ref{fm1621} below.
\smallskip

\noindent{\bf(b)} If $\rho:\U(2)\ra\SO(4)$ is the inclusion, then $\pi:E_\rho\ra B\U(2)$ is a complex vector bundle of rank $2$. Define $\xi^{\U(2)}$ to be the chain complex
\begin{equation}
\begin{tikzcd}[row sep=0]
	{[-1]} & {[0]} & {[1]}\\
	\pi^*\La^0(E_\rho)\rar &
	\pi^*\La^1(E_\rho)\rar &
	\pi^*\La^2(E_\rho),\\
	(x,\la)\arrow[r,mapsto] & (x,\la x),\\
	& (x,\al)\arrow[r,mapsto] & (x,x\wedge\al).
\end{tikzcd}	
\label{fm16eq5}
\end{equation}
The degrees are as indicated, with $\pi^*\La^1(E_\rho)\cong E_\rho$ in degree zero. It is easy to see that $\xi^{\U(2)}|_{S_\rho}$ is exact, so we obtain a classifying map
\e
 f_\xi^{\U(2)}:M\U(2)\longra B\SU\subset B\U.
 \label{fm16eq6}
\e
As indicated, this map factors over $B\SU$ since $\bigl(f_\xi^{\U(2)}\bigr)^*(c_1)=0$. This fact and Theorem \ref{fm3thm3}(b) are proved in \S\ref{fm1622} below.

Moreover, if we regard a quaternionic line bundle as a vector bundle with structure group $\SU(2)\subset\U(2)$, then the complexes \eq{fm16eq3} and \eq{fm16eq5} are quasi-isomorphic and therefore have homotopic classifying maps.
\smallskip

\noindent{\bf(c)} If $\rho:\Spin(4)\ra\SO(4)$ is the double cover, then $E_\rho\ra B\Spin(4)$ is a real vector bundle with spin structure. By the usual description of spin structures for oriented rank $4$ vector bundles, this determines a pair of quaternionic line bundles $\Si^\pm_\rho\ra B\Spin(4)$ and an isomorphism $c:E_\rho\ra\Hom_\H(\Si^-_\rho,\Si^+_\rho)$, the {\it Clifford multiplication}. Define $\xi^{\Spin(4)}$ to be the chain complex
\begin{equation}
\begin{tikzcd}[row sep=0]
   {[-1]} & {[0]}\\
   \pi^*(\Si^-_\rho)\rar & \pi^*(\Si^+_\rho),\\
   (x,\psi)\arrow[r,mapsto] & (x,c_x(\psi)),
\end{tikzcd}	
\label{fm16eq7}
\end{equation}
of $\H$-vector bundles over $D_\rho$. Since $c_x:\Si^-_\rho\ra\Si^+_\rho$ is an isomorphism when $x\neq 0$, we find that $\xi^{\Spin(4)}|_{S_\rho}$ is exact. We thus obtain a classifying map
\e
 f_\xi^{\Spin(4)}:M\Spin(4)\longra B\Sp.
 \label{fm16eq8}
\e
If we view a quaternionic line bundle as a real vector bundle with spin structure given by the spinor bundles $\Si^+_\rho=E_\rho$, $\Si^-_\rho=\ul\H$ and Clifford multiplication \eq{fm16eq3}, then the complexes \eq{fm16eq3} and \eq{fm16eq7} coincide.

We prove Theorem \ref{fm3thm3}(c) in \S\ref{fm1623} below.
\smallskip

\noindent{\bf(d)} In this case, the map is constructed differently: Bott--Samelson \cite{BoSa} show that $\pi_3(E_8)\cong\Z$ and $\pi_n(E_8)=0$ for all $3\neq n\le 15$, so $\pi_4(BE_8)\cong\pi_3(E_8)\cong\Z$ and $\pi_n(BE_8)\cong\pi_{n-1}(E_8)=0$ for all $4\neq n\le 16$. By the Hurewicz Theorem, $H^4(BE_8,\Z)\cong\Hom(\pi_4(BE_8),\Z)\cong\Z$ and the generator of this group is a degree four cohomology class, which is classified by a map (see \S\ref{fm23})
\e
 f: BE_8\longra K(\Z,4).
\label{fm16eq9}
\e

\subsection{Proofs of Theorem \ref{fm3thm3}(a)--(d)}
\label{fm162}

The proof is based on the following theorem, see Thom~\cite[Th.~II.6]{Thom}.

\begin{thm}[Whitehead]
\label{fm16thm1}
Let\/ $f\colon X\ra Y$ be a cellular map between simply-connected CW complexes. Suppose that for all coefficients\/ $\Z_p$ with\/ $p\ge 0$, the map\/ $f^*\colon H^*(Y,\Z_p)\ra H^*(X,\Z_p)$ induced by\/ $f$ is an isomorphism if\/ $*<n$ and a monomorphism if\/ $*=n.$ Then\/ $f$ is an\/ $n$-connected map.
\end{thm}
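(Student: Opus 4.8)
The plan is to run the usual argument: replace $f$ by a cofibration, translate the cohomological hypothesis into a statement about integral homology, and then conclude by the relative Hurewicz theorem. First I would replace $f$ by the inclusion of $X$ into the mapping cylinder $M_f$; since $f$ is cellular, $(M_f,X)$ is a CW pair with $M_f\simeq Y$. Using the long exact homotopy sequence of the pair together with the simple connectivity of $X$ and $Y$, one checks both that the pair $(M_f,X)$ is automatically $1$-connected and that the condition that $f$ be $n$-connected is equivalent to $\pi_k(M_f,X)=0$ for all $k\le n$. So the goal becomes: prove $\pi_k(M_f,X)=0$ for all $k\le n$. (For $n\le 1$ this is immediate from simple connectivity, so I assume $n\ge 2$ from now on.)

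Second I would pass from cohomology to integral homology. Feeding the maps $f^*$ into the long exact cohomology sequence of $(M_f,X)\simeq(Y,X)$ with $\Z_p$-coefficients, the isomorphism/monomorphism hypothesis immediately yields $H^k(M_f,X,\Z_p)=0$ for all $k\le n$ and all $p\ge 0$. Since the relative cellular chain complex $C_*(M_f,X)$ is a complex of free abelian groups, the universal coefficient theorem relates $H^*(M_f,X,\Z_p)$ to $H_*(M_f,X,\Z)$ degreewise; granting that the relevant homology groups are finitely generated (as they are in every application of this theorem in the monograph), the vanishing over all $\Z_p$ in degrees $\le n$ forces $H_k(M_f,X,\Z)=0$ for all $k\le n$. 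Equivalently, $f_*$ is an isomorphism on $H_k(-,\Z)$ for $k<n$ and an epimorphism for $k=n$.

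Third I would upgrade this to homotopy by an induction on $m=2,\dots,n$ using the relative Hurewicz theorem. The pair $(M_f,X)$ is $1$-connected; and if it is known to be $(m-1)$-connected for some $2\le m\le n$, then — because $X$ is simply connected — the relative Hurewicz map identifies $\pi_m(M_f,X)\cong H_m(M_f,X,\Z)$, which vanishes by the second step, so the pair is $m$-connected. Iterating up to $m=n$ gives $\pi_k(M_f,X)=0$ for all $k\le n$, which is precisely the assertion that $f$ is $n$-connected.

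The main obstacle is the second step: extracting vanishing of integral homology in degrees $\le n$ from vanishing of cohomology with all coefficients $\Z_p$ in those degrees. This is a routine universal-coefficients computation once finite generation is granted, which is why I carry that hypothesis from the outset; and it genuinely needs it, as the example $X=\{*\}$ and $Y$ a Moore space $M(\Z_{p^\iy},n)$, $n\ge 2$, shows: the reduced cohomology of $Y$ with coefficients in any $\Z_q$ vanishes in all degrees $\le n$, so $\{*\}\hookra Y$ satisfies the stated hypotheses in degrees $\le n$, yet it is not $n$-connected since $\pi_n(Y)\cong\Z_{p^\iy}\ne 0$. The remaining steps are formal: diagram chasing in exact sequences and the inductive invocation of relative Hurewicz.
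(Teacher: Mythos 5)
Your proof is correct, and in fact the paper offers no proof of this statement at all --- it is quoted from Thom \cite[Th.~II.6]{Thom} and used as a black box in \S\ref{fm16}, so there is no argument of the authors' to compare against. Your route (mapping cylinder, long exact sequence to get $H^k(M_f,X;\Z_p)=0$ for $k\le n$, universal coefficients to pass to integral homology, then induction via relative Hurewicz using simple connectivity of $X$) is the standard one and each step checks out. The one substantive point you raise is also right, and worth stressing: as literally transcribed the theorem is false without a finite-type hypothesis, and your counterexample $\{*\}\hookra M(\Z_{p^\iy},n)$ is valid --- $\Hom(\Z_{p^\iy},\Z_q)=0$ for all $q\ge 0$ because $\Z_{p^\iy}$ is divisible and torsion, so all the cohomological hypotheses hold in degrees $\le n$ while $\pi_n$ fails to be hit; the obstruction $\Ext(\Z_{p^\iy},\Z)\cong\hat\Z_p$ only appears in degree $n+1$. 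So the finite generation of $H_k(M_f,X;\Z)$ is genuinely needed to convert ``$\Hom(H_k,\Z)=0$, $\Ext(H_k,\Z)=0$ and $\Hom(H_n,\Z_q)=0$ for all primes $q$'' into $H_k=0$ for $k\le n$. This implicit hypothesis is harmless for the paper: every space to which Theorem \ref{fm16thm1} is applied in \S\ref{fm162} ($M\Sp(1)$, $M\U(2)$, $M\Spin(4)$, $B\SU(2)$, $B\SU$, $B\Sp$, $BE_8$, $K(\Z,4)$) has degreewise finitely generated homology, as do the relevant mapping cones. It would be cleaner to state the theorem for maps between simply-connected CW complexes of finite type, which is presumably the setting Thom intended.
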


% This is not exactly how Thom states this theorem. His proof proceeds by showing that $\pi_*(Cone(f), X)=0$ for all $*\le n$, so the long exact sequence of homotopy groups of the pair $(Cone(f), X)$ implies the result.

In order to apply the Whitehead theorem to a classifying map \eq{fm16eq2}, we need to compute the pullbacks $\bigl(f_\xi^H\bigr)^*(a)$ of cohomology class $a\in H^*(BG_\F,R)$, where $R=\Z_p$ is a commutative ring. We can use $a$ to define a characteristic class
\begin{equation*}
 K_\F(X;A)\longra H^*(X;A,R),\qquad \xi\longmapsto a(\xi)\coloneqq \bigl(f_\xi^H\bigr)^*(a).
\end{equation*}
Let $s:BH\ra MH$ be the composition of the zero section of $D_\rho$ with the projection onto $MH$. By the Thom isomorphism \eq{fm2eq23}, we can write
\e
\label{fm16eq10}
 (f_\xi^H\bigr)^*(a)=a(\xi)=\pi^*(b)\cup t
\e
for some class $b\in H^*(BH,R)$. By applying $s^*$ to \eq{fm16eq10}, we see that $b$ satisfies
\e
\label{fm16eq11}
 a(s^*(\xi))=b\cup e
\e
for the Euler class $e=s^*(t)$ of the vector bundle $E_\rho$. In each of the cases \eq{fm16eq3}, \eq{fm16eq5}, and \eq{fm16eq7}, the pullback $s^*(\xi)$ has zero differentials, hence $s^*(\xi)=\sum_{i=0}^n (-1)^i [s^*(V^i)]$ in $K_\F(BH)$. This will allow us to compute $a(s^*(\xi))$ using a Whitney sum formula and we can then read off the value of $b$ in \eq{fm16eq11} (there will be a unique such $b$ in each case) and hence determine \eq{fm16eq10}.

\subsubsection{Proof of Theorem \ref{fm3thm3}(a)}
\label{fm1621}

To prove that \eq{fm16eq4} is a homotopy equivalence, we will show that
\e
\label{fm16eq12}
 \bigl(f_\xi^{\Sp(1)}\bigr)^*: H^*(B\SU(2),\Z)\longra H^*(M\Sp(1),\Z)
\e
is an isomorphism in all degrees. Since the cohomology groups on either side are torsion-free, this implies isomorphisms in $\Z_p$-cohomology, and Theorem \ref{fm16thm1} then shows that $f_\xi^{\Sp(1)}$ is a homotopy equivalence. We have $H^*(B\SU(2),\Z)=\Z[c_2]$. By the Thom isomorphism, we (additively) have $H^*(M\Sp(1),\Z)=\pi^*\Z[c_2(E_\rho)]t$.
As for all complex vector bundles, the Euler class $e=s^*(t)=c_2(E_\rho)$ is the top degree Chern class. We calculate the pullback $\bigl(f_\xi^{\Sp(1)}\bigr)^*(c_2^k)$ of $a=c_2^k$ using \eq{fm16eq10} and \eq{fm16eq11}. As $s^*(\xi)=[E_\rho]-[\ul \H]$, we have
\begin{equation*}
 a(s^*(\xi))=c_2([E_\rho]-[\ul \H])^k=c_2(E_\rho)^k=c_2(E_\rho)^{k-1}\cup e,
\end{equation*}
hence
\begin{equation*}
 \bigl(f_\xi^{\Sp(1)}\bigr)^*(c_2^k)=\pi^*c_2(E_\rho)^{k-1}\cup t\qquad\text{for all $k\ge 1$.}
\end{equation*}
In particular, \eq{fm16eq12} is an isomorphism, and the above equations also prove \eq{fm3eq24}.

\subsubsection{Proof of Theorem \ref{fm3thm3}(b)}
\label{fm1622}

To prove that \eq{fm16eq6} is $10$-connected, we will show that
\e
\label{fm16eq13}
 \bigl(f_\xi^{\U(2)}\bigr)^*: H^*(B\SU,\Z)\longra H^*(M\U(2),\Z)
\e
is an isomorphism in all degrees $\le 9$ and a monomorphism in degree $10$. Since the cohomology groups on either side are torsion-free, this implies that $f_\xi^{\U(2)}$ induces also an isomorphisms in this range with $\Z_p$-coefficients, so Theorem \ref{fm16thm1} completes the proof that $f_\xi^{\U(2)}$ is $10$-connected.

Using the Thom isomorphism \eq{fm2eq23}, we can describe the cohomology groups on either side of \eq{fm16eq13} in low dimensions as follows:
\begin{center}
\begin{tabular}{c|ll}
  & $H^*(B\SU,\Z)$		& $H^*(M\U(2),\Z)$\\ \hline
4 & $c_2$				& $t$\\
6 & $c_3$				& $\pi^*[c_1(E_\rho)]t$\\
8 & $c_4$, $c_2^2$		& $\pi^*[c_1(E_\rho)^2]t$, $\pi^*[c_2(E_\rho)]t$\\
10& $c_5$, $c_2c_3$		& $\pi^*[c_3(E_\rho)]t$, $\pi^*[c_1(E_\rho)c_2(E_\rho)]t$, $\pi^*[c_1(E_\rho)^3]t$\\
\end{tabular}
\end{center}

To calculate \eq{fm16eq13}, we use \eq{fm16eq10} and \eq{fm16eq11}. As $E_\rho$ is a complex vector bundle, $e=c_2(E_\rho)$ for its Euler class. Write $c=1+c_1+c_2+\ldots$ for the total Chern class. As $s^*(\xi)=-[\La^0(E_\rho)]+[\La^1(E_\rho)]-[\La^2(E_\rho)]$ in $K_\C^0(B\U(2))$, the Whitney sum formula implies
\begin{multline*}
 c(s^*(\xi))=\frac{c(E_\rho)}{c(\La^2(E_\rho))}\\
 =\frac{1+c_1(E_\rho)+c_2(E_\rho)}{1+c_1(E_\rho)}=
  1+\Bigl[
 1-c_1(E_\rho)+c_1(E_\rho)^2-c_1(E_\rho)^3+\ldots \Bigr]e.
\end{multline*}
Hence, comparing terms in each degree, we find
\begin{align*}
 \bigl(f_\xi^{\U(2)}\bigr)^*(c_1)&=0,\\
 \bigl(f_\xi^{\U(2)}\bigr)^*(c_k)&=(-1)^k \pi^*c_1(E_\rho)^{k-2}\cup t\qquad \text{for all $k\ge 2$.}
\end{align*}
In particular, $\bigl(f_\xi^{\U(2)}\bigr)^*(c_1)=0$, which was claimed above. Combined with \eq{fm2eq25} we find from this also
\begin{align*}
\bigl(f_\xi^{\U(2)}\bigr)^*(c_2^2)&=\bigl(f_\xi^{\U(2)}\bigr)^*(c_2)^2=t^2=\pi^*[c_2(E_\rho)]t,\\
\bigl(f_\xi^{\U(2)}\bigr)^*(c_2c_3)&=\bigl(f_\xi^{\U(2)}\bigr)^*(c_2)\bigl(f_\xi^{\U(2)}\bigr)^*(c_3)=-\pi^*[c_1(E_\rho)c_2(E_\rho)]t.
\end{align*}
Referring back to the table of the cohomology groups in low dimension, these formulas show that $\bigl(f_\xi^{\U(2)}\bigr)^*$ is an isomorphism in all dimension $\le 9$ and a monomorphism in dimension $10$. Therefore, the Whitehead Theorem completes the proof, and the above formulas also prove \eq{fm3eq26}.

\subsubsection{Proof of Theorem \ref{fm3thm3}(c)}
\label{fm1623}

To prove that \eq{fm16eq8} is $12$-connected, we will show that
\e
\label{fm16eq14}
 \bigl(f_\xi^{\Spin(4)}\bigr)^*: H^*(B\Sp,\Z)\longra H^*(M\Spin(4),\Z)
\e
is an isomorphism in all degrees $\le 12$. Since the cohomology groups on either side are torsion-free, this implies that $f_\xi^{\Spin(4)}$ induces also an isomorphisms in this range with $\Z_p$-coefficients, so Theorem \ref{fm16thm1} completes the proof that $f_\xi^{\Spin(4)}$ is $12$-connected.

Recall from Bruner--Catanzaro--May \cite[Ch.~4]{BCM} that $H^*(B\Sp,\Z)$ is a polynomial ring on the {\it symplectic Pontrjagin classes},
\begin{equation*}
 H^*(B\Sp,\Z)=\Z[q_1,q_2,\cdots],\qquad q_i=(-1)^ic_{2i}.
\end{equation*}
The cohomology of $M\Spin(4)$ was determined in \S\ref{fm1541}, namely
\begin{equation*}
 \ti H^*(M\Spin(4),\Z)\cong \pi^*H^*(B\Spin(4),\Z)\cup t=\Z[\pi^*(c_2(\Si^+_\rho)),\pi^*(c_2(\Si^-_\rho))]\cup t,
\end{equation*}
where $t\in \ti H^4(M\Spin(4),\Z)$ is the Thom class and $\Si^\pm_\rho$ are the spinor bundles on $B\Spin(4)$. In low degrees, the cohomology is therefore as follows.

\begin{center}
\begin{tabular}{c|ll}
  & $H^*(B\Sp,\Z)$ & $H^*(M\Spin(4),\Z)$\\ \hline
4 & $q_1$					& $t$\\
8 & $q_1^2$, $q_2$		& $\pi^*[c_2(\Si^+_\rho)]t$, $\pi^*[c_2(\Si^-_\rho)]t$\\
12 & $q_1^3$, $q_1q_2$, $q_3$		& $\pi^*[c_2(\Si^+_\rho)^2]t$, $\pi^*[c_2(\Si^-_\rho)^2]t$, $\pi^*[c_2(\Si^+_\rho)c_2(\Si^-_\rho)]t$\\
\end{tabular}
\end{center}

To calculate \eq{fm16eq14}, we use \eq{fm16eq10} and \eq{fm16eq11}. Since $W\cong \Hom_\H(\Si^-_\rho,\Si^+_\rho)$ we have $e=c_2(\Si^+_\rho)-c_2(\Si^-_\rho)$ for the Euler class. Write $q=1+q_1+q_2+\ldots$ for the total symplectic Pontrjagin class. As $s^*(\xi)=-[\Si^-_\rho]+[\Si^+_\rho]$ in quaternionic $K$-theory, the Whitney sum formula for symplectic Pontrjagin classes, see \cite[Th.~4.1(iii)]{BCM} implies
\begin{equation*}
 q(s^*(\xi))=\frac{1-c_2(\Si^+_\rho)}{1-c_2(\Si^-_\rho)}=1-\bigl[1+c_2(\Si^-_\rho)+c_2(\Si^-_\rho)^2+\ldots\bigr]\cup e,
\end{equation*}
so by \eq{fm16eq10} we find
\begin{equation*}
 \bigl(f_\xi^{\Spin(4)}\bigr)^*(q)=q(\xi)=1-\pi^*\bigl[1+c_2(\Si^-_\rho)+c_2(\Si^-_\rho)^2+\ldots\bigr]\cup t.
\end{equation*}
By comparing degrees, we find
\begin{equation*}
 \bigl(f_\xi^{\Spin(4)}\bigr)^*(q_k)=-\pi^*\bigl[c_2(\Si^-_\rho)^{k-1}\bigr]\cup t,\qquad \text{for all $k\ge 1,$}
\end{equation*}
which, combined with \eq{fm2eq25}, gives
\begin{align*}
 \bigl(f_\xi^{\Spin(4)}\bigr)^*(q_1^2)&=t^2=\pi^*(e)\cup t=\pi^*[c_2(\Si^+_\rho)-c_2(\Si^-_\rho)]\cup t,\\
 \bigl(f_\xi^{\Spin(4)}\bigr)^*(q_1^3)&=-\pi^*[c_2(\Si^+_\rho)^2-2c_2(\Si^+_\rho)c_2(\Si^-_\rho)+c_2(\Si^-_\rho)^2]\cup t,\\
 \bigl(f_\xi^{\Spin(4)}\bigr)^*(q_1q_2)&=\pi^*[c_2(\Si^+_\rho)c_2(\Si^-_\rho)-c_2(\Si^-_\rho)^2]\cup t.
\end{align*}
These formulas show that \eq{fm16eq14} is an isomorphism in all degrees $\le 12$, and also prove \eq{fm3eq28}.

\subsubsection{Proof of Theorem \ref{fm3thm3}(d)}
\label{fm1624}

By construction, $\pi_4(f):\pi_4(BE_8)\ra\pi_4(K(\Z,4))$ is an isomorphism. As $\pi_n(K(\Z,4))=0$ for all $4\neq n$, we conclude that the map \eq{fm16eq9} is $16$-connected, which proves Theorem \ref{fm3thm3}(d) and \eq{fm3eq30}.

\section{Proof of Theorem \ref{fm3thm5}}
\label{fm17}

The proof follows the same strategy outlined at the beginning of \S\ref{fm15}. Theorem \ref{fm3thm5}(a),(d) are proved in \S\ref{fm171}--\S\ref{fm175} and Theorem \ref{fm3thm5}(b),(c) in \S\ref{fm177}.

\subsection{\texorpdfstring{Computation of $\ti\Om_n^{\bs{\mathrm{Spin}}}(K(\Z,3))$}{Computation of Ωₙˢᵖⁱⁿ(K(ℤ,3))}}
\label{fm171}

\subsubsection{Description of the (co)homology}
\label{fm1711}

We continue to use Notation \ref{fm15nota1}. By Theorem \ref{fm2thm1}, $H^*(K(\Z,3),\Z_2)$, $*\le 9$, has generators $\bar d_3,\bar d_5',\bar d_9'$ satisfying
\e
\bar d_5'=\Sq^2(\bar d_3),\qquad \bar d_9'=\Sq^4\circ\Sq^2(\bar d_3).
\label{fm17eq1}
\e
Since $\Sq^1(\bar d_3)=0$ as $H^4(K(\Z,3),\ab\Z_2)=0$ and $\Sq^2\circ\Sq^2=\Sq^3\circ\Sq^1$ by \eq{fm2eq15},
\begin{equation*}
 \Sq^2(\bar d_5')=0.
\end{equation*}
The $\Z$-homology groups $H_{n+i}(K(\Z,n),\Z)$ are computed by Breen--Mikhailov--Touz\'e \cite[App.~B]{BMT} for $n\le 11$ and $i\le 10$. This leads to the following tables.
\ea
&\renewcommand{\arraystretch}{1.5}
\begin{tabular}{c|cccccccccccc}
$n$ & 
$0,1,2,4,6\!\!\!\!$ & $3$ & $5$  & $7$ & $8$ & $9$ & $10$ \\
\hline
$\ti H_n(K(\Z,3),\Z)$ & 
$0$ & $\!\!\Z\an{\de_3}\!\!\!$ &  $\!\!\Z_2\an{\de'_5}\!\!\!$ & $\!\!\Z_3\an{\de_7}\!\!\!$ & $\!\!\Z_2\an{\de_3\de_5'}\!\!\!$ & $\!\!\Z_2\an{\de'_9}\!\!\!$ & $\!\!\Z_3\an{\de_{10}}\!\!\!$
\end{tabular}
\nonumber\\
&\renewcommand{\arraystretch}{1.5}
\begin{tabular}{c|cccccccccccc}
$n$ & 
$0,1,2,4,5,7$ & $3$ & $6$ & $8$ & $9$   \\
\hline
$\ti H^n(K(\Z,3),\Z)$ & 
$0$ & $\!\Z\an{d_3}\!$ & $\!\Z_2\an{d_3^2}\!$ & $\!\Z_3\an{d_8}\!$ & $\!\Z_2\an{d_3^3}\!$  
\end{tabular}
\label{fm17eq2}\\
&\renewcommand{\arraystretch}{1.5}
\begin{tabular}{c|cccccccccccc}
$n$ & 
$\!\!0,1,2,4,7\!\!\!\!\!\!$ & $3$ & $5$ & $6$ & $8$ & $9$  \\
\hline
$\ti H_n(K(\Z,3),\Z_2)$ & 
$0$ & $\!\!\Z_2\an{\bar\de_3}\!\!$ &  $\!\!\Z_2\an{\bar\de'_5}\!\!$ & $\!\!\Z_2\an{\bar\de_3^2}\!\!$ & $\!\!\Z_2\an{\bar\de_3\bar\de_5'}\!\!$ & $\!\!\Z_2\an{\bar\de_3^3,\bar\de'_9}\!\!\!$ 
\end{tabular}
\nonumber\\
&\renewcommand{\arraystretch}{1.5}
\begin{tabular}{c|cccccccccccc}
$n$ & 
$\!\!0,1,2,4,7\!\!\!\!\!\!\!$ & $3$ & $5$ & $6$ & $8$ & $9$  \\
\hline
$\ti H^n(K(\Z,3),\Z_2)$ & 
$0$ & $\!\!\Z_2\an{\bar d_3}\!\!$ & $\!\!\Z_2\an{\bar d'_5}\!\!$ & $\!\!\!\Z_2\an{\bar d_3^2}\!\!$ & $\!\!\!\Z_2\an{\bar d_3\bar d'_5}\!\!$ & $\!\!\Z_2\an{\bar d_3^3,\bar d_9'}\!\!\!$
\end{tabular}
\nonumber
\ea

\subsubsection{Computation of the spectral sequence}
\label{fm1712}

The groups \eq{fm17eq2} determine the $E^2$-page of the Atiyah--Hirzebruch spectral sequence
\e
\label{fm17eq3}
 \ti H_p(K(\Z,3),\Om^{\bs\Spin}_q(*))\Longra\ti\Om^{\bs\Spin}_{p+q}(K(\Z,3))
\e
for $p+q\le 9$. Proposition \ref{fm2prop1} yields the differentials on the $E^2$-page, and this leads to the $E^3$-page shown in Figure \ref{fm17fig1}. The only possible higher differentials $d_{p,q}^r$ with $p+q\le 9$, $r\ge 3$ are $d^3_{8,0}$, $d^3_{8,1}$, and $d_{9,0}^3$. We will show that $d^3_{8,0}=0$ and that $d_{9,0}^3$ is an isomorphism. 

\begin{figure}[htb]
\centering
\begin{tikzpicture}
\matrix (m) [matrix of math nodes,
nodes in empty cells,nodes={minimum width=9ex,
minimum height=5ex,outer sep=-5pt},
column sep=1ex,row sep=1ex]{
4 & \Z\an{\al_4\de_3}  & \Z_2\an{\al_4\de_5'} \\
2 &   & \Z_2\an{\al_1^2\bar\de_5'} & \Z_2\an{\al_1^2\bar\de_3^2} & &\\
1 &&& \Z_2\an{\al_1\bar\de_3^2} && \Z_2\an{\al_1\bar\de_3\bar\de_5'}\\
0 & \Z\an{\de_3} &  &  & \Z_3\an{\de_7} & \Z_2\an{\de_3\de_5'} & \Z_2\an{\de_9'} \\
\quad\strut & 3 & 5 & 6 & 7 & 8 & 9  \strut \\};
\draw[-stealth,bend right=3,pos=0.4] (m-4-6) to  node[below]{\scriptsize $d^3_{8,0}$}  (m-2-3);

\draw[-stealth,bend right=10] (m-3-6) to  node[above]{\scriptsize $d^3_{8,1}$}  (m-1-3);

\draw[-stealth,bend left=5,shorten >= 1ex] (m-4-7) to  node[right,pos=0.25,xshift=1.5ex]{\scriptsize $d^3_{9,0}$}  (m-2-4);
\draw[thick] (m-1-1.north east) node[above]{$q$} -- (m-5-1.east);
\draw[thick] (m-5-1.north) -- (m-5-7.north east) node[right]{$p$};
\end{tikzpicture}
\caption{$E^3$-page of $\ti H_p(K(\Z,3),\Om^{\bs\Spin}_q(*))\Ra\ti\Om^{\bs\Spin}_{p+q}(K(\Z,3))$, $p+q\le 9$}
\label{fm17fig1}
\end{figure}

Let
\begin{equation*}
 \chi:\Si K(\Z,3)\longra K(\Z,4)
\end{equation*}
be the classifying map of the image of the primary class $d_3$ under the suspension isomorphism $\ti H^3(K(\Z,3),\Z)\cong \ti H^4(\Si K(\Z,3),\Z)$. We can compose the morphism of spectral sequence \eq{fm2eq10} with $\chi_*$ to get a morphism
\e
\ti\Om_n^{\bs\Spin}(K(\Z,3)) \xrightarrow{\cong}\ti\Om_{n+1}^{\bs\Spin}(\Si K(\Z,3)) \xrightarrow{\chi_*} \ti\Om_{n+1}^{\bs\Spin}(K(\Z,4))
\label{fm17eq4}
\e
and a morphism of spectral sequences. In particular, we get a morphism from Figure \ref{fm17fig1} to \ref{fm15fig2}. On the $E^2$-page this is just the suspension isomorphism in ordinary homology and thus maps $\de_3\mapsto\ep_4$ on the $E^2$-page of the spectral sequence. Dually on cohomology it maps $e_4\mapsto d_3$, and thus maps $\bar e_4\mapsto\bar d_3$ on mod 2 reductions. The induced action on $\Z_2$-cohomology preserves Steenrod squares, so by \eq{fm15eq1} and \eq{fm17eq1} it maps $\bar e_6'\mapsto\bar d_5'$. Dually, on $\Z_2$-homology it maps $\bar\de_5'\mapsto\bar\ep_6'$. Also, $\bar e_7\mapsto \bar d_3^2$ and $\bar e_{10}'\mapsto \bar d_9'$ in $\Z_2$-cohomology and hence $\bar\de_3^2\mapsto\bar\ep_7$ and $\bar \de_9'\mapsto\bar\ep_{10}'$ in $\Z_2$-homology. It maps $\de_3\de_5'\mapsto 0$ as the group in position $(9,0)$ in Figure \ref{fm15fig2} is zero. 

Thus we see that the morphism from Figure \ref{fm17fig1} to Figure \ref{fm15fig2} gives an isomorphism $E^3_{5,2}\ra E^3_{6,2}$, but the zero map $E^3_{8,0}\ra E^3_{9,0}=0$. This forces $d^3_{8,0}=0$ in Figure \ref{fm17fig1}. Similarly, we have an isomorphism $E^3_{6,2}\ra E^3_{7,2}$ and an injective map $E^3_{9,0}\ra E^3_{10,0}.$ Hence $d^3_{9,0}$ in Figure \ref{fm17fig1} is the restriction of $d^3_{10,0}$ in Figure \ref{fm15fig2} and we recall from \S\ref{fm1512} that $d_{10,0}^3$ is surjective, so $d^3_{9,0}$ in Figure \ref{fm17fig1} is an isomorphism. Hence Figure \ref{fm17fig1} leads to the $E^\iy$-page shown in Figure~\ref{fm17fig2}.

\begin{figure}[htb]
\centering
\begin{tikzpicture}
\matrix (m) [matrix of math nodes,
nodes in empty cells,nodes={minimum width=9ex,
minimum height=5ex,outer sep=-5pt},
column sep=1ex,row sep=1ex]{
4 & \Z\an{\al_4\de_3}  & \\
2 &   & \Z_2\an{\al_1^2\bar\de_5'} &\\
1 &&& \Z_2\an{\al_1\bar\de_3^2}\\
0 & \Z\an{\de_3} &  &  & \Z_3\an{\de_7} & \Z_2\an{\de_3\de_5'} \\
\quad\strut & 3 & 5 & 6 & 7 & 8  \strut \\};
\draw[thick] (m-1-1.north east) node[above]{$q$} -- (m-5-1.east);
\draw[thick] (m-5-1.north) -- (m-5-7.north east) node[right]{$p$};
\end{tikzpicture}
\caption{$E^\iy$-page of $\ti H_p(K(\Z,3),\Om^{\bs\Spin}_q(*))\Ra\ti\Om^{\bs\Spin}_{p+q}(K(\Z,3))$, $p+q\le 8$}
\label{fm17fig2}
\end{figure}

\subsubsection{Determining the filtration}
\label{fm1713}

The groups $\ti\Om^{\bs\Spin}_n(K(\Z,3))$ in Table \ref{fm3tab6} and \eq{fm3eq49}, \eq{fm3eq51} follow from Figure \ref{fm17fig2} for all $n\neq 7$. For $n=7$, there is a nontrivial filtration, which we claim is
\begin{equation}
\begin{tikzcd}[column sep=small]
0\arrow[r,symbol=\subset,"\Z\an{\al_4\de_3}"' yshift=-1.5ex] & F_{3,7}'\!\cong\!\Z\arrow[r,symbol=\subset,"\Z_2\an{\al_1^2\bar\de_5'}"' yshift=-1.5ex] & F_{5,7}'\!\cong\!\Z\arrow[r,symbol=\subset,"\Z_2\an{\al_1\bar\de_3^2}"' yshift=-1.5ex] & F_{6,7}'\!\cong\!\Z\arrow[r,symbol=\subset,"\Z_3\an{\de_7}"' yshift=-1.5ex] & F_{7,7}'\!=\!\ti\Om_7^{\bs\Spin}(K(\Z,3))\!\cong\!\Z.
\end{tikzcd}\!\!\!
\label{fm17eq5}
\end{equation}
Here the question is whether the extensions by $\Z_2$ or $\Z_3$ are trivial or nontrivial, for example, $F'_{5,7}$ is an extension of $\Z$ by $\Z_2$, so could be either $\Z$ or $\Z\op\Z_2$. 

We prove \eq{fm17eq5} by relating it to the filtration \eq{fm15eq5} for $\ti\Om^{\bs\Spin}_8(K(\Z,4))$. The morphism from the $K(\Z,3)$ spectral sequence to the $K(\Z,4)$ spectral sequence induced by \eq{fm17eq4} maps the filtration \eq{fm17eq5} to \eq{fm15eq5}, mapping $F'_{k,7}$ to $F_{k+1,8}$, and on the graded pieces maps $\de_3\mapsto\ep_4$, $\bar\de_5'\mapsto\bar\ep_6',$ $\bar\de_3^2\mapsto\bar\ep_7$ and $\de_7\mapsto\ep_8$. This forces all the extensions in \eq{fm17eq5} to be nontrivial, as for example if $F'_{5,7}=\Z\op\Z_2$ then the map $F'_{5,7}\cong\Z\op\Z_2\ra F_{6,8}\cong\Z$ would map the $\Z_2$-summand to zero, contradicting $\bar\de_5'\mapsto\bar\ep_6'$. Hence $\ti\Om^{\bs\Spin}_7(K(\Z,3))\cong\Z$, completing Table \ref{fm3tab6} in the case of $K(\Z,3)$.

The classes $\de_3$, $\de_3\de_5'$ are dual to $d_3$ and $\bar d_3\bar d_5'=\bar d_3\Sq^2(\bar d_3)$, which by Proposition \ref{fm2prop2} gives the explicit isomorphisms \eq{fm3eq49} and \eq{fm3eq51}. By \eq{fm17eq5} the group $\ti\Om_7^{\bs\Spin}(K(\Z,3))\cong\Z$ has a generator $\th=[X,\al]$ such that $12\th=[K3\t \cS^3,\al']$, where $\al'=1\t s_3$ for the generator $s_3\in H^3(\cS^3,\Z)$. Observe that \eq{fm3eq50} determines a well-defined map $\ti\Om_7^{\bs\Spin}(K(\Z,3))\ra\Q$, $[X,\al]\mapsto\frac14\int_X p_1(TX)\cup\al$. It maps $[K3\t \cS^3,\al']$ to $12$ and hence $\th$ to $1$. Therefore, \eq{fm3eq50} takes integer values and is indeed an isomorphism.

Moreover, it is easy to check (see \S\ref{fm175}) that \eq{fm3eq50} maps $\vartheta_2\mapsto -1$, so in fact $\th=\vartheta_2$. Clearly, \eq{fm3eq49} maps $\rho\mapsto 1$. The formula $\Sq^2(b_2)=b_3$ in $H^*(\SU,\Z)$ implies that \eq{fm3eq51} maps $\upsilon\mapsto 1$.

\subsection{\texorpdfstring{Computation of $\ti\Om_n^{\bs{\mathrm{Spin}}}(K(\Z_2,3))$}{Computation of Ωₙˢᵖⁱⁿ(K(ℤ₂,3))}}
\label{fm172}

\subsubsection{Description of the (co)homology}
\label{fm1721}

Let $\bar c_3'\in \ti H^3(K(\Z_2,3),\Z_2)$ be the primary class as in \S\ref{fm23}. According to Serre \cite{Serr}, the $\Z_2$-cohomology of $K(\Z_2,3)$ is a polynomial algebra on
\begin{gather*}
\bar c_3',\enskip
\bar c_4=\Sq^1(\bar c_3'),\enskip
\bar c_5'=\Sq^2(\bar c_3'),\enskip
\bar c_6'=\Sq^2\Sq^1(\bar c_3'),\\
\bar c_7=\Sq^3\Sq^1(\bar c_3'),\enskip
\bar c_9'=\Sq^4\Sq^2(\bar c_3'),\enskip
\bar c_{10}'=\Sq^4\Sq^2\Sq^1(\bar c_3'),\ldots
\end{gather*}
We again follow Notation \ref{fm15nota1}, namely classes $\bar c_i$ lift to $\Z$-cohomology while classes $\bar c_i'$ do not admit lifts, see \eq{fm17eq9} below. Dually, $\ti H_n(K(\Z_2,3),\Z_2)$ are generated by homology classes $\bar\ga_3',\bar\ga_4,\bar\ga_3'^2,\ldots, \bar\ga_3'^2\bar\ga_4,\bar\ga_{10}'$.

\begin{align}
&\begin{tabular}{P{2.5cm}|cccccccccc}
 $n$ & $0,1,2$ & $3$ & $4$ & $5$ & $6$ & $7$\\
  \hline
 $\tilde H^n(K(\Z_2,3),\Z_2)$\!\!\!\!\! & $0$ & $\!\!\Z_2\an{\bar c_3'}\!\!$ & $\!\!\Z_2\an{\bar c_4}\!\!$ & $\!\!\Z_2\an{\bar c_5'}\!\!$ & $\!\!\Z_2\an{\bar c_3'^2, \bar c_6'}\!\!$ &  $\!\!\Z_2\an{\bar c_3'\bar c_4,\bar c_7}\!\!$
\end{tabular}
\nonumber
\\
&\begin{tabular}{P{0.5cm}|ccc}
 & $8$ & $9$ & $10$\\
  \hline
  $\cdots$ & $\!\!\Z_2\an{\bar c_4^2, \bar c_3'\bar c_5'}\!\!$ & $\!\!\Z_2\an{\bar c_3'^3, \bar c_4\bar c_5', \bar c_3'\bar c_6', \bar c_9'}\!\!$ & $\!\!\Z_2\an{\bar c_5'^2, \bar c_4\bar c_6', \bar c_3'\bar c_7, \bar c_3'^2\bar c_4, \bar c_{10}'}\!\!$
\end{tabular}
\label{fm17eq6}\\
&\begin{tabular}{P{2.5cm}|cccccccccc}
 $n$ & $0,1,2$ & $3$ & $4$ & $5$ & $6$ & $7$\\
  \hline
 $\tilde H_n(K(\Z_2,3),\Z_2)$\!\!\!\!\! & $0$ & $\!\!\Z_2\an{\bar \ga_3'}\!\!$ & $\!\!\Z_2\an{\bar \ga_4}\!\!$ & $\!\!\Z_2\an{\bar \ga_5'}\!\!$ & $\!\!\Z_2\an{\bar \ga_3'^2, \bar \ga_6'}\!\!$ &  $\!\!\Z_2\an{\bar \ga_3'\bar \ga_4,\bar \ga_7}\!\!$
\end{tabular}
\nonumber\\
&\begin{tabular}{P{0.5cm}|ccc}
 & $8$ & $9$ & $10$\\
  \hline
  $\cdots$ & $\!\!\Z_2\an{\bar \ga_4^2, \bar \ga_3'\bar \ga_5'}\!\!$ & $\!\!\Z_2\an{\bar \ga_3'^3, \bar \ga_4\bar \ga_5', \bar \ga_3'\bar \ga_6', \bar \ga_9'}\!\!$ & $\!\!\Z_2\an{\bar \ga_5'^2, \bar \ga_4\bar \ga_6', \bar \ga_3'\bar \ga_7, \bar \ga_3'^2\bar \ga_4, \bar \ga_{10}'}\!\!\!\!$
\end{tabular}\!\!
\label{fm17eq7}
\end{align}

To determine the (co)homology of $K(\Z_2,3)$ with $\Z$-coefficients we use the Bockstein spectral sequence as in \S\ref{fm1571}, where by Proposition \ref{fm15prop4} it suffices to consider the prime $p=2$. The first differential is the Steenrod operation $\Sq^1=\rho_2\circ\be_2$ and the Adem relation \eq{fm2eq15} allows the computation of the action of $\Sq^1$ on all classes \eq{fm17eq6}.
 In detail, the action of $\Sq^1$ is given by
\e
\label{fm17eq8}
\begin{aligned}
\bar c_3'&\mapsto \bar c_4,&  \bar c_4&\mapsto 0, & \bar c_5'&\mapsto \bar c_3'^2, &
\bar c_3'^2&\mapsto 0, \\
\bar c_6'&\mapsto \bar c_7, & \bar c_7&\mapsto 0, &\bar c_3'\bar c_4&\mapsto \bar c_4^2,
& \bar c_4^2&\mapsto 0, \\
\bar c_3'\bar c_5' &\mapsto \bar c_4\bar c_5'+\bar c_3'^3, & \bar c_3'^3&\mapsto \bar c_3'^2\bar c_4,
& \bar c_4\bar c_5'&\mapsto \bar c_3'^2\bar c_4,
& \bar c_3'\bar c_6'&\mapsto \bar c_4\bar c_6'+\bar c_3'\bar c_7, \\
\bar c_9'&\mapsto\bar c_5'^2, & \bar c_5'^2&\mapsto 0, & \bar c_4\bar c_6'&\mapsto \bar c_4 \bar c_7,
& \bar c_3'\bar c_7&\mapsto \bar c_4\bar c_7, \\
\bar c_3'^2\bar c_4&\mapsto 0, & \bar c_{10}'&\mapsto \bar c_{11}\mathrlap{\coloneqq\Sq^5(\bar c_6')\neq 0.}
\end{aligned}
\e
It follows that the Bockstein spectral sequence degenerates at the $E_1$-page, and the integer cohomology $\ti H^n(K(\Z_2,3),\Z)$ is a direct sum of $\Z_2$-summands on generators
\begin{align*}
 c_4&=\Sq^1_\Z(\bar c_3'),
 &c_6&=\Sq^3_\Z(\bar c_3'),
 &c_7&=\Sq^3_\Z(\bar c_4),
 &&c_4^2,\\
 c_9&=\Sq^1_\Z(\bar c_3' \bar c_5'),
 &&c_4 c_6, &c_{10}&=\Sq^1_\Z(\bar c_3' \bar c_6'), &\ti c_{10}&=\Sq^1_\Z(\bar c_9').
\end{align*}
We record this in \eq{fm17eq9}.
\begin{align}
&\begin{tabular}{P{2.6cm}|cccccccc}
 $n$ & $\!\!0,1,2,3,5\!\!$ & $4$ & $6$ & $7$ & $8$\\
  \hline
 $\tilde H^n(K(\Z_2,3),\Z)$\! & $0$ & $\!\!\Z_2\an{c_4}\!\!$ & $\!\!\Z_2\an{c_6}\!\!$ & $\!\!\Z_2\an{c_7}\!\!$ & $\!\!\Z_2\an{c_4^2}\!\!$
\end{tabular}
\nonumber
\\
&\begin{tabular}{P{2.5cm}|cc}
 & $9$ & $10$\\
  \hline
  $\cdots$ & $\Z_2\an{c_9}\!\!$ & $\!\!\Z_2\an{c_4c_6,c_{10},\ti c_{10}}\!\!$
\end{tabular}
\label{fm17eq9}
\end{align}

Dually, the homological mod-$2$ Bockstein spectral sequence also degenerates at the $E^1$-page, the first differential $d^1=\rho_2\circ \be_2:H_*(X,\Z_2)\ra H_{*-1}(X,\Z_2)$ being dual to \eq{fm17eq8}; each summand $\Z_2\an{d^1(\bar\ka)}$ in the image of $d^1$ determines a $\Z_2$-summand $\Z_2\an{\la}$, where $\la$ is the image of $\bar\ka$ under the homological Bockstein homomorphism $\be_2:H_*(X,\Z_2)\ra H_{*-1}(X,\Z)$, and $\rho_2(\la)=d^1(\bar\ka)$. In this way we find that $\ti H_n(K(\Z_2,3),\Z)$ is a $\Z_2$-vector space with basis
\begin{align*}
\ga_3'=\be_2(\bar\ga_4),\enskip \ga_5'=\be_2(\bar\ga_3'^2),\enskip \ga_6'=\be_2(\bar\ga_7),\enskip \ga_3'\ga_4=\be_2(\bar\ga_4^2),\\ \ga_3'\ga_5'=\be_2(\bar\ga_4\bar\ga_5')=\be_2(\bar\ga_3'^3),\enskip \ga_3'\ga_6'=\be_2(\bar\ga_4\bar\ga_6')=\be_2(\bar\ga_3'\bar\ga_7),\\ \ga_9'=\be_2(\bar\ga_5'^2),\enskip \ti\ga_9'=\be_2(\bar\ga_3'^2\bar\ga_4),\enskip \ga_{10}'=\be_2(\bar\ga_{11}),\enskip \ti\ga_{10}'=\be_2(\bar\ga_4\bar\ga_7).
\end{align*}
We record this in \eq{fm17eq10}. The mod-$2$ reductions of these homology classes are as suggested by the notation, namely $\rho_2(\ga_i'\cdots\ga_j')=\bar\ga_i'\cdots\bar\ga_j'$, as well as the formulas $\rho_2(\ti\ga_9')=\bar\ga_3'^3+\bar\ga_4\bar\ga_5'$ and $\rho_2(\ti\ga_{10}')=\bar\ga_4\bar\ga_6'+\bar\ga_3'\bar\ga_7$.
\begin{align}
&\begin{tabular}{P{2.5cm}|cccccccc}
 $n$ & $\!\!0,1,2,4\!\!$ & $3$ & $5$ & $6$ & $7$ & $8$\\
  \hline
 $\tilde H_n(K(\Z_2,3),\Z)$\!\! & $0$ & $\!\!\Z_2\an{\ga_3'}\!\!$ & $\!\!\Z_2\an{\ga_5'}\!\!$ & $\!\!\Z_2\an{\ga_6'}\!\!$ & $\!\!\Z_2\an{\ga_3'\ga_4}\!\!$ & $\!\!\Z_2\an{\ga_3'\ga_5'}\!\!$
\end{tabular}
\nonumber
\\
&\begin{tabular}{P{2.5cm}|cc}
 & $9$ & $10$\\
  \hline
  $\cdots$ &  $\!\!\Z_2\an{\ga_3'\ga_6',\ga_9',\ti\ga_9'}\!\!$ & $\!\!\Z_2\an{\ga_{10}',\ti\ga_{10}'}\!\!$
\end{tabular}
\label{fm17eq10}
\end{align}

\subsubsection{Computation of the spectral sequence}
\label{fm1722}

We calculate $\ti\Om_*^{\bs\Spin}(K(\Z_2,3))$ using the Atiyah--Hirzebruch spectral sequence
\begin{equation*}
 \ti H_p(K(\Z_2,3),\Om^{\bs\Spin}_q(*))\Longra\ti\Om^{\bs\Spin}_{p+q}(K(\Z_2,3)).
\end{equation*}
The Adem relation \eq{fm2eq15} implies that $\Sq^2$ acts on the classes \eq{fm17eq6} by
\begin{align*}
 &\bar c_3'\mapsto\bar c_5',\enskip \bar c_4\mapsto\bar c_6',\enskip \bar c_5'\mapsto\bar c_7,\enskip \bar c_3'^2\mapsto\bar c_4^2,\enskip \bar c_6'\mapsto 0,\\
 &\enskip \bar c_3'\bar c_4\mapsto\bar c_4\bar c_5'+\bar c_3'\bar c_6',\enskip \bar c_7\mapsto0,\enskip \bar c_4^2\mapsto0,\enskip \bar c_3'\bar c_5'\mapsto\bar c_3'^2\bar c_4+\bar c_5'^2+\bar c_3'\bar c_7.
\end{align*}
According to Proposition \ref{fm2prop1}, the differential $d^2_{p,1}$ in this spectral sequence is dual to the Steenrod operation $\Sq^2$, so given in the dual basis $\bar\ga_3',\bar\ga_4,\bar\ga_3'^2,\ab\ldots,\ab \bar\ga_3'^2\bar\ga_4,\ab\bar\ga_{10}'$ by the transpose matrix. Moreover, $d^2_{p,0}$ can be identified with $d^2_{p,1}\circ \rho_2$. This leads to the $E^3$-page of the spectral sequence shown in Figure~\ref{fm17fig3}.

\begin{figure}[htb]
\centering
\begin{tikzpicture}
  \matrix (m) [matrix of math nodes,
    nodes in empty cells,nodes={minimum width=7ex,
    minimum height=5ex,outer sep=-5pt},
    column sep=1ex,row sep=1ex]{
	4 & \!\!\Z_2\an{\al_4\ga_3'}\!\! & \!\!\Z_2\an{\al_4\ga_5'}\!\! \\
	2 &&& \!\!\Z_2\an{\al_1^2\bar\ga_6'}\!\! & \!\!\Z_2\an{\al_1^2\bar\ga_7}\!\!\\
	1 &&& \!\!\Z_2\an{\al_1\bar\ga_3'^2}\!\!\\
	0 & \Z_2\an{\ga_3'}\!\! &&& \!\!\Z_2\an{\ga_3'\ga_4}\!\! & \!\!\Z_2\an{\ga_3'\ga_5'}\!\! & \!\!\Z_2\an{\ga_9',\ti\ga_9'\!+\!\ga_3'\ga_6'}\!\! \\
    \quad\strut  & 3 & 5 & 6 & 7 & 8 & 9 \strut \\};
 \draw[thick] (m-1-1.north east) node[above]{$q$} -- (m-5-1.east);
 \draw[thick] (m-5-1.north) -- (m-5-7.north east) node[right]{$p$};

 \draw[-stealth,shorten >=1ex] (m-4-7) -- node[above right]{\scriptsize $d^3_{9,0}$} (m-2-4);
 \draw[-stealth,shorten >=1ex,shorten <=1ex] (m-2-4) -- node[below right]{\scriptsize $d^3_{6,2}$} (m-1-2);

\end{tikzpicture}
\caption{$E^3$-page of $\ti H_p(K(\Z_2,3),\Om^{\bs\Spin}_q(*))\Ra\ti\Om^{\bs\Spin}_{p+q}(K(\Z_2,3))$, $p\!+\!q\!\le\! 9$.}
\label{fm17fig3}
\end{figure}

As in \S\ref{fm1712} the map $\chi:\Si K(\Z_2,3)\ra K(\Z_2,4)$, uniquely defined up to homotopy by the condition that $\chi^*(f_4')$ is the suspension of $\bar c_3'$, maps the suspension of $\bar\ga_6'$ to $\bar\varphi_7'$ and the suspension of $\bar\ga_3'$ to $\bar\varphi_4'$ and hence identifies the differential $d^3_{6,2}$ in Figure \ref{fm17fig3} with $d_{7,2}^3$ in Figure \ref{fm15fig12}. It follows that $d^3_{6,2}\neq 0$, which leads to the $E^\iy$-page shown in Figure~\ref{fm17fig4}.

\begin{figure}[htb]
\centering
\begin{tikzpicture}
  \matrix (m) [matrix of math nodes,
    nodes in empty cells,nodes={minimum width=10ex,
    minimum height=5ex,outer sep=-5pt},
    column sep=1ex,row sep=1ex]{
	1 && \Z_2\an{\al_1\bar\ga_3'^2}\\
	0 & \Z_2\an{\ga_3'} && \Z_2\an{\ga_3'\ga_4} & \Z_2\an{\ga_3'\ga_5'}\\
    \quad\strut  & 3 & 6 & 7 & 8 \strut \\};
 \draw[thick] (m-1-1.north east) node[above]{$q$} -- (m-3-1.east);
 \draw[thick] (m-3-1.north) -- (m-3-5.north east) node[right]{$p$};
 
\end{tikzpicture}
\caption{$E^\iy$-page of $\ti H_p(K(\Z_2,3),\Om^{\bs\Spin}_q(*))\Ra\ti\Om^{\bs\Spin}_{p+q}(K(\Z_2,3))$, $p\!+\!q\!\le\! 8$.}
\label{fm17fig4}
\end{figure}

The extension problems are trivial in dimensions $7\neq n\le 8$, while for $n=7$ the group $\ti\Om^{\bs\Spin}_7(K(\Z_2,3))$ could be $\Z_2^2$ or $\Z_4$. This proves Table \ref{fm3tab6} for $K(\Z_2,3)$.

By Figure \ref{fm17fig4}, the groups $\ti\Om^{\bs\Spin}_n(K(\Z_2,3))$ for $n=3,8$ are isomorphic to $\Z_2\an{\ga_3'}$ and $\Z_2\an{\ga_3'\ga_5'}$. By \eq{fm17eq6}, these classes are dual to $\bar c_3'$ and $\bar c_3'\bar c_5'=\bar c_3'\Sq^2(\bar c_3')$, so Proposition \ref{fm2prop2} gives the isomorphisms \eq{fm3eq52} and \eq{fm3eq53}.

We have already seen that \eq{fm3eq49} maps $\rho\mapsto 1$ and that \eq{fm3eq51} maps $\upsilon\mapsto1$.

\subsection{\texorpdfstring{Computation of $\ti\Om_n^{\bs{\mathrm{Spin}}}(\SU(2))$}{Computation of Ωₙˢᵖⁱⁿ(SU(2))}}
\label{fm173}

As $\SU(2)\cong\cS^3$, we can apply the suspension isomorphism of the generalized homology theory to get $\ti\Om_q^{\bs\Spin}(\cS^3)\cong\Om_{q-3}^{\bs\Spin}(*)$ for all $q$. This proves Table \ref{fm3tab6} for $\SU(2)$. Moreover, it is easy to see that \eq{fm3eq41} maps $\rho\mapsto 1$ and \eq{fm3eq42} maps~$\vartheta_1\mapsto 1$.

\subsection{\texorpdfstring{Computation of $\ti\Om_n^{\bs{\mathrm{Spin}}}(\SU)$}{Computation of Ωₙˢᵖⁱⁿ(SU)}}
\label{fm174}

We will use the spectral sequence
\e
 \ti H_p(\SU,\ab\Om^{\bs\Spin}_q(*))\Longrightarrow\ti\Om^{\bs\Spin}_{p+q}(\SU).
 \label{fm17eq11}
\e
Borel \cite[Th.~8.2]{Bore} shows that the cohomology $H^*(\SU,\Z)$ is an exterior algebra on generators $b_i$ of degree $2i-1$, $i\ge 2$, which are dual to homology classes $\be_i\in H_{2i-1}(\SU,\Z)$ such that $(i-1)!\be_i$ corresponds under the Hurewicz homomorphism to the generator $\la_i:\cS^{2i-1}\ra\SU$ of $\pi_{2i-1}(\SU).$ The reduced cohomology $\ti H^n(\SU,\Z)$ for $n\le 9$ is therefore given by
\e
\begin{tabular}{c|cccccccccccc}
$n$ & 
$0,1,2,4,6$ & $3$ & $5$ & $7$ & $8$ & $9$   \\
\hline
\parbox[top][4ex][c]{2.1cm}{$\ti H^n(\SU,\Z)$} & 
$0$ & $\!\!\Z\an{b_2}\!\!$ &  $\!\!\Z\an{b_3}\!\!$ & $\!\!\Z\an{b_4}\!\!$ & $\!\!\Z\an{b_2b_3}\!\!$ & $\!\!\Z\an{b_5}\!\!$  
\end{tabular}
\label{fm17eq12}
\e
From the Universal Coefficient Theorem we obtain the homology, as follows. Here $b_i\cdot\be_i=1$ and $\be_2\be_3$ is the generator dual to $b_2b_3$ as in Notation \ref{fm15nota1} (we make no use of the product on~$H_*(\SU,\Z)$).
\e
\begin{tabular}{c|cccccccccccc}
$n$ & 
$0,1,2,4,6$ & $3$ & $5$ & $7$ & $8$ & $9$   \\
\hline
\parbox[top][4ex][c]{2.1cm}{$\ti H_n(\SU,\Z)$} & 
$0$ & $\!\!\Z\an{\be_2}\!\!$ &  $\!\!\Z\an{\be_3}\!\!$ & $\!\!\Z\an{\be_4}\!\!$ & $\!\!\Z\an{\be_2\be_3}\!\!$ & $\!\!\Z\an{\be_5}\!\!$  
\end{tabular}
\label{fm17eq13}
\e

The $E^2$-page of the spectral sequence \eq{fm17eq11} is therefore as in Figure \ref{fm17fig5}. We have shown all non-zero terms $\ti H_p(\SU,\Om^{\bs\Spin}_q(*))$ with $p+q\le 9$ and all possible non-zero differentials $d^2_{p,q}$ in this range.

\begin{figure}[htb]
\centering
\begin{tikzpicture}
\matrix (m) [matrix of math nodes,
nodes in empty cells,nodes={minimum width=11.8ex,
minimum height=5ex,outer sep=-5pt},
column sep=1ex,row sep=1ex]{
4 & \Z\an{\al_4\be_2}  & \Z\an{\al_4\be_3} &\\
2 & \Z_2\an{\al_1^2\bar\be_2}  & \Z_2\an{\al_1^2\bar\be_3} & \Z_2\an{\al_1^2\bar\be_4} &\\
1 & \Z_2\an{\al_1\bar\be_2}  & \Z_2\an{\al_1\bar \be_3} & \Z_2\an{\al_1\bar \be_4} & \Z_2\an{\al_1\bar \be_2\bar \be_3} \\
0 & \Z\an{\be_2} & \Z\an{\be_3} & \Z\an{\be_4} & \Z\an{\be_2\be_3} & \Z\an{\be_5} \\
\quad\strut & 3 & 5 & 7 & 8 & 9 \strut \\};
\draw[-stealth] (m-3-3) -- node[left]{\scriptsize $d^2_{5,1}$}  (m-2-2);
\draw[-stealth] (m-3-4) -- node[left]{\scriptsize $d^2_{7,1}$}  (m-2-3);
\draw[-stealth] (m-4-3) -- node[left]{\scriptsize $d^2_{5,0}$}  (m-3-2);
\draw[-stealth] (m-4-4) -- node[left]{\scriptsize $d^2_{7,0}$}  (m-3-3);
\draw[-stealth] (m-4-6) -- node[right,xshift=3ex]{\scriptsize $d^2_{9,0}$} (m-3-4);
\draw[thick] (m-1-1.north east) node[above]{$q$} -- (m-5-1.east);
\draw[thick] (m-5-1.north) -- (m-5-6.north east) node[right]{$p$};
\end{tikzpicture}
\caption{$E^2$-page of $\ti H_p(\SU,\Om^{\bs\Spin}_q(*))\Ra\ti\Om^{\bs\Spin}_{p+q}(\SU)$, $p+q\le 9$}
\label{fm17fig5}
\end{figure}

As before, the differentials $d^2_{p,0}, d^2_{p,1}$ are determined using Steenrod squares, see Proposition \ref{fm2prop1}. Borel \cite[Th.~8.3]{Bore} proves that
\begin{equation*}
\Sq^2:\bar b_2\longmapsto \bar b_3, \quad
\Sq^2:\bar b_3\longmapsto 0, \quad
\Sq^2:\bar b_4\longmapsto\bar  b_5,
\end{equation*}
so
\begin{align*}
d^2_{5,0}&:\be_3\longmapsto\al_1\bar\be_2, &
d^2_{5,1}&:\al_1\bar\be_3\longmapsto\al_1^2\bar\be_2, \\
d^2_{7,0}&=d^2_{7,1}=0, &
d^2_{9,0}&:\be_5\longmapsto\al_1\bar\be_4.
\end{align*}
From this we deduce the $E^3$-page of \eq{fm17eq11} shown in Figure \ref{fm17fig6}. 

\begin{figure}[htb]
\centering
\begin{tikzpicture}
\matrix (m) [matrix of math nodes,
nodes in empty cells,nodes={minimum width=11.8ex,
minimum height=5ex,outer sep=-5pt},
column sep=1ex,row sep=1ex]{
4 & \Z\an{\al_4\be_2}  &  &\\
2 &   & \Z_2\an{\al_1^2\bar\be_3} &  &\\
1 &   &  &  &  \\
0 & \Z\an{\be_2} & \Z\an{2\be_3} & \Z\an{\be_4} & \Z\an{\be_2\be_3} &  \\
\quad\strut & 3 & 5 & 7 & 8  \strut \\};
\draw[-stealth] (m-4-5.north) -- node[right]{\scriptsize $d^3_{8,0}$} (m-2-3.east);
\draw[-stealth,bend left=10] (m-4-5.west) to node[left]{\scriptsize $d^5_{8,0}$} (m-1-2.south);
\draw[thick] (m-1-1.north east) node[above]{$q$} -- (m-5-1.east);
\draw[thick] (m-5-1.north) -- (m-5-5.north east) node[right]{$p$};
\end{tikzpicture}
\caption{$E^3=E^\iy$-page of $\ti H_p(\SU,\Om^{\bs\Spin}_q(*))\Ra\ti\Om^{\bs\Spin}_{p+q}(\SU)$, $p+q\le 8$}
\label{fm17fig6}
\end{figure}

The only possible higher differentials are $d^3_{8,0}$ and $d^5_{8,0}$, which we claim are both trivial. For this we compare \eq{fm17eq11} with the spectral sequence \eq{fm17eq3}. Let $\phi:\SU\ra K(\Z,3)$ be the classifying map of the cohomology class $b_2\in H^3(\SU,\Z)$. Then $\phi_*(\be_2)=\de_3$ and $\phi_*(\be_3)=\de_5'$, and thus in Figures \ref{fm17fig6} and \ref{fm17fig1}, in positions $(3,4),(5,2),(8,0)$ it maps $\al_4\be_2\mapsto\al_4\de_3$, $\al_1^2\bar\be_3\mapsto\al_1^2\bar\de_5'$, and $\be_2\be_3\mapsto\de_3\de_5'$. But in Figure \ref{fm17fig1} we have shown that $d^3_{8,0}=0$, and $d^5_{8,0}=0$ is trivial as it maps $\Z_2\ra\Z$. Thus in Figure \ref{fm17fig6} we must have $d^3_{8,0}=d^5_{8,0}=0$, so Figure \ref{fm17fig6} is also the $E^\iy$-page of the spectral sequence.

Figure \ref{fm17fig6} determines the groups $\ti\Om^{\bs\Spin}_n(\SU)$ for $n\le 8$ in Table \ref{fm3tab6}, except for $n=7$ where we have a filtration. For $n=7$ we use that $\psi:\SU\ra K(\Z,3)$ induces a morphism of filtrations
\begin{equation*}
\begin{tikzcd}[column sep=10.9ex]
	0\arrow[r,symbol=\subset,"\Z\an{\al_4\be_2}" yshift=1.5ex] & \begin{array}{l} F_{3,7}\\ \cong\Z\end{array}\dar{\cong}\arrow[r,symbol=\subset,"\Z_2\an{\al_1^2\bar\be_3}" yshift=1.5ex] & \begin{array}{l} F_{5,7}\\ \cong\Z\end{array}\dar{\cong}\arrow[r,symbol=\subset,"\Z\an{\be_4}" yshift=1.5ex] & \begin{array}{l} F_{7,7}=\ti\Om_7^{\bs\Spin}(\SU)\\ \cong\Z^2\end{array}\dar{\psi_*}\\
	0\arrow[r,symbol=\subset,"\Z\an{\al_4\de_3}"' yshift=-1.5ex] & \begin{array}{l} F_{3,7}'\\ \cong\Z\end{array}\arrow[r,symbol=\subset,"\Z_2\an{\al_1^2\bar\de_5'}"' yshift=-1.5ex] & \begin{array}{l} F_{5,7}'\\ \cong\Z\end{array}\arrow[r,symbol=\subset,"\Z_2\an{\al_1\bar\de_3^2}\op\Z_3\an{\de_7}"' yshift=-1.5ex] & \begin{array}{l} F_{7,7}'=\ti\Om_7^{\bs\Spin}(K(\Z,3)).\\ \cong\Z\end{array}
\end{tikzcd}
\end{equation*}

Here the bottom line was computed in \eq{fm17eq5}, and in particular we have $F'_{5,7}\cong\Z$, not $F'_{5,7}\cong\Z\op\Z_2$. As $\psi_*$ induces isomorphisms $F_{3,7}=\Z\an{\al_4\be_2}\ra F'_{3,7}=\Z\an{\al_4\de_3}$ and $F_{5,7}/F_{3,7}=\Z_2\an{\al_1^2\bar\be_3}\ra F'_{5,7}/F'_{3,7}=\Z_2\an{\al_1^2\bar\de_5'}$, it follows that $F_{5,7}\cong\Z$, and hence $\ti\Om^{\bs\Spin}_7(\SU)\cong\Z^2$.

Using Proposition \ref{fm2prop2}, Figure \ref{fm17fig6} implies the explicit isomorphisms $\Psi_n(\SU)$ for $n=3,5,8$ which, by definition of $\be_2,2\be_3,\be_4,\be_2\be_3$ can be written as \eq{fm3eq43}, \eq{fm3eq44}, \eq{fm3eq46}.

We have already seen that \eq{fm3eq41} maps $\rho\mapsto 1$ in the $\SU(2)$-case. Recall that $\varsigma=[\cS^5,\phi]$ for the generator $\phi\in\pi_5(\SU)$. We have $\phi_*([\cS^5])=2\be_3$ under the Hurewicz homomorphism, so \eq{fm3eq44} maps $\varsigma\mapsto 1$. For $\upsilon$, recall that $H^*(\SU(3),\Z)=\Z[b_2,b_3]/(b_2^2,b_3^2)$. For the inclusion $\phi:\SU(3)\ra\SU$ we have $\phi^*(b_2b_3)=b_2b_3$, and this class is Kronecker dual to the fundamental class of $\SU(3)$, so \eq{fm3eq46} maps $\upsilon\mapsto 1$.

Finally, we explain how to get the explicit isomorphism \eq{fm3eq45}. By the spectral sequence, we have an isomorphism $\ti\Om_7^{\bs\Spin}(\SU)/F_{5,7}\overset{\cong}\longra\Z$, $[X,\phi]\mapsto\int_X\phi^*(b_4)$, which is the second component of \eq{fm3eq45}. Observe that $\vartheta_3$ is mapped to $1$ under this isomorphism, hence determines a splitting $\ti\Om_7^{\bs\Spin}(\SU)=F_{5,7}\op\Z\an{\vartheta_3}$ with projection $\ti\Om_7^{\bs\Spin}(\SU)\ra F_{5,7}$, $[X,\phi]\mapsto [X,\phi]-\int_X\phi^*(b_4)\cdot\vartheta_3$. According to the diagram above, $\psi:\SU\ra K(\Z,3)$ induces a morphism $F_{5,7}\ra F_{5,7}'\subset\ti\Om_7^{\bs\Spin}(K(\Z,3))$ whose image has index $6$, so \eq{fm3eq50} (divided by $-6$) is an isomorphism $F_{5,7}\ra \Z$, $[X,\phi]\mapsto -\frac{1}{24}\int_X p_1(TX)\phi^*(b_2)$, and the image of $[X,\phi]-\int_X\phi^*(b_4)\vartheta_3$ is $-\frac{1}{24}\int_X p_1(TX)\phi^*(b_2)+\frac16\int_X \phi^*(b_2)$, which gives the second component of~\eq{fm3eq45}.

For $\vartheta_1$ we have $\phi^*(b_2)=1\bt[\cS^3]$, $\phi^*(b_4)=0$ and $p_1(TX)=-48[K3]\bt 1$, so \eq{fm3eq45} maps $\vartheta_1\mapsto 2$ and hence $\frac{\vartheta_1}{2}\mapsto 1$.

Finally, for $\vartheta_3=[\CP^3\t\cS^1,\phi]$ we have $H^*(\CP^3\t\cS^1,\Z)=\Z[u,s]/(u^4,s^2)$ and one checks that $p_1(TX)=4u^2\bt 1$, $\phi^*(b_2)=u\bt s$, and $\phi^*(b_4)=u^3\bt s$, so \eq{fm3eq45} maps $\vartheta_3\mapsto (0,1)$.

\subsection{\texorpdfstring{Computation of $\ti\Om_n^{\bs{\mathrm{Spin}}}(\Sp)$}{{Computation of Ωₙˢᵖⁱⁿ(Sp)}}}
\label{fm175}

We use the spectral sequence
\begin{equation*}
 \ti H_p(\Sp,\Om_q^{\bs\Spin}(*))\Longrightarrow\ti\Om_{p+q}^{\bs\Spin}(\Sp).
\end{equation*}
Recall that $H^*(\Sp,\Z)$ is an exterior $\Z$-algebra on generators $a_i$ of degree $2i-1$. Let $\th_i\in H_{2i-1}(\Sp,\Z)$ be the homology class dual to $a_i$. The $E^2$-page of the spectral sequence is shown in Figure~\ref{fm17fig7}.

\begin{figure}[htb]
\centering
\begin{tikzpicture}
\matrix (m) [matrix of math nodes,
nodes in empty cells,nodes={minimum width=9ex,
minimum height=5ex,outer sep=-5pt},
column sep=.4ex,row sep=1ex]{
4   &   \Z\an{\al_4\th_1}           & \\
2   &   \Z_2\an{\al_1^2\bar\th_1}   & \Z_2\an{\al_1^2\bar\th_2}\\
1   &   \Z_2\an{\al_1\bar\th_1}     & \Z_2\an{\al_1\bar\th_2}\\
0   &   \Z\an{\th_1}                & \Z\an{\th_2}\\
\quad\strut & 3 & 7 \\};
\draw[thick] (m-1-1.north east) node[above]{$q$} -- (m-5-1.east);
\draw[thick] (m-5-1.north) -- (m-5-3.north east) node[right]{$p$};
\end{tikzpicture}
\caption{$E^2$-page of $\ti H_p(\Sp,\Om^{\bs\Spin}_q(*))\Rightarrow\ti\Om^{\bs\Spin}_{p+q}(\Sp)$, $p+q\le 10$. \\ This is also the $E^\iy$-page for $p+q\le 9$.}
\label{fm17fig7}
\end{figure}

All differentials $d^r_{p,q}$ with $p+q\le 10$, $r\ge 2$ vanish for degree reasons, so this is also $E^\iy$-page for $p+q\le 9$. All extension problems are trivial, which proves Table \ref{fm3tab6} for $\Sp$.

The isomorphism \eq{fm3eq47} follows from Figure \ref{fm17fig7} and Proposition \ref{fm2prop2}, and it clearly maps $\rho\mapsto 1$. Observe that \eq{fm3eq48} defines a well-defined morphism $\ti\Om^{\bs\Spin}_7(\Sp)\ra\Z^2$. It clearly maps $\vartheta_1\mapsto(1,0)$ as $\int_X\phi^*(a_2)=0$.

Recall from the proof of Lemma \ref{fm18lem2} that $H^*(X,\Z)=\Z[x,y]/(x^2,y^2)$, $\phi^*(a_1)=x$, $\phi^*(a_2)=xy$, and $p_1(TX)=-4y$. Therefore, \eq{fm3eq48} maps $\vartheta_2\mapsto (0,-1)$. As $\ti\Om_7^{\bs\Spin}(\Sp)\cong\Z^2$ and \eq{fm3eq48} maps $\vartheta_1,\vartheta_2$ to a basis of $\Z^2$, we conclude that \eq{fm3eq48} is an isomorphism.

\subsection{Proof of Proposition \ref{fm3prop6}}
\label{fm176}

By unravelling the definitions, the composition of \eq{fm3eq55} with $\hat\xi^{\bs\Spin}_{n-1}(BG)$ is \eq{fm3eq57}. The main content of Proposition \ref{fm3prop6} is how it maps the generators. We explain how comparing \eq{fm3eq37} and \eq{fm3eq48} shows $\vartheta_2\mapsto\ze_2-\ze_2'$ in detail; the other cases are similar and will be left to the reader. Since $q_1,q_2\in H^*(B\Sp,\Z)$ are the transgressions of $a_1,a_2\in H^*(\Sp,\Z)$, there is a commutative diagram
\begin{equation*}
\begin{tikzcd}[ampersand replacement=\&]
\ti\Om_7^{\bs\Spin}(\Sp)\arrow[r,"\eq{fm3eq48}"]\arrow[d,"\eq{fm3eq57}"] \& \Z^2\arrow[d,"{\begin{pmatrix}1&0\\0&1\\0&1\end{pmatrix}}" xshift=1ex]\\
\ti\Om_8^{\bs\Spin}(B\Sp)\arrow[r,"\eq{fm3eq37}"] \& \Z^3\mathrlap{.}
\end{tikzcd}
\end{equation*}
Therefore, that $\vartheta_1\mapsto(1,0)$, $\vartheta_2\mapsto(0,1)$ in \eq{fm3eq48} implies that \eq{fm3eq57} maps $\vartheta_1\mapsto\ze_1$, $\vartheta_2\mapsto\ze_2-\ze_2'$, recalling that \eq{fm3eq37} maps $\ze_2\mapsto(0,1,0)$, $\ze_2\mapsto(0,0,-1)$.

It is obvious that $\rho\mapsto\de$. By comparing \eq{fm3eq11} and \eq{fm3eq44}, one finds $\varsigma\mapsto\varep$. Similarly, comparing \eq{fm3eq32} and \eq{fm3eq42} shows $\vartheta_1\mapsto\ze_1$, comparing \eq{fm3eq35} and \eq{fm3eq45} shows $\frac{\vartheta_1}{2}\mapsto\frac{\ze_1}{2}$, and comparing \eq{fm3eq35} and \eq{fm3eq45} shows $\vartheta_3\mapsto\ze_3$

Finally, we prove that $\ti\Om_8^{\bs\Spin}(\SU)\ra\ti\Om_9^{\bs\Spin}(B\SU)$ is surjective, which shows $\upsilon\mapsto \al_1\ze_2$. This is the most difficult part of the proof as there is no explicit (cohomological) isomorphism $\ti\Om_9^{\bs\Spin}(B\SU)\cong\Z_2$. Let $C$ be the mapping cone of the map $\chi:\Si\SU\ra B\SU$ from \eq{fm3eq54}. Using the induced long exact sequence of the mapping cone,
\begin{equation*}
\begin{tikzcd}
 \cdots\rar & \ti\Om_9^{\bs\Spin}(\Si\SU)\rar{\chi_*} & \ti\Om_9^{\bs\Spin}(B\SU)\rar{\jmath_*} & \ti\Om_9^{\bs\Spin}(C)\rar & \cdots,
\end{tikzcd}
\end{equation*}
we see that it suffices to show $\ti\Om_9^{\bs\Spin}(C)=0$. To see this, we will use the Atiyah--Hirzebruch spectral sequence, so we need to determine the ordinary homology of $C$. From the analogous long exact sequence in ordinary homology
\begin{equation*}
\begin{tikzcd}
 \cdots\rar & H_n(\Si\SU)\rar{\chi_*} & H_n(B\SU)\rar{\jmath_*} & H_n(C)\rar & \cdots
\end{tikzcd}
\end{equation*}
one finds $H_n(C,\Z)=0$ for all $8\neq n<10$, $H_8(C,\Z)\cong\Z$, and $H_{10}(C,\Z)\cong\Z^2$. This leads to the $E^2$-page for the spectral sequence shown in Figure \ref{fm17fig8}. The differential $d^2_{10,0}$ is dual to $\Sq^2:H^8(C,\Z_2)\ra H^{10}(C,\Z_2)$. We claim that $d^2_{10,0}\neq 0$, which implies $\ti\Om_9^{\bs\Spin}(C)=0$, as claimed.

\begin{figure}[htb]
\centering
\begin{tikzpicture}
\matrix (m) [matrix of math nodes,
nodes in empty cells,nodes={minimum width=9ex,
minimum height=5ex,outer sep=-5pt},
column sep=.4ex,row sep=1ex]{
2   &   \Z_2   & \\
1   &   \Z_2     & \Z_2^2\\
0   &   \Z                & \Z^2\\
\quad\strut & 8 & 10 \\};
\draw[thick] (m-1-1.north east) node[above]{$q$} -- (m-4-1.east);
\draw[thick] (m-4-1.north) -- (m-4-3.north east) node[right]{$p$};
\draw[-stealth] (m-3-3) -- node[above,xshift=1ex]{\scriptsize $d^2_{10,0}$} (m-2-2);
\end{tikzpicture}
\caption{$E^2$-page of $\ti H_p(C,\Om^{\bs\Spin}_q(*))\Rightarrow\ti\Om^{\bs\Spin}_{p+q}(C)$, $p+q\le 10$.}
\label{fm17fig8}
\end{figure}

The prove $d^2_{10,0}\neq 0$, we calculate the Steenrod square using Lemma \ref{fm15lem3}, which we apply to $K=B\SU$ as follows. First, observe that $\chi:\Si\Om K\ra K$ as in the lemma can be identified with $\chi:\Si\SU\ra B\SU$ composed with the suspension of the homotopy equivalence $\Om B\SU\simeq \SU$. Denote the suspension isomorphism $\ti H^n(X,\Z_2)\ra \ti H^{n+1}(\Si X,\Z_2)$ by $a\mapsto a^\si$. Using the long exact sequence in cohomology
\begin{equation*}
\begin{tikzcd}[column sep=small]
{}\rar & H^n(C,\Z_2)\rar{\jmath^*} & H^n(B\SU,\Z_2)\rar{\chi^*} & H^n(\Si\SU,\Z_2)\rar{\de} & H^{n+1}(C,\Z_2)\rar & {}
\end{tikzcd}
\end{equation*}
we find that $H^8(C,\Z_2)\cong\Z_2$ is generated by a class $c$ with $\jmath^*(c)=\bar c_2^2$ and $H^{10}(C,\Z_2)\cong\Z^2_2$ has generators $a,b$ where $\jmath^*(a)=\bar c_2\bar c_3$ and $b=\de((\bar b_2 \bar b_3)^\si)$. Here $\chi^*(\bar c_2)=\bar b_2^\si$, so $\bar e=\bar b_2$ in the notation of Lemma \ref{fm15lem3}, thus
\begin{equation*}
 \Sq^2(c)=\de(\Sq^2(\bar b_2)\cup\bar b_2)^\si=\de(\bar b_2\bar b_3)^\si=b
\end{equation*}
is non-zero.

\subsection{Proofs of Theorem \ref{fm3thm5}(b),(c)}
\label{fm177}

We prove (b). To show that $\ti\Om^{\bs\Spin}_7(\SU)\ra\ti\Om^{\bs\Spin}_7(K(\Z,3))$ maps $\frac{\vartheta_1}{2}\mapsto-6\vartheta_2$ and $\vartheta_3\mapsto\vartheta_2$, we use the isomorphism \eq{fm3eq50}: we know that \eq{fm3eq45} maps $\frac{\vartheta_1}{2}\mapsto(1,0)$, so $\int_X\phi^*(b_4)=0$ and $\int_X\frac{p_1(TX)\phi^*(b_2)}{24}=-1$. Since $\al=\phi^*(b_2)$, this means that \eq{fm3eq50} evaluates to $-6$, proving $\frac{\vartheta_1}{2}\mapsto-6\vartheta_2$. Similarly, \eq{fm3eq45} maps $\vartheta_3\mapsto(0,1)$ so $\int_X\phi^*(b_4)=1$ and $\int_X\frac{p_1(TX)\phi^*(b_2)}{24}=\frac16$ and \eq{fm3eq50} evaluates to $1$, so $\vartheta_3\mapsto\vartheta_2$. The proof that $\vartheta_1\mapsto-12\vartheta_2$ follows similarly by comparing \eq{fm3eq42} and \eq{fm3eq50}.

To show $\al_1\ze_2\mapsto0$, consider the commutative diagram
\begin{equation*}
\begin{tikzcd}
\ti\Om_8^{\bs\Spin}(\Sp)=\Z_2\an{\al_1\vartheta_2}\rar{\eq{fm3eq57}}\dar & \ti\Om_9^{\bs\Spin}(B\Sp)=\Z_2\an{\al_1\ze_2,\al_1\ze_2'}\dar\\
\ti\Om_8^{\bs\Spin}(K(\Z,3))=\Z_2\an{\upsilon}\arrow[r,"\eq{fm3eq57}","\cong"'] & \ti\Om_9^{\bs\Spin}(K(\Z,4))=\Z_2\an{\al_1\ze_2},
\end{tikzcd}
\end{equation*}
where the groups are taken from Table \ref{fm3tab1} and Table \ref{fm3tab6}. By Proposition \ref{fm3prop6}, \eq{fm3eq57} maps $\al_1\vartheta_2\mapsto\al_1\ze_2+\al_1\ze_2'$ and $\upsilon\mapsto\al_1\ze_2$, so the bottom horizontal map in the diagram is an isomorphism. The right vertical map sends $\al_1\ze_2+\al_1\ze_2'$ to $\al_1\ze_2+\al_1(\frac{\ze_1}{4}+\ze_2+4\ze_3)=\al_1\frac{\ze_1}{4}=0$ by \eq{fm3eq7} and \eq{fm3eq8}, hence $\al_1\vartheta_2\mapsto0$ also under the left vertical map.

We prove (c). By Proposition \ref{fm3prop6} the morphism \eq{fm3eq57} maps $\vartheta_1\mapsto\ze_1$, $\frac{\vartheta_1}{2}\mapsto\frac{\ze_1}{2}$, $\vartheta_3\mapsto\ze_3$, and $\upsilon\mapsto\al_1\ze_2$. In particular, we see from Table \ref{fm3tab4} and Table \ref{fm3tab6} that $\ti\Om_8^{\bs\Spin}(\SU)\ra\ti\Om_9^{\bs\Spin}(B\SU)$ from \eq{fm3eq57} is an isomorphism. From \eq{fm3eq8} we know $\al_1\frac{\ze_1}{2}=0$ and $\al_1\ze_3=0$ in $\ti\Om_9^{\bs\Spin}(B\SU)$ which therefore implies $\al_1\frac{\vartheta_1}{2}=0$ and $\al_1\vartheta_3=0$. Finally, $\al_1\vartheta_1=0$ as $\ti\Om_8^{\bs\Spin}(\SU(2))\cong\ti\Om_5(*)$ by the suspension isomorphism and Table \ref{fm2tab2}, as $\SU(2)\cong\cS^3$.

\section{Proof of Theorem \ref{fm3thm2}}
\label{fm18}

\subsection{\texorpdfstring{Computation of $\Om_n^{\bs{\mathrm{Spin}}}(\cL K(\Z,4);K(\Z,4))$}{Computation of Ωₙˢᵖⁱⁿ(ℒK(ℤ,4);K(ℤ,4))}}
\label{fm181}

From \eq{fm2eq20} for $T=K(\Z,4)$ and the homotopy equivalence $\Om K(\Z,4)\simeq K(\Z,3)$ we obtain a spectral sequence
\e
H_p(K(\Z,4),\ti\Om^{\bs\Spin}_q(K(\Z,3)))\Longra\Om^{\bs\Spin}_{p+q}(\cL K(\Z,4);K(\Z,4)).
\label{fm18eq1}
\e

Recall the groups $\ti\Om^{\bs\Spin}_q(K(\Z,3))$ from Table \ref{fm3tab6}. This leads to the $E^2$-page of the spectral sequence \eq{fm18eq1} for $p+q\le 9$ as shown in Figure~\ref{fm18fig1}.

\begin{figure}[htb]
\centering
\begin{tikzpicture}
\matrix (m) [matrix of math nodes,
nodes in empty cells,nodes={minimum width=11.8ex,
minimum height=5ex,outer sep=-5pt},
column sep=.5ex,row sep=1ex]{
9 & ? \\
8 & \Z_2   \\
7 & \Z   \\
3 & \Z & \Z & \Z_2 \\
\quad\strut & 0 & 4 & 6 \strut \\};
\draw[-stealth] (m-4-4) -- node[above]{\scriptsize $d^6_{6,3}$}  (m-2-2);
\draw[thick] (m-1-1.north east) node[above]{$q$} -- (m-5-1.east);
 \draw[thick] (m-5-1.north) -- (m-5-4.north east) node[right]{$p$};
\end{tikzpicture}
\caption{$E^2$-page of the spectral sequence $H_p(K(\Z,4),\ti\Om^{\bs\Spin}_q(K(\Z,3)))\Ra\ab\Om^{\bs\Spin}_{p+q}(\cL K(\Z,4);K(\Z,4))$, $p+q\le 9$. This is also the $E^\iy$-page for $p+q\le 8$.}
\label{fm18fig1}
\end{figure}

The only possible higher differential in this region is $d^6_{6,3}$. By construction, $\eta$ lifts to $\Om^{\bs\Spin}_8(\cL M\SO(4);M\SO(4))$ and by \eq{fm3eq7} we have $\eta\mapsto\al_1\ze_2$ under the natural morphism $\ti\Om_n^{\bs\Spin}(M\SO(4))\ra\ti\Om_n^{\bs\Spin}(K(\Z,4))$. Hence the image of $\eta$ in $\Om^{\bs\Spin}_8(\cL K(\Z,4);K(\Z,4))$ maps under $\hat\xi^{\bs\Spin}_8(K(\Z,4))$ to $\al_1\ze_2\neq 0$, so $\Om^{\bs\Spin}_8(\cL K(\Z,4);K(\Z,4))\ne 0$ which then forces $d^6_{6,3}=0$. Therefore Figure \ref{fm18fig1} is also the $E^\iy$-page of \eq{fm18eq1} for $p+q\le 8$. All extension problems in this range are trivial, so Table \ref{fm3tab2} follows.

The class $\de$ can clearly be lifted along $\hat\xi^{\bs\Spin}_{n-1}(K(\Z,4))$ and we have just seen that $\al_1\ze_2$ can be lifted, proving Table \ref{fm3tab3} for $n\neq 8$. For $n=8$, observe the following.

\begin{lem}
\label{fm18lem1}
The image of\/ $\hat\xi^{\bs\Spin}_7(K(\Z,4))$ has index at least two.
\end{lem}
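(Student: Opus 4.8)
The goal is to show that $\hat\xi^{\bs\Spin}_7(K(\Z,4))$, which maps into $\ti\Om_8^{\bs\Spin}(K(\Z,4))=\Z\an{\ze_2,\ze_3}$, does not hit the full group; more precisely, Table \ref{fm3tab3} records its image as $\Z\an{2\ze_2,\ze_3}$, and this lemma is the key estimate needed to pin that down (together with the explicit lifts already exhibited, which show $\ze_3$ and $2\ze_2$ lie in the image). So the plan is to produce an invariant that detects $\ze_2$ modulo $2$ but vanishes on everything in the image of $\hat\xi^{\bs\Spin}_7(K(\Z,4))$.

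First I would use the factorization through the mod $2$ reduction $F_{K(\Z,4)}^{K(\Z_2,4)}$ and the orientation functor $\sH_8^{\Z_2}$, or more directly the description of $\hat\xi^{\bs\Spin}_8(K(\Z,4))$ in Theorem \ref{fm3thm2}(c): for a class $[X,\al]\in\Om^{\bs\Spin}_7(\cL K(\Z,4);K(\Z,4))$, with reduction $\bar\al=\bar\be\bt[\cS^1]+\bar\ga\bt1$ under the K\"unneth isomorphism for $X\t\cS^1$ (here $\dim X=7$), the value of $\hat\xi^{\bs\Spin}_7(K(\Z,4))([X,\al])$, after composing with the isomorphism $\ti\Om_8^{\bs\Spin}(K(\Z,4))\to\Z^2$ of \eq{fm3eq19} and reducing the first coordinate mod $2$, is computed by $\int_X\bar\be\cup\Sq^2(\bar\be)$ — this is the analogue for $n=8$ of \eq{fm3eq22}, transported one dimension down via the suspension/evaluation setup of \S\ref{fm24}. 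The point is that $\bar\be\in H^3(X,\Z_2)$ (since $\al$ has degree $4$ on $X\t\cS^1$ and we extract the $[\cS^1]$-component), and $\Sq^3(\bar\be)=\bar\be\cup\bar\be=0$ automatically because $\bar\be$ has degree $3$ while $\Sq^3$ raises degree to $6$ and $\dim X=7$ gives no constraint — wait, rather the relevant vanishing is $\Sq^1\bar\be$ combining with Wu-type relations; the clean statement I want is simply that the composite $\Om^{\bs\Spin}_7(\cL K(\Z,4);K(\Z,4))\to\Z_2$, $[X,\al]\mapsto\int_X\bar\be\cup\Sq^2(\bar\be)$, measures the class of $\hat\xi^{\bs\Spin}_7(K(\Z,4))([X,\al])$ in $\Z_2=\ti\Om_8^{\bs\Spin}(K(\Z,4))\ot\Z_2$ under $\ze_2\mapsto1$, $\ze_3\mapsto0$.

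Next I would observe that $\int_X\bar\be\cup\Sq^2(\bar\be)$ need \emph{not} vanish identically: for a class $[X,\al]$ one would like to mimic Example \ref{fm12ex1}, taking $X$ to be a compact spin $7$-manifold carrying $\bar\be\in H^3(X,\Z_2)$ with $\int_X\bar\be\cup\Sq^2(\bar\be)=\ul1$, e.g.\ a suitable $\cS^3$-bundle over the Wu manifold or the total space built as in the construction of $\eta$ in \S\ref{fm311}. Then $\hat\xi^{\bs\Spin}_7(K(\Z,4))([X,\al])$ would equal $\ze_2$ mod $2\ze_2+\Z\an{\ze_3}$, i.e.\ would be $\ze_2+2k\ze_2+\ell\ze_3$ for some $k,\ell$. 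This shows $\ze_2$ \emph{does} lie in the image modulo $2\ze_2$ — so actually this needs care: the lemma asserts the image has index $\ge2$, which is the \emph{opposite} direction. So instead I would argue that \emph{every} class in the image of $\hat\xi^{\bs\Spin}_7(K(\Z,4))$ has \emph{even} $\ze_2$-coordinate, equivalently that the $\Z_2$-valued invariant above \emph{vanishes} on the image. That vanishing is forced by a parity argument: the invariant $[X,\al]\mapsto\int_X\bar\be\cup\Sq^2(\bar\be)$ on a \emph{closed} $7$-manifold $X$ with $\bar\be=\bar\ga|$ pulled back appropriately is the reduction of an integral expression that is constrained by the Wu formula on $X\t\cS^1$, where the relevant characteristic number on the $8$-manifold $X\t\cS^1$ is automatically zero because $w_j(X\t\cS^1)=w_j(X)$ and spin gives $w_1=w_2=0$, killing the Wu class $v_4$ contribution; more simply, the composite with $\hat\xi^{\bs\Spin}_7$ lands in the \emph{image} of the boundary-of-cylinder map, and the only part of $\ti\Om_8^{\bs\Spin}(K(\Z,4))$ reachable this way is constrained by comparison with the $M\SO(4)$ computation, where Table \ref{fm3tab3} gives $\Im\hat\xi^{\bs\Spin}_7(M\SO(4))=\Z\an{\frac{\ze_1}{4},2\ze_2,\ze_3}$, whose image in $\ti\Om_8^{\bs\Spin}(K(\Z,4))$ under \eq{fm3eq7} (using $\frac{\ze_1}{4}\mapsto0$) is exactly $\Z\an{2\ze_2,\ze_3}$.

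\textbf{The main obstacle.} The delicate point is establishing that the $\Z_2$-invariant genuinely descends to $\ti\Om_8^{\bs\Spin}(K(\Z,4))\ot\Z_2$ and is \emph{nontrivial} there (so that its vanishing on the image is a real constraint), while simultaneously verifying it does vanish on the image — i.e.\ reconciling the potential nonvanishing of $\int_X\bar\be\cup\Sq^2(\bar\be)$ in Example \ref{fm12ex1}-type situations (which live in $\Om^{\bs\Spin}_8$, dimension $8$, not $7$) with its required vanishing in dimension $7$. The honest route is: feed the generator $\eta\in\ti\Om_9^{\bs\Spin}(M\SO(4))$ through the long exact sequence/filtration in \S\ref{fm18}, use $\hat\xi^{\bs\Spin}_8(M\SO(4))$ from Table \ref{fm3tab3} together with $d^6_{6,3}=0$ established there, and then transport along $\mu_*:\ti\Om_*^{\bs\Spin}(M\SO(4))\to\ti\Om_*^{\bs\Spin}(K(\Z,4))$ to conclude that no element of $\Im\hat\xi^{\bs\Spin}_7(K(\Z,4))$ can have odd $\ze_2$-component, since such an element would lift to $M\SO(4)$ and contradict $\Im\hat\xi^{\bs\Spin}_7(M\SO(4))=\Z\an{\frac{\ze_1}{4},2\ze_2,\ze_3}$. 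I expect the bookkeeping in this transport — keeping track of exactly which generators survive the spectral sequence of \eq{fm18eq1} and how $\mu_*$ acts on them — to be the part requiring the most care, though it is routine once set up.
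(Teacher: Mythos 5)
You have correctly identified what must be shown — that every class in $\Im\hat\xi^{\bs\Spin}_7(K(\Z,4))$ has even $\ze_2$-coefficient — and you even write down the right K\"unneth decomposition, but you never carry out the one computation that proves it, and the substitute arguments you offer do not work. The paper's proof is a single direct calculation: a class in the image is $[X\t\cS^1,\al]$ with $\dim X=7$, and writing $\al=\be\bt s+\ga\bt 1$ with $\be\in H^3(X,\Z)$, $\ga\in H^4(X,\Z)$, one gets $\al\cup\al=2\,\be\ga\bt s$ because $(\be\bt s)^2=0$ (as $s^2=0$ in $H^2(\cS^1)$) and $(\ga\bt 1)^2=0$ (as $\ga^2\in H^8(X)=0$). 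Hence $\int_{X\t\cS^1}\al^2=2\int_X\be\ga$ is even, and since the first component of the isomorphism \eq{fm3eq19} is $\int\al^2$ and sends $\ze_2\mapsto(1,0)$, $\ze_3\mapsto(0,1)$, the image lies in $\Z\an{2\ze_2,\ze_3}$, which has index two. That is the whole proof.

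Your proposed routes to the key vanishing both fail. First, the invariant $\int_X\bar\be\cup\Sq^2(\bar\be)$ from \eq{fm3eq22} is the formula for $\hat\xi^{\bs\Spin}_8$, one dimension up; for $\dim X=7$ the class $\bar\be\cup\Sq^2(\bar\be)$ lives in $H^8(X,\Z_2)=0$, so this "invariant" is identically zero and detects nothing — it is not the mod-$2$ reduction of the $\ze_2$-coordinate in the present setting (that role is played by $\int\al^2\bmod 2$, equivalently the Pontrjagin square of \eq{fm3eq21}). Second, the appeal to Table \ref{fm3tab3}'s entry $\Im\hat\xi^{\bs\Spin}_7(M\SO(4))=\Z\an{\frac{\ze_1}{4},2\ze_2,\ze_3}$ is circular: that entry is part of Theorem \ref{fm3thm2}(b), whose proof in \S\ref{fm18} rests precisely on this lemma and its analogue Lemma \ref{fm18lem5} (both proved by the even-self-intersection computation above). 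Moreover, your claim that an element of $\Im\hat\xi^{\bs\Spin}_7(K(\Z,4))$ "would lift to $M\SO(4)$" is unjustified — surjectivity of $\Om_7^{\bs\Spin}(\cL M\SO(4);M\SO(4))\to\Om_7^{\bs\Spin}(\cL K(\Z,4);K(\Z,4))$ is not something you have established, and it is not needed. Replace the "main obstacle" paragraph with the two-line cup-product computation.
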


\begin{proof}
We use the isomorphism $\ti\Om_8^{\bs\Spin}(K(\Z,4))\cong\Z^2$ from \eq{fm3eq19}. Classes in $\Om^{\bs\Spin}_7(\cL K(\Z,4);K(\Z,4))$ are represented by pairs $[X,\al]$ of a compact spin $7$-manifold $X$ and a cohomology class $\al\in H^4(X\t\cS^1,\Z)$. Under the K\"unneth isomorphism, we can decompose $\al=\be\bt s + \ga\bt 1$ (here, $s\in H^1(\cS^1,\Z)$ denotes the generator) and therefore
\e
\label{fm18eq2}
 \al\cup\al=\be^2\bt s^2+2\be+2\be\ga\bt s+\ga^2\bt 1=2\be\ga\bt s+\ga^2\bt 1,
\e
which integrates to $2\int_X \be\ga$ over $X\t\cS^1$. Hence for classes in the image of $\hat\xi^{\bs\Spin}_7(K(\Z,4))$ the first component in \eq{fm3eq19} is even.
\end{proof}

On the other hand, $\ze_3$ can clearly be lifted along $\hat\xi^{\bs\Spin}_7(K(\Z,4))$ and, according to the following lemma, $2\ze_2$ can also be lifted so, conversely, the image of $\hat\xi^{\bs\Spin}_7(K(\Z,4))$ has index at most two. This proves $\Im\hat\xi^{\bs\Spin}_7(K(\Z,4))=\Z\an{2\ze_2,\ze_3}$ and completes the proof of Table \ref{fm3tab3} for $K(\Z,4)$.

\begin{lem}
\label{fm18lem2}
The class\/ $2\ze_2\in\ti\Om_8^{\bs\Spin}(M\U(2))$ can be lifted along the map
\begin{equation*}
 \hat\xi^{\bs\Spin}_7(M\U(2)):\Om_7^{\bs\Spin}(\cL M\U(2);M\U(2))\longra\ti\Om_8^{\bs\Spin}(M\U(2)).
\end{equation*}
\end{lem}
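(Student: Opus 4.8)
The goal is to exhibit an explicit element of $\Om_7^{\bs\Spin}(\cL M\U(2);M\U(2))$ whose image under $\hat\xi^{\bs\Spin}_7(M\U(2))$ equals $2\ze_2$ in $\ti\Om_8^{\bs\Spin}(M\U(2))$. Since $\ze_2=[\HP^2,\overline{\HP}^1]-[\HP^2,\es]$, whose only nonzero invariant under the isomorphism \eq{fm3eq13} is the middle component $\int_M c_2(\nu_M)$, the plan is to construct a compact spin $7$-manifold $X$ together with a compact $3$-submanifold $M\subset X\t\cS^1$ with normal $\U(2)$-structure, disjoint from $X\t\{-1\}$, such that $[X\t\cS^1,M]$ has invariants $(0,2,0)$ under \eq{fm3eq13}. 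The Pontrjagin--Thom construction (Theorem \ref{fm2thm2}) and the definition of $\hat\xi^{\bs\Spin}_7$ via \eq{fm2eq7} then show this class lifts $2\ze_2$.

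\textbf{Key steps.} First I would take $X=\HP^1\t\cS^3$, a compact spin $7$-manifold, so that $X\t\cS^1=\HP^1\t\cS^3\t\cS^1$. Recall that $\HP^1\hookrightarrow\HP^2$ has normal bundle $\nu\cong\Hom_\H(\cL,\H)$, a quaternionic line bundle, hence carries a natural $\U(2)$-structure (indeed an $\SU(2)$-structure) coming from the $\Spin(4)$-structure with $\Si^-_\nu=\cL$, $\Si^+_\nu$ trivial; the class $\ze_2$ in $\ti\Om_8^{\bs\Spin}(M\SU(2))$ is $[\HP^2,\overline{\HP}^1]-[\HP^2,\es]$ and maps to $(0,1)$ under \eq{fm3eq10}. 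The idea is to build inside $\HP^1\t\cS^3\t\cS^1$ a submanifold diffeomorphic to $\HP^1\t\cS^3$ (embedded as $\HP^1\t\cS^3\t\{1\}$, say, or as a thickened copy) whose normal bundle in the product picks up $\int_M c_2(\nu_M)=2$. Concretely, I would realize this by taking the mapping torus of a self-map of $\HP^1\t\cS^3$ that is the identity on $\HP^1$ and a degree-considerations map on $\cS^3$, choosing the normal data so that $c_2(\nu_M)$ evaluates to $2[\HP^1]^\vee$: the factor $2$ arises because $\ze_1\in\ti\Om_8^{\bs\Spin}(M\{1\})$ maps to $2\frac{\ze_1}{2}$ under $\ti\Om_8^{\bs\Spin}(M\{1\})\to\ti\Om_8^{\bs\Spin}(M\U(2))$ (Theorem \ref{fm3thm1}(b)), and a trivially-normally-framed submanifold crossed with $\cS^1$ gives something in the image of $\hat\xi^{\bs\Spin}_7$ that hits $\ze_1$, which equals $2\frac{\ze_1}{2}$; but one must separate the $\frac{\ze_1}{2}$-component (first coordinate of \eq{fm3eq13}) from the $\ze_2$-component. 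So rather than $\ze_1$, I would instead directly use the geometry of $\HP^1\hookrightarrow\HP^2$: take $M = \HP^1\t\cS^3 \subset (\HP^2\t\cS^3)\t\cS^1$ via an embedding whose $\cS^1$-direction is a loop based at a point away from $-1$, and then pull back to a $7$-manifold $X$ by restricting to a suitable hypersurface; the key computation is $\int_M c_2(\nu_M)$. After writing down the explicit $M$, I would compute its three invariants under \eq{fm3eq13} — the signature term, $\int_M c_2(\nu_M)$, $\int_M c_1(\nu_M)^2$ — using the splitting $TX|_M = TM\oplus\nu_M$, the Hirzebruch signature formula, and $c_1(\nu_M)=0$, $p_1(\nu_M)=-2c_2(\Si^-_\nu)=-2c_2(\nu_M)$ since $\nu_M$ is a quaternionic line bundle (so self-conjugate), exactly as in the $\sO_{7,4}^{\bs\Spin}$-computations in \S\ref{fm16}. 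The identity \eq{fm18eq2} from the previous lemma explains why the signature-type first coordinate is forced to be even for any lift, consistent with the answer $(0,2,0)$.

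\textbf{Main obstacle.} The hard part is arranging that the first coordinate of \eq{fm3eq13} vanishes while the middle coordinate equals exactly $2$ — i.e.\ constructing $M\subset X\t\cS^1$ so that the signature/$c_1^2$ contribution is killed but $\int_M c_2(\nu_M)=2$. A naive product $M=M_0\t\cS^1$ with $M_0$ a $3$-submanifold of a spin $7$-manifold cannot see the $c_2$-invariant at all (the relevant cohomology would not have enough room, since $\int_M c_2(\nu_M)$ needs a $4$-dimensional submanifold's worth of normal data pushed into the $\cS^1$-direction). The resolution I anticipate is to take $M$ itself of the form (mapping torus of a diffeomorphism of a $3$-manifold) or, better, to use that $\HP^1\t\cS^3\hookrightarrow\HP^2\t\cS^3$ times a loop already carries a nontrivial normal $\U(2)$-structure twisting: the $\cS^1$-parameter records a path of $\U(2)$-structures on $\nu_{\HP^1}$, and choosing this path to represent $2\in\pi_0$ of the relevant structure space (detected by $c_2$) produces the factor $2$. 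I would verify the factor is exactly $2$, not $1$, by comparing with $\frac{\ze_1}{2}$ via the map to $\ti\Om_8^{\bs\Spin}(M\SO(4))$: since $\al_1\ze_2\neq 0$ in $\ti\Om_9^{\bs\Spin}(M\U(2))$ (Table \ref{fm3tab1}) but $\al_1\frac{\ze_1}{2}=0$ (equation \eq{fm3eq8}), a class in $\Im\hat\xi^{\bs\Spin}_7$ cannot be an odd multiple of $\ze_2$ modulo $\frac{\ze_1}{2}$ and $\ze_3$ — this parity constraint both motivates and cross-checks that $2\ze_2$, and not $\ze_2$, is what is achievable. Combining Lemma \ref{fm18lem1} (image has index $\ge 2$) with this construction (index $\le 2$, witnessed by $2\ze_2$ and $\ze_3$) then pins down $\Im\hat\xi^{\bs\Spin}_7(M\U(2))=\Z\an{\frac{\ze_1}{2},2\ze_2,\ze_3}$ as claimed in Table \ref{fm3tab3}.
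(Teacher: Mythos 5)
There is a genuine gap: you correctly identify that the lemma amounts to producing a class in $\Om_7^{\bs\Spin}(\cL M\U(2);M\U(2))$ whose invariants under \eq{fm3eq13} are $(0,2,0)$, but you never actually produce one. Your "key steps" cycle through several candidate constructions ($X=\HP^1\t\cS^3$, mapping tori, embeddings of $\HP^1\t\cS^3$ into $\HP^2\t\cS^3\t\cS^1$) without committing to a specific submanifold $M\subset X\t\cS^1$ with normal $\U(2)$-structure, and without computing its three invariants. That computation is the whole content of the lemma, and the difficulty is real: realizing a prescribed Poincar\'e dual class by a submanifold with $\SU(2)$ or $\U(2)$ normal structure is equivalent (via Theorem \ref{fm2thm2} and \eq{fm3eq23}) to constructing a rank-$2$ bundle with prescribed $c_2$, and the paper explicitly records for the $M\SU(2)$ analogue (Lemma \ref{fm18lem3}) that no explicit construction is known there, forcing an abstract argument. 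The paper's proof of the present lemma circumvents this by passing to $B\SU$ via Corollary \ref{fm3cor1}, where Whitney sums are available: it takes $X=(\Sp(2)\t\Sp(1))/(\Sp(1)\t\Sp(1))$, the mapping-torus $\Sp(2)$-bundle $P\ra X\t\cS^1$ representing the image of $\vartheta_2$, corrects it by a bundle $Q$ pulled back from $\cS^4$ so that $c_2(P\op Q)=-x\bt s+y\bt 1$ and $c_4(P\op Q)=0$, and evaluates \eq{fm3eq35} to get $(0,-2,0)$. Nothing in your proposal supplies an equivalent mechanism for hitting $c_2$-squared equal to $\pm 2$ while killing the other two coordinates.

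A secondary error: your "cross-check" that an odd multiple of $\ze_2$ cannot lie in $\Im\hat\xi^{\bs\Spin}_7$ because $\al_1\ze_2\ne 0$ while $\al_1\frac{\ze_1}{2}=0$ is not a valid argument. The image of $\hat\xi^{\bs\Spin}_7$ being carried by $\al_1$ into the image of $\hat\xi^{\bs\Spin}_8$ is consistent with $\ze_2$ itself lying in $\Im\hat\xi^{\bs\Spin}_7$ (indeed $\al_1\ze_2$ \emph{does} lie in $\Im\hat\xi^{\bs\Spin}_8(M\U(2))$ by Table \ref{fm3tab3}). The genuine obstruction to lifting $\ze_2$ is Lemma \ref{fm18lem1}: for any $\al=\be\bt s+\ga\bt 1$ on $X\t\cS^1$ one has $\int\al\cup\al=2\int_X\be\cup\ga$, which is the even-ness constraint you correctly cite elsewhere but should not conflate with an $\al_1$-multiplication argument.
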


\begin{proof}
By Corollary \ref{fm3cor1} we can equivalent show that $2\ze_2\in\ti\Om_8^{\bs\Spin}(B\SU)$ lifts to $\Om_7^{\bs\Spin}(\cL B\SU;B\SU)$. We thus need to construct a compact spin $7$-manifold $X$ and a principal $\SU(n)$-bundle $P\ra X\t\cS^1_{\rm b}$ which maps to $(0,2,0)$ under \eq{fm3eq35}.

The image of $\vartheta_2\in\ti\Om_7^{\bs\Spin}(\Sp)$ in $\ti\Om_8^{\bs\Spin}(B\Sp)$ under \eq{fm3eq57} is a mapping torus principal $\Sp(2)$-bundle $P\ra X\t\cS^1$, where $X=(\Sp(2)\t\Sp(1))/(\Sp(1)\t\Sp(1))$ is a compact spin $7$-manifold. Observe that $X$ is the total space of a $3$-sphere bundle over $\cS^4=\Sp(2)/(\Sp(1)\t\Sp(1))$. This bundle has a section, so the Gysin sequence splits and yields $H^*(X,\Z)=\Z[x,y]/(x^2,y^2)$ where $x\in H^3(X,\Z)$ is Poincar\'e dual to the section and where $y\in H^4(X,\Z)$ is the pullback of the generator of $H^4(\cS^4,\Z)$. For the map $\phi:X\ra\Sp(2)$ one checks that $\phi^*(a_1)=x$, $\phi^*(a_2)=xy$, and $p_1(TX)=-4y$. Since $q_1,q_2,\ldots\in H^*(B\Sp,\Z)$ are the transgressions of $a_1,a_2,\ldots\in H^*(\Sp,\Z)$, this implies $q_1(P)=x\bt s$, $q_2(P)=xy\bt s$, where $s\in H^1(\cS^1,\Z)$ is the generator. Let $Q\ra X\t\cS^1$ be the pullback along the projection $X\t\cS^1\ra\cS^4$ of the standard $\SU(2)$-bundle over $\cS^4$, so $c_2(Q)=y\bt 1$. If we view $P$ as a principal $\SU(4)$-bundle with $c_{2n}(P)=(-1)^nq_n(P)$, then the Whitney direct sum $\SU(6)$-bundle $P\op Q\ra X\t\cS^1$ satisfies $c_2(P\op Q)=-x\bt s+y\bt 1$ and $c_4(P\op Q)=0$. This implies that \eq{fm3eq35} maps $[X\t\cS^1,P\op Q]$ to $(0,-2,0)$ so, after reversal of the orientation on $X$, we have constructed a preimage of~$2\ze_2$.
\end{proof}

\subsection{\texorpdfstring{Computation of $\Om_n^{\bs{\mathrm{Spin}}}(\cL K(\Z_2,4);K(\Z_2,4))$; proof of (c)}{Computation of Ωₙˢᵖⁱⁿ(ℒK(ℤ₂,4);K(ℤ₂,4)); proof of (c)}}
\label{fm182}

The spectral sequence \eq{fm2eq20} for $T=K(\Z_2,4)$ and the homotopy equivalence $\Om K(\Z_2,4)\simeq K(\Z_2,3)$ yield a spectral sequence with
\e
H_p(K(\Z_2,4),\ti\Om^{\bs\Spin}_q(K(\Z_2,3)))\Longra\Om^{\bs\Spin}_{p+q}(\cL K(\Z_2,4);K(\Z_2,4)).
\label{fm18eq3}
\e
Recall the groups $\ti\Om^{\bs\Spin}_q(K(\Z_2,3))$ from Table \ref{fm3tab6}. Moreover, the homology groups $H_p(K(\Z_2,4),\Z_2)$ are given in \eq{fm15eq19}. For the spectral sequence \eq{fm18eq3}, this leads to the $E^2$-page shown in Figure~\ref{fm18fig2}.

\begin{figure}[htb]
\centering
\begin{tikzpicture}
\matrix (m) [matrix of math nodes,
nodes in empty cells,nodes={minimum width=11.8ex,
minimum height=5ex,outer sep=-5pt},
column sep=.5ex,row sep=1ex]{
9 & ? \\
8 & \Z_2   \\
7 & \Z_2\text{ or }\Z_4 \\
3 & \Z_2 & \Z_2 & \Z_2 & \Z_2 \\
\quad \strut & 0 & 4 & 5 & 6 \strut \\ };
\draw[-stealth] (m-4-5) -- node[above]{\scriptsize $d^6_{6,3}$}  (m-2-2);
\draw[-stealth] (m-4-4) -- node[above]{\scriptsize $d^5_{5,3}$}  (m-3-2);
\draw[thick] (m-1-1.north east) node[above]{$q$} -- (m-5-1.east);
\draw[thick] (m-5-1.north) -- (m-5-5.north east) node[right]{$p$};
\end{tikzpicture}

\caption{The $E^2$-page of the spectral sequence $H_p(K(\Z_2,4),\ti\Om^{\bs\Spin}_q(K(\Z_2,3)))$ $\Ra$ $\Om^{\bs\Spin}_{p+q}(\cL K(\Z_2,4);K(\Z_2,4))$, $p+q\le 9$. We will show $d_{6,3}^6=0$ and $d_{5,3}^5=0$, so this is also the $E^\iy$-page for $p+q\le 8$.}
\label{fm18fig2}
\end{figure}

The obvious map $K(\Z,4)\ra K(\Z_2,4)$ induces a morphism of spectral sequences from \eq{fm18eq1} to \eq{fm18eq3}. A comparison of \eq{fm3eq49} and \eq{fm3eq52} shows that $\ti\Om^{\bs\Spin}_3(K(\Z,3))\ra\ti\Om^{\bs\Spin}_3(K(\Z_2,3))$ can be identified with the projection $\Z\ra\Z_2$. Similarly, a comparison of \eq{fm3eq51} and \eq{fm3eq53} shows that $\ti\Om^{\bs\Spin}_8(K(\Z,3))\ra\ti\Om^{\bs\Spin}_8(K(\Z_2,3))$ is an isomorphism. The map $H_6(K(\Z,4),\Z)=\Z_2\an{\varep_6'}\ra H_6(K(\Z_2,4),\Z_2)=\Z_2\an{\varphi_6'}$ is an isomorphism. The fact that $d_{6,3}^6=0$ for the spectral sequence \eq{fm18eq1} therefore implies $d_{6,3}^6=0$ in Figure~\ref{fm18fig2}.

Recall that classes $[X,\bar\al]$ in $\Om_n^{\bs\Spin}(\cL K(\Z_2,4);K(\Z_2,4))$ are represented by a compact spin $n$-manifold $X$ and cohomology class $\bar\al\in H^4(X\t \cS^1,\Z_2)$, which we can decompose as $\bar\al=\bar\be\bt \bar s+\bar\ga\bt 1$, where $\bar s\in H^1(\cS^1,\Z_2)$ denotes the generator. Consider the morphism
\e
\label{fm18eq4}
\begin{aligned}
 \Om_8^{\bs\Spin}(\cL K(\Z_2,4);K(\Z_2,4))&\longra \Z_2^2,\\
[X,\bar\al]&\longmapsto \Bigl(\ts\int_X \bar\be\cup\Sq^2(\bar\be), \ts\int_X \Sq^1(\bar\be)\cup\bar\ga\Bigr).
\end{aligned}
\e
The following two examples prove the surjectivity of \eq{fm18eq4}:
\begin{itemize}
\item The Lie group $X=\SU(3)$ is a compact spin $8$-manifold with cohomology $H^*(\SU(3),\Z_2)=\Z_2[\bar b_2,\bar b_3]/(\bar b_2^2, \bar b_3^2)$, where $\bar b_2$, $\bar b_3$ have degrees $3$ and $5$. We have $\Sq^1(\bar b_2)=0$ and $\Sq^2(\bar b_2)=\bar b_3$. Letting $\bar\be=\bar b_2$, $\bar\ga=0$, we see that \eq{fm18eq4} maps $[X,\bar\al]\mapsto (1,0)$.
\item $X=\RP^7\t \cS^1_{\rm b}$ is a compact spin $8$-manifold with cohomology $H^*(\RP^7\t \cS^1,\Z_2)=\Z_2[\bar t,\bar s]/(\bar t^8,\bar s^2)$. We have $\Sq^1(\bar t^3)=\bar t^4$, $\Sq^2(\bar t^3)=\bar t^5$. Letting $\bar\be=\bar t^3\ot 1$, $\bar \ga=\bar t^3\ot \bar s$, we see that \eq{fm18eq4} maps $[X,\bar\al]\mapsto (0,1)$.
\end{itemize}
The fact that \eq{fm18eq4} is surjective implies that
\e
\label{fm18eq5}
 d_{5,3}^5=0,\qquad \Om_8^{\bs\Spin}(\cL K(\Z_2,4);K(\Z_2,4))\cong\Z_2^2.
\e

The image of the second example under $\hat\xi^{\bs\Spin}_8(K(\Z_2,4))$ is zero: $\RP^7\t\cS^1_{\rm b}\t\cS^1_{\rm b}$ with the class $\bar\al=(\bar t^3\ot 1)\ot \bar s + (\bar t^3\ot \bar s)\ot 1=\bar t^3\ot (\bar s\ot 1+1\ot \bar s)$ is nullbordant because there is a spin diffeomorphism (the change of coordinates $(x,y)\mapsto (x+y,x)$ of the torus $\cS^1_{\rm b}\t \cS^1_{\rm b}$) that takes this example to $[\RP^7\t\cS^1_{\rm b}\t\cS^1_{\rm b},\bar t^3\t \bar s\t 1]$, which bounds the spin $8$-manifold $\RP^7\t\cS^1_{\rm b}\t D^2$ with class $\bar t^3\t \bar s\t 1$.

Since the second example, $[X,\bar \al]$ with $\RP^7\t \cS^1_{\rm b}$, maps to $(0,1)$ under the isomorphism \eq{fm18eq4} and $\hat\xi^{\bs\Spin}_8(K(\Z_2,4)):\Om_8^{\bs\Spin}(\cL K(\Z_2,4);K(\Z_2,4))\cong\Z_2^2\ra\ti\Om_9^{\bs\Spin}(K(\Z,4))\cong\Z_2$ is non-trivial, it must be the projection onto the first component of the isomorphism \eq{fm18eq4}. This proves Theorem \ref{fm3thm2}(c) for $K(\Z_2,4)$. Since $\ti\Om_9^{\bs\Spin}(K(\Z,4))\ra\ti\Om_9^{\bs\Spin}(K(\Z_2,4))$ is an isomorphism by Table \ref{fm3tab1}, the same description holds also for $\hat\xi^{\bs\Spin}_8(K(\Z,4))$.

Figure \ref{fm18fig2} and \eq{fm18eq5} imply Table \ref{fm3tab2}: The image of $\hat\xi^{\bs\Spin}_{n-1}(K(\Z_2,4))$ is at least as large as the image of $\hat\xi^{\bs\Spin}_{n-1}(K(\Z,4))$, which verifies Table \ref{fm3tab3} except for the case $n=8$, where $\Im\hat\xi^{\bf\Spin}_7(K(\Z_2,4))$ contains at least the subgroup $\Z_2\an{2\ze_2}\subset\Z_4\an{\ze_2}$ of index two. On the other hand, the image is a subgroup of index at least two. To see this, recall that the Pontrjagin square defines an isomorphism $\Om_8^{\bs\Spin}(K(\Z_2,4))\cong\Z_4$, $[Y,\bar\al]\mapsto\int_Y\cP(\bar\al)$. If $[X,\bar\al]\in \Om_7^{\bs\Spin}(\cL K(\Z_2,4);K(\Z_2,4))$, where $\bar\al=\bar\be\ot \bar s+\bar\ga\ot 1$, then by Definition \ref{fm2def4}(c) we have $\int_{X\t \cS^1}\cP(\bar\al)\bmod 2=\int_{X\t \cS^1} \bar\al\cup\bar\al$ which by \eq{fm18eq2} integrates to $0$ over $X\t \cS^1$. In other words, every element in $\Im\hat\xi^{\bs\Spin}_7(K(\Z_2,4))$ has $\int_Y\cP(\bar\al)\in\Z_4$ even. This completes the verification of Table~\ref{fm3tab3}.
 
\subsection{\texorpdfstring{Computation of $\Om_n^{\bs{\mathrm{Spin}}}(\cL M\SU(2);M\SU(2))$}{Computation of Ωₙˢᵖⁱⁿ(ℒMSU(2);MSU(2))}}
\label{fm183}

There is a spectral sequence
\e
 H_p\bigl(M\SU(2),\ti\Om_q^{\bs\Spin}(\Om M\SU(2))\bigr)\Longrightarrow\Om_{p+q}^{\bs\Spin}(\cL M\SU(2);M\SU(2)),\label{fm18eq6}
\e
so we determine $\ti\Om_q^{\bs\Spin}(\Om M\SU(2))$ first. By Theorem \ref{fm3thm3}, there is a homotopy equivalence $\Om M\SU(2)\simeq\Om B\SU(2)\simeq\SU(2)\cong\cS^3$, so $\ti\Om_q^{\bs\Spin}(\Om M\SU(2))\cong\ti\Om_q^{\bs\Spin}(\cS^3)\cong\Om_{q-3}^{\bs\Spin}(*)$ for all $q$. This yields the $E^2$-page of the spectral sequence \eq{fm18eq6} shown in Figure \ref{fm18fig3}. All differentials $d_{p,q}^r$ with $p+q\le 9$, $r\ge 2$ vanish, for degree reasons. Thus the $E^2$-page is also the $E^\iy$-page for $p+q\le 8$. All the extension problems are trivial and so we obtain Table \ref{fm3tab2} for $M\SU(2)$.

\begin{figure}[htb]
\centering
\begin{tikzpicture}
\matrix (m) [matrix of math nodes,
nodes in empty cells,nodes={minimum width=11.8ex,
minimum height=5ex,outer sep=-5pt},
column sep=.5ex,row sep=1ex]{
7 & \Z \\
5 & \Z_2 &\Z_2\an{\bar\tau}\\
4 & \Z_2 &\Z_2\an{\bar\tau}\\
3 & \Z & \Z\an{\tau}\\
\quad\strut & 0 & 4\\};
\draw[thick] (m-1-1.north east) node[above]{$q$} -- (m-5-1.east);
 \draw[thick] (m-5-1.north) -- (m-5-3.north east) node[right]{$p$};
\end{tikzpicture}
\caption{$E^2=E^\iy$-page of $H_p(M\SU(2),\ti\Om^{\bs\Spin}_q(\SU(2)))\Ra\ab\Om^{\bs\Spin}_{p+q}(\cL M\SU(2);M\SU(2))$, $p+q\le 9$}
\label{fm18fig3}
\end{figure}

Since $\de$ to $\Om^{\bs\Spin}_3(\cL M\SU(2);M\SU(2))$, the map $\hat\xi^{\bs\Spin}_{n-1}(M\SU(2))$ is clearly surjective for $n=4,5,6$. Moreover, the image of $\hat\xi^{\bs\Spin}_7(M\SU(2))$ has index two: Indeed, an element of $\Om^{\bs\Spin}_7(\cL M\SU(2);M\SU(2))$ is represented by a pair $[X,M]$ where $X$ is a compact spin $7$-manifold and $M$ is a $4$-dimensional submanifold $M\subset X\t\cS^1$ with an $\SU(2)$-structure on its normal bundle. By \eq{fm3eq24}, we have $\int_M c_2(\nu_M)=\int_X \al\cup\al$ where $\al\in H^4(M\t\cS^1,\Z)$ is Poincar\'e dual to $[M]$. The calculation \eq{fm18eq2} shows that $\int_M c_2(\nu_M)$ is always even. The isomorphism \eq{fm3eq10} now proves that $\hat\xi^{\bs\Spin}_7(M\SU(2))$ has index at least two.  Obviously, $\ze_1$ can be lifted to $\Om_7^{\bs\Spin}(\cL M\SU(2);M\SU(2))$ and, according to the following lemma, $2\ze_2$ can also be lifted. This proves Table \ref{fm3tab3} for the case $M\SU(2)$.

\begin{lem}
\label{fm18lem3}
We can lift\/ $2\ze_2\in\ti\Om_8^{\bs\Spin}(M\SU(2))$ along\/ $\hat\xi^{\bs\Spin}_7(M\SU(2))$.
\end{lem}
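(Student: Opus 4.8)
\textbf{Proof proposal for Lemma \ref{fm18lem3}.}

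The plan is to reuse the geometric construction from Lemma \ref{fm18lem2}, which produces a preimage of $2\ze_2$ in $\Om_7^{\bs\Spin}(\cL B\SU;B\SU)$, and then transport it back to $M\SU(2)$. Recall that in the proof of Lemma \ref{fm18lem2} we built a compact spin $7$-manifold $X=(\Sp(2)\t\Sp(1))/(\Sp(1)\t\Sp(1))$ together with a principal $\SU(6)$-bundle $P\op Q\ra X\t\cS^1_{\rm b}$ (obtained from the mapping-torus $\Sp(2)$-bundle of $\vartheta_2$, viewed as an $\SU(4)$-bundle, plus the pullback of the standard $\SU(2)$-bundle over $\cS^4$), whose Chern classes are $c_2(P\op Q)=-x\bt s+y\bt 1$ and $c_4(P\op Q)=0$, so that \eq{fm3eq35} maps $[X\t\cS^1_{\rm b},P\op Q]$ to $(0,-2,0)$, i.e.\ to $-2\ze_2$ after orientation reversal. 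First I would observe that this class lies in $\Om_7^{\bs\Spin}(\cL B\SU(m);B\SU(m))$ for $m=6$, hence, via the chain of identifications $\Om_7^{\bs\Spin}(\cL B\SU(m);B\SU(m))\cong\Om_7^{\bs\Spin}(\cL B\SU;B\SU)$ for $2m\ge n+1$ coming from Theorem \ref{fm3thm3}(b), it determines a class in $\Om_7^{\bs\Spin}(\cL B\SU;B\SU)$ lifting $2\ze_2$.

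Next I would use the zig-zag $B\SU\xleftarrow{\ \simeq_{10}\ }M\U(2)\ra M\SO(4)$ together with $M\SU(2)\ra M\U(2)$ from \eq{fm3eq1} to compare the relative loop bordism groups. By Theorem \ref{fm3thm3}(a), $M\SU(2)\ra B\SU(2)$ is a weak homotopy equivalence, so it induces an isomorphism $\Om_7^{\bs\Spin}(\cL M\SU(2);M\SU(2))\cong\Om_7^{\bs\Spin}(\cL B\SU(2);B\SU(2))$, and these groups are compatible with $\hat\xi^{\bs\Spin}_7$ by the naturality of \eq{fm2eq7}. Thus it suffices to produce a class in $\Om_7^{\bs\Spin}(\cL B\SU(2);B\SU(2))$ mapping to $2\ze_2\in\ti\Om_8^{\bs\Spin}(B\SU(2))\cong\ti\Om_8^{\bs\Spin}(M\SU(2))$ under $\hat\xi^{\bs\Spin}_7(B\SU(2))$. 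From Corollary \ref{fm3cor1} we have $\ti\Om_8^{\bs\Spin}(B\SU(2))=\Z\an{\ze_1,\ze_2}$ with the explicit isomorphism \eq{fm3eq32} sending $[X,P]\mapsto\bigl(\int_X[-\tfrac{c_2(P)^2}{24}-\tfrac{p_1(TX)c_2(P)}{48}],\int_X c_2(P)^2\bigr)$, so I need a compact spin $7$-manifold $X$ and a principal $\SU(2)$-bundle $P\ra X\t\cS^1_{\rm b}$ with $\int_{X\t\cS^1_{\rm b}}c_2(P)^2=2$ and with the first invariant equal to the corresponding value for $2\ze_2$.

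The concrete construction I would attempt: take $X$ to be a suitable spin $7$-manifold fibring over $\cS^4$ (for instance an $\cS^3$-bundle over $\cS^4$ with a section, as in Lemma \ref{fm18lem2}), with $H^*(X,\Z)=\Z[x,y]/(x^2,y^2)$, $x\in H^3$, $y\in H^4$, and form an $\SU(2)$-bundle $P\ra X\t\cS^1_{\rm b}$ with $c_2(P)=-x\bt s+y\bt 1$, so that $c_2(P)^2=-2xy\bt s + y^2\bt 1 = -2xy\bt s$, which integrates to $\mp 2$ over $X\t\cS^1_{\rm b}$; after orientation reversal on $X$ this realizes $\int c_2(P)^2 = 2$. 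One then has to check the first component of \eq{fm3eq32} comes out correctly, using $p_1(TX)=-4y$ as computed in the proof of Lemma \ref{fm18lem2}; since $y^2=0$ and $xy\bt s$ is the top class, the relevant integrals are forced, and I expect this to match $2\ze_2$ on the nose (or after a further correction by a multiple of $\ze_1$, which can be absorbed since $\ze_1$ itself lifts along $\hat\xi^{\bs\Spin}_7$). The main obstacle is precisely this bookkeeping: verifying that the $\SU(2)$-bundle $P$ with the prescribed $c_2$ actually exists over $X\t\cS^1_{\rm b}$ (an $\SU(2)$-bundle over a manifold of dimension $\le 8$ is determined by $c_2\in H^4$, so this is automatic once the target class is integral, but one must confirm integrality and the absence of obstructions in dimension $8$), and that the two components of \eq{fm3eq32}, combined with the already-liftable $\ze_1$, span exactly the subgroup $\Z\an{\ze_1,2\ze_2}$ rather than something smaller — equivalently that the index of $\Im\hat\xi^{\bs\Spin}_7(M\SU(2))$ is exactly $2$, the lower bound having been established above via \eq{fm18eq2} and \eq{fm3eq10}.
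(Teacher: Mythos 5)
Your approach has a genuine gap at its crux. Everything hinges on the existence of a principal $\SU(2)$-bundle $P\ra X\t\cS^1_{\rm b}$ with $c_2(P)=-x\bt s+y\bt 1$, and you justify this by asserting that an $\SU(2)$-bundle over a manifold of dimension $\le 8$ is determined by $c_2\in H^4$. That is false: $B\SU(2)=\HP^\iy$ is not a $K(\Z,4)$, and the classifying map $B\SU(2)\ra K(\Z,4)$ of $c_2$ is only $5$-connected (its homotopy fibre has $\pi_5=\pi_6=\Z_2$ and $\pi_7=\Z_{12}$, coming from $\pi_*(\cS^3)$), so for an $8$-manifold $Y$ there are obstructions in $H^6(Y,\Z_2)$, $H^7(Y,\Z_2)$ and $H^8(Y,\Z_{12})$ to realizing a prescribed degree-$4$ class as $c_2$ of an $\SU(2)$-bundle. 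For $Y=X\t\cS^1$ with $H^*(X,\Z)=\Z[x,y]/(x^2,y^2)$ the last two of these groups are nonzero, so nothing is automatic. The mapping-torus route cannot rescue this either: $\ti\Om_7^{\bs\Spin}(\SU(2))=\Z\an{\vartheta_1}$ maps only onto $\Z\an{\ze_1}$ under \eq{fm3eq57}, which is why the $\Sp(2)$- and $\SU(6)$-bundles of Lemma \ref{fm18lem2} cannot simply have their structure group reduced to $\SU(2)$. (Your arithmetic is otherwise consistent: if such a bundle existed, \eq{fm3eq32} would indeed evaluate to $(0,\mp 2)$ using $p_1(TX)=-4y$ and $x^2=y^2=0$.)

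This missing step is exactly where the paper abandons the explicit route: its proof of the lemma opens by admitting that no explicit construction of the lift is known, and gives an abstract argument instead. Namely, the map $\psi:M\SU(2)\ra M\U(2)$ induces a morphism of the spectral sequences computing $\Om_7^{\bs\Spin}(\cL M\SU(2);M\SU(2))\cong\Z^2$ and $\Om_7^{\bs\Spin}(\cL M\U(2);M\U(2))\cong\Z^3$; the top filtration quotients are both $\Z\an{\tau}$ and are identified by $\psi$, so a generator $\tau$ may be chosen already in $\Om_7^{\bs\Spin}(\cL M\SU(2);M\SU(2))$. Since Lemma \ref{fm18lem2} puts $2\ze_2$ in $\Im\hat\xi^{\bs\Spin}_7(M\U(2))$ while $\hat\xi^{\bs\Spin}_7(M\U(2))$ sends the bottom filtration step into $\Z\an{\frac{\ze_1}{2},\ze_3}$, one deduces $\hat\xi^{\bs\Spin}_7(M\SU(2))(\tau)\in 2\ze_2+\Z\an{\ze_1}$, and liftability of $\ze_1$ finishes the argument. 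To salvage your explicit approach you would have to verify the vanishing of the lifting obstructions above for your specific class, which is the nontrivial content you have skipped.
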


\begin{proof}
Lemma \ref{fm18lem2} shows that $2\ze_2$ can be lifted to $\Om_7^{\bs\Spin}(\cL M\U(2);M\U(2))$, which we improve here to $\Om_7^{\bs\Spin}(\cL M\SU(2);M\SU(2))$. Unfortunately, we do not know an explicit construction of the lift, so we give an abstract argument.

The inclusion $\SU(2)\ra\U(2)$ induces a map $\psi:M\SU(2)\ra M\U(2)$, which leads to a commutative diagram
\begin{equation*}
\begin{tikzcd}[column sep=huge]
	\Om_7^{\bs\Spin}(\cL M\SU(2);M\SU(2))\rar{\hat\xi^{\bs\Spin}_7(M\SU(2))}\dar{(\cL\psi)_*} & \Om_8^{\bs\Spin}(M\SU(2))=\Z\an{\ze_1,\ze_2}\dar{\psi_*}\\
	\Om_7^{\bs\Spin}(\cL M\U(2);M\U(2))\rar{\hat\xi^{\bs\Spin}_7(M\U(2))} & \Om_8^{\bs\Spin}(M\U(2))=\Z\an{\frac{\ze_1}{2},\ze_2,\ze_3}.
\end{tikzcd}
\end{equation*}
The map $\psi$ also induces a morphism between the spectral sequences \eq{fm18eq6} and \eq{fm18eq7} below so, in particular, a morphism between the $E^\iy$-pages of these spectral sequences, which are shown in Figure \ref{fm18fig3} and Figure \ref{fm18fig4} below. These imply that the following extension problems are mapped onto each other:
\begin{equation*}
\begin{tikzcd}
	0\rar & F_1=\Z\an{\ze_1}\rar\dar{(\Om\psi)_*} & \Om_7^{\bs\Spin}(\cL M\SU(2);M\SU(2))\rar\dar{(\cL\psi)_*} & \Z\rar\dar{\cong} & 0\\
	0\rar & F_1'=\Z\an{\frac{\ze_1}{2},\ze_3}\rar & \Om_7^{\bs\Spin}(\cL M\U(2);M\U(2))\rar & \Z\rar & 0.\!
\end{tikzcd}
\end{equation*}
Hence $\Om_7^{\bs\Spin}(\cL M\SU(2);M\SU(2))$ and $\Om_7^{\bs\Spin}(\cL M\U(2);M\U(2))$ are the extensions obtained from $F_1$ and $F_1'$, respectively, by adjoining one generator $\tau$, which can chosen in $\Om_7^{\bs\Spin}(\cL M\SU(2);M\SU(2))$. Since $2\ze_2$ is in the image of $\hat\xi^{\bs\Spin}_7(M\U(2))$ and since $\hat\xi^{\bs\Spin}_7(M\U(2))$ maps $F_1'$ to the complement of $\ze_2$, by the commutative diagram it must be that $\hat\xi^{\bs\Spin}_7(M\U(2))(\tau)\in 2\ze_2+\Z\an{\ze_1}$, hence $\hat\xi^{\bs\Spin}_7(M\SU(2))(\tau)\in 2\ze_2+\Z\an{\ze_1}$. Since $\ze_1$ can obviously be lifted along $\hat\xi^{\bs\Spin}_7(M\SU(2))$, we conclude $2\ze_2$ can also be lifted along $\hat\xi^{\bs\Spin}_7(M\SU(2))$.
\end{proof}

\subsection{\texorpdfstring{Computation of $\Om_n^{\bs{\mathrm{Spin}}}(\cL M\U(2);M\U(2))$}{Computation of Ωₙˢᵖⁱⁿ(ℒMU(2);MU(2))}}
\label{fm184}

There is a spectral sequence
\e
 H_p\bigl(M\U(2),\ti\Om_q^{\bs\Spin}(\Om M\U(2))\bigr)\Longrightarrow\Om_{p+q}^{\bs\Spin}(\cL M\U(2);M\U(2)),\label{fm18eq7}
\e
so we determine $\ti\Om_q^{\bs\Spin}(\Om M\U(2))$ first. By Theorem \ref{fm3thm3} there is a $9$-connected map $\Om M\U(2)\ra\Om B\SU\simeq\SU$, so $\ti\Om_q^{\bs\Spin}(\Om M\SU(2))\cong\ti\Om_q^{\bs\Spin}(\SU)$ for $q\le 9$.

By using Table \ref{fm3tab6} we obtain the $E_2$-page of the spectral sequence \eq{fm18eq7} for $p+q\le 9$ as shown in Figure \ref{fm18fig4}.

\begin{figure}[htb]
\centering
\begin{tikzpicture}
\matrix (m) [matrix of math nodes,
nodes in empty cells,nodes={minimum width=11.8ex,
minimum height=5ex,outer sep=-5pt},
column sep=1ex,row sep=1ex]{
8 & \Z\an{1}   \\
7 & \Z^2\an{1}   \\
5 & \Z\an{1} & \Z\an{\tau}  \\
3 & \Z\an{1} & \Z\an{\tau} & \Z\an{\ga_1^T} \\
\quad\strut & 0 & 4 & 6 \strut \\};
\draw[-stealth] (m-3-3) -- node[left]{\scriptsize $d^4_{4,5}$}  (m-1-2);
\draw[-stealth] (m-4-4) -- node[right]{\scriptsize $d^6_{6,3}$}  (m-1-2);
\draw[thick] (m-1-1.north east) node[above]{$q$} -- (m-5-1.east);
 \draw[thick] (m-5-1.north) -- (m-5-4.north east) node[right]{$p$};
\end{tikzpicture}
\caption{$E^2=E^\iy$-page of $H_p\bigl(M\U(2),\ti\Om_q^{\bs\Spin}(\Om M\U(2))\bigr)$ $\Rightarrow\Om_{p+q}^{\bs\Spin}(\cL M\U(2);M\U(2))$, $p+q\le 9$}
\label{fm18fig4}
\end{figure}

The only possible higher differentials in this region are $d^4_{4,5},d^6_{6,3}$. We claim that these vanish, so the $E^2$-page is also the $E^\iy$-page for $p+q\le 8$. All extension problems are trivial, which proves Table \ref{fm3tab2} for the case $M\U(2)$. It only remains to prove that $d^4_{4,5},d^6_{6,3}$ both vanish. It suffices to show
\begin{equation*}
\Om^{\bs\Spin}_8(\cL M\U(2);M\U(2))\ot\Q\cong\Om^{\bs\Spin}_8(\cL B\SU;B\SU)\ot\Q\cong\Q,
\end{equation*}
where the first isomorphism is by Theorem~\ref{fm3thm3}.

Recall from Dold \cite{Dold} that over the rational numbers we have a Chern--Dold character isomorphism of homology theories
\begin{equation*}
 \Om_n^{\bs\Spin}(X;A)\ot\Q\cong H_n(X;A,\Q[t]) = \bigoplus\nolimits_{i\ge 0} H_{n-4i}
 (X;A,\Q),
\end{equation*}
where $\bigoplus_{n\in\N}\Om_n^{\bs\Spin}(*)\ot\Q=\Q[t]$ is a polynomial ring with a variable $t$ of degree $4$, namely $t=\al_4\ot 1\in \Om_4^{\bs\Spin}(*)\ot\Q$. In particular, the Chern--Dold character induces a morphism of spectral sequences and we can equivalently show $H_8(\cL B\SU;B\SU,\Q[t])\cong\Q$.

\begin{lem}
\label{fm18lem4}
Let\/ $(X,\mu,e)$ be an H-space structure. Then\/ $X\t\Om X$ is weakly homotopy equivalent to the free loop space\/ $\cL X$. Hence by the K\"unneth Theorem,
\begin{equation*}
 H_*(\cL X;X,\Q)\cong H_*(X,\Q)\ot \ti H_*(\Om X,\Q).
\end{equation*}
\end{lem}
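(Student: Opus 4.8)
\textbf{Proof proposal for Lemma \ref{fm18lem4}.} The statement is a standard fact about free loop spaces of H-spaces, and the plan is to produce the weak homotopy equivalence $X\t\Om X\simeq \cL X$ explicitly and then invoke the K\"unneth theorem over $\Q$. First I would recall that for any pointed space $X$ the evaluation-at-basepoint map $\ev_{\ul 0}:\cL X\ra X$ is a fibration with fibre $\Om X$, and it admits the section $s_X:X\ra\cL X$ sending a point to the constant loop there (this is exactly the setup of Definition \ref{fm2def3}). So $\cL X$ is a fibration over $X$ with a section; the extra structure provided by the H-space multiplication $\mu$ is what will split this fibration as a product.

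The key step is to write down the map $\Psi:X\t\Om X\ra\cL X$ by $\Psi(x,\ga)(t)=\mu(x,\ga(t))$, i.e.\ at a point $x\in X$ and a based loop $\ga:\cS^1\ra X$ we translate $\ga$ by $x$ using the multiplication. Since $\mu(x,e)=x$ (up to homotopy; one can choose a strictly unital model, or work up to homotopy throughout), the loop $\Psi(x,\ga)$ passes through $x$ at the basepoint, so $\ev_{\ul 0}\ci\Psi=\pi_X$, the projection to the first factor. Thus $\Psi$ is a fibrewise map over $X$ between two fibrations with fibre $\Om X$: over each $x\in X$ it restricts to the self-map $\ga\mapsto\mu(x,-)\ci\ga$ of $\Om X$, and since $\mu(x,-):X\ra X$ is a homotopy equivalence (left translation by $x$ in an H-space is a homotopy equivalence when $X$ is, say, connected CW — or more robustly, it induces an isomorphism on all homotopy groups because $[\mu(x,-)]=[\id]$ in $\pi_0\Map(X,X)$ when $X$ is connected grouplike, which holds for all the spaces $B\SU$, $B\Sp$, $K(R,k)$ we care about), the induced map on each fibre is a weak equivalence. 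A fibrewise map over a common base inducing weak equivalences on all fibres is a weak homotopy equivalence of total spaces (by the five lemma applied to the long exact sequences of the two fibrations, or by comparing Serre spectral sequences). Hence $\Psi:X\t\Om X\ra\cL X$ is a weak homotopy equivalence.

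Given the weak equivalence, $H_*(\cL X,\Q)\cong H_*(X\t\Om X,\Q)\cong H_*(X,\Q)\ot H_*(\Om X,\Q)$ by the K\"unneth theorem over the field $\Q$. Moreover $\Psi$ is compatible with the section $s_X$ and the inclusion $X\times\{e\}$, so it respects the splitting $H_*(\cL X;X)=\Ker\big((\ev_{\ul 0})_*\big)$ of \eq{fm2eq5}; on the product side this kernel is precisely $H_*(X,\Q)\ot\ti H_*(\Om X,\Q)$. This yields the claimed isomorphism $H_*(\cL X;X,\Q)\cong H_*(X,\Q)\ot\ti H_*(\Om X,\Q)$.

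The main obstacle is the bookkeeping around strict versus homotopy-unitality of $\mu$: to define $\Psi$ cleanly one wants $\mu(x,e)=x$ on the nose, whereas a general H-space only gives this up to homotopy, which forces $\Psi$ to land in $\cL X$ only after a homotopy correction and complicates the claim that $\ev_{\ul 0}\ci\Psi=\pi_X$ exactly. I would handle this either by replacing $X$ with a homotopy-equivalent strictly unital model (always possible for the grouplike $E_1$-spaces arising here, e.g.\ realize $B\SU$, $B\Sp$ via simplicial monoids, or note $K(R,k)$ is literally a topological abelian group), or by working entirely at the level of homotopy classes and fibre-homotopy equivalences, which is all that is needed since the conclusion is only about homology. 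In the application in \S\ref{fm184} we only need $X=B\SU$ (equivalently $\Om M\U(2)\simeq\SU$ via Theorem \ref{fm3thm3}), which is a genuine grouplike $E_1$-space, so the strictification is harmless; this is the point I would flag but not belabor.
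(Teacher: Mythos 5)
Your proof is correct and follows essentially the same route as the paper: both construct the map $(x,\ga)\mapsto\bigl(t\mapsto\mu(x,\ga(t))\bigr)$ as a map of fibrations over $X$ (with section the constant loops) and conclude it is a weak equivalence via the five lemma applied to the two long exact sequences of homotopy groups, then apply K\"unneth over $\Q$. Your extra care about strict versus homotopy unitality of $\mu$ is a reasonable point that the paper's proof silently elides.
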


\begin{proof}
There is a commutative diagram
\begin{equation*}
 \begin{tikzcd}
  \Om X\rar{\id_{\Om X}}\dar{\io_2} & \Om X\dar\\
  X\t\Om X\dar{\pi_1}\rar{\ze} & \cL X\dar{\ev_1}\\
  X\rar{\id_X} & X
 \end{tikzcd}
\end{equation*}
where $\ze(x,\ga)$ for $x\in X$, $\ga\in\Om X$ is defined (using the $H$-space structure) to be the (possible unbased) loop $t\mapsto \mu(x,\ga(t))$. The columns of this diagram are fibrations and hence induce a pair of long exact sequences of homotopy groups
\begin{equation*}
\begin{tikzcd}
 \cdots\rar & \pi_n(\Om X)\rar\dar{\id_{\pi_n(\Om X)}} & \pi_n(X\t\Om X)\dar{\ze_*}\rar & \pi_n(X)\dar{\id_{\pi_n(X)}}\rar & \cdots\\
 \cdots\rar & \pi_n(\Om X)\rar & \pi_n(\cL X)\rar & \pi_n(X)\rar & \cdots,
\end{tikzcd}
\end{equation*}
so $\pi_n(\ze):\pi_n(X\t\Om X)\ra\pi_n(\cL X)$ is an isomorphism by the 5-lemma.
\end{proof}

By applying Lemma \ref{fm18lem4} to $X=B\SU$ and $\Om X\simeq\SU$, we find
\begin{equation*}
 H_*(\cL B\SU;B\SU,\Q[t])\cong H_*(B\SU,\Q)\ot \ti H_*(\SU,\Q)\ot\Q[t],
\end{equation*}
where $\ti H_*(\SU,\Q)$ consists of polynomials $P(b_2,b_3,\cdots)$ with zero constant term. Hence $H_8(\cL B\SU; B\SU,\Q[t])=\Q\an{b_2b_3}$ is non-zero, as claimed.

\subsection{\texorpdfstring{Computation of $\Om_n^{\bs{\mathrm{Spin}}}(\cL M\Spin(4);M\Spin(4))$}{Computation of Ωₙˢᵖⁱⁿ(ℒMSpin(4);MSpin(4))}}
\label{fm185}

There is a spectral sequence
\e
 H_p\bigl(M\Spin(4),\ti\Om_q^{\bs\Spin}(\Om M\Spin(4))\bigr)\Longrightarrow\Om_{p+q}^{\bs\Spin}(\cL M\Spin(4);M\Spin(4)),\label{fm18eq8}
\e
so we determine $\ti\Om_q^{\bs\Spin}(\Om M\Spin(4))$ first. By Theorem \ref{fm3thm3} there is an $11$-connected map $\Om M\Spin(4)\ra\Om B\Sp\simeq\Sp$, so $\ti\Om_q^{\bs\Spin}(\Om M\SU(2))\cong\ti\Om_q^{\bs\Spin}(\Sp)$ for $q\le 11$. These groups were determined in Table \ref{fm3tab6}. This leads to the $E^2$-page for $p+q\le 9$ shown in Figure~\ref{fm18fig5}.

\begin{figure}[htb]
\centering
\begin{tikzpicture}
\matrix (m) [matrix of math nodes,
nodes in empty cells,nodes={minimum width=9ex,
minimum height=5ex,outer sep=-5pt},
column sep=.4ex,row sep=1ex]{
9   &   \Z_2\an{\al_1^2\vartheta_2}    &   \\
8   &   \Z_2\an{\al_1\vartheta_2}      &   \\
7   &   \Z\an{\vartheta_1,\vartheta_2}         &   \\
5   &   \Z_2\an{\al_1^2\rho} &   \Z_2\an{\al_1^2\rho}\\
4   &   \Z_2\an{\al_1\rho}   &   \Z_2\an{\al_1\rho}\\
3   &   \Z\an{\rho}          &   \Z\an{\rho}\\
\quad\strut & 0 & 4 \\};
\draw[thick] (m-1-1.north east) node[above]{$q$} -- (m-7-1.east);
\draw[thick] (m-7-1.north) -- (m-7-3.north east) node[right]{$p$};
\end{tikzpicture}
\caption{$E^2$-page of $\ti H_p(M\Spin(4),\Om^{\bs\Spin}_q(\Om M\Spin(4)))$ $\Rightarrow$ 
$\ti\Om^{\bs\Spin}_{p+q}(\cL M\Spin(4);M\Spin(4))$, $p+q\le 9$. \\ This is also the $E^\iy$-page for $p+q\le 9$.}
\label{fm18fig5}
\end{figure}

All differentials $d^r_{p,q}$ with $p+q\le 9$, $r\ge 2$ vanish, so this is also the $E^\iy$-page for $p+q\le 8$. Clearly the extensions problems are all trivial for $p+q\le 7$. To see that the extension problem is also trivial for $p+q=8$, we argue as follows. From the filtration (the first column in Figure \ref{fm18fig5} corresponds to the inclusion of the fibre $\Om M\Spin(4)\simeq_{11}\Sp$) and the $E^\iy$-page we obtain a commutative diagram with exact rows:
\begin{equation*}
 \begin{tikzcd}[column sep=4.5ex]
     0\rar  & \ti\Om_7^{\bs\Spin}(\Sp)\rar\dar{\al_1} & \Om_7^{\bs\Spin}(\cL M\Spin(4);M\Spin(4))\rar\dar{\al_1} & \Z\an{\rho}\dar{\mod 2}\rar & 0\\
     0\rar  & \ti\Om_8^{\bs\Spin}(\Sp)\rar & \Om_8^{\bs\Spin}(\cL M\Spin(4);M\Spin(4))\rar & \Z_2\an{\al_1\rho}\rar & 0
 \end{tikzcd}
\end{equation*}
The left vertical map is surjective by Table \ref{fm3tab6} and the right vertical map is also surjective, so the four lemma implies that the middle vertical map is also surjective. Since $2\al_1=0$, every element in the image has order two, which contradicts $\Om_8^{\bs\Spin}(\cL M\Spin(4);M\Spin(4))=\Z_4$. Hence $\Om_8^{\bs\Spin}(\cL M\Spin(4);M\Spin(4))=\Z_2^2$, completing the proof of Table~\ref{fm3tab2}.

\subsection{Proof of Theorem \ref{fm3thm2}(b)}
\label{fm186}

In this section, we show that the image of the map
\begin{equation*}
 \hat\xi^{\bs\Spin}_{n-1}(T):\Om_{n-1}^{\bs\Spin}(\cL T;T)\longra\ti\Om_n^{\bs\Spin}(T)
\end{equation*}
defined in \eq{fm2eq7} is given by Table \ref{fm3tab3}, for the various spaces $T$.

\begin{lem}
\label{fm18lem5}
For\/ $n=8$ and each\/ $T=M\SU(2),$ $M\U(2),$ $M\Spin(4),$ $M\SO(4),$ $K(\Z,4),$ and\/ $K(\Z_2,4),$ the class\/ $\ze_2\in\ti\Om_8^{\bs\Spin}(T)$ is {\bf not} in the image of\/ $\hat\xi^{\bs\Spin}_7(T)$.
\end{lem}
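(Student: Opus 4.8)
\textbf{Proof plan for Lemma \ref{fm18lem5}.}
The plan is to reduce all six cases to the single case $T=K(\Z,4)$, and then to prove that case by the parity computation already used in Lemmas \ref{fm18lem1} and \ref{fm18lem3}. The key point is that $\ze_2$ lives compatibly in all of $\ti\Om_8^{\bs\Spin}(T)$ for $T$ on the list \eq{fm3eq1}, by the convention of \S\ref{fm311}, and that the maps $\hat\xi^{\bs\Spin}_7$ are natural along the maps in \eq{fm3eq1}. Concretely, suppose $\phi:T'\ra T$ is one of the maps in \eq{fm3eq1} and that $\phi_*:\ti\Om_8^{\bs\Spin}(T')\ra\ti\Om_8^{\bs\Spin}(T)$ sends $\ze_2\mapsto\ze_2$ (or to a $\Z$-linear combination with unit coefficient on $\ze_2$, modulo classes not affecting the argument). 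Then a lift of $\ze_2$ along $\hat\xi^{\bs\Spin}_7(T')$ would push forward, via the commuting square relating $\hat\xi^{\bs\Spin}_7(T')$ and $\hat\xi^{\bs\Spin}_7(T)$ through $\cL\phi_*$, to a lift of $\phi_*(\ze_2)$ along $\hat\xi^{\bs\Spin}_7(T)$. So it suffices to handle the cases lowest in the diagram \eq{fm3eq1}. Tracing \eq{fm3eq7}--\eq{fm3eq8} and the maps in \eq{fm3eq1}: $\ze_2$ in $M\SU(2),M\U(2),M\Spin(4)$ all map to $\ze_2$ in $M\SO(4)$ (with an extra $-\ze_2'$ contribution only in the $\Spin(4)$ case, killed by a further push to $K(\Z,4)$ where $\ze_2'\mapsto\frac{\ze_1}{4}+\ze_2+4\ze_3\mapsto\ze_2+4\ze_3$, still unit on $\ze_2$); $\ze_2$ in $M\SO(4)$ maps to $\ze_2$ in $K(\Z,4)$; and $\ze_2$ in $K(\Z,4)$ maps to $\ze_2$ in $K(\Z_2,4)$. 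Thus a lift for $K(\Z,4)$ would yield lifts everywhere downstream, so it is enough to prove the lemma for $T=K(\Z,4)$.

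Second, I would prove the $K(\Z,4)$ case directly. An element of $\Om_7^{\bs\Spin}(\cL K(\Z,4);K(\Z,4))$ is represented by a pair $[X,\al]$ with $X$ a compact spin $7$-manifold and $\al\in H^4(X\t\cS^1,\Z)$. Under the K\"unneth decomposition $\al=\be\bt s+\ga\bt 1$, the computation \eq{fm18eq2} in the proof of Lemma \ref{fm18lem1} gives $\int_{X\t\cS^1}\al\cup\al=2\int_X\be\ga$, which is always even. Since \eq{fm3eq19} identifies $\ti\Om_8^{\bs\Spin}(K(\Z,4))\cong\Z^2$ via $[Y,\al]\mapsto\bigl(\int_Y\al^2,\int_Y\al\cup\frac{p_1(TY)+2\al}{4}\bigr)$, and $\hat\xi^{\bs\Spin}_7(K(\Z,4))([X,\al])=[X\t\cS^1,\al']$ with $\al'$ the tautological lift, the first component of any class in $\Im\hat\xi^{\bs\Spin}_7(K(\Z,4))$ is even. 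But \eq{fm3eq19} sends $\ze_2\mapsto(1,0)$, whose first component is odd. Hence $\ze_2\notin\Im\hat\xi^{\bs\Spin}_7(K(\Z,4))$. This is essentially the content of Lemma \ref{fm18lem1} combined with $\Im\hat\xi^{\bs\Spin}_7(K(\Z,4))=\Z\an{2\ze_2,\ze_3}$ established in \S\ref{fm181}, so one could alternatively just quote that description of the image and observe $\ze_2\notin\Z\an{2\ze_2,\ze_3}$.

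Finally I would assemble the reduction: for $T=M\SO(4)$ apply the functoriality square with $\phi:M\SO(4)\ra K(\Z,4)$; for $T=M\SU(2),M\U(2)$ compose to $M\SO(4)$ then to $K(\Z,4)$ (or directly to $K(\Z,4)$ via \eq{fm3eq1}), using that $\ze_2\mapsto\ze_2$; for $T=M\Spin(4)$ use $M\Spin(4)\ra M\SO(4)\ra K(\Z,4)$ and that $\ze_2\mapsto\ze_2$ (the $\ze_2'\mapsto\frac{\ze_1}{4}+\ze_2+4\ze_3$ term from \eq{fm3eq7} is irrelevant since $\ze_2'$ is not involved in the image of $\ze_2$ itself); and for $T=K(\Z_2,4)$ use $\psi:K(\Z,4)\ra K(\Z_2,4)$ with $\ze_2\mapsto\ze_2$ from Table \ref{fm3tab1}. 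In each case a hypothetical lift of $\ze_2$ along $\hat\xi^{\bs\Spin}_7(T)$ would, by naturality of $\hat\xi^{\bs\Spin}$ (which follows from its definition in \eq{fm2eq7} as a map of long exact / split sequences, compatible with induced maps $\cL\phi_*$), produce a lift of $\ze_2$ along $\hat\xi^{\bs\Spin}_7(K(\Z,4))$, contradicting the previous paragraph. The only mild subtlety — and the step I expect to require the most care — is checking that the pushforwards of $\ze_2$ under all the maps in \eq{fm3eq1} are exactly as claimed, i.e. that no ``mixing'' with $\ze_1$ or $\ze_3$ or $\ze_2'$ destroys the odd-first-component obstruction; this is handled by the explicit formulas \eq{fm3eq7}, the isomorphisms \eq{fm3eq13}, \eq{fm3eq15}, \eq{fm3eq17}, \eq{fm3eq19}, and the fact that the relevant obstruction (oddness of $\int_Y\al^2$, resp.\ of the corresponding integral on each $MH$) is itself natural under these maps.
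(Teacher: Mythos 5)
Your reduction strategy and the parity computation for $T=K(\Z,4)$ are correct and match the paper's mechanism, but the case $T=K(\Z_2,4)$ has a genuine gap: your functoriality argument runs in the wrong direction there. The map in \eq{fm3eq1} is $\psi:K(\Z,4)\ra K(\Z_2,4)$, so the naturality square for $\hat\xi^{\bs\Spin}_7$ transports a lift of $\ze_2$ \emph{from} $\Om_7^{\bs\Spin}(\cL K(\Z,4);K(\Z,4))$ \emph{to} $\Om_7^{\bs\Spin}(\cL K(\Z_2,4);K(\Z_2,4))$, not the other way round. A hypothetical lift of $\ze_2$ along $\hat\xi^{\bs\Spin}_7(K(\Z_2,4))$ need not come from $K(\Z,4)$ at all (indeed, by Table \ref{fm3tab2} the group $\Om_7^{\bs\Spin}(\cL K(\Z_2,4);K(\Z_2,4))$ is not even fully determined, so surjectivity of $\psi$ on these relative groups is not available), so non-liftability over $K(\Z,4)$ does not imply non-liftability over $K(\Z_2,4)$. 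Since $K(\Z_2,4)$ is the terminal object of \eq{fm3eq1}, it must be handled by a direct argument.

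The paper in fact runs your reduction in the opposite sense: it reduces \emph{all} five other cases to $T=K(\Z_2,4)$ (using that $\ze_2\mapsto\ze_2$ under every map $T\ra K(\Z_2,4)$, by Table \ref{fm3tab1}), and then kills that case directly using the Pontrjagin square isomorphism \eq{fm3eq21}: for $\bar\al=\bar\be\bt[\cS^1]+\bar\ga\bt 1$ on $X\t\cS^1$ one has $\int_{X\t\cS^1}\bar\al\cup\bar\al=2\int_X\bar\be\cup\bar\ga=0$ in $\Z_2$, and since $\cP(\bar\al)\bmod 2=\bar\al\cup\bar\al$ by Definition \ref{fm2def4}(c), every class in $\Im\hat\xi^{\bs\Spin}_7(K(\Z_2,4))$ has even image in $\Z_4\cong\ti\Om_8^{\bs\Spin}(K(\Z_2,4))$, whereas $\ze_2\mapsto 1$. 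To repair your proof you should either adopt this terminal reduction, or keep your reduction to $K(\Z,4)$ for the five spaces mapping into it and add the Pontrjagin square argument as a separate, self-contained treatment of $T=K(\Z_2,4)$.
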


\begin{proof}
Consider the case $T=K(\Z_2,4)$. Elements in $\Im\hat\xi^{\bs\Spin}_7(K(\Z_2,4))$ have the form $[X\t\cS^1_{\rm b},\bar\al]$ for a compact spin $7$-manifold $X$ and $\bar\al\in H^4(X\t\cS^1,\Z_2)$. If we decompose $\bar\al=\bar\be\bt[\cS^1]+\bar\ga\bt 1$ using the K\"unneth isomorphism, we find $\int_{X\t\cS^1}\bar\al\cup\bar\al=2\int_X\bar\be\cup\bar\ga=0$. In particular, the mod $2$ reduction of the Pontrjagin square $\int_{X\t\cS^1}\cP(\bar\al)\bmod2=\int_{X\t\cS^1}\bar\al\cup\bar\al$ vanishes, see Definition \ref{fm2def4}(c), so the isomorphism \eq{fm3eq21} implies that $\ze_2\notin \Im\hat\xi^{\bs\Spin}_7(K(\Z_2,4))$.

Each $T$ as in the statement of the lemma has a morphism $T\ra K(\Z_2,4)$, which determines a commutative diagram
\begin{equation*}
\begin{tikzcd}[column sep=huge]
 \Om_7^{\bs\Spin}(\cL T, T)\rar{\hat\xi^{\bs\Spin}_7(T)}\dar & \ti\Om_8^{\bs\Spin}(T)\dar\\
 \Om_7^{\bs\Spin}(\cL K(\Z_2,4);K(\Z_2,4))\rar{\hat\xi^{\bs\Spin}_7(K(\Z_2,4))} & \ti\Om_8^{\bs\Spin}(K(\Z_2,4)).
\end{tikzcd}
\end{equation*}
According to Table \ref{fm3tab1}, the right vertical morphism maps $\ze_2$ to itself, hence $\ze_2\notin\Im\hat\xi^{\bs\Spin}_7(K(\Z_2,4))$ implies $\ze_2\notin\Im\hat\xi^{\bs\Spin}_7(T)$.
\end{proof}

Next, we show that the groups listed in Table \ref{fm3tab3} are all contained in the image of $\hat\xi^{\bs\Spin}_{n-1}(T)$ using the following lemma.

\begin{lem}
\label{fm18lem6}
The following elements have natural lifts along\/ $\hat\xi^{\bs\Spin}_{n-1}$.
\begin{itemize}
\setlength{\itemsep}{0pt}
\setlength{\parsep}{0pt}
\item[{\bf(a)}] $\de$ to\/ $\Om_3^{\bs\Spin}(\cL M\{1\})$ and
$\varepsilon$ to\/ $\Om_5^{\bs\Spin}(\cL M\U(2);M\U(2));$
\item[{\bf(b)}] $\ze_1$ to\/ $\Om_7^{\bs\Spin}(\cL M\{1\};M\{1\}),$ $\frac{\ze_1}{2}$ to\/ $\Om_7^{\bs\Spin}(\cL M\U(2);M\U(2)),$ and\/ $\frac{\ze_1}{4}$ to\/ $\Om_7^{\bs\Spin}(\cL M\SO(4);M\U(4));$
\item[{\bf(c)}] $2\ze_2$ to\/ $\Om_7^{\bs\Spin}(\cL M\SU(2);M\SU(2));$
\item[{\bf(d)}] $\ze_2\pm\ze_2'$ to\/ $\Om_7^{\bs\Spin}(\cL M\Spin(4);M\Spin(4));$
\item[{\bf(e)}] $\ze_3$ to\/ $\Om_7^{\bs\Spin}(\cL M\U(2);M\U(2));$
\item[{\bf(f)}] $\al_1\ze_2$ to\/ $\Om_8^{\bs\Spin}(\cL M\U(2);M\U(2));$
\item[{\bf(g)}] $\eta$ to\/ $\Om_8^{\bs\Spin}(\cL M\SO(4);M\SO(4))$.
\end{itemize}
\end{lem}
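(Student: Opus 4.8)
The statement to prove is Lemma \ref{fm18lem6}, which asserts that the explicit generators listed in Table \ref{fm3tab3} all admit natural lifts along the maps $\hat\xi^{\bs\Spin}_{n-1}(T)$. Combined with Lemma \ref{fm18lem5} and the index bounds already established in \S\ref{fm181}--\S\ref{fm185}, this will pin down $\Im\hat\xi^{\bs\Spin}_{n-1}(T)$ exactly as in Table \ref{fm3tab3}, completing the proof of Theorem \ref{fm3thm2}(b).

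\begin{proof}[Proof sketch of Lemma \ref{fm18lem6}]
The plan is to exhibit, for each generator, an explicit relative bordism class in $\Om_{n-1}^{\bs\Spin}(\cL T;T)$ whose image under $\hat\xi^{\bs\Spin}_{n-1}(T)$ is the desired class, using the description of $\hat\xi^{\bs\Spin}_{n-1}(T)$ from \eq{fm2eq7}: a class $[X,\phi]$ in $\Om_{n-1}^{\bs\Spin}(\cL T;T)$, thought of via the adjoint as $\phi':X\t\cS^1\ra T$, maps to $[X\t\cS^1_{\rm b},\phi']$. Concretely, for Thom spaces $T=MH$ this means producing a submanifold $M\subset X\t\cS^1$ with normal $H$-structure not meeting $X\t\{-1\}$, i.e.\ a class in the based loop space bordism, so that the Pontrjagin--Thom collapse gives the required map; indeed most of the relevant submanifolds have already been written down in \S\ref{fm311} in precisely this ``$[X\t\cS^1_{\rm b},M\t\{1\}]$'' form.

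For parts (a), (b), (d), (e) the lifts are essentially immediate from the constructions in \S\ref{fm311}: the classes $\de=[\cS^3\t\cS^1_{\rm b},\{*\}]$, $\varepsilon=[\cS^5\t\cS^1_{\rm b},\cS^2]$, $\ze_1=[K3\t\cS^3\t\cS^1_{\rm b},K3\t\{N\}\t\{1\}]$, $\tfrac{\ze_1}{2}$, $\tfrac{\ze_1}{4}$ (see \eq{fm3eq2}--\eq{fm3eq3}), $\ze_3=[\CP^3\t(\cS^1_{\rm b})^2,\CP^2\t\{1\}^2]$ (see \eq{fm3eq5}), and $\ze_2'\pm\ze_2$ coming from $[\HP^2,\HP^1]$-type classes were already defined as submanifolds of an ambient manifold of the form $X\t\cS^1_{\rm b}$ that avoid $X\t\{-1\}$; one simply checks that the normal structure and the avoidance condition give a well-defined element of the relative loop bordism group whose $\hat\xi$-image is the named class. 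For $\ze_2-\ze_2'$ over $M\Spin(4)$ one can also invoke Proposition \ref{fm3prop6}, since \eq{fm3eq58} says $\vartheta_2\mapsto\ze_2-\ze_2'$ and $\vartheta_2$ visibly lifts to $\ti\Om_7^{\bs\Spin}(\Sp)$, hence via \eq{fm3eq55} to $\Om_7^{\bs\Spin}(\cL B\Sp;B\Sp)\cong\Om_7^{\bs\Spin}(\cL M\Spin(4);M\Spin(4))$ by Theorem \ref{fm3thm3}(c); for $\ze_2+\ze_2'$ one uses the reversed-orientation variant. Part (f), the lift of $\al_1\ze_2$ over $M\U(2)$, follows from Proposition \ref{fm3prop6} and \eq{fm3eq58}, which give $\upsilon\mapsto\al_1\ze_2$ under $\SU\ra B\SU\simeq_{10}M\U(2)$, while $\upsilon=[\SU(3),\phi]$ clearly lifts to $\ti\Om_8^{\bs\Spin}(\SU)$ and hence via \eq{fm3eq55} to $\Om_8^{\bs\Spin}(\cL M\U(2);M\U(2))$. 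Part (g), the lift of $\eta$ over $M\SO(4)$, is also immediate: $\eta=[(\SU(3)\t\cS^3)/\SO(3)\t\cS^1_{\rm b},\SU(3)/\SO(3)\t\{1\}]$ in \eq{fm3eq6} is already presented with the submanifold avoiding the $-1$ fibre, so it comes from a class in $\Om_8^{\bs\Spin}(\cL M\SO(4);M\SO(4))$.

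The genuinely nontrivial item is part (c), the lift of $2\ze_2$ over $M\SU(2)$, and this is the main obstacle since $\ze_2$ itself is \emph{not} liftable by Lemma \ref{fm18lem5}, so one must show the index-two subgroup is attained and cannot do this by an obvious explicit submanifold. This is exactly Lemma \ref{fm18lem3}, whose proof I would carry out as in \S\ref{fm183}: transport the already-established liftability of $2\ze_2$ over $M\U(2)$ (Lemma \ref{fm18lem2}, proved via the explicit $\Sp(2)$-bundle construction over $X=(\Sp(2)\t\Sp(1))/(\Sp(1)\t\Sp(1))$) along the map $\psi:M\SU(2)\ra M\U(2)$, and compare the two short exact sequences coming from the $E^\iy$-pages of the spectral sequences \eq{fm18eq6} and \eq{fm18eq7} (Figures \ref{fm18fig3} and \ref{fm18fig4}). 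One observes $\Om_7^{\bs\Spin}(\cL M\SU(2);M\SU(2))$ and $\Om_7^{\bs\Spin}(\cL M\U(2);M\U(2))$ are both obtained by adjoining one generator $\tau$, chosen compatibly in the $\SU(2)$ case, to subgroups on which $\hat\xi^{\bs\Spin}_7$ hits multiples of $\ze_1$ (resp.\ $\tfrac{\ze_1}{2}$) and $\ze_3$; the commuting square then forces $\hat\xi^{\bs\Spin}_7(M\SU(2))(\tau)\in 2\ze_2+\Z\an{\ze_1}$, and since $\ze_1$ lifts (part (b)), so does $2\ze_2$. Finally, assembling: for each $T$ the subgroup of $\ti\Om_n^{\bs\Spin}(T)$ generated by the classes just lifted agrees with the entry of Table \ref{fm3tab3}, and the reverse inclusions follow from the index bounds in \S\ref{fm181}--\S\ref{fm185} together with Lemma \ref{fm18lem5} ruling out $\ze_2$; this proves Theorem \ref{fm3thm2}(b).
\end{proof}
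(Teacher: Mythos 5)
Your proposal follows the paper's route almost everywhere: parts (a), (b), (e), (g) are immediate because the generators are already presented in the form $[X\t\cS^1_{\rm b},M]$ with $M$ avoiding a fibre, part (c) is exactly Lemma \ref{fm18lem3} (transporting Lemma \ref{fm18lem2} along $\psi:M\SU(2)\ra M\U(2)$ and comparing the filtrations of \eq{fm18eq6} and \eq{fm18eq7}), and part (f) is the paper's argument via $\upsilon\mapsto\al_1\ze_2$ in \eq{fm3eq58}. For $\ze_2-\ze_2'$ in part (d) you also use the paper's argument ($\vartheta_2\mapsto\ze_2-\ze_2'$ via Proposition \ref{fm3prop6}).

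There is, however, a genuine gap in your treatment of $\ze_2+\ze_2'$ in part (d): the claim that ``one uses the reversed-orientation variant'' does not work. Reversing the orientation (spin structure) of the source $7$-manifold in the $\vartheta_2$ construction negates the bordism class, so it produces $-(\ze_2-\ze_2')=\ze_2'-\ze_2$, which is the same subgroup element you already have, not $\ze_2+\ze_2'$; and there is no evident modification of the normal $\Spin(4)$-structure realizing $\ze_2+\ze_2'$ directly, since exchanging the spinor bundles $\Si^\pm_\nu$ forces an orientation reversal of $\nu_M$ and of $M$ (this is exactly how $\ze_2'$ differs from $\ze_2$), so it again only permutes $\pm(\ze_2-\ze_2')$. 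Note also that neither $\ze_2$ nor $\ze_2'$ is individually liftable (Lemma \ref{fm18lem5} and its image under $M\SU(2)\ra M\Spin(4)$), so $\ze_2+\ze_2'$ cannot be obtained term by term. The correct argument, which is the one the paper uses, is purely algebraic: by part (c) the class $2\ze_2$ lifts over $M\SU(2)$, hence over $M\Spin(4)$ via the map $M\SU(2)\ra M\Spin(4)$ of \eq{fm3eq1}; since $\Im\hat\xi^{\bs\Spin}_7(M\Spin(4))$ is a subgroup containing both $2\ze_2$ and $\ze_2-\ze_2'$, it contains $2\ze_2-(\ze_2-\ze_2')=\ze_2+\ze_2'$. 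With this one-line replacement your proof is complete.
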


\begin{proof}
All elements of the form $[X\t \cS^1_{\rm b},M]$ are in the image of $\hat\xi^{\bs\Spin}_{n-1}$, so the claim is obvious apart from the cases (c), (d), and (f). Part (c) was already proved in Lemma \ref{fm18lem3}. To prove (d), note that according to \eq{fm3eq58} the composition
\begin{equation*}
\begin{tikzcd}
\ti\Om_7^{\bs\Spin}(\Sp)\rar{\eq{fm3eq55}} & \begin{array}{l}\Om_7^{\bs\Spin}(\cL B\Sp;B\Sp)\cong \\ \Om_7^{\bs\Spin}(\cL M\Spin(4);M\Spin(4))\end{array}\arrow[r,"\hat\xi^{\bs\Spin}_{n-1}(M\Spin(4))",xshift=-3ex] & \Om_8^{\bs\Spin}(M\Spin(4)).
\end{tikzcd}
\end{equation*}
maps $\vartheta_2\mapsto\ze_2-\ze_2'$. By (c), we can lift $2\ze_2$ to $\Om_7^{\bs\Spin}(\cL M\SU(2);M\SU(2))$, which maps to $\Om_7^{\bs\Spin}(\cL M\Spin(4);M\Spin(4))$, and therefore $2\ze_2-(\ze_2-\ze_2')=\ze_2+\ze_2'$ can also be lifted. Part (f) follows from \eq{fm3eq58} as $\upsilon\mapsto\al_1\ze_2$.
\end{proof}

We now complete the proof of Theorem \ref{fm3thm2}(b) and verify Table \ref{fm3tab3}. In view of Table \ref{fm3tab1}, Lemma \ref{fm18lem6}(a) shows that \eq{fm2eq7} is surjective for all $n\le 7$ and all~$T$.

If $n=8$, then Lemma \ref{fm18lem5} implies the image of $\hat\xi^{\bs\Spin}_7(T)$ is a subgroup of index at least two. Moreover, by Lemma \ref{fm18lem6}(b)--(e) the subgroups listed in Table \ref{fm3tab3} are all contained in the image of $\hat\xi^{\bs\Spin}_7(T)$ and are clearly subgroups of index two, so must be the entire image. This proves Theorem \ref{fm3thm2}(b) for $n=8$ and all~$T$.

Let $n=9$. Lemma \ref{fm18lem6}(f) implies $\al_1\ze_2\in\Im\hat\xi^{\bs\Spin}_8(T)$ for $T=M\U(2),\ab M\SO(4),\ab K(\Z,4),\ab K(\Z_2,4)$, and then Table \ref{fm3tab1} shows that $\Im\hat\xi^{\bs\Spin}_8(T)$ is surjective in these cases. Moreover, Lemma \ref{fm18lem6}(g) implies $\eta\in\Im\hat\xi^{\bs\Spin}_8(M\SO(4))$ and, the image of $\hat\xi^{\bs\Spin}_{n-1}(T)$ being closed under multiplication by $\al_1$, Lemma \ref{fm18lem6}(b) implies $\al_1\frac{\ze_1}{4}\in\Im\hat\xi^{\bs\Spin}_8(M\SO(4))$. Hence $\hat\xi^{\bs\Spin}_8(M\SO(4))$ is surjective by Table~\ref{fm3tab1}.

It remains to prove Table \ref{fm3tab3} for $T=M\SU(2)$ and $T=M\Spin(4)$. Consider the commutative diagram induced by $\psi:M\SU(2)\ra M\U(2)$,
\begin{equation*}
\begin{tikzcd}[column sep=huge]
 \Om_8^{\bs\Spin}(\cL M\SU(2);M\SU(2))\rar{\hat\xi^{\bs\Spin}_8(M\SU(2))}\dar{(\cL \psi)_*} & \ti\Om_9^{\bs\Spin}(M\SU(2))\dar{\psi_*}\\
 \Om_8^{\bs\Spin}(\cL M\U(2);M\U(2))\rar{\hat\xi^{\bs\Spin}_8(M\U(2))} & \ti\Om_9^{\bs\Spin}(M\U(2)).
\end{tikzcd}
\end{equation*}
According to Theorem \ref{fm3thm2}(a) we have $\Om_8^{\bs\Spin}(\cL M\SU(2);M\SU(2))\cong\Z_2$ and $\Om_8^{\bs\Spin}(\cL M\U(2);M\U(2))\cong\Z$, so the left vertical morphism $(\cL\psi)_*$ in the diagram vanishes. By the commutativity of the diagram, $\psi_*(\Im\hat\xi^{\bs\Spin}_8(M\SU(2)))=\Im(\hat\xi^{\bs\Spin}_8(M\U(2)\circ(\cL\psi)_*)=0$. The right vertical map $\psi_*$ in the diagram is an isomorphism by Table \ref{fm3tab1}, hence $\Im\hat\xi^{\bs\Spin}_8(M\SU(2))=0$.

For $T=M\Spin(4)$, note in \S\ref{fm185} that $\al_1:\Om_8^{\bs\Spin}(\cL M\Spin(4);M\Spin(4))\ab\ra\ab\Om_9^{\bs\Spin}(\cL M\Spin(4);M\Spin(4))$ is surjective. We know $\Im\hat\xi^{\bs\Spin}_7(M\Spin(4))=\Z\an{\ze_1,\ze_2-\ze_2',\ze_2+\ze_2'}$ and have $\al_1\ze_1=0$ by \eq{fm3eq8}, hence $
\Im\hat\xi^{\bs\Spin}_8(M\Spin(4))=\al_1\Im\hat\xi^{\bs\Spin}_7(M\Spin(4))=\Z_2\an{\al_1(\ze_2+\ze_2')}$.

\section{Proofs of theorems in \S\ref{fm9}--\S\ref{fm11}}
\label{fm19}

\subsection{Proof of Theorem \ref{fm9thm1}}
\label{fm191}

For (a), let $X$ be a compact $n$-manifold with a $\bs B$-structure. By Definition \ref{fm9def2}, $\sO$ is orientable for $X$ if and only if there exists a natural isomorphism $\eta_X$ in the diagram
\begin{equation*}
\xymatrix@C=55pt@R=10pt{
\ddrrtwocell_{}\omit^{}\omit{_{\,\,\,\,\,\eta_X}} & {\Bord^{\bs B}_n(BG)} \ar[dr]^{\sO}  \\
*+[r]{\Bord_X(BG)} \ar[ur]^{\Pi_X^{\bs B}}  \ar[dr]_\boo &   & *+[l]{A\qs B} \ar[dl]^{F_{A\qs B}^{0\qs B}} \\
& {0\qs B.} & }
\end{equation*}
As $B$ is abelian, the existence of $\eta_X$ is equivalent to the commuting of the diagram for each object $P$ in $\Bord_X(BG)$:
\e
\begin{gathered}
\xymatrix@C=55pt@R=10pt{
& {\Aut_{\Bord^{\bs B}_n(BG)}(X,P)} \ar[dr]^{\sO} \\
*+[r]{\Aut_{\Bord_X(BG)}(P)} \ar[ur]^{\Pi_{X,P}^{\bs B}}  \ar[dr]_{\ul{0}} && *+[l]{B} \ar[dl]^{\id_{B}} \\
& {B.} }
\end{gathered}
\label{fm19eq1}
\e

We extend \eq{fm19eq1} to the diagram:
\e
\begin{gathered}
\xymatrix@!0@C=40pt@R=30pt{
*+[r]{\Om^{\bs B}_n(\cL BG;BG)} \ar[rrrr]^(0.6){\hat\xi^{\bs B}_n(BG)} &&&& *+[l]{\Om_{n+1}^{\bs B}(BG)}
\ar[d]_{\eq{fm4eq5}}^\cong \ar[drrr]^{\eq{fm4eq4}}_(0.3)\cong
\\
*+[r]{\Om^{\bs B}_n(\cL BG)} \ar[u]_{\Pi^{\bs B}_n(BG)} \ar[urrrr]^(0.4){\xi^{\bs B}_n(BG)}
&&&& *+[l]{\Aut_{\Bord^{\bs B}_n(BG)}(X,P)} \ar[drrr]_{\sO} &&& *+[l]{\Aut_{\Bord^{\bs B}_n(BG)}(\boo)} \ar[lll]_(0.75){\ot\id_{(X,P)}} \ar[d]^{\sO} \\
*+[r]{\Aut_{\Bord_X(BG)}(P)} \ar[u]_{\chi_P^{\bs B}} \ar[urrrr]^(0.4){\Pi_{X,P}^{\bs B}}  \ar[drrrr]_(0.4){\ul{0}} &&&&&&& *+[l]{B} \ar[dlll]^{\id_{B}} \\
&&&& *+[l]{B.} }
\end{gathered}
\label{fm19eq2}
\e
Here the top left triangle commutes by \eq{fm2eq7}, the top left parallelogram commutes by \eq{fm4eq15}, the top right triangle commutes by \eq{fm4eq6}, and the bottom right triangle commutes as $\sO$ is a monoidal functor.

We know that all of \eq{fm19eq2} commutes except possibly the bottom parallelogram. The route clockwise round the outside of \eq{fm19eq2} from $\Om^{\bs B}_n(\cL BG)$ to $B$ appears in \eq{fm9eq3} as a route from $\Om^{\bs B}_n(\cL BG)$ to $B$. If $\Xi_{n,\sO}^{\bs B,G}\equiv\ul{0}$ then the composition of this route in \eq{fm9eq3} is $\ul{0}$, since \eq{fm9eq3} commutes. Thus the outside of \eq{fm19eq2} commutes. This forces the bottom parallelogram of \eq{fm19eq2} to commute, so $\sO$ is orientable for $X$. This proves the `if' part of~(a).

For the `only if', suppose $\sO$ is orientable for every compact $n$-manifold $X$ with $\bs B$-structure. Any element $\ti\om$ of $\Om^{\bs B}_n(\cL BG;BG)$ can be lifted through $\Pi^{\bs B}_n(BG)$ to some $\om\in\Om^{\bs B}_n(\cL BG)$, and we may then choose an object $(X,\bar Q)$ in $\Bord^{\bs B}_n(\cL BG)$ whose class in $\pi_0(\Bord^{\bs B}_n(\cL BG))$ is identified with $\om$ under \eq{fm4eq11}. Then $\bar Q\ra X\t\cS^1$ is a principal $G$-bundle. Define $P\ra X$ by $P=\bar Q\vert_{X\t\{0\}}$. Define $Q\ra X\t[0,1]$ to be the pullback of $\bar Q\ra X\t\cS^1$ by $\id_X\t\pi$, where $\pi:[0,1]\ra\cS^1=\R/\Z=[0,1]/(0\!\sim\! 1)$ is the projection. Then $Q\vert_{X\t\{0\}}=Q\vert_{X\t\{1\}}=P$, so $[Q]:P\ra P$ is a morphism in $\Bord_X(BG)$, that is, $[Q]\in\Aut_{\Bord_X(BG)}(P)$. The definition of $\chi_P^{\bs B}$ in Proposition \ref{fm4prop3} implies that $\chi_P^{\bs B}([Q])=\om$.

Since $\sO$ is orientable for $X$, \eq{fm19eq2} commutes. So comparing the two routes round the outside of \eq{fm19eq2} from $[Q]\in\Aut_{\Bord_X(BG)}(P)$ and using $\chi_P^{\bs B}([Q])=\om$ shows that the route clockwise round the outside of \eq{fm19eq2} from $\om\in\Om^{\bs B}_n(\cL BG)$ to $B$ is $\ul{0}$. Hence in \eq{fm9eq3} we see that $\Xi_{n,\sO}^{\bs B,G}\ci\Pi^{\bs B}_n(BG)(\om)=\Xi_{n,\sO}^{\bs B,G}(\ti\om)=\ul{0}$. As this holds for all $\ti\om\in\Om^{\bs B}_n(\cL BG;BG)$, this proves that $\Xi_{n,\sO}^{\bs B,G}\equiv 0$, completing~(a). 

Part (b) follows by a minor modification of the argument above: if $X$ is fixed, $\sO$ is orientable for $X$ if and only if $\Xi_{n,\sO}^{\bs B,G}$ is zero on every element of $\Om^{\bs B}_n(\cL BG;BG)$ of the form $[X,Q]$, for this fixed $X$, where $Q\ra X\t\cS^1$ relates to $P$ in \eq{fm19eq1} by $P=Q\vert_{X\t\{0\}}$ for $0\in \cS^1=[0,1]/(0\sim 1)$. The analogues (i)--(iii) are proved in a very similar way.

\subsection{Proof of Theorem \ref{fm10thm1}}
\label{fm192}

\begin{dfn}
\label{fm19def1}
We will define a new bordism category $\tBord^{\bs\Spin}_7(K(\Z,4))$, and show that it is equivalent to $\Bord^{\bs\Spin}_7(K(\Z,4))$ in Definition~\ref{fm6def1}.
\begin{itemize}
\setlength{\itemsep}{0pt}
\setlength{\parsep}{0pt}
\item[(a)] {\it Objects\/} of $\tBord^{\bs\Spin}_7(K(\Z,4))$ are triples $(X,N,[N]_\fund)$ for $X$ an oriented, spin 7-manifold and $N\subset X$ a compact, oriented 3-submanifold, and $[N]_\fund\in C_3(X,\Z)$ a choice of fundamental cycle for $N$ in homology.
\item[(b)] {\it Morphisms} $[Y,C]:(X_0,N_0,[N_0]_\fund)\ra(X_1,N_1,[N_1]_\fund)$ are equivalence classes of pairs $(Y,M),$ see (c), where $Y$ is a compact, oriented, spin 8-manifold with boundary $\pd Y=-X_0\amalg X_1$ in oriented spin 7-manifolds, and $C\in C_4(Y,\Z)$ is a 4-chain on $Y$ such that $\pd C=-[N_0]_\fund+[N_1]_\fund$.
\item[(c)] In the situation of (b), two choices $(Y_0,C_0)$ and $(Y_1,C_1)$ are {\it equivalent} if there exists a pair $(Z,D),$ where $Z$ is a compact, oriented, spin 9-manifold with corners with an oriented, spin diffeomorphism
\begin{equation*}
\pd Z\cong (-X_0\t[0,1])\amalg (X_1\t[0,1]) \amalg (-Y_0\t\{0\})\amalg (Y_1\t\{1\}),
\end{equation*}
and $D\in C_5(Z,\Z)$ is a 5-chain on $Z$ with
\begin{equation*}
\pd D=-[N_0]_\fund\bt[0,1]_\fund+[N_1]_\fund\bt[0,1]_\fund-C_0+C_1.
\end{equation*}
\item[(d)] If $[Y,C]:(X_0,N_0,[N_0]_\fund)\ra(X_1,N_1,[N_1]_\fund)$ and $[\hat Y,\hat C]:(X_1,\ab N_1,\ab[N_1]_\fund)\ab\ra(X_2,N_2,[N_2]_\fund)$ are morphisms, the {\it composition} is 
\begin{equation*}
[\hat Y,\hat C]\ci[Y,C]=\bigl[\hat Y\amalg_{X_1}Y,\hat C+C\bigr].
\end{equation*}
That is, we glue $Y,\hat Y$ along their common boundary component $X_1$ to make an oriented spin 8-manifold $\hat Y\amalg_{X_1}Y$, and we add the 4-chains $\hat C,C$. Composition is associative.
\item[(e)] {\it Identities\/} are $\id_{(X,N,[N]_\fund)}=\bigl[X\t[0,1],[N]_\fund\bt[0,1]_\fund\bigr]$.
\item[(f)] The {\it monoidal structure\/} on $\tBord^{\bs\Spin}_7(K(\Z,4))$ is defined as
\begin{equation*}
(X,N,[N]_\fund)\ot (\hat X,\hat N,[\hat N]_\fund)=(X\amalg\hat X,N\amalg\hat N,[N]_\fund+[\hat N]_\fund)
\end{equation*}
on objects, and $[Y,C]\ot[\hat Y,\hat C]=[Y\amalg\hat Y,C+\hat C]$ on morphisms.
\item[(g)] The {\it unit object} in $\tBord^{\bs\Spin}_7(K(\Z,4))$ is $\boo=(\es,\es,0).$
\item[(h)] For a pair of objects, the {\it symmetry isomorphism\/} is
\begin{align*}
&\si_{X_0,X_1}=[Y,C]:(X_0,N_0,[N_0]_\fund)\ot(X_1,N_1,[N_1]_\fund)\\
&\qquad \longra (X_1,N_1,[N_1]_\fund)\ot(X_0,N_0,[N_0]_\fund)
\end{align*}
where $Y=(X_0\amalg X_1)\t[0,1]$ and $C=[N_0]_\fund\bt[0,1]_\fund+[N_1]_\fund\bt[0,1]_\fund$, taking the boundary diffeomorphisms to be the obvious identifications $(X_0\amalg X_1)\t\{0\}\cong X_0\amalg X_1$ and $(X_0\amalg X_1)\t\{1\}\cong X_1\amalg X_0$, swapping round factors at~$1\in\pd[0,1]$.
\end{itemize}

Next, we define an orientation functor $\ti\sH_7^\Z:\tBord^{\bs\Spin}_7(K(\Z,4))\ab\ra 0\qs\Z_2$. For each object $(X,N,[N]_\fund)$ in $\tBord^{\bs\Spin}_7(K(\Z,4))$ with $N\ne\es$, define
\ea
\ti\sH_7^\Z(X,N,[N]_\fund)&=\bigl\{F:\bigl\{\text{nonvanishing sections $s$ of $\nu_N\ra N$}\bigr\}\ra\{\pm 1\}
\nonumber\\
&\text{such that $F(s)=(-1)^{d(s,s')}F(s')$ for all $s,s'$}\bigr\},
\label{fm19eq3}
\ea
where $d(s,s')$ is as in \eq{fm10eq1}. Let $s_0$ be a nonvanishing section of $\nu_N\ra N$, and define a function $F_0$ as in \eq{fm19eq3} by $F_0(s)=(-1)^{d(s,s_0)}$. Then from \eq{fm10eq2} we see that $\ti\sH_7^\Z(X,N)=\{F_0,-F_0\}$, so 
$\ti\sH_7^\Z(X,N)$ is a $\Z_2$-torsor under the $\Z_2$ action of multiplying functions $F$ by $\{\pm 1\}$. When $N=\es$ we define $\ti\sH_7^\Z(X,\es)=\{\pm 1\}$.

Now let $[Y,C]:(X_0,N_0,[N_0]_\fund)\ra(X_1,N_1,[N_1]_\fund)$ be a morphism in $\tBord^{\bs\Spin}_7(K(\Z,4))$. We will define an isomorphism of $\Z_2$-torsors
\e
\ti\sH_7^\Z([Y,C]):\ti\sH_7^\Z(X_0,N_0,[N_0]_\fund)\longra \ti\sH_7^\Z(X_1,N_1,[N_1]_\fund).
\label{fm19eq4}
\e
Let $F\in \ti\sH_7^\Z(X_0,N_0,[N_0]_\fund)$. Define
\begin{equation*}
\ti\sH_7^\Z([Y,C])(F):\bigl\{\text{nonvanishing sections $s_1$ of $\nu_{N_1}\ra N_1$}\bigr\}\longra\{\pm 1\}
\end{equation*}
as follows: choose a representative $(Y,C)$ for $[Y,C]$. Choose a nonvanishing section $s_0$ of $\nu_{N_0}\ra N_0$. Choose a vector field $v$ on $Y$ which is tangent to $\pd Y=X_0\amalg X_1$ at $\pd Y$, and which on $N_0\subset X_0$ projects to the section $s_0$ of $\nu_{N_0}$, and on $N_1\subset X_1$ projects to the section $s_1$ of $\nu_{N_1}$. Define $C'\in C_4(Y,\Z)$ by $C'=\exp(\ep v)(C)$ for $\ep>0$ small. That is, we move $C$ a small distance in direction of $v$. As $s_0,s_1$ are nonvanishing, $\exp(\ep v)(N_i)$ is disjoint from $N_i$ in $X_i\subset\pd Y$ for $i=0,1$. Hence $\pd C$ and $C'$ have disjoint support, and $C$ and $\pd C'$ have disjoint support. Therefore we can define the homological intersection $C\bu C'\in\Z$ on the oriented 8-manifold $Y$, generalizing the intersection product $\bu:H_4(Y,\Z)\t H_4(Y,\Z)\ra\Z$. Now define
\e
\ti\sH_7^\Z([Y,C])(F):s_1\longmapsto F(s_0)\cdot(-1)^{C\bu C'}.
\label{fm19eq5}
\e
This also makes sense if $N_0=\es$ or $N_1=\es$, when we replace $F(s_i)$ by $F\in\{\pm 1\}$.

One can show that if we replace $s_0$ by $\hat s_0$, giving $\hat C'$ instead of $C'$, then $C\bu\hat C'=d(\hat s_0,s_0)+C\bu C'$. As $F(\hat s_0)=(-1)^{d(\hat s_0,s_0)}F(s_0)$, we see $\ti\sH_7^\Z([Y,C])(F)(s_1)$ is independent of the choice of $s_0$. Similarly, if we replace $s_1$ by $\ti s_1$, giving $\ti C'$ instead of $C'$, then $C\bu\ti C'=C\bu C'+d(s_1,\ti s_1)$, so $\ti\sH_7^\Z([Y,C])(F)(s_1)=(-1)^{d(s_1,\ti s_1)}F(\ti s_1)$, and $\ti\sH_7^\Z([Y,C])$ does map as in \eq{fm19eq4}. It is also straightforward to show using (c) above that $\ti\sH_7^\Z([Y,C])$ is independent of the representative $(Y,C)$. Hence $\ti\sH_7^\Z([Y,C])$ is well defined.

It is now easy to show that $\ti\sH_7^\Z$ is compatible with composition, and so is a functor. We define a monoidal structure on $\ti\sH_7^\Z$ by, for all $(X_0,N_0,[N_0]_\fund),\ab(X_1,N_1,[N_1]_\fund)$ in $\tBord^{\bs\Spin}_7(K(\Z,4))$, as in \eq{fmAeq2} we define isomorphisms in $0\qs\Z_2$ by
\begin{align*}
\phi_{X_0,X_1}&:\ti\sH_7^\Z(X_0,N_0,[N_0]_\fund)\ot_{\Z_2}\ti\sH_7^\Z(X_1,N_1,[N_1]_\fund)\\
&\qquad \longra \sH_7^\Z(Z_0\amalg X_1,N_0\amalg N_1,[N_0]_\fund+[N_1]_\fund),\\
\phi_{X_0,X_1}&:F_0\ot_{\Z_2}F_1\longmapsto \bigl(s_0\amalg s_1\longmapsto F_0(s_0)\cdot F_1(s_1)\bigr),
\end{align*}
and we define $\phi_{\boo}:\boo_{0\qs\Z_2}=\Z_2\ra \ti\sH_7^\Z(\es,\es,0)=\{\pm 1\}$ to be the usual isomorphism $\ul{n}\mapsto (-1)^{\ul{n}}$. The $\phi_{X_0,X_1}$ commute with symmetric structures, so $\ti\sH_7^\Z:\tBord^{\bs\Spin}_7(K(\Z,4))\ab\ra 0\qs\Z_2$ is a symmetric monoidal functor.
\end{dfn}

\begin{prop}
\label{fm19prop1}
There is an equivalence of Picard groupoids
\e
\Phi:\Bord^{\bs\Spin}_7(K(\Z,4))\,{\buildrel\simeq\over\longra}\,\tBord^{\bs\Spin}_7(K(\Z,4)).
\label{fm19eq6}
\e
and a monoidal natural isomorphism $\eta:\ti\sH_7^\Z\ci\Phi\Ra\sH_7^\Z$.
\end{prop}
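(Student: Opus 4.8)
The plan is to realize $\tBord^{\bs\Spin}_7(K(\Z,4))$ as a geometric model for $\Bord^{\bs\Spin}_7(K(\Z,4))$ via chain-level Poincar\'e--Thom duality, and then to pin down $\eta$ by computing invariants and invoking the uniqueness part of Theorem \ref{fmAthm2}. First I would construct a symmetric monoidal functor $\Psi:\tBord^{\bs\Spin}_7(K(\Z,4))\ra\Bord^{\bs\Spin}_7(K(\Z,4))$ by dualizing. On objects, $\Psi(X,N,[N]_\fund)=(X,C_{[N]_\fund})$, where $C_{[N]_\fund}$ is a $4$-cocycle representing the Poincar\'e dual of the class of $[N]_\fund$ — concretely a cochain representative of the image of $[N]_\fund$ under a fixed chain-level cap-with-$[X]$ equivalence, or equivalently the Thom cocycle of a tubular neighbourhood of $N$ as in \S\ref{fm25}; since only the cohomology class matters in $\tBord$ up to the morphism relation, the representative is immaterial. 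On a morphism $[Y,C]$ with $\pd C=-[N_0]_\fund+[N_1]_\fund$ one dualizes $C$ on the spin $8$-manifold $Y$ to a $4$-cocycle $D$ restricting to $C_{[N_i]_\fund}$ on $X_i$, and sets $\Psi([Y,C])=[Y,D]$. Checking that this respects the equivalence of Definition \ref{fm19def1}(c), composition, identities, and the monoidal and symmetry data is a lengthy but routine chain-level verification, the only delicate point being collars and corners, handled as in \S\ref{fm6}. Let $\Phi$ be a quasi-inverse of $\Psi$; its existence follows from the next step (and concretely it can be built using Thom's representability of degree $3$ homology classes by submanifolds, as in Theorem \ref{fm2thm2}, together with a choice of fundamental cycles).

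Second I would show $\Psi$ is an equivalence of Picard groupoids, which by Theorem \ref{fmAthm2} reduces to checking that $\Psi$ induces isomorphisms on $\pi_0$ and $\pi_1$. For $\pi_1$: morphisms $\boo\ra\boo$ in $\tBord^{\bs\Spin}_7(K(\Z,4))$ are pairs $(Y,C)$ with $Y$ a closed spin $8$-manifold and $C$ a $4$-\emph{chain} (a cycle, as $\pd C=0$), modulo bordisms of pairs, and sending $[Y,C]$ to the classifying map of $[C]\in H_4(Y,\Z)\cong H^4(Y,\Z)$ identifies $\pi_1(\tBord^{\bs\Spin}_7(K(\Z,4)))\cong\Om^{\bs\Spin}_8(K(\Z,4))$ compatibly with $\Psi_*$. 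For $\pi_0$: Table \ref{fm3tab1} gives $\Om^{\bs\Spin}_7(K(\Z,4))=0$, so $\pi_0(\Bord^{\bs\Spin}_7(K(\Z,4)))=0$, and it suffices to see $\pi_0(\tBord^{\bs\Spin}_7(K(\Z,4)))=0$; since the class of $(X,N,[N]_\fund)$ in $\Om^{\bs\Spin}_7(K(\Z,4))$ vanishes there is a spin $8$-manifold $Y$ with $\pd Y=X$ and a $4$-cocycle restricting to $C_{[N]_\fund}$, and a relative chain-level Poincar\'e duality argument upgrades this to a $4$-chain $C$ on $Y$ with $\pd C=[N]_\fund$, giving the required null-bordism.

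Third I would produce the monoidal natural isomorphism, equivalently exhibit one $\ti\sH_7^\Z\Ra\sH_7^\Z\ci\Psi$. Both are symmetric monoidal functors $\tBord^{\bs\Spin}_7(K(\Z,4))\ra 0\qs\Z_2$, and $\pi_0=0$ forces $H^2_\sym(\pi_0,\Z_2)=0$, so by Theorem \ref{fmAthm2} such a functor is determined up to monoidal natural isomorphism by its effect on $\pi_0$ and $\pi_1$. On $\pi_0$ both are trivially $0$. On $\pi_1\cong\Om^{\bs\Spin}_8(K(\Z,4))$ I would compute $\pi_1(\ti\sH_7^\Z)$ from \eq{fm19eq5}: for $[Y,C]\in\Aut_{\tBord}(\boo)$ the induced automorphism of $\{\pm1\}$ is multiplication by $(-1)^{C\bu C'}$ with $C'$ a small pushoff of $C$, so $C\bu C'$ is the self-intersection number of the cycle $C$, i.e. $C\bu C'=\int_Y a\cup a$ for $a\in H^4(Y,\Z)$ Poincar\'e dual to $[C]$. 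Comparing with \eq{fm3eq19}, which maps $[Y,a]\mapsto(\int_Y a^2,\ldots)$ with $\ze_2\mapsto(1,0)$, $\ze_3\mapsto(0,1)$, gives $\pi_1(\ti\sH_7^\Z):\ze_2\mapsto\ul{1}$, $\ze_3\mapsto\ul{0}$, and $0$ on $\Om^{\bs\Spin}_8(*)$; by \eq{fm9eq6} this is exactly $\pi_1(\sH_7^\Z)$. Hence $\ti\sH_7^\Z\cong\sH_7^\Z\ci\Psi$, and composing with the quasi-inverse $\Phi$ yields $\ti\sH_7^\Z\ci\Phi\cong\sH_7^\Z$.

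The main obstacle will be the chain-level Poincar\'e--Thom bookkeeping: making $\Psi$ well-defined and functorial at the cochain level in the presence of boundaries and corners, choosing the chain-level cap product compatibly with collars, and the relative duality argument in the $\pi_0$ step upgrading a bounding cocycle to a $4$-chain with prescribed boundary $[N]_\fund$. One must also check carefully that $\ti\sH_7^\Z$ as in Definition \ref{fm19def1} is genuinely well-defined and symmetric monoidal — the identities $C\bu\hat C'=d(\hat s_0,s_0)+C\bu C'$ and $C\bu\ti C'=C\bu C'+d(s_1,\ti s_1)$ used there are elementary but require the intersection-theory arguments sketched in Definition \ref{fm19def1}, and the eventual comparison with conditions (i),(ii) of Definition \ref{fm10def2}, which feeds Theorem \ref{fm10thm1}, rests on \eq{fm10eq2}--\eq{fm10eq4}.
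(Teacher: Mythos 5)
Your proposal is correct and follows essentially the same route as the paper: Poincar\'e duality to pass between $4$-cocycles and $3$-cycles/$4$-chains, Thom's representability of degree-$3$ homology classes by submanifolds, and then Theorem \ref{fmAthm2} with $\Om_7^{\bs\Spin}(K(\Z,4))=0$ killing the $H^2_\sym$ obstruction and the self-intersection computation $\pi_1(\ti\sH_7^\Z)([Y,\al])=\al\bu\al\bmod 2$ matching \eq{fm9eq6}. The one labour-saving device in the paper worth noting is that it sidesteps the chain-level bookkeeping you flag as the main obstacle: rather than dualizing cochains explicitly, it observes that $\Bord^{\bs\Spin}_7(K(\Z,4))$ is independent up to equivalence of the cochain model used, defines an intermediate category $\hBord^{\bs\Spin}_7(K(\Z,4))$ with $(n-4)$-chains in place of $4$-cochains, and then exhibits $\tBord^{\bs\Spin}_7(K(\Z,4))\ra\hBord^{\bs\Spin}_7(K(\Z,4))$ as a fully faithful functor that is essentially surjective by Thom.
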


\begin{proof}
By Poincar\'e duality, if $X$ is a compact oriented $n$-manifold without boundary we have canonical isomorphisms $H^4(X,\Z)\cong H_{n-4}(X,\Z)$, and if $X$ has boundary then $H^4(X,\Z)\cong H_{n-4}(X,\pd X,\Z)$. In fact, we can use complexes $(C_{n-*}(X,\Z),\pd)$ rather than $(C^*(X,\Z),\d)$ to compute cohomology. 

Since $\Bord^{\bs\Spin}_7(K(\Z,4))$ in Definition \ref{fm6def1} does not depend, up to equivalence of categories, on the cochain model $(C_*(-,\Z),\d)$ used to define cohomology of manifolds, we can define a variant $\hBord^{\bs\Spin}_7(K(\Z,4))$ of $\Bord^{\bs\Spin}_7(K(\Z,4))$ in Definition \ref{fm6def1} in which we replace 4-cochains $C\in H^4(X,\Z)$ for an $n$-manifold $X$ by $(n-4)$-chains $C'\in H_{n-4}(X,\Z)$, and this will give an equivalent Picard groupoid $\hBord^{\bs\Spin}_7(K(\Z,4))\simeq\Bord^{\bs\Spin}_7(K(\Z,4))$.

Next observe that there is an obvious forgetful functor $\tBord^{\bs\Spin}_7(K(\Z,4))\ra\hBord^{\bs\Spin}_7(K(\Z,4))$ mapping $(X,N,[N]_\fund)\!\mapsto\!(X,[N]_\fund)$ on objects and $[Y,C]\ab\mapsto[Y,C]$ on morphisms. As the definitions of morphisms in the two categories are the same, this is an equivalence of $\tBord^{\bs\Spin}_7(K(\Z,4))$ with a subcategory of $\hBord^{\bs\Spin}_7(K(\Z,4))$. An object $(X,B)$ in $\hBord^{\bs\Spin}_7(K(\Z,4))$, so that $B\in C_3(X,\Z)$ with $\pd B=0$, lies in the essential image of $\tBord^{\bs\Spin}_7(K(\Z,4))$ if $[B]=[N]\in H_3(X,\Z)$ for some compact, oriented, embedded submanifold $N\subset X$. But by Thom \cite[Th.~II.27]{Thom}, every class in $H_3(X,\Z)$ is realized by an compact oriented 3-submanifold. So $\tBord^{\bs\Spin}_7(K(\Z,4))\simeq\hBord^{\bs\Spin}_7(K(\Z,4))$. Composing with the equivalence $\hBord^{\bs\Spin}_7(K(\Z,4))\simeq\Bord^{\bs\Spin}_7(K(\Z,4))$ gives the equivalence of Picard groupoids~\eq{fm19eq6}.

To show there is a monoidal natural isomorphism $\eta:\ti\sH_7^\Z\ci\Phi\Ra\sH_7^\Z$, we apply Theorem \ref{fmAthm2}(c). By Proposition \ref{fm6prop1}, we must show three things:
\begin{itemize}
\setlength{\itemsep}{0pt}
\setlength{\parsep}{0pt}
\item[(i)] $\pi_0(\ti\sH_7^\Z\ci\Phi)=\pi_0(\sH_7^\Z)$ in morphisms $\Om_7^{\bs\Spin}(K(\Z,4))\ra 0$;
\item[(ii)] $\pi_1(\ti\sH_7^\Z\ci\Phi)=\pi_1(\sH_7^\Z)$ in morphisms $\Om_8^{\bs\Spin}(K(\Z,4))\ra\Z_2$;
\item[(iii)] The difference class $\om(\ti\sH_7^\Z\ci\Phi,\sH_7^\Z)$ in $H^2_\sym(\Om_7^{\bs\Spin}(K(\Z,4)),\Z_2)$ is zero.
\end{itemize}
Here (i) is clearly trivial, and (iii) is trivial as $\Om_7^{\bs\Spin}(K(\Z,4))=0$ by Tables \ref{fm2tab1} and \ref{fm3tab1}. For (ii), $\pi_1(\ti\sH_7^\Z)$ maps $[Y,\al]\mapsto \al\bu\al\mod 2$, where $Y$ is a compact spin 8-manifold, $\al\in H_4(Y,\Z)$, and $\bu:H_4(Y,\Z)\t H_4(Y,\Z)\ra\Z$ is the intersection product. Also $\pi_1(\Phi)$ maps $[Y,\be]\mapsto[Y,\Pd^{-1}(\be)]$, where $Y$ is a compact spin 8-manifold, $\be\in H^4(Y,\Z)$, and $\Pd^{-1}:H^4(Y,\Z)\ra H_4(Y,\Z)$ is the Poincar\'e duality isomorphism. Therefore $\pi_1(\ti\sH_7^\Z\ci\Phi)$ maps $[Y,\be]\mapsto\int_Y\be\cup\be\mod 2$. But $\pi_1(\sH_7^\Z)$ is defined on the generators of $\ti\Om_8^{\bs\Spin}(K(\Z,4))$ in \eq{fm9eq6}, and from \eq{fm3eq4}--\eq{fm3eq5} and \eq{fm9eq6} we see that $\pi_1(\sH_7^\Z)$ does map $[Y,\be]\mapsto\int_Y\be\cup\be\mod 2$. The proposition follows.
\end{proof}

Now let $X$ be a compact spin 7-manifold. Proposition \ref{fm19prop1} implies that orientations on $X$ for the orientation functors $\sH_7^\Z:\Bord^{\bs\Spin}_7(K(\Z,4))\ra 0\qs\Z_2$ and $\ti\sH_7^\Z:\tBord^{\bs\Spin}_7(K(\Z,4))\ra 0\qs\Z_2$ are equivalent. However, Definition \ref{fm19def1} is designed so that an orientation on $X$ for $\ti\sH_7^\Z$ is equivalent to a flag structure on $X$ in the sense of Definition \ref{fm10def2}. To see this, note that if $(N_0,s_0),(N_1,s_1)$ are disjoint flagged submanifolds in $X$ with $[N_0]=[N_1]$ in $H_3(X,\Z)$, and $[N_i]_\fund$ is a fundamental chain for $N_i$, then we can choose $C\in C_4(X\t[0,1],\Z)$ with $\pd C=-[N_0]_\fund\t\{0\}+[N_1]_\fund\t\{1\}$, and as in Definition \ref{fm19def1} with $Y=X\t[0,1]$ we can choose $C'$ and define the sign $(-1)^{C\bu C'}$ in \eq{fm19eq5}. But we can show that in this case $(-1)^{C\bu C'}=(-1)^{D((N_0,s_0),(N_1,s_1))}$ for $D((N_0,s_0),(N_1,s_1))$ as in Definition \ref{fm10def1}. Theorem \ref{fm10thm1} follows.

\subsection{Proof of Theorem \ref{fm11thm1}}
\label{fm193}

All of part (a) is immediate except the data in Table \ref{fm11tab1}. To prove Table \ref{fm11tab1} when $G=\SU(m)$, observe that for $[X,P]$ in $\Om_8^{\bs\Spin}(B\SU(m))$ we can compute $\pi_1(\sN_7^{\bs\Spin,\SU(m)})([X,P])$ as the index of the positive Dirac operator $\sD^+_X$ on the compact spin $8$-manifold $X$ twisted by the real vector bundle $\Ad(P)$. Let $E\ra X$ be the vector bundle associated to the principal $\SU(m)$-bundle $P\ra X$ with fibre $\C^m$. To compute the numerical index of an elliptic operator, we may pass to its complexification and then we can apply the Atiyah--Singer Index Formula. As $(\Ad(P)\ot\C)\op\C=E^*\ot E$, this gives
\begin{align*}
&\ind\bigl(\sD^+_X\ot\Ad(P)\bigr)=\ts\int_X\hat{A}(X)\ch\bigl(\su(E)\ot\C-\su(\ul\C^m)\ot\C\bigr)\\
&=\ts\int_X\left(1-\frac{p_1(TX)}{24} + \frac{7p_1(TX)^2-4p_2(X)}{5760} \right)\cdot\left(\ch(E^*)\ch(E)-m^2\right).
\end{align*}
Using $c_1(E)=0$ to simplify the expression, we have
\begin{equation*}
\ch(E)\ch(E^*)-m^2=-2m c_2(E)+\frac{(6+m)c_2(E)^2-2m c_4(E)}{6},
\end{equation*}
which we substitute into the index formula and find
\e
\ts\ind\bigl(\sD^+_X\ot\Ad(P)\bigr)=\frac{m}{12}\int_X p_1(TX)c_2(E)\!+\!\frac{m+6}{6}\int_X c_2(E)^2\!-\!\frac{m}{3}\int_X c_4(E).
\label{fm19eq7}
\e

In Theorem \ref{fm11thm1}, we have described $P\ra X$ in terms of a $4$-dimensional submanifold $M\subset X$ and a $\U(2)$-structure on its normal bundle $\nu_M$. We can rewrite the above index formula in terms of integrals over $M$, as follows. Let $\al\in H^4(X,\Z)$ be the cohomology class Poincar\'e dual to $[M]$. Then $c_2(E)=\al$ and $c_4(E)=c_1(\nu_M)^2\al$. Moreover, $p_1(TX)\vert_X=p_1(\nu_M)+p_1(TM)$ and $p_1(\nu_M)=-c_2(\nu_M\ot\C)=-c_2(\nu_M\op\overline{\nu_M})=-2c_2(\nu_M)+c_1(\nu_M)^2$. Also, $\al^2\vert_X=\al\cup e(\nu_M)=\al\cup c_2(\nu_M)$. Inserting all this into \eq{fm19eq7} and using the Hirzebruch Signature Theorem gives
\begin{align*}
\ind \bigl(\sD^+_X\ot\Ad(P)\bigr) &=
\begin{aligned}[t]
&\ts\frac{m}{12}\int_M \bigl(p_1(TM)-2c_2(\nu_M)+c_1(\nu_M)^2\bigr)\\
&\ts+\frac{m+6}{6}\int_M c_2(\nu_M)-\frac{m}{3}\int_M c_1(\nu_M)^2
\end{aligned}
 \\
 &=\ts\frac{m}{4}\sign(M)+\int_M c_2(\nu_M)-\frac{m}{4}\int_M c_1(\nu_M)^2.
\end{align*}
Recall from Theorem \ref{fm3thm1}(d) that \eq{fm3eq13} maps $\frac{\ze_1}{2}\mapsto(1,0,0),$ $\ze_2\mapsto (0,1,0),$ and $\ze_3\mapsto(0,0,1)$. By the previous formula, $\ind \bigl(\sD^+_X\ot\Ad(P)\bigr)$ is the inner product of \eq{fm3eq13} with the vector $(-2m,1,0)$. This verifies Table \ref{fm11tab1} in the case~$G=\SU(m)$.

The proof of Table \ref{fm11tab1} in the case $G=\Sp(m)$ is similar, and we explain the details next. For $[X,P]$ in $\Om_8^{\bs\Spin}(B\Sp(m))$ let $E\ra X$ be the quaternionic vector bundle associated to $P\ra X$ with fibre $\H^m$. For $\Sp(m)$, the complexified adjoint representation is the second symmetric power of the defining representation, therefore $\Ad(P)\ot\C=S^2_\C E$. Viewing $E$ as a complex vector bundle, we have
\begin{equation*}
\ch S^2_\C E=m(2m+1)-(2m+2)c_2(E)+\frac{m+7}{6}c_2(E)^2-\frac{m+4}{3}c_4(E).
\end{equation*}
The Atiyah--Singer Index Formula therefore gives
\begin{align}
\label{fm19eq8}
\ind \bigl(\sD^+_X\ot\Ad(P)\bigr)&=
\ts\int_X
\begin{aligned}[t]
&\ts\Bigl(1-\frac{p_1(TX)}{24} + \frac{7p_1(TX)^2-4p_2(X)}{5760} \Bigr)\cdot{}\\
&\ts\bigl(-(2m\!+\!2)c_2(E)\!+\!\frac{m+7}{6}c_2(E)^2\!-\!\frac{m+4}{3}c_4(E)\bigr)
\end{aligned}\\
&=\ts\frac{m+1}{12}\int_X p_1(TX)c_2(E)+\frac{m+7}{6}\int_Xc_2(E)^2-\frac{m+4}{3}\int_X c_4(E).\nonumber
\end{align}
In Theorem \ref{fm11thm1}, we have described $P\ra X$ in terms of a $4$-dimensional submanifold $M\subset X$ and a $\Spin(4)$-structure on its normal bundle $\nu_M$, with spinor bundles $\Si_{\nu_M}^\pm$. We can rewrite the above index formula in terms of integrals over $M$, as follows. We have
\begin{equation*}
c_2(E)=\al,\qquad c_2(E)^2=\bigl(c_2(\Si_{\nu_M}^+)-c_2(\Si_{\nu_M}^-)\bigr)\al,\qquad c_4(E)=-c_2(\Si_{\nu_M}^-)\al.
\end{equation*}
Moreover, since $\nu_M=\Hom_\H(\Si^-_{\nu_M},\Si^+_{\nu_M})$ we have $p_1(\nu_M)=-2c_2^+ - 2c_2^-$ and hence $p_1(TX)\vert_M=p_1(TM)+p_1(\nu_M)=p_1(TM)-2c_2^+ - 2c_2^-$. Inserting all this into the index formula \eq{fm19eq8} and applying the Hirzebruch Signature Theorem gives
\begin{equation*}
\ind \bigl(\sD^+_X\ot\Ad(P)\bigr)=\ts\frac{m+1}{4}\sign(M) + \int_M c_2(\Si_{\nu_M}^+).
\end{equation*}

Recall from Theorem \ref{fm3thm1}(d) that \eq{fm3eq16} maps $\ze_1\mapsto(1,0,0),$ $\ze_2\mapsto (0,1,0),$ and $\ze_2'\mapsto(0,0,-1)$. By the previous formula, $\ind \bigl(\sD^+_X\ot\Ad(P)\bigr)$ is in this case the inner product of \eq{fm3eq16} with the vector $(-4m-4,1,0)$. This verifies Table \ref{fm11tab1} in the case~$G=\Sp(m)$.

All of (b) is immediate except the data in Table \ref{fm11tab2}. Now as $\sN_8^{\bs\Spin,G}$ is a symmetric monoidal functor it satisfies $q'\ci \pi_0(\sN_8^{\bs\Spin,G})=\pi_1(\sN_8^{\bs\Spin,G})\ci q$. But $q:\Om_8^{\bs\Spin}(BG)\ra\Om_9^{\bs\Spin}(BG)$ acts by multiplication by $\al_1$ by Proposition \ref{fm5prop1}(a), and $q':\Z\ra\Z_2$ is reduction mod 2. As all the generators in Table \ref{fm11tab2} are of the form $\al_1\ze$, the data in Table \ref{fm11tab2} is determined by Table \ref{fm11tab1} as shown. Part (c) is immediate. This completes the proof.

\subsection{Proof of Theorem \ref{fm11thm2}}
\label{fm194}

For part (a), by Theorems \ref{fm3thm1} and \ref{fmAthm2} and \eq{fm5eq4}--\eq{fm5eq5}, all we have to do is determine $\pi_1(\sO_{7,4}^{\bs\Spin,\SO(4),*})$. In the splitting \eq{fm11eq3}, $\pi_1(\sO_{7,4}^{\bs\Spin,\SO(4),*})=0$ on $\Om_8^{\bs\Spin}(*)$, as $\Om_8^{\bs\Spin}(*)$ corresponds to elements $[Y,N]\in\Om_8^{\bs\Spin}(M\SO(4))$ with $N=\es$, so the action of \eq{fm11eq2} (as the index of an operator on $N$) is clearly zero. Thus it remains only to prove Table~\ref{fm11tab3}.

To compute the action of $\sO_{7,4}^{\bs\Spin,\SO(4),*}$ in \eq{fm11eq2} on $\frac{\ze_1}{4},\ze_2,\ze_3$ in \eq{fm3eq3}--\eq{fm3eq5}, observe that in the case in which $N$ and the fibres of $\nu_N\ra N$ are spin,
\begin{align*}
\sO_{7,4}^{\bs\Spin,\SO(4),\pm}([Y,N])=\sO_7\bigl([N,F_N^\pm]\bigr)=\ind F_N^\pm=\int_N\hat{A}(TN)\ch(\Si_\nu^\pm),
\end{align*}
where the first step holds by Definition \ref{fm9def6} with $F_N^\pm$ the Fueter operators of Definition \ref{fm9def5}, the second step by Theorem \ref{fm9thm2}, and the third by the Atiyah--Singer Index Theorem, with $\Si_\nu^\pm$ the spinor bundles of $\nu_N\ra N$. As $c_1(\Si_\nu^\pm)=0$ so that $\ch(\Si_\nu^\pm)=2-c_2(\Si_\nu^\pm)$, we see that
\begin{equation*}
\sO_{7,4}^{\bs\Spin,\SO(4),\pm}([Y,N])=\int_N\Bigl(\frac{1}{12}p_1(TN)-c_2(\Si_\nu^\pm)\Bigr).
\end{equation*}

Using $e(\nu)=c_2(\Si_\nu^+)-c_2(\Si_\nu^-)$ (the Euler class), and $p_1(\nu)=-2c_2(\Si_\nu^+)-2c_2(\Si_\nu^-)$ (the first Pontryagin class), we may rewrite these as
\ea
\sO_{7,4}^{\bs\Spin,\SO(4),+}([Y,N])&=\int_N\Bigl(\frac{1}{12}p_1(TN)-\frac{1}{2}e(\nu)-\frac{1}{4}p_1(\nu)\Bigr),
\label{fm19eq9}\\
\sO_{7,4}^{\bs\Spin,\SO(4),-}([Y,N])&=\int_N\Bigl(\frac{1}{12}p_1(TN)+\frac{1}{2}e(\nu)-\frac{1}{4}p_1(\nu)\Bigr).
\label{fm19eq10}
\ea 
Equations \eq{fm19eq9}--\eq{fm19eq10} also hold when $N$ and $\nu_N\ra N$ are not spin, so that $\Si_\nu^\pm$ are not defined and $\hat{A}(TN)$ need not be integral, but $p_1(TN),e(\nu),p_1(\nu)$ are defined and integral. Also $\sO_{7,4}^{\bs\Spin,\SO(4),0}([Y,N])=\sO_{7,4}^{\bs\Spin,\SO(4),-}([Y,N])-
\sO_{7,4}^{\bs\Spin,\SO(4),+}([Y,N])$ by definition, so 
\e
\sO_{7,4}^{\bs\Spin,\SO(4),0}([Y,N])=\int_Ne(\nu)=[N]\bu[N],
\label{fm19eq11}
\e
where $[N]\bu[N]$ is the self-intersection of $N$ in $Y$. Table \ref{fm11tab3} now follows from Table \ref{fm19tab1}, which is an easy computation, and \eq{fm19eq9}--\eq{fm19eq11}. This completes~(a).

\begin{table}[htb]
\centerline{\begin{tabular}{|l|l|l|l|}
\hline
 & $\frac{\ze_1}{4}$ & $\ze_2$ &  $\ze_3$ \\
\hline
$\int_{{M_j}_{\vphantom{(}}}^{\vphantom{(}}p_1(TM_j)$  &  $-12$ & 0 & 3 \\
\hline
$\int_{{M_j}_{\vphantom{(}}}^{\vphantom{(}}e(\nu)$  &  0 & 1 & 0 \\
\hline
$\int_{{M_j}_{\vphantom{(}}}^{\vphantom{(}}p_1(\nu)$ &  0 & $-2$ & 1 \\
\hline
\end{tabular}}
\caption{Invariants of $\frac{\ze_1}{4},\ze_2,\ze_3$ in \eq{fm3eq3}--\eq{fm3eq5}}
\label{fm19tab1}
\end{table}

For part (b), by Theorems \ref{fm3thm1} and \ref{fmAthm2} and \eq{fm5eq4}--\eq{fm5eq5}, all we have to do is determine $\pi_0(\sO),\pi_1(\sO)$ for $\sO$ as in \eq{fm11eq4}. For $\pi_0(\sO)$, equation \eq{fm11eq5} follows from Definition \ref{fm9def6}. Also $\pi_1(\sO)$ is zero on $\Om_9^{\bs\Spin}(*)$ as in (a) above. Thus it remains only to prove Table \ref{fm11tab4}. As the symmetric monoidal functors \eq{fm11eq4} commute with the linear quadratic invariants $q$ in Theorem \ref{fmAthm2}(a), and in $\Bord_{8,4}^{\bs\Spin}(M\SO(4))$ we have $q(\frac{\ze_1}{4})=\al_1\frac{\ze_1}{4}$, $q(\ze_2)=\al_1\ze_2$ by the description of $q$ in Proposition \ref{fm5prop1}(a), and in $\Z\qs\Z_2$ we have $q(m)=m\mod 2$, the $\al_1\frac{\ze_1}{4},\al_1\ze_2$ columns in Table \ref{fm11tab4} follow from \eq{fm11eq5} and the $\frac{\ze_1}{4},\ze_2$ columns in Table~\ref{fm11tab3}.

For the $\eta$ column in Table \ref{fm11tab4} observe from \eq{fm3eq6} that the normal bundle $\nu$ of $\SU(3)/\SO(3)\t\{(1,0)\}$ in $(\SU(3)\t\cS^3)/\SO(3)\t\cS^1$ is $E\op\R$, where $E$ is the normal bundle of $\SU(3)/\SO(3)$ in $(\SU(3)\t\cS^3)/\SO(3)$, and $\R$ is the normal bundle of $\{(1,0)\}$ in $\cS^1$. Changing the sign in the $\R$ summand exchanges the (locally defined) spin bundles $\Si^\pm_\nu$ in Definition \ref{fm9def5}, and so exchanges $\pi_1\bigl(\sO_{8,4}^{\bs\Spin,\SO(4),\pm}\bigr)$ acting on $\eta$. Hence 
\begin{equation*}
\pi_1\bigl(\sO_{7,4}^{\bs\Spin,\SO(4),+}\bigr)(\eta)=\pi_1\bigl(\sO_{7,4}^{\bs\Spin,\SO(4),-}\bigr)(\eta).
\end{equation*}
As $\sO_{7,4}^{\bs\Spin,\SO(4),0}=\sO_{7,4}^{\bs\Spin,\SO(4),-}-\sO_{7,4}^{\bs\Spin,\SO(4),+}$, the third row in the $\eta$ column in Table \ref{fm11tab4} follows. Part (c) follows from Theorem \ref{fm3thm1}(a),(b) and Proposition \ref{fm11prop3}, and part (d) is immediate. 

\subsection{Proof of Theorem \ref{fm11thm3}}
\label{fm195}

We will first show that:
\begin{itemize}
\setlength{\itemsep}{0pt}
\setlength{\parsep}{0pt}
\item[(i)] Suppose a Lie group $H$ has a torus subgroup $T\subseteq H$, and write $G=Z(T)$ for the centralizer of $T$. Then $\inc:G\hookra H$ is of complex type.
\item[(ii)] Let $\io:G\ra H$ be a morphism of connected Lie groups which is a covering map. Then $\io$ is of complex type.
\item[(iii)] Compositions of complex type morphisms are of complex type.
\end{itemize}

Parts (ii),(iii) are obvious. For (i), write $\g,\h$ for the Lie algebras of $G,H$. Under the adjoint representation of $T$ on $\h$ we have a splitting $h=\g\op\m$, where $\g$ is a trivial $T$-representation and $\m$ contains only nontrivial $T$-representations. Let $\U(1)\subseteq T$ be a sufficiently general $\U(1)$-subgroup. Then $\m$ contains only nontrivial $\U(1)$ representations, so we may split $\m=\bigop_{k>0}V_k\ot_\R\R^2[k]$ as $\U(1)$-representations, where $V_k$ is a real vector space and $\R^2[k]$ is the irreducible real $\U(1)$-representation with action $e^{i\th}\mapsto \bigl(\begin{smallmatrix}\cos k\th& \sin k\th \\ -\sin k\th &\cos k\th\end{smallmatrix}\bigr)$. 

We make $\m$ into a complex vector space by identifying $\R^2[k]\cong\C$ with $i\in\C$ acting by $\bigl(\begin{smallmatrix}0& 1 \\ -1 & 0\end{smallmatrix}\bigr)$. As the $G$- and $\U(1)$-actions on $\m$ commute and the complex structure on $\m$ is determined by the $\U(1)$-action, it is preserved by $G$. Hence $\inc:G\hookra H$ is of complex type.

Suppose now that $H$ is a compact, connected, simply-connected, simple Lie group corresponding to a Dynkin diagram $\Ga$, e.g. $H=E_8$. Then $H$ has a maximal torus $\U(1)^{\Ga_0}$ with $\U(1)$ factors corresponding to the set of vertices $\Ga_0$ of $\Ga$. Choose $k$ vertices $v_1,\ldots,v_k$ in $\Ga$, corresponding to a subgroup $\U(1)^k\subset H$. Then $\inc:Z(\U(1)^k)\hookra H$ is of complex type by (i).

By Lie theory, it is easy to show the Lie algebra of $Z(\U(1)^k)$ is ${\mathfrak z}(\U(1)^k)=\u(1)^{\op^k}\op\g$, where $\g$ is the semisimple Lie algebra whose Dynkin diagram $\Ga'$ is the result of deleting vertices $v_1,\ldots,v_k$ and any edges meeting them from $\Ga$. Write $G$ for the compact, connected, simply-connected, semisimple Lie group with Dynkin diagram $\Ga'$. It is then nearly true that $Z(\U(1)^k)=\U(1)^k\t G$. 

In fact $Z(\U(1)^k)$ could have finitely many connected components, and its identity component $Z(\U(1)^k)_1$ is of the form $Z(\U(1)^k)_1=(\U(1)^k\t G)/K$ for $K\subset \U(1)^k\t G$ a finite normal subgroup. But $Z(\U(1)^k)\hookra H$ of complex type implies that $Z(\U(1)^k)_1\hookra H$ is of complex type, which implies that $\U(1)^k\t G\hookra H$ is of complex type by (ii),(iii).

In \eq{fm11eq9}, the morphisms $E_7\t\U(1)\ra E_8$, $E_6\t\U(1)^2\ra E_8$, $\Spin(14)\t\U(1)\ra E_8$, $\SU(8)\t\U(1)\ra E_8$, $\Sp(3)\t\U(1)\ra F_4$, and $\Spin(7)\t\U(1)\ra F_4$, all arise this way by deleting 1 or 2 vertices from the Dynkin diagrams $E_8,F_4$.

For $G_2\ra\Spin(8)$, we have inclusions $G_2\hookra\Spin(7)\hookra\Spin(8)$, where in Lie algebras $\mathfrak{spin}(7)/\g_2$ and $\mathfrak{spin}(8)/\mathfrak{spin}(7)$ are both the irreducible 7-dimensional $G_2$-representation $\La_7$. Hence $\mathfrak{spin}(7)/\g_2\cong\La_7\op\La_7\cong\La_7\ot_\R\C$, so $G_2\hookra\Spin(8)$ is of complex type. Also $\Spin(m)\ra\SO(m)$ is by~(ii).

Next consider the three embeddings of Lie groups:
\begin{align*}
&{\rm(A)} & \U(1)&\longra\SU(m+1), 	& e^{i\th}&\longmapsto \mathop{\rm diag}\bigl(e^{i\th},\ldots,  e^{i\th},e^{-im\th}\bigr), \\
&{\rm(B)} & \U(1)&\longra\Sp(m+1), 	& e^{i\th}&\longmapsto \mathop{\rm diag}\bigl(1,\ldots,1, e^{i\th}\bigr), \\
&{\rm(C)} & \U(1)&\longra\SO(m+2), 	& e^{i\th}&\longmapsto \begin{pmatrix} 
1 & 0 & \cdots  & 0 & 0 & 0 \\
0 & 1 & 0 & \cdots   & 0 & 0\\
\vdots & 0 & \ddots & \ddots & \vdots & \vdots \\
0 & \vdots & \ddots & 1 & 0 & 0 \\
0 & 0 & \cdots & 0 & \cos\th & \sin\th \\
0 & 0 & \cdots & 0 & -\sin\th & \cos\th
\end{pmatrix}.
\end{align*}

For (A), $Z(\U(1))\cong\U(m)\subset\SU(m+1)$, where the embedding $\U(m)\hookra\SU(m+1)$ maps $A\mapsto \bigl(\begin{smallmatrix} A & 0 \\ 0 & \det A^{-1} \end{smallmatrix}\bigr)$. Hence $\U(m)\hookra\SU(m+1)$ is of complex type by (i), completing Theorem \ref{fm11thm3}(a). Also $\SU(m)\t\U(1)\ra\U(m)$ is a covering map, so $\SU(m)\t\U(1)\ra\SU(m+1)$ is of complex type by~(ii),(iii).

For (B), $Z(\U(1))\cong\Sp(m)\t\U(1)\subset\Sp(m+1)$, so $\Sp(m)\t\U(1)\hookra\Sp(m+1)$ is of complex type by (i). For (C), $Z(\U(1))\cong\SO(m)\t\SO(2)\subset\SO(m+2)$ with $\SO(2)\cong\U(1)$, so $\SO(m)\t\U(1)\ra\SO(m+2)$ is of complex type by (i). We show $\Spin(m)\t\U(1)\ra\Spin(m+2)$ is of complex type by lifting to Spin groups. We have now constructed the last four complex type morphisms in \eq{fm11eq9}. This completes the proof of Theorem~\ref{fm11thm3}.

\appendix
\section{Picard groupoids}
\label{fmA}

Categorical groups may be viewed as a categorification of the concept of a group. Similarly, Picard groupoids categorify abelian groups. These will be important tools in this monograph, so we briefly review them here. We state a classification result for Picard groupoids, originally due to Sinh \cite{Sinh}, and develop it from our point of view. This is mainly to fix the necessary terminology that is needed to prove an additional classification result for morphisms between Picard groupoids, Theorem \ref{fmAthm2}, that we could not find in the literature.

For more background on symmetric monoidal categories, we refer to Joyal--Street \cite{JoSt} and MacLane \cite[Ch.~VII.1 \& Ch.~XI]{MacL}.

\begin{dfn}
\label{fmAdef1}
A {\it monoidal category} $(\cC,\ot,\boo,\al)$ is a category $\cC$ with a {\it tensor product} functor $\ot:\cC\t\cC\ra\cC,$ a unit object $\boo\in\cC,$ a natural associativity isomorphism $\al,$ and unit isomorphisms. Usually, we will not make these explicit, which is justified by MacLane's coherence theorem. To simplify our exposition, we will usually assume that all unit isomorphisms are identities. The set $\pi_0(\cC)$ of {\it isomorphism classes} of objects of a monoidal category is a (possibly non-commutative) monoid. Moreover, the operation induced by the tensor product and the ordinary composition agree in the {\it automorphism group} $\pi_1(\cC)=\Aut_\cC(\boo),$ which implies that $\pi_1(\cC)$ is an abelian group (Eckmann--Hilton argument). We write $\pi_0(\cC)$ multiplicatively and $\pi_1(\cC)$ additively.

A {\it categorical group} is a monoidal category $(\cG,\ot,\boo,\al)$ in which all morphisms are invertible and for which the monoid $\pi_0(\cG)$ is a group.
\end{dfn}

This means that every object $x$ has a {\it dual}, an object $x^*$ for which there exist isomorphisms $\ep_x: x^*\ot x\cong{\boo}$ and $\eta_x:{\boo}\cong x\ot x^*$ (one usually requires some axioms, which play no role here). In a categorical group, all of the automorphism groups can be identified with each other via
\e
\label{fmAeq1}
 \pi_1(\cG)\longra\Aut_\cG(x),\enskip \left({\boo}\xrightarrow{\vp}{\boo}\right)\longmapsto \left(x\cong {\boo}\ot x\xrightarrow{\vp\ot x}{\boo}\ot x\cong x\right).
\e

\begin{dfn}
\label{fmAdef2}
Let $\cG$ be a categorical group. The {\it conjugation} $\la_\cG:\pi_0(\cG)\ra\Aut(\pi_1(\cG)),$ $x\mapsto\la_x,$ takes $\la_x(\vp)$ for $x\in\pi_0(\cG)$ and $\vp\in\pi_1(\cG)$ to
\begin{equation*}
 {\boo}\xrightarrow{\eta_x} x\ot x^*\cong x\ot{\boo}\ot x^*\xrightarrow{x\ot \vp\ot x^*} x\ot{\boo}\ot x^*\cong x\ot x^*\xrightarrow{\eta_x^{-1}} {\boo}.
\end{equation*}
\end{dfn}

\begin{ex}
\label{fmAex1}
Given a group $\pi_0$ and an abelian group $\pi_1,$ let $\cG=\pi_0\qs\pi_1$ denote the category of $\pi_0$-graded $\pi_1$-torsors. In other words, the objects of $\cG$ are all pairs $(x,S),$ where $x\in\pi_0$ and $S$ is a set with a free, transitive left action of the group $\pi_1.$  If $x=y,$ then $\Hom_\cG\bigl((x,S),(y,T)\bigr)$ is the set of all $\pi_1$-equivariant maps $\vp: S\ra T,$ otherwise the morphism set is defined to be empty. Define the tensor product of objects by $(x_0,S_0)\ot(x_1, S_1)=(x_0x_1,S_0\ot_{\pi_1} S_1),$ where $S_0\ot_{\pi_1} S_1=(S_0\t S_1)/\pi_1$ is the quotient by the anti-diagonal $\pi_1$-action.

As any two $\pi_1$-torsors are isomorphic and every isomorphism is multiplication by a group element, $\cG$ is a categorical group with $\pi_0(\cG)=\pi_0,$ $\pi_1(\cG)=\pi_1,$ and a trivial conjugation action of $\pi_0(\cG)$ on $\pi_1(\cG).$ In Example \ref{fmAex2} we will generalize this construction and get a possibly non-trivial conjugation action.

In case $\pi_1=0$ the construction of the category $\pi_0\qs\pi_1$ boils down to the abelian group $\pi_0$ viewed as a discrete monoidal category in the usual way.
\end{ex}

We wish to classify all categorical groups up to monoidal equivalence. Recall here that a {\it monoidal structure} on a functor $F:\cC\ra\cD$ of monoidal categories $(\cC,\ot_\cC,\boo_\cC)$ and $(\cD,\ot_\cD,\boo_\cD)$ is a collection of isomorphisms
\e
\label{fmAeq2}
F(x)\ot_\cD F(y)\xrightarrow{\phi_{x,y}} F(x\ot_\cC y),\quad {\boo_\cD}\xrightarrow{\phi_{\boo}} F({\boo_\cC}),
\e
for all objects $x, y$ of $\cC,$ compatible with the associativity and unit isomorphisms in $\cC$ and $\cD,$ see \cite[p.~25]{JoSt}. A {\it monoidal transformation} of such functors is a natural transformation $F\Rightarrow G$ that maps the isomorphisms \eq{fmAeq2} for $F$ and $G$ onto each other, see \cite[p.~25]{JoSt}. A {\it monoidal equivalence} is a pair of monoidal functors whose composites either way admit monoidal natural isomorphisms to the identity functors of $\cC$ and $\cD.$

Monoidal structures are categorically well-behaved. For example, if $F:\cC\xrightarrow{\sim}\cD$ is an equivalence of categories and $(\cD,\ot_\cD,{\boo_{\cD}})$ has a monoidal structure, then there is a monoidal structure on $\cC$ and $F$ such that $F$ becomes a monoidal equivalence. Indeed, by doctrinal adjunction every equivalence $F$ is part of an adjoint equivalence $(F,G,\ep,\eta)$ and we can define $x\ot_\cC y=G(F(x)\ot_\cD F(y)),$ $\phi_{x,y}=\eta_{F(x),F(y)},$ and ${\boo_\cC}=G({\boo_\cD}).$

\begin{dfn}
\label{fmAdef3}
Let $\pi_0$ be a group, $\pi_1$ an abelian group, and $\la:\pi_0\ra\Aut(\pi_1)$ a group action. Recall that the {\it normalized bar cochain complex} has as its $n$\textsuperscript{th} cochain group $C^n(\pi_0,\pi_1)$ the set of all maps $\be:(\pi_0)^{\t n}\ra \pi_1$ such that $\be(x_1,\ldots,x_n)=0$ whenever $x_i=1$ for some $i.$ The codifferential is defined by
\begin{equation*}
\begin{split}
 \de\be(x_1,\ldots,x_{n+1})=&\la_{x_1}\be(x_2,\ldots,x_{n+1})+\sum_{i=1}^n (-1)^i \be(x_1,\ldots,x_ix_{i+1},\ldots,x_{n+1})\\
 &\qquad+(-1)^{n+1}\be(x_1,\ldots,x_n).
\end{split}
\end{equation*}
The cohomology of this cochain complex is the {\it group cohomology} $H^*(\pi_0,\pi_1).$
\end{dfn}

\begin{ex}
\label{fmAex2}
Let $\pi_0$ be a group, $\pi_1$ an abelian group, $\la:\pi_0\ra\Aut(\pi_1)$ a group action, and $\al\in C^3(\pi_0,\pi_1)$ be a normalized $3$-cocycle satisfying $\de\al=0.$ We will define a categorical group $\cG(\pi_0,\pi_1,\la,\al).$ The object set is $\pi_0$ and the morphism set from $x$ to $y$ is $\pi_1$ if $x=y$ and empty otherwise. The composition is given by the binary operation on $\pi_1$ whenever this makes sense. The tensor product is defined using the binary operations on $\pi_0$ and $\pi_1$ as
\begin{equation*}
\Bigl(x_0\xrightarrow{\vp_0}x_0\Bigr)\ot\Bigl(x_1\xrightarrow{\vp_1}x_1\Bigr)=\Bigl(x_0x_1\xrightarrow{\vp_0+\la_{x_0}(\vp_1)}x_0x_1\Bigr).
\end{equation*}
The associativity isomorphism is $\al(x_0,x_1,x_2)\in\pi_1,$ which is viewed as an element of $\Hom_{\cG}(x_0(x_1x_2),(x_0x_1)x_2).$ All of the unit isomorphisms are identities.
\end{ex}

Notice that $\cG(\pi_0,\pi_1,\la,\al)$ is a categorical group with $\pi_0(\cG)=\pi_0,$ $\pi_1(\cG)=\pi_1,$ and conjugation action $\la.$ For categorical groups of this kind, all of the terminology for monoidal categories boils down to concepts in group cohomology:

\begin{lem}
\label{fmAlem1}
{\bf(a)} Monoidal functors\/ $F:\cG(\pi_0,\pi_1,\la,\al)\ra\cG(\pi_0',\pi_1',\la',\al')$ are in {\rm 1-1} correspondence to triples\/ $(f_0,f_1,\phi)$ of group morphisms\/ $f_0:\pi_0\ra\pi_0'$ and\/ $f_1:\pi_1\ra\pi_1'$ and where\/ $\phi\in C^2(\pi_0,\pi_1')$ is a\/ $2$-cochain such that
\begin{align*}
 f_1\ci\la&=\la'\ci(f_0\t f_1),
&(f_0\t f_0\t f_0)^*(\al')+\de\phi=(f_1)_*(\al)\in C^3(\pi_0,\pi_1').
\end{align*}

\noindent{\bf(b)} Let\/ $F$ and\/ $G$ be monoidal functors with triples\/ $(f_0,f_1,\phi)$ and\/ $(g_0,g_1,\ga).$ Then\/ $F$ and\/ $G$ are \textup(non-monoidally\textup) naturally isomorphic if and only if\/ $f_0=g_0$ and\/ $f_1=g_1.$ In this case, there exists a monoidal natural isomorphism if and only if the \begin{bfseries}difference class\end{bfseries}\/ $\om(F,G)=[\phi-\ga]\in H^2(\pi_0,\pi_1')$ vanishes. Given a monoidal functor\/ $F$ and\/ $\om\in H^2(\pi_0,\pi_1'),$ there exists a monoidal functor\/ $G$ such that\/ $f_0=g_0,$ $f_1=g_1,$ and\/ $\om(F,G)=\om.$ In other words, the set of all monoidal functors\/ $F$ up to monoidal natural isomorphism with given\/ $\pi_0(F)=f_0$ and\/ $\pi_1(F)=f_1$ is a\/ $H^2(\pi_0,\pi_1')$-torsor.
\smallskip

\noindent{\bf(c)} If non-empty, the set of all monoidal natural isomorphisms\/ $F\Rightarrow G$ is a torsor over the group\/ $H^1(\pi_0,\pi_1').$
\end{lem}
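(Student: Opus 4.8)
The plan is to prove all three parts by directly unwinding the definitions, exploiting that a categorical group of the form $\cG(\pi_0,\pi_1,\la,\al)$ is as rigid as possible: it is skeletal (distinct objects are never isomorphic), and every nonempty Hom-set is a $\pi_1$-torsor canonically identified with $\pi_1$ via \eq{fmAeq1}. Hence a functor between two such categories carries no information beyond a pair of maps, a monoidal structure on it is a single $2$-cochain, and the monoidal coherence axioms translate into the cochain identities of Definition \ref{fmAdef3}. The one structural fact used repeatedly is that $\pi_1(\cG')=\pi_1'$ is abelian (the Eckmann--Hilton argument in Definition \ref{fmAdef1}), so that conjugation by morphisms is trivial and none of the conditions below pick up conjugation corrections.

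For (a) I would first analyse an arbitrary monoidal functor $F$. On objects it is a set map $f_0\colon\pi_0\to\pi_0'$. For each $x$ it restricts to a group morphism $\Aut_\cG(x)=\pi_1\to\Aut_{\cG'}(f_0(x))=\pi_1'$, and since $F$ preserves composition these all agree, under the identifications \eq{fmAeq1}, with a single morphism $f_1\colon\pi_1\to\pi_1'$. The monoidal isomorphism $\phi_{x,y}\colon F(x)\ot F(y)\to F(x\ot y)$ of \eq{fmAeq2} can only exist when its source and target are the same object, which forces $f_0(x)f_0(y)=f_0(xy)$; so $f_0$ is a homomorphism and $\phi_{x,y}$ is an element $\phi(x,y)\in\pi_1'$. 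The unit axioms, with strict units, give $\phi(x,1)=\phi(1,y)=0$, so $\phi\in C^2(\pi_0,\pi_1')$. Naturality of $\phi_{x,y}$ in its two arguments, read off using the tensor rule of Example \ref{fmAex2}, becomes $f_1\ci\la=\la'\ci(f_0\t f_1)$, and the associativity coherence relating $\phi$ with $\al,\al'$ becomes $(f_0\t f_0\t f_0)^*(\al')+\de\phi=(f_1)_*(\al)$ in $C^3(\pi_0,\pi_1')$. Conversely, given a triple $(f_0,f_1,\phi)$ with these two properties one defines $F(x)=f_0(x)$, $F(v)=f_1(v)$ on morphisms, and monoidal structure $\phi_{x,y}=\phi(x,y)$, and checks the axioms by reversing the computation; this gives the bijection.

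For (b) a natural transformation $\eta\colon F\Rightarrow G$ supplies morphisms $\eta_x\colon F(x)\to G(x)$, so its existence forces $f_0=g_0$, after which $\eta_x\in\pi_1'$; naturality against the automorphisms of $x$, conjugation being trivial, forces $f_1=g_1$; conversely, when $f_0=g_0$ and $f_1=g_1$ any assignment $x\mapsto\eta_x$ is natural and, every morphism of $\cG'$ being invertible, an isomorphism. Requiring $\eta$ to be monoidal adds the unit normalization $\eta_1=0$, so $\eta\in C^1(\pi_0,\pi_1')$, together with the compatibility of $\eta$ with $\phi$ and $\ga$, which rearranges (again via Example \ref{fmAex2}) to $\de\eta=\ga-\phi$. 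Thus a monoidal natural isomorphism $F\Rightarrow G$ exists iff $f_0=g_0$, $f_1=g_1$, and the difference class $\om(F,G)=[\phi-\ga]$ vanishes in $H^2(\pi_0,\pi_1')$. Finally, given $F$ with data $(f_0,f_1,\phi)$ and any class $\om$ represented by a cocycle $\psi\in Z^2(\pi_0,\pi_1')$, the triple $(f_0,f_1,\phi-\psi)$ again satisfies the identities of (a) because $\de(\phi-\psi)=\de\phi$, hence defines a monoidal functor $G$ with $g_0=f_0$, $g_1=f_1$ and $\om(F,G)=\om$. So the set of monoidal functors with prescribed $\pi_0(F)$ and $\pi_1(F)$, taken up to monoidal natural isomorphism, is an $H^2(\pi_0,\pi_1')$-torsor.

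For (c), assuming $F\cong G$ monoidally, I would fix one monoidal natural isomorphism $\eta_0\colon F\Rightarrow G$; then $\eta\mapsto\eta_0^{-1}\ci\eta$ is a bijection from the set of all monoidal natural isomorphisms $F\Rightarrow G$ onto the group of monoidal natural automorphisms of $F$, so it suffices to identify the latter. Unwinding as in (b), such an automorphism is a normalized $1$-cochain $\mu\in C^1(\pi_0,\pi_1')$ on which the tensor axiom imposes the cocycle condition $\de\mu=0$, with composition corresponding to addition of cochains; one thereby identifies this group with $H^1(\pi_0,\pi_1')$, giving the asserted torsor structure. The steps requiring the most care are the sign- and action-bookkeeping in (a) and (b) — verifying that the coherence hexagon reduces to exactly $\de\phi=(f_1)_*(\al)-(f_0\t f_0\t f_0)^*(\al')$ and that the compatibility of $\eta$ reduces to exactly $\de\eta=\ga-\phi$, with the $\la'$-twists in the correct places — together with confirming that the reconstructions in (a) and (b) genuinely satisfy all the monoidal coherence axioms, i.e. that the two translations are mutually inverse.
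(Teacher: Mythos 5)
Your proposal is correct and follows exactly the route the paper takes: the paper's entire proof of this lemma is the single sentence ``Everything follows by spelling out the definitions in the special case of Example \ref{fmAex2},'' and your argument is precisely that spelling-out, with the coherence axioms for \eq{fmAeq2} translating into the stated cochain identities. One tiny imprecision: the various restrictions $\Aut_\cG(x)\ra\Aut_{\cG'}(f_0(x))$ are forced to coincide not by functoriality alone but by the naturality of $\phi_{x,y}$ (set $\psi=0$ there), which you invoke in the very next clause anyway, so the argument stands.
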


\begin{proof}
Everything follows by spelling out the definitions in the special case of Example \ref{fmAex2}.
\end{proof}

In particular, (a) implies that the identity functor can be made into a monoidal equivalence between $\cG(\pi_0,\pi_1,\la,\al)$ and $\cG(\pi_0,\pi_1,\la,\al')$ if and only if $\al$ and $\al'$ represent the same group cohomology class in $H^3(\pi_0,\pi_1).$

\begin{dfn}
\label{fmAdef4}
A category is {\it skeletal}	if $x\cong y$ for objects implies $x=y.$
\end{dfn}

In particular, the set of objects in a skeletal category is $\pi_0(\cG).$ By choosing an object in every isomorphism class, one obtains the following well-known result.

\begin{lem}
\label{fmAlem2}
Every category\/ $\cG$ has an equivalent skeletal full subcategory\/ $\sk\cG.$
\end{lem}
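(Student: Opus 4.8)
\textbf{Proof proposal for Lemma \ref{fmAlem2}.}

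The plan is to construct $\sk\cG$ by choosing a skeleton using the Axiom of Choice, and then to verify directly that the inclusion functor is an equivalence. First I would observe that "$x\cong y$'' (being isomorphic in $\cG$) is an equivalence relation on the objects of $\cG$, and using the Axiom of Choice I would pick, for each isomorphism class, a single representative object. Let $\sk\cG$ be the full subcategory of $\cG$ whose objects are exactly these chosen representatives, with all $\cG$-morphisms between them. By construction $\sk\cG$ is skeletal: if $x,y$ are objects of $\sk\cG$ with $x\cong y$ in $\sk\cG$, then they are isomorphic in $\cG$, hence lie in the same isomorphism class, hence equal the same chosen representative, so $x=y$.

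Next I would show that the inclusion functor $I:\sk\cG\hookra\cG$ is an equivalence of categories. It is fully faithful, essentially by definition, since $\sk\cG$ is a \emph{full} subcategory, so $\Hom_{\sk\cG}(x,y)=\Hom_\cG(x,y)$ for all objects $x,y$ of $\sk\cG$, and composition and identities agree. It is essentially surjective: given any object $z\in\cG$, the representative $x\in\sk\cG$ of the isomorphism class of $z$ satisfies $x\cong z$ in $\cG$ by the choice made above. A standard lemma (a functor is an equivalence iff it is full, faithful and essentially surjective --- see MacLane \cite[Ch.~IV.4]{MacL}) then shows $I$ is an equivalence.

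Finally, to make the statement fully precise one should also exhibit an explicit quasi-inverse, which again requires the Axiom of Choice: for each object $z\in\cG$ choose an isomorphism $\th_z:z\to P(z)$, where $P(z)$ is the chosen representative of the class of $z$, taking $\th_x=\id_x$ when $x$ is already a representative. This defines a functor $P:\cG\to\sk\cG$ on morphisms by $P(f)=\th_{z'}\ci f\ci\th_z^{-1}$ for $f:z\to z'$, and the families $(\th_z)$ give natural isomorphisms $\id_\cG\Ra I\ci P$ and $\id_{\sk\cG}\Ra P\ci I$, with the latter in fact an identity. The only subtle point, and the one I would be most careful about, is the use of the Axiom of Choice in two places (selecting representatives, and selecting the isomorphisms $\th_z$) and checking that $P$ so defined is genuinely functorial; everything else is routine bookkeeping. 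Since the paper only needs the existence of an equivalent skeletal subcategory, and later (e.g.\ in Theorem \ref{fmAthm2}) works with $\cG(\pi_0,\pi_1,\la,\al)$-type presentations, this level of detail suffices.
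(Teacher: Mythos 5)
Your proof is correct and follows the same route the paper takes: the paper simply remarks that the lemma is well known and is obtained "by choosing an object in every isomorphism class," which is exactly your construction of the full subcategory on chosen representatives together with the standard fully-faithful-plus-essentially-surjective argument. The extra detail you give about the quasi-inverse and the two uses of the Axiom of Choice is fine but not needed beyond what the paper records.
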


On the subcategory $\sk\cG$ we can construct a monoidal structure such that the inclusion functor becomes a monoidal equivalence. We conclude:

\begin{lem}
\label{fmAlem3}
Every skeletal categorical group\/ $\cG$ is monoidally isomorphic to the category\/ $\cG(\pi_0,\pi_1,\la,\al)$ where\/ $\pi_0=\pi_0(\cG),$ $\pi_1=\pi_1(\cG),$ $\la$ is the conjugation action, and $\al\in C^3(\pi_0,\pi_1)$ satisfies $\de\al=0$ and corresponds to the collection of associativity isomorphisms in $\cG,$ viewed as belonging to\/ $\pi_1$ by \eq{fmAeq1}.
\end{lem}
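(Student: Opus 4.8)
\textbf{Proof proposal for Lemma \ref{fmAlem3}.}

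The plan is to upgrade the equivalence $\sk\cG\hookra\cG$ of Lemma \ref{fmAlem2} to a monoidal equivalence, and then to identify $\sk\cG$ explicitly with $\cG(\pi_0,\pi_1,\la,\al)$. First I would recall that, by the paragraph preceding Definition \ref{fmAdef3}, any equivalence $F:\cD\xrightarrow{\sim}\cC$ with $(\cD,\ot_\cD,\boo_\cD)$ monoidal transports the monoidal structure to $\cC$: choosing an adjoint equivalence $(F,G,\ep,\eta)$, one sets $x\ot_\cC y = G(F(x)\ot_\cD F(y))$, etc. Applying this with $\cC=\sk\cG$ and $\cD=\cG$, we equip $\sk\cG$ with a monoidal structure for which the inclusion $\sk\cG\hookra\cG$ is a monoidal equivalence. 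Since $\cG$ is a categorical group, so is $\sk\cG$ (being monoidally equivalent to one), and it has the same invariants $\pi_0(\sk\cG)=\pi_0(\cG)=:\pi_0$, $\pi_1(\sk\cG)=\pi_1(\cG)=:\pi_1$, and the same conjugation action $\la:\pi_0\ra\Aut(\pi_1)$, all of which are invariant under monoidal equivalence.

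Next I would describe $\sk\cG$ concretely. Being skeletal, its object set is exactly $\pi_0$. For objects $x,y$ of $\sk\cG$, the morphism set $\Hom_{\sk\cG}(x,y)$ is empty unless $x=y$ (skeletality), and $\Hom_{\sk\cG}(x,x)=\Aut_{\sk\cG}(x)$, which is canonically identified with $\pi_1$ via the conjugation isomorphism \eq{fmAeq1}. Under this identification, composition becomes the group operation of $\pi_1$. For the tensor product: on objects it induces the group operation on $\pi_0$; on morphisms, using \eq{fmAeq1} to write an automorphism of $x_i$ as an element $\vp_i\in\pi_1$, one checks directly from the definition of the conjugation action (Definition \ref{fmAdef2}) that $\vp_0\ot\vp_1$ corresponds to $\vp_0+\la_{x_0}(\vp_1)$; this is exactly the formula in Example \ref{fmAex2}. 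It remains to read off the associativity data. After normalizing the unit isomorphisms to be identities (possible in any monoidal category by MacLane's coherence theorem, and I would invoke this as in the discussion after Definition \ref{fmAdef1}), the associativity isomorphism $\al_{x_0,x_1,x_2}\colon x_0\ot(x_1\ot x_2)\ra (x_0\ot x_1)\ot x_2$ of $\sk\cG$ is an automorphism of the object $(x_0x_1)x_2$ (since source and target coincide in a skeletal category), hence an element $\al(x_0,x_1,x_2)\in\pi_1$ normalized so that it vanishes when any $x_i=1$. The pentagon identity for $\al$ in $\sk\cG$ becomes precisely the cocycle condition $\de\al=0$ in $C^3(\pi_0,\pi_1)$ of Definition \ref{fmAdef3}. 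Thus $\sk\cG$ is, on the nose, the category $\cG(\pi_0,\pi_1,\la,\al)$ of Example \ref{fmAex2}, with all structure maps matching, which gives the claimed monoidal isomorphism.

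The main obstacle is bookkeeping rather than anything deep: one must verify that the identification \eq{fmAeq1} of each $\Aut_{\sk\cG}(x)$ with $\pi_1$ is compatible with \emph{both} composition and tensor product simultaneously, and in particular that the conjugation action appearing in the tensor product formula of Example \ref{fmAex2} is literally the $\la$ of Definition \ref{fmAdef2} — this is where the Eckmann--Hilton-type interchange and the unit-normalization are used. The other delicate point is checking that the pentagon axiom unwinds exactly to $\de\al=0$ with the sign conventions of Definition \ref{fmAdef3}; I would track the four associators in the pentagon and match terms, rather than grinding through it in full. Everything else (that the inclusion $\sk\cG\hookra\cG$ is a monoidal equivalence, that the invariants are preserved) is standard category theory already recalled in the excerpt.
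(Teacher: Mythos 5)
Your proposal is correct and follows exactly the route the paper intends (the paper states the lemma as an immediate consequence of transporting the monoidal structure to the skeleton and leaves the verification implicit); your identification of the tensor product on morphisms as $\vp_0+\la_{x_0}(\vp_1)$ via the interchange law and of the pentagon axiom with $\de\al=0$ supplies precisely the omitted details. Note only that the lemma as stated already assumes $\cG$ skeletal, so your first paragraph (passing from a general $\cG$ to $\sk\cG$) reproduces the surrounding discussion rather than being part of the proof itself.
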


The group cohomology class $[\al]\in H^3(\pi_0,\pi_1)$ is called the {\it associativity invariant} of $\cG$ (it is independent of the choice of monoidal structure on the skeletal subcategory).

We see that $(\pi_0,\pi_1,\la,[\al])$ is a complete invariant of categorical groups in the sense that every quadruple arises as the invariants a categorical group. Conversely, the quadruple uniquely describes the categorical group up to monoidal equivalence. This kind of argument also shows that every result that holds in the special case of Example \ref{fmAex2} and is invariant under monoidal equivalence remains true for general categorical groups. This leads to the following result.

\begin{thm}
\label{fmAthm1}
{\bf(a)} For every quadruple\/ $(\pi_0,\pi_1,\la,[\al]),$ where\/ $\pi_0$ is group,\/ $\pi_1$ is an abelian group,\/ $\la:\pi_0\ra\Aut(\pi_1)$ is a morphism, and\/ $[\al]\in H^3(\pi_0,\pi_1),$ there exists a categorical group\/ $\cG$ with these invariants.
\smallskip

\noindent{\bf(b)} Consider categorical groups\/ $\cG$ and\/ $\cG'$ with invariants\/ $(\pi_0,\pi_1,\la,[\al])$ and $(\pi_0',\pi_1',\la',[\al']).$ Let\/ $f_0:\pi_0\ra\pi_0'$ and\/ $f_1:\pi_1\ra\pi_1'$ be group morphisms such that\/ $f_1\ci\la=\la'\ci(f_0\t f_1).$ Then there exists a monoidal functor\/ $F:\cG\ra\cG'$ with\/ $\pi_0(F)=f_0$ and\/ $\pi_1(F)=f_1$ if and only if\/ $(f_0)^*([\al'])=(f_1)_*([\al])$ in\/~$H^3(\pi_0,\pi_1').$

\smallskip

\noindent{\bf(c)} Let\/ $F, G:\cG\ra\cG'$ be monoidal functors of categorical groups with\/ $\pi_0(F)=\pi_0(G)$ and\/ $\pi_1(F)=\pi_1(G).$ There is a monoidal transformation\/ $F\Rightarrow G$ if and only if the difference class\/ $\om(F,G)\in H^2(\pi_0(\cG),\pi_1(\cG'))$ vanishes. The set of all monoidal functors\/ $F:\cG\ra\cG'$ with given\/ $\pi_0(F)=f_0$ and\/ $\pi_1(F)=f_1$ modulo monoidal natural isomorphism is a torsor over\/ $H^2(\pi_0(\cG),\pi_1(\cG')).$
\smallskip

\noindent{\bf(d)} Let\/ $F, G:\cG\ra\cG'$ be monoidal functors of categorical groups. Define an equivalence relation $\sim$ between monoidal transformations\/ $\th:F\Rightarrow G$ such that $\th_1\sim\th_2$ if there exists $\ep\in\Aut_{\cG'}(\boo)=\pi_1(\cG')$ such that the following commutes in $\cG'$ for all objects {\rm$x\in\cG$:}
\e
\begin{gathered}
\xymatrix@C=80pt@R=15pt{*+[r]{\boo\ot\cG(x)} \ar[r]_(0.6){\text{unit iso.}} \ar[d]^{\ep\ot\th_1} & \cG(x) \ar[r]_(0.4){\text{unit iso.}} & *+[l]{\cG(x)\ot\boo} \ar[d]_{\th_2\ot\ep} \\
*+[r]{\boo\ot\cG'(x)} \ar[r]^(0.6){\text{unit iso.}}  & \cG'(x) \ar[r]^(0.4){\text{unit iso.}} & *+[l]{\cG'(x)\ot\boo.\!} }
\end{gathered}
\label{fmAeq3}
\e
If non-empty, the set of all monoidal transformations\/ $\th:F\Rightarrow G$ modulo equivalence $\sim$ is a torsor over\/ $H^1(\pi_0(\cG),\pi_1(\cG')).$ 
\end{thm}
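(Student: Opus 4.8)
The plan is to reduce every assertion to the concrete model $\cG(\pi_0,\pi_1,\la,\al)$ of Example \ref{fmAex2}, where the statements become standard computations in the normalized bar complex of Definition \ref{fmAdef3}, and then transport the conclusions to arbitrary categorical groups via Lemmas \ref{fmAlem2} and \ref{fmAlem3}. For part (a), choose a normalized $3$-cochain $\al$ with $\de\al=0$ representing the prescribed class $[\al]$; the relation $\de\al=0$ is exactly the pentagon axiom for the associativity isomorphisms of the model, so $\cG:=\cG(\pi_0,\pi_1,\la,\al)$ is a categorical group by Example \ref{fmAex2}, with $\pi_0(\cG)=\pi_0$ and $\pi_1(\cG)=\pi_1$ by construction. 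Unwinding Definition \ref{fmAdef2} in this model shows that the conjugation action is $\la$, and since $\cG$ is already skeletal, Lemma \ref{fmAlem3} identifies its associativity invariant with the class of the chosen cocycle, namely $[\al]$. (Independence of the representative is consistent with $[\al]$ being a monoidal-equivalence invariant.)

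For parts (b) and (c) I would first record that all the objects appearing in these statements are invariant under monoidal equivalence: the invariants $\pi_0,\pi_1,\la,[\al]$; the notion of a monoidal functor $F$ with prescribed $\pi_0(F)=f_0$, $\pi_1(F)=f_1$; and the difference class $\om(F,G)$. The only delicate item is $\om(F,G)$, which one should define intrinsically for categorical groups — say, by reading the comparison isomorphisms $\phi^F_{x,y}\ci(\phi^G_{x,y})^{-1}$ as elements of $\Aut_{\cG'}(F(x\ot y))\cong\pi_1(\cG')$, obtaining a normalized $2$-cocycle whose class is independent of choices — and then check that it is stable under pre- and post-composition with monoidal equivalences. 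Granting this, pick skeletal full subcategories by Lemma \ref{fmAlem2}, identified with $\cG(\pi_0,\pi_1,\la,\al)$ and $\cG(\pi_0',\pi_1',\la',\al')$ by Lemma \ref{fmAlem3}; composition with the inclusion monoidal equivalences $\sk\cG\hookra\cG$, $\sk\cG'\hookra\cG'$ and their quasi-inverses sets up structure-preserving bijections between the data for $\cG,\cG'$ and for the skeletal models. Now Lemma \ref{fmAlem1}(a) says a monoidal functor between the concrete models is a triple $(f_0,f_1,\phi)$ with $f_1\ci\la=\la'\ci(f_0\t f_1)$ and $(f_0\t f_0\t f_0)^*(\al')+\de\phi=(f_1)_*(\al)$; such a $\phi$ exists iff $(f_0)^*[\al']=(f_1)_*[\al]$ in $H^3(\pi_0,\pi_1')$, which is part (b), and Lemma \ref{fmAlem1}(b) gives that the monoidal functors with fixed $(f_0,f_1)$ modulo monoidal natural isomorphism form an $H^2(\pi_0,\pi_1')$-torsor, with two of them monoidally isomorphic precisely when their difference class vanishes, which is part (c).

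For part (d), if there is any monoidal transformation $F\Ra G$ then, since all morphisms in a categorical group are invertible and by the identification \eq{fmAeq1}, necessarily $\pi_0(F)=\pi_0(G)$ and $\pi_1(F)=\pi_1(G)$, so we are back in the setup of (b),(c). Working in the skeletal model, a monoidal transformation $\th:F\Ra G$ between functors $(f_0,f_1,\phi)$ and $(f_0,f_1,\ga)$ is a normalized $1$-cochain $\th\in C^1(\pi_0,\pi_1')$ (with $\pi_0$ acting via $\la'\ci(f_0\t\id)$) solving $\de\th=\phi-\ga$; so, when nonempty, these form a torsor over the cocycle group $Z^1(\pi_0,\pi_1')$ (cf.\ Lemma \ref{fmAlem1}(c)). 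Unwinding the diagram \eq{fmAeq3} in the concrete model, where the unit isomorphisms are identities, shows that $\th_1\sim\th_2$ exactly when $\th_2-\th_1=\de\ep$ for some $\ep\in\pi_1'=\Aut_{\cG'}(\boo)$, i.e.\ exactly when $\th_1-\th_2\in B^1(\pi_0,\pi_1')$; hence modulo $\sim$ the set becomes a torsor over $Z^1/B^1=H^1(\pi_0,\pi_1')$, and one checks as in (b),(c) that this is unchanged on passing from $\cG,\cG'$ to the skeletal models.

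The genuinely non-formal part, and the main obstacle, is to pin down intrinsic descriptions of the difference class $\om(F,G)$ and of the equivalence relation $\sim$ of part (d) that are manifestly invariant under monoidal equivalence, so that the bar-complex computations of Lemma \ref{fmAlem1}, carried out in a skeletal model, really compute the invariants attached to $\cG$ and $\cG'$ themselves. This amounts to careful $2$-categorical bookkeeping — tracking how monoidal-structure data and the unit constraints behave under composition with a monoidal equivalence and its quasi-inverse — and it is precisely where normalization of cochains, abelianness of $\pi_1'$, and strictness of the unit get used.
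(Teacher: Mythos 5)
Your proposal is correct and follows essentially the same route as the paper: reduce to the skeletal model $\cG(\pi_0,\pi_1,\la,\al)$ of Example \ref{fmAex2} via Lemmas \ref{fmAlem2} and \ref{fmAlem3}, then read off (a)--(c) from Lemma \ref{fmAlem1} and do the bar-complex computation for (d), where your identification of $\sim$ with quotienting $Z^1$ by $B^1$ correctly fills in the step the paper leaves implicit.
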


The next goal is to classify Picard groupoids and to classify the monoidal functors and transformations between them.

\begin{dfn}
\label{fmAdef5}
A {\it symmetry} $\si$ on a monoidal category $(\cC,\ot,\boo,\al)$ is a natural isomorphism $\si_{x,y}: x\ot y\ra y\ot x$ such that $\si_{y,x}\ci\si_{x,y}=1_{x\ot y}$ and such that the unit and the hexagon coherence diagrams of \cite[XI, \S 1, eq.~(7)]{MacL} commute. A {\it Picard groupoid} is a categorical group $\cG$ equipped with a symmetry $\si.$
\end{dfn}

In particular, $\pi_0(\cG)$ is then a commutative monoid and the conjugation action becomes trivial. From now on, we write the abelian groups $\pi_0, \pi_1$ additively.

\begin{ex}
\label{fmAex3}
Additive categories are symmetric monoidal categories.
\end{ex}

Recall that for a {\it symmetric monoidal functor} of symmetric monoidal categories the isomorphisms \eq{fmAeq2} are required to commute with the symmetry, see \cite[XI, \S 2]{MacL}. A symmetric monoidal functor between Picard groupoids is also called a {\it morphism of Picard groupoids}. There are no further conditions for a monoidal transformation between symmetric monoidal functors. Symmetries are also categorically well-behaved and can be transported along monoidal equivalences using an adjoint equivalence as above.

\begin{ex}
\label{fmAex4}
This example is very instructive; there are two categorical groups with $\pi_0=\pi_1=\Z_2.$ Both have objects $\ov 0,\ov 1$ and tensor product $\ov x+\ov y=\ov{x+y},$ where `$\ov{\phantom{a}}$' denotes the remainder modulo two. Both of the categorical groups have $\Aut(\ov 0)=\Aut(\ov 1)=\{1,\eta\},$ where $\eta^2=1.$ One of the categorical groups has a trivial associativity isomorphism $\al(\ov x,\ov y,\ov z)=1.$ The other, nonequivalent categorical group has $\al(\ov 1,\ov 1,\ov 1)=\eta$ and $\al(\ov x,\ov y,\ov z)=1$ for all $(\ov x,\ov y,\ov z)\neq(\ov 1,\ov 1,\ov 1),$ representing the non-trivial class in $H^3(\Z_2,\Z_2)\cong\Z_2.$

Only the first category admits a symmetry. Hence, up to (non-symmetric) monoidal equivalence, there is a unique Picard groupoid with $\pi_0=\pi_1=\Z_2.$ However, there are two inequivalent symmetries on the unique two-object two-morphism categorical group: the trivial symmetry $\si(\ov 1,\ov 1)=1$ and the non-trivial symmetry $\si(\ov 1,\ov 1)=\eta.$ This boils down to equipping the categorical group of $\Z_2$-graded $\Z_2$-torsors with the ordinary symmetry or the super symmetry.	
\end{ex}

Later we will see in general that the associativity invariant $[\al]$ vanishes for Picard groupoids. Instead, there is a quadratic invariant coming from the symmetry. Before defining this invariant, we briefly recall quadratic maps.

\begin{dfn}
\label{fmAdef6}
Let $\pi_0$ and $\pi_1$ be abelian groups.
\begin{itemize}
\item
A map $q:\pi_0\ra\pi_1$ is {\it quadratic} if $b_q(x,y)=q(x+y)-q(x)-q(y)$ defines a bilinear map. This implies $q(\la x)=\la^2q(x)$ for $\la\in\Z.$ Let $\Quad(\pi_0,\pi_1)$ be the abelian group of all quadratic maps.
\item
A bilinear map $\al:\pi_0\t\pi_0\ra\pi_1$ is {\it alternating} if $\al(x,x)=0$ for all $x\in\pi_0$ and {\it skew-symmetric} if $\al(x,y)+\al(y,x)=0$ for all $x,y\in\pi_0.$ (This is also a good definition when $\al$ is not bilinear.) Let $\Alt(\pi_0,\pi_1)$ be the abelian group of all alternating bilinear maps and let $\Skew(\pi_0,\pi_1)$ be the abelian group of all skew-symmetric bilinear maps.
\end{itemize}
\end{dfn}

By expanding $\al(x+y,x+y)=0,$ one finds $\Alt(\pi_0,\pi_1)\subset\Skew(\pi_0,\pi_1).$ If a quadratic map $q:\pi_0\ra\pi_1$ is also a linear map, then $q(2x)=4q(x)$ and $q(2x)=q(x+x)=q(x)+q(x),$ so $2q(x)=0.$ Therefore, $q$ factors through a linear map $\pi_0/2\pi_0\ra\pi_1.$ Conversely, every linear map $\pi_0/2\pi_0\ra\pi_1$ determines a linear quadratic map by composing with the canonical projection. Hence $\Hom(\pi_0/2\pi_0,\pi_1)\subset\Quad(\pi_0,\pi_1)$ is the subset of {\it linear quadratic maps}.

\begin{prop}
\label{fmAprop1}
{\bf(a)} There is a short exact sequence
\begin{equation}
\label{fmAeq4}
 \begin{tikzcd}[column sep=3ex]
  0\rar&\Alt(\pi_0,\pi_1)\rar&\Skew(\pi_0,\pi_1)\rar{\De^*} & \Hom(\pi_0/2\pi_0,\pi_1)\rar & 0,
 \end{tikzcd}
\end{equation}
where\/ $\De^*$ maps\/ $\al\in\Skew(\pi_0,\pi_1)$ to the quadratic map\/ $q(x)=\al(x,x).$
\smallskip

\noindent{\bf(b)} There is a group morphism\/ $A: Z^2(\pi_0,\pi_1)\ra\Alt(\pi_0,\pi_1)$ that maps a\/ $2$-cocycle\/ $F:\pi_0\t\pi_0\ra\pi_1$ to the alternating bilinear map\/ $\al(x,y)=F(x,y)-F(y,x).$ Moreover,\/ $A$ is surjective with kernel the symmetric\/ $2$-cocycles.
\end{prop}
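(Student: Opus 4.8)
The plan is to establish (a) and (b) separately; in each the well-definedness is a short direct check, and the only real work is surjectivity — which for (b) is where essentially all the content sits.

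\textbf{Part (a).} First I would check that $\De^*$ is a well-defined homomorphism. If $\al\in\Skew(\pi_0,\pi_1)$, then expanding $\al(x+y,x+y)$ and using $\al(x,y)+\al(y,x)=0$ gives $q(x+y)=q(x)+q(y)$ for $q(x):=\al(x,x)$, while $q(2x)=4q(x)$ together with this additivity forces $2q(x)=0$; hence $q$ factors through $\pi_0/2\pi_0$, so $\De^*\al\in\Hom(\pi_0/2\pi_0,\pi_1)$, clearly additively in $\al$. Exactness at $\Alt(\pi_0,\pi_1)$ is the injectivity of the inclusion, and exactness at $\Skew(\pi_0,\pi_1)$ holds because $\ker\De^*=\{\al:\al(x,x)=0\ \forall x\}$ is by definition $\Alt(\pi_0,\pi_1)$. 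For surjectivity, given $\ell\in\Hom(\pi_0/2\pi_0,\pi_1)$, note first that $\ell$ takes values in the $2$-torsion $\pi_1[2]$ (since $2\bar x=0$ in $\pi_0/2\pi_0$); choose an $\F_2$-basis $\{v_i\}$ of the $\F_2$-vector space $\pi_0/2\pi_0$ and let $\psi$ be the $\F_2$-bilinear form on $\pi_0/2\pi_0$ with values in $\pi_1[2]$ determined by $\psi(v_i,v_i)=\ell(v_i)$ and $\psi(v_i,v_j)=0$ for $i\neq j$. Using $c^2=c$ in $\F_2$ one checks $\psi(v,v)=\ell(v)$. Pulling $\psi$ back along $\pi_0\to\pi_0/2\pi_0$ yields a $\Z$-bilinear form $\al$ on $\pi_0$ which is symmetric with values in $\pi_1[2]$, hence skew-symmetric, and satisfies $\De^*\al=\ell$. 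This finishes (a).

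\textbf{Part (b), formal part.} The assignment $F\mapsto A(F)$ is additive in $F$, and $A(F)(x,x)=0$, $A(F)(y,x)=-A(F)(x,y)$ are immediate, so $A(F)$ is alternating and skew-symmetric; note the cocycle condition is \emph{not} needed for these. Biadditivity of $A(F)$ is a direct computation from the normalized $2$-cocycle identity $\de F=0$: evaluating $\de F$ on the triples $(x,x',y)$, $(x,y,x')$ and $(y,x,x')$ and combining the three resulting relations (using that $\pi_0$ is abelian) yields $A(F)(x+x',y)=A(F)(x,y)+A(F)(x',y)$, after which skew-symmetry supplies additivity in the second slot; thus $A(F)\in\Alt(\pi_0,\pi_1)$. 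Finally $A(F)=0$ holds exactly when $F$ is symmetric, which is the asserted kernel.

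\textbf{Part (b), surjectivity, and the main obstacle.} Since every coboundary has $\de g(x,y)=g(x)+g(y)-g(x+y)$ symmetric, $A$ annihilates $B^2(\pi_0,\pi_1)$ and descends to $\bar A\colon H^2(\pi_0,\pi_1)\to\Alt(\pi_0,\pi_1)$; as $Z^2(\pi_0,\pi_1)\twoheadrightarrow H^2(\pi_0,\pi_1)$, it suffices to prove $\bar A$ surjective. For this I would invoke the universal coefficient theorem for group cohomology, which gives the exact sequence
\[
0\to \mathrm{Ext}^1_{\Z}(H_1(\pi_0;\Z),\pi_1)\to H^2(\pi_0;\pi_1)\to \Hom(H_2(\pi_0;\Z),\pi_1)\to 0,
\]
together with the classical computation $H_2(\pi_0;\Z)\cong\Lambda^2_{\Z}\pi_0$ for $\pi_0$ abelian, so $\Hom(H_2(\pi_0;\Z),\pi_1)\cong\Hom(\Lambda^2\pi_0,\pi_1)=\Alt(\pi_0,\pi_1)$. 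Under the bar resolution the class $x\wedge y\in\Lambda^2\pi_0\cong H_2(\pi_0;\Z)$ corresponds to the cycle $[x|y]-[y|x]$, so the universal coefficient (Kronecker) map sends $[F]$ to the form $x\wedge y\mapsto F(x,y)-F(y,x)$, i.e.\ it is exactly $\bar A$; hence $\bar A$, and therefore $A$, is surjective. (One can instead argue by hand: pull $\al$ back along a free presentation $0\to R\to P\to\pi_0\to 0$ with $P$ free abelian, realize the resulting alternating bilinear form on $P$ as the antisymmetrization of a bilinear form via a totally ordered basis of $P$, and then push its class down to $\pi_0$ using $\mathrm{Ext}^1(\text{free},-)=0$.) I expect this surjectivity to be the only genuine obstacle: because $\mathrm{Ext}^1_{\Z}(\pi_0,\pi_1)$ need not vanish one cannot, in general, write down a representative of the alternating form that manifestly descends from a free cover, so some real input — the universal coefficient theorem, equivalently the computation of $H_2$ of an abelian group — is unavoidable. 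The rest is routine bookkeeping.
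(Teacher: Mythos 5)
Your proposal is correct. Part (a) and the formal parts of (b) coincide with the paper's argument: you realize a linear quadratic map as the diagonal of a symmetric (hence $2$-torsion-valued, hence skew) bilinear form built diagonally on an $\F_2$-basis of $\pi_0/2\pi_0$, and you extract bilinearity of $A(F)$ from three instances of the cocycle identity, exactly as in the text. Where you genuinely diverge is the surjectivity of $A$. The paper assumes $\pi_0$ finitely generated, decomposes it into cyclic factors, and writes down an explicit ``upper-triangular'' bilinear form (automatically a cocycle) whose antisymmetrization is the given $\al$, asserting that the general case follows by expressing $\pi_0$ as a filtered colimit of finitely generated subgroups. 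You instead invoke the universal coefficient sequence for group cohomology together with $H_2(\pi_0;\Z)\cong\La^2\pi_0$ and identify the Kronecker map with $\bar A$ via the cycle $[x|y]-[y|x]$. Your route imports two classical theorems but treats arbitrary $\pi_0$ uniformly and explains conceptually why $H^2/H^2_\sym\cong\Alt$, which is the content of \eq{fmAeq9}; the paper's route is elementary linear algebra, but its colimit reduction is only asserted (lifts of $\al|_{G_i}$ to cocycles need not form a compatible system, so that step deserves more care than either source gives it). Your parenthetical ``by hand'' alternative is essentially the paper's construction performed on a free cover with a totally ordered basis; the descent step there is only sketched, but since your primary argument is complete this is harmless.
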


\begin{proof}
(a) If $\al\in\Skew(\pi_0,\pi_1),$ then the map $b_q$ associated to $q(x)=\al(x,x)$ vanishes and is therefore bilinear. Moreover, $q(x+y)=\al(x+y,x+y)=\al(x,x)+\al(y,y)=q(x)+q(y).$ We prove surjectivity of $\De^*$ in \eq{fmAeq4}, which is the only non-trivial assertion of the exact sequence. Pick a basis $\{\ov f_k\}$ of the $\Z_2$-vector space $\pi_0/2\pi_0.$ Since a linear quadratic map $q$ satisfies $2q=0,$ we can define a bilinear map $\al:\pi_0/2\pi_0\ot\pi_0/2\pi_0\ra\pi_1$ by $\al(\ov f_k, \ov f_\ell):=\de_{k,\ell}q(\ov f_k).$ Evidently, the pullback of $\al$ to $\pi_0\ot\pi_0$ is a (diagonal) skew-symmetric bilinear form that maps to $q$ under $\De^*.$
\smallskip

\noindent
(b) We show that $\al(x,y)=F(x,y)-F(y,x)$ is bilinear: As $F$ is a 2-cocycle,
\ea
 F(x+y,z)+F(x,y)&=F(x,y+z)+F(y,z),
\label{fmAeq5}\\
 F(z,x+y)+F(x,y)&=F(z+x,y)+F(z,x),
\label{fmAeq6}\\
 F(x,z+y)+F(z,y)&=F(x+z,y)+F(x,z).
\label{fmAeq7}
\ea
Subtracting \eq{fmAeq6} from \eq{fmAeq5} and then using  \eq{fmAeq7} gives $\al(x+y,z)=F(x+y,z)-F(z,x+y)=F(x,z)-F(z,x)+F(y,z)-F(z,y)=\al(x,z)+\al(y,z).$

To show that $A$ is surjective we will assume that $\pi_0$ is a finitely generated abelian group, an assumption that can be removed by observing that every abelian group is a filtered colimit of finitely generated abelian groups. Write $\pi_0=\Z^n\op\Z_{q_1}\op\cdots\op\Z_{q_n}$ for prime powers $q_k.$ Let $e_1,\ldots, e_n\in\Z^n$ and $\ov f_k\in\Z_{q_k}$ be the corresponding cyclic generators. Being alternating, $\al$ is completely determined by its values
\begin{equation*}
 \al_{ij}=\al(e_i,e_j),
 \qquad
 \al_{ik}=\al(e_i,\ov f_k),
 \qquad
 \al_{k\ell}=\al(\ov f_k,\ov f_\ell),
\end{equation*}
for all $i<j$ and $k<\ell.$ Moreover, $q_k\al_{ik}=0$ and $q_k\al_{k\ell}=q_\ell\al_{k\ell}=0.$ This implies that we can define a bilinear map $F:\pi_0\ot\pi_0\ra\pi_1$ by
\begin{align*}
 F(e_i,e_j)&=
 \begin{cases}
 \al_{ij} & \text{if $i<j,$}\\
 0 & \text{if $i\ge j,$}
 \end{cases}
&F(\ov f_k,\ov f_\ell)&=
 \begin{cases}
 \al_{k\ell} & \text{if $k<\ell,$}\\
 0 & \text{if $k\ge\ell,$}
 \end{cases}\\
 F(e_i,\ov f_k)&=\al_{ik}\quad\text{for all $i,k$},
&F(\ov f_k,e_i)&=0\quad\text{for all $i,k$}.
\end{align*}
Being bilinear, $F$ defines a $2$-cocycle in $Z^2(\pi_0,\pi_1)$ and $\al(x,y)=F(x,y)-F(y,x)$ is easily checked on the cyclic generators.
\end{proof}

\begin{ex}
\label{fmAex5}
Continue with the categorical group $\cG(\pi_0,\pi_1,\la,\al)$ from Example \ref{fmAex2} and the trivial action $\la=1.$ By the hexagon coherence diagrams of \cite[XI, \S1, eq.~(7)]{MacL}, a symmetry on $\cG(\pi_0,\pi_1,1,\al)$ is equivalently a skew-symmetric map $\si:\pi_0\t\pi_0\ra\pi_1$ satisfying
\begin{equation*}
-\al(z,x,y)+\si(x+y,z)-\al(x,y,z)=\si(x,z)-\al(x,z,y)+\si(y,z).
\end{equation*}
Set $\al=0,$ so that a symmetry isomorphism is just a skew-symmetric bilinear form $\si\in\Skew(\pi_0,\pi_1)$ and define $\cP(\pi_0,\pi_1,\si)$ to be the Picard groupoid with symmetry $\si$ and underlying categorical group $\cG(\pi_0,\pi_1,1,0).$

For Picard groupoids of this kind, all of the terminology for symmetric monoidal categories again boils down to linear algebra. For example, a symmetric monoidal functor $\cP(\pi_0,\pi_1,\si)\ra\cP(\pi_0',\pi_1',\si')$ is equivalently a triple $(f_0,f_1,\phi),$ where $f_0:\ab\pi_0\ra\pi_0',$ $f_1:\pi_1\ra\pi_1'$ are group morphisms and $\phi\in C^2(\pi_0,\pi_1')$ is a normalized $2$-cocycle satisfying
\e
\label{fmAeq8}
\si'(f_0(x),f_0(y))-f_1(\si(x,y))=\phi(x,y)-\phi(y,x).
\e
Consider a pair of monoidal functors $(f_0,f_1,\phi)$ and $(g_0,g_1,\ga).$ A non-monoidal natural isomorphism exists precisely when $f_0=g_0$ and $f_1=g_1$ and is then given by an arbitrary map $u:\pi_0\ra\pi_1'$ of sets. For a monoidal transformation,
\begin{equation*}
u(x)-u(x+y)+u(y)=\phi(x,y)-\ga(x,y).
\end{equation*}
The obstruction for the existence of a monoidal transformation is therefore the group cohomology class $[\phi-\ga]\in H^2_\sym(\pi_0,\pi_1')\subset H^2(\pi_0,\pi_1'),$ see Theorem \ref{fmAthm2}(c) below.
\end{ex}

\begin{lem}
\label{fmAlem4}
Let\/ $\cG$ be a Picard groupoid. Then the associativity invariant\/ $[\al]\in H^3(\pi_0(\cG),\pi_1(\cG))$ vanishes. In particular, the underlying categorical group of\/ $\cG$ is monoidally equivalent to some\/ $\cG(\pi_0,\pi_1,1,0).$
\end{lem}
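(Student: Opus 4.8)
The plan is to prove the first assertion, that $[\alpha]=0$ in $H^3(\pi_0(\cG),\pi_1(\cG))$; the second statement then follows formally. By Lemmas \ref{fmAlem2} and \ref{fmAlem3} we may replace $\cG$ by a skeletal model, and since $\cG$ is symmetric the conjugation action is trivial, so this model is monoidally isomorphic to $\cG(\pi_0,\pi_1,1,\alpha)$ for $\pi_0=\pi_0(\cG)$, $\pi_1=\pi_1(\cG)$ and some normalized $3$-cocycle $\alpha$ with $\delta\alpha=0$. Moreover this model carries a symmetry of the kind described in Example \ref{fmAex5}, namely a skew-symmetric map $\sigma:\pi_0\times\pi_0\to\pi_1$ subject to the $\alpha$-twisted hexagon identity
\begin{equation*}
-\alpha(z,x,y)+\sigma(x+y,z)-\alpha(x,y,z)=\sigma(x,z)-\alpha(x,z,y)+\sigma(y,z).
\end{equation*}
Once $[\alpha]=0$ is known, Lemma \ref{fmAlem1}(a) (equivalently Theorem \ref{fmAthm1}(b)) upgrades the identity functor to a monoidal equivalence $\cG(\pi_0,\pi_1,1,\alpha)\simeq\cG(\pi_0,\pi_1,1,0)$, giving the second claim.

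The main step is therefore to exhibit a normalized $2$-cochain $\beta\in C^2(\pi_0,\pi_1)$ with $\delta\beta=\alpha$, built from $\sigma$ and the ``diagonal'' restrictions of $\alpha$. There are only three ingredients available: the pentagon $\delta\alpha=0$, the symmetry identity $\sigma(x,y)+\sigma(y,x)=0$, and the hexagon above. Specializing the hexagon already suggests the shape of the answer: putting $x=z$ and cancelling $\alpha(x,x,y)$ yields $-\alpha(x,y,x)=\sigma(x,x)+\sigma(y,x)-\sigma(x+y,x)$, so that the component of $\alpha$ ``along the diagonal'' is visibly a coboundary-type expression in $\sigma$. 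The plan is to bootstrap this, following the classical cochain formulas of Eilenberg--MacLane for symmetric (stable) data, into a trivializing cochain $\beta$ whose terms are of the shapes $\sigma(x,y)$, $\alpha(x,x,y)$ and $\alpha(x,y,y)$, and then to verify $\delta\beta=\alpha$ by direct expansion, repeatedly substituting the hexagon to remove the $\sigma$-terms and the pentagon to collapse the remaining associator terms in pairs. Conceptually the reason this succeeds is that the symmetry makes the associativity obstruction ``stable,'' so that it is carried by quadratic data rather than by $H^3(\pi_0,\pi_1)$; but an explicit cochain is the most self-contained route.

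The hard part will be the combinatorial bookkeeping: pinning down the correct signs and the correct combination of $\sigma$- and diagonal-$\alpha$-terms entering $\beta$, and then carrying out the verification of $\delta\beta=\alpha$ without error, since the expansion produces on the order of a dozen terms that must cancel only after the hexagon and pentagon are applied. No genuinely new idea is needed beyond the three identities above; the subtlety is purely in the cochain arithmetic. With such a $\beta$ in hand the proof is complete.
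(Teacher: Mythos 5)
Your setup (reduce to a skeletal model $\cG(\pi_0,\pi_1,1,\al)$ carrying a symmetry $\si$ subject to the $\al$-twisted hexagon, then show $[\al]=0$) matches the paper's, but the central step of your argument is announced rather than carried out. Everything hinges on producing a normalized $2$-cochain $\be$ with $\de\be=\al$; you describe the shape you expect $\be$ to have and state that you will "pin down the correct signs" and "verify $\de\be=\al$ by direct expansion", but no formula for $\be$ is given and no verification is performed. Since you yourself identify this as the hard part, the proof is incomplete as written: what you have established is only the specialization of the hexagon at $x=z$, which controls the diagonal components of $\al$ but does not by itself trivialize the class. It is also not clear that a trivializing cochain can be assembled purely from $\si(x,y)$, $\al(x,x,y)$ and $\al(x,y,y)$; the cochain-level content you are proposing to reprove is essentially Eilenberg--MacLane's theorem that a symmetric $3$-cocycle is determined up to symmetric coboundary by its trace, and that is not a routine one-page expansion.

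For comparison, the paper sidesteps this computation entirely: it first checks that the trace $q(x)=\si(x,x)$ is a \emph{linear quadratic} map, uses the exact sequence \eq{fmAeq4} to find a genuinely bilinear $\si'\in\Skew(\pi_0,\pi_1)$ with $\si'(x,x)=\si(x,x)$, observes that $(0,\si')$ is then a symmetric $3$-cocycle with the same trace as $(\al,\si)$, and invokes Eilenberg--MacLane \cite[Th.~26.1]{EiMa} to conclude the two are cohomologous, whence $\al=\pd\phi$. If you want a self-contained argument you must either supply the explicit $\be$ and the full cancellation check, or else cite the Eilenberg--MacLane result as the paper does; note that in the latter route the step showing $\si(x,x)$ is linear quadratic and can be realized by a bilinear skew form is not optional, and your proposal currently omits it.
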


\begin{proof}
This is essentially a consequence of Eilenberg--MacLane \cite[Th.~26.1]{EiMa} (or see \cite[p.7]{JoOs}) and we use their terminology of `symmetric cochains' which are certain pairs $(\al,\si).$ As the conjugation action of $\cG$ vanishes, Theorem \ref{fmAthm1} implies that the underlying categorical group of $\cG$ is monoidally equivalent to some $\cG(\pi_0,\pi_1,1,\al).$ For the symmetry $\si$ of $\cG$ a calculation shows that $\si(x,x)$ is a linear quadratic form, so by \eq{fmAeq4} there exists $\si'\in\Skew(\pi_0,\pi_1)$ with $\si(x,x)=\si'(x,x).$ By bilinearity, $(0,\si')$ is a symmetric $3$-cocycle. Now \cite[Th.~26.1]{EiMa} implies that $(\al,\si)$ and $(0,\si')$ are cohomologous, since they agree on the diagonal. In particular, $\al=\pd\phi$ is a coboundary. Then the 2-cochain $\phi$ determines a monoidal structure on the identity functor that makes  $\cG(\pi_0,\pi_1,1,\al)$ monoidally equivalent to $\cG(\pi_0,\pi_1,1,0).$
\end{proof}

Transporting the symmetry from $\cG$ to $\cG(\pi_0,\pi_1,1,0),$ we conclude that every Picard groupoid is symmetric monoidally equivalent to $\cP(\pi_0,\pi_1,\si)$ for some $\si\in\Skew(\pi_0,\pi_1).$ However, there are symmetric monoidal equivalences $\cP(\pi_0,\pi_1,\si)\ra\cP(\pi_0,\pi_1,\si')$ that change $\si$ by a $2$-cocycle, so by Proposition \ref{fmAprop1} only the linear quadratic form $q(x)=\si(x,x)$ is an invariant of $\cG,$ called the {\it symmetry invariant}. The triple $(\pi_0,\pi_1,q)$ is a complete invariant of Picard groupoids. As before, every result that holds in the special case of Example \ref{fmAex5} and is invariant under symmetric monoidal equivalence remains true for general Picard groupoids. This leads to the following classification result for Picard groupoids:

\begin{thm}
\label{fmAthm2}
{\bf(a)} Let\/ $\pi_0$ and\/ $\pi_1$ be abelian groups. Up to equivalence, Picard groupoids\/ $\cG$ with\/ $\pi_0(\cG)=\pi_0$ and\/ $\pi_1(\cG)=\pi_1$ are classified by their symmetry invariant, a linear quadratic form\/ $q:\pi_0(\cG)\ra\pi_1(\cG).$ Conversely, every triple $(\pi_0,\pi_1,q)$ occurs as the invariants of some Picard groupoid.
\smallskip

\noindent{\bf(b)} Let\/ $\cG$ and\/ $\cG'$ be Picard groupoids with symmetry invariants\/ $q$ and\/ $q'.$ Let\/ $f_0:\pi_0(\cG)\ra\pi_0(\cG')$ and\/ $f_1:\pi_1(\cG)\ra\pi_1(\cG')$ be group morphisms. There exists a symmetric monoidal functor\/ $F:\cG\ra\cG'$ with\/ $\pi_0(F)=f_0$ and\/ $\pi_1(F)=f_1$ if and only if\/ $q'\ci f_0=f_1\ci q.$
\smallskip

\noindent{\bf(c)} Let\/ $F, G:\cG\ra\cG'$ be symmetric monoidal functors of Picard groupoids with\/ $\pi_0(F)=\pi_0(G)$ and\/ $\pi_1(F)=\pi_1(G).$ There is a monoidal natural isomorphism\/ $F\Rightarrow G$ if and only if the difference class\/ $\om(F,G)\in H^2(\pi_0(\cG),\pi_1(\cG'))$ vanishes. Furthermore, $\om(F,G)$ lies in the subgroup\/ $H^2_\sym(\pi_0(\cG),\pi_1(\cG')),$ which is part of a short exact sequence
\begin{equation}
\text{\begin{small}$\displaystyle
\!\!\xymatrix@C=10pt{
0 \ar[r] & H^2_\sym(\pi_0(\cG),\pi_1(\cG')) \ar[r] & H^2(\pi_0(\cG),\pi_1(\cG')) \ar[r] & \Alt(\pi_0(\cG),\pi_1(\cG')) \ar[r] & 0.}$\end{small}}
\label{fmAeq9}
\end{equation}
The set of all symmetric monoidal functors\/ $F:\cG\ra\cG'$ with given\/ $\pi_0(F)=f_0$ and\/ $\pi_1(F)=f_1$ modulo monoidal natural isomorphism is a torsor over\/ $H^2_\sym(\pi_0(\cG),\pi_1(\cG')).$
\smallskip

\noindent{\bf(d)} Let\/ $F, G:\cG\ra\cG'$ be symmetric monoidal functors of Picard groupoids. If non-empty, the set of all monoidal natural isomorphisms\/ $F\Rightarrow G$ is a torsor over\/ $H^1(\pi_0(\cG),\pi_1(\cG')).$ Here $H^1(\pi_0(\cG),\pi_1(\cG'))=\Hom(\pi_0(\cG),\pi_1(\cG'))$ as $\pi_0(\cG)$ acts trivially on~$\pi_1(\cG')$.
\end{thm}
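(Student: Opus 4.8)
The plan is to reduce all four parts to the concrete ``linear-algebra'' model $\cP(\pi_0,\pi_1,\si)$ of Example \ref{fmAex5}, exploiting the principle (already used repeatedly in this appendix) that any statement about $\cP(\pi_0,\pi_1,\si)$ which is invariant under symmetric monoidal equivalence holds for every Picard groupoid. The reduction rests on Lemma \ref{fmAlem4}: the associativity invariant of a Picard groupoid $\cG$ vanishes, so its underlying categorical group is monoidally equivalent to $\cG(\pi_0(\cG),\pi_1(\cG),1,0)$, and transporting the symmetry of $\cG$ along this equivalence (using an adjoint equivalence, as in the discussion after Definition \ref{fmAdef3}) exhibits $\cG$ as symmetric monoidally equivalent to some $\cP(\pi_0(\cG),\pi_1(\cG),\si)$ with $\si\in\Skew(\pi_0(\cG),\pi_1(\cG))$. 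Since by \eq{fmAeq8} (with $f_0,f_1$ the identities) any symmetric monoidal equivalence $\cP(\pi_0,\pi_1,\si)\to\cP(\pi_0,\pi_1,\si')$ is given by a normalized $2$-cocycle $\phi$ with $\si'-\si=A(\phi)\in\Alt(\pi_0,\pi_1)$ (notation of Proposition \ref{fmAprop1}(b)), one gets $\si'(x,x)=\si(x,x)$, so the linear quadratic form $q(x)=\si(x,x)$ is a genuine invariant of $\cG$; this is the symmetry invariant.

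First I would prove (a). For existence, given abelian groups $\pi_0,\pi_1$ and a linear quadratic $q\colon\pi_0\to\pi_1$, surjectivity of $\De^*$ in \eq{fmAeq4} (Proposition \ref{fmAprop1}(a)) produces $\si\in\Skew(\pi_0,\pi_1)$ with $\si(x,x)=q(x)$, and then $\cP(\pi_0,\pi_1,\si)$ has symmetry invariant $q$. For the classification, if $\cG,\cG'$ have equal invariants, reduce to models $\cP(\pi_0,\pi_1,\si)$, $\cP(\pi_0,\pi_1,\si')$ with $\si(x,x)=\si'(x,x)$; then $\si-\si'\in\Alt(\pi_0,\pi_1)$ by the exact sequence \eq{fmAeq4}, so Proposition \ref{fmAprop1}(b) gives a $2$-cocycle $\phi$ with $A(\phi)=\si-\si'$, and by \eq{fmAeq8} this $\phi$ determines a symmetric monoidal equivalence $\cP(\pi_0,\pi_1,\si')\to\cP(\pi_0,\pi_1,\si)$ lifting the identity on $\pi_0,\pi_1$.

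Parts (b) and (d) are then short computations in the model, once one records (Example \ref{fmAex5}) that a symmetric monoidal functor $\cP(\pi_0,\pi_1,\si)\to\cP(\pi_0',\pi_1',\si')$ is a triple $(f_0,f_1,\phi)$ with $\phi$ a normalized $2$-cocycle satisfying \eq{fmAeq8}. For (b): if $F=(f_0,f_1,\phi)$ exists, set $y=x$ in \eq{fmAeq8} to get $q'(f_0(x))=f_1(q(x))$; conversely, given $q'\ci f_0=f_1\ci q$, the skew-symmetric bilinear form $\psi(x,y)=\si'(f_0(x),f_0(y))-f_1(\si(x,y))$ has $\psi(x,x)=0$, hence lies in $\Alt(\pi_0,\pi_1')$, so Proposition \ref{fmAprop1}(b) produces a $2$-cocycle $\phi$ with $A(\phi)=\psi$, i.e.\ satisfying \eq{fmAeq8}. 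For (d): a monoidal natural isomorphism between $(f_0,f_1,\phi)$ and $(f_0,f_1,\ga)$ is a map $u\colon\pi_0\to\pi_1'$ with $\de u=\phi-\ga$, two such differ by a $1$-cocycle, and since $\de$ vanishes on $C^0(\pi_0,\pi_1')$ for the trivial action there are no $1$-coboundaries, so the set of them (when non-empty) is a torsor over $H^1(\pi_0,\pi_1')=\Hom(\pi_0,\pi_1')$.

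Finally, (c) is where I expect the main work. The criterion ``$\om(F,G)=0$ iff $F,G$ are monoidally naturally isomorphic'' and the identification $\om(F,G)=[\phi-\ga]$ are already in Theorem \ref{fmAthm1}(c) for categorical groups; the new content is that $\om(F,G)$ lands in $H^2_\sym(\pi_0(\cG),\pi_1(\cG'))$ and that the set of functors with fixed $(f_0,f_1)$ is a torsor over that subgroup. When $F=(f_0,f_1,\phi)$ and $G=(f_0,f_1,\ga)$ are symmetric monoidal, both $\phi$ and $\ga$ satisfy \eq{fmAeq8} with the same left-hand side $\psi$, so $A(\phi-\ga)=\psi-\psi=0$, i.e.\ $\phi-\ga$ is a symmetric $2$-cocycle and $\om(F,G)=[\phi-\ga]\in H^2_\sym$; conversely, adding a symmetric $2$-cocycle to $\phi$ preserves \eq{fmAeq8}, which gives the torsor statement. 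For the exact sequence \eq{fmAeq9}: the map $A$ of Proposition \ref{fmAprop1}(b) kills coboundaries (since $\de u$ is symmetric for the trivial action), hence descends to $\bar A\colon H^2(\pi_0(\cG),\pi_1(\cG'))\to\Alt(\pi_0(\cG),\pi_1(\cG'))$, which is surjective because $A$ is, with $\ker\bar A=H^2_\sym$ since $A(\phi)=0$ precisely expresses that the representative $\phi$ is symmetric. The main obstacle I anticipate is bookkeeping rather than a deep idea: checking that the symmetry transports correctly along the chosen adjoint equivalences so that all four parts genuinely reduce to the model, and verifying carefully that $H^2_\sym$ (the image of symmetric $2$-cocycles in $H^2$) coincides with $\ker\bar A$, which also yields exactness of \eq{fmAeq9}.
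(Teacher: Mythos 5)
Your proposal is correct and follows essentially the same route as the paper: reduction to the skeletal model $\cP(\pi_0,\pi_1,\si)$ via Lemma \ref{fmAlem4}, the cocycle description of symmetric monoidal functors from Example \ref{fmAex5} (equation \eq{fmAeq8}), and the linear algebra of Proposition \ref{fmAprop1}. The extra detail you supply for parts (c) and (d) is exactly what the paper compresses into its citations of Theorem \ref{fmAthm1}(c),(d).
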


\begin{proof}
(a) This is immediate from the preceding discussion.
\smallskip

\noindent (b) By choosing identifications $\cG\simeq \cP(\pi_0,\pi_1,\si)$ and $\cG'\simeq \cP(\pi_0',\pi_1',\si'),$ symmetric monoidal functors $\cG\ra\cG'$ correspond to $2$-cocycles $\phi\in C^2(\pi_0,\pi_1')$ satisfying \eq{fmAeq8}. Suppose that $q'\ci f_0=f_1\ci q.$ Then the left hand side of \eq{fmAeq8} vanishes for $x=y,$ hence is in the kernel of $\Skew(\pi_0,\pi_1')\ra\Hom(\pi_0/2\pi_0,\pi_1').$ By Proposition \ref{fmAprop1}(a) and (b) these are the alternating forms which can all be written as $\phi(x,y)-\phi(y,x)$ for some $2$-cocycle $\phi.$
\smallskip

\noindent (c) The exact sequence follows from Proposition \ref{fmAprop1}, and the rest follows from Theorem \ref{fmAthm1}(c), but also requiring functors to be {\it symmetric\/} monoidal.
\smallskip

\noindent (d) This follows from Theorem \ref{fmAthm1}(d) and the fact that the equivalence relation $\sim$ on monoidal transformations $\th:F\Ra G$ defined in \eq{fmAeq3} is trivial in the symmetric case.
\end{proof}

It follows that every Picard groupoid is equivalent to a category of $\pi_0$-graded $\pi_1$-torsors $\cG=\pi_0\qs\pi_1$ (see Example \ref{fmAex1}) with symmetry isomorphism determined by $\si\in\Skew(\pi_0,\pi_1)$ as
\begin{align*}
(x_0,S_0)\ot_\si(x_1,S_1)&\longra(x_1,S_1)\ot_\si(x_0,S_0),\\
s_0\ot_\si s_1&\longmapsto\si(x_0,x_1)(s_1\ot_\si s_0).
\end{align*}
These Picard groupoids are equivalent if the `diagonal' quadratic forms $\si(x,x)$ coincide. We may therefore view a symmetry isomorphism as a {\it sign convention} when commuting objects past each other. From this point of view, Theorem \ref{fmAthm2} classifies all possible sign conventions on $\pi_0\qs\pi_1$ up to equivalence. Sign conventions are very important in the construction of the Quillen determinant line bundle (see \cite{Upme1}) and they are equally important here.

\begin{rem}
\label{fmArem1}
The similarity between Theorems \ref{fmAthm1}, \ref{fmAthm2} and obstruction theory (Postnikov invariants, in particular in stable homotopy theory) is not a coincidence. The reason is that Picard groupoids are a model for stable homotopy 1-types. In this context, the invariant $q$ is called the {\it stable 1\textsuperscript{st} Postnikov invariant}.
\end{rem}

\begin{ex}
\label{fmAex6}
To calculate the obstruction $\om(F,G)$ for a pair of symmetric monoidal functors $F,G:\cG\ra(\pi_0'\qs\pi_1',\ot_{\si'}),$ where $\si'\in\Skew(\pi_0',\pi_1'),$ the symmetric monoidal equivalence $Z:\cP(\pi_0,\pi_1,\si)\ra\cG$ needs to be made explicit. Pick an object $x_i$ of $\cG$ in each isomorphism class $[x_i]\in\pi_0(\cG).$ This determines the functor $Z,$ by mapping $[x_i]$ to the object $x_i$ and an automorphism $f\in\pi_1(\cG)$ of $[x_i]$ to $f\ot x_i\in\Aut_\cG(x_i).$ Pick isomorphisms $\ze_{i,j}: x_i\ot x_j\ra x_{i+j}$ such that these satisfy the associativity axiom for a monoidal structure on $Z$ (always possible by Lemma \ref{fmAlem4}). By choosing bases of each torsor $F(x_i),$ the morphism of torsors
\begin{equation*}
\begin{tikzcd}[column sep=large]
 F(x_i)\ot F(x_j)\ar[r,"\phi_{x_i,x_j}"] & F(x_i\ot x_j)\ar[r,"F(\ze_{i,j})"] & F(x_{i+j})
\end{tikzcd}
\end{equation*}
determines a $2$-cocycle $\phi_{i,j}\in C^2(\pi_0,\pi_1').$ Here, $\phi_{x,y}$ denotes the monoidal structure on $F.$ Another choice of bases changes $\phi_{i,j}$ by a coboundary. In the same way, we obtain from $G$ a $2$-cocycle $\psi_{i,j}\in C^2(\pi_0,\pi_1').$ Then $[\phi_{i,j}-\psi_{i,j}]$ is the obstruction class for the existence of a symmetric monoidal isomorphism $F\cong G.$
\end{ex}

\begin{ex}
\label{fmAex7}
In the situation of Theorem \ref{fmAthm2}(c), write $\pi_i=\pi_i(\cG)$, $\pi_i'=\pi_i(\cG')$, and suppose that $\pi_0$ is a finitely generated abelian group and $\pi_1'=\Z_2$. Then for $n,a_{p,k}\ge 0$ with only finitely many $a_{p,k}$ nonzero we may write 
\begin{equation*}
\pi_0\cong\Z^n\t\prod_{\text{$p$ prime, $k\ge 1$}}(\Z_{p^k})^{a_{p,k}}.
\end{equation*}
Since $H^2_\sym(\Ga_1\t\Ga_2,\Z_2)\cong H^2_\sym(\Ga_1,\Z_2)\t H^2_\sym(\Ga_2,\Z_2)$ it follows that
\begin{equation*}
H^2_\sym(\pi_0,\Z_2)\cong H^2_\sym(\Z,\Z_2)^n\t\prod_{\text{$p$ prime, $k\ge 1$}}(H^2_\sym(\Z_{p^k},\Z_2))^{a_{p,k}}.
\end{equation*}
Calculation shows that
\begin{equation*}
H^2_\sym(\Z,\Z_2)=0,\quad H^2_\sym(\Z_{p^k},\Z_2)\cong\begin{cases} \Z_2, & p=2, \\ 0, & p>2.\end{cases}
\end{equation*}
Thus, each factor of $\Z_{2^k}$ in $\pi_0$ contributes a factor $\Z_2$ to $H^2_\sym(\pi_0,\Z_2)$. 

Write $G=\{\ga\in\pi_0:2G=0\}$ for the 2-torsion subgroup of $\pi_0$. Then $G\cong\prod_{k\ge 1}\bigl(\Z_2\an{\ul{2^{k-1}}}\bigr)^{a_{2,k}}$, that is, each factor of $\Z_{2^k}$ in $\pi_0$ contributes a factor $\Z_2$ to $G$. We can make the relationship between $H^2_\sym(\pi_0,\Z_2)$ and $G$ explicit as follows: there is an isomorphism $\Psi_{\pi_0}:H^2_\sym(\pi_0,\Z_2)\ra\Hom(G,\Z_2)$, such that
\e
\Psi_{\pi_0}([C]):\ga\longmapsto C(\ga,\ga)+C(0,0)
\label{fmAeq10}
\e
whenever $[C]\in H^2_\sym(\pi_0,\Z_2)$ is represented by a cocycle $C\in C^2(\pi_0,\Z_2)$ which is {\it symmetric}, i.e.\ $C(\al,\be)=C(\be,\al)$ for $\al,\be\in \pi_0$, and $\ga\in G\subset\pi_0$. 

To see that \eq{fmAeq10} is well defined, note that if $C'=C+\d D$ for $D:\pi_0\ra\Z_2$ then $C'(\ga,\ga)=C(\ga,\ga)+D(2\ga)=C(\ga,\ga)+D(0)$ as $2\ga=0$ and $C'(0,0)=C(0,0)+D(0)$, and the condition $\d C=0$ implies that $\Psi([C])$ is a group morphism. The map $\Ga\mapsto\Psi_\Ga$ is compatible with products $\Ga=\Ga_1\t\Ga_2$, and we can check that $\Psi_\Ga$ is an isomorphism for $\Ga=\Z_{2^k}$, $k\ge 1$, so $\Psi_\Ga$ is an isomorphism for any finitely generated abelian group $\Ga$.

Now let $F,G$ be as in Theorem \ref{fmAthm2}(c). As $\pi_i(F)=\pi_i(G)$ there exists a (not necessarily monoidal) natural isomorphism $\ze:F\Ra G$. Define a symmetric cocycle $C_\ze\in C^2(\pi_0,\Z_2)$ by, for all objects $M_1,M_2\in\cF$
\begin{align*}
&C_\ze([M_1],[M_2])\\
&= F_\ot(M_1,M_2)\ci(\ze(M_1)\ot\ze(M_2))^{-1}\ci G_\ot(M_1,M_2)^{-1}\ci\ze(M_1\ot M_2)\\
&\quad \in\Hom_\cG(F(M_1\ot M_2),F(M_1\ot M_2))\cong\Z_2,
\end{align*}
where $F_\ot(M_1,M_2):F(M_1)\ot F(M_2)\ra F(M_1\ot M_2)$ is the isomorphism in \eq{fmAeq2}. Then $\om(F,G)=[C_\ze]\in H^2_\sym(\pi_0,\Z_2)$. So Theorem \ref{fmAthm2}(c) says that a monoidal isomorphism $\eta:F\Ra G$ exists if and only if $[C_\ze]=0$, or equivalently, if $\Psi([C_\ze])=0$, that is, if $C_\ze(\ga,\ga)+C_\ze(0,0)=\ul{0}\in\Z_2$ for all~$\ga\in G\subseteq\pi_0$.
\end{ex}

\medskip

\noindent{\small\sc Dominic Joyce, The Mathematical Institute, Radcliffe
Observatory Quarter, Woodstock Road, Oxford, OX2 6GG, U.K. 

\noindent E-mail:  {\tt joyce@maths.ox.ac.uk.}

\smallskip

\noindent Markus Upmeier, Department of Mathematics, University of Aberdeen, Fraser Noble Building, Elphinstone Rd, Aberdeen, AB24 3UE, U.K.

\noindent E-mail: {\tt markus.upmeier@abdn.ac.uk.}}

\end{document}